\numberwithin{equation}{section} 
\newtheorem{theorem}{Theorem}[section]
\newtheorem{lemma}[theorem]{Lemma}
\newtheorem{proposition}[theorem]{Proposition}
\newtheorem{proposition-definition}[theorem]{Proposition-Definition} 
\newtheorem{conjecture}[theorem]{Conjecture}
\newtheorem{corollary}[theorem]{Corollary}
\newtheorem{assumption}[theorem]{Assumption} 
\newtheorem{question}[theorem]{Question} 
\theoremstyle{definition} 
\newtheorem{definition}[theorem]{Definition}
\newtheorem{remark}[theorem]{Remark}
\newtheorem{example}[theorem]{Example}
\newtheorem{notation}[theorem]{Notation}
\newtheorem{problem}[theorem]{Problem} 
\newtheorem{recall}[theorem]{Recall} 
\newtheorem*{question*}{Question} 
\newtheorem*{remark*}{Remark} 
\newcommand{\C}{\mathbb C}
\newcommand{\R}{\mathbb R}
\newcommand{\Z}{\mathbb Z}
\newcommand{\Q}{\mathbb Q}
\newcommand{\N}{\mathbb N} 
\newcommand{\T}{\mathbb T}
\newcommand{\F}{\mathbb F}
\newcommand{\U}{\mathbb U} 
\newcommand{\D}{\mathbb D}  
\newcommand{\A}{\mathbb A} 
\newcommand{\hA}{\widehat{\A}} 
\newcommand{\bL}{\mathbb L}
\newcommand{\LL}{\boldsymbol{\mathsf L}}  
\newcommand{\LLo}{\LL^\circ}
\newcommand{\hLL}{\widehat{\LL}}
\newcommand{\hLLo}{\hLL{}^\circ}
\newcommand{\Proj}{\mathbb P}
\newcommand{\FM}{\mathbb{FM}}
\newcommand{\bx}{\mathbf{x}}
\newcommand{\by}{\mathbf{y}} 
\newcommand{\tby}{\tilde{\by}} 
\newcommand{\ba}{\mathbf{a}} 
\newcommand{\bq}{\mathbf{q}}
\newcommand{\tbq}{\tilde{\bq}} 
\newcommand{\bp}{\mathbf{p}} 
\newcommand{\bt}{\mathbf{t}}
\newcommand{\bs}{\mathbf{s}} 
\newcommand{\bY}{\boldsymbol{Y}} 
\newcommand{\bT}{\boldsymbol{T}}
\newcommand{\bN}{\mathbf{N}}
\newcommand{\bOmega}{\boldsymbol{\Omega}}
\newcommand{\bOmegao}{\bOmega_{\circ}}
\newcommand{\AOmega}{\cA\bOmega} 
\newcommand{\AOmegao}{\AOmega_\circ}
\newcommand{\bTheta}{\boldsymbol{\Theta}} 
\newcommand{\bThetao}{\bTheta_{\circ}} 
\newcommand{\ATheta}{\cA\bTheta}
\newcommand{\AThetao}{\ATheta_\circ} 
\newcommand{\unit}{\boldsymbol{1}} 
\newcommand{\Nabla}{\boldsymbol{\nabla}} 
\newcommand{\hNabla}{\widehat{\Nabla}} 
\newcommand{\Nablacc}{\Nabla^{\rm cc}}
\newcommand{\bD}{\mathbf{D}} 
\newcommand{\bxi}{\boldsymbol{\xi}}
\newcommand{\bh}{\boldsymbol{h}} 
\newcommand{\NE}{\operatorname{NE}}
\newcommand{\ev}{\operatorname{ev}}
\newcommand{\Hom}{\operatorname{Hom}}
\DeclareMathOperator{\sHom}{\mathscr{H}\text{\kern -3pt {\calligra\large om}}\,}
\newcommand{\Pic}{\operatorname{Pic}}
\newcommand{\Ker}{\operatorname{Ker}}
\newcommand{\Image}{\operatorname{Im}}
\newcommand{\rank}{\operatorname{rank}} 
\newcommand{\id}{\operatorname{id}}
\newcommand{\End}{\operatorname{End}}
\newcommand{\Res}{\operatorname{Res}}
\newcommand{\Spf}{\operatorname{Spf}}   
\newcommand{\Sym}{\operatorname{Sym}}
\newcommand{\pr}{\operatorname{pr}} 
\newcommand{\KS}{\operatorname{KS}}
\newcommand{\Cont}{\operatorname{Cont}}
\newcommand{\Aut}{\operatorname{Aut}} 
\newcommand{\Auteq}{\operatorname{Auteq}} 
\newcommand{\Tr}{\operatorname{Tr}} 
\newcommand{\filt}{\operatorname{filt}} 
\newcommand{\diag}{\operatorname{diag}} 
\newcommand{\PT}{\operatorname{PT}} 
\newcommand{\hot}{\operatorname{h.\! o.\! t.}}
\newcommand{\For}{\operatorname{For}} 
\newcommand{\emb}{\operatorname{emb}}
\newcommand{\sdet}{\operatorname{sdet}} 
\newcommand{\Toric}{\operatorname{\mathfrak{Toric}}}  
\newcommand{\Crep}{\operatorname{\mathfrak{Crep}}}
\newcommand{\Amp}{\operatorname{Amp}}
\newcommand{\Vol}{\operatorname{Vol}} 
\newcommand{\Mir}{\operatorname{Mir}} 
\newcommand{\gr}{\operatorname{gr}} 
\newcommand{\Gr}{\operatorname{Gr}} 
\newcommand{\dist}{\operatorname{dist}} 
\newcommand{\cc}{{\operatorname{cc}}}
\newcommand{\Mon}{\operatorname{Mon}} 
\newcommand{\tr}{{\rm T}}
\newcommand{\CR}{\operatorname{CR}} 
\newcommand{\cN}{\mathcal{N}}
\newcommand{\cA}{\mathcal{A}}
\newcommand{\tcA}{\widetilde{\cA}} 
\newcommand{\cU}{\mathcal{U}}
\newcommand{\cV}{\mathcal{V}} 
\newcommand{\cC}{\mathcal{C}} 
\newcommand{\tcC}{\widetilde{\cC}} 
\newcommand{\cO}{\mathcal{O}}
\newcommand{\bcO}{\boldsymbol{\cO}}
\newcommand{\AO}{\cA\bcO} 
\newcommand{\cT}{\mathcal{T}}
\newcommand{\cL}{\mathcal{L}}
\newcommand{\cH}{\mathcal{H}} 
\newcommand{\tcH}{\widetilde{\cH}}
\newcommand{\cZ}{\mathcal{Z}} 
\newcommand{\cM}{\mathcal{M}} 
\newcommand{\cMo}{\cM^{\circ}} 
\newcommand{\cMss}{\cM_{\rm ss}} 
\newcommand{\hcM}{\widehat{\cM}}
\newcommand{\cMbar}{\overline{\cM}}
\newcommand{\cF}{\mathcal{F}} 
\newcommand{\cW}{\mathcal{W}}
\newcommand{\cS}{\mathcal{S}} 
\newcommand{\cR}{\mathcal{R}} 
\newcommand{\tnabla}{\widetilde{\nabla}}
\newcommand{\hF}{\widehat{F}}
\newcommand{\hG}{\widehat{G}} 
\newcommand{\hUU}{\widehat{\U}}
\newcommand{\hcZ}{\widehat{\cZ}} 
\newcommand{\hcF}{\widehat{\cF}}
\newcommand{\sfF}{\mathsf{F}}
\newcommand{\AF}{\cA\sfF}
\newcommand{\hGamma}{\widehat{\Gamma}} 
\newcommand{\hp}{\hat{p}}
\newcommand{\hq}{\hat{q}} 
\newcommand{\hpsi}{\hat{\psi}} 
\newcommand{\hR}{\widehat{R}} 
\newcommand{\hx}{\hat{x}} 
\newcommand{\hs}{\hat{s}} 
\newcommand{\hc}{\hat{c}} 
\newcommand{\ts}{\tilde{s}} 
\newcommand{\te}{\tilde{e}} 
\newcommand{\tg}{\tilde{g}} 
\newcommand{\ty}{\tilde{y}} 
\newcommand{\tkappa}{\tilde{\kappa}} 
\newcommand{\tcM}{\widetilde{\cM}}
\newcommand{\hC}{\widehat{C}} 
\newcommand{\hcW}{\widehat{\cW}}
\newcommand{\hotimes}{\mathbin{\widehat\otimes}}
\newcommand{\hdelta}{\hat{\delta}}
\newcommand{\sfP}{\mathsf{P}}
\newcommand{\sfPss}{\sfP_{\rm ss}} 
\newcommand{\sfQ}{\mathsf{Q}} 
\newcommand{\tsfQ}{\widetilde{\sfQ}} 
\newcommand{\sfR}{\mathsf{R}} 
\newcommand{\sx}{\mathsf{x}}
\newcommand{\sy}{\mathsf{y}}
\newcommand{\Fock}{\mathfrak{Fock}}
\newcommand{\Fockan}{\operatorname{\mathfrak{AFock}}}
\newcommand{\Fockanrat}{\operatorname{\Fockan_{\rm rat}}}
\newcommand{\ov}{\overline}
\newcommand{\surj}{\twoheadrightarrow} 
\newcommand{\fm}{\mathfrak{m}}
\newcommand{\iu}{\mathtt{i}} 
\newcommand{\Phiss}{\Phi_{\rm ss}}
\newcommand{\Phissu}{\Phi_{{\rm ss},u}}
\newcommand{\Css}{C_{\rm ss}} 
\newcommand{\wave}{\mathscr{C}}
\newcommand{\wavess}{\wave_{\rm ss}} 
\newcommand{\Wall}{\mathsf{W}} 
\newcommand{\oi}{{\ov{\imath}}} 
\newcommand{\oj}{{\ov{\jmath}}} 
\newcommand{\Mbar}{\ov{M}}
\newcommand{\cAabs}{\mathcal{A}^{\text{\rm \tiny abs}}}
\newcommand{\Picc}{\Pi_{\rm cc}} 
\newcommand{\Lambdacc}{\Lambda_{\rm cc}} 
\newcommand{\Phicc}{\Phi^{\rm cc}}
\newcommand{\varthetacc}{\vartheta_{\rm cc}}
\renewcommand{\projlim}{\varprojlim}
\renewcommand{\injlim}{\varinjlim} 
\def\pair#1#2{\langle #1,#2\rangle}
\def\Pair#1#2{\left\langle #1,#2\right\rangle}
\def\parfrac#1#2{\frac{\partial{#1}}{\partial #2}}
\def\corr#1{\left\langle #1 \right\rangle}
\title{A Fock sheaf for Givental quantization}
\author{Tom Coates}
\email{t.coates@imperial.ac.uk}
\address{Department of Mathematics, Imperial College London, 180 Queen's Gate, 
London SW7 2AZ, United Kingdom} 
\author{Hiroshi Iritani}
\email{iritani@math.kyoto-u.ac.jp}
\address{Department of Mathematics, Graduate School of Science, 
Kyoto University, Kitashirakawa-Oiwake-cho, 
Sakyo-ku, Kyoto, 606-8502, Japan}
\keywords{Gromov--Witten invariants, geometric quantization, 
Fock space, Fock sheaf, 
modular forms, quasi-modular forms, 
mirror symmetry, 
toric orbifold, crepant resolution conjecture, 
semisimple Frobenius manifold,
variation of semi-infinite Hodge structure, nc-Hodge structure, 
Givental's quantization formalism}
\subjclass[2010]{14N35, 53D45, 53D50}
\begin{document}
\begin{abstract}
We give a global, intrinsic, and co-ordinate-free quantization formalism for Gromov--Witten invariants and their B-model counterparts, which simultaneously generalizes the quantization formalisms described by Witten, Givental, 
and Aganagic--Bouchard--Klemm. 
Descendant potentials live in a Fock sheaf, consisting of local functions on Givental's Lagrangian cone that satisfy the $(3g-2)$-jet condition of Eguchi--Xiong; they also satisfy a certain anomaly equation, which generalizes the Holomorphic Anomaly Equation of Bershadsky--Cecotti--Ooguri--Vafa.  We interpret Givental's formula for the higher-genus potentials associated to a semisimple Frobenius manifold in this setting, showing that, in the semisimple case, there is a canonical global section of the Fock sheaf.  This canonical section automatically has certain modularity properties.  When $X$ is a variety with semisimple quantum cohomology, a theorem of Teleman implies that the canonical section coincides with the geometric descendant potential defined by Gromov--Witten invariants of $X$.  We use our formalism to prove a higher-genus version of Ruan's Crepant Transformation Conjecture for compact toric orbifolds.  When combined with our earlier joint work with Jiang, this shows that the total descendant potential for compact toric orbifold $X$ is a modular function for a certain group of autoequivalences of the derived category of $X$.
\end{abstract}

\maketitle 

\let\oldtocsection=\tocsection
\let\oldtocsubsection=\tocsubsection

\renewcommand{\tocsection}[2]{\hspace{0em}\oldtocsection{#1}{#2}}
\renewcommand{\tocsubsection}[2]{\hspace{1em}\oldtocsubsection{#1}{#2}}
\tableofcontents

\section{Introduction} 

Givental's quantization formalism~\cite{Givental:quantization,Givental:symplectic} has been an essential ingredient in many recent advances in Gromov--Witten theory.  These include the Quantum Lefschetz theorem~\cite{Coates--Givental,Tseng,CCIT:computing}, the Abelian/non-Abelian correspondence~\cite{Bertram--Ciocan-Fontanine--Kim,Ciocan-Fontanine--Kim--Sabbah}, connections to integrable systems~\cite{Givental:nKdV,Milanov,Faber--Shadrin--Zvonkine} and birational geometry~\cite{CIT:wall-crossings,Coates--Ruan,Coates--Iritani--Jiang,Iritani:Ruan,Iritani:integral,Brini--Cavalieri--Ross,Brini--Cavalieri}, 
the Landau--Ginzburg/Calabi--Yau 
correspondence~\cite{Chiodo--Ruan,Milanov--Ruan,Krawitz--Shen}, 
the study of relations in the tautological ring~\cite{Lee:1,Lee:2,Pandharipande--Pixton--Zvonkine}, and the theory of quasimaps~\cite{CFK:1,CFK:2,CFK:3,Cheong--Ciocan-Fontanine--Kim}.  The quantization formalism suggests, roughly speaking, that the Gromov--Witten theory of a target space $X$ is controlled by linear symplectic geometry in a certain symplectic vector space\footnote
{In the main body of the text, we use the space of $L^2$-functions on $S^1$ 
(see \S \ref{subsec:Givental-symplecticvs}) 
or a certain nuclear space (see equation \ref{eq:nuclear-Giventalsp}) 
instead of $\C(\!(z^{-1})\!)$. } 
\[
\cH^X = H^\bullet(X;\C)\otimes \C(\!(z^{-1})\!)
\]
which can be thought of as the localized $S^1$-equivariant Floer cohomology of the loop space of $X$~\cite{Givental:homological,Morava,Iritani:Floer}.  Genus-zero Gromov--Witten invariants of $X$ determine and are determined by a Lagrangian cone $\cL_X \subset \cH^X$ with very special geometric properties.  Natural operations in Gromov--Witten theory correspond to symplectic linear transformations $\U$ of $\cH^X$: their effect on genus-zero Gromov--Witten invariants is recorded by the effect of $\U$ on $\cL_X$, and their effect on higher-genus Gromov--Witten invariants is (or is expected to be) recorded by the action of the \emph{quantized} symplectic transformation $\hUU$ on the total descendant potential $\cZ_X$ for $X$, which is a generating function for all Gromov--Witten invariants of $X$.  That is, the total descendant potential $\cZ_X$, which is the mathematical counterpart of the partition function in type IIA string theory, should be thought of as an element of the Fock space arising from the geometric quantization of the Givental space $\cH^X$.

The symplectic transformation $\U$ is visible at the level of genus-zero Gromov--Witten invariants, and so the quantization formalism is a powerful ``genus zero controls higher genus'' principle.  One of the most striking instances of this is Givental's formula~\cite{Givental:quantization} for the total descendant potential of a target space with generically semisimple quantum cohomology:
\begin{equation}
  \label{eq:Givental_formula}
  \cZ_X = e^{F^1(t)} \widehat{S_t^{-1}} \widehat{\Psi} \widehat{R_t} \Big(\cZ_{\rm pt}^{\otimes N}\Big)
\end{equation}
Here $\cZ_{\rm pt}$ is the total descendant potential for a point (the Kontsevich--Witten $\tau$-function~\cite{Witten:2D,Kontsevich}); $N$ is the rank of $H^\bullet(X;\C)$; $S_t$, $\Psi$, and $R_t$ are linear symplectomorphisms defined in terms of genus-zero Gromov--Witten invariants of $X$; and $F^1(t)$ is the genus-one non-descendant Gromov--Witten potential.   The formula \eqref{eq:Givental_formula} gives a closed-form expression for higher-genus Gromov--Witten invariants of $X$ in terms of genus-zero Gromov--Witten invariants of $X$ and higher-genus Gromov--Witten invariants of a point.  It was conjectured by Givental and proven by him in the toric case~\cite{Givental:semisimple}; it was proven for arbitrary  generically semisimple Frobenius manifolds by Teleman~\cite{Teleman}, using the classification of two-dimensional semisimple Family Topological Field Theories.

Since we have this powerful genus-zero controls higher genus principle, and since genus-zero Gromov--Witten theory is in many cases reasonably well understood (for instance via mirror symmetry), it is surprising that this has not, to date, led to a better understanding of higher genus Gromov--Witten theory.  One reason for this is that Givental's quantization formalism is rather difficult to use in practice: the quantized operators involved are defined as infinite sums over Feynman diagrams, and there are delicate questions of convergence.  In particular there is not a ``Fock space'' on which Givental's quantized operators act: the well-definedness of $\hUU \cZ$ is proven on a case-by-case basis, using properties both of the particular transformation $\U$ and the particular generating function $\cZ$.

\medskip

The idea that the partition function should be regarded as a state in a Fock space arising from geometric quantization was originally proposed by 
Witten~\cite{Witten:background} in the context of the B-model for a 
Calabi--Yau $3$-fold. Witten used this idea to give an interpretation 
of the holomorphic anomaly of 
Bershadsky--Cecotti--Ooguri--Vafa~\cite{BCOV:HA,BCOV:KS}. 
It has had a number of important consequences, including the discovery that 
the higher-genus Gromov--Witten potentials of local Calabi--Yau $3$-folds 
should be quasi-modular forms~\cite{ABK,Brini-Tanzini,ASYZ}.  
Building on these works \cite{BCOV:HA,BCOV:KS,Witten:background}, 
Aganagic--Bouchard--Klemm \cite{ABK} (see also~\cite{ADKMV})
described a concrete quantization procedure, in the context 
of the Calabi--Yau B-model, which is free of many of the technical complexities 
of Givental's quantization.  
We refer to this as Witten quantization. 
In their setting, the relevant symplectic vector space is 
a finite dimensional space given by the middle 
cohomology group of the Calabi--Yau $3$-fold. 
The quantized operators involved are defined as \emph{finite} 
sums over Feynman diagrams, and so convergence and 
well-definedness of the results are manifest.  

\medskip

This paper grew out of an attempt to understand and 
unify Givental quantization and Witten quantization.  
We give a global, intrinsic, and co-ordinate-free quantization formalism, 
which reduces to Givental quantization whenever 
they both make sense 
and which reduces to Witten quantization in the Calabi--Yau 3-fold case.   
We now give a brief summary of this paper. 

\medskip 

\noindent 
{\bf Construction of a Fock Sheaf.} 
Let $\cM$ be a complex manifold. We start with 
a locally free $\cO_{\cM}[\![z]\!]$-module $\sfF$ 
of finite rank equipped with a flat connection: 
\[
\nabla \colon \sfF \to \Omega^1_\cM \otimes z^{-1} \sfF 
\]
and a $\nabla$-flat, symmetric, non-degenerate and 
``$z$-sesquilinear'' pairing: 
\[
(\cdot,\cdot)_\sfF \colon (-)^*\sfF \otimes_{\cO_{\cM}[\![z]\!]} 
\sfF \to \cO_{\cM}[\![z]\!]
\]
where $(-)^* \sfF$ means $\sfF$ on which $z$ acts by 
$-z$. 
We call the triple $(\sfF,\nabla,(\cdot,\cdot)_\sfF)$ 
a \emph{cTP structure},
extending the terminology of Hertling~\cite{Hertling:ttstar};
see Definition~\ref{def:cTP}.  
A cTP structure arises from geometry as the Dubrovin connection 
associated to quantum cohomology or as the Gauss--Manin 
connection associated to deformation of complex manifolds 
or singularities. 
We need to assume that our cTP structure satisfies a certain miniversality 
condition (see Assumption \ref{assump:miniversal}). 
We regard $\sfF$ as an infinite-dimensional vector bundle over $\cM$ 
and write $\LL$ for the total space of the vector bundle $z\sfF\to \cM$. 
The total space $\LL$ is an analogue of Givental's Lagrangian cone $\cL_X$. 
As a polarization for geometric quantization, we consider an 
$\cO_\cM$-submodule  
$\sfP$ of $\sfF[z^{-1}]$ such that:
\begin{itemize} 
\item $\sfP$ is opposite to $\sfF$, i.e.~$\sfF \oplus \sfP = \sfF[z^{-1}]$; 
\item $\sfP$ is isotropic with respect to the 
symplectic form $\Omega$ on $\sfF[z^{-1}]$ defined by 
$\Omega(s_1,s_2) := \Res_{z=0}(s_1,s_2)_{\sfF} \, dz$; 
\item $\sfP$ is parallel ($\nabla \sfP \subset \Omega^1_\cM \otimes \sfP$) 
and closed under $z^{-1}$ ($z^{-1} \sfP \subset \sfP$). 
\end{itemize} 
We call $\sfP$ an \emph{opposite module}\footnote
{We also consider $\sfP$ which does not satisfy the 
third condition: in this case $\sfP$ is called a 
\emph{pseudo-opposite module}. 
\label{foot:pseudo-opposite}}. 
This serves as a splitting of the (semi-infinite) Hodge filtration 
and has been used to construct a Frobenius manifold structure 
(or flat structure) in the context of singularity theory \cite{SaitoK,SaitoM}. 
In terms of Givental's symplectic space, $\sfP$ corresponds to a 
Lagrangian subspace $P\subset \cH^X$ which is transversal to $\cL_X$. 
The opposite module $\sfP$ defines an affine flat structure 
on the total space $\LL$: we write 
$\Nabla$ for the corresponding flat connection on $T\LL$.  
Given an open set $U\subset \cM$ 
and an opposite module $\sfP$ over $U$, the Fock space 
$\Fock(U;\sfP)$ consists of collections
\[
\wave = \left\{ C^{(g)}_{\mu_1 \dots \mu_n} : 
\text{$g\ge 0$, $n\ge 0$,  $2g -2 + n> 0$} \right\} 
\]
of meromorphic symmetric tensors $C^{(g)}_{\mu_1 \dots\mu_n} 
\in (T^*\LL)^{\otimes n}$ 
over $\LL|_U$, called the genus-$g$, $n$-point correlation functions. 
We require that these tensors satisfy:
\begin{itemize} 
\item the Jetness condition
$C^{(g)}_{\mu_1 \dots \mu_n} = 
\Nabla_{\mu_1} C^{(g)}_{\mu_2 \dots \mu_n}$; 
\item   
the Eguchi--Xiong $(3g-2)$-jet 
condition (see equation~\ref{eq:jetcondition} below 
or~\cite{Eguchi--Xiong,Getzler:jetspace,Dubrovin--Zhang}); 
\item 
the Dilaton Equation (this is a homogeneity condition);  
\item a certain pole order condition along a discriminant divisor 
in $\LL|_U$;
\end{itemize} 
see Definition \ref{def:localFock}. The genus-zero correlation functions are given by the 
Yukawa coupling and its derivatives, 
which are determined by the cTP structure itself. 
We glue these Fock spaces to give a sheaf of sets on $\cM$ via a 
\emph{transformation rule} 
\[
T(\sfP,\widehat{\sfP}) \colon \Fock(U;\sfP) \to \Fock(U;\widehat{\sfP}) 
\] 
defined for two opposite modules $\sfP,\widehat{\sfP}$ over $U$. 
The element $\{\hC^{(h)}_{\mu_1\dots\mu_m}\} 
\in \Fock(U;\widehat{\sfP})$ corresponding to 
the element $\{C^{(g)}_{\mu_1\dots\mu_n}\} 
\in \Fock(U;\sfP)$ 
is given by a Feynman rule: each $\hC^{(g)}_{\mu_1\dots\mu_m}$ 
is expressed as a finite sum over connected stable graphs, with vertex terms given 
by the $C^{(h)}_{\mu_1,\dots,\mu_n}$, $h\le g$, 
and propagator defined geometrically in terms 
of the two opposite modules $\sfP$, $\widehat{\sfP}$. 
The construction of a Fock sheaf will be given in \S \ref{sec:global_theory}. 
We also refer the reader to \S \ref{sec:globalquantization:motivation} 
for a more informal account.

\medskip 
\noindent 
{\bf Comparison to Givental and Witten Quantization.} 
Our transformation rule is given as a finite sum over Feynman graphs  
and is a direct generalization of Witten quantization to infinite dimensions. 
A key point is the use of a certain \emph{algebraic co-ordinate system} 
on the total space $\LL$. 
Every ingredient in the Feynman rule has a polynomial/rational 
expression in the algebraic co-ordinate system, 
and this fact makes evident that the Feynman rule is well-defined 
in infinite dimensions. On the other hand, when we restrict 
correlation functions to the formal neighbourhood $\hLL$ 
of the fiber $\LL_t$ of $\LL$ at a point $t\in \cM$ and write the 
Feynman rule in a \emph{flat} co-ordinate system on $\hLL$, 
our transformation rule coincides exactly with the action of 
Givental's quantized operator on tame functions. 
\begin{theorem}[see Theorem \ref{thm:transformationrule-Giventalquantization} 
for a more precise formulation] 
The transformation rule $T(\sfP,\sfP')$ matches with the action of Givental's 
upper-triangular loop group over the formal neighbourhood 
$\hLL$ of the fiber at each point on the base space. 
\end{theorem}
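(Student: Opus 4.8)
The plan is to fix a base point $t\in\cM$ and two opposite modules $\sfP,\sfP'$ over a neighbourhood $U\ni t$, to restrict all correlation functions from $\LL|_U$ to the formal neighbourhood $\hLL$ of the fibre $\LL_t$, and then to compare, graph by graph, the Feynman expansion defining $T(\sfP,\sfP')$ with the Feynman expansion defining the action of the appropriate element of Givental's upper-triangular loop group. The first task is to produce that element. The opposite module $\sfP$ fixes the affine flat connection $\Nabla=\Nabla^{\sfP}$ on $T\LL$, hence a formal flat coordinate system on $\hLL$; likewise $\sfP'$ gives another. I would show that the two flat coordinate systems are related, over $\hLL$, by an affine transformation whose linear part $R=R_{\sfP,\sfP'}(z)=\mathrm{Id}+\sum_{k\ge1}R_k z^k$ lies in Givental's upper-triangular loop group and satisfies the symplecticity ${}^{\mathrm t}R(-z)\,R(z)=\mathrm{Id}$. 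This is a Birkhoff-type statement: because both $\sfP$ and $\sfP'$ are $\nabla$-flat, isotropic, and closed under $z^{-1}$, the change of trivialisation is forced to be a power series in $z$ with invertible leading term that preserves the pairing --- which is exactly the upper-triangular symplectic loop group --- and the translation part records the dilaton shift. Through the $\sfP$-flat coordinates, $\hLL$ is identified with a formal neighbourhood of Givental's space on which $\hUU$, for $\U=R$, acts on tame functions; this is the operator to be matched.

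Next I would match the ingredients of the two Feynman rules in the $\sfP'$-flat coordinates. \emph{Vertices}: the genus-$h$, $n$-point vertex of $T(\sfP,\sfP')$ is the tensor $C^{(h)}_{\mu_1\dots\mu_n}\in\Fock(U;\sfP)$, and by the Jetness condition $C^{(h)}_{\mu_1\dots\mu_n}=\Nabla_{\mu_1}C^{(h)}_{\mu_2\dots\mu_n}$ (with $\Nabla=\Nabla^{\sfP}$) these are the successive derivatives of a single genus-$h$ potential, which on $\hLL$ in $\sfP$-flat coordinates is exactly the genus-$h$ descendant potential feeding Givental's graph sum. \emph{Propagator}: the propagator of the Fock sheaf is built geometrically from the two decompositions $\sfF[z^{-1}]=\sfF\oplus\sfP=\sfF\oplus\sfP'$ --- morally the difference of the two projections onto $\sfF$ --- and I would compute it in flat coordinates and check that its generating series equals Givental's propagator attached to $R$, namely
\[
\sum_{k,l\ge0}V_{kl}\,z^k w^l \;=\; \frac{R^{-1}(z)\,{}^{\mathrm t}R^{-1}(w)-\mathrm{Id}}{z+w}
\]
in the normalisation of \S\ref{sec:global_theory}; this kernel is polynomial in $z,w$ and symmetric precisely because $R$ is symplectic. \emph{Legs and dilaton shift}: the leg factors of $T(\sfP,\sfP')$ and its genus-one one-loop constant must reproduce the remaining ingredients of $\hUU$ --- the leg and dilaton-shift operators (built from $R^{-1}(z)-\mathrm{Id}$, including the induced linear change of coordinates) and the one-loop $\tfrac12$-trace, the latter being the analogue of the $e^{F^1}$ factor, which I would match via the genus-one transformation formula of \S\ref{sec:global_theory}. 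Granting these identifications the two sums agree termwise, being indexed by the same connected stable graphs with the same automorphism weights and powers of $\hbar$.

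It then remains to see that the comparison is between genuinely defined objects. On the Fock-sheaf side the correlation functions satisfy the Eguchi--Xiong $(3g-2)$-jet condition, the Dilaton Equation, and the pole-order condition along the discriminant; I would check that after restriction to $\hLL$ and passage to flat coordinates these become exactly the conditions defining Givental's space of \emph{tame} functions, on which $\hUU$ is defined and to each jet of which only finitely many stable graphs contribute. The finiteness built into $T(\sfP,\sfP')$ through the algebraic coordinate system of \S\ref{sec:global_theory} matches this, so the equality of Feynman sums is an equality of convergent (formal) expressions; in particular the matching, together with the fact that $T(\sfP,\sfP')$ maps into $\Fock(U;\sfP')$, shows that $\hUU$ preserves tameness.

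The main obstacle is the combination of the first step with the propagator identification: extracting the upper-triangular $R(z)$ from the pair $(\sfP,\sfP')$ over $\hLL$ and showing that the Fock sheaf's geometric propagator is literally the kernel above. This is where all three defining properties of an opposite module are used simultaneously, and where one must be scrupulous about conventions --- the dilaton shift, signs, the $z\leftrightarrow-z$ in the $z$-sesquilinear pairing, symmetrisation factors, powers of $\hbar$ --- since these are the points at which Givental-type quantization identities are most fragile. I expect the cleanest route is to verify the identity infinitesimally first, comparing the derivative of $T(\sfP_s,\sfP)$ along a one-parameter family $\sfP_s$ of opposite modules with the infinitesimal quantized action --- i.e.\ with the anomaly equation --- and then to integrate, using the cocycle relation $T(\sfP,\sfP'')=T(\sfP',\sfP'')\circ T(\sfP,\sfP')$, which mirrors the composition law of quantized upper-triangular transformations up to the standard cocycle; the remaining matchings (vertices, legs, genus-one term, tameness) are then bookkeeping.
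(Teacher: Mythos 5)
Your proposal follows essentially the same route as the paper: extract from $\sfP,\sfP'$ the unitary gauge transformation $R=\Phi'^{-1}\circ\Phi$ between their flat trivializations (Lemma~\ref{lem:unitaryframe-opposite}), identify the geometric propagator restricted to the fiber, written in flat co-ordinates, with Givental's kernel attached to $R$ (Proposition~\ref{prop:Giventalprop=prop}), and note that the formalization/jet potential is exactly the tame (ancestor-type, rational) potential on which $\hR$ acts, so the two Feynman sums agree termwise and the jet/pole conditions match the tameness and rationality conditions (Theorem~\ref{thm:transformationrule-Giventalquantization}). The only cosmetic deviations are that in the paper $R_0$ need not be the identity (it only intertwines the two constant pairings), the propagator kernel is a formal power series rather than a polynomial in $z,w$, the relation between the two flat co-ordinate systems on $\hLLo$ is nonlinear (what is affine is the ambient Darboux change by $R$ plus the dilaton-shift translation), and the genus-one constant and dilaton-shift bookkeeping are absorbed into the shift isomorphism $T_{\bD'-R\bD}$ rather than an $e^{F^1}$-type leg factor.
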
 
In \S \ref{subsec:global_L2}, we will adapt the transformation 
rule to an $L^2$-setting. 
There we work with the $L^2$-subspace $L^2(\LL)\subset \LL$,  
which is an infinite-dimensional Hilbert manifold, 
and describe a transformation rule for holomorphic correlation 
functions on $L^2(\LL)$. 
We see that one can define the quantized operator $\hUU$ 
for any linear symplectic transformation $\U$ that satisfies a 
certain ``trace class'' condition 
(see Definitions~\ref{def:Darboux_close}, \ref{def:transformation_L2}). 
This gives a uniform definition of quantization without insisting that 
 $\U$ be upper-triangular or lower-triangular. 
We also show in Remark~\ref{rem:very_detailed_remark} 
that this $\hUU$ coincides with Givental's quantized operator 
whenever $\U$ is sufficiently close to the identity. 

\medskip 
\noindent 
{\bf A Global Section of the Fock Sheaf in the Semisimple Case.}
There is a simple and attractive interpretation of 
Givental's formula \eqref{eq:Givental_formula} in our setting.  
Suppose that the flat connection $\nabla$ of the cTP structure 
is extended in the $z$-direction with poles of order $2$ 
along $z=0$ such that the pairing $(\cdot,\cdot)_\sfF$ 
is flat in the $z$-direction: this is called a \emph{cTEP structure}  
(see Definition \ref{def:cTP}). 
cTP structures that come from geometry, such as quantum cohomology, are often 
cTEP structures, not just cTP structures. We say that a cTEP structure is 
\emph{tame semisimple} if the residue $\cU \in \End(\sfF/z\sfF)$ 
of $\nabla_{z\partial_z}$ is semisimple with distinct eigenvalues. 
We prove the following: 

\begin{theorem}[Definition \ref{def:Giventalwavefcn}, 
Theorem \ref{thm:formalization-Giventalwave}] 
There exists a canonical global section of 
the Fock sheaf associated to a tame semisimple cTEP structure, 
which coincides with the potential given by 
Givental's formula \eqref{eq:Givental_formula} 
in the formal neighbourhood of each point of the base space. 
We call this global section the \emph{Givental wave function}. 
\end{theorem}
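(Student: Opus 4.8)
\emph{Strategy.} My plan is to \emph{define} the Givental wave function directly and only afterwards identify it with Givental's formula \eqref{eq:Givental_formula}. I would work first in the chart attached to the canonical opposite module $\sfPss$ of the tame semisimple cTEP structure, using the following canonical data: since the residue $\cU$ of $\nabla_{z\partial_z}$ has distinct eigenvalues, there is near each point of the base a canonical normalised idempotent frame (a canonical $\Psi = \Psi_t$) and a canonical formal fundamental solution $\Psi\,R(z)\,e^{U/z}$ at $z=0$ with $R(z) = \id + R_1 z + R_2 z^2 + \cdots$, where tameness forces $R = R_t$ to be unique; the $R$-matrix determines $\sfPss$ — the opposite module exhibiting the formal decomposition of the cTEP structure at $z=0$ into $N$ rank-one pieces — and a calibration $S_t$, a fundamental solution of $\nabla$ normalised at $z = \infty$. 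Feeding $(S_t,\Psi_t,R_t)$ into \eqref{eq:Givental_formula} gives, genus by genus, a formal function whose $\Nabla$-derivatives (of order $\ge 3$ for $g=0$, $\ge 1$ for $g=1$) I take to be the correlators $C^{(g)}_{\mu_1\dots\mu_n}$ in the $\sfPss$-chart; equivalently, $C^{(g)}_{\mu_1\dots\mu_n}$ is a finite sum over connected stable graphs of genus $g$ with $n$ legs, the vertices being descendant correlators of the point (the Witten--Kontsevich $\tau$-function), and the propagator and leg-insertions built from $R_t$, $\Psi_t$, $S_t$. Every ingredient is polynomial or rational in the algebraic coordinate system on $\LL$, so each $C^{(g)}_{\mu_1\dots\mu_n}$ is a meromorphic symmetric tensor on $\LL|_U$; I pass to an arbitrary opposite module via the transformation rule $T(\sfPss,\sfP')$.

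\emph{Verifying the Fock-sheaf axioms.} I then have to check that $\wavess = \{C^{(g)}_{\mu_1\dots\mu_n}\}$ lies in $\Fock(U;\sfPss)$, and hence, after $T$, in each $\Fock(U;\sfP')$. In genus $0$ the graph sum collapses to the Yukawa coupling and its $\Nabla$-derivatives, which agrees with the genus-zero correlators the cTP structure already prescribes; this is genus-zero reconstruction. The Jetness condition and the symmetry of the tensors hold by construction. The Dilaton equation reflects the dilaton equation for the point together with the compatibility of $\widehat{S_t^{-1}}$, $\widehat{\Psi}$, $\widehat{R_t}$ with homogeneity. The Eguchi--Xiong $(3g-2)$-jet condition is the one axiom not visible from the graph sum; it follows because $\widehat{R_t}\bigl(\cZ_{\rm pt}^{\otimes N}\bigr)$ is a tame function, which is precisely the content of Givental's and Teleman's analysis \cite{Givental:quantization,Teleman} (see also \cite{Eguchi--Xiong,Getzler}). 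The remaining axiom — the prescribed pole order along the discriminant divisor in $\LL|_U$ — is where the real work lies: the entries of $R_t$ are singular where two eigenvalues of $\cU$ collide, and one must show that the poles the Feynman sum acquires there are of exactly the order permitted by Definition~\ref{def:localFock}. I expect this pole-order estimate, together with checking that the formal output of \eqref{eq:Givental_formula} genuinely extends to a meromorphic object over the whole of $\LL|_U$, to be the main obstacle.

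\emph{Canonicity and gluing.} Next I would show $\wavess$ depends on no choices: the only apparent one is the base point $t$ entering $S_t$, and independence of it is, at the level of descendant potentials, the self-consistency of Givental's formula proved by Givental and Teleman; in our language it is cleanest to pass to ancestor potentials and note that the transition $\widehat{S_t S_{t'}^{-1}}$ between calibrations at nearby points is exactly the change of flat coordinate on $\LL$ that moves the base point of the formal neighbourhood, so the descendant correlators agree. Because $\sfPss$ is intrinsic and $\cZ_{\rm pt}^{\otimes N}$ is symmetric in its $N$ factors, $\sfPss$ and $\wavess$ are invariant under the monodromy permutation action on the eigenvalues of $\cU$, and hence descend from the semisimple cover to the base $\cM$; using the cocycle property of the transformation rule $T$, the charted pieces glue to a genuine — and manifestly canonical — global section of the Fock sheaf.

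\emph{Comparison on formal neighbourhoods.} Finally, restricting to the formal neighbourhood $\hLL_t$ of a point and rewriting in a flat chart, the Feynman sum defining $\wavess$ becomes term by term the expansion of Givental's quantised operators applied to $\cZ_{\rm pt}^{\otimes N}$, i.e.\ exactly \eqref{eq:Givental_formula}; combined with the theorem above identifying $T$ with Givental's upper-triangular loop-group action on $\hLL_t$, this shows that $\wavess$ coincides with Givental's potential in the formal neighbourhood of every point, which is the assertion of the theorem.
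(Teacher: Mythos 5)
There is a genuine gap, and it sits in your definitional step. You propose to feed $(S_t,\Psi_t,R_t)$ into \eqref{eq:Givental_formula} and declare the resulting graph sum, with propagators built from $R_t$, to be the correlators \emph{in the $\sfPss$-chart}. That is a chart confusion: by the very definition of the transformation rule, a Feynman sum with the $R$-matrix propagator is the presentation of the wave function under a \emph{different} opposite module (e.g.\ $\sfP_{\rm std}$, the one defining the calibration $S_t$); expressed with respect to $\sfPss$ the $R$-contributions cancel, and what is left is just the product of point potentials. This is exactly how the paper defines the Givental wave function (Definition~\ref{def:Giventalwavefcn}): in the canonical semisimple trivialization one sets $\Nabla^3\Css^{(0)}=\bY$, $\Nabla\Css^{(1)}=-\sum_i \frac{1}{24}\,dx_1^i/x_1^i$ and $\Css^{(g)}=\sum_i\cF^g_{\rm pt}(0,x_1^i,\dots,x_{3g-2}^i)$ — no $R$, no $S$, no graph sum — and the $R$-matrix enters only through $T(\sfPss,\sfP_{\rm std})$, whose identification with Givental's $\hR$ is Theorem~\ref{thm:transformationrule-Giventalquantization}; the comparison with \eqref{eq:Givental_formula} is then Lemma~\ref{lem:jetofsemisimplewavefcn} together with Theorem~\ref{thm:formalization-Giventalwave}. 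Your version also imports data that is not there for a bare tame semisimple cTEP structure: $R_t$ does not determine a calibration $S_t$ normalised at $z=\infty$ (that requires a choice of opposite module/flat trivialization), so canonicity would need an independence-of-$S_t$-and-chart argument that the proposal does not supply, whereas the paper gets canonicity for free because $\sfPss$ itself is canonical (Proposition~\ref{prop:semisimpletriv}, Definition~\ref{def:semisimple_opposite}).

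Second, the step you single out as the main obstacle — pole control where two eigenvalues of $\cU$ collide — conflates the caustic in the base with the discriminant of the Fock space. The (Pole) axiom of Definition~\ref{def:localFock} concerns the divisor $P(t,x_1)=0$ in the fibres of $\LL$ (here $P(t,x_1)=\prod_i(-x_1^i)$), not the locus in $\cM$ where eigenvalues merge; the section is only claimed over the tame semisimple locus $\cMss$, where $R_t$ is regular, so no estimate at the caustic is required, and in the paper's chart (Pole), (Grading \& Filtration) and tameness follow immediately from the explicit form \eqref{eq:Fpt} of $\cF^g_{\rm pt}$ (the $(3g-2)$-jet condition is a consequence of Grading \& Filtration, not a separate appeal to Teleman). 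The genuinely nontrivial facts — that Jetness, Grading \& Filtration and the fibrewise pole bounds are preserved when passing to other opposite modules — are already established in complete generality for the transformation rule (Lemmas~\ref{lem:jetness}--\ref{lem:pole}), so the correct order of argument is: define $\wavess$ as the $(N+1)$-point wave function in the $\sfPss$-chart, invoke the general gluing/cocycle machinery, and only afterwards recognise Givental's formula as the formalization of its $\sfP_{\rm std}$-presentation.
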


We observe, via the Levelt--Turrittin formal decomposition of $\nabla_{z\partial_z}$, 
that any tame semisimple cTEP structure of rank $N+1$ is locally isomorphic to 
the cTEP structure associated with the quantum cohomology of $N+1$~points; 
moreover the isomorphism is unique up to (signed) permutation of $N+1$~points 
(Proposition \ref{prop:semisimpletriv}). 
This shows that a tame semisimple cTEP structure admits 
a canonical \emph{semisimple opposite module} 
$\sfPss$ (Definition \ref{def:semisimple_opposite}). 
Then the Gromov--Witten potential $\cZ^{\otimes N}_{\rm pt}$ 
of $N$~points defines an element of $\Fock(\cM,\sfPss)$: 
this is the Givental wave function above. 
Teleman's theorem~\cite{Teleman} can be rephrased in our language as: 
\begin{theorem}[see Theorem~\ref{thm:Teleman}]  
When the quantum cohomology of $X$ is generically semisimple, 
the total descendant Gromov--Witten potential of $X$, when viewed as a section 
of the Fock sheaf, coincides with the Givental wave function. 
\end{theorem}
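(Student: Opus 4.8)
The plan is to reduce the statement to Teleman's theorem~\cite{Teleman} by combining it with two results established earlier in the paper: Theorem~\ref{thm:transformationrule-Giventalquantization}, which identifies the transformation rule with Givental's quantized loop group action on formal neighbourhoods, and Theorem~\ref{thm:formalization-Giventalwave}, which identifies the Givental wave function with the right-hand side of Givental's formula~\eqref{eq:Givental_formula} in the formal neighbourhood of each point of the base. Throughout we assume, as elsewhere in the paper, that the quantum cohomology of $X$ converges on a domain $\cM$, so that its Dubrovin connection defines a cTEP structure over $\cM$; the hypothesis that quantum cohomology is generically semisimple means precisely that the residue $\cU$ of $\nabla_{z\partial_z}$ --- essentially quantum multiplication $E\star_t$ by the Euler vector field --- has distinct eigenvalues on a dense open subset $\cM^{\mathrm{ss}}\subseteq\cM$, which is exactly the locus over which $\sfPss$ and the Givental wave function are defined. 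Write $\sfP^{\mathrm{GW}}=z^{-1}H^\bullet(X;\C)[z^{-1}]$ for the standard opposite module of the quantum $D$-module, the chart in which $\cZ_X$ is the usual generating function of descendant invariants.

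The core of the argument is a germ-by-germ comparison. Fix $t\in\cM^{\mathrm{ss}}$. By Theorem~\ref{thm:transformationrule-Giventalquantization}, over the formal neighbourhood $\hLL_t$ the transformation rule $T(\sfPss,\sfP^{\mathrm{GW}})$ acts as the Givental quantization of the symplectomorphism carrying $\sfPss$ to $\sfP^{\mathrm{GW}}$ at $t$; unwinding the construction of $\sfPss$ from the formal decomposition of $\nabla_{z\partial_z}$ (Proposition~\ref{prop:semisimpletriv}) identifies this quantized symplectomorphism, after the dilaton shift, with $e^{F^1(t)}\widehat{S_t^{-1}}\widehat{\Psi}\widehat{R_t}$. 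Hence, by Theorem~\ref{thm:formalization-Giventalwave}, the restriction of the Givental wave function to $\hLL_t$, read in the chart $\sfP^{\mathrm{GW}}$, is exactly the right-hand side of~\eqref{eq:Givental_formula}. On the Gromov--Witten side, Teleman's theorem applied at the semisimple point $t$ --- together with the ancestor--descendant relation, which converts his ancestor identity $\widehat{\Psi}\widehat{R_t}\,\cZ_{\rm pt}^{\otimes N}$ into the descendant statement through $e^{F^1}\widehat{S_t^{-1}}$ --- asserts that this same right-hand side is the canonical formal descendant potential of $X$ at $t$, i.e.\ the germ of $\cZ_X$ at $\LL_t$ in the chart $\sfP^{\mathrm{GW}}$ (for convergent quantum cohomology this is the analytic continuation of the usual generating function). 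Thus the two germs coincide, for every $t\in\cM^{\mathrm{ss}}$.

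Since Theorem~\ref{thm:formalization-Giventalwave} already exhibits the Givental wave function as a genuine global section of the Fock sheaf over $\cM^{\mathrm{ss}}$ --- whose defining properties, namely Jetness, the Eguchi--Xiong $(3g-2)$-jet condition, the dilaton equation, and the pole order along the discriminant, are therefore inherited by $\cZ_X$ through this germ-wise identification --- the conclusion is precisely that $\cZ_X$ coincides with the Givental wave function as a section of the Fock sheaf over $\cM^{\mathrm{ss}}$; the same identity then holds in any chart via the transformation rules $T(\sfP,\sfP')$. Given that all of the analytic content is being quoted, the main work is bookkeeping of a delicate kind: matching Teleman's conventions (ancestor potentials of a semisimple Family Topological Field Theory) with those of the Fock sheaf --- the exact form of the ancestor--descendant conversion operator $e^{F^1}\widehat{S_t^{-1}}$, the identification of the semisimple locus of the Family TFT with $\cM^{\mathrm{ss}}$ via $\cU=E\star$, and the verification that what Teleman computes is indeed the germ at $\LL_t$ of the sheaf-theoretic object. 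I expect this translation, rather than any new estimate, to be where the subtlety lies.
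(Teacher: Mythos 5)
Your reduction to Teleman's theorem is correct in substance and is close in spirit to the paper's proof, but the route differs in one structural respect. The paper never passes through descendants or the factor $e^{F^1(t)}\widehat{S_t^{-1}}$ at all: it observes that a section of the Fock sheaf over $\cM_{\rm A}^{\rm ss}$ is determined by its formalizations at the points $(t,-z\unit)\in\LLo$ with respect to the standard opposite module $\sfP_{\rm std}$, that the formalization of the Gromov--Witten wave function there is the geometric ancestor potential $\cA_{X,t}$ (Theorem~\ref{thm:jet-GW}), that the formalization of the Givental wave function there is the abstract ancestor potential of Definition~\ref{def:Giventalabstractpotential} (Theorem~\ref{thm:formalization-Giventalwave}), and that Teleman's theorem is precisely the identity of these two ancestor potentials. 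Your version instead compares germs of descendant potentials, which forces you to invoke the ancestor--descendant relation to convert Teleman's statement; this buys nothing, since the Fock-sheaf germs being compared are already ancestor-type objects, and it is exactly to avoid this conversion that the paper states Theorems~\ref{thm:jet-GW} and~\ref{thm:formalization-Giventalwave} in ancestor form.

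One point in your second paragraph is genuinely inaccurate, though repairable: you attribute $e^{F^1(t)}\widehat{S_t^{-1}}\widehat{\Psi}\widehat{R_t}$ to the transformation rule $T(\sfPss,\sfP_{\rm std})$ ``after the dilaton shift''. Theorem~\ref{thm:transformationrule-Giventalquantization} identifies the change of polarization only with the quantization of the unitary (upper-triangular) gauge transformation relating the two unitary frames at $t$, i.e.\ with $\widehat{\Psi}\widehat{R_t}$; the lower-triangular factor $\widehat{S_t^{-1}}$ and the constant $e^{F^1(t)}$ do not come from a change of opposite module but from the change of base point in the flat coordinates --- the jet/ancestor--descendant relation of \S\ref{subsec:AncDec} and Theorems~\ref{thm:ancdec-analytic}, \ref{thm:jet-GW} --- and the genus-one constant is in any case invisible to the Fock sheaf, which records only $\Nabla C^{(1)}$. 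Since you do cite the ancestor--descendant relation separately, your argument closes once this bookkeeping is assigned to the correct step, but as written the sentence conflates the two mechanisms.
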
 
This is just a rephrasing of Givental's formula \eqref{eq:Givental_formula} 
that says that $\cZ_X$ and $\cZ_{\rm pt}^{\otimes N}$ are related by 
a quantized symplectic operator; in our formalism, 
this quantized operator arises as the ``transition function''  
$T(\sfPss, \sfP_{\rm std})$ of the Fock sheaf between the semisimple 
opposite module $\sfPss$ and the standard opposite module $\sfP_{\rm std}$ 
(see Example \ref{ex:Amodel-opposite}) of  
the quantum cohomology of $X$.

Since the Givental wave function is canonically associated to 
a semisimple cTEP structure, it is automatically ``modular'' in the following sense. 
Opposite modules $\sfP$ arising from geometry are typically 
\emph{not} monodromy invariant, and therefore the 
presentation $\wave_\sfP$ of the Givental wave function 
with respect to the polarization $\sfP$ is not single-valued 
in general. 
Regarding it as a function on the universal cover of $\cM$, 
we have the following transformation property with respect to 
a deck-transformation $\gamma \in \pi_1(\cM)$:  
\begin{equation} 
\label{eq:deck_trans} 
\gamma^\star \wave_\sfP  = T(\sfP,\gamma^\star \sfP) \wave_\sfP 
\end{equation} 
Since $\cM$ typically arises as a moduli space of 
complex structures, the universal cover of $\cM$ should be 
regarded as an analogue of a Hermitian symmetric space. 
Thus we refer to the property \eqref{eq:deck_trans} as 
modularity; see also Remark~\ref{rem:modularity}.  

\medskip 
\noindent 
{\bf The Crepant Transformation Conjecture in the Toric Case.} 
Combining our global quantization formalism with mirror symmetry, 
we deduce in \S\ref{sec:toric_mirror_symmetry} 
a higher-genus version of Ruan's celebrated Crepant Transformation Conjecture 
for compact toric orbifolds.  
We fix a convex lattice polytope $\Delta$ in $\Z^n$ 
containing the origin in its interior and consider 
the set $\Crep(\Delta)$ of weak-Fano 
toric orbifolds having $\Delta$ as the fan polytope. 
(See \S \ref{sec:GIT} for the precise conditions that 
we impose on $\Delta$.) 
Toric orbifolds from $\Crep(\Delta)$ are $K$-equivalent to each other; 
moreover they are derived equivalent.  
These toric orbifolds have the same Landau--Ginzburg models 
as mirrors \cite{Givental:ICM,Hori--Vafa}, each of them corresponding 
to a different limit point of the B-model moduli space. 
The Landau--Ginzburg mirror produces a generically 
semisimple cTEP structure over the B-model moduli space  
\cite{Sabbah:hypperiod,Douai--Sabbah:I, Douai--Sabbah:II, Barannikov:projective, 
CIT:wall-crossings, Iritani:integral, Reichelt--Sevenheck}. 
Therefore the Fock sheaf $\Fock_{\rm B}$ associated to the B-model 
admits a global section given by the Givental wave function. 
Mirror symmetry for toric orbifolds \cite{CCIT:mirror,Iritani:integral} 
and Teleman's theorem \cite{Teleman} 
immediately imply: 
\begin{theorem}[see Theorem \ref{thm:CTC_higher_genus}]
\label{thm:CTC_toric_introd}  
There exists a global section of the B-model Fock sheaf $\Fock_{\rm B}$ 
which restricts to the total descendant Gromov--Witten potential $\cZ_X$ 
of $X \in \Crep(\Delta)$ (viewed as a section of $\Fock_{\rm B}$) 
in a neighbourhood of the large radius limit point of $X$. 
In other words, the Gromov--Witten potentials $\cZ_X$ with $X\in \Crep(\Delta)$ 
are analytically continued to each other as sections of the B-model Fock sheaf. 
\end{theorem} 

This establishes the higher-genus Crepant Transformation Conjecture 
for toric orbifolds in $\Crep(\Delta)$. 
Using the $L^2$-formalism in \S\ref{subsec:global_L2}, 
we recover an earlier formulation of the higher-genus Crepant Transformation 
Conjecture~\cite{Bryan--Graber,CIT:wall-crossings,Coates--Ruan} 
as follows\footnote{Note that we do not require any Hard Lefschetz 
hypothesis here~\cite{Bryan--Graber}.}:

\begin{theorem}[see Corollary \ref{cor:CTC_higher_genus}] 
\label{thm:CTC_toric_introd_2} 
Let $X_1$, $X_2$ be compact weak-Fano toric orbifolds from $\Crep(\Delta)$. 
There exists a linear symplectic transformation $\U_\gamma \colon \cH^{X_1} \to 
\cH^{X_2}$ depending on a path $\gamma$ on the B-model moduli space 
connecting the two large radius limit points 
such that, under analytic continuation along $\gamma$, we have 
\[
\cZ_{2} \propto \hUU_\gamma \cZ_{1}
\]
where $\cZ_{i}$ is the total descendant Gromov--Witten 
potential of $X_i$. 
\end{theorem}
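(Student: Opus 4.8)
The plan is to deduce this corollary from Theorem~\ref{thm:CTC_toric_introd} by passing from the sheaf-theoretic statement to an explicit comparison of the two Gromov--Witten potentials via the $L^2$-formalism of \S\ref{subsec:global_L2}. By Theorem~\ref{thm:CTC_toric_introd} there is a global section $\wave$ of $\Fock_{\rm B}$ restricting to $\cZ_{X_1}$ near the large radius limit point $t_1$ of $X_1$ and, by the same theorem applied to $X_2$, restricting to $\cZ_{X_2}$ near the large radius limit point $t_2$ of $X_2$. (Strictly, one needs that \emph{the same} global section works for both; this is part of the content of Theorem~\ref{thm:CTC_higher_genus}, which produces one section of $\Fock_{\rm B}$ whose restrictions to the various large radius limits recover the respective $\cZ_{X_i}$ — this follows because the Givental wave function is canonically attached to the single B-model cTEP structure, hence is automatically the common section.) The path $\gamma$ from $t_1$ to $t_2$ on the B-model moduli space gives analytic continuation of this section and its defining data along $\gamma$.

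First I would fix opposite modules: near $t_1$ take the opposite module $\sfP_1$ whose flat coordinates on $\LL$ are Givental's Darboux coordinates adapted to $X_1$ (so that $\wave_{\sfP_1}$, read in flat coordinates, \emph{is} the descendant potential $\cZ_{X_1}$ in Givental's normalization), and similarly $\sfP_2$ near $t_2$. Then I would analytically continue $\sfP_1$ along $\gamma$ to a (pseudo-)opposite module near $t_2$, call it $\gamma^\star\sfP_1$. The transformation rule gives
\begin{equation}
\wave_{\gamma^\star\sfP_1} = T(\sfP_1,\gamma^\star\sfP_1)\,\wave_{\sfP_1},
\end{equation}
and by the gluing axiom defining the Fock sheaf the left side agrees, near $t_2$, with $\wave_{\sfP_2}$ up to the further transformation $T(\gamma^\star\sfP_1,\sfP_2)$. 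Composing, and invoking Theorem~\ref{thm:transformationrule-Giventalquantization} to identify each $T(\sfP,\sfP')$ on the formal neighbourhood $\hLL_{t_2}$ with the action of a quantized symplectic operator, I obtain that the formal germ of $\cZ_{X_2}$ at $t_2$ equals $\widehat{\U_\gamma}$ applied to the analytic continuation of the formal germ of $\cZ_{X_1}$, where $\U_\gamma\colon \cH^{X_1}\to\cH^{X_2}$ is the composite of the change-of-polarization symplectomorphism with the (lower-triangular) mirror identifications $\Mir_i$ of \cite{CCIT:mirror,Iritani:integral} pulling the B-model symplectic space back to $\cH^{X_i}$. To upgrade the formal identity to one of honest (analytically continued) generating functions, I would use the $L^2$-setting: the mirror map realizes a neighbourhood of $t_i$ as a genuine open subset of $\cM$, the potentials $\cZ_{X_i}$ are honest holomorphic correlation functions on $L^2(\LL)$ by the results of \S\ref{subsec:global_L2}, and $\U_\gamma$ is checked to satisfy the trace-class / Darboux-closeness condition (Definitions~\ref{def:Darboux_close}, \ref{def:transformation_L2}) because it is a composite of the two lower-triangular mirror maps (trace class by the structure of toric mirror symmetry) and a middle symplectomorphism close to a fixed one along $\gamma$. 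This yields $\cZ_{X_2}\propto\widehat{\U_\gamma}\cZ_{X_1}$ after analytic continuation along $\gamma$, the proportionality constant absorbing the $e^{F^1}$-type genus-one contributions and the normalization of the vacuum, exactly as in \eqref{eq:Givental_formula}.

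The main obstacle I expect is the verification that the symplectic transformation $\U_\gamma$ lies in the class for which $\widehat{\U_\gamma}$ is defined in the $L^2$-formalism, and that the $L^2$-quantization so obtained agrees with Givental's Feynman-diagram quantized operator. The change-of-polarization map $T(\sfP_1,\sfP_2)$ is upper-triangular and is handled by Theorem~\ref{thm:transformationrule-Giventalquantization}, but the full $\U_\gamma$ also contains the two mirror maps $\Mir_1,\Mir_2^{-1}$, which are \emph{lower}-triangular, and Givental quantization of lower-triangular operators (the $\widehat{S_t}$-type factors) requires separate justification; here one must check the compatibility of the $L^2$-transformation rule with $S$-type operators, as flagged in Remark~\ref{rem:very_detailed_remark}. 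Equivalently, one must ensure that the analytic continuation of $\sfP_1$ along $\gamma$ stays within the domain where the Feynman rule converges and matches the formal answer — this is where the trace-class condition, and the finiteness of the propagator expansion, do real work. Once that is in place, the rest is bookkeeping: tracking the proportionality constant through the anomaly/Dilaton normalizations, and noting (as in the footnote) that no Hard Lefschetz hypothesis enters because the argument is entirely on the level of cTEP structures and their canonical Givental wave functions, never using any grading on cohomology.
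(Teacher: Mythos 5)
Your overall strategy---start from the single global section of the B-model Fock sheaf (Theorem~\ref{thm:CTC_higher_genus}), analytically continue along $\gamma$, and express the comparison of the two presentations as a quantized symplectic transformation in the $L^2$-setting---is the same as the paper's; the corollary there is essentially a rephrasing of Theorem~\ref{thm:CTC_higher_genus} in the language of \S\ref{subsec:global_L2}. The difference is in the assembly. The paper never decomposes $\U_\gamma$ into mirror maps plus a change of polarization: it takes as Darboux frames the analytically continued fundamental solutions $L_i$ of the Dubrovin connections (Example~\ref{exa:Darbouxframe_fundsol}), so that the $L^2$-presentation of the global section with respect to $L_i$ is, by the analytic ancestor--descendant relation (Theorem~\ref{thm:ancdec-analytic}), exactly the analytically continued descendant potential $\cZ_i$; no quantized $\widehat{S}$-type operator is ever applied. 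Then $\U_\gamma$ is defined by $L_1 = L_2\U_\gamma$ and $\hUU_\gamma$ is the general $L^2$-transformation rule of Definition~\ref{def:transformation_L2}, whose well-definedness (closeness, Definition~\ref{def:Darboux_close}) follows because $\U_\gamma$ is multiplication by a loop-group element, a one-line trace-class argument---not from any structure of the individual mirror maps, which is where your argument is vaguest. The obstacle you flag is thus dissolved rather than solved: in the $L^2$-formalism a lower-triangular transformation has vanishing propagator and acts by a mere coordinate change (Remark~\ref{rem:very_detailed_remark}), and the descendant normalization comes for free from the choice of Darboux frame. If you insist on your factorized route, you still owe (i) a cocycle statement identifying the composite of your three quantized factors with $\hUU_\gamma$ of the single transformation, and (ii) an actual verification of the trace-class condition, neither of which your sketch supplies.
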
 

In our recent joint work with Jiang~\cite{Coates--Iritani--Jiang}, 
we computed the symplectic transformation $\U_\gamma$ 
explicitly for a certain path $\gamma$, and showed that 
it arises from a composition of Fourier--Mukai transformations 
$\FM \colon D^b(X_1) \cong D^b(X_2)$ 
via the $\hGamma$-integral structure in quantum cohomology 
\cite{Iritani:integral,Katzarkov--Kontsevich--Pantev}. 
This means that we have the following commutative diagram: 
\[
\xymatrix{ 
D^b(X_1) \ar[r]^{\FM} \ar[d] & D^b(X_2) \ar[d] \\ 
\tcH^{X_1} \ar[r]^{\U_\gamma} & \tcH^{X_2}
} 
\]
where the vertical arrow is the map defining the $\hGamma$-integral 
structure and $\tcH^{X_i}$ is a multi-valued variant of Givental's symplectic space 
(see \cite{Coates--Iritani--Jiang}). 
Note that an auto-equivalence of $D^b(X)$ induces a symplectic transformation 
of the Givental space $\cH^X$ via the $\hGamma$-integral structure, 
and we expect that the total descendant potential $\cZ_X$ of $X$ should 
be modular with respect to the group of autoequivalences. 
Our joint work with Jiang \cite{Coates--Iritani--Jiang} implies: 

\begin{theorem}[see Corollary \ref{cor:modularity_autoeq}] 
\label{thm:CTC_toric_introd_3} 
The total descendant potential $\cZ_X$ for a compact weak Fano toric 
orbifold $X$ is modular with respect to a certain non-trivial subgroup 
of the group of autoequivalences of the bounded derived category of 
coherent sheaves on $X$.
\end{theorem}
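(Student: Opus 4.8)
The plan is to combine three facts that are already in place: (i) by Theorem~\ref{thm:CTC_toric_introd} together with Teleman's theorem \cite{Teleman}, the total descendant potential $\cZ_X$ --- for $X\in\Crep(\Delta)$, which under the standing hypotheses on $\Delta$ covers the compact weak-Fano toric orbifolds in question --- is the germ, at the large-radius limit point $*=\mathrm{LRL}_X$ of the B-model moduli space $\cM=\cM_{\rm B}$, of the \emph{canonical} global section $\wave$ of the Fock sheaf $\Fock_{\rm B}$; (ii) every global section of a Fock sheaf satisfies the modularity transformation law \eqref{eq:deck_trans} under $\pi_1(\cM,*)$; and (iii) the identification, from \cite{Coates--Iritani--Jiang}, of the relevant B-model monodromy symplectomorphisms with Fourier--Mukai autoequivalences of $D^b(X)$ via the $\hGamma$-integral structure. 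So I would argue as follows.

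First I would fix the opposite module $\sfP$ over a neighbourhood of $*$ coming from the $\hGamma$-integral structure and apply \eqref{eq:deck_trans} to a loop $\gamma\in\pi_1(\cM,*)$: the $\sfP$-presentation $\wave_\sfP$ transforms into $T(\sfP,\gamma^\star\sfP)\,\wave_\sfP$. By the comparison theorem (Theorem~\ref{thm:transformationrule-Giventalquantization}), read in a flat coordinate system on the formal neighbourhood of the fibre over $*$, the transformation rule $T(\sfP,\gamma^\star\sfP)$ is exactly Givental's quantized operator $\hUU_\gamma$, where $\U_\gamma\in\operatorname{Sp}(\cH^X)$ is the monodromy of the B-model variation along $\gamma$, transported to Givental's symplectic space. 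Since the germ of $\wave$ at $*$ is $\cZ_X$ up to the genus-one factor $e^{F^1}$ of \eqref{eq:Givental_formula}, analytic continuation of $\wave$ along $\gamma$ and restriction back near $*$ yields
\[
\gamma^\star\cZ_X \;\propto\; \hUU_\gamma\,\cZ_X ,
\]
the proportionality constant recording the monodromy of $e^{F^1}$; this is Corollary~\ref{cor:CTC_higher_genus} specialised to $X_1=X_2=X$ with $\gamma$ a loop.

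Next I would identify the group. The assignment $\gamma\mapsto\U_\gamma$ is the monodromy representation $\rho\colon\pi_1(\cM,*)\to\operatorname{Sp}(\cH^X)$ of the B-model variation of semi-infinite Hodge structure; by mirror symmetry for toric orbifolds \cite{CCIT:mirror,Iritani:integral} this variation is the quantum cohomology variation of $X$, and by \cite{Coates--Iritani--Jiang} the operator $\U_\gamma$ attached to the distinguished loops is the symplectic transformation induced through the $\hGamma$-integral structure by an explicit Fourier--Mukai autoequivalence $\Phi_\gamma\in\Auteq(D^b(X))$ (the commutative square of the excerpt with $X_1=X_2=X$). Let $G\subseteq\Auteq(D^b(X))$ be the subgroup generated by these $\Phi_\gamma$. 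Then the displayed identity says precisely that $\cZ_X$ is modular with respect to $G$ in the sense of \eqref{eq:deck_trans}: for $\Phi\in G$ realised by a loop $\gamma$, one has $\gamma^\star\cZ_X\propto\widehat{\U_{\Phi}}\,\cZ_X$, where $\U_\Phi\in\operatorname{Sp}(\cH^X)$ is the image of $\Phi$ under the $\hGamma$-integral structure.

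For non-triviality I would exhibit an element of $G$ that is neither the identity nor in the kernel of the action on $\cZ_X$: that kernel contains the autoequivalences $\otimes\,\cO(D)$ induced by integral Kähler translations (Novikov-periodicity of Gromov--Witten invariants) and even shifts, so one needs a genuinely ``new'' monodromy. For $\Delta$ as in \S\ref{sec:GIT}, $\cM_{\rm B}$ is not simply connected --- it is the complement of a discriminant in a secondary-fan toric variety --- and the loops implementing a crepant transformation to another chamber and returning, together with the loops around the discriminant components, act non-trivially on the semi-infinite Hodge structure; the corresponding $\Phi_\gamma$ produced by \cite{Coates--Iritani--Jiang} are explicit compositions of spherical-type twists and $\otimes\cO$-shifts supported on toric strata, manifestly not of the trivial type, so $G$ is non-trivial and the theorem follows. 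The hard part will be Step~1: matching the intrinsic rule $T(\sfP,\gamma^\star\sfP)$ with Givental's $\hUU_\gamma$ \emph{including} the scalar ($e^{F^1}$-monodromy), and ensuring that the monodromy symplectomorphism $\U_\gamma$ occurring here is literally the operator that \cite{Coates--Iritani--Jiang} identify with a Fourier--Mukai transform --- which requires the mirror isomorphism of \cite{CCIT:mirror,Iritani:integral} to intertwine the B-model and quantum-cohomology variations compatibly with $\sfP$ and with the $\hGamma$-structure. Granting these compatibilities, all established in the cited work, the modularity statement is a formal consequence of \eqref{eq:deck_trans}.
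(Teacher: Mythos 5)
Your proposal follows essentially the same route as the paper's: (i) the global section of the B-model Fock sheaf given by the Givental wave function, obtained from genus-zero toric mirror symmetry together with Teleman's theorem (Theorem~\ref{thm:CTC_higher_genus}); (ii) the modularity of that global section under the fundamental group of the B-model moduli space, expressed through quantized symplectic transformations (Corollary~\ref{cor:CTC_higher_genus} and the corollary following it); and (iii) the identification, via the $\hGamma$-integral structure and \cite{Coates--Iritani--Jiang}, of the relevant monodromy symplectomorphisms with Fourier--Mukai functors and line-bundle twists. The main packaging difference is that the paper carries out step (ii) in the $L^2$-formalism of \S\ref{subsec:global_L2}, with Darboux frames given by the analytically continued fundamental solutions and the quantization of Definition~\ref{def:transformation_L2}, whereas you invoke the formal comparison Theorem~\ref{thm:transformationrule-Giventalquantization}. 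Be aware that $T(\sfP,\gamma^\star\sfP)$ by itself only captures the ``upper-triangular'' part of the monodromy $\U_\gamma$: the full $\hUU_\gamma$ only appears after combining the transformation rule with the deck pullback and the accompanying change of flat co-ordinates, which is precisely what the $L^2$-formalism (cf.~Remark~\ref{rem:very_detailed_remark}) keeps track of; your ``hard part'' caveat is pointing at exactly this.

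The one genuine soft spot is your non-triviality paragraph. You dismiss line-bundle twists as lying ``in the kernel'' of the action and then justify non-triviality by asserting that loops around the discriminant are identified in \cite{Coates--Iritani--Jiang} with spherical-type twists; that identification is not established in the cited work and is not used in the paper. It is also unnecessary for the statement as made: the subgroup $\Gamma$ of Corollary~\ref{cor:modularity_autoeq} is generated by tensoring by line bundles (monodromy around the large radius limit points, \cite[Proposition~2.10(ii)]{Iritani:integral}) together with the Fourier--Mukai equivalences between members of $\Crep(\Delta)$ from \cite{Coates--Iritani--Jiang,Coates--Iritani--Jiang--Segal}, and this is already a non-trivial subgroup of $\Auteq(D^b(X))$ --- which is all the theorem asserts. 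If you want the stronger claim that some element acts on $\cZ_X$ by a genuinely non-trivial quantized operator, you should either restrict to the case $|\Crep(\Delta)|\ge 2$ and use the Fourier--Mukai round trips, or supply an argument not available in the references cited here.
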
 

\begin{remark} 
\label{rem:analyticity_in_z} 
We remark on the analyticity of the genus-zero data with respect to $z$ and its role in 
the above Theorems \ref{thm:CTC_toric_introd}--\ref{thm:CTC_toric_introd_3}.
The B-model cTEP structure above can be in fact analytified in the $z$-direction 
and lifted to a TEP structure (see Definition \ref{def:TP}) 
globally over the B-model moduli space 
\cite{Sabbah:hypperiod,Douai--Sabbah:I, Douai--Sabbah:II, Barannikov:projective, 
CIT:wall-crossings, Iritani:integral, Reichelt--Sevenheck}. 
Mirror symmetry implies that this global TEP structure restricts to 
the quantum cohomology TEP structure of each $X \in \Crep(\Delta)$ 
on a neighbourhood of the large radius limit point of $X$: this 
is the content of the \emph{genus-zero} crepant transformation conjecture 
\cite{CIT:wall-crossings, Iritani:Ruan, Coates--Ruan} 
(which was proved in \cite{Coates--Iritani--Jiang} for the most general set-up 
for toric stacks). 
In Theorem \ref{thm:CTC_toric_introd}, we do not need  
this lift to a TEP structure 
since the analytic structure of the Fock sheaf depends only on 
the underlying cTEP structure. 
On the other hand, in order to define a semi-infinite period map 
(see \S\ref{subsec:Lagrangian_TP} and \S\ref{subsec:Kaehler}) 
of the genus-zero data, we need its analyticity in $z$. 
This analyticity enables us to compare Givental's symplectic 
spaces $\cH^{X_1}, \cH^{X_2}$ via analytic continuation 
along the path $\gamma$. The symplectic 
transformation $\U_\gamma \colon \cH^{X_1} \to \cH^{X_2}$ 
in Theorem \ref{thm:CTC_toric_introd_2} arises in this way 
and matches up the Lagrangian cones encoding the information of 
the genus-zero theory: 
\[
\cL_{X_2} = \U_\gamma \cL_{X_1}.  
\]
\end{remark}

\medskip 

\noindent 
{\bf Anomaly Equation.} 
In our global quantization formalism, we also allow polarizations
$\sfP$ which are not parallel along $\cM$ (see footnote~\ref{foot:pseudo-opposite} on page~\pageref{foot:pseudo-opposite}). 
In this case, the connection $\Nabla$ on the tangent bundle 
$T\LL$ is not flat, and correlation functions $C^{(g)}_{\mu_1\dots\mu_n}$ 
fail to satisfy the Jetness condition. 
We have instead the following anomaly equation.   
\begin{theorem}[Theorem \ref{thm:anomaly}]  
Correlation functions under a non-parallel polarization $\sfP$ 
satisfy the anomaly equation: 
\[
C^{(g)}_{\mu_1 \dots \mu_n} 
= \Nabla_{\mu_1} C^{(g)}_{\mu_2\dots \mu_n} 
+ \frac{1}{2} 
\sum_{\substack{\{2,\dots,n\} = I \sqcup J \\ 
k+ l = g}} 
C^{(k)}_{\mu_I, \alpha} 
{\Lambda}_{\mu_1}^{\alpha\beta}
C^{(l)}_{\mu_J, \beta}  
+ \frac{1}{2} 
C^{(g-1)}_{\mu_2\dots\mu_n\alpha\beta} 
{\Lambda}^{\alpha\beta}_{\mu_1}
\]
where ${\Lambda}^{\alpha\beta}_{\mu}$ is 
a tensor which measures the deviation of $\sfP$ from  
being parallel. 
\end{theorem}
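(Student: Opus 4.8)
The plan is to reduce the assertion to a comparison with a genuine opposite module and then to differentiate the Feynman expansion furnished by the transformation rule, in the same way one derives the Holomorphic Anomaly Equation of Bershadsky--Cecotti--Ooguri--Vafa. The identity is local on $\cM$, so I first pass to a small open set $U\subset\cM$ over which a genuine opposite module $\sfP_0$ (parallel and closed under $z^{-1}$) exists, and write $\tNabla$ for the flat affine connection on $T\LL|_U$ that it determines. The correlation functions $\{C^{(g)}_{\mu_1\dots\mu_n}\}$ of a given Fock-sheaf element in the polarization $\sfP$ are obtained, via the transformation rule $T(\sfP_0,\sfP)$, from the correlation functions $\{\tilde C^{(g)}_{\mu_1\dots\mu_n}\}$ in the polarization $\sfP_0$, and the latter satisfy the Jetness condition $\tilde C^{(g)}_{\mu_1\dots\mu_n}=\tNabla_{\mu_1}\tilde C^{(g)}_{\mu_2\dots\mu_n}$. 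Recall that $T(\sfP_0,\sfP)$ presents each $C^{(g)}_{\mu_1\dots\mu_n}$ as a finite sum over connected stable graphs of total genus $g$ with $n$ external legs, a vertex of genus $h$ contributing $\tilde C^{(h)}$ and each internal edge a propagator bivector $\Delta=\Delta(\sfP_0,\sfP)$, a section of $\Sym^2 T\LL$ over $\LL|_U$ built geometrically from the two modules.

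The key input is a single differential-geometric identity: the deviation tensor $\Lambda^{\alpha\beta}_{\mu}$ controls at once the discrepancy $\Nabla-\tNabla$ between the two affine connections on $T\LL|_U$ and the covariant derivative of the propagator, $\tNabla_{\mu}\Delta^{\alpha\beta}=-\Lambda^{\alpha\beta}_{\mu}$ --- both being read off from $\nabla_{\mu}$ applied to the projections attached to the splittings $\sfF[z^{-1}]=\sfF\oplus\sfP$ and $\sfF[z^{-1}]=\sfF\oplus\sfP_0$. Note that $\Lambda$ vanishes on directions tangent to the fibres of $\LL\to\cM$ (the affine structure is always fibrewise flat), so the anomaly is a purely $\cM$-directional effect and the asserted equation degenerates to ordinary Jetness in vertical directions, as it must.

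Granting this, I would compute $\Nabla_{\mu_1}C^{(g)}_{\mu_2\dots\mu_n}$ by writing $\Nabla=\tNabla+(\Nabla-\tNabla)$ and applying $\tNabla_{\mu_1}$ to the graph sum by the Leibniz rule. A derivative landing on a vertex factor $\tilde C^{(h)}$ is turned, by the Jetness of the $\tilde C$'s, into the same factor with one extra leg $\mu_1$; summed over all vertices and combined with the $(\Nabla-\tNabla)$-contributions on the remaining legs --- which is where the unstable-vertex graphs produced by growing a leg are absorbed --- these reassemble into the graph sum for $C^{(g)}_{\mu_1\mu_2\dots\mu_n}$. A derivative landing on an internal edge replaces its propagator by $\tNabla_{\mu_1}\Delta^{\alpha\beta}=-\Lambda^{\alpha\beta}_{\mu_1}$; cutting that edge either disconnects the graph into two stable connected graphs of genera $k+l=g$ with external legs partitioned as $S_1\sqcup S_2=\{2,\dots,n\}$, contributing $-\tfrac12\sum C^{(k)}_{S_1,\alpha}\Lambda^{\alpha\beta}_{\mu_1}C^{(l)}_{S_2,\beta}$ (the $\tfrac12$ from the $(S_1,k)\leftrightarrow(S_2,l)$ symmetry), or leaves it connected with the genus dropped to $g-1$ and two new legs, contributing $-\tfrac12\,C^{(g-1)}_{\mu_2\dots\mu_n\alpha\beta}\Lambda^{\alpha\beta}_{\mu_1}$. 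Rearranging gives the anomaly equation; that the output is still a legitimate Fock-sheaf element --- the jet, Dilaton, and pole-order conditions --- follows from the general properties of the transformation rule, and independence of the auxiliary $\sfP_0$ is automatic since every term in the final identity is intrinsic to $\sfP$.

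The hardest part will be the bookkeeping in this last step: matching, graph by graph, the ``grow a leg at each vertex'' terms against the full graph sum for the $n$-point function, so that the leftover boundary configurations cancel exactly against the $(\Nabla-\tNabla)$-contributions --- this is where one must use the genus-zero identities relating the propagator, the Yukawa coupling and $\Lambda$ --- together with fixing the normalization of $\Lambda$ so that the two identifications in the key input hold on the nose. Once these are in place the graph manipulation is purely formal, being the infinite-dimensional analogue of the BCOV and Witten computations, and the two factors of $\tfrac12$ together with the genus-splitting and genus-lowering terms are forced by the combinatorics of cutting an edge.
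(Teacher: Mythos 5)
Your overall strategy is the same as the paper's (differentiate the Feynman expansion defining the correlation functions under the curved polarization, splitting the covariant derivative through a parallel reference module, and cut edges carrying the differentiated propagator), but your stated ``key input'' is wrong at exactly the point where the bookkeeping has to close. For a parallel reference module $\sfP_0$ and a curved $\sfP$, the covariant derivative of the propagator is \emph{not} $\tNabla_{\mu}\Delta^{\alpha\beta}=-\Lambda^{\alpha\beta}_{\mu}$; by Proposition~\ref{prop:propagator-curved}(2) it is
\[
\tNabla_{\mu}\Delta^{\alpha\beta}
= -\Lambda^{\alpha\beta}_{\mu}
+ \Delta^{\alpha\sigma}\,C^{(0)}_{\sigma\mu\tau}\,\Delta^{\tau\beta}.
\]
The extra $\Delta\,C^{(0)}\,\Delta$ term is not a normalization issue that can be fixed by rescaling $\Lambda$: it is precisely the contribution that, in the graph expansion of $C^{(g)}_{\mu_1\dots\mu_n}$, produces the graphs in which the new leg $\mu_1$ sits on a genus-zero trivalent vertex inserted in the interior of an edge (the case $C_{\rm prop}$ in the proof of Lemma~\ref{lem:jetness}). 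With your identity those graphs are never generated, so the reassembly you assert in the third paragraph --- vertex derivatives plus leg modifications recombining into the full $(n{+}1)$-point graph sum --- cannot go through; only the unstable configurations where $\mu_1$ shares a trivalent vertex with an external leg are absorbed by the $(\Nabla-\tNabla)$ terms, while those where it joins two internal edges are missing. Once the correct identity is used, the $-\Lambda$ insertions at separating and non-separating edges give the two anomaly terms with the factors $\tfrac12$ exactly as you describe, and the argument coincides with the paper's.

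A second, related imprecision: $\Lambda$ does not ``control the discrepancy $\Nabla-\tNabla$''. By Proposition~\ref{prop:difference_conn}(1) that difference of connections is $(\Nabla-\tNabla)_{\mu}\,d\sx^{\nu}=\Delta^{\nu\sigma}C^{(0)}_{\sigma\mu\rho}\,d\sx^{\rho}$, i.e.\ it is governed by the propagator contracted with the Yukawa coupling, independently of whether the modules are parallel; $\Lambda$ measures only the failure of $\sfP$ to be parallel and enters solely through the derivative of the propagator displayed above (and, correspondingly, the leg-modification terms you invoke are $\Delta\cdot\bY$ terms, not $\Lambda$ terms). Correcting these two identifications turns your outline into the paper's proof of Theorem~\ref{thm:anomaly}; as written, the deferred ``hardest part'' is where the argument actually fails.
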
 

In \S \ref{sec:HAE}, we consider a cTP structure equipped 
with a \emph{real structure}, which we call a \emph{TRP structure}. 
We impose the condition that the TRP structure is pure 
(see Definition \ref{def:pure_TRP}). 
Quantum cohomology or its B-model counterpart are often equipped 
with a natural real structure and give examples of pure TRP structures 
\cite{Sabbah:FL, Hertling:ttstar,Iritani:ttstar}.  
For a pure TRP structure, we can consider a polarization 
which is obtained as the complex conjugate of $\sfF$. 
Such a polarization is called the \emph{K\"{a}hler 
polarization} or \emph{holomorphic polarization} in the context of 
geometric quantization. 
This complex-conjugate polarization is intrinsic 
to the TRP structure, and therefore if we have a single-valued section 
of the Fock sheaf (such as the Givental wave function), 
its presentation with respect to this gives a single-valued \emph{function}. 
This should be a useful and important property.  
A drawback of this polarization is that the corresponding correlation functions 
are not holomorphic. 
The anti-holomorphic dependence is described precisely 
by the holomorphic anomaly equation. 
\begin{theorem}[Proposition \ref{prop:HAE}] 
Correlation functions under the complex conjugate polarization 
satisfy the following holomorphic anomaly equation: 
\[
0  = \partial_{\ov\mu_1} C^{(g)}_{\mu_2\dots\mu_n} 
+ \frac{1}{2} 
\sum_{\substack{\{2,\dots,n\} = I \sqcup J \\ 
k+ l = g}} 
C^{(k)}_{\mu_I, \alpha} {\Lambda}_{\, \ov\mu_1}^{\alpha\beta}
C^{(l)}_{\mu_J, \beta}  
+ \frac{1}{2} 
C^{(g-1)}_{\mu_2 \dots \mu_n\alpha\beta} 
{\Lambda}^{\alpha\beta}_{\, \ov{\mu}_1}  
\]
where $\Lambda^{\alpha\beta}_{\ov{\mu}}$ is a tensor 
associated to the TRP structure. 
\end{theorem}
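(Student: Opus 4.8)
The plan is to obtain the holomorphic anomaly equation as the anti-holomorphic component of the general anomaly equation of Theorem~\ref{thm:anomaly}, applied to the complex-conjugate (K\"ahler) polarization $\sfP = \ov\sfF$ of a pure TRP structure. First I would check that $\ov\sfF$ is an opposite module in the $C^\infty$ sense: purity of the TRP structure gives a splitting $\sfF \oplus \ov\sfF = \sfF[z^{-1}]\hotimes_{\cO_\cM}\cC^\infty_\cM$ over each point, $\ov\sfF$ is isotropic for $\Omega$ and closed under $z^{-1}$ (all of this follows from the corresponding properties of $\sfF$ via the real involution $\kappa$), so the Fock-space and anomaly formalism of \S\ref{sec:global_theory} applies verbatim after base change to $\cC^\infty_\cM$; the resulting correlation functions $C^{(g)}_{\mu_1\dots\mu_n}$ are then tensors on $\LL$ with holomorphic cotangent legs but $C^\infty$ coefficients in the base variables $(t,\ov t)$. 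The proof of Theorem~\ref{thm:anomaly} is diagrammatic and uses only the opposite-module axioms, so it extends to $C^\infty$ pseudo-opposite modules with the derivative index allowed to run over anti-holomorphic base directions; this extension is what I would invoke below.

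Next I would compute the deviation of $\ov\sfF$ from being $\nabla$-parallel, splitting $\nabla = \nabla' + \nabla''$ into its $(1,0)$- and $(0,1)$-parts on $\cM$. The real involution $\kappa$ is $\nabla$-compatible and conjugate-linear, hence intertwines $\nabla'$ with $\nabla''$ and sends $\sfF$ to $\ov\sfF$; combining this with the Griffiths transversality built into the cTP structure, $\nabla\sfF\subset z^{-1}\Omega^1_\cM\hotimes\sfF$, together with the tautological fact that the Dolbeault operator $\nabla''$ preserves the holomorphic subbundle $\sfF$ with \emph{no} $z^{-1}$-shift, I obtain
\[
\nabla'\,\ov\sfF \ \subset\ \Omega^{1,0}_\cM \otimes \ov\sfF , \qquad
\nabla''\,\ov\sfF \ \subset\ z^{-1}\,\Omega^{0,1}_\cM \otimes \ov\sfF .
\]
The first inclusion says the holomorphic deviation tensor vanishes, $\Lambda^{\alpha\beta}_{\mu}=0$; the second says $\ov\sfF$ fails to be parallel only in anti-holomorphic directions, and its leading $z^{-1}$-term, read off via the pairing $(\cdot,\cdot)_\sfF$ and the identification of $T\LL$ with the appropriate quotient of $\sfF$ (extended to the $C^\infty$ setting), is exactly the symmetric tensor $\Lambda^{\alpha\beta}_{\ov\mu}$ of the statement --- the semi-infinite analogue of the conjugate Yukawa coupling of Bershadsky--Cecotti--Ooguri--Vafa. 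I would also record that, because $\ov\sfF$ is $\nabla'$-parallel, the affine coordinate system on $\LL$ attached to $\ov\sfF$ is holomorphic along $\cM$, so the $(0,1)$-part of the induced connection $\Nabla$ on $T\LL$ is simply the Dolbeault operator; in particular $\Nabla_{\ov\mu} = \partial_{\ov\mu}$ on the coefficient functions of the correlators.

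Finally I would specialise the ($C^\infty$-)anomaly equation to $\sfP = \ov\sfF$ and take its component along an anti-holomorphic cotangent direction $d\ov t^{\mu_1}$ of $\LL$. Since every $C^{(g)}_{\mu_2\dots\mu_n}$ carries only holomorphic tensor legs, its $d\ov t^{\mu_1}$-component --- the would-be term $C^{(g)}_{\ov\mu_1\mu_2\dots\mu_n}$ --- vanishes; and since $\Lambda_\mu=0$ the holomorphic part of the anomaly equation merely reproduces the Jetness condition, while its anti-holomorphic part, using $\Nabla_{\ov\mu_1}=\partial_{\ov\mu_1}$ from the previous step, reads
\[
0 = \partial_{\ov\mu_1} C^{(g)}_{\mu_2\dots\mu_n}
+ \tfrac12 \sum_{\substack{\{2,\dots,n\}=S_1\sqcup S_2\\ k+l=g}} C^{(k)}_{S_1,\alpha}\,\Lambda^{\alpha\beta}_{\ov\mu_1}\,C^{(l)}_{S_2,\beta}
+ \tfrac12\, C^{(g-1)}_{\mu_2\dots\mu_n\alpha\beta}\,\Lambda^{\alpha\beta}_{\ov\mu_1},
\]
which is the asserted holomorphic anomaly equation. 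The main obstacle I anticipate is not the algebra but the foundational bookkeeping: setting up the $C^\infty$-base-changed Fock sheaf and re-running the proof of Theorem~\ref{thm:anomaly} so that it genuinely applies to the non-holomorphic polarization $\ov\sfF$, and tracking the $z$-conventions of Definition~\ref{def:pure_TRP} (the action of $\kappa$ on $z$) closely enough to confirm both that the $(1,0)$-deviation is exactly zero and that no $z$-twist beyond the single $z^{-1}$ enters the $(0,1)$-deviation --- it is precisely this single $z^{-1}$ that makes $\Lambda_{\ov\mu}$ a bona fide tensor of the expected type rather than an object of larger pole order.
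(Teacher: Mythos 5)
Your route is in substance the paper's own: the paper explicitly regards the holomorphic anomaly equation as the anti-holomorphic instance of the anomaly equation of Theorem~\ref{thm:anomaly}, after extending the structure sheaf on $\cM$ to real-analytic functions, and its proof of Proposition~\ref{prop:HAE} is exactly the computation your plan reduces to. The one organisational difference is that the paper never needs to re-run Theorem~\ref{thm:anomaly} in the extended setting: the correlation functions under the conjugate polarization are \emph{defined} (Definition~\ref{def:correlationfunctions_under_cc}) by the Feynman rule with vertices taken from a Fock element for a \emph{parallel holomorphic} pseudo-opposite module $\sfP$, so the only anti-holomorphic dependence sits in the propagator, and the whole proof is the single identity $\ov\partial_i \Delta(\sfP,z^{-1}\ov\AF)^{\mu\nu} = -{\Lambdacc}^{\mu\nu}_{\oi}$ of Proposition~\ref{prop:propagatorcc}(2) applied edge by edge; your observations that there is no vertex-derivative term, no leg-modification term (both $\Nablacc$ and $\Nabla^{\sfP}$ have $(0,1)$-part $\ov\partial$), and hence no ``new-leg'' term on the left-hand side, are precisely why the separating and non-separating edges produce the two $\Lambdacc$-terms and nothing else.

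Two pieces of bookkeeping in your write-up must be corrected for the argument to go through, and they are not merely cosmetic as stated. First, the conjugate polarization is $z^{-1}\ov\AF$, not $\ov\AF$ (Definition~\ref{def:cc-opposite}): purity gives $\AF[z^{-1}] = \AF \oplus z^{-1}\ov\AF$, equation \eqref{eq:opposition-cc}, whereas $\AF \cap \ov\AF = \cA(K)$ is the rank-$(N{+}1)$ $tt^*$-bundle, so the splitting you assert for $\ov\sfF$ fails; likewise isotropy \eqref{eq:isotropic-cc} holds for $z^{-1}\ov\AF$ but not for $\ov\AF$ (the residue at $z=0$ of the pairing of two sections regular at $z=\infty$ need not vanish without the extra $z^{-1}$). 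Second, the $(0,1)$-deviation goes the other way in $z$: by \eqref{eq:nabla-AF} one has $\nabla\colon \ov\AF \to \cA^{1,0}_\cM\otimes\ov\AF \oplus \cA^{0,1}_\cM\otimes(z\ov\AF)$ — a first-order pole at $z=\infty$, i.e.\ multiplication by $z$, not $z^{-1}$ — because $\kappa$ involves $z\mapsto 1/\ov z$ and so transports the Griffiths pole at $z=0$ to one at $z=\infty$. With these conventions the projection of $\nabla_{\oi}(z^{-1}\ov\AF)$ onto $\AF$ lands in $\AF\cap\ov\AF$, which is what makes $\Lambdacc$ a genuine tensor valued in $\pr^*\cA^{0,1}_\cM$; after these fixes your reduction yields the stated equation with the correct signs.
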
 
This is analogous to the holomorphic anomaly equation 
of Bershadsky--Cecotti--Ooguri--Vafa~\cite{BCOV:HA, BCOV:KS}. 
Given a parallel polarization $\sfP$ of the TRP structure, 
we introduce a positive scalar function on the base $\cM$, 
called the \emph{half-density metric}. 
This can be thought of heuristically as a Hermitian metric on the half-density line bundle 
``$\det(T^*\LL)^{1/2}$'' of $\LL$; see Definition \ref{def:half_density_metric}. 
The genus-one potential can be viewed as a holomorphic 
section of $\det(T^*\LL)^{1/2}$, and the holomorphic 
anomaly equation at genus one is a formula for 
the curvature of $\det(T^*\LL)^{1/2}$ 
(see equation \ref{eq:genus_one_HAE} and \cite{BCOV:HA}). 
Singularities and global properties of this metric will be the subject 
of a future study. 

\section*{Relation to Other Work} 
In this paper we focus on the construction of a Fock sheaf 
and its fundamental properties, but we only discuss how to construct a canonical section of the Fock sheaf in the semisimple case (where we use Givental's formula). 
To give a section of the Fock sheaf in general, we certainly need more data 
from geometry. An approach based on Calabi--Yau categories and 
Topological Quantum Field Theory has been proposed by Costello~\cite{Costello:TCFT_CYcat, Costello:partitionfcn_TFT}, 
Kontsevich--Soibelman~\cite{Kontsevich-Soibelman:notes_Ainfinity} 
and Katzarkov--Kontsevich--Pantev~\cite{KKP:TQFT}. 
Another approach based on renormalization and BCOV theory 
has been developed by Costello--Li~\cite{Costello--Li, SiLi:BCOV_elliptic, 
SiLi:VHS_Frob_gauge}; using a chain-level version of Givental's symplectic space, they construct a mathematical version 
of the higher-genus B-model. These works should give a canonical 
global section of the Fock sheaf. 
We also remark that the approach based on Givental's formula 
(as in this paper) has been taken by several authors  
\cite{Milanov--Ruan, Krawitz--Shen, 
Milanov--Ruan--Shen, LiLiSaitoShen:exc_unimodular}; 
in particular Milanov--Ruan \cite{Milanov--Ruan} showed that the Gromov--Witten 
potential of an elliptic orbifold $\Proj^1$ is a quasi-modular form 
using Givental's formula.

\section*{Plan of the Paper} 
We begin by fixing notation for various objects in Gromov--Witten theory (\S\ref{sec:nota}).  We give an informal sketch of our quantization framework in~\S\ref{sec:globalquantization:motivation}, and give the rigorous construction in~\S\ref{sec:global_theory}.  In~\S\ref{sec:Givental} we explain the precise connection between our quantization formalism and Givental's. \S\ref{sec:GW-wavefunction}~describes how the Gromov--Witten potential fits into our framework.  \S\ref{sec:semisimple_case}~treats the semisimple case; in particular we explain how Givental's formula \eqref{eq:Givental_formula} gives rise to a global section of the Fock sheaf.  In~\S\ref{sec:mirror_symmetry} we give two applications of our formalism to mirror symmetry, proving the higher-genus Crepant Transformation Conjecture for toric orbifolds in~\S\ref{sec:toric_mirror_symmetry} and discussing mirror symmetry for Calabi--Yau manifolds in \S\ref{sec:CY}.  In \S\ref{sec:HAE} we describe how the Holomorphic Anomaly Equation of 
Bershadsky--Cecotti--Ooguri--Vafa arises from the anomaly equation for curved polarizations given in \S\ref{subsec:curved}.

\section*{Acknowledgements}
We thank Hsian-Hua Tseng for many useful conversations 
on Givental quantization. 
The definition of Fock spaces for ancestor
potentials (\S\ref{sec:ancestor_Fock}) was originally worked out 
in a joint project with him, and we are grateful to him for allowing 
us to present this formulation here. 
We thank Si Li, Tony Pantev and Daniel Sternheimer for helpful 
discussions. 
We also thank the anonymous referee for the careful reading of 
our manuscript and for many helpful suggestions. 
This research was supported by a Royal Society University Research 
Fellowship; the Leverhulme Trust; ERC Starting Investigator 
Grant number~240123; EPSRC Mathematics Platform grant EP/I019111/1; 
Inoue Research Award for Young Scientists, 
EPSRC EP/E022162/1, 
JSPS Kakenhi Grant Number 19740039, 22740042, 23224002, 24224001 
and 25400069.

\section{Notation in Gromov--Witten Theory}
\label{sec:nota} 
We use the same notation as~\cite{CI:convergence}. 
Let $X$ be a smooth projective variety and let $H_X$
be the even part of $H^\bullet(X;\Q)$.

\subsection{Gromov--Witten Invariants}
\label{sec:GW}
Let $X_{g,n,d}$ denote the moduli space of $n$-pointed
genus-$g$ stable maps to $X$ of degree $d \in H_2(X;\Z)$.
Write
\begin{align}
  \label{eq:correlator}
  \corr{a_1 \psi_1^{l_1},\ldots,a_n
    \psi_n^{l_n}}^{X}_{g,n,d}
  =
  \int_{[X_{g,n,d}]^{\text{vir}}}
  \prod_{i=1}^{n} \ev_i^*(a_i) \cup \psi_i^{l_i}
\end{align}
where $a_1,\ldots,a_n \in H_X$; $\ev_i \colon X_{g,n,d} \to X$ is the
evaluation map at the $i$th marked point; $\psi_1,\ldots,\psi_n \in
H^2\big(X_{g,n,d};\Q\big)$ are the universal cotangent line classes;
$l_1, \ldots, l_n$ are non-negative integers; and the integral denotes
cap product with the virtual fundamental class~\cite{Behrend--Fantechi,Li--Tian}.  The right-hand
side of \eqref{eq:correlator} is a rational number, called a
\emph{Gromov--Witten invariant} of $X$ (if $l_i= 0$ for all $i$) or a
\emph{gravitational descendant} (if any of the $l_i$ are non-zero).

\subsection{Bases for Cohomology and Novikov Rings}
\label{subsec:bases}
Fix bases $\phi_0,\ldots,\phi_N$ and $\phi^0,\ldots,\phi^N$ for $H_X$
such that:
\begin{equation}
  \label{eq:basisproperties}
  \begin{minipage}{0.9\linewidth}
    \begin{itemize}
    \item $\phi_0$ is the identity element $\unit \in H_X$
    \item $\phi_1,\ldots,\phi_r$ is a nef $\Z$-basis for the free part of
 $H^2(X;\Z) \subset H_X$ 
    \item each $\phi_i$ is homogeneous
    \item $(\phi_i)_{i=0}^{i=N}$ and $(\phi^j)_{j=0}^{j=N}$ are dual with
      respect to the Poincar\'e pairing
    \end{itemize}
  \end{minipage}
\end{equation}
Note that $r$ is the rank of $H_2(X)$. 
Define the \emph{Novikov ring} 
$\Lambda = \Q[\![Q_1,\ldots,Q_r]\!]$ and, for $d \in H_2(X;\Z)$, write $Q^d = Q_1^{d_1} \cdots Q_r^{d_r}$ where $d_i = d\cdot \phi_i$. 
 
\subsection{Quantum Cohomology}
\label{sec:convergence}
Let $t^0,\ldots,t^N$ be the co-ordinates of $t \in H_X$ defined by the basis
$\phi_0,\ldots,\phi_N$, so that $t = t^0 \phi_0
+ \ldots + t^N \phi_N$.  
Define the \emph{genus-zero Gromov--Witten potential} 
$F^0_X \in \Lambda[\![t^0,\dots,t^N]\!]$ by
\[
F^0_X = \sum_{d \in \NE(X)} \sum_{n = 0}^\infty {Q^d \over n!}
\corr{\vphantom{\big\vert}t, \ldots,t}^X_{0,n,d}
\]
where the first sum is over the set $\NE(X)$ of degrees of effective
curves in $X$.  
This is a generating function for genus-zero
Gromov--Witten invariants.  The \emph{quantum product} $\ast$ is
defined in terms of the third partial derivatives of $F^0_X$:
\begin{equation}
  \label{eq:bigQC}
  \phi_i \ast \phi_j = \sum_{h=0}^{N} 
  {\partial^3 F^0_X \over \partial t^i \partial t^j \partial t^h}
  \phi^h
\end{equation}
The product $\ast$ is bilinear over $\Lambda$, and defines a formal
family of algebras on $H_X \otimes \Lambda$ parameterized by
$t^0,\ldots,t^N$.  This is the \emph{quantum cohomology} or
\emph{big quantum cohomology} of $X$.

We have defined big quantum cohomology as a formal family of algebras,
i.e. in terms of the ring of formal power series
$\Q[\![Q_1,\ldots,Q_r]\!][\![t^0,\ldots,t^N]\!]$.  
In many cases however, 
including the examples discussed in~\cite{CI:convergence},
the genus-zero Gromov--Witten potential $F^0_X$ converges 
to an analytic function.  
By this we mean the following.  The Divisor
Equation~\cite[\S2.2.4]{Kontsevich--Manin} implies that
\[
F^0_X \in \Q[\![t^0,Q_1 e^{t^1}, \ldots, 
Q_r e^{t^r}, t^{r+1},t^{r+2},\ldots,t^N]\!]
\]
and one can often show, for example by using mirror symmetry, that
$F^0_X$ is the power series expansion of an analytic function:
\[
F^0_X \in \Q\Big\{t^0,Q_1 e^{t^1}, \ldots, Q_r e^{t^r},
t^{r+1}, t^{r+2}, \ldots, t^N\Big\}
\]
We can then set $Q_1 = \cdots = Q_r = 1$, obtaining an analytic
function
\[
F^0_X \in \Q\Big\{t^0, e^{t^1}, \ldots, e^{t^r},
t^{r+1}, t^{r+2},\ldots,t^N\Big\}
\]
of the variables $t^0,\ldots, t^N$ defined in a region
\begin{align}
  \label{eq:LRLnbhd}
  \begin{cases}
    |t^i| < \epsilon_i & \text{$i=0$ or $r<i\leq N$} \\
    \Re t^i \ll 0 & 1 \leq i \leq r
  \end{cases}
\end{align}
We refer to the limit point
\begin{align*}
  \begin{cases}
    t^i =0  & \text{$i=0$ or $r<i\leq N$} \\
    \Re t^i \to -\infty & 1 \leq i \leq r
  \end{cases}
\end{align*}
as the \emph{large-radius limit point}.  When $F^0_X$ converges to an
analytic function in the sense just described, the quantum product
$\ast$ then defines a family of algebra structures on $H_X$ that
depends analytically on parameters $t^0,\ldots,t^N$ in the
neighbourhood \eqref{eq:LRLnbhd} of the large-radius limit point.

\subsection{The Dubrovin Connection}
\label{subsec:Dubrovin_conn}

Consider $H_X \otimes \Lambda$ as a scheme over $\Lambda$ and let
$\cM$ be a formal neighbourhood of the origin in $H_X\otimes \Lambda$.  The
\emph{Euler vector field} $E$ on $\cM$ is
\begin{equation}
  \label{eq:Eulerfield}
  E = {t^0 \parfrac{}{t^0}} + \sum_{i=1}^r \rho^i \parfrac{}{t^i} +
  \sum_{i=r+1}^N \big(1 - \textstyle\frac{1}{2}{\deg \phi_i} \big) 
t^i \parfrac{}{t^i}
\end{equation}
where $c_1(X) = \rho^1 \phi_1 + \cdots + \rho^r \phi_r$.  The
\emph{grading operator} $\mu \colon H_X \to H_X$ is defined by:
\begin{equation} 
\label{eq:gradingoperator}
\mu(\phi_i) = \deg \phi_i - \textstyle\frac{1}{2} \dim_{\C} X
\end{equation} 
Let $\pi \colon \cM \times \A^1 \to \cM$ 
denote projection to the first factor.  
The \emph{extended Dubrovin connection} 
is a meromorphic flat connection $\nabla$ 
on $\pi^* T\cM \cong H_X \times (\cM\times \A^1)$, defined by:
\begin{align*}
& \nabla_{\parfrac{}{t^i}} = \parfrac{}{t^i} 
  - \frac{1}{z}\big(\phi_i {\ast}\big)
&& 0 \leq i \leq N \\
& \nabla_{z \parfrac{}{z}} = z \parfrac{}{z} + \frac{1}{z} 
\big({E\ast}\big) + \mu 
&& \text{where $z$ is the co-ordinate on $\A^1$.} 
\end{align*}
Together with the pairing on $T\cM$ induced by the Poincar\'e pairing,
the Dubrovin connection equips $\cM$ with the structure of a formal
Frobenius manifold with extended structure
connection~\cite{Manin}.

The Dubrovin connection admits a canonical fundamental solution 
(see e.g.~\cite[Proposition~2]{Pandharipande:afterGivental},~\cite[Proposition~2.4]{Iritani:integral}) 
$L\in \End(H_X)\otimes \Lambda[\![t]\!][\![z^{-1}]\!]$, 
defined by
\begin{equation}
\label{eq:fundamentalsolution}
L(t,z) v = v + 
\sum_{d \in \NE(X)} \sum_{n=0}^{\infty} \sum_{\epsilon = 0}^{N} 
\frac{Q^d}{n!} 
  \corr{\frac{v}{z-\psi},t,\ldots,t,\phi^\epsilon}^X_{0,n+2,d}
  \phi_\epsilon
\end{equation}
where $v \in H_X$. The expression $v/(z-\psi)$ in the correlator 
should be expanded as the series $\sum_{n=0}^\infty 
{v \psi^n}{z^{-n-1}}$.  
This satisfies $\nabla_{\partial/\partial t^i} ( L(t,z) v ) =0$ for 
all $i=0,\dots,N$. 
The fundamental solution also satisfies the unitarity property
\begin{align*}
  \big(L(t,-z) v, L(t,z) w\big)_{H_X} = (v,w)_{H_X}
  &&
  \text{for all $v$,~$w \in H_X$}
\end{align*}
where $(\cdot,\cdot)_{H_X}$ denotes 
the Poincar\'{e} pairing on $H_X$. 
Hence the inverse fundamental solution 
$M(t,z):= L(t,z)^{-1}$ is identified with the adjoint of $L(t,-z)$: 
\begin{equation} 
\label{eq:inversefundamentalsolution} 
M(t,z) v= v + 
\sum_{d \in \NE(X)} \sum_{n=0}^{\infty} \sum_{\epsilon = 0}^{N} 
\frac{Q^d}{n!} 
\corr{\frac{\phi^\epsilon}{-z-\psi},t,\ldots,t,v}^X_{0,n+2,d}
\phi_\epsilon
\end{equation} 
The Divisor Equation~\cite[Theorem~8.3.1]{AGV} for descendant 
invariants shows that 
\begin{equation} 
\label{eq:M-divisoreq} 
M(t,z) v = e^{-\delta/z} \left( 
v +  
\sum_{d \in \NE(X)} \sum_{n=0}^{\infty} \sum_{\epsilon = 0}^{N} 
\frac{e^{d\cdot \delta}Q^d}{n!} 
\corr{\frac{\phi^\epsilon }{-z-\psi}, t' ,\ldots, t', 
v}^X_{0,n+2,d}
\phi_\epsilon \right)
\end{equation} 
where $t =\delta + t'$, $\delta \in H^2(X)$, $t' \in  
\bigoplus_{p\neq 1} H^{2p}(X)$. 
This form will be helpful when we specialize the Novikov variables $Q_i$ to $1$ 
in the fundamental solutions.  

If the genus-zero Gromov--Witten potential $F^0_X$ converges to an
analytic function, as discussed in Section~\ref{sec:convergence} above, 
then the extended Dubrovin connection with $Q_1= \cdots = Q_r =1$ 
depends analytically on $t$ in a neighbourhood \eqref{eq:LRLnbhd} 
of the large-radius limit point 
and defines an analytic Frobenius manifold 
with extended structure connection. 
The fundamental solution with $Q_1=\cdots = Q_r=1$ then depends
analytically on both $t$ and $z$, where $t$ lies in the neighbourhood
\eqref{eq:LRLnbhd} and $z$ is any point of $\C^\times$.

\subsection{Gromov--Witten Potentials}
\label{subsec:GW-potentials} 

We introduce various generating functions 
for Gromov--Witten invariants. 
They belong to certain rings of formal 
power series (in infinitely many variables), 
for which we refer the reader to~\cite[\S 2.5]{CI:convergence}. 

Let $(t_0, t_1, t_2, \ldots)$ 
be an infinite sequence of elements of $H_X$
and write $t_n = t_n^0 \phi_0 + \cdots + t_n^N \phi_N$. 
The
\emph{genus-$g$ descendant potential} 
\begin{equation} 
\label{eq:genus_g_descendantpot}
\cF^g_X := \sum_{d \in \NE(X)} \sum_{n = 0}^{\infty} 
\sum_{l_1=0}^\infty
\cdots
\sum_{l_n=0}^\infty 
{Q^d \over n!}
\corr{t_{l_1} \psi_1^{l_1},\ldots,t_{l_n}\psi_n^{l_n}}^X_{g,n,d}
\end{equation} 
is a generating function for genus-$g$ gravitational descendants of $X$.
The \emph{total descendant potential}
\begin{equation} 
\label{eq:totaldescendantpot}
\cZ_X := \exp \Bigg(\sum_{g=0}^\infty \hbar^{g-1} \cF^g_X\Bigg)
\end{equation}
is a generating function 
for all gravitational descendants of $X$. 

Consider now the morphism $p_m \colon X_{g,m+n,d} \to \Mbar_{g,m}$ that
forgets the map and the last $n$ marked points, and then stabilises
the resulting prestable curve.  Write $\psi_{m|i} \in
H^2(X_{g,n+m,d};\Q)$ for the pullback along $p_m$ of the $i$th
universal cotangent line class on $\Mbar_{g,m}$, and write
\begin{multline}
  \label{eq:mixedcorrelator}
  \corr{a_1 \bar{\psi}_1^{k_1},\ldots,a_m
    \bar{\psi}_m^{k_m}:b_1 \psi_{m+1}^{l_1},\ldots,b_n
    \psi_{m+n}^{l_n}}^{X}_{g,m+n,d} \\
  =
  \int_{[X_{g,m+n,d}]^{\text{vir}}}
  \prod_{i=1}^{m} 
  \Big(\ev_i^*(a_i) \cup \psi_{m|i}^{k_i}\Big)
  \cdot
  \prod_{j=1}^{n} \Big(\ev_{m+j}^* (b_{j}) \cup
  \psi_{m+j}^{l_{j}} \Big)
\end{multline}
where $a_1,\ldots,a_m \in H_X$; $b_1,\ldots,b_n \in
H_X$; and $k_1,\ldots,k_m, l_1,\ldots,l_n$ are non-negative
integers.

As above, consider $t \in H_X$ with $t = t^0\phi_0 + \cdots + t^N
\phi_N$ and a sequence $(y_0,y_1,y_2,\dots)$ 
of elements in $H_X$ with $y_n = y_n^0 \phi_0 + \cdots + y_n^N \phi_N$. 
The \emph{genus-$g$ ancestor potential}
is 
\begin{equation}
  \label{eq:ancestor}
  \bar{\cF}^g_X := 
  \sum_{d \in \NE(X)}
  \sum_{n=0}^\infty
  \sum_{m=0}^\infty
  \sum_{l_1=0}^\infty
  \cdots
  \sum_{l_m=0}^\infty
  {Q^d \over n!m!}
 \corr{y_{l_1} \bar{\psi}_{1}^{l_1},\ldots,y_{l_m} \bar{\psi}_{m}^{l_m} : 
    \overbrace{t,\ldots, t}^n}^X_{g,m+n,d}
\end{equation}
and the \emph{total ancestor potential} is:
\begin{equation}
  \label{eq:ancestorpotential}
  \cA_X := \exp \Bigg(\sum_{g=0}^\infty \hbar^{g-1} \bar{\cF}^g_X\Bigg)
\end{equation}
We will often want to emphasize the dependence of the ancestor
potentials on the variable $t$, writing $\bar{\cF}^g_t$ for
$\bar{\cF}^g_X$ and $\cA_t$ for $\cA_X$.  Note that the ancestor
potentials \eqref{eq:ancestor} do not contain terms with $g=0$ and
$m<3$, or with $g=1$ and $m=0$, as in these cases the space
$\Mbar_{g,m}$ is empty and so the map $p_m\colon X_{g,m+n,d} \to
\Mbar_{g,m}$ is not defined.

Let $(t_0, t_1,t_2,\ldots)$ and $(y_0, y_1, y_2,\ldots)$ 
be infinite sequences of elements of $H_X$ 
with $t_n= t_n^0 \phi_0 + \cdots + t_n^N \phi_N$ 
and $y_n = y_n^0 \phi_0 + \cdots + y_n^N \phi_N$. 
Define the \emph{genus-$g$ jet potential}
\begin{multline*}
  \cW^g_X := \sum_{d \in \NE(X)}
  \sum_{m = 0}^{\infty}
 \sum_{k_1=0}^\infty
  \cdots
  \sum_{k_m=0}^\infty \\
  \sum_{n=0}^\infty 
  \sum_{l_1=0}^\infty
  \cdots
  \sum_{l_n=0}^\infty 
  {Q^d \over n! m!}
  \corr{y_{k_1} \bar{\psi}_1^{k_1},\ldots,y_{k_m}\bar{\psi}_m^{k_m}:
    t_{l_1} \psi_{m+1}^{l_1},\ldots,t_{l_n} \psi_{m+n}^{l_n}
  }^X_{g,m+n,d}
\end{multline*} 
We write $\cW_X = \sum_{g=0}^\infty \hbar^{g-1} \cW^g_X$. 
The \emph{total jet potential} is:
\begin{equation} 
\label{eq:jetpotential-GW} 
\exp(\cW_X) = \exp\Bigg(\sum_{g=0}^\infty \hbar^{g-1} \cW^g_X
\Bigg)
\end{equation}

The co-ordinates $(t_0,t_1,t_2,\dots)$ are used for the descendant 
potentials and the co-ordinates $(y_0, y_1, y_2, \dots)$ are used 
for the ancestor potentials. 
We sometimes also use the co-ordinates 
$(q_0,q_1,q_2,\dots)$ with $q_n = q_n^0 \phi_0 + \cdots + q_n^N \phi_N$ 
related to  $(t_0,t_1,t_2,\dots)$ or $(y_0,y_1,y_2,\dots,)$ 
by the identification:
\begin{align*}
  q_n^i = - \delta_{n,1} \delta_{i,0} + t_n^i 
&&   q_n^i = - \delta_{n,1} \delta_{i,0} + y_n^i
\end{align*}
This identification is called the \emph{Dilaton shift}. 
See also \S\ref{subsec:dilatonshift} below. 

\subsection{The Orbifold Case} 
The discussion in this paper applies to the case 
where $X$ is a smooth algebraic orbifold or Deligne--Mumford stack,
rather than a smooth algebraic variety.  
The discussion above goes through in this 
situation with minimal changes, as follows:
\begin{itemize}
\item We take $H_X$ to be the even part\footnote{Here we mean the even
    part of the rational cohomology of the inertia stack $IX$ with
    respect to the usual grading on $H^\bullet(IX)$, not the age-shifted
    grading; cf.~\cite[\S2.2]{Coates--Iritani--Jiang}} of the Chen--Ruan orbifold cohomology
  $H^\bullet_{\CR}(X;\Q)$ rather than the even part of the ordinary
  cohomology $H^\bullet(X;\Q)$.
\item We replace:
  \begin{itemize}
  \item the usual grading on $H^\bullet(X)$;
    by the age-shifted grading on $H^\bullet_{\CR}(X)$
  \item the Poincar\'e pairing on $H^\bullet(X)$ by the orbifold
    Poincar\'e pairing on $H^\bullet_{\CR}(X)$.
  \end{itemize}
  Note that $H^2(X) \subset H^2_{\CR}(X)$, and so definition
  \eqref{eq:basisproperties} makes sense in the orbifold context.
\item We define correlators \eqref{eq:correlator} and
  \eqref{eq:mixedcorrelator} using orbifold Gromov--Witten invariants
 ~\cite{AGV} rather than usual Gromov--Witten invariants.  
  There are two small differences:
  \begin{itemize}
  \item a subtlety in the definition of $\ev_k^*$, 
  discussed in~\cite{AGV},~\cite[\S2.2.2]{CCLT};
  \item the degree $d$ of an orbifold stable map $f:\Sigma \to X$ 
    lies in $H_2(|X|;\Z)$, where $|X|$ is the coarse moduli 
    space of $X$. 
  \end{itemize}
\end{itemize}
Having made these changes, the discussion in
\S\S\ref{sec:GW}--\ref{subsec:GW-potentials} 
applies to orbifolds as well.  
In this context, the family of algebras 
$\big(H_X \otimes\Lambda,\ast\big)$ 
is called \emph{orbifold quantum cohomology}~\cite{Chen--Ruan:GW}. 

\section{Global Quantization: Motivation}
\label{sec:globalquantization:motivation}

In this section, as an introduction to global quantization, we review
Givental's symplectic formalism~\cite{Givental:quantization, 
Coates--Givental, Givental:symplectic} 
from the viewpoint of geometric
quantization.  This section is not logically necessary, and can safely
be skipped by the impatient reader, but provides motivation and
context for the rest of the paper.  Roughly speaking one can think
of our Fock space as obtained from the quantization of Givental's
infinite-dimensional symplectic space $\cH$, and of the total
descendant potential $\cZ_X$ as an element of the Fock space.  The aim
of this section is to give an informal account of the ideas behind the
rigorous construction, which is given in \S\ref{sec:global_theory} 
and \S\ref{subsec:global_L2}.

\subsection{Givental's Symplectic Vector Space} 
\label{subsec:Givental-symplecticvs} 
Givental's quantization is based on the 
Hilbert space  
\[
\cH = H_X \otimes_{\Q} L^2(S^1,\C) 
\]
equipped with the symplectic form:
\[
\Omega(f(z), g(z)) = 
\frac{1}{2\pi\iu} 
\int_{S^1} (f(-z), g(z))_{H_X} \, dz
\]
Here $L^2(S^1,\C)$ denotes the space of 
complex-valued $L^2$ functions on $S^1$ 
and $(\alpha,\beta)_{H_X} = \int_X \alpha \cup \beta$ 
is the Poincar\'{e} pairing. The co-ordinate $z$ 
on $S^1$ coincides with the variable that appeared in the 
Dubrovin connection (see \S\ref{subsec:Dubrovin_conn}). 
We call $(\cH, \Omega)$ the \emph{Givental space} for $X$. 
Each element $f(z)\in \cH$ has 
a Fourier expansion 
\[
f(x) = \sum_{n=0}^\infty q_n z^n + \sum_{n=0}^\infty p_n (-z)^{-n-1} 
\]
with $q_n, p_n \in H_X\otimes \C$. 
We have the decomposition $\cH = \cH_+ \oplus \cH_-$ 
where:
\begin{align} 
\label{eq:cH+-}
\cH_+ & = \Biggl\{\bq = \sum_{n=0}^\infty q_n z^n \in \cH \Biggr \} &
\cH_- & = \Biggl\{ \bp = \sum_{n=0}^\infty p_n (-z)^{-n-1} \in \cH
\Biggr\} \\
\intertext{These are maximally isotropic subspaces. We set}
q_n & = \sum_{i=0}^N q_n^i \phi_i &
p_n & = \sum_{i=0}^N p_{n,i} \phi^i 
\end{align}
and regard $\{\text{$q_n^i$, $p_{n,i}$} :
\text{$0 \leq n < \infty$, $0\le i \le N$}\}$ as a complex 
co-ordinate system on $\cH$. 
These are holomorphic Darboux co-ordinates,
in the sense that:
\[
\Omega = \sum_{n=0}^\infty \sum_{i=0}^N dp_{n,i} 
\wedge dq_n^i
\]
\subsection{Dilaton Shift} 
\label{subsec:dilatonshift} 
Let us denote by $\cF^{g}$ the genus-$g$ descendant 
Gromov--Witten potential \eqref{eq:genus_g_descendantpot} 
with Novikov variables specialized\footnote{In order to make sense of this specialization, we need 
a certain convergence assumption for $\cF^g$~\cite[\S8.1]{CI:convergence}. 
This technical point will be explained in 
Definition~\ref{def:F^g_Xan} below.} to 1 (i.e.~$Q_1= \cdots = Q_r=1$). 
We can regard $\cF^g$ as a holomorphic function 
on an open subset $U$ of $\cH_+$
\[
\cF^{g} \colon U \to \C 
\]
via the \emph{Dilaton shift}
\begin{equation*} 
\bq = \bt  - z \unit  
\end{equation*} 
where $\unit \in H_X$ is the identity element and we set:
\begin{align*}
  \bt = \sum_{n = 0}^\infty \sum_{i=0}^N t_n^i \phi_i z^n && 
  \bq = \sum_{n = 0}^\infty \sum_{i=0}^N q_n^i \phi_i z^n
\end{align*}
The open subset $U$ contains a point 
$-z \unit + t$ with $t$ in a neighbourhood 
\eqref{eq:LRLnbhd} of the large radius limit point 
and $\cF^g|_{-z \unit +t}$ gives the non-descendant 
genus-$g$ Gromov--Witten potential $F^g(t)$ with the Novikov variables specialized to~$1$.

\subsection{Lagrangian Submanifold and TP Structure} 
\label{subsec:Lagrangian_TP}
Here we introduce the Givental cone for $X$, a submanifold $\cL$ of $\cH$ 
which encodes all information about
the genus-zero Gromov--Witten theory of $X$.  
Define $\cL$ to be the following submanifold 
of $\cH$:  
\[
\cL = \left\{\bq + \bp \, \Bigg |\, p_{n,i} 
= \parfrac{\cF^{0}}{q_n^i}(\bq) \right\}
\]
where we set 
$\bp = \sum_{n=0}^\infty \sum_{i=0}^N 
p_{n,i} \phi^i  (-z)^{-n-1}$. 
This is Lagrangian since 
it is the graph of the differential $d\cF^0$. 
Moreover, it has the following 
special geometric properties~\cite{Coates--Givental, Givental:symplectic}: 
\begin{itemize} 
\item $\cL$ is a cone, i.e.~it is preserved by 
scalar multiplication. 
\item $T_f$, the tangent space of $\cL$ at 
$f\in \cL$, is tangent to $\cL$ exactly along 
$z T_f$. This means: 

(i) $z T_f \subset \cL$; 

(ii) For $g \in z T_f$, we have $T_g = T_f$; 

(iii) $T_f \cap \cL = z T_f$. 
\end{itemize}  
The Lagrangian submanifold $\cL$ is a submanifold-germ 
around the unique family of points on $\cL$ 
of the form: 
\begin{align*} 
t \longmapsto J(t,-z) = -z \unit + t + \bp_t && \bp_t \in \cH_-
\end{align*} 
with $t\in H_X^\C$ in a neighbourhood \eqref{eq:LRLnbhd} 
of the large radius limit point, and the above properties 
should be understood in the sense of germs. 
The set of all tangent spaces to $\cL$ forms a finite dimensional family: 
every tangent space coincides with $T_t = T_{J(t,-z)}\cL$ at 
$J(t,-z)$ for a unique $t\in H_X^\C$. 
The point $J(t,-z)$ on $\cL$ is called the \emph{$J$-function}. 
Moreover we can recover the Lagrangian submanifold 
$\cL$ as the union of tangent spaces: 
\[
\cL = \bigcup_{f\in \cL} z T_f 
= \bigcup_{t\in H_X^\C} z T_t
\]
The special geometric properties of $\cL$ can be rephrased 
as \emph{Griffiths transversality} for the family $\{T_t\}$ 
of semi-infinite subspaces; $\{T_t\}$ is an example of 
Barannikov's \emph{variation of semi-infinite Hodge structure} ~\cite{Barannikov}. 

We saw that $\cL$ is ruled by infinite-dimensional 
spaces $z T_t$. The ruling structure can be understood 
via the identification of $\cL$ with the total space of a certain 
infinite-dimensional vector bundle, as follows. 
Consider the 
vector bundle $\cH \times H_X^\C \to H_X^\C$ 
endowed with the (non-extended) Dubrovin connection 
$\nabla = d - \frac{1}{z} 
\sum_{i=0}^N (\phi_i *) dt^i$ 
(see \S\ref{subsec:Dubrovin_conn}). 
The inverse
$M(t,z)=L(t,z)^{-1}$ of the fundamental solution 
(see equation~\ref{eq:inversefundamentalsolution}) 
defines an isomorphism 
of flat bundles: 
\[
M \colon \left(
\cH \times H_X^\C \to H_X^\C,   
\nabla \right) \stackrel{\sim}{\longrightarrow}
\left( 
\cH \times H_X^\C \to  
H_X^\C, d \right)
\]
Here Novikov variables have (again) been specialized to 1 in 
$\nabla$, $L(t,z)$ and $M(t,z)$. 
We have:
\begin{align*}
  M(t,z) (-z \unit) = J(t,-z) &&
  M(t,z) z \cH_+ = z T_t
\end{align*}
Therefore the Lagrangian submanifold $\cL$ 
is obtained as the projection to the fiber $\cH$ 
of the image of the subbundle $z \cH_+\times H_X^\C$ 
under the map $M$: 
\[
\cL = (\pr_{\cH} \circ M)
\left(z \cH_+ \times H_X^\C\right)  
\]
and the bundle structure $z \cH_+ \times H_X^\C \to H_X^\C$ 
gives the ruling on $\cL$. See Figure~\ref{fig:ruling}. 

\begin{figure}[tbp] 
\includegraphics[bb=48 540 552 732]{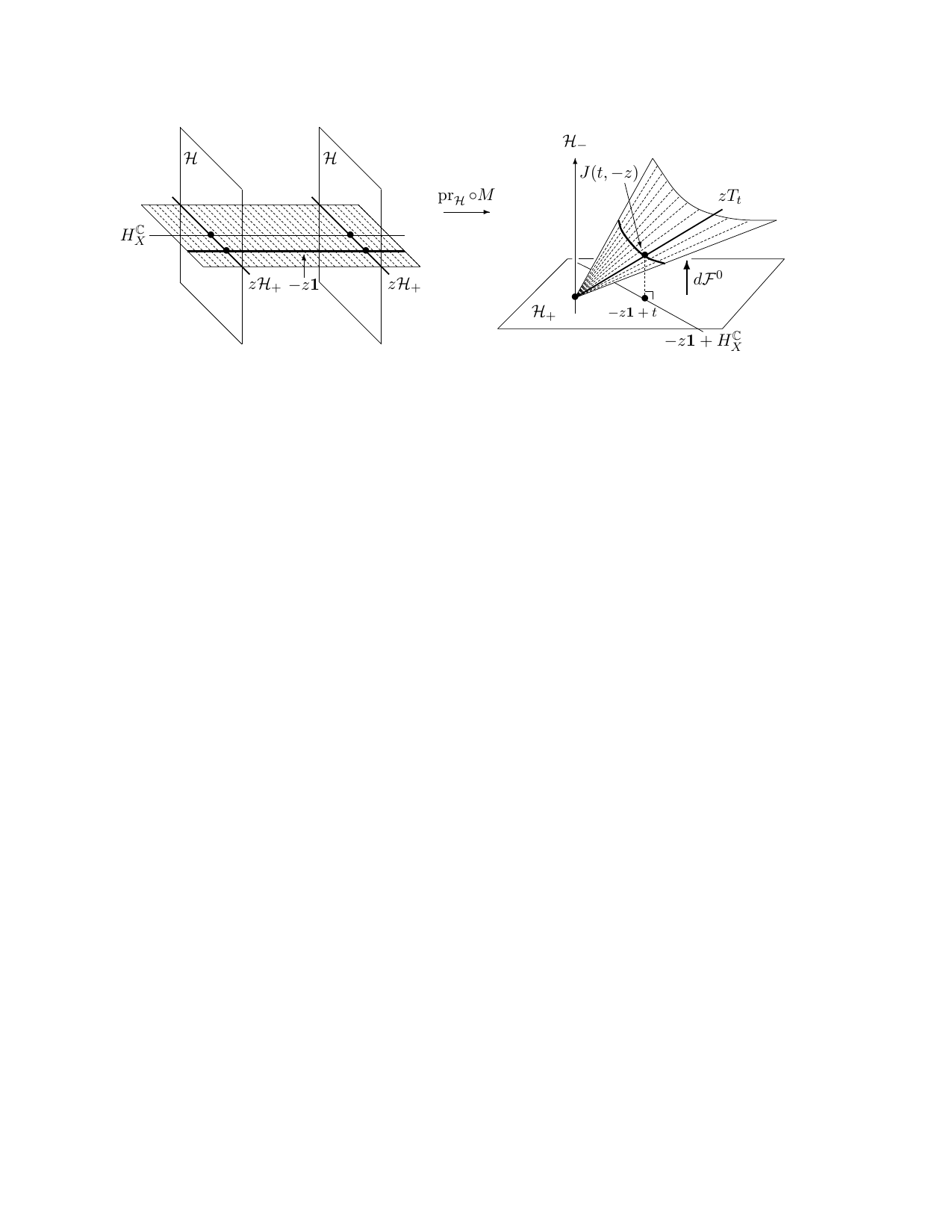}
\caption{The subbundle $z\cH_+\times H_X^\C \to H_X^\C$ 
(left) 
and the ruled Lagrangian submanifold $\cL$ 
(right). By identifying all the fibers $\cH$ 
by the $\nabla$-parallel transportation 
in the left picture, we get the picture 
on the right. 
The zero section collapses to the origin  
and the section $-z\unit$ goes to the $J$-function $J(t,-z)$.}
\label{fig:ruling}
\end{figure} 
Using this identification, we can introduce 
two different co-ordinate systems on $\cL$. 
\begin{description} 
\label{desc:flat-vs-alg} 
\item[flat co-ordinates] 
$(q_0, q_1,q_2, \cdots) 
\mapsto (\bq, d \cF^0(\bq))$.  These are the co-ordinates 
given by the projection to $\cH_+$; here $q_n = \sum_{i=0}^N q_n^i \phi_i 
\in H_X^\C$. 

\item[algebraic co-ordinates]
$(t,x_1,x_2,\dots,) 
\mapsto M(t,z) (x_1 z + x_2 z^2 + x_3 z^3 + \cdots)$.  These are
the co-ordinates coming from the standard co-ordinates 
on $z\cH_+ \times H_X^\C$; here $t, x_n \in H_X^\C$. 
\end{description} 
We saw that the Lagrangian submanifold $\cL$ can be identified with 
the total space of the infinite-dimensional vector bundle 
$z\cH_+ \times H_X^\C \to H_X^\C$. 
This infinite-dimensional vector bundle arises 
from the \emph{finite dimensional} vector bundle 
\[
F= H_X^\C \times (H_X^\C\times \C_z)  
\to H_X^\C \times \C_z
\]
as its push-forward $\pi_*(z \cO(F))$ 
along the projection $\pi \colon H_X^\C \times \C_z \to H_X^\C$; 
here $\C_z$ denotes the complex plane with co-ordinate $z$. 
The finite dimensional vector bundle $F$ over $H_X^\C \times \C_z$ 
is endowed with a flat connection $\nabla$ and a $\nabla$-flat 
pairing $(\cdot,\cdot)_F$. 
The structure $(F,\nabla,(\cdot,\cdot)_F)$ here is given the name \emph{TP structure} in Definition~\ref{def:TP} below;
this terminology is borrowed from Hertling~\cite{Hertling:ttstar}. 
The global quantization formalism in \S\ref{sec:global_theory} 
is based on a closely related structure called a \emph{cTP structure}, for `complete TP' structure: 
we replace the Lagrangian submanifold $\cL$ above with the total space of a cTP structure. 
The use of algebraic co-ordinates will be important there. 

\begin{remark} 
  Both TP structures and variations of semi-infinite Hodge structure 
  are generalizations of the notion of \emph{Variation of Hodge Structure} (VHS), and
  in fact reduce to it when we deal with the small quantum cohomology
  of a 3-dimensional Calabi--Yau manifold.  These structures originate from K.~Saito's theory of primitive forms~\cite{SaitoK}, and have been
  rediscovered in the context of integrable systems, string theory, and
  mirror symmetry~\cite{Dubrovin:2DTFT,Barannikov,Katzarkov--Kontsevich--Pantev}.
\end{remark}

\subsection{Geometric Quantization} 
\label{subsec:geometricquantization} 
The quantization of a real symplectic manifold $H$ 
is given by a Hilbert space $\Fock(H)$ called 
the \emph{Fock space}, and an assignment of an operator 
$\hF$ acting on the Fock space $\Fock(H)$ to 
a smooth function $F\colon H \to \R$ 
such that 
\[
[\hF_1, \hF_2] = 
\iu\hbar \widehat{\{F_1, F_2\}} + O(\hbar^2) 
\]
where $\{\cdot, \cdot \}$ 
is the Poisson bracket and 
$\hbar$ is a formal variable. 
In geometric quantization, the construction 
of the Fock space depends on the choice 
of a polarization $P$, i.e.~an integrable 
Lagrangian subbundle of $TH\otimes \C$. 
To emphasize the dependence on $P$, 
we denote by $\Fock(H;P)$ the Fock space associated to $P$. 
We illustrate this in the following example.

\begin{example}[\!\cite{Kirillov,Woodhouse}] 
\label{ex:quantization}
Take $H$ to be the symplectic vector space $\R^{2n}$ 
with co-ordinates $(p_\mu, q^\mu)$, $\mu = 1,\dots,n$. 
Let $\omega =\sum_{\mu=1}^n dp_\mu \wedge dq^\mu$ be 
the symplectic form on $H$. 
The \emph{prequantum line bundle} is a Hermitian  
line bundle $L\to H$ endowed with a Hermitian 
connection $\nabla$ such that the curvature 
$\nabla^2$ equals $-\iu \omega/\hbar$, where 
$\hbar$ is a positive real parameter in this example. 
We take the following prequantum line bundle: 
\begin{align*}
  L = H \times \C &&  \nabla = d - 
\frac{\iu}{2\hbar} \sum_{\mu=1}^n (p_\mu dq^\mu - q^\mu dp_\mu)
\end{align*}
The connection $\nabla$ here is 
$Sp(H)$-invariant. 
For $F\in C^\infty(H,\R)$, we define 
\[
\hF : = \iu \hbar \nabla_{X_F} + F 
\]
where $X_F$ is the Hamiltonian vector field of $F$ 
(i.e.\ $\iota_{X_F} \omega = dF$). 
This operator acts on the space $C^\infty(H,L)$ of sections of 
$L$ and we have $[\hF, \hG]= \iu\hbar 
\widehat{\{ F, G\}}$. 
This is called \emph{prequantization}. 
However, $C^\infty(H,L)$ is too big and we 
need to take a smaller subspace. 
Let $P\subset H\otimes \C$ be a Lagrangian subspace. 
We can view $P$ as a subbundle of $TH\otimes \C$ 
which is invariant under translation. 
The space of \emph{polarized sections} of $L$ is 
defined to be 
\[
\Gamma_P(H,L) = \{ s\in C^\infty(H,L) : 
\text{$\nabla_V s = 0$ for all $V \in P$}\}
\]
Note that $[\nabla_{V_1}, \nabla_{V_2} ] = 0$ for 
$V_1,V_2 \in P$ 
because $P$ is Lagrangian and 
$\nabla^2 = - \iu \omega/\hbar$. 
There are two important special cases: 
\begin{itemize} 
\item When $P\subset H$, it is called the 
\emph{real polarization}. In this case, $\Gamma_P(H,L)$ 
is the space of sections of $L$ which are covariantly 
constant along each leaf $v + P$, $v\in H$. 

\item When $P \oplus \ov{P} = H\otimes \C$, it is called 
a \emph{K\"{a}hler} or \emph{holomorphic polarization}. 
This corresponds to the choice of a complex structure 
$I_P$ on $H$ such that $\omega(v_1,v_2) = 
\omega(I_P v_1, I_P v_2)$ and $P = (H\otimes \C)^{0,1}$. 
In this case, $\Gamma_P(H,L)$ is the space of 
holomorphic sections of $L$ (with respect to $I_P$). 
\end{itemize}
Suppose that $P$ is non-negative, i.e.~that $\iu \omega(v,\overline{v}) \ge 0$
for all $v\in P$. Then one can introduce a certain $L^2$-metric on
the space of polarized sections~\cite{Kirillov} and
define the Fock space $\Fock(H;P)$ to be the Hilbert space
of $L^2$-polarized sections.
If the flow generated by $X_F$ 
preserves the polarization $P$ as a subbundle 
of $TH\otimes \C$, then the operator $\hF$ preserves 
the subspace $\Gamma_P(H,L)$ and acts on the 
Fock space (possibly as an unbounded operator). 
In particular, the quantizations of the 
linear functions $p_\mu, q^\mu$ 
act on $\Fock(H,P)$ and satisfy the canonical 
commutation relation: $
[\hq^\mu, \hp_\nu] = \iu\hbar \delta^\mu_\nu$. 
Thus $\Fock(H,P)$ becomes an irreducible unitary 
representation of the Heisenberg algebra. 
Because an irreducible unitary representation of the Heisenberg 
algebra is unique up to isomorphism 
(the Stone--von~Neumann Theorem), 
Schur's Lemma shows that there exists 
an
isomorphism 
\[
T_{P,P'} \colon \Fock(H,P) \stackrel{\sim}{\longrightarrow} \Fock(H,P')
\]
of representations of the Heisenberg algebra. 
This isomorphism $T_{P,P'}$ is unique up to scalar multiplication.
For example, when $P$ is the subbundle 
spanned by $\partial/\partial p_\mu$ and 
$P'$ is spanned by $\partial/\partial q^\mu$, 
the isomorphism $T_{P,P'}$ is given by the Fourier 
transformation 
\begin{equation} 
\label{eq:FT}
\psi(q) \longmapsto 
\hpsi(p) = \frac{1}{(2\pi \hbar)^{n/2}}
\int_{\R^n} e^{-\iu pq/\hbar} \psi(q) \, dq 
\end{equation} 
where we identify elements of $\Fock(H,P)$ (respectively of $\Fock(H,P')$) 
as functions of the $q^\mu$ (respectively of the $p_\mu$) by 
restriction to $p_\mu=0$ (respectively to $q^\mu=0$). 
The transformation $T_{P,P'}$ is known as a 
\emph{Segal--Shale--Weil representation} 
or \emph{Bogoliubov transformation}.  
\end{example} 

We regard the Givental space $\cH$ as a complexification 
of a real symplectic vector space $\cH_\R$ and 
try to apply the above scheme to it. 
However, since $\cH$ is infinite-dimensional, 
the quantization has many difficulties. 
For example, it is known that there are uncountably 
many irreducible representaions of the infinite-dimensional Heisenberg algebra~\cite{Florig--Summers,Garding--Wightman}, and so 
the argument in Example~\ref{ex:quantization} fails in our situation. 
The following heuristic discussion will be only used 
as a motivation\footnote{ 
For example, we \emph{do not} construct 
the Fock space as a representation of the Heisenberg 
algebra. Our Fock space is not even a vector space.}. 
Consider the following prequantum line bundle: 
\begin{align*}
  L = \cH \times \C &&
  \nabla = d - \frac{1}{2\hbar} 
  \sum_{n=0}^\infty \sum_{i=0}^N  (p_{n,i} dq_n^i -  q_n^i dp_{n,i})    
\end{align*}
Here we dropped the imaginary unit since we will ignore 
the metric. 
As the standard polarization of $\cH$, we take  
$P = \cH_-$ which is spanned by 
$\partial/\partial p_{n,i}$. 
In this case, a polarized section $s$ of $L$ 
should take the form 
\begin{align*}
  s = \exp\left(-\frac{1}{2\hbar} \bq\cdot \bp\right) f(\bq) &&
  \bq \cdot \bp = \sum_{n=0}^\infty \sum_{i=0}^N q_n^i  p_{n,i}
\end{align*}
for some holomorphic function $f$ on $\cH_+$. 
Following Givental's convention~\cite{Givental:quantization}, 
we define the quantized operator of 
a linear function $F\colon \cH \to \C$ as: 
\[
\hF := \frac{1}{\sqrt{\hbar}} \big( - \hbar 
\nabla_{X_F} + F\big)
\]
Then it is easy to check that the actions  of 
$p_{n,i}$,~$q_n^i$ on 
polarized sections are given by:
\begin{align*} 
\hp_{n,i} 
\left( e^{-\frac{1}{2\hbar} \bq\cdot \bp} f(\bq) \right) 
& = e^{-\frac{1}{2\hbar} \bq\cdot \bp} 
\left( 
\sqrt{\hbar} \parfrac{}{q_n^i} f(\bq) 
\right) \\ 
\hq_n^i 
\left( e^{-\frac{1}{2\hbar} \bq \cdot \bp} f(\bq) \right) 
& = e^{-\frac{1}{2\hbar} \bq \cdot \bp} 
\left ( 
\frac{q_n^i}{\sqrt{\hbar}} f(\bq)\right)
\end{align*} 
These give the Schr\"{o}dinger representation. 
By Dilaton shift we regard the 
total descendant potential $\cZ = \exp(\sum_{g=0}^\infty 
\hbar^{g-1} \cF^g)$ of $X$ (see equation~\ref{eq:totaldescendantpot}) 
as a function on $\cH_+$. (Here again 
all Novikov variables $Q_1,\dots, Q_r$ have been specialized to $1$.) 
We regard the total descendant potential as a polarized section of $L$ 
by the following extension: 
\begin{equation}
\label{eq:pol_section_Z} 
\ov\cZ(\bq,\bp) = \exp\left( 
- \frac{1}{2\hbar}\bq \cdot \bp\right) \cZ(\bq) 
\end{equation} 
Let us consider the restriction of $\ov\cZ$ to the 
Lagrangian submanifold $\cL$. Note that  
$\cF^0$ is homogeneous of degree two in $\bq$ 
since $\cL$ is a cone. Therefore:
\begin{align*} 
\cZ '(\bq) = 
\ov\cZ(\bq,\bp)\Big|_{(\bq,\bp)\in \cL}  & = \exp 
\left( -\frac{1}{2\hbar} \bq \cdot d\cF^0\right )\cZ(\bq) 
= \exp\left( -\frac{1}{\hbar} \cF^0 \right) \cZ(\bq) \\  
&= \exp\left(
\cF^1(\bq) + \hbar \cF^2(\bq) + \hbar^2 \cF^3(\bq)+ 
\cdots  \right)
\end{align*} 
where the genus $0$ potential cancelled in the 
second line. 
Therefore \emph{we can forget about the genus-zero 
potential after restricting to $\cL$}.  
Moreover the original polarized section can be reconstructed 
from this restriction if we know the submanifold 
$\cL$. Therefore we shall define the Fock space 
to be the set of certain functions $\cZ' \colon \cL \to \C$ 
over $\cL$ of the form 
\[
\cZ' = \exp\left(\sum_{g=1}^\infty \hbar^{g-1} \cF^g\right)
\] 
(without genus-zero term).   Different choices of polarization give different ways
of extending functions $\cZ'$ on $\cL$ to $\cH$.

\begin{remark} 
\label{rem:lower_triangular}
Givental~\cite{Givental:quantization} 
defined the quantized operator $\hUU$ 
on the Fock space for a linear 
symplectic transformation $\U \in Sp(\cH)$. 
In particular, if $\U(z)$ is an element of the 
loop group $LGL(H_X^\C)$ satisfying 
$\U(-z)^\dagger \U(z) =\unit$, it defines a 
symplectic transformation $\U$ of $\cH$ and 
its quantization $\hUU$. 
(Here $\U(-z)^\dagger$ is the adjoint with 
respect to the Poincar\'{e} pairing on $H_X$.) 
This is called the \emph{Givental group action} 
on the Fock space. 
The above interpretation in geometric quantization 
immediately explains the lower triangular part 
of the Givental group action. 
If the Fourier expansion of $\U(z)$ contains 
only non-positive powers in $z$, then $\U=\U(z)$ preserves the 
standard polarization $\cH_-$. In this case, we can 
make it act on polarized sections $s$ via the 
``co-ordinate change": $s\mapsto \hUU s 
:= s \circ \U(z)^{-1}$. 
For the polarized section $\ov\cZ$ 
in \eqref{eq:pol_section_Z}, 
we have: 
\[
(\hUU \ov\cZ)(\bp,\bq) = 
\exp\left(-\frac{1}{2\hbar} \bp\cdot \bq\right)
\exp\left(\frac{1}{2\hbar} W(\bq,\bq)\right)  
\cZ\left([\U(z)^{-1} \bq(z)]_+\right)
\] 
where $[\U(z)^{-1}\bq(z)]_+ \in \cH_+$ 
denotes the non-negative part as a $z$-series and 
$W(\bq,\bq)$ is the quadratic form defined by: 
\begin{equation} 
\label{eq:form-W} 
W(\bq,\bq)= \Omega([\U(z)^{-1} \bq(z)]_+, [\U(z)^{-1} \bq(z)]_-)
\end{equation} 
This coincides with Givental's formula~\cite[Proposition~5.3]{Givental:quantization}
for $\hUU$. 
\end{remark} 

\subsection{Ancestor-Descendant Relation}
\label{subsec:AncDec} 
We have seen that the total descendant potential $\cZ$ gives rise
to a polarized section $\ov\cZ$ which restricts over $\cL$ to the potential $\cZ'$
which does not contain the genus-zero term.  
A result of Kontsevich--Manin~\cite[Theorem~2.1]{Kontsevich--Manin:relations},
reformulated\footnote{See Coates--Givental~\cite[Appendix~2]{Coates--Givental} for 
a proof.} by Givental~\cite[\S5]{Givental:quantization},  
tells us that $\cZ'$ coincides with the total ancestor potential 
\eqref{eq:ancestorpotential}
with the zeroth variable $y_0$ set to be zero:
\[
\cZ'(\bq) = e^{F^1(t)} \cA_t 
\Bigr |_{y_0 =0,\, y_1 = x_1 + \unit,\, 
y_2 = x_2,\, y_3 = x_3, \dots} 
\]
where $(t,x_1,x_2,\dots)$ are 
the \emph{algebraic co-ordinates} 
from page \pageref{desc:flat-vs-alg}. 
Set $\bx=\sum_{n=1}^\infty x_n z^n$, and notice that $\bq$ and $(t,\bx)$ are 
related by $\bq = [M(t,z) \bx]_+$. 
In other words, for $g\ge 1$, we have:
\[
\cF^g(\bq) = \delta_{g,1} F^1(t) + \bar\cF^g_{t}
\Bigr |_{y_0 =0,\, y_1 = x_1 + \unit,\, 
y_2 = x_2,\, y_3 = x_3, \dots}
\]
Strictly speaking, this relation is an equality of 
formal power series over the Novikov ring.  
We shall explain how to make sense of it as an equality of analytic functions in Theorem~\ref{thm:ancdec-analytic} below,
where we discuss the specialization to $Q_1 = \cdots =Q_r =1$. 

\subsection{Transformation Rule and the Fock Sheaf}  
\label{subsec:motivation-Focksheaf}
We have so far discussed only local situations, since 
the Lagrangian submanifold $\cL$ is given \emph{a~priori} as a germ. 
Assume that $\cL$ is analytically continued 
to a certain global submanifold. 
We would like to construct a Fock sheaf over $\cL$ 
by gluing local Fock spaces. 
The essential point here 
is that \emph{a transversal polarization may not exist globally}. 
Take an open covering $\{U_\alpha\}$ of $\cL$. 
If each $U_\alpha$ is sufficiently small then
we can choose a polarization $P_\alpha\subset \cH$ 
which is transversal to $\cL$ over $U_\alpha$, i.e.\ 
$P_\alpha \pitchfork T_x\cL$ for $x\in U_\alpha$. 
We take a Lagrangian subspace $S \subset \cH$ 
transversal to $P_\alpha$. 
By the identification $\cH = S \oplus P_\alpha 
\cong T^* S$, we can express 
$U_\alpha \subset \cL$ as the graph of the differential 
$d\cF^0$ of a quadratic function:
\[
\cF^0 \colon S \to \C
\] 
This defines the genus-zero potential over $U_\alpha$. 
(Here we identify $U_\alpha$ with a subset of $S$ via the 
projection $\cH \to S$ along $P_\alpha$.) 
The third derivatives $C_{\mu\nu\rho}^{(0)} = 
\partial_\mu \partial_\nu \partial_\rho 
\cF^0$ in linear coordinates $\{x^\mu\}$ on $S$ 
define a well-defined cubic tensor on $\cL$,
independent of the choice of $(S,P_\alpha)$. 
The tensor $\sum C^{(0)}_{\mu\nu\rho} 
dx^{\mu} \otimes dx^{\nu} \otimes dx^{\rho}$ 
is called the \emph{Yukawa coupling}. 

We define the local Fock space $\Fock(U_\alpha; P_\alpha)$ 
to be the set of functions $\cZ' \colon U_\alpha \to \C$ 
of the form $\cZ'= 
\exp(\sum_{g=1}^\infty \hbar^{g-1} \cF^g)$ 
without the genus-zero term. 
When $U_\alpha \cap U_\beta \neq \varnothing$, 
we shall define a \emph{transformation rule}\footnote
{Our transformation rule $T_{\alpha\beta}$ 
is defined up to a scalar multiple, due to the ambiguity 
at genus one,
and $T_{\gamma \alpha} T_{\beta\gamma} T_{\alpha\beta}  = 
c_{\alpha\beta\gamma} \id$ for some constant 
$c_{\alpha\beta\gamma}$.  
Later we ignore the constant ambiguity and 
work with the genus-one one-form $d\cF^1$ 
rather than the potential function $\cF^1$. 
See Definition~\ref{def:localFock} and Proposition~\ref{prop:cocycle}.} 
(gluing map) 
\[
T_{\alpha\beta} \colon 
\Fock(U_\alpha\cap U_\beta;P_\alpha) \to \Fock(U_\alpha\cap U_\beta;P_\beta) 
\]
induced by the change of polarizations. 
This defines a sheaf of Fock spaces---\emph{the Fock sheaf}---over $\cL$. 
\begin{figure}[tbp]
\centering 
\includegraphics[bb=162 593 436 737]{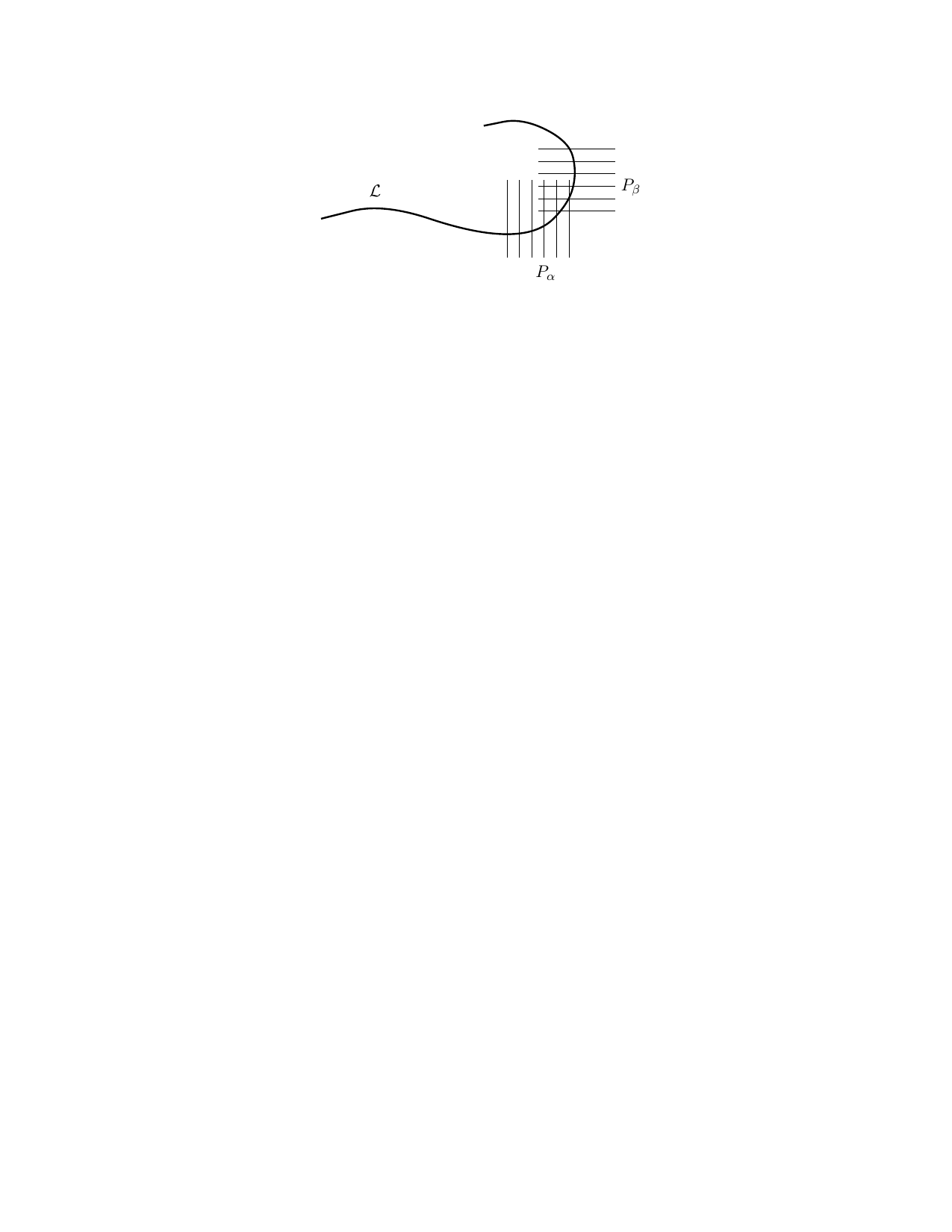}
\caption{We need to take a different polarization 
on each chart.}
\end{figure}
Moreover, if there exists a linear symplectic transformation 
$\U \in Sp(\cH)$ which leaves the global Lagrangian 
submanifold invariant ($\U(\cL) = \cL$), then $\U$ acts 
on sections of the Fock sheaf by pull-back along $\U$ 
followed by the transformation rule induced from 
the difference of polarizations. 
(In the context of mirror symmetry, such an automorphism 
$\U$ of $\cL$ arises from the monodromy 
of the mirror family.)
For the Fock sheaf so constructed, we can ask the following questions. 

\begin{question}
Does the total descendant potential extend to a global section 
of the Fock sheaf? 
\end{question}

\noindent and if so:

\begin{question}
Let $\U \in Sp(\cH)$ be a symplectic transformation preserving $\cL$.  
Is the global section invariant under $\U$?  
(This should imply a ``modularity" of the Gromov--Witten potential.)
\end{question} 
The transformation rule $T_{\alpha\beta}$  
is described as follows. 
Let $T_x$ denote the tangent space of $\cL$ at $x\in \cL$. 
For each $x\in U_\alpha\cap U_\beta$, we have 
$T_x \oplus P_\alpha = T_x \oplus P_\beta = \cH$. 
Since $T_x, P_\alpha,P_\beta$ are Lagrangian subspaces, 
we can identify $P_\alpha,P_\beta$ with the dual space of $T_x$ 
via the symplectic form $\Omega$. 
Take $\varphi \in T_x^*$ and let $v_\alpha(\varphi)\in P_\alpha$, 
$v_\beta(\varphi)\in P_\beta$ be the corresponding 
vectors. Then $v_\beta(\varphi) - v_\alpha(\varphi)$ is 
symplectic-orthogonal to $T_x$, and thus belongs to $T_x$. 
Thus we have a map:
\begin{align*}
  \Delta(x) \colon T_x^* \longrightarrow T_x &&
  \varphi \longmapsto v_\beta(\varphi) - v_\alpha(\varphi)
\end{align*}
We can regard $\Delta(x)$ as an element of $T_x\otimes T_x$. 
Then $\Delta$ defines a symmetric bivector field 
on $U_\alpha\cap U_\beta$. 
The polarization $P_\alpha$ defines an affine flat structure 
on $U_\alpha$, via the open embedding to the 
vector space $U_\alpha \hookrightarrow \cH/P_\alpha$. 
Let $\{x^\mu\}$ be a flat co-ordinate system on $U_\alpha$. 
Write $\Delta = \Delta^{\mu\nu} 
\partial_{x^\mu}\otimes \partial_{x^\nu}$. 
For $\cZ_\alpha' = 
\exp(\sum_{g = 1}^\infty \hbar^{g-1} \cF_\alpha^g) \in 
\Fock(U_\alpha\cap U_\beta;P_\alpha)$, we define:
\begin{equation} 
\label{eq:transf-rule_expository}
(T_{\alpha\beta} \cZ'_\alpha)(x)  
:= e^{\frac{1}{2}
\int C^{(0)}_{\mu\nu\rho}(x) 
\Delta^{\mu\nu}(x) dx^\rho} 
\exp\left(\frac{\hbar}{2} \Delta^{\mu\nu}(x) 
\partial_{y^\mu} \partial_{y^\nu}\right) 
\cZ_\alpha''(x;y) \biggr |_{y=0}
\end{equation} 
where 
\[
\cZ_\alpha''(x;y) = \cZ_\alpha'(x+y) 
e^{\frac{1}{\hbar} \left( 
\cF^0(x+y) - \cF^0(x) - (\partial_{\mu} \cF^0(x)) y^{\mu} 
- \frac{1}{2} (\partial_{\mu} \partial_{\nu} \cF^0)(x) 
y^{\mu} y^\nu \right)}
\]
\begin{remark} 
Take Lagrangian subspaces $S_\alpha, S_\beta \subset \cH$ transversal 
to $P_\alpha, P_\beta$ respectively.  
These define genus-zero potentials $\cF^0_\alpha$, 
$\cF^0_\beta$ as above, and we set 
$\cZ_\alpha = \exp(\cF^0_\alpha/\hbar) \cZ_\alpha'$, 
$\cZ_\beta = \exp(\cF^0_\beta/\hbar) \cZ_\beta'$ 
for $\cZ_\alpha' \in \Fock(U_\alpha;P_\alpha)$, 
$\cZ_\beta' \in \Fock(U_\beta;P_\beta)$. 
The definition \eqref{eq:transf-rule_expository} 
originates from the asymptotic 
expansion as $\hbar \to 0$ of the Fourier-type transformation 
(cf.~equation~\ref{eq:FT}) 
\[
\cZ_\beta(x) = \int_{S_\alpha} 
e^{-G_{\alpha\beta}(x,x')/\hbar}\cZ_\alpha(x') \, dx' 
\]
which would make rigorous sense if $\cH$ were finite dimensional. 
In the finite dimensional case, this integral 
representation and its asymptotic expansion 
were used by Aganagic--Bouchard--Klemm~\cite[equation~2.8]{ABK}
to describe the transformation of topological 
string partition functions. 
Here $G_{\alpha\beta}(x,x')$ 
is a quadratic function (the so-called \emph{generating function}) 
defined by  
\[
d G_{\alpha\beta}(x,x') =  \sum y_\mu dx^\mu - \sum y_\mu' d{x'}^{\mu}
\]
where $(x^\mu,y_\mu)$ and $({x'}^{\mu}, y'_\mu)$ 
are Darboux co-ordinate systems on $\cH$ compatible 
with the decompositions $\cH = S_\alpha \oplus P_\alpha$ 
and $\cH=S_\beta \oplus P_\beta$ respectively.  
\end{remark} 

\begin{remark}
The discussion here is far from being rigorous.  
For instance, in the previous Remark we assumed that $(x^\mu, {x'}^\mu)$ 
form a co-ordinate system on $\cH$, which would hardly be true in our 
infinite-dimensional setting. 
In \S\S\ref{subsec:Fockspace}--\ref{subsec:transformation}, we set 
up a correct function space for the Fock space 
and show that the transformation rule 
is indeed well-defined.  
We will give another formulation based on $L^2$-topology 
in \S\ref{subsec:global_L2}, which is more similar to the 
exposition here. 
\end{remark} 

\section{Global Quantization: General Theory} 
\label{sec:global_theory} 

We now construct a rigorous version of the structure sketched out in~\S\ref{sec:globalquantization:motivation}.  Let $\cM$ be a complex manifold and let $\cO_{\cM}$ denote 
the analytic structure sheaf. 
The space $\cM$ will be the base space of a (c)TP structure. 
Examples include the cohomology of a smooth 
projective variety ($A$-model TP structure) and the base 
space of an unfolding of singularities ($B$-model TP structure). 
The discussion in this section also applies 
to the case where $\cM$ is replaced with 
the formal neighbourhood of a point on it, and 
in particular applies to formal Frobenius manifolds (such as those associated to the A-model or B-model). 

\subsection{TP and TEP Structure} 
\label{subsec:TP_TEP}
A TP structure is a certain coherent sheaf with extra structure
over $\cM\times \C$. 
Fix a co-ordinate $z$ on the complex line $\C$. 
Let $(-)\colon \cM \times \C \to \cM \times \C$ 
be the map sending $(t, z)$ to $(t,-z)$ 
and let $\pi \colon \cM\times \C \to \cM$ be 
the projection.

\begin{definition}
\label{def:TP} 
\ 
\begin{enumerate}
\item A \emph{TP structure} $(\cF, \nabla, (\cdot,\cdot)_{\cF})$ 
  with base $\cM$ consists of 
  a locally free $\cO_{\cM\times \C}$-module $\cF$ of rank $N+1$, and 
  a flat connection $\nabla$ with pole along $z=0$ 
  \[ 
  \nabla \colon \cF \to 
  \pi^* \Omega^1_{\cM} \otimes_{\cO_{\cM\times \C}} \cF(\cM\times \{0\})  
  \]  
  so that for $f\in \cO_{\cM\times \C}$, $s\in \cF$,
  and tangent vector fields $v_1,v_2\in \Theta_\cM$:
  \begin{align*}
    \nabla (f s) = df \otimes s + f \nabla s &&
    [\nabla_{v_1}, \nabla_{v_2}] = \nabla_{[v_1,v_2]}  
  \end{align*}
  together with a non-degenerate pairing 
  \[
  (\cdot,\cdot)_{\cF} \colon 
  (-)^* \cF \otimes_{\cO_{\cM\times \C}} \cF \to \cO_{\cM\times \C}
  \]
  which satisfies
  \begin{align*} 
    \begin{split} 
      ((-)^*s_1,s_2)_{\cF} 
      & = (-)^* ((-)^* s_2, s_1)_{\cF}   \\ 
      d ((-)^* s_1, s_2)_{\cF}  
      & = ((-)^* \nabla s_1, s_2)_{\cF} 
      + ((-)^*s_1,\nabla s_2)_{\cF}  
    \end{split} 
  \end{align*}
  for $s_1,s_2 \in \cF$. 
  Here $\cF(\cM\times \{0\})$ denotes the sheaf of 
  sections of $\cF$ with poles of order at most~1 
  along the divisor $\cM\times \{0\} \subset \cM \times \C$. 

\item A \emph{TEP structure} is a TP structure 
  such that the connection $\nabla$ is extended in 
  the $z$-direction with a pole of order 2 along $z=0$. 
  More precisely, it is a TP structure 
  $(\cF,\nabla,(\cdot,\cdot)_{\cF})$ 
  equipped with a $\pi^{-1}\cO_{\cM}$-module map  
  \[
  \nabla_{z\partial_z} \colon 
  \cF \to \cF(\cM\times \{0\}) 
  \]
  such that for $f\in \cO_{\cM\times \C}$, 
  $s$,~$s_1$,~$s_2 \in \cF$, and $v\in \Theta_\cM$:
  \begin{gather*} 
    \nabla_{z\partial_z} (fs) = z(\partial_z f) s + 
    f \nabla_{z\partial_z} s \qquad 
    [\nabla_v, \nabla_{z\partial_z} ] = 0 \\   
    z\partial_z ((-)^* s_1, s_2)_{\cF} 
    = ((-)^* \nabla_{z\partial_z} s_1, s_2)_{\cF} 
    + ((-)^* s_1, \nabla_{z\partial_z} s_2)_{\cF}
  \end{gather*} 
  Combining the $\cM$-direction with the $z$-direction, 
  we can view $\nabla$ as a map:
  \[
  \nabla \colon \cF \to 
  (\pi^*\Omega_{\cM}^1 \oplus \cO_{\cM\times \C} z^{-1}dz) 
  \otimes_{\cO_{\cM\times \C}} \cF(\cM\times \{0\}) 
  \]
\end{enumerate}
\end{definition} 

\begin{remark} 
These notions are due to Hertling~\cite{Hertling:ttstar}. 
TEP here stands for Twister, Extension and Pairing. 
Definitions similar to the one above 
were given in~\cite[Definition~2.6]{CIT:wall-crossings},~\cite[Definition~2.1]{Iritani:ttstar}. 
\end{remark} 

\begin{example} 
\label{ex:AmodelTP}
  An important class of examples of TEP structure is
  provided by the quantum cohomology of a projective algebraic variety
  $X$.  If the genus-zero Gromov--Witten potential $F^0_X$ converges
  in the sense of \S\ref{sec:convergence} then, as discussed there, we
  can specialize Novikov variables, setting $Q_1 = \cdots = Q_r = 1$,
  and regard $F^0_X$ as an analytic function on an open subset
  \eqref{eq:LRLnbhd} of $H_X\otimes \C$.  Denote this open set by
  $\cM_{\rm A}$. Then the Dubrovin connection (see
  \S\ref{subsec:Dubrovin_conn}) for the quantum cohomology of $X$
  defines a TEP structure over the analytic space $\cM =
  \cM_{\rm A}$, which we call the \emph{A-model TEP structure} for
  $X$, by setting:
\begin{equation}
    \label{eq:AmodelTP}
    \begin{minipage}{0.92\linewidth}
      \begin{itemize} 
      \item $\cF = H_X \otimes_\Q \cO_{\cM\times \C}$;
      \item $\nabla = d - \frac{1}{z} \sum_{i=0}^N (\phi_i *) dt^i
        +(\frac{1}{z^2} (E *) + \frac{1}{z} \mu) dz $;  
      \item $(\alpha(-z), \beta(z))_{\cF} = \int_X \alpha(-z) \cup
        \beta(z)$;
      \end{itemize}
    \end{minipage}
\end{equation}
where $E$ is the Euler vector field \eqref{eq:Eulerfield} and 
$\mu$ is the grading operator \eqref{eq:gradingoperator}. 
In the case where the genus-zero Gromov--Witten potential is not
known to converge, the same procedure defines a TEP structure over the formal 
neighbourhood of the origin in $H_X\otimes \Lambda$, 
viewed as a formal scheme over $\Lambda$. 
\end{example}

\subsection{cTP and cTEP Structure} 
\label{subsec:cTP} 
A cTP structure is a certain coherent sheaf with extra structure
over $\cM\times \hA^1$, where $\hA^1 = \Spf \C[\![z]\!]$ 
denotes the formal neighbourhood of zero in $\C$. 
A sheaf of modules over $\cM \times \hA^1$ 
is the same thing as a sheaf of $\cO_{\cM}[\![z]\!]$-modules. 
Let $(-)\colon \cM \times \hA^1 \to \cM \times \hA^1$ 
be the map sending $(t, z)$ to $(t,-z)$ as before. 
For an $\cO_{\cM}[\![z]\!]$-module $\sfF$, 
the structure of an $\cO_\cM[\![z]\!]$-module 
on the pull-back $(-)^* \sfF$ is defined by 
$f(z) (-)^* \alpha = (-)^* f(-z) \alpha$ for 
$f(z) \in \cO_{\cM}[\![z]\!]$ and $\alpha \in \sfF$. 
\begin{definition}
\label{def:cTP} 
\ 
\begin{enumerate}
\item A \emph{cTP structure} $(\sfF, \nabla, (\cdot,\cdot)_{\sfF})$ 
  with base $\cM$ consists of 
  a locally free $\cO_\cM[\![z]\!]$-module $\sfF$ of rank $N+1$, and
  a flat connection $\nabla$ with pole along $z=0$ 
  \[ 
  \nabla \colon \sfF \to 
  \Omega^1_{\cM} \otimes_{\cO_{\cM}} z^{-1} \sfF 
  \]  
  so that for $f\in \cO_\cM[\![z]\!]$, $s\in \sfF$, 
  and tangent vector fields $v_1,v_2\in \Theta_\cM$:
  \begin{align*}
    \nabla (f s) = df \otimes s + f \nabla s && 
    [\nabla_{v_1}, \nabla_{v_2}] = \nabla_{[v_1,v_2]} 
  \end{align*}
  together with a pairing 
  \[
  (\cdot,\cdot)_{\sfF} \colon 
  (-)^* \sfF \otimes_{\cO_{\cM}[\![z]\!]} \sfF \to \cO_{\cM}[\![z]\!] 
  \]
  which satisfies
  \begin{align*} 
    \begin{split} 
      ((-)^*s_1,s_2)_{\sfF} 
      & = (-)^* ((-)^* s_2, s_1)_{\sfF}   \\ 
      d ((-)^* s_1, s_2)_{\sfF}  
      & = ((-)^* \nabla s_1, s_2)_{\sfF} 
      + ((-)^*s_1,\nabla s_2)_{\sfF}  
    \end{split} 
  \end{align*}
  for $s_1,s_2 \in \sfF$. 
  The pairing $(\cdot,\cdot)_{\sfF}$ is assumed to be 
  non-degenerate in the sense that the induced 
  pairing on $\sfF_0 := \sfF/z\sfF$ 
  \[
  (\cdot,\cdot)_{\sfF_0} \colon \sfF_0\otimes_{\cO_{\cM}} 
  \sfF_0 \to \cO_{\cM} 
  \]
  is non-degenerate. 
  We regard $z^{-1} \sfF$ as a subsheaf of $\sfF[z^{-1}] : = 
  \sfF \otimes_{\cO_{\cM}[\![z]\!]} \cO_{\cM}(\!(z)\!)$. 

\item A \emph{cTEP structure} is a cTP structure 
  such that the connection $\nabla$ is extended in 
  the $z$-direction with a pole of order 2 along $z=0$. 
  More precisely, it is a cTP structure 
  $(\sfF,\nabla,(\cdot,\cdot)_{\sfF})$ 
  equipped with an $\cO_{\cM}$-module map  
  \[
  \nabla_{z\partial_z} \colon 
  \sfF \to z^{-1} \sfF
  \]
  such that for $f\in \cO_{\cM}[\![z]\!]$, 
  $s$,~$s_1$,~$s_2 \in \sfF$, and $v\in \Theta_\cM$:
  \begin{gather*} 
    \nabla_{z\partial_z} (fs) = z(\partial_z f) s + 
    f \nabla_{z\partial_z} s \qquad 
    [\nabla_v, \nabla_{z\partial_z} ] = 0 \\   
    z\partial_z ((-)^* s_1, s_2)_{\sfF} 
    = ((-)^* \nabla_{z\partial_z} s_1, s_2)_{\sfF} 
    + ((-)^* s_1, \nabla_{z\partial_z} s_2)_{\sfF}
  \end{gather*} 
  Combining the $\cM$-direction with the $z$-direction, 
  we can view $\nabla$ as a map:
  \[
  \nabla \colon \sfF \to 
  (\Omega_{\cM}^1 \oplus \cO_{\cM} z^{-1}dz) 
  \otimes_{\cO_{\cM}} z^{-1} \sfF
  \]
\end{enumerate}
\end{definition} 

\begin{remark} 
\label{rem:completion-of-TP} 
A TP structure (respectively a TEP structure) in Definition~\ref{def:TP} 
gives rise to a cTP structure (respectively a cTEP structure)   
by restriction to the formal neighbourhood of $z=0$ 
in $\cM\times \C$. 
In particular, the A-model TEP structure in 
Example~\ref{ex:AmodelTP} defines 
the \emph{A-model cTEP structure} 
over the formal neighbourhood of $z=0$. 
On the other hand, we do not know if every cTP  structure
admits an extension to $\cM \times \C$.
The first letter ``c" of cTP stands for ``complete".
\end{remark}

\begin{remark} 
In the remainder of this section we work with cTP structures, without extending the connection to the $z$-direction. 
Consequently the framework that we construct applies 
to cases, such as \emph{equivariant} quantum cohomology, where the flat connection cannot 
be extended to the $z$-direction.
An extension to the $z$-direction will play an important role 
when we construct a \emph{semisimple opposite module} 
in \S\ref{subsec:semisimpleopposite}, and in certain
 \emph{Virasoro symmetries} of the Fock space.  
\end{remark} 

\subsection{Total Space of a cTP Structure} 
\label{subsec:totalspace} 
We begin by studying the geometry of the total space 
of a cTP structure. The total space of a 
cTP structure is an algebraic analogue of Givental cone $\cL$ discussed in~\S\ref{subsec:Lagrangian_TP}. 

Let $(\sfF,\nabla, (\cdot,\cdot)_\sfF)$ be a cTP structure. 
Set $\sfF[z^{-1}] := \sfF\otimes_{\cO_{\cM}[\![z]\!]} 
\cO_{\cM}(\!(z)\!)$. 
This is a locally free $\cO_{\cM}(\!(z)\!)$-module.  
The pairing $(\cdot,\cdot)_{\sfF}$ induces a 
symplectic pairing 
\[
\Omega \colon \sfF[z^{-1}] \otimes_{\cO_{\cM}} 
\sfF[z^{-1}] \to \cO_\cM
\]
defined by:
\begin{equation}
\label{eq:symplecticform} 
\Omega(s_1, s_2) =  
\Res_{z=0} ((-)^*s_1,s_2)_{\sfF} \,dz
\end{equation} 
Note that this is anti-symmetric: $\Omega(s_1,s_2) = 
- \Omega(s_2,s_1)$. 
We define the dual modules 
$(z^n \sfF)^\vee$, $n\in \Z$, and $\sfF[z^{-1}]^\vee$ as:
\begin{align}
\label{eq:dualmodules} 
\begin{split}  
(z^n \sfF)^\vee &:= 
\injlim_{l} \sHom_{\cO_{\cM}}(z^n \sfF/z^l \sfF,\cO_{\cM})
\\
\sfF[z^{-1}]^\vee &:= \projlim_n 
\injlim_l \sHom_{\cO_{\cM}}(z^{-n}\sfF/z^l \sfF,\cO_{\cM})
\end{split}  
\end{align}  
There is a sequence of natural projections:
\[
\sfF[z^{-1}]^\vee \surj \cdots \surj (z^{-2} \sfF)^\vee 
\surj (z^{-1} \sfF)^\vee \surj \sfF^\vee \surj (z\sfF)^\vee 
\surj \cdots
\] 
The dual $(z^n\sfF)^\vee$ has the structure of an 
$\cO_{\cM}[\![z]\!]$-module such that the action of $z$ is nilpotent. 
It is locally isomorphic to 
$(\cO_{\cM}(\!(z)\!)/\cO_{\cM}[\![z]\!])^{\oplus (N+1)}$ 
as an $\cO_\cM[\![z]\!]$-module, where $N+1$ is the rank of $\sfF$. 
Also $\sfF[z^{-1}]^\vee$ is a locally free $\cO_{\cM}(\!(z)\!)$-module. 
The dual flat connection $\nabla^\vee$ is defined 
by:
\begin{align}
  \label{eq:dual_flat_connection}
  \nabla^\vee \colon 
  (z^{-1}\sfF)^\vee \to \sfF^\vee\otimes_{\cO_{\cM}}\Omega^1_{\cM} &&
  \pair{\nabla^\vee \varphi}{s} := d\pair{\varphi}{s} - 
  \pair{\varphi}{\nabla s}
\end{align}
The symplectic pairing gives an isomorphism 
\begin{align*}
  \sfF[z^{-1}] \cong \sfF[z^{-1}]^\vee &&
  s \mapsto \iota_s \Omega = \Omega(s, \cdot)
\end{align*}
which in turn induces the dual symplectic 
pairing $\Omega^\vee$ on $\sfF[z^{-1}]^\vee$: 
\begin{equation}
\label{eq:dualsymp} 
\Omega^\vee \colon \sfF[z^{-1}]^\vee \otimes_{\cO_{\cM}} 
\sfF[z^{-1}]^\vee \to \cO_{\cM} 
\end{equation}

\begin{definition} 
\label{def:totalspace}
The \emph{total space} $\LL$ 
of a cTP structure $(\sfF,\nabla, (\cdot,\cdot)_{\sfF})$ 
is the total space of 
the infinite-dimensional vector bundle 
associated to $z\sfF$. 
As a set, $\LL$ consists of all pairs 
$(t,\bx)$ such that $t\in \cM$ and $\bx\in z \sfF_t$. 
Let $\pr \colon \LL \to \cM$ denote the natural projection. 
We endow $\LL$ with the structure of a ringed space 
so that we can regard it as a ``fiberwise algebraic variety''
over $\cM$. 
For a connected open set $U\subset \cM$ such that 
$\sfF|_U$ is a free $\cO_U[\![z]\!]$-module, 
the ring of regular functions on $\pr^{-1}(U)$ 
is defined to be the polynomial ring over $\cO(U)$:  
\begin{equation}
\label{eq:strsheaf_prinvU}
\bcO(\pr^{-1}(U)) := \Sym_{\cO(U)}^\bullet 
\Gamma(U,(z\sfF)^\vee)
\end{equation} 
A basis of open sets of $\LL$ 
is given by the complements in $\pr^{-1}(U)$ of the zero-loci  
of regular functions in $\bcO(\pr^{-1}(U))$ 
for all such open sets $U\subset \cM$. 
For a general open set $V \subset \LL$ in this topology, 
$\bcO(V)$ is the ring\footnote
{When $V = \pr^{-1}(U)$ for a connected open set $U$ 
such that $\sfF|_U$ can be trivialized, 
one can check that $\bcO(V)$ coincides with 
the original definition \eqref{eq:strsheaf_prinvU}.  
More generally, for the complement $D(h)$ of the zero-locus 
of $h\in \bcO(\pr^{-1}(U))$, $\bcO(D(h))$ is the 
localization of the polynomial ring $\bcO(\pr^{-1}(U))$ by $h$: 
\[
\bcO(D(h)) = \bcO(\pr^{-1}(U))_h
\]
\begin{proof} 
Each element $r\in \bcO(D(h))$ can be 
locally written as $r=f/g$ for some $f,g \in \bcO(\pr^{-1}(U'))$ 
with $U'\subset U$. Then by the standard argument 
using Hilbert's Nullstellensatz, we can see that 
for each $t\in U'$, there exists $m\in \N$ such that 
$h^m (f/g)$ restricted to the fiber  
$z \sfF_t$ is a polynomial on $z \sfF_t$ 
($m$ here can depend on $t$). 
On the other hand, it is clear that 
we can take $m$ to be $\deg(g)$. 
Then $r h^{\deg(g)}$ is a polynomial 
in fiber variables with coefficients in holomorphic functions 
on the base $U$.
\end{proof} 
} of $\C$-valued functions 
which can be written locally as quotients $f/g$ 
of some polynomials $f, g \in \bcO(\pr^{-1}(U))$. 

Let $U\subset \cM$ be a connected 
open set such that $\sfF|_U$ is a free $\cO_U[\![z]\!]$-module. 
Then by \eqref{eq:strsheaf_prinvU}, 
$\bcO(\pr^{-1}(U))$ is graded by the degree of polynomials: 
\[
\bcO^n(\pr^{-1}(U)) = \Sym^n_{\cO(U)} \Gamma(U, (z\sfF)^\vee)
\]
The $\cO_{\cM}$-module $(z \sfF)^\vee$ 
has the increasing filtration 
$(z\sfF)_l^\vee = \sHom_{\cO_{\cM}}(z\sfF/z^{l+2}\sfF,\cO_{\cM})$.  
This induces the exhaustive increasing filtration on 
$\bcO(\pr^{-1}(U))$: 
\begin{align*}
  & \bcO_l(\pr^{-1}(U)) =
  \cO(U) + 
  \sum_{n=1}^\infty \sum_{\substack{l_1,\dots,l_n \ge 0 \\ 
      l_1 + \cdots + l_n \le l}} 
  \Gamma\big(U, (z \sfF)_{l_1}^\vee (z\sfF)_{l_2}^\vee 
  \cdots (z\sfF)^\vee_{l_n}\big)
  && l\ge 0  \\
  & \bcO_{-1}(\pr^{-1}(U)) := \{0\}
\end{align*}
such that 
\[
\{0\} \subset 
\bcO_0(\pr^{-1}(U)) \subset 
\bcO_1(\pr^{-1}(U)) \subset 
\bcO_2(\pr^{-1}(U)) \subset \cdots
\] 
and $\bcO_{l_1}(\pr^{-1}(U)) \bcO_{l_2}(\pr^{-1}(U))
\subset \bcO_{l_1+l_2}(\pr^{-1}(U))$. 
\end{definition} 

Let $U\subset \cM$ be a connected open set  
such that $\sfF|_U$ is a free $\cO_U[\![z]\!]$-module. 
Take a trivialization 
$\sfF|_U \cong \C^{N+1} \otimes \cO_U[\![z]\!]$.  
This induces a trivialization 
$\sfF[z^{-1}]|_U \cong \C^{N+1} \otimes 
\cO_U(\!(z)\!)$ and defines a dual frame 
$x_n^i \in \sfF[z^{-1}]^\vee$, 
$n\in \Z$, $0\le i\le N$, by: 
\begin{align} 
  \label{eq:frame_x}
  x_n^i \colon \sfF[z^{-1}]\Bigr|_U \cong 
  \C^{N+1} \otimes \cO_U(\!(z)\!) 
  & \longrightarrow \cO_U &&
  \sum_{m\in \Z} \sum_{j=0}^N a_m^j e_j z^m 
  \longmapsto a_n^i
\end{align}  
Here $e_i$, $0\le i\le N$, denotes the standard basis 
of $\C^{N+1}$. 
By restricting $x_n^i$ to $z \sfF$, we obtain
fiber co-ordinates $x_n^i$, $n\ge 1$, $0\le i\le N$, on $\LL|_U$. 
Assume that $\dim \cM = N+1 = \rank \sfF$ 
and let $t^0,\dots, t^N$ be a local co-ordinate system on $U$. 
We call $\{t^i, x_n^i : 0\le i\le N, \, n\ge 1\}$ 
an \emph{algebraic local co-ordinate system} on $\LL$. 
This corresponds to the algebraic co-ordinate system on 
the Lagrangian submanifold $\cL$ discussed on page \pageref{desc:flat-vs-alg}. 
In \S\ref{subsec:flatstronL} below, 
we will introduce a \emph{flat co-ordinate system} 
on the formal neighbourhood (or an $L^2$ or nuclear neighbourhood) of the fiber $\LL_t =\pr^{-1}(t)$.  
We have:
\[
\bcO(\pr^{-1}(U)) = 
\cO(U)\left[x_n^i : 0\le i\le N, n\ge 1\right]
\]
The grading is given by the degree as polynomials in the variables $x_n^i$. 
The filtration is given by:
\begin{equation} 
\label{eq:filtration} 
\bcO_l(\pr^{-1}(U)) = \left \{\sum_{n=0}^\infty  
\sum_{\substack{l_1,\dots,l_n\ge 0 \\ 
l_1 + \cdots + l_n \le l}} 
\sum_{i_1,\dots,i_n \ge 0} 
f^{l_1,\dots,l_n}_{i_1,\dots,i_n}(t) 
x_{l_1+1}^{i_1}\cdots x_{l_n+1}^{i_n} : 
f^{l_1,\dots,l_n}_{i_1,\dots,i_n}(t)
\in \cO(U)\right \}
\end{equation} 
Under this trivialization, 
we present the flat connection $\nabla$ as 
\begin{equation} 
\label{eq:presentation_nabla} 
\nabla s = d s - \frac{1}{z} \cC(t,z) s 
\end{equation} 
where $\cC(t,z) = \sum_{i=0}^N \cC_i(t,z) dt^i 
\in \End(\C^{N+1})\otimes \Omega_U^1[\![z]\!]$ and 
$s \in \C^{N+1} \otimes \cO_U[\![z]\!] 
\cong \sfF|_U$. 
The residual part $\cC(t,0) = (-z\nabla)|_{z=0}$ 
defines a section of $\End(\sfF_0) \otimes \Omega_U^1$, which is independent of the choice of trivialization. 
In the case of the A-model TEP structure in Example~\ref{ex:AmodelTP},  
we have $\cC(t,z) = \cC(t,0) = \sum_{i=0}^N (\phi_i*) dt^i$ 
with respect to the standard trivialization. 

\begin{definition}[The open subset $\LLo\subset \LL$] 
\label{def:possibilityofdilaton} 
Define the following open subsets: 
\begin{align*}
\sfF_{0,t}^\circ & := 
\{ x_1 \in \sfF_{0,t} :
\text{$T_t \cM \to \sfF_{0,t}$, $\ v\mapsto \cC_v(t,0) x_1$ is an isomorphism}\}  \\  
\LLo & := \{ (t,\bx) \in \LL : \text{$t\in \cM$, $\bx \in z \sfF_t$, $(\bx/z)|_{z=0} \in \sfF_{0,t}^\circ$} \}
\end{align*}
We set $\sfF_0^\circ = \bigcup_{t\in \cM} \sfF_{0,t}^\circ$. 
This is an open subset of the total space of $\sfF_0$. 
\end{definition} 

Henceforth we assume that our 
cTP structure $(\sfF,\nabla, (\cdot,\cdot)_\sfF)$ 
is \emph{miniversal}, which means: 
\begin{assumption} 
\label{assump:miniversal} 
At every point $t\in \cM$, 
$\sfF_{0,t}^\circ$ is a non-empty Zariski 
open subset of $\sfF_{0,t}$. 
\end{assumption} 

This assumption implies in particular that $\dim \cM = \rank \sfF$. 
Miniversality holds for the cTP structure defined
by quantum cohomology because $\phi_0= \unit$ 
is a section of $\sfF_0^\circ$. 
Using an algebraic local co-ordinate system $\{t^i,x_n^i\}$ 
on $\LL$, we can write $\LLo$ as 
the complement of the zero-locus 
of the degree $N+1$ polynomial $P(t,x_1)$ defined by
\begin{equation} 
\label{eq:discriminant}
P(t,x_1) := (-1)^{N+1}\det\big(\cC_0(t,0) x_1, \cC_1(t,0) x_1, \dots, \cC_N(t,0) x_1\big) 
\in \cO(U)[x_1^0,\dots,x_1^N] 
\end{equation} 
where $\cC_i(t,z) = \cC_{\partial/\partial t^i}(t,z)$. 
We call $P$ the \emph{discriminant}. 
More invariantly, we can think of 
$P(t,x_1) dt^0\wedge \dots \wedge dt^N$ 
as a section of the line bundle 
$\pr^*(\det(\sfF_0) \otimes K_\cM)$ over $\LL$. 
In the case of the A-model TEP structure in Example~\ref{ex:AmodelTP}, 
we have $P(t,x_1) = \det(-x_1*_t)$ under the standard 
trivialization. 
The ring of regular functions over 
$\pr^{-1}(U)^\circ := \pr^{-1}(U) \cap \LLo$ 
is 
\[
\bcO(\pr^{-1}(U)^\circ)  
= \cO(U)[\{x_n^i\}_{n\ge 1,0\le i\le N}, 
P(t,x_1)^{-1}].  
\]
Since $P(t,x_1)$ is homogeneous in $x_1$ and 
lies in the zeroth filter, $\bcO(\pr^{-1}(U)^\circ)$ 
inherits the grading and the filtration. 
Since we will almost always deal with open sets of the form 
$\pr^{-1}(U)$ or $\pr^{-1}(U)^\circ$,  
we will omit the domain $\pr^{-1}(U)$ or 
$\pr^{-1}(U)^\circ$ from the notation, writing 
e.g.~$\bcO^n$, $\bcO_l$ 
for $\bcO^n(\pr^{-1}(U))$, $\bcO_l(\pr^{-1}(U))$ 
(or for $\bcO^n(\pr^{-1}(U)^\circ)$, 
$\bcO_l(\pr^{-1}(U)^\circ)$). 
We also write $\bcO^n_l := \bcO^n \cap \bcO_l$. 

The sheaf $\bOmega^1$ of one-forms on $\LL$ 
is defined on a local co-ordinate chart as 
\[
\bOmega^1 = \bigoplus_{i=0}^N  
\bcO  d t^i \oplus  
\bigoplus_{n =1}^\infty 
\bigoplus_{i=0}^N  \bcO d x_n^i   
\]
and then glued in the obvious way. 
The grading and the filtration on $\bOmega^1$ 
are determined by:
\begin{align}
  \label{eq:grading_filtration_coord}
  \deg(dt^i) = 0 &&
  \deg(d x_n^i) = 1 &&
  \filt(dt^i) =-1 &&
  \filt(d x_n^i) = n-1  
\end{align}
Here $\filt(y)$ denotes the least number $m$
such that $y$ belongs to the $m$th filter. 
We have:
\begin{equation*} 
(\bOmega^1)^n_l = \bigoplus_{i=0}^N \bcO^n_{l+1} 
d t^i \oplus \bigoplus_{l_1+l_2\le l} \bigoplus_{i=0}^N 
\bcO_{l_1}^{n-1} d x_{l_2+1}^i
\end{equation*} 
More generally, we set:
\[
\big((\bOmega^1)^{\otimes m}\big)_l^n = 
\sum_{l_1+\dots+l_m \le l} \sum_{n_1+\dots+n_m=n} 
(\bOmega^1)_{l_1}^{n_1} \otimes \cdots 
\otimes (\bOmega^1)_{l_m}^{n_m}
\]
The sheaf $\bTheta$ of tangent vector fields 
on $\LL$ 
is the dual of $\bOmega^1$: 
\[
\bTheta := 
\sHom_{\bcO}(\bOmega^1, \bcO) 
\underset{\text{(locally)}}{=} \prod_{i=0}^N 
\bcO  \partial_i \times 
\prod_{n=1}^\infty \prod_{i=0}^N 
\bcO \partial_{n,i}
\]
where $\partial_i := \partial/\partial t^i$,  
$\partial_{n,i} := \partial/\partial x_n^i$. 
Note that $\bOmega^1$ is the direct \emph{sum} 
whereas $\bTheta$ is the direct \emph{product}. 

\subsection{Yukawa Coupling and Kodaira--Spencer Map} 
\label{subsec:Yukawa} 
Recall from \S\ref{subsec:motivation-Focksheaf} that the Yukawa coupling 
is the third derivative of the genus-zero potential. 
In terms of an algebraic local co-ordinate system, 
this has the following simple definition. 
We start by noting that the flatness of $\nabla$ implies 
\begin{align*}
  [\cC_i(t,0), \cC_j(t,0)] = 0 && \text{for all $i$,~$j$.}
\end{align*} 
Also the flatness of the pairing implies:
\[
(\cC_i(t,0) s_1, s_2)_{\sfF_0} = (s_1, \cC_i(t,0) s_2)_{\sfF_0}
\]
Hence the operators $\cC_i(t,0)$ together yield a structure similar to 
a Frobenius algebra. (In order to define a Frobenius algebra 
structure on $\sfF_0$, one needs to choose an identity element from 
$\sfF_0^\circ$.) 
\begin{definition}
\label{def:Yukawa} 
The \emph{Yukawa coupling} is 
a symmetric cubic tensor 
\[
\bY = \sum_{i,j,h} C^{(0)}_{ijh} d t^i\otimes d t^j \otimes d t^h 
\in ((\bOmega^1)^{\otimes 3})^2_{-3}
\]
on $\LL$ 
defined in local co-ordinates by:
\begin{align*}
C^{(0)}_{ijh}(t,\bx)   
= \big(\cC_i(t,0) x_1, \cC_j(t,0) \cC_h(t,0) x_1\big)_{\sfF_0} &&
x_1 = (\bx/z)|_{z=0}
\end{align*}
The tensor $\bY$ is the pull-back of a cubic tensor on 
$\sfF_0$. 
\end{definition} 
Let $\pr\colon \LL\to \cM$ denote the natural projection. 
We define the pull backs of the sheaves $z^n\sfF$, $\sfF[z^{-1}]$, 
$(z^n\sfF)^\vee$, $\sfF[z^{-1}]^\vee$ 
to $\LL$ as follows\footnote
{Note that the standard pull-back 
$\pr^{-1}(z^n \sfF) \otimes_{\pr^{-1}\cO_{\cM}} \bcO$ 
of $z^n\sfF$ is different from the above definition 
of $\pr^*(z^n\sfF)$. 
We take the completion with respect to the $z$-adic topology.}. 
\begin{align}
\label{eq:pullbacks}
\begin{split}  
\pr^* (z^n\sfF) & := 
\projlim_{l} \pr^* (z^n\sfF/z^l\sfF) \cong 
(\pr^{-1} z^n\sfF)  
\otimes_{\pr^{-1}\cO_{\cM}[\![z]\!]}\bcO[\![z]\!] \\
\pr^* \sfF[z^{-1}] 
& := \projlim_{l} \pr^*(\sfF[z^{-1}]/z^l \sfF) \cong 
(\pr^{-1} \sfF[z^{-1}])\otimes_{\pr^{-1}\cO_\cM(\!(z)\!)} \bcO(\!(z)\!) \\ 
\pr^* (z^n \sfF)^\vee & := (\pr^{-1} (z^n\sfF)^\vee)  
\otimes_{\pr^{-1} \cO_{\cM}} \bcO \qquad \text{(the standard definition)} \\ 
\pr^* \sfF[z^{-1}]^\vee 
& := \projlim_l \pr^* (z^{-l}\sfF)^\vee 
\cong (\pr^{-1} \sfF[z^{-1}]^\vee)
\otimes_{\pr^{-1}\cO_{\cM}(\!(z)\!)} \bcO(\!(z)\!)  
\end{split}
\end{align} 
These are locally free modules over, respectively, $\bcO[\![z]\!]$, 
$\bcO(\!(z)\!)$, $\bcO(\!(z)\!)/\bcO[\![z]\!]$, and
$\bcO(\!(z)\!)$. 
The pull-back $\pr^*\sfF$ admits a flat connection $\tnabla:=\pr^*\nabla$:   
\begin{equation} 
\label{eq:tnabla}
\tnabla \colon \pr^*\sfF \to \bOmega^1
\hotimes \pr^*(z^{-1}\sfF)
\end{equation} 
where $\hotimes$ is the completed tensor product 
$\bOmega^1 \hotimes \pr^*(z^{-1}\sfF) 
=\projlim_n (\bOmega^1 \otimes 
\pr^*(z^{-1}\sfF/z^n \sfF))$. 
A local trivialization $\sfF|_U \cong 
\C^{N+1} \otimes \cO_U[\![z]\!]$ induces 
a trivialization 
$\pr^*\sfF|_{\pr^{-1}(U)} 
\cong \C^{N+1}\otimes \bcO[\![z]\!]$. 
Under this trivialization, we can write, using notation as in \eqref{eq:presentation_nabla}: 
\begin{align*}
\tnabla_i =\tnabla_{\partial/\partial t^i} 
= \partial_i - \frac{1}{z} \cC_i(t,z) &&
\text{$\tnabla_{n,i} = \tnabla_{\partial/\partial x_n^i} 
= \partial_{n,i}$,  $n\ge 1$} 
\end{align*} 
The trivialization also 
induces a trivialization 
$\pr^*\sfF[z^{-1}]|_{\pr^{-1}(U)} 
\cong \C^{N+1} \otimes \bcO(\!(z)\!)$.  
We denote by\footnote
{We denote the frame of $\sfF[z^{-1}]^\vee$ by 
$\{x_n^i\}$ 
and the frame of $\pr^*\sfF[z^{-1}]^\vee$ by 
$\{\varphi_n^i\}$ so that the co-ordinates on $\LL$ 
and the frame of $\pr^*\sfF[z^{-1}]^\vee$ are 
not confused.} 
$\{\varphi_n^i : \text{$n\in \Z$, $0\le i\le N$}\}$ the local frame of 
$\pr^* \sfF[z^{-1}]^\vee$ defined by
\begin{align} 
\label{eq:frame_varphi} 
\varphi_n^i \colon \pr^*\sfF[z^{-1}]\Bigr|_{\pr^{-1}(U)}  
\cong \C^{N+1} \otimes \bcO(\!(z)\!) 
\to \bcO &&
\sum_{m\in \Z} \sum_{j=0}^N 
a_m^j e_j z^m \mapsto a_n^i  
\end{align} 
(cf.~equation~\ref{eq:frame_x}).
The \emph{tautological section} 
$\bx$ of $\pr^* (z\sfF)$ is defined by 
\[
\bx(t,\bx) = \bx   
\] 
where $(t,\bx)$ denotes the point $\bx \in z \sfF_t$ 
on $\LL$. 
\begin{definition} 
\label{def:KS} 
The \emph{Kodaira--Spencer map} 
$\KS \colon \bTheta \to \pr^*\sfF$ 
is defined by:
\begin{align*} 
\KS(v) & = \tnabla_v \bx && v\in \bTheta
\intertext{The \emph{dual Kodaira--Spencer map} 
  $\KS^* \colon \pr^*\sfF^\vee 
  \to \bOmega^1$ is defined by:}
  \KS^*(\varphi) & = \varphi(\tnabla \bx) 
  && \varphi \in \pr^*\sfF^\vee
\end{align*}
The maps 
$\KS$ and $\KS^*$ are 
isomorphisms over $\LLo\subset \LL$. 
\end{definition} 
In terms of the Lagrangian submanifold $\cL$ in 
\S\ref{subsec:Lagrangian_TP},  
the Kodaira--Spencer map corresponds to 
the differential $d\iota $ of the embedding 
$\iota \colon \cL \hookrightarrow \cH$;
see also \S \ref{subsec:global_L2}, \S\ref{subsec:Kaehler}. 

\begin{notation}
\label{nota:[]_^}
For $\C^{N+1}$-valued power series 
$f = \sum_{n\in \Z} \sum_{i=0}^N a_n^i e_i z^n$ 
in $z$, we write $[f]_n^i = a_n^i$. 
Here $e_0,\dots, e_N$ are the standard 
basis of $\C^{N+1}$. 
\end{notation}

\begin{remark} 
In algebraic local co-ordinates $\{t^i,x_n^i\}$ on $\LL$, we have:
\begin{equation}
\label{eq:KS_coord}
\begin{aligned} 
  \KS(\partial_i) &= -\cC_i(t,z) (\bx/z) \\
  \KS(\partial_{n,i}) &= e_i z^n && n\ge 1\\  
  \KS^*(\varphi_n^i) & =[d \bx - z^{-1}\cC(t,z) \bx]_{n}^i && n\ge 1\\  
\end{aligned} 
\end{equation} 
Here $\bx= \sum_{n=1}^\infty x_n z^n$ and 
$x_n = \sum_{i=0}^N x_n^i e_i$. (Note that 
$\KS^*(\varphi_0^i) = -[\cC(t,0)x_1]^i$.) 
These formulae make clear that 
$\KS$ and $\KS^*$ are isomorphisms over 
$\LLo$. 
\end{remark} 

\begin{lemma} 
\label{lem:Yukawa-KS} 
The Yukawa coupling $\bY$ can be written as 
$ (\id \otimes (\KS^*)^{\otimes 2}) \pr^*\Upsilon$  
for the following section $\Upsilon \in \Omega^1_\cM \otimes \sfF^\vee 
\otimes \sfF^\vee$: 
\begin{align*}
  \Upsilon(X, v,w) = \left([v], \cC_X(t,0)[w]\right)_{\sfF_0} &&
  \text{$X\in \Theta_\cM$, $v, w \in \sfF$}
\end{align*}
\end{lemma}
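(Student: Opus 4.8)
The plan is to verify the identity in an algebraic local coordinate chart and then to glue, using that every object appearing in the statement is defined intrinsically. First I would fix a connected open $U\subseteq\cM$ with a trivialization $\sfF|_U\cong\C^{N+1}\otimes\cO_U[\![z]\!]$ and present $\nabla$ as in~\eqref{eq:presentation_nabla}; write $(g_{ab})$, $g_{ab}=(e_a,e_b)_{\sfF_0}$, for the Gram matrix of the induced pairing on $\sfF_0=\sfF/z\sfF$ and $(\cC_i(t,0))^c{}_b$ for the matrix entries of $\cC_i(t,0)\in\End(\C^{N+1})$. In this trivialization the frame $\{\varphi_n^i:n\ge0\}$ of~\eqref{eq:frame_varphi} restricts to an $\bcO$-frame of $\pr^*\sfF^\vee$, and $\varphi_0^i$ is precisely the functional reading off the $i$-th component of the class in $\sfF_0$. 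Since $\Upsilon(X,v,w)=([v],\cC_X(t,0)[w])_{\sfF_0}$ depends on $v,w$ only through their images in $\sfF_0$, we obtain
\[
\pr^*\Upsilon=\sum_{i,a,b}\Bigl(\sum_c g_{ac}\,(\cC_i(t,0))^c{}_b\Bigr)\,dt^i\otimes\varphi_0^a\otimes\varphi_0^b .
\]

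Next I would compute $\KS^*(\varphi_0^i)$ directly from Definition~\ref{def:KS}. For the tautological section $\bx=\sum_{n\ge1}x_n z^n$, $x_n=\sum_i x_n^i e_i$, one has $\tnabla\bx=d\bx-z^{-1}\cC(t,z)\bx$, whose $e_i z^0$-component equals $-[\cC(t,z)(\bx/z)]|_{z=0}=-\cC(t,0)x_1$ with $x_1=(\bx/z)|_{z=0}$; hence $\KS^*(\varphi_0^i)=-[\cC(t,0)x_1]^i$, recovering~\eqref{eq:KS_coord}. Applying $\id\otimes(\KS^*)^{\otimes2}$ to $\pr^*\Upsilon$, the two sign factors multiply to $+1$ and the coefficient of $dt^i\otimes dt^j\otimes dt^h$ becomes
\begin{multline*}
\sum_{a,b,c}g_{ac}\,(\cC_i(t,0))^c{}_b\,(\cC_j(t,0)x_1)^a\,(\cC_h(t,0)x_1)^b\\
=\bigl(\cC_j(t,0)x_1,\,\cC_i(t,0)\cC_h(t,0)x_1\bigr)_{\sfF_0} .
\end{multline*}

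To finish, I would reconcile this with $C^{(0)}_{ijh}=(\cC_i(t,0)x_1,\cC_j(t,0)\cC_h(t,0)x_1)_{\sfF_0}$ using the two structural facts recorded at the start of~\S\ref{subsec:Yukawa}: the operators $\cC_k(t,0)$ mutually commute, and each is self-adjoint for $(\cdot,\cdot)_{\sfF_0}$. Moving $\cC_i(t,0)$ across the pairing in the expression just obtained, and $\cC_j(t,0)$ across the pairing in $C^{(0)}_{ijh}$, shows that both equal $(\cC_i(t,0)\cC_j(t,0)x_1,\cC_h(t,0)x_1)_{\sfF_0}$, so they agree and
\[
(\id\otimes(\KS^*)^{\otimes2})\pr^*\Upsilon=\sum_{i,j,h}C^{(0)}_{ijh}\,dt^i\otimes dt^j\otimes dt^h=\bY
\]
over $\pr^{-1}(U)$. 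Since $\Upsilon$ (built from the intrinsic operator $\cC_X(t,0)=(-z\nabla_X)|_{z=0}$ and the induced pairing on $\sfF_0$), $\KS^*$, $\pr^*$ and $\bY$ (the pullback of a tensor on $\sfF_0$ by Definition~\ref{def:Yukawa}) are all independent of the chosen trivialization, these local identities patch together to the asserted global identity on $\LL$.

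The calculation is essentially bookkeeping; the one substantive point---and the step I would be most careful about---is the last one, where one must use \emph{both} the commutativity $[\cC_i(t,0),\cC_j(t,0)]=0$ and the self-adjointness of the $\cC_i(t,0)$ to recognize the index-asymmetric contraction produced by $\KS^*$ as the totally symmetric tensor $C^{(0)}_{ijh}$. A minor point worth tracking is the sign carried by $\KS^*(\varphi_0^i)$, which enters squared and hence leaves $\bY$ with the correct sign.
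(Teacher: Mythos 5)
Your proof is correct and follows essentially the same route as the paper: both hinge on the identity $\KS^*(\varphi_0^i)=-\sum_j[\cC_j(t,0)x_1]^i\,dt^j$ from \eqref{eq:KS_coord} together with the commutativity and self-adjointness of the $\cC_i(t,0)$, the only difference being that you apply $(\KS^*)^{\otimes 2}$ to $\pr^*\Upsilon$ and compare with $\bY$, whereas the paper factors $\bY$ through $\KS^*(\varphi_0^f)\otimes\KS^*(\varphi_0^g)$ (using the same symmetry implicitly in its first displayed line). Your explicit handling of the sign and of the symmetry step is a faithful expansion of what the paper leaves tacit.
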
 
\begin{proof} 
Note that $\KS^*(\varphi_0^i) = - \sum_j [\cC_j(t,0) x_1]^i dt^j$ by
\eqref{eq:KS_coord}. Therefore:
\begin{align} 
\label{eq:Yukawa-KS}
\begin{split} 
\bY & = \sum_{i=0}^N \sum_{j=0}^N \sum_{h=0}^N
\big(\cC_j(t,0) x_1, \cC_i(t,0) \cC_h(t,0)x_1\big)_{\sfF_0} 
dt^i \otimes dt^j \otimes dt^h \\ 
& = \sum_{i=0}^N \sum_{j=0}^N \sum_{h=0}^N
\sum_{f=0}^N \sum_{g=0}^N 
\big(e_f, \cC_i(t,0) e_g\big)_{\sfF_0} 
[\cC_j(t,0) x_1]^f [\cC_h(t,0) x_1]^g  dt^i \otimes dt^j \otimes dt^h \\ 
& = \sum_{i=0}^N \sum_{f=0}^N \sum_{g=0}^N
\big(e_f, \cC_i(t,0) e_g\big)_{\sfF_0} 
dt^i \otimes \KS^*(\varphi_0^f) \otimes \KS^*(\varphi_0^g)
\end{split}
\end{align}  
The conclusion follows. 
\end{proof}

\subsection{Opposite Modules and Frobenius Manifolds} 
\label{subsec:opp-Frob} 
We now introduce the notion of opposite module.  
In the construction of the Fock space, an opposite module 
plays the role of a \emph{polarization}: see \S\ref{subsec:geometricquantization}.  
The content in this section is an adaptation of~\cite[\S2.2.2]{CIT:wall-crossings} 
to the setting of miniversal cTP structures $(\sfF, \nabla, (\cdot,\cdot)_{\sfF})$.  
Opposite modules were first used in singularity theory by M.~Saito~\cite{SaitoM} 
in order to construct K.~Saito's \emph{flat structure}~\cite{SaitoK}, 
or Dubrovin's \emph{Frobenius manifold structure}~\cite{Dubrovin:2DTFT}, 
on the base space of miniversal unfolding of a singularity.
A closely related discussion can be found in Sabbah 
\cite[VI, \S 2]{Sabbah:isomonodromic} and Hertling \cite[\S 5.2]{Hertling:ttstar}.

\begin{definition} 
\label{def:opposite} 
A \emph{pseudo-opposite module} $\sfP$ for a cTP structure 
$(\sfF, \nabla, (\cdot,\cdot)_\sfF)$ 
is an $\cO_\cM$-submodule $\sfP$ of $\sfF[z^{-1}]$ 
satisfying the following two conditions: 
\renewcommand{\labelenumi}{(\roman{enumi})}
\begin{description} 
\item[(Opp1)] (opposedness) $\sfF[z^{-1}] = \sfF \oplus \sfP$; and
\item[(Opp2)] (isotropy) $\Omega(\sfP,\sfP) = 0$.  
\end{description}
A pseudo-opposite module $\sfP$ is said to be 
\emph{parallel} if it satisfies
\begin{description} 
\item[(Opp3)] $\nabla$ preserves $\sfP$, i.e.~$\nabla \sfP \subset \Omega_{\cM}^1\otimes \sfP$.
\end{description}  
If $\sfP$ satisfies (Opp1--Opp3) and
\begin{description} 
\item[(Opp4)] ($z^{-1}$-linearity) $z^{-1} \sfP \subset \sfP$
\end{description} 
then it is called an \emph{opposite module}. 
When a pseudo-opposite module fails to satisfy 
the parallel condition (Opp3), it is said to be \emph{curved}. 

Suppose that $(\sfF,\nabla,(\cdot,\cdot)_\sfF)$ 
is a cTEP structure. 
An opposite module $\sfP$ for the underlying 
cTP structure is said to be \emph{homogeneous} 
if it satisfies 
\begin{description} 
\item[(Opp5)] (homogeneity) $\nabla_{z\partial_z} \sfP \subset \sfP$. 
\end{description} 
The notion of a (pseudo-)opposite module is local.  
For an open set $U\subset \cM$, $\sfP$ is called a 
\emph{(pseudo-)opposite module over} $U$ 
if it is a (pseudo-)opposite module 
of the restriction $(\sfF, \nabla, (\cdot,\cdot)_\sfF)|_U$. 
\end{definition} 

\begin{example} 
\label{ex:Amodel-opposite} 
The A-model cTEP structure 
(see Example~\ref{ex:AmodelTP} and 
Remark~\ref{rem:completion-of-TP}) associated to a smooth 
projective variety $X$ admits 
a standard opposite module $\sfP_{\rm std}$ defined by:
\[
\sfP_{\rm std} = H_X \otimes_\Q 
z^{-1}\cO_{\cM_{\rm A}}[z^{-1}]
\]
Moreover this opposite module is homogeneous. 
See Remark~\ref{rem:opposite_conformalcase} below 
for the relationship between a homogeneous opposite module
and a Frobenius manifold structure. 
\end{example} 

A pseudo-opposite module 
$\sfP$ is necessarily a locally free $\cO_{\cM}$-module 
with a countable basis 
because $\sfP \cong \sfF[z^{-1}]/\sfF$ by opposedness (Opp1). 
We observe that an opposite module exists at least 
in the formal neighbourhood of any point $t$ in $\cM$. 
\begin{lemma}
\label{lem:existence_opposite}
There exists an opposite module $\sfP$ in the formal neighbourhood 
$\hcM$ of every point $t\in \cM$. 
Here an opposite module in the formal 
neighbourhood means an $\cO_{\hcM}$-submodule $\sfP$ of 
$\widehat{\sfF[z^{-1}]}= \projlim_n \sfF[z^{-1}]/\fm_t^n \sfF[z^{-1}]$ 
satisfying the conditions (Opp1)-(Opp4) in Definition~\ref{def:opposite}
with $\sfF[z^{-1}]$ and $\cM$ there replaced by 
$\widehat{\sfF[z^{-1}]}$ and $\hcM$, 
where $\fm_t$ is the maximal ideal of the local ring $\cO_{\cM,t}$. 
\end{lemma}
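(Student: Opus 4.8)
The plan is to construct an opposite module in the formal neighbourhood $\hcM$ of a fixed point $t\in\cM$ by choosing one over the point itself and then extending it order by order in the maximal ideal $\fm_t$, controlling at each step the obstruction with the flatness of $\nabla$ and the non-degeneracy of the pairing. First I would recall that over the point $t$, i.e.\ for the $\C$-vector space $\sfF_t[z^{-1}]$ with the induced symplectic form $\Omega_t$, an opposite subspace always exists: take a $\C[\![z]\!]$-basis of $\sfF_t$, adjust it by Gram--Schmidt with respect to $\Omega_t$ so that the pairing is put in standard form, and then let $\sfP_t\subset\sfF_t[z^{-1}]$ be the $\C[z^{-1}]z^{-1}$-span of the dual frame; this $\sfP_t$ is $z^{-1}$-closed, opposite to $\sfF_t$, and isotropic, so conditions (Opp1), (Opp2), (Opp4) hold at $t$. (Condition (Opp3) is vacuous at a single point.)

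Next I would set up the deformation argument. Write $\cO_{\hcM}=\varprojlim_n \cO_{\cM,t}/\fm_t^{n+1}$ and suppose inductively that we have an $\cO_{\cM,t}/\fm_t^{n+1}$-submodule $\sfP^{(n)}$ of $\widehat{\sfF[z^{-1}]}/\fm_t^{n+1}$ satisfying (Opp1)--(Opp4) modulo $\fm_t^{n+1}$. Because of opposedness modulo $\fm_t^{n+1}$, a lift $\sfP^{(n+1)}$ modulo $\fm_t^{n+2}$ is determined by a $\C$-linear map $\sfP_t \otimes (\fm_t^{n+1}/\fm_t^{n+2}) \to \sfF_t$, i.e.\ by an element of $\Hom_{\C}(\sfP_t,\sfF_t)\otimes(\fm_t^{n+1}/\fm_t^{n+2})$, modified using the residual freedom. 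Imposing the isotropy condition (Opp2) to order $n+1$ gives a linear equation whose inhomogeneous term is the failure of isotropy of an arbitrary lift; the pairing identifies $\Hom(\sfP_t,\sfF_t)\cong\Hom(\sfP_t,\sfP_t^\vee)$, and the symmetric/antisymmetric decomposition together with non-degeneracy of $\Omega_t$ shows the isotropy equation is solvable, with the solution unique up to adding a ``symmetric'' piece. The $z^{-1}$-closedness condition (Opp4) is preserved automatically if we choose the correcting map to be $z^{-1}$-linear, which we may because the obstruction itself is $z^{-1}$-linear. The remaining freedom (symmetric, $z^{-1}$-linear maps) is then used to kill the obstruction to the parallel condition (Opp3): the flatness $[\nabla_{v_1},\nabla_{v_2}]=\nabla_{[v_1,v_2]}$ and the $\nabla$-compatibility of the pairing force the $\nabla$-derivative of an isotropic, $z^{-1}$-closed lift to lie in $\Omega^1_{\hcM}\otimes\sfF$ modulo $\fm_t^{n+1}$ in a way that is a \emph{closed} $1$-form valued in the allowed obstruction space, hence exact after passing to the formal neighbourhood (a formal Poincar\'e lemma), so it can be absorbed. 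Assembling the $\sfP^{(n)}$ by $\varprojlim$ yields the desired $\sfP$ over $\hcM$.

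The main obstacle I expect is the simultaneous bookkeeping of conditions (Opp2), (Opp3), (Opp4) at each inductive step: individually each is a routine linear-algebra or formal-integration problem, but one must show that the correcting terms needed to fix one condition do not destroy another. Concretely, the delicate point is that fixing (Opp3) requires exactness of a certain $\nabla$-closed $1$-form with values in the quotient $\sfF_t$ (more precisely in the ``symmetric endomorphism'' subspace), and one needs the flat structure together with (Opp2) to guarantee that this form indeed takes values in that subspace; this uses both halves of the compatibility axiom $d((-)^*s_1,s_2)_\sfF=((-)^*\nabla s_1,s_2)_\sfF+((-)^*s_1,\nabla s_2)_\sfF$. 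Once this compatibility is unwound, the extension goes through and the limit $\sfP=\varprojlim_n\sfP^{(n)}$ is an opposite module over $\hcM$ in the sense of the statement.
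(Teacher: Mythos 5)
Your construction of $\sfP_t$ over the central fiber---normalize a $\C[\![z]\!]$-basis of $\sfF_t$ so that the pairing becomes $z$-independent, and take the span of the $z^{-1}e_i$ over $\C[z^{-1}]$---is essentially the first half of the paper's argument. The divergence, and the problem, lies in the extension to $\hcM$. You extend order by order in $\fm_t$, first arranging (Opp1), (Opp2), (Opp4) at each order and then trying to use the residual (symmetric, $z^{-1}$-linear) freedom to absorb the obstruction to (Opp3). But the entire content of the lemma sits in that absorption step, and your proposal does not prove it: the claims that the obstruction is a closed one-form valued in the correct (symmetric, $z^{-1}$-equivariant) space, that the closedness is with respect to the relevant induced flat connection so that a formal Poincar\'e lemma applies, and that the correcting term does not destroy the (Opp2) and (Opp4) already achieved, are all asserted (``once this compatibility is unwound, the extension goes through'') rather than established---you yourself flag this as the main obstacle. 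As written, this is a plan whose decisive step is missing, so the proof is incomplete at exactly the point where the work is.

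The gap disappears if you invert the logic, which is what the paper does: make (Opp3) automatic and check the other conditions afterwards. Having chosen the frame $e_0,\dots,e_N$ of $\sfF_t$ with $(e_i,e_j)_{\sfF}$ independent of $z$, solve $\nabla f_i=0$ in $\widehat{\sfF[z^{-1}]}$ with $f_i(0)=e_i$; this can be done order by order in a regular system of parameters $s_0,\dots,s_N$ precisely because one works in the formal neighbourhood, with flatness of $\nabla$ guaranteeing consistency. Then $\sfP=\bigoplus_i \C[z^{-1}][\![s_0,\dots,s_N]\!]\, z^{-1}f_i$ is parallel and $z^{-1}$-closed by construction; $(f_i,f_j)_{\sfF}$ is constant in $s$ by flatness of the pairing, hence equals the $z$-independent value $(e_i,e_j)_{\sfF}$, so isotropy follows from the residue formula defining $\Omega$; and opposedness holds because it holds modulo $\fm_t$, where $f_i\equiv e_i$. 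This route requires no obstruction theory at all, which is why the inductive bookkeeping you identify as delicate never arises in the paper's proof.
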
 
\begin{proof} 
The fiber $\sfF_t$ at $t$ is a 
free $\C[\![z]\!]$-module of rank $N+1$. 
We claim that there exists a basis $e_0,\dots,e_N$ 
of $\sfF_t$ over $\C[\![z]\!]$ such that 
$(e_i, e_j)_{\sfF}$ is
independent of $z$. 
Take any basis $e'_0,\dots,e'_N$ of 
$\sfF_t$. By transforming the basis by an 
element in $GL(N+1,\C)$, one can assume that 
$(e_i',e_j') = c_i \delta_{ij} + O(z)$ for some 
non-zero element $c_i\in \C$.  
After a further change of basis  
$[e_0',\dots,e_N'] = [e_0,\dots, e_N] A(z)$ 
with $A(z) = I + A_1 z + A_2 z^2 + \cdots$, 
we can assume that $(e_i, e_j)_{\sfF} = c_i \delta_{ij}$. 
Once we have such a basis, we can define 
a free $\C[z^{-1}]$-submodule $\sfP_t$ of 
$\sfF_t[z^{-1}]$ by 
$\sfP_t = \bigoplus_{i=0}^N \C[z^{-1}] z^{-1} e_i$. 
This is opposite to $\sfF_t$, $z^{-1}$-linear and 
isotropic with respect to $\Omega$. 
Next we extend it to the formal neighbourhood of $t$. 
Let $s_0,\dots,s_N\in \fm_t$ be a regular system of 
parameters of the local ring $\cO_{\cM,t}$. 
We extend the basis $e_0,\dots,e_N$ of $\sfF_t$ 
to a frame $\te_0,\dots,\te_N$ of $\sfF$ 
over an open neighbourhood of $t$. 
This trivializes $\sfF$ in the formal neighbourhood:
$\sfF|_{\hcM} 
= \bigoplus_{i=0}^N \C[\![z,s_0,\dots,s_N]\!] \te_i$. 
In this frame, we can solve for a flat section 
$f_i(s) \in \widehat{\sfF[z^{-1}]} = 
\bigoplus_{i=0}^N \C(\!(z^{-1})\!)[\![s_0,\dots,s_N]\!] \te_i$ 
such that $f_i(0) = e_i$. 
Then $\sfP = \bigoplus_{i=0}^N 
\C[z^{-1}][\![s_0,\dots,s_N]\!] z^{-1}f_i$ 
is parallel with respect to $\nabla$ and 
gives an opposite module over the formal neighbourhood $\hcM$.  
\end{proof} 

\begin{proposition}[cf.\  
{\cite[\S2.2.2]{CIT:wall-crossings},~\cite[Lemma~3.8]{Iritani:Ruan}}]
\label{prop:flat_trivialization}
For an open set $U\subset \cM$ and 
an opposite module $\sfP$ over $U$, 
the following hold.
\begin{itemize}
\item[(i)] The natural maps $\sfF_0 = \sfF/z\sfF \leftarrow 
\sfF \cap z \sfP \rightarrow z \sfP/\sfP$ 
are isomorphisms of $\cO_U$-modules. 

\item[(ii)] We have $\sfF = (\sfF \cap z \sfP )\otimes \C[\![z]\!] 
\cong (z\sfP/\sfP) \otimes \C[\![z]\!]$, which we call a \emph{flat trivialization}. 
Note that $z\sfP/\sfP$ is a locally free coherent $\cO_U$-module 
with a flat connection, 
and let $\nabla^0 \colon z\sfP/\sfP 
\to \Omega^1_U \otimes_{\cO_U} (z\sfP/\sfP) $ denote the 
flat connection induced by $\nabla$. 

\item[(iii)] Under the flat trivialization, the connection 
$\nabla$ takes the form 
\[
\nabla = \nabla^0 - \frac{1}{z} \cC(t) 
\]
where $\cC(t)$ is a $z$-independent $\End(z\sfP/\sfP)$-valued 
one-form. 

\item[(iv)] Under the flat trivialization, the pairing 
$(\cdot,\cdot)_{\sfF}$ induces and can be recovered from 
a $z$-independent symmetric pairing 
\[
(\cdot,\cdot)_{z\sfP/\sfP} \colon (z\sfP/\sfP) 
\otimes (z\sfP/\sfP) \to \cO_U 
\]
which is flat with respect to $\nabla^0$. 

\item[(v)] Assume that there exists a section $\zeta$ of 
$\sfF \cap z\sfP$ over $U$ which is flat with respect to $\nabla^0$ 
in the flat trivialization and whose image 
under $\sfF \to \sfF_0 = \sfF/z\sfF$ lies in $\sfF_0^\circ$. 
(This assumption implies the miniversality of  
$(\sfF,\nabla,(\cdot,\cdot)_\sfF)$.)
We call such a section $\zeta$ a \emph{primitive section} 
associated to $\sfP$. 
Then the base $U$ carries the structure of a \emph{Frobenius manifold without Euler vector field}.
It consists of:
\begin{itemize} 
\item A flat symmetric $\cO_U$-bilinear 
metric $g: \Theta_U \otimes_{\cO_U} \Theta_U \to \cO_U$, defined by:
\[
g(v_1,v_2) = 
(z\nabla_{v_1} \zeta, z\nabla_{v_2} \zeta)_{\sfF}
\]

\item A commutative and associative product  
$* \colon \Theta_U \otimes_{\cO_U} \Theta_U
\to \Theta_U$, defined by:
\[
z\nabla_{v_1} z\nabla_{v_2} \zeta = -z\nabla_{v_1*v_2} \zeta
\]

\item A flat identity vector field $e\in \Theta_U$ for the product $*$, 
defined by:
\[
- z \nabla_e \zeta= \zeta
\]
\end{itemize} 
such that the connection $\nabla^\lambda_v = 
\nabla^{\rm LC}_v - \lambda (v *)$ on the 
tangent sheaf $\Theta_U$ is a flat pencil of 
connections with parameter $\lambda$. 
Here $\nabla^{\rm LC}$ denotes the Levi-Civita 
connection for the metric $g$. 
\end{itemize}
The same statements {\rm (i)-(v)} here hold,
with $U$ replaced by $\hcM$,  for $\sfP$ an opposite module over
the formal neighbourhood $\hcM$ of $t\in \cM$ 
(in the sense of Lemma~\ref{lem:existence_opposite}). 
\end{proposition}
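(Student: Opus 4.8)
The plan is to build a ``flat trivialization'' of $\sfF$ adapted to $\sfP$ and then read off every piece of structure from it; parts (i)--(iv) are then essentially bookkeeping with the $z$-adic topology, and part (v) is the classical passage from an opposite module to a Frobenius manifold.

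\emph{Parts (i) and (ii).} Multiplication by $z$ is an automorphism of $\sfF[z^{-1}]$, so (Opp1) also gives $z\sfF\oplus z\sfP=\sfF[z^{-1}]$, while (Opp4) gives $\sfP\subset z\sfP$; hence $\sfF\cap\sfP=0$ and $z\sfF\cap z\sfP=z(\sfF\cap\sfP)=0$. Put $G:=\sfF\cap z\sfP$. The map $G\to\sfF_0$ has kernel $z\sfF\cap z\sfP=0$ and the map $G\to z\sfP/\sfP$ has kernel $\sfF\cap\sfP=0$, so both are injective; surjectivity is a one-line diagram chase using the two decompositions of $\sfF[z^{-1}]$ (given $s\in\sfF$, write $s=a+b$ with $a\in z\sfF$, $b\in z\sfP$; then $b=s-a\in G$ lifts $s\bmod z\sfF$, and symmetrically for $z\sfP/\sfP$). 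This is (i). For (ii), $G\xrightarrow{\sim}\sfF_0$ gives $\sfF=G+z\sfF$, hence $\sfF=G+zG+\cdots+z^nG+z^{n+1}\sfF$ for all $n$; since $\sfF$ is $z$-adically complete this shows $\sfF$ is the $z$-adically completed direct sum of the $z^nG$, $n\ge0$, uniqueness of expansions following from $z\sfF\cap G=0$. Thus $\sfF\cong G\otimes_{\cO_U}\cO_U[\![z]\!]\cong(z\sfP/\sfP)\otimes\C[\![z]\!]$ via (i). Finally $\nabla_v(zs)=z\nabla_vs$ for $v\in\Theta_{\cM}$ together with (Opp3) gives $\nabla_v(z\sfP)\subset\Omega^1_{\cM}\otimes z\sfP$, so $\nabla$ induces the flat connection $\nabla^0$ on $z\sfP/\sfP$ asserted in (ii).

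\emph{Parts (iii) and (iv).} The key observation is that in this trivialization $\sfP$ consists of \emph{finite} Laurent tails in negative powers of $z$ (i.e.\ is the plain direct sum of the $z^nG$, $n\le-1$): an element $p\in\sfP\subset\sfF[z^{-1}]$ has bounded-below $z$-expansion, its non-negative part lies in $\sfF$, and its strictly negative part lies in $\bigoplus_{n\le-1}z^nG\subset\sfP$ by (Opp4), so the non-negative part lies in $\sfF\cap\sfP=0$. Consequently $z\sfP=\bigoplus_{n\le0}z^nG$, and for $g\in G$ we have $z^{-1}g\in\sfP$, hence $\nabla_v(z^{-1}g)\in\Omega^1_{\cM}\otimes\sfP$, i.e.\ $\nabla_vg\in z\sfP$; combined with the order-one pole $\nabla_vg\in z^{-1}\sfF$ this forces $\nabla_vg\in z^{-1}G\oplus G$, so $\nabla_vg=z^{-1}\cC(t)(v)g+\nabla^0_vg$ with $\cC(t)$ a $z$-\emph{independent} $\End(G)$-valued one-form, which (after transport to $z\sfP/\sfP$) is (iii); note $\cC(t)$ agrees with the residue $\cC(t,0)$ under the isomorphism of (i). For (iv), isotropy (Opp2) applied to $z^{-a}g_1,z^{-b}g_2\in\sfP$ ($a,b\ge1$, $g_i\in G$) gives $0=\Omega(z^{-a}g_1,z^{-b}g_2)=\pm\Res_{z=0}z^{-a-b}(g_1,g_2)_\sfF\,dz$, so every coefficient of $z^{a+b-1}$ with $a+b-1\ge1$ in $(g_1,g_2)_\sfF$ vanishes; hence $(g_1,g_2)_\sfF\in\cO_U$ is $z$-independent. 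Separating powers of $z$ in the two axioms of $(\cdot,\cdot)_\sfF$ then yields symmetry and $\nabla^0$-flatness of the resulting pairing on $z\sfP/\sfP$, together with $(\cC(t)(v)g_1,g_2)=(g_1,\cC(t)(v)g_2)$; nondegeneracy is inherited from that of $(\cdot,\cdot)_{\sfF_0}$ via (i).

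\emph{Part (v) and the formal case.} Using the primitive section $\zeta$ ($\nabla^0\zeta=0$, $[\zeta]\in\sfF_{0}^{\circ}$), the $\cO_U$-linear map $v\mapsto[z\nabla_v\zeta]=-[\cC(t)(v)\zeta]\in z\sfP/\sfP$ corresponds under (i) to $v\mapsto\cC_v(t,0)[\zeta]$, hence is an isomorphism $\Theta_U\xrightarrow{\sim}z\sfP/\sfP$ exactly because $[\zeta]\in\sfF_{0}^{\circ}$. I would transport $(\cdot,\cdot)_{z\sfP/\sfP}$ and $\nabla^0$ through this isomorphism to obtain $g$ and a flat connection on $\Theta_U$, define $*$ and $e$ as in the statement, and check: commutativity and associativity of $*$ from $[\cC(t)(v_1),\cC(t)(v_2)]=0$ (flatness of $\nabla$ at order $z^{-2}$, cf.\ \S\ref{subsec:Yukawa}); the Frobenius relation $g(v_1*v_2,v_3)=g(v_1,v_2*v_3)$ from the $(\cdot,\cdot)_{z\sfP/\sfP}$-symmetry of $\cC(t)(v)$; the identity property of $e$ by construction; and that the Levi--Civita connection of $g$ equals the transported $\nabla^0$, using torsion-freeness and the potentiality relation $d^{\nabla^0}\cC(t)=0$ (flatness of $\nabla$ at order $z^{-1}$). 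Then the transported connection is $\nabla^0-\lambda\cC(t)=\nabla^{\rm LC}-\lambda(v*)$, whose flatness for all $\lambda$ is precisely the package of these three structure equations; for the routine parts of this verification I would cite \cite{SaitoM,Hertling:ttstar,Sabbah:isomonodromic}. The formal-neighbourhood version is immediate: every step is either purely algebraic in $z$ or local on the base, so it goes through verbatim with $\cO(U)$ replaced by the complete local ring at $t$ and $\sfP$ the opposite module produced in Lemma~\ref{lem:existence_opposite}.

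The bookkeeping in (i)--(iv) requires only care with the $z$-adic completions; the genuinely substantial step is (v), and within it the identification of the Levi--Civita connection of $g$ with the connection induced by $\nabla^0$ — equivalently, extracting the full Dubrovin-type system (associativity and the flat-pencil property in particular) from the order-by-order flatness of $\nabla$ and of $(\cdot,\cdot)_\sfF$ plus the primitive section. This is where I expect the main, though classical, work to lie.
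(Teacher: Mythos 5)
Your proposal is correct and follows essentially the same route as the paper: (i)--(ii) by the same splitting-and-iteration argument, (iii)--(iv) by the same degree bookkeeping with opposedness, $z^{-1}$-linearity and isotropy (your explicit description $\sfP=\bigoplus_{n\le -1}z^n(\sfF\cap z\sfP)$ is just a repackaging of the paper's decomposition $z^{-1}\sfF\cap z\sfP=(z^{-1}\sfF\cap\sfP)\oplus(\sfF\cap z\sfP)$), and (v) via the Kodaira--Spencer-type isomorphism $v\mapsto -z\nabla_v\zeta$, whose remaining verifications the paper likewise leaves to the reader with the same classical references. The formal-neighbourhood case is handled identically.
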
 
\begin{proof}
The proof is similar to that 
in~\cite[\S2.2.2]{CIT:wall-crossings}.
For (i), the injectivity of the maps 
$\sfF \cap z\sfP \to \sfF/z\sfF$, 
$\sfF\cap z\sfP \to z\sfP/\sfP$ follows 
from opposedness $\sfF[z^{-1}] 
= \sfF \oplus \sfP = z \sfF \oplus z \sfP$. 
For a local section $s\in \sfF/z\sfF$, 
take a local lift $\ts\in \sfF$. 
By opposedness, one can write $\ts = s' + s''$ 
with $s' \in z\sfF$ and $s'' \in z \sfP$. 
Now $s'' = \ts -s' \in \sfF \cap z\sfP$ 
and the image of $s''$ in $\sfF/z\sfF$ equals $s$. 
A similar argument shows the surjectivity of 
$\sfF\cap z\sfP \to z\sfP/\sfP$. 
For (ii), we need to show that any local section 
$s\in \sfF$ has a unique expression 
$s = \sum_{n=0}^\infty s_n z^n$ 
with $s_n \in \sfF \cap z\sfP$. 
The zeroth term $s_0$ is given as the unique lift of 
$[s]\in \sfF/z \sfF$ to $\sfF \cap z\sfP$ 
(which exists by (i)). Then $s-s_0\in z \sfF$. 
The first term $s_1 z $ is given as the unique lift of 
$[s - s_0]\in z\sfF/z^2 \sfF$ to $z \sfF \cap z^2 \sfP$. 
Then $s-s_0 -s_1 z \in z^2 \sfF$.  
Repeating this, we get the desired expression. 
For (iii), take a section $s\in \sfF \cap z\sfP$. 
Then $\nabla s = \Omega^1_U\otimes (z^{-1}\sfF \cap z \sfP)$ 
because $\nabla(\sfF)\subset \Omega^1_U \otimes z^{-1} \sfF$ 
and $\nabla(z\sfP) \subset \Omega^1_U \otimes z \sfP$. 
By opposedness $\sfF[z^{-1}] = 
\sfF \oplus \sfP$,  
we have $z^{-1} \sfF \cap z\sfP = 
(z^{-1} \sfF \cap \sfP) \oplus (\sfF \cap z\sfP)$. 
With respect to this decomposition, we can write 
$\nabla s = z^{-1} \cC(t) s \oplus \nabla^0 s$. 
For (iv), it suffices to show that $(s_1,s_2)_{\sfF}$ 
is independent of $z$ for $s_1,s_2 \in \sfF \cap z\sfP$.  
Because $\sfP$ is isotropic and $z^{-1}$-linear, 
we have $(\sfP,\sfP)_{\sfF} \subset z^{-2}\cO_{\cM}[z^{-1}]$. 
Therefore $(s_1,s_2)_{\sfF} \in  
(z\sfP,z\sfP)_{\sfF} \subset \cO_{\cM}[z^{-1}]$. 
On the other hand $(s_1,s_2)_{\sfF} \in \cO_{\cM}[\![z]\!]$. 
The $\nabla^0$-flatness of $(\cdot,\cdot)_{z\sfP/\sfP}$ 
follows from the $\nabla$-flatness of $(\cdot,\cdot)_{\sfF}$ 
and (iii). For (v), one needs to show that the isomorphism 
$\Theta_U \ni v \mapsto -z \nabla_v \zeta = \cC_v(t) \zeta 
\in \sfF \cap z\sfP$ translates the given structures on $\sfF$ 
into the Frobenius manifold structure. The details here
are left to the reader. 
\end{proof} 

\begin{example} 
The standard trivialization \eqref{eq:AmodelTP}  
of the A-model TEP structure  
is the flat trivialization associated to 
the standard opposite module $\sfP_{\rm std}$ 
in Example~\ref{ex:Amodel-opposite}. 
\end{example} 

\begin{remark} 
\label{rem:product}
The product $*$ in Proposition~\ref{prop:flat_trivialization}(v) 
does not depend on the choice of opposite module 
$\sfP$. In fact, the tangent sheaf $\Theta_\cM$ 
of the base space $\cM$ of a miniversal cTP structure 
carries a natural product $*$ such that the embedding 
\begin{align*}
  \Theta_\cM \to \End_{\cO_\cM}(\sfF_0) && v \mapsto z \nabla_v  
\end{align*}
becomes a homomorphism of $\cO_{\cM}$-algebras.  
The product $*$ endows $\cM$ with 
the structure of an $F$-manifold~\cite{Hertling--Manin:weak},
since it arises from a Frobenius manifold structure 
at least infinitesimally by Lemma~\ref{lem:existence_opposite}
(cf.~\cite[\S2.2]{CIT:wall-crossings},~\cite[\S3.2]{Iritani:Ruan}). 
\end{remark} 

\begin{remark} 
\label{rem:opposite_extension}
Let $\pi \colon \cM\times \Proj^1 \to \cM$ 
denote the projection.  An opposite module $\sfP$ gives rise to 
an extension of $\sfF$ (regarded as a sheaf on 
$\cM \times \hA^1$) to a locally free sheaf 
$\cF^{(\infty)}$ over $\cM \times \Proj^1$ 
such that $\pi_* \cF^{(\infty)} = \sfF \cap z\sfP$.
The sheaf $\cF^{(\infty)}$ gives a free $\cO_{\Proj^1}$-module 
when restricted to each fiber $\{t\} \times \Proj^1$. 
\end{remark} 

\begin{remark} 
\label{rem:opposite_conformalcase} 
Let $(\sfF,\nabla,(\cdot,\cdot)_{\sfF})$ be a 
cTEP structure. 
Under the miniversality assumption (Assumption~\ref{assump:miniversal}), there is an
\emph{Euler vector field} $E$ on the base 
which is uniquely characterized by the condition 
that $\nabla_{z\partial_z} + \nabla_E$ has no 
poles along $z=0$ (cf.~\cite[\S3.2]{Iritani:Ruan}), i.e.~that:
\[
(\nabla_{z\partial_z} + \nabla_E) \sfF \subset \sfF
\]
Assume that we have a homogeneous opposite module 
$\sfP$ over $U$ and also that there exists 
a primitive section $\zeta$ associated to $\sfP$, 
in the sense of Proposition~\ref{prop:flat_trivialization} (v), 
which satisfies 
\[
(\nabla_{z\partial_z} + \nabla_E)\zeta = -\frac{\hc}{2} \zeta
\]
for some $\hc\in \C$. 
Then the structures $(g,*,e)$ 
in Proposition~\ref{prop:flat_trivialization}(v) 
together with the Euler vector field $E$ define
a Frobenius manifold structure~\cite[Definition~1.2]{Dubrovin:2DTFT} on $U$ 
with conformal dimension $\hc$ 
(cf.~\cite[Proposition 2.12]{CIT:wall-crossings}). 
They satisfy the following additional properties:
\begin{align}
\label{eq:Euler_compatibility} 
\begin{split}  
\left(\nabla^{\rm LC}\right)^2 E & = 0 \\ 
E g(v_1,v_2) &= g([E,v_1],v_2) + g(v_1, [E, v_2]) + (2-\hc) g(v_1,v_2)\\ 
[E, v_1 * v_2] & = [E, v_1]* v_2 + v_1 * [E,v_2] + v_1 * v_2
\end{split} 
\end{align}
Conversely, any conformal Frobenius manifold determines 
a TEP structure: see Definition~\ref{def:cTEP-of-Frobeniusmfd}.  
For the A-model cTEP structure, in the convergent case, the standard 
opposite module $\sfP_{\rm std}$ in Example~\ref{ex:Amodel-opposite} 
gives rise to the standard Frobenius manifold structure 
on the set $\cM_{\rm A}$ defined in equation~\ref{eq:LRLnbhd}.
\end{remark} 

\subsection{Connection on the Total Space $\LLo$}
\label{subsec:conn-LL}

Recall from \S\ref{subsec:motivation-Focksheaf} that 
a polarization $P$ which is transversal to $\cL$ 
defines an affine flat structure on $\cL$ via 
the projection $\cL \to \cH/P$ along $P$. 
In a similar manner, we construct a flat structure on $\LL$ 
associated to a parallel pseudo-opposite module $\sfP$. 
The choice of $\sfP$ also defines the 
genus-zero potential in \S\ref{subsec:flatstronL}. 

The connection $\tnabla$ 
on $\pr^*\sfF$  in \eqref{eq:tnabla} extends $z^{-1}$-linearly 
to the flat connection $\tnabla \colon 
\pr^* \sfF[z^{-1}]\to 
\bOmega^1 \hotimes \pr^* \sfF[z^{-1}]$ where 
$\bOmega^1 \hotimes \pr^* \sfF[z^{-1}] 
:= \projlim_{l} (\bOmega^1 \otimes 
\pr^* (\sfF[z^{-1}]/z^l\sfF))$. 
Define the dual flat connection 
$\tnabla^\vee \colon \pr^* \sfF[z^{-1}]^\vee   
\to \bOmega^1 \hotimes \pr^*\sfF[z^{-1}]^\vee$  by:
\begin{align} 
\label{eq:tnablavee-def}
\pair{\tnabla^\vee \varphi}{s} 
:= d\pair{\varphi}{s} - \pair{\varphi}{\tnabla s} &&
\text{$s \in \pr^*\sfF[z^{-1}]$,
$\varphi\in \pr^* \sfF[z^{-1}]^\vee$}
\end{align} 
where $\bOmega^1 \hotimes \pr^*\sfF[z^{-1}]^\vee 
:= \projlim_l (\bOmega^1 \otimes 
\pr^* (z^{-l}\sfF)^\vee)$. 
Under a local trivialization of $\sfF$ and 
the associated frame 
$\{\varphi_n^i :  \text{$n\in \Z$, $0\le i\le N$}\}$ 
of $\pr^*\sfF[z^{-1}]^\vee$, 
we can write (see Notation~\ref{nota:[]_^}):
\begin{equation} 
\label{eq:tnablavee}
\tnabla^\vee \varphi_n^i = \sum_{l\in \Z} \sum_{j=0}^N 
\left[\cC(t,z) e_j z^{l}\right ]_{n+1}^i \varphi_l^j
\end{equation} 
where the summand in the right-hand side vanishes for $l\ge n+2$. 
This induces the flat connection 
$\tnabla^\vee \colon \pr^* (z^n \sfF)^\vee 
\to \bOmega^1 \otimes \pr^* (z^{n+1} \sfF)^\vee$ 
for each $n\in \Z$ 
such that the following diagram commutes: 
\begin{equation} 
  \label{eq:tnablavee-CD}
  \begin{aligned}
    \xymatrix{
      \pr^* \sfF[z^{-1}]^\vee  \ar[r]^-{\tnabla^\vee} \ar[d] & \bOmega^1 \hotimes \pr^*\sfF^\vee[z^{-1}] \ar[d] \\ 
      \pr^* (z^n \sfF)^\vee \ar[r]^-{\tnabla^\vee} & \bOmega^1 \otimes \pr^* (z^{n+1}\sfF)^\vee
    }
  \end{aligned}
\end{equation} 

\begin{definition} 
\label{def:Nabla} 
Let $\sfP$ be a pseudo-opposite module for
a cTP structure $(\sfF,\nabla,(\cdot,\cdot)_\sfF)$. 
Let $\Pi \colon \sfF[z^{-1}] = \sfF \oplus \sfP \to \sfF$
denote the projection along $\sfP$. 
Set $\bOmegao^1 := \bOmega^1|_{\LLo}$ 
and $\bThetao = \bTheta|_{\LLo}$. 
Consider the maps:
\[
\xymatrix{
  \pr^*\sfF \ar[rr]^-{\tnabla} && \bOmega^1 \hotimes \pr^*(z^{-1}\sfF) \ar[rr]^-{\id\otimes \Pi} && \bOmega^1 \hotimes \pr^*\sfF
}
\]
\[
\xymatrix{
  \pr^*\sfF^\vee \ar[rr]^-{\Pi^*} && \pr^* (z^{-1}\sfF)^\vee \ar[rr]^-{\tnabla^\vee} && \bOmega^1 \otimes \pr^*\sfF^\vee
}
\]
Via the (dual) Kodaira--Spencer isomorphisms 
$\KS\colon \bThetao \cong \pr^*\sfF$ 
and $\KS^* \colon \pr^*\sfF^\vee \cong \bOmegao^1$ 
over $\LLo$, 
these maps induce respectively the connections 
\begin{align*} 
\Nabla  \colon \bThetao \longrightarrow \bOmegao^1\hotimes \bThetao &&
\Nabla \colon \bOmegao^1 \longrightarrow \bOmegao^1 \otimes \bOmegao^1 
\end{align*} 
on the tangent and the cotangent sheaves on $\LLo$.  These induced connections are dual to 
each other. 
Here $\bOmegao^1 \hotimes \bThetao 
= \projlim_n (\bOmegao^1 \otimes (\bThetao/{\bThetao}_n))$ 
with ${\bThetao}_n:= \KS^{-1}(\pr^*(z^n \sfF)) 
\subset \bThetao$. 
The connection on $\bOmegao^1$ induces 
a connection on $n$-tensors: 
\begin{align*}
  \Nabla \colon (\bOmegao^1)^{\otimes n} 
  \to \bOmegao^1 \otimes (\bOmegao^1)^{\otimes n} &&
  n\ge 0 
\end{align*}
For $n=0$, this denotes the exterior derivative. 
When we want to emphasize the dependence on the choice of 
$\sfP$, we will write $\Pi_\sfP$, $\Nabla^\sfP$ for $\Pi$, $\Nabla$. 
\end{definition} 

\begin{proposition} 
\label{prop:Nabla-torsionfree}
The connection $\Nabla = \Nabla^\sfP 
\colon \bThetao \to \bOmegao^1 \hotimes \bThetao$ 
associated to a pseudo-opposite module $\sfP$ is torsion-free. 
If $\sfP$ is parallel, then $\Nabla$ is flat. If $\sfP$ is parallel, then the dual connection $\Nabla \colon \bOmegao^1 \to 
\bOmegao^1 \otimes \bOmegao^1$ is also flat. 
\end{proposition}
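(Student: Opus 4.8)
The plan is to transport everything through the Kodaira--Spencer isomorphism $\KS\colon \bThetao \xrightarrow{\sim} \pr^*\sfF$ of Definition~\ref{def:KS}, exploiting two facts. First, the pulled-back connection $\tnabla = \pr^*\nabla$ on $\pr^*\sfF$ is flat: its curvature in the $\cM$-directions vanishes because $(\sfF,\nabla)$ is a cTP structure, and in the fibre directions $\tnabla$ is given by the partial derivatives $\partial_{n,i}$, which commute with $\partial_i - z^{-1}\cC_i(t,z)$ since $\cC(t,z)$ does not depend on the fibre co-ordinates. Second, by construction $\KS(v) = \tnabla_v\bx$, where $\bx$ is the tautological section of $\pr^*(z\sfF)$, so $\KS$ turns Lie brackets of vector fields into iterated covariant derivatives of $\bx$; recall also that $\Nabla_v w = \KS^{-1}\bigl((\id\otimes\Pi)\,\tnabla_v \KS(w)\bigr)$ and that $\Pi\colon \sfF[z^{-1}] = \sfF\oplus\sfP \to \sfF$ restricts to the identity on $\sfF$.

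For torsion-freeness I would compute, for vector fields $v,w$ on $\LLo$,
\[
\KS\bigl(\Nabla_v w - \Nabla_w v - [v,w]\bigr)
= \Pi\bigl(\tnabla_v\tnabla_w\bx - \tnabla_w\tnabla_v\bx\bigr) - \tnabla_{[v,w]}\bx ,
\]
using $\KS(w)=\tnabla_w\bx$, $\KS(v)=\tnabla_v\bx$ and $\KS([v,w])=\tnabla_{[v,w]}\bx$. Flatness of $\tnabla$ gives $\tnabla_v\tnabla_w\bx - \tnabla_w\tnabla_v\bx = \tnabla_{[v,w]}\bx$, and this lies in $\pr^*\sfF$ because $\tnabla$ has a pole of order at most one and $\bx\in\pr^*(z\sfF)$; hence $\Pi$ fixes it and the displayed expression vanishes. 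Since $\KS$ is an isomorphism over $\LLo$, the torsion vanishes. This uses only the opposedness (Opp1), not parallelism.

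For flatness when $\sfP$ is parallel, the extra input is that $\tnabla$ then preserves $\pr^*\sfP$: for a base vector field this is (Opp3) applied fibrewise over $\LL$, and for a fibre vector field it is immediate since $\tnabla$ acts there by partial derivatives while $\pr^*\sfP$ is generated over $\bcO$ by pull-backs of sections of $\sfP$. Given $s = \KS(w')\in\pr^*\sfF$, split $\tnabla_w s = \Pi\tnabla_w s + (1-\Pi)\tnabla_w s$; the second summand lies in $\pr^*\sfP$, hence so does $\tnabla_v\bigl((1-\Pi)\tnabla_w s\bigr)$, which is therefore killed by $\Pi$. This yields the identity $\Pi\,\tnabla_v\,\Pi\,\tnabla_w\, s = \Pi\,\tnabla_v\tnabla_w\, s$. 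Substituting it into
\[
\KS\bigl(R^{\Nabla}(v,w)w'\bigr) = \Pi\,\tnabla_v\,\Pi\,\tnabla_w\,\KS(w') - \Pi\,\tnabla_w\,\Pi\,\tnabla_v\,\KS(w') - \Pi\,\tnabla_{[v,w]}\,\KS(w')
\]
collapses the right-hand side to $\Pi\bigl(R^{\tnabla}(v,w)\KS(w')\bigr)=0$. Hence $\Nabla$ on $\bThetao$ is flat; since the connection on $\bOmegao^1$ of Definition~\ref{def:Nabla} is dual to it, it is flat too, the curvature of a dual connection being minus the transpose of the original.

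The main obstacle --- and essentially the only delicate point --- is the order-one pole of $\tnabla$ along $z=0$: the operator $\tnabla_v$ sends $\pr^*\sfF$ into $\pr^*(z^{-1}\sfF)$, not into $\pr^*\sfF$, so $\Pi$ and $\tnabla_v$ do not literally commute, and one must argue that their commutator always lands in $\pr^*\sfP$ and is thus annihilated by $\Pi$ --- which is exactly what parallelism (Opp3) guarantees. A secondary, routine matter is to check continuity of all the maps involved for the $z$-adic and filtration topologies, so that the completed tensor products $\hotimes$ appearing in $\tnabla$, in $\Nabla$, and in the definitions of $\bOmegao^1$ and $\bThetao$ present no difficulty.
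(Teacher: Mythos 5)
Your proof is correct and follows essentially the same route as the paper: torsion-freeness via the identity $\KS(\Nabla_v w)=\Pi\,\tnabla_v\tnabla_w\bx$ together with flatness of $\tnabla$ and the fact that $\Pi$ fixes $\pr^*\sfF$, and flatness by showing the curvature of $\Pi\circ\tnabla$ equals $\Pi$ applied to the (vanishing) curvature of $\tnabla$. Your operator identity $\Pi\,\tnabla_v\,\Pi\,\tnabla_w = \Pi\,\tnabla_v\tnabla_w$ under (Opp3) is just a rephrasing of the paper's block-triangular form of $\tnabla$ with respect to $\sfF\oplus\sfP$, so there is no substantive difference.
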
 
\begin{proof} 
For $v_1,v_2 \in \bThetao$ and 
the tautological section $\bx$, we have 
\[
\Nabla_{v_1} v_2 - \Nabla_{v_2}v_1 
= \KS^{-1} \Pi(\tnabla_{v_1} \tnabla_{v_2} \bx 
- \tnabla_{v_2} \tnabla_{v_1} \bx) 
= \KS^{-1} \Pi(\tnabla_{[v_1,v_2]}\bx) 
= \Nabla_{[v_1,v_2]}  
\]
by the definition of the Kodaira--Spencer map 
and the flatness of $\tnabla$. 
This shows that $\Nabla$ is torsion-free. 

Suppose that $\sfP$ is parallel. 
To prove the flatness of $\Nabla$, 
it suffices to show that the connection  
$(\id\otimes \Pi) \circ \tnabla \colon 
\pr^* \sfF \to \bOmega^1 \hotimes \pr^* \sfF$ 
on $\pr^* \sfF$ is flat. Therefore 
it suffices to prove that 
the connection 
$(\id\otimes\Pi) \circ \nabla 
\colon \sfF \to \Omega^1_{\cM} \otimes \sfF$ 
on $\sfF$ is flat. 
Under the decomposition  
$\sfF[z^{-1}] = \sfF \oplus \sfP$, we can write 
\[
\nabla = 
\begin{pmatrix} 
A & 0 \\ 
C & B 
\end{pmatrix} 
\]
with $A \in \Hom_{\C}(\sfF,  \Omega^1_{\cM} \otimes \sfF)$, 
$B \in \Hom_\C(\sfP,\Omega^1_{\cM} \otimes \sfP )$, and
$C \in \Hom_{\cO_\cM}(\sfF, \Omega^1_{\cM} \otimes \sfP)$,
because $\sfP$ is parallel. 
Here $A = (\id\otimes \Pi) \circ \nabla$. 
The flatness of $\nabla$ implies that 
$A$ and $B$ are flat connections. 
\end{proof} 

\begin{lemma} 
\label{lem:poleorder-Nabla} 
The connection $\Nabla \colon \bOmegao^1 \to 
\bOmegao^1 \otimes \bOmegao^1$ associated to a 
pseudo-opposite module $\sfP$ 
raises the pole order along the discriminant $P=0$ 
(see equation~\ref{eq:discriminant}) by at most one. 
\end{lemma}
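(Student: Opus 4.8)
The plan is to prove the statement first for the dual connection $\Nabla \colon \bThetao \to \bOmegao^1 \hotimes \bThetao$ on vector fields, where the estimate comes out cleanly, and then to transfer it to one‑forms by duality. Recall from Definition~\ref{def:Nabla} that the two connections are dual, and set $\Pi = \Pi_\sfP$. If $\omega \in \bOmegao^1$ has pole order $\le k$ along the discriminant $P = 0$, then for coordinate frame fields $v, w$ among $\partial/\partial t^i$, $\partial/\partial x_n^i$ the identity $\pair{\Nabla_v \omega}{w} = v\pair{\omega}{w} - \pair{\omega}{\Nabla_v w}$ bounds the pole order of each component of $\Nabla \omega$: the function $\pair{\omega}{w}$ has pole order $\le k$; differentiating along a coordinate field raises pole order by at most one (note that $P = P(t, x_1)$ is independent of $x_n$ for $n \ge 2$, so those derivations do not raise it at all); and, once we know $\Nabla_v w$ has pole order $\le 1$, the term $\pair{\omega}{\Nabla_v w}$ has pole order $\le k + 1$ as well. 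So it suffices to show that $\Nabla = \Nabla^\sfP$ on $\bThetao$ raises pole order by at most one.

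For this, by the Leibniz rule $\Nabla(f\,w) = df \otimes w + f\,\Nabla w$ it is enough to check that $\Nabla$ sends each coordinate frame vector field $\partial/\partial t^i$, $\partial/\partial x_n^i$ — all regular along $P = 0$ — to a vector field of pole order at most one, with a bound uniform in the frame index; uniformity will be visible from the formulas. Fix a local trivialization of $\sfF$ as in \S\ref{subsec:totalspace}, so that $\Nabla_v w = \KS^{-1}\bigl(\Pi(\tnabla_v \KS(w))\bigr)$ by Definition~\ref{def:Nabla}. The key observation is that $\Pi(\tnabla_v \KS(w))$ is a \emph{regular} section of $\pr^*\sfF$ when $w$ is a frame field: by \eqref{eq:KS_coord} we have $\KS(\partial/\partial x_n^i) = e_i z^n$ and $\KS(\partial/\partial t^i) = -\cC_i(t,z)(\bx/z)$, whose coefficients are polynomial in $\bx$ and holomorphic in $t$; the connection $\tnabla$ has the regular matrix of \eqref{eq:presentation_nabla} in this trivialization (it is $\partial/\partial t^i - z^{-1}\cC_i(t,z)$ in the base directions and $\partial/\partial x_n^i$ in the fibre directions), so $\tnabla_v \KS(w)$ is again a regular section, of $\pr^*(z^{-1}\sfF)$; and the projection $\Pi \colon \sfF[z^{-1}] = \sfF \oplus \sfP \to \sfF$ is $\cO_\cM$‑linear because $\sfP$ is an $\cO_\cM$‑submodule (this is the only place we use the hypothesis on $\sfP$, and we need nothing beyond pseudo‑oppositeness), so its pull‑back is $\bcO$‑linear and introduces no power of $P$.

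It remains to note that the only place $P$ enters is the final $\KS^{-1}$: the Kodaira--Spencer map $\KS \colon \bThetao \to \pr^*\sfF$ is an isomorphism precisely over $\LLo = \{P \neq 0\}$, and in the basis $\{e_j z^m : m \ge 0\}$ of $\pr^*\sfF$ its matrix is block lower‑triangular with $z^0$‑block $\bigl(-[\cC_i(t,0)x_1]^j\bigr)_{j,i}$, whose determinant is, up to sign, the discriminant $P$ of \eqref{eq:discriminant}. Hence $\KS^{-1}$ has regular matrix entries except in the $\sfF/z\sfF$‑direction, where they are cofactors divided by $P$, so $\KS^{-1}$ carries a regular section of $\pr^*\sfF$ to a vector field of pole order at most one. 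Combining the three steps, $\Nabla_v w$ has pole order at most one, uniformly in $v,w$, and by the two reductions above the lemma follows.

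The point that needs care — and the reason to route the argument through $\bThetao$ rather than computing $\Nabla$ on $\bOmegao^1$ directly via $\KS^*$, $\Pi^*$ and $\tnabla^\vee$ — is the order in which the $P$‑degenerate isomorphism is applied. On the tangent side $\KS^{-1}$ is the \emph{last} operation, so the single power of $P$ it produces is never subsequently differentiated and the bound ``$\le 1$'' drops out. Carried out directly on one‑forms, the analogous step $(\KS^*)^{-1}$ comes first and is then differentiated by $\tnabla^\vee$, which naively yields pole order $2$; recovering the sharp bound that way forces one to exhibit a cofactor‑expansion cancellation (the extra factor of $P$ created by $\KS^*$ absorbing one of the two poles), precisely the bookkeeping that the duality reduction sidesteps.
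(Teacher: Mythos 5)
Your argument is correct, and it genuinely reroutes the paper's proof rather than reproducing it. The paper stays on the cotangent side: $\Nabla$ on $\bOmegao^1$ is the transport under $\KS^*$ of the connection $\tnabla^\vee\circ\Pi^*$ on $\pr^*\sfF^\vee$, both of which are regular along $P=0$, the single pole coming from $\KS^{*-1}$, with the explicit formulas of Example~\ref{exa:Nabla-explicit} cited as confirmation. You instead prove the bound for the dual connection on $\bThetao$, where $\KS^{-1}$ is applied \emph{last} in $\Nabla_v w=\KS^{-1}\Pi\,\tnabla_v\KS(w)$, so that for coordinate frame fields the bound $\le 1$ is a pure composition count, and you then pass to one-forms via the duality recorded in Definition~\ref{def:Nabla}; since $\Nabla\omega$ has only finitely many nonzero components, bounding each $\pair{\Nabla_v\omega}{w}$ does give $P^{k+1}\Nabla\omega$ regular, so the reduction is sound, and your use of pseudo-oppositeness only through $\cO_\cM$-linearity of $\Pi$ matches the paper's level of generality. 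Two small corrections to your commentary rather than your proof: (i) the cancellation needed on the cotangent route is lighter than a ``cofactor-expansion'' --- for regular $\omega$ set $\tau=P\,\KS^{*-1}\omega$, which is regular because $\KS^{*-1}$ has a pole of order one; then $P\omega=\KS^*\tau$, so $P\,\Nabla\omega=(\id\otimes\KS^*)(\tnabla^\vee\Pi^*\tau)-dP\otimes\omega$ is regular, and the general case follows by Leibniz --- this one-line step is what the paper's ``the conclusion follows'' compresses; (ii) the $P^{-1}$ produced by inverting $\KS$ is not confined to the $\sfF/z\sfF$-block but leaks into the fibre components of $\KS^{-1}$ of a general regular section through the triangular structure, though every entry still has pole order at most one, which is all your argument uses.
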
 
\begin{proof} 
The connection $\Nabla$ arises from
the connection $\tnabla^\vee \circ \Pi^* \colon \pr^* \sfF^\vee 
\to \bOmega^1 \otimes \pr^* \sfF^\vee$  
via the isomorphism $\KS^* \colon \pr^* \sfF^\vee |_{\LLo}
\cong \bOmegao^1$. Both the connection $\tnabla^\vee \circ \Pi^*$ 
and $\KS^*$ are regular along $P=0$, but the inverse $\KS^{*-1}$ 
has a pole of order one along $P=0$. 
The conclusion follows. 
(See also the formula in Example~\ref{exa:Nabla-explicit}.) 
\end{proof} 

\begin{example} 
\label{exa:Nabla-explicit} 
Assume that $\sfP$ is an opposite module and that we have 
a trivialization $\sfF \cong \C^{N+1} \otimes \cO[\![z]\!]$ 
such that $\sfP$ is identified in this 
trivialization as $\C^{N+1} \otimes z^{-1}\cO[z^{-1}]$. 
(The trivialization here is the \emph{flat trivialization}  
associated to $\sfP$ in Proposition~\ref{prop:flat_trivialization}.)
In this case, $\cC(t,z)$ in the presentation 
\eqref{eq:presentation_nabla} of $\nabla$ 
is independent of $z$. 
We write $\cC= \cC(t) = \cC(t,z)$ below. 
Let $\{t^i,x_n^i\}$ be the associated algebraic local
co-ordinate system on $\LL$. 
The flat connection 
$\Nabla \colon \bOmegao^1 \to \bOmegao^1 \otimes \bOmegao^1$ is 
given in these co-ordinates as:
\begin{align*}
\Nabla d t^h &= - [K(x_1)^{-1} \cC_i e_j]^h
(d t^i \otimes d x_1^j + d x_1^j\otimes d t^i) \\
& \quad + [K(x_1)^{-1} (\cC_i \cC_j x_2 - (\partial_i \cC_j ) x_1)]^h
d t^i \otimes d t^j, \\
\Nabla d x_n^h 
& = -[K(x_{n+1}) K(x_1)^{-1} \cC_i e_j ]^h 
(d t^i \otimes d x_1^j+ d x_1^j \otimes dt^i) \\ 
& \quad + \left 
[K(x_{n+1})K(x_1)^{-1} 
(\cC_i \cC_j x_2 - (\partial_i \cC_j) x_1) 
-(\cC_i \cC_j x_{n+2} - (\partial_i \cC_j) x_{n+1}) 
\right ]^h
d t^i \otimes d t^j \\ 
&  \quad
+ [\cC_i e_j]^h (d t^i \otimes d x_{n+1}^j + 
d x_{n+1}^j \otimes d t^i), \quad n\ge 1
\end{align*} 
where $K(x_n)\in \End(\C^{N+1}) \otimes \bcO$ 
is defined by $K(x_n) e_i := \cC_i(t) x_n$ 
and we used the Einstein summation convention for the repeated 
indices $i,j,h$. 
\end{example} 

\begin{remark} 
When $\sfP$ is an opposite module, we have two different 
flat structures on the total space $\LLo$. 
Recall that the tangent bundle $\bThetao$ is identified with 
$\pr^*\sfF$ via the Kodaira--Spencer map 
and the flat connection $\Nabla^{\sfP}$ is induced 
from the flat connection $\Pi_{\sfP} \circ \tnabla$ on 
$\pr^*\sfF$. 
Another flat structure on $\LLo$ 
is given by the flat trivialization 
$\sfF \cong (\sfF \cap z\sfP)[\![z]\!] \cong 
(z\sfP/\sfP)[\![z]\!]$ that we discussed in Proposition~\ref{prop:flat_trivialization}.  
This arises from the restriction of the flat connection 
$\Pi_{\sfP} \circ \tnabla$ to the flat-subbundle 
$\pr^*(\sfF \cap z\sfP)$ and its $z$-linear extension. 
Note that $\Nabla^{\sfP}$ is not $z$-linear (under the 
identification $\bThetao \cong \pr^*\sfF)$ 
whereas the latter flat structure is $z$-linear. 
\end{remark} 

\subsection{Flat Co-ordinates and Genus-Zero Potential}
\label{subsec:flatstronL}

We construct a flat co-ordinate system for $\Nabla = \Nabla^\sfP$ 
for a parallel pseudo-opposite module $\sfP$ 
and see that the Yukawa coupling is the third derivative of 
a certain function, called the genus-zero potential.  
A flat co-ordinate system and the genus-zero potential  
may only be defined in the formal\footnote{Or, rather than formal neighbourhood, in an $L^2$- or nuclear neighbourhood: 
see Remarks~\ref{rem:L2-neighbourhood} and~\ref{rem:nuclear-neighbourhood}.} neighbourhood 
$\hLLo$ of the fiber $\LLo_t = \pr^{-1}(t) \cap \LLo$ 
at $t\in \cM$.   

Let $s_0,\dots,s_N$ be a regular system of parameters 
in the local ring $\cO_{\cM,t}$ at $t\in \cM$. 
The formal neighbourhood $\hcM$ of $t$ is then given by:
\[
\hcM = \Spf \C[\![s_0,\dots,s_N]\!]
\]
Take a local trivialization 
$\sfF \cong \bigoplus_{i=0}^N \cO[\![z]\!]e_i$ 
in a neighbourhood of $t$  
and let $\{s^i,x_n^i\}$ be the corresponding algebraic local co-ordinate system 
on $\LL$ as in \S\ref{subsec:totalspace}. 
The formal neighbourhood $\hLL$ 
of $\LL_t=\pr^{-1}(t)$ in $\LL$ (respectively the formal neighbourhood $\hLLo$ of $\LLo_t$ in $\LLo$)
is then given by  
\begin{align*} 
\hLL & = \Spf \C\left[
\{x_n^i\}_{n\ge 1, 0\le i\le N} \right]
[\![s^0,\dots,s^N]\!] \\ 
\hLLo &  = \Spf \C
\left[\{x_n^i\}_{n\ge 1, 0\le i\le N}, 
P(t,x_1)^{-1}\right][\![s^0,\dots,s^N]\!] 
\end{align*} 
where $P(t,x_1)$ is the discriminant \eqref{eq:discriminant}. 

Let $\sfP$ be a parallel pseudo-opposite module over 
the formal neighbourhood $\hcM$ of $t$ 
(see Lemma~\ref{lem:existence_opposite}). 
The above local trivialization of $\sfF$ induces 
a trivialization $\sfF|_{\hcM} \cong 
\bigoplus_{i=0}^N \C[\![z]\!][\![s_0,\dots,s_N]\!]e_i$ 
on the formal neighbourhood $\hcM$. 
We can solve for a unique flat section 
$f_i(s)\in \widehat{\sfF[z^{-1}]}: = \projlim_n \sfF[z^{-1}]/\fm_t^n 
\sfF[z^{-1}] 
\cong \bigoplus_{i=0}^N \C(\!(z)\!)[\![s_0,\dots,s_N]\!] e_i$ 
such that: 
\begin{align*}
  \nabla f_i(s)=0 && f_i(0) = e_i
\end{align*}
This defines a parallel transportation map: 
\begin{align*}
  \PT \colon 
  \widehat{\sfF[z^{-1}]} 
  \overset{\cong}{\longrightarrow}  
  \sfF_t[z^{-1}][\![s_0,\dots,s_N]\!]
  &&
  f_i \mapsto e_i 
\end{align*}
which is an isomorphism of $\C(\!(z)\!)[\![s_0,\dots,s_N]\!]$-modules. 
Since the symplectic form $\Omega_t$ identifies $\sfP_t$ with 
$\sfF_t^\vee$, there exist unique elements 
$\xi_m^j \in \sfP_t$, $m\ge 0$, $0\le j\le N$,
such that:
\[
\Omega(\xi^j_m, e_iz^n) = \delta_i^j \delta_{n,m}
\]
Then we have a Darboux basis 
$\{e_i z^n, \xi_m^j\}_{0 \leq n,m < \infty, 0\le i,j\le N}$ of $\sfF_t[z^{-1}]$.  
A general element of $\sfF_t[z^{-1}]$ can be written as  
a linear combination 
\[
\sum_{n=0}^\infty \sum_{i=0}^N q_n^i e_i z^n 
+ \sum_{m=0}^\infty \sum_{j=0}^N p_{m,j} \xi_m^j 
\]
and the coefficients $\{q_n^i, p_{m,j}\}$ form 
a Darboux co-ordinate system on $\sfF_t[z^{-1}]$. 
Pulling back the Darboux co-ordinates via 
\[
\emb \colon 
(z\sfF)|_{\hcM} \hookrightarrow \widehat{\sfF[z^{-1}]} 
\overset{\PT}{\longrightarrow} \sfF_t[z^{-1}][\![s_0,\dots,s_N]\!]
\]
we get regular functions $q_n^i, p_{m,j}$ 
on the total space $\hLL$ of $(z\sfF)|_{\hcM}$: 
\begin{align*}
  q_n^i := \emb^*(q_n^i) && p_{m,j} := \emb^* (p_{m,j})
\end{align*}

\begin{definition} 
\label{def:flatcoordinate_on_LL} 
We call $\{q_n^i\}_{n\ge 0, 0\le i\le N}$ the \emph{flat 
co-ordinate system} on the formal neighbourhood 
$\hLLo$ of $\LL_t$. 
It depends only on the choice of a trivialization 
$\sfF_t \cong \C^{N+1}[\![z]\!]$ 
at the point $t$ and on the isotropic subspace 
$\sfP_t\subset \sfF_t[z^{-1}]$ which is complementary 
to $\sfF_t$. 
One can view this flat co-ordinate system 
as a  ``projection" to the tangent space:  
\[
\bq = \sum_{n=0}^{\infty} \sum_{i=0}^N q_n^i e_i z^n 
\colon \hLL \longrightarrow \sfF_t 
\]
such that it is the identity on $\LL_t = z\sfF_t$ 
and its derivative at any point $\bx$ in $\LL_t$ 
\[
D\bq\colon \bTheta_\bx \longrightarrow \sfF_t 
\]
coincides with the Kodaira--Spencer map 
(this will be verified in equation~\ref{eq:KSinv_dq+dp} below). 
\end{definition} 

The flatness of the co-ordinates $q_n^i$ will be shown momentarily. 
We write $e_i = \sum_{j=0}^N f_j(s) M^j_i(s,z)$ 
with $M^j_i \in \C(\!(z)\!)[\![s^0,\dots,s^N]\!]$.  
Let $M(s,z)$ be the $(N+1) \times (N+1)$ matrix with matrix elements  
$M_i^j(s,z)$. 
By definition, $M(s,z)$ is a matrix representation of the parallel transportation 
map $\PT$, i.e.~$\PT(e_i) = \sum_{j=0}^N M^j_i(s,z) e_j$. 
By the definition of the functions $q_n^i$, $p_{m,j}$ on 
$\hLL$, we have 
\begin{equation} 
\label{eq:formal_flat_coordinates} 
\bq + \bp=  M(s,z) \bx 
\end{equation} 
where:
\begin{align*}
  \bq = \sum_{n=0}^\infty \sum_{i=0}^N q_n^i e_i z^n &&
  \bp = \sum_{n=0}^\infty \sum_{i=0}^N p_{n,i} \xi_n^i &&
  \bx = \sum_{n=1}^\infty \sum_{i=0}^N x_n^i e_i z^n &&
\end{align*}
Here $M(s,z)$ acts on the column vector $\bx$ in the 
basis $e_0,\dots,e_N$. 
Let $\nabla = d - z^{-1} \cC(s,z)$ be the 
presentation of the connection in the 
trivialization given by the frame $e_0,\dots,e_N$. 
The matrix $M(s,z)$ is a solution to the differential equation 
\begin{equation} 
\label{eq:inverse_fundsol} 
d M(s,z) = -z^{-1} M(s,z) \cC(s,z) 
\end{equation} 
with the initial condition $M(0,z) = I$; i.e.~$M$ is an inverse fundamental solution, 
cf.~\eqref{eq:inversefundamentalsolution}. 
Therefore $M(s,z)= I-z^{-1}\sum_i \cC_i(0,z) s^i+\hot$, 
where $\hot$ means terms of order $2$ or more in $s^0,\dots,s^N$. 
Thus we have, by \eqref{eq:formal_flat_coordinates}:
\begin{align}
\label{eq:flat-alg}
\begin{split} 
  q_0 &= - \sum_i s^i\cC_i(0,0)x_1 + \hot \\
  q_n & =x_n - \sum_i s^i [\cC_i(0,z) \bx]_{n+1} + \hot  \quad n\ge 1 
\end{split} 
\end{align} 
where $q_n = \sum_{i=0}^N q_n^i e_i$ and $[\cdots]_n$ 
denotes the coefficient of $z^n$. The lowest order 
term of the first equation gives an 
invertible change of variables between 
$\{q_0^i\}_{i=0}^N$ and $\{s^i\}_{i=0}^N$ 
when the matrix formed by the column vectors 
$\{\cC_i(0,0)x_1\}_{i=0}^N$ is invertible, i.e.~when
$P(t,x_1)$ is invertible. 
Therefore $\{q_n^i\ : \text{$n\ge 0$, $0\le i\le N$}\}$ gives a co-ordinate system on $\hLLo$, in the sense that:
\[
\C\left[\{x_n^i\}_{n\ge 1, 0\le i\le N}, 
P(t,x_1)^{-1}\right][\![s^0,\dots,s^N]\!] 
= 
\C\left[\{q_n^i\}_{n\ge 1, 0\le i\le N}, 
P(t,q_1)^{-1}\right][\![q_0^0,\dots,q_0^N]\!]
\]
We elaborate on this in Lemma~\ref{lem:sx-poleorder} below. 
\begin{remark} 
\label{rem:difference-normalization}
Notice a small difference between $M$  
in Gromov--Witten theory (see equation~\ref{eq:inversefundamentalsolution})
and $M$ in the above construction 
(equation~\ref{eq:formal_flat_coordinates}). 
In the construction above, $M$ is normalized 
so that it is the identity at the base point. 
In Gromov--Witten theory, however, 
it is normalized by the asymptotic behaviour 
$M \sim e^{-\delta/z}$ 
at the large radius limit (see equation~\ref{eq:M-divisoreq}). 
The Gromov--Witten case will be discussed 
in Example~\ref{exa:Amodel-genuszero}. 
\end{remark} 

\begin{lemma} 
\label{lem:sx-poleorder}
When we invert the co-ordinate change \eqref{eq:flat-alg} 
and express $s^i, x_n^i$, $n\ge 1$ as functions of $q_n^i$, $n\ge 0$, 
we find  
\begin{align*} 
s^i & \in P_t  \C[q_1,q_2, P_t q_3, P_t^2 q_4,\dots]
[\![P_t^{-2} q_0]\!] \\ 
x_n^i & \in 
\delta_{n,1} q_1^i + P_t^{2-n} \C[q_1,q_2,P_t q_3, P_t^2 q_4, \dots ] 
[\![P_t^{-2} q_0]\!] \quad n\ge 1
\end{align*} 
where $P_t = P(t,q_1)$. Moreover, we have: 
\[
\sum_{i=0}^N s^i \cC_i(0,0) q_1 \in 
P_t^2 \C^{N+1}[q_1,q_2,P_t q_3,P_t^2q_4,\dots]
[\![P_t^{-2} q_0]\!]
\]
\end{lemma}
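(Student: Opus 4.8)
The plan is to invert the change of co-ordinates \eqref{eq:flat-alg} by successive approximation in the $q_0$-adic filtration (equivalently the $\fm_t$-adic filtration: on $\hLLo$ one has $\{s=0\}=\{q_0=0\}$, and $q_n=x_n$ there for $n\ge 1$, since $M(0,z)=I$ makes $\bq+\bp=\bx$ purely non-negative in $z$), and to control at each stage the order of pole along the discriminant $\{P_t=0\}$; invertibility of the co-ordinate change itself is already known. The starting observation is that in \eqref{eq:formal_flat_coordinates}, using the recursion \eqref{eq:inverse_fundsol}, the $s$-degree-$k$ part $M_k(s,z)$ of the inverse fundamental solution $M(s,z)=I-z^{-1}\sum_i\cC_i(0,z)s^i+\hot$ lies in $z^{-k}\End(\C^{N+1})[\![z]\!]$, so the $s$-degree-$k$ contribution to the $z^n$-component of $M(s,z)\bx$ involves only $x_1,\dots,x_{n+k}$; and this stays true for the correction coming from the choice of $\sfP_t$, which is a fixed, $x$- and $s$-independent linear map $z^{-1}\C^{N+1}[z^{-1}]\to\C^{N+1}[\![z]\!]$ that only lowers the $z$-degree. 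Inverting the leading-order equations $q_0=-K(x_1)s+\hot$, $q_n=x_n+\hot$ --- where $K(x_1)\in\End(\C^{N+1})$ has columns $\cC_i(0,0)x_1$, so $\det K(x_1)=(-1)^{N+1}P(t,x_1)$ by \eqref{eq:discriminant} --- gives $x_n=q_n+\cdots$ and, by Cramer's rule, $s=-K(q_1)^{-1}q_0+\cdots$ with $K(q_1)^{-1}$ having entries of the form (degree-$N$ polynomial in $q_1$) divided by $P(t,q_1)=P_t$. This already displays the asserted shape at lowest order.

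For the bookkeeping I would assign to each monomial $P_t^{\,e}q_0^{a_0}q_1^{a_1}q_2^{a_2}\cdots$ --- treating $P_t$ as a formally adjoined invertible symbol --- the \emph{room} $\rho=e+2a_0-\sum_{n\ge 3}(n-2)a_n$, and write $R^{\ge j}$ for the set of series (scalar- or $\C^{N+1}$-valued) all of whose monomials have $\rho\ge j$; then $R=R^{\ge 0}=\C[q_1,q_2,P_tq_3,P_t^2q_4,\dots][\![P_t^{-2}q_0]\!]$, and the three assertions become $s^i\in R^{\ge 1}$, $x_n^i\in\delta_{n,1}q_1^i+R^{\ge 2-n}$, and $\sum_i s^i\cC_i(0,0)q_1\in R^{\ge 2}$. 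One records the elementary facts $R^{\ge j}R^{\ge j'}\subseteq R^{\ge j+j'}$, $q_0R^{\ge j}\subseteq R^{\ge j+2}$, $P_tR^{\ge j}\subseteq R^{\ge j+1}$, $P_t^{-1}R^{\ge j}\subseteq R^{\ge j-1}$, that each $R^{\ge j}$ is closed under $q_0$-adic limits, that $K(q_1)^{-1}$ has entries in $R^{\ge -1}$ (numerator in $R^{\ge 0}$, denominator $\pm P_t$), and that $x_1-q_1$ and $s$ vanish on $\{q_0=0\}$. The induction on $q_0$-degree then runs simultaneously for $s$ and the $x_n$: using the bounds already established at lower $q_0$-degree, the $\hot$ in the equation for $q_n$ is a sum of terms $(\text{degree-}k\text{ in }s)\cdot x_m$ with $m\le n+k$, and substitution places each such term in $R^{\ge k}\cdot R^{\ge 2-m}\subseteq R^{\ge k+2-(n+k)}=R^{\ge 2-n}$; since $q_n\in R^{\ge 2-n}$ for $n\ge 2$ this yields $x_n\in q_n+R^{\ge 2-n}$, while the case $n=1$ gives $x_1-q_1\in R^{\ge 1}$. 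For $s$ one solves the $q_0$-equation as $s=K(x_1)^{-1}(\hot-q_0)$, writes $K(x_1)^{-1}=(I+K(q_1)^{-1}K(x_1-q_1))^{-1}K(q_1)^{-1}\in R^{\ge -1}$ (the first factor being invertible $q_0$-adically because $x_1-q_1$ vanishes on $\{q_0=0\}$), and notes $\hot-q_0\in R^{\ge 2}$ by the same estimate (the $\hot$ here has $s$-degree $\ge 2$, and a degree-$k$ term with an $x_m$, $m\le k$, lies in $R^{\ge k+2-m}\subseteq R^{\ge 2}$), so $s\in R^{\ge -1}\cdot R^{\ge 2}=R^{\ge 1}$.

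Finally, for the sharper statement $\sum_i s^i\cC_i(0,0)q_1=K(q_1)s\in R^{\ge 2}$, I would re-use the $q_0$-equation in the form $K(x_1)s=\hot-q_0$ together with the splitting $K(x_1)s=K(q_1)s+K(x_1-q_1)s$: since $\hot$ and $q_0$ lie in $R^{\ge 2}$ and $K(x_1-q_1)s\in R^{\ge 1}\cdot R^{\ge 1}=R^{\ge 2}$ (using $x_1-q_1\in R^{\ge 1}$, already proved), the extra power of $P_t$ falls out. The step I expect to be the main obstacle is the combinatorial compatibility underlying the induction: one must verify that the $s$-degree-$k$ part of the $z^n$-coefficient of $M(s,z)\bx$ (including the $\sfP_t$-correction) involves no $x_m$ with $m>n+k$, so that the $k$ units of room gained from the $k$ factors of $s\in R^{\ge 1}$ exactly offset the worst-case loss of $n+k-2$ units incurred by $x_{n+k}\in R^{\ge 2-(n+k)}$; carrying this out cleanly requires the precise $z^{-k}\End(\C^{N+1})[\![z]\!]$-structure of $M_k$ (hence that $\nabla$ has at worst a first-order pole in $z$) and a formalism transparent enough to see that the polarization correction does not disturb the index bound.
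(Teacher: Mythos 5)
Your proposal is correct and is essentially the paper's own argument in different notation: the ``room'' filtration is precisely the paper's rescaling $\hs^i=P_t^{-1}s^i$, $\hx_n^i=P_t^{n-2}(x_n^i-\delta_{n,1}q_1^i)$, $\hq_n^i=P_t^{n-2}q_n^i$, your index bound $m\le n+k$ (including the polarization correction, which only lowers the $z$-degree) is exactly what makes all powers of $P_t$ non-negative in the rewritten system \eqref{eq:hq0}, and the Cramer-rule inversion of the leading operator $\hs\mapsto P_t^{-1}\sum_i \hs^i\,\cC_i(0,0)q_1$ together with the extraction of the final statement from the $q_0$-equation coincide with the paper's steps. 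The one harmless slip is writing $R^{\ge k}\cdot R^{\ge 2-m}$ for terms containing $x_1$ (i.e.\ $m=1$): since $x_1\in q_1+R^{\ge 1}$ one only gets $R^{\ge k}$ rather than $R^{\ge k+1}$, but this weaker bound still suffices at every place it is used.
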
 
\begin{proof} 
Because $M(s,z)$ is a solution to the differential equation 
\eqref{eq:inverse_fundsol} with $M(0,z)=\id$, we can 
expand it in the form: 
\begin{equation} 
\label{eq:M-expansion}
M(s,z) = \id + \sum_{n>0} \sum_{I =(i_1,\dots,i_n)} 
\sum_{m\ge 0} s^I 
M_{I,m} z^{-n+m}  
\end{equation} 
with $s^I = s^{i_1} \cdots s^{i_n}$. 
Let $\Pi_t \colon \sfF_t[z^{-1}] \to \sfF_t$ denote the 
projection along $\sfP_t$. 
We set $\Pi_t( v z^{-a}) = \sum_{u=0}^\infty \pi^{-a}_u(v) z^u$ 
for $a>0$, where $\pi^{-a}_u \in \End(\C^{N+1})$.  
From \eqref{eq:formal_flat_coordinates}, we have
\begin{align*} 
q_u  = x_u  &+  \sum_{\substack{l>0, n>0, m\ge 0 \\ 
-n+m+l =u}} \sum_I s^I 
M_{I,m} x_l  +  \sum_{\substack{l>0,n>0,m\ge 0 \\ -n+m+l<0}} 
\sum_I s^I  
\pi^{-n+m+l}_u M_{I, m} x_l   
\end{align*} 
for $u\ge 0$. 
Here $I=(i_1,\dots,i_n)$, and we set $x_0=0$. 
Setting 
\begin{align*}
  q_n^i  = P_t^{2-n} \hq_n^i \quad (n\neq 1),  && 
  s^i = P_t \hs^i, && 
  x_n^i = \delta_{n,1} q_1^i + P_t^{2-n} \hx_n^i \quad (n\ge 1),  
\end{align*}
we can rewrite this in the following form: 
\begin{equation}
  \label{eq:hq0} 
  \begin{aligned}
    \hq_0 & = P_t^{-1} \sum_{i=0}^N \hs^i M_{i,0} q_1 
    + \sum_{i=0}^N \hs^i M_{i,0} \hx_1 + 
    \sum_{\substack{l>0, m\ge 0 \\ n=m+l \ge 2}} 
    \sum_I P_t^m \hs^{I} M_{I,m} 
    (\delta_{l,1} P_t^{-1} q_1 + \hx_l ) \\ 
    & \qquad + 
    \sum_{\substack{l>0,n>0,m\ge 0 \\ m+l <n}} \sum_I
    P_t^{n-l} \hs^{I} 
    \pi^{-n+m+l}_0M_{I,m} ( \delta_{l,1} P_t^{-1} q_1 + \hx_l)  \\
    0 & = \hx_1 + 
    \sum_{\substack{l>0,n>0, m\ge 0 \\m+l = n+1}} \sum_I 
    P_t^m \hs^{I} M_{I,m} 
    (\delta_{l,1} P_t^{-1} q_1 + \hx_l )  \\ 
    & \qquad + \sum_{\substack{l>0, n>0, m\ge 0\\ m+l<n}} \sum_I
    P_t^{n-l+1} \hs^{I} \pi^{-n+m+l}_1 M_{I,m} 
    (\delta_{l,1} P_t^{-1} q_1 + \hx_l) \\ 
    \hq_u & = 
    \hx_u + \sum_{\substack{l>0,n>0,m\ge 0 \\ 
        m+l = n + u}} \sum_I
    P_t^m \hs^{I} M_{I,m} 
    (\delta_{l,1}P_t^{-1} q_1 + \hx_l) \\ 
    & \qquad + \sum_{\substack{l>0,n>0,m\ge 0 \\ 
        m+l<n}} \sum_I P_t^{u+(n-l)} \hs^I \pi^{-n+m+l}_uM_{I,m}
    (\delta_{l,1} P_t^{-1} q_1 + \hx_l) \qquad  (u \ge 2)
  \end{aligned}
\end{equation}
where again $I=(i_1,\dots,i_n)$.
Note that the powers of $P(t,q_1)$ appearing 
on the right-hand side are non-negative except for the 
leading term $P_t^{-1} \sum_{i=0}^N \hs^i M_{i,0} q_1$ in $\hq_0$. 
From these equations, we can solve for $\hs^i$,~$\hx_n^i$ 
as functions of $\hq_n^i$, $n\neq 1$, and $q_1^i$. 
To do this, we need to invert the leading term operator:
\[
\hs \mapsto P(t,q_1)^{-1} \sum_{i=0}^N \hs^i M_{i,0} q_1 
= - P(t,q_1)^{-1} \sum_{i=0}^N \hs^i \cC_i(0,0) q_1
\]
Because $P(t,q_1) = (-1)^{N+1}\det\big(\cC_0(0,0)q_1, \dots, \cC_N(0,0)q_1\big)$, 
the inverse operator is polynomial in $q_1^0,\dots, q_1^N$. 
(The inverse is the transpose of the cofactor matrix 
of $-(\cC_0(0,0)q_1,\dots,\cC_N(0,0)q_1)$.)  
Therefore we have:
\begin{equation} 
\label{eq:hshx}
\hs^i, \hx_n^i  \in \C[q_1,\hq_2,\hq_3,\dots][\![\hq_0]\!]
\end{equation} 
The first statement in the Lemma follows by substituting
$\hq_n = P(t,q_1)^{n-2}q_n$, $n\neq 1$. 
Equations \eqref{eq:hq0} and \eqref{eq:hshx} 
in turn show that $P_t^{-1}\sum_{i=0}^N \hs^i M_{i,0} q_1$ 
lies in $\C[q_1,\hq_2,\hq_3,\dots][\![\hq_0]\!]$. 
The second statement follows. 
\end{proof}

\begin{proposition}[flatness]   
$\Nabla^{\sfP} dq_n^i =0$. 
\end{proposition}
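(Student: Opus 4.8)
The plan is to deduce the flatness of $q_n^i$ from the flatness of the \emph{dual} connection and the definition of the Kodaira--Spencer isomorphism. By Definition~\ref{def:Nabla}, the connection $\Nabla^\sfP$ on $\bOmegao^1$ is the transport, under $\KS^*\colon\pr^*\sfF^\vee\xrightarrow{\sim}\bOmegao^1$, of the connection $D:=\tnabla^\vee\circ\Pi^*_\sfP$ on $\pr^*\sfF^\vee$; hence $\Nabla^\sfP dq_n^i=\KS^*\!\bigl(D\,(\KS^*)^{-1}dq_n^i\bigr)$ and it suffices to show that $(\KS^*)^{-1}dq_n^i$ is $D$-flat. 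Since $\sfP$ is parallel, the argument in the proof of Proposition~\ref{prop:Nabla-torsionfree} shows that $\Pi_\sfP\circ\tnabla=\pr^*A$, where $A:=\Pi_\sfP\circ\nabla$ is a flat connection on $\sfF$; dualizing, $D=\pr^*(A^\vee)$ for the flat connection $A^\vee$ on $\sfF^\vee$ dual to $A$. So the task reduces to exhibiting an $A^\vee$-flat section $\lambda_n^i$ of $\sfF^\vee$ over the formal neighbourhood $\hcM$ with $\KS^*(\pr^*\lambda_n^i)=dq_n^i$.

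Next I would make the relation between flat and algebraic co-ordinates differential. Differentiating the defining identity $\bq+\bp=M(s,z)\bx$ from \eqref{eq:formal_flat_coordinates} and using that $M$ solves $dM=-z^{-1}M\cC(s,z)$ \eqref{eq:inverse_fundsol} gives $d\bq+d\bp=M(s,z)\bigl(d\bx-z^{-1}\cC\bx\bigr)=M(s,z)\cdot\tnabla\bx$; here $\tnabla\bx=d\bx-z^{-1}\cC\bx$ in fact lies in $\bOmega^1\hotimes\pr^*\sfF$, not merely in $\bOmega^1\hotimes\pr^*(z^{-1}\sfF)$, since $z^{-1}\cC_i(t,z)\bx=\cC_i(t,z)(\bx/z)$ with $\bx/z\in\sfF$, so that the right-hand side of Definition~\ref{def:KS} makes sense. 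Reading off the coefficient of $e_iz^n$ for $n\ge0$ on both sides---on the left this is $dq_n^i$, because the negative-power part $\bp$ contributes only to the $\xi_m^j$-coefficients---yields $dq_n^i=\lambda_n^i(\tnabla\bx)=\KS^*(\pr^*\lambda_n^i)$, where $\lambda_n^i$ is the functional on $\pr^*\sfF$ obtained by parallel-transporting via $M(s,z)$ (equivalently via $\PT$) and then extracting the coefficient of $e_iz^n$; i.e.\ $(\KS^*)^{-1}dq_n^i=\pr^*\lambda_n^i$. (This also verifies the property of the flat co-ordinate system stated after Definition~\ref{def:flatcoordinate_on_LL}.)

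It then remains to check that $\lambda_n^i$ is $A^\vee$-flat. Since $\PT$ intertwines $\nabla$ on $\widehat{\sfF[z^{-1}]}$ with the trivial connection $d$ on $\sfF_t[z^{-1}][\![s]\!]$, and $\lambda_n^i=\Lambda_n^i\circ\PT$ for the $z$- and $s$-independent functional $\Lambda_n^i$ ``coefficient of $e_iz^n$'', the functional $\lambda_n^i$ is $\nabla^\vee$-flat on $\widehat{\sfF[z^{-1}]}$. Moreover $\lambda_n^i$ annihilates $\sfP$: under $\PT$, $\sfP$ maps into $\bigoplus_j\C[z^{-1}][\![s]\!]\,z^{-1}e_j$, which has only negative powers of $z$, so its $e_iz^n$-coefficient vanishes for $n\ge0$. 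Finally, for any $\nabla^\vee$-flat $\lambda$ vanishing on $\sfP$ the restriction $\lambda|_\sfF$ is $A^\vee$-flat: for $\sigma\in\sfF$ one has $\langle A^\vee(\lambda|_\sfF),\sigma\rangle=d\langle\lambda,\sigma\rangle-\langle\lambda,\Pi_\sfP\nabla\sigma\rangle=d\langle\lambda,\sigma\rangle-\langle\lambda,\nabla\sigma\rangle=\langle\nabla^\vee\lambda,\sigma\rangle=0$, using $\langle\lambda,\Pi_\sfP w\rangle=\langle\lambda,w\rangle$. Hence $\pr^*\lambda_n^i=\pr^*(A^\vee)$-flat$\,=D$-flat, and $\Nabla^\sfP dq_n^i=\KS^*(D\,\pr^*\lambda_n^i)=0$.

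The main obstacle will be the bookkeeping with the various completed dual modules of \S\ref{subsec:totalspace}--\S\ref{subsec:conn-LL}: over the formal neighbourhood $\hLLo$ the functional $\lambda_n^i$ is not $z$-nilpotent but only $\fm_t$-adically small, so the identity $\KS^*(\pr^*\lambda_n^i)=dq_n^i$ and the whole $\KS^*$-formalism have to be read in the $\fm_t$-adic completions of $\sfF^\vee$ and $\pr^*\sfF^\vee$; one must also check that $\PT$ intertwines \emph{all} of $\tnabla$, including its fibre-direction part $\tnabla_{n,i}=\partial_{n,i}$, with the trivial connection. The pole-order estimates of Lemma~\ref{lem:sx-poleorder} are precisely what guarantee that $\lambda_n^i$ and $dq_n^i$ remain inside $\hLLo=\Spf\C[\{x_n^i\},P(t,x_1)^{-1}][\![s^0,\dots,s^N]\!]$ throughout.
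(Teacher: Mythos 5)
Your overall strategy---transport the problem through $\KS^*$, identify $(\KS^*)^{-1}dq_n^i$ with the pullback of a flat section of (a completion of) $\sfF^\vee$, and invoke the abstract fact that a $\nabla^\vee$-flat functional annihilating $\sfP$ restricts to a flat functional for the connection dual to $A=\Pi_{\sfP}\circ\nabla$---is sound, and it is genuinely different from the paper's proof, which instead computes $\tnabla^\vee\Pi^*\varphi_n^i$ explicitly and shows that $\Nabla(d\bq+d\bp)$ is a $\sfP_t$-valued quadratic differential. However, there is a genuine gap at your central identification. You define $\lambda_n^i$ as ``$\PT$ followed by extracting the coefficient of $e_iz^n$'', and you justify $\lambda_n^i(\sfP)=0$ by asserting that $\PT$ maps $\sfP$ into $\bigoplus_j\C[z^{-1}][\![s]\!]\,z^{-1}e_j$. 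That assertion presupposes $\sfP_t=\bigoplus_j z^{-1}\C[z^{-1}]e_j$ in the chosen trivialization, i.e.\ that the trivialization is a flat trivialization adapted to an honest ($z^{-1}$-closed) opposite module. In the setting of the proposition, $\sfP$ is an arbitrary \emph{parallel pseudo-opposite} module---possibly not $z^{-1}$-linear, so no trivialization can put it in that form---and the trivialization of $\sfF_t$ is arbitrary; $\sfP_t$ is just some isotropic complement of $\sfF_t$, whose elements can have nonzero Laurent coefficients in nonnegative powers of $z$. With the Laurent-coefficient reading of $\lambda_n^i$, both of your key claims fail in general: $\lambda_n^i$ need not kill $\sfP$, and the $e_iz^n$-coefficient of the left-hand side of $d\bq+d\bp=M(s,z)\tnabla\bx$ is $dq_n^i+\sum_{m,j}[\xi_m^j]_n^i\,dp_{m,j}$, so the identity $dq_n^i=\KS^*(\pr^*\lambda_n^i)$ breaks down.

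The repair is short and keeps your architecture intact. Define $\lambda_n^i$ as the \emph{Darboux} functional $\iota_{\xi_n^i}\Omega_t=\Omega_t(\xi_n^i,\cdot)$ composed with $\PT$: it agrees with the Laurent-coefficient functional on $\sfF_t$ and kills $\sfP_t$ by isotropy of $\sfP_t$, so applying it to $d\bq+d\bp=M(s,z)\tnabla\bx$ (your equation, which is \eqref{eq:dq+dp} of the paper) annihilates $d\bp$ and yields exactly $dq_n^i=\KS^*\bigl(\pr^*(\lambda_n^i|_{\sfF})\bigr)$; annihilation of all of $\sfP$ then uses only parallelness, i.e.\ $\PT(\sfP)\subset\sfP_t[\![s^0,\dots,s^N]\!]$, not any Laurent-expansion description of $\sfP_t$. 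Your flatness argument is unchanged (any $s$-independent functional composed with $\PT$ is $\nabla^\vee$-flat), your final lemma about restriction to $\sfF$ is correct as written, and the completion bookkeeping you flag at the end is consistent with how the paper itself treats these objects (its own computation is an identity in $\sfF_t[z^{-1}]\hotimes\pr^*\sfF^\vee$). With that one correction the proof goes through and is a clean, more structural alternative to the paper's direct computation.
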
 
\begin{proof} 
We regard $\bq+\bp$ as an $\sfF_t[z^{-1}]$-valued  
function on $\hLL$. 
By \eqref{eq:formal_flat_coordinates} and 
\eqref{eq:inverse_fundsol}, we have:
\begin{equation} 
\label{eq:dq+dp} 
d \bq + d \bp = (dM(s,z)) \bx + 
M(s,z) d\bx = 
M(s,z) (-z^{-1} \cC(s,z) \bx + d \bx) 
= M(s,z) \tnabla \bx
\end{equation} 
This is an equality in 
$\sfF_t[z^{-1}]\hotimes \bOmega^1 = 
\projlim_n \sfF_t[z^{-1}]\otimes (\bOmega^1/\fm_t^n\bOmega^1)$; 
$\tnabla \bx$ is a section of $\pr^*\sfF \hotimes \bOmega^1$
and $M(s,z)$ acts on the $\pr^*\sfF$ factor 
(via the trivialization). 
By \eqref{eq:KS_coord}, we have:
\begin{equation} 
\label{eq:KSinv_dq+dp} 
\KS^{*-1} (d\bq+ d\bp) =  
\sum_{n=0}^{\infty} \sum_{i=0}^N M(s,z)e_iz^n \otimes \varphi_n^i
\end{equation} 
This is an equality in 
$\sfF_t[z^{-1}]\hotimes \pr^*\sfF^\vee$. 
For the map $\Pi^* \colon \pr^*\sfF^\vee\to 
\pr^*(z^{-1} \sfF)^\vee$, we have:
\[
\Pi^* \varphi_n^i = \varphi_n^i + 
\sum_{j=0}^N 
\left[\Pi e_j z^{-1}\right]_n^i \varphi_{-1}^j
\]
Hence for the map $\tnabla^\vee \colon 
\pr^*(z^{-1}\sfF)^\vee \to \bOmega^1\otimes \pr^* \sfF^\vee$
we have, from \eqref{eq:tnablavee}:
\begin{align*} 
\tnabla^\vee \Pi^* \varphi_n^i 
& = \sum_{l=0}^\infty \sum_{j=0}^N 
\left[\cC(s,z) e_j z^{l} \right]_{n+1}^i \varphi_l^j 
+ \sum_{l=0}^{\infty} \sum_{j=0}^N \sum_{h=0}^N 
\left[\Pi e_j z^{-1}\right]_n^i 
\left[\cC(s,z) e_h z^l\right]_0^j \varphi_l^h \\ 
& = \sum_{l=0}^\infty \sum_{j=0}^N 
\left[z^{-1}\cC(s,z) e_j z^{l} \right]_{n}^i \varphi_l^j 
+ \sum_{h=0}^N 
\left[\Pi \cC(s,0) e_hz^{-1}\right]_n^i \varphi_0^h
\end{align*} 
Therefore, from \eqref{eq:KSinv_dq+dp} and 
\eqref{eq:inverse_fundsol}:
\begin{align*} 
& \tnabla^\vee \Pi^* \KS^{*-1} (d \bq + d\bp) \\
& = \sum_{n=0}^\infty \sum_{i=0}^N M(s,z) e_iz^n \otimes 
\left(\sum_{l=0}^\infty \sum_{j=0}^N 
\left[z^{-1}\cC(s,z) e_j z^{l} \right]_{n}^i \varphi_l^j 
+ \sum_{h=0}^N 
\left[\Pi \cC(s,0) e_hz^{-1}\right]_n^i \varphi_0^h\right) \\ 
& \qquad - \sum_{n=0}^\infty \sum_{i=0}^N 
M(s,z) z^{-1} \cC(s,z) e_i z^n \otimes \varphi_n^i \\
& = \sum_{h=0}^N M(s,z) \Pi (\cC(s,0) e_h z^{-1}) \otimes \varphi_0^h 
- \sum_{i=0}^N M(s,z) \cC(s,0) e_iz^{-1} \otimes \varphi_0^i \\ 
& = - \sum_{i=0}^N M(s,z) 
\left[ \cC(s,0) e_i z^{-1}\right]_{\sfP} \otimes \varphi_0^i
\end{align*} 
Here $[\cC(s,0) e_h z^{-1}]_\sfP$ denotes the $\sfP$-component 
of the section $\cC(s,0) e_h z^{-1}$ of 
$\Omega^1_{\cM} \otimes \sfF[z^{-1}]$ under 
the decomposition $\sfF[z^{-1}] = \sfF \oplus \sfP$. 
Applying $\id \otimes \KS^*$ to the above equality and 
using \eqref{eq:KS_coord}, we obtain 
\begin{equation} 
\label{eq:Nabla_dq+dp} 
\Nabla( d\bq+ d \bp) 
= \sum_{i=0}^N \sum_{j=0}^N 
\left( M(s,z) \left[z^{-1}\cC_i(s,0) \cC_j(s,0) x_1\right]_{\sfP}\right) 
ds^i\otimes ds^j 
\end{equation} 
where $\Nabla = \Nabla^\sfP$ is the connection on $\bOmegao^1$ 
associated to $\sfP$. 
Since $\sfP$ is parallel and $M(s,z)$ represents 
the parallel transportation map to the fiber $\sfF_t[z^{-1}]$, 
the right-hand side is a $\sfP_t$-valued quadratic differential 
on $\hLLo$. 
\end{proof} 

\begin{lemma} 
\label{lem:tensorT} 
The tensor $\bT := \Omega_t(d\bp\otimes d\bq) = 
\sum_{n=0}^\infty \sum_{i=0}^N  
d p_{n,i} \otimes d q_n^i$ on $\hLL$ is symmetric.  
In particular, $(\partial p_{n,i}/\partial q_m^j)$ 
is symmetric in $(n,i)$ and $(m,j)$. 
\end{lemma}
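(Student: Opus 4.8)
The plan is to show that the map $\bq+\bp$ (understood in the appropriate completed sense as a $\sfF_t[z^{-1}]$-valued function on $\hLL$) pulls the symplectic form $\Omega_t$ back to zero on $\hLL$, and to read off the symmetry of $\bT$ from this. In the Darboux coordinates $\{q_n^i,p_{m,j}\}$ on $\sfF_t[z^{-1}]$ one has $\Omega_t=\sum_{n,i}dp_{n,i}\wedge dq_n^i$, so pulling back along $\emb$ gives $\emb^*\Omega_t=\sum_{n,i}dp_{n,i}\wedge dq_n^i$ as a two-form on $\hLL$, where now $q_n^i$, $p_{n,i}$ denote the pulled-back regular functions. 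The failure of $\bT=\sum_{n,i}dp_{n,i}\otimes dq_n^i$ to be symmetric is exactly the antisymmetric tensor $\sum_{n,i}\bigl(dp_{n,i}\otimes dq_n^i-dq_n^i\otimes dp_{n,i}\bigr)$, which equals $\emb^*\Omega_t$ (up to an overall sign coming from the convention for $\Omega_t$). So it suffices to prove $\emb^*\Omega_t=0$.

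To compute $\emb^*\Omega_t$, I would use the identity $d\bq+d\bp=M(s,z)\tnabla\bx$ from \eqref{eq:dq+dp}. The key observation is that $\tnabla\bx$ takes values in $\pr^*\sfF$, not merely in $\pr^*(z^{-1}\sfF)$: since $\bx$ is a section of $\pr^*(z\sfF)$ and $\nabla(z\sfF)\subset\Omega^1_{\cM}\otimes\sfF$, there is no polar part. Because the pairing $(\cdot,\cdot)_\sfF$ is $\nabla$-flat, so is the residue pairing, and hence the matrix $M(s,z)$, which represents parallel transport of $\sfF[z^{-1}]$ to the fibre at $t$, preserves the symplectic form: $\Omega_t\bigl(M(s,z)a,M(s,z)b\bigr)=\Omega_s(a,b)$ for $a,b\in\sfF_s[z^{-1}]$. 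Therefore $\emb^*\Omega_t=\Omega(\tnabla\bx\wedge\tnabla\bx)$, with $\Omega$ the fibrewise pairing at the variable point $s$. But both arguments lie in $\sfF$, and for $a,b\in\sfF$ one has $((-)^*a,b)_\sfF\in\cO_{\cM}[\![z]\!]$, a power series with no pole, so $\Omega(a,b)=\Res_{z=0}((-)^*a,b)_\sfF\,dz=0$. Hence $\emb^*\Omega_t=0$ and $\bT$ is symmetric. (Equivalently: $z\sfF$ is isotropic for $\Omega$, so its image under the parallel-transport embedding is isotropic.)

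For the "in particular" statement I would restrict to $\hLLo$, where $\{q_n^i\}_{n\ge 0,\,0\le i\le N}$ is a coordinate system by the discussion preceding Lemma~\ref{lem:sx-poleorder}. Expanding $dp_{n,i}=\sum_{m,j}(\partial p_{n,i}/\partial q_m^j)\,dq_m^j$ in $\bT=\sum_{n,i}dp_{n,i}\otimes dq_n^i$ and invoking the symmetry of $\bT$ already established gives $\partial p_{n,i}/\partial q_m^j=\partial p_{m,j}/\partial q_n^i$.

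The genuinely delicate point is not any single computation but the bookkeeping needed to make these manipulations legitimate in the infinite-dimensional, $z$-adically completed framework of \S\ref{subsec:totalspace}--\S\ref{subsec:flatstronL}: one must check that $\bT$, the pulled-back two-form $\emb^*\Omega_t$, and the identity $d\bq+d\bp=M\tnabla\bx$ all lie in the appropriate completed tensor sheaves, and that the grading/filtration bounds let the infinite sum $\sum_{n,i}dp_{n,i}\otimes dq_n^i$ and its reindexing converge and be handled term by term. The symplectic-geometric content itself is short.
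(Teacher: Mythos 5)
Your proof is correct and follows essentially the same route as the paper's: both use the identity $d\bq+d\bp=M(s,z)\tnabla\bx$ from \eqref{eq:dq+dp}, the fact that $M(s,z)$ preserves the symplectic form, and the isotropy of $\pr^*\sfF$ (since $\tnabla\bx$ is a section of $\pr^*\sfF\hotimes\bOmega^1$) to conclude $\Omega_t\big((d\bq+d\bp)\otimes(d\bq+d\bp)\big)=0$, which is exactly the vanishing of the antisymmetric part of $\bT$. Your added remarks (why $\tnabla\bx$ has no polar part, why $\sfF$ is isotropic, and the coordinate expansion on $\hLLo$ for the "in particular" statement) only make explicit what the paper leaves implicit.
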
 
\begin{proof} 
Since $\pr^*\sfF\subset \pr^*\sfF[z^{-1}]$ 
is isotropic with respect to $\Omega$, 
we have  
\[
\Omega\left(\tnabla \bx\otimes \tnabla \bx 
\right) =0 
\]
where we regard $\tnabla \bx$ 
as a section of $\pr^*\sfF \hotimes \bOmega^1$ 
and $\Omega$ contracts the $\pr^*\sfF$ component. 
Since the parallel transportation map $M(s,z)$ to 
the fiber $\sfF_t$ preserves the symplectic form, 
by \eqref{eq:dq+dp} we have\footnote
{Note that this is not a trivial equality.} 
\[
\Omega_t\left( (d\bq+d\bp) \otimes (d\bq+d\bp)\right)  = 0
\]
where $\Omega_t$ denotes the symplectic form on $\sfF_t[z^{-1}]$. 
This implies that
\[
\Omega_t( d\bq \otimes d\bp) + \Omega_t(d \bp \otimes d \bq) = 0
\]
which completes the proof. 
\end{proof} 

\begin{definition} 
\label{def:genuszeropot}
The \emph{genus-zero potential} is a function on $\hLL$ defined by:
\begin{equation} 
\label{eq:genuszeropot}
C^{(0)} :=  \frac{1}{2} \sum_{n=0}^\infty\sum_{i=0}^N 
p_{n,i} q_n^i
\end{equation} 
This depends on a choice of parallel pseudo-opposite module $\sfP$ 
over the formal neighbourhood $\hcM$. 
\end{definition} 
\begin{lemma} \[p_{n,i} = \frac{\partial C^{(0)}}{\partial q_n^i}\]
\end{lemma}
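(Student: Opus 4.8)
The plan is to deduce the identity from two ingredients: the cone (homogeneity) structure of $\LL$, and the symmetry established in Lemma~\ref{lem:tensorT}. First I would observe that each of the functions $q_n^i$ and $p_{n,i}$ on $\hLLo$ is homogeneous of degree one with respect to the fibrewise scalar action $\bx\mapsto\lambda\bx$ on $\LL$. Indeed, the matrix $M(s,z)$ in \eqref{eq:formal_flat_coordinates} depends only on the base coordinates $s^0,\dots,s^N$, so the relation $\bq+\bp=M(s,z)\bx$ is \emph{linear} in the fibre variable $\bx$; projecting onto the $\sfF_t$- and $\sfP_t$-components then shows that $\bq$ and $\bp$, hence all the $q_n^i$ and $p_{n,i}$, scale linearly. (This action preserves $\LLo$ because the discriminant $P(t,x_1)$ is homogeneous in $x_1$.) Consequently, in the flat coordinate system $\{q_n^i\}_{n\ge 0}$ the generating vector field of this action is $\sum_{n,i}q_n^i\,\partial/\partial q_n^i$, and Euler's relation for degree-one homogeneous functions yields
\[
\sum_{n=0}^\infty\sum_{i=0}^N q_n^i\,\frac{\partial p_{m,j}}{\partial q_n^i}=p_{m,j}
\qquad\text{for all }m\ge 0,\ 0\le j\le N.
\]

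Next I would differentiate the defining formula \eqref{eq:genuszeropot} directly:
\[
\frac{\partial C^{(0)}}{\partial q_m^j}=\frac12\,p_{m,j}+\frac12\sum_{n=0}^\infty\sum_{i=0}^N q_n^i\,\frac{\partial p_{n,i}}{\partial q_m^j}.
\]
By Lemma~\ref{lem:tensorT} the array $(\partial p_{n,i}/\partial q_m^j)$ is symmetric under exchange of the index pairs $(n,i)$ and $(m,j)$, so in the last sum I may replace $\partial p_{n,i}/\partial q_m^j$ by $\partial p_{m,j}/\partial q_n^i$. The Euler relation of the previous step then collapses the sum to $\tfrac12 p_{m,j}$, and adding the two halves gives $p_{m,j}$, as required.

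The main point that needs care — and the step I expect to cause the most bookkeeping — is ensuring that all the manipulations above are legitimate in the relevant completed ring: that $C^{(0)}$ is a well-defined function on $\hLLo$ so that term-by-term differentiation of the infinite sum makes sense, and that $\{q_n^i\}_{n\ge 0}$ really is a coordinate system on $\hLLo$ in which the scaling field takes the stated diagonal form. Both of these are controlled by Lemma~\ref{lem:sx-poleorder} and the discussion of pole orders along $P=0$ preceding it, but I would need to invoke them explicitly rather than treating the sums purely formally. Once that is in place, the argument is just the two-line calculation above.
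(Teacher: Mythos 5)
Your proposal is correct and follows essentially the same route as the paper: differentiate $C^{(0)}=\tfrac12\sum p_{n,i}q_n^i$, use the symmetry of $(\partial p_{n,i}/\partial q_m^j)$ from Lemma~\ref{lem:tensorT} to swap index pairs, and invoke degree-one homogeneity of $p_{m,j}$ under the dilation field $\sum_{n,i} q_n^i\,\partial/\partial q_n^i=\sum_{n\ge 1,i} x_n^i\,\partial/\partial x_n^i$. Your extra justification of the homogeneity (linearity of $\bq+\bp=M(s,z)\bx$ in $\bx$) is exactly the fact the paper uses implicitly.
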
 
\begin{proof} 
By Lemma~\ref{lem:tensorT}, we have:
\[
\parfrac{C^{(0)}}{q_l^j} 
= \frac{1}{2} p_{l,j} + \frac{1}{2} 
\sum_{n=0}^\infty \sum_{i=0}^N 
\parfrac{p_{n,i}}{q_l^j} q_n^i 
= \frac{1}{2} p_{l,j} 
+ \frac{1}{2} \sum_{n=0}^\infty\sum_{i=0}^N  
q_n^i \parfrac{p_{l,j}}{q_n^i}
= p_{l,j}
\]
Here we used the fact that 
the function $p_{l,j}$ is homogeneous 
of degree one with respect to the 
dilation vector field 
$\sum_{n=0}^\infty\sum_{i=0}^{N} q_n^i 
(\partial/\partial  q_n^i) 
= \sum_{n=1}^\infty \sum_{i=0}^N x_n^i 
(\partial/\partial x_n^i)$. 
\end{proof} 
\begin{proposition}[Potentiality] 
\label{prop:potentiality}
The Yukawa coupling $\bY$ is the third covariant 
derivative of $C^{(0)}$, i.e.\ $\Nabla^3 C^{(0)} = \Nabla \bT = \bY$. 
Here $\Nabla = \Nabla^{\sfP}$ is the flat connection 
associated to the parallel pseudo-opposite module $\sfP$. 
\end{proposition}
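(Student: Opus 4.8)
The plan is to obtain the tensor $\bT$ of Lemma~\ref{lem:tensorT} as the covariant Hessian of $C^{(0)}$, and then to compute $\Nabla\bT$ using the formula \eqref{eq:Nabla_dq+dp} for $\Nabla(d\bq+d\bp)$ that was established in the proof of the preceding flatness proposition. Throughout one works on the formal neighbourhood $\hLLo$, where the flat co-ordinates $\{q_n^i\}$ are defined.

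\emph{First two derivatives.} By the lemma just established, $p_{n,i}=\partial C^{(0)}/\partial q_n^i$, so in the flat co-ordinate system $\Nabla C^{(0)}=dC^{(0)}=\sum_{n,i}p_{n,i}\,dq_n^i$. Applying $\Nabla=\Nabla^\sfP$ again, using the Leibniz rule and the flatness $\Nabla dq_n^i=0$, one gets $\Nabla^2C^{(0)}=\sum_{n,i}dp_{n,i}\otimes dq_n^i$, which is exactly $\bT=\Omega_t(d\bp\otimes d\bq)$ by Lemma~\ref{lem:tensorT} (and symmetric by that lemma). This already proves the first asserted equality $\Nabla^3C^{(0)}=\Nabla\bT$.

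\emph{Third derivative.} Differentiating once more and again using $\Nabla dq_n^i=0$, one has $\Nabla\bT=\sum_{n,i}(\Nabla dp_{n,i})\otimes dq_n^i$. Writing the $\sfF_t[z^{-1}]$-valued one-form $d\bp=\sum_{n,i}dp_{n,i}\,\xi_n^i$ and $d\bq=\sum_{n,i}dq_n^i\,e_iz^n$ and contracting via the Darboux duality $\Omega_t(\xi_n^i,e_jz^m)=\delta_i^j\delta_{nm}$, this is $\Nabla\bT=\Omega_t(\Nabla d\bp\otimes d\bq)$. Since $\Nabla d\bq=0$, equation \eqref{eq:Nabla_dq+dp} reads $\Nabla d\bp=\sum_{i,j}\bigl(M(s,z)[z^{-1}\cC_i(s,0)\cC_j(s,0)x_1]_\sfP\bigr)\,ds^i\otimes ds^j$. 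To contract this against $d\bq$: because $\sfP_t$ is isotropic and $M(s,z)$ carries the parallel module $\sfP$ into $\sfP_t$, pairing against the $\sfP_t$-valued form $d\bp$ contributes nothing, so $\Omega_t(M(s,z)[\,\cdot\,]_\sfP,d\bq)=\Omega_t(M(s,z)[\,\cdot\,]_\sfP,d\bq+d\bp)$; and since $M(s,z)$ preserves the (flat) symplectic form and $\pr^*\sfF$ is $\Omega$-isotropic, \eqref{eq:dq+dp} turns the latter into $\Omega(z^{-1}\cC_i(s,0)\cC_j(s,0)x_1,\tnabla\bx)$, the $\sfP$-projection dropping out. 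A residue computation using $(\tnabla\bx)|_{z=0}=-\sum_h\cC_h(s,0)x_1\,ds^h$ (from $\bx\in z\sfF$ and \eqref{eq:KS_coord}) together with the $(-)^*$-structure of $(\cdot,\cdot)_\sfF$ evaluates this one-form to $\sum_h(\cC_i(s,0)\cC_j(s,0)x_1,\cC_h(s,0)x_1)_{\sfF_0}\,ds^h$. Hence $\Nabla\bT=\sum_{i,j,h}(\cC_i\cC_jx_1,\cC_hx_1)_{\sfF_0}\,ds^i\otimes ds^j\otimes ds^h$, and the self-adjointness of the $\cC_i(s,0)$ together with $[\cC_i(s,0),\cC_j(s,0)]=0$ (both consequences of flatness of $\nabla$) rewrite the coefficient as $(\cC_ix_1,\cC_j\cC_hx_1)_{\sfF_0}$, so that $\Nabla\bT=\bY$ by Definition~\ref{def:Yukawa}.

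\emph{Main obstacle.} The delicate step is the contraction: one must carefully match the $\sfF_t[z^{-1}]$-valued form $d\bp$ of \eqref{eq:Nabla_dq+dp} with the scalar coefficients $dp_{n,i}$ of $\bT$, confirm that $\Omega_t(\Nabla d\bp\otimes d\bq)$ reproduces $\Nabla\bT$ exactly rather than some symmetrization of it, and keep the signs straight in the residue, tracking both the pullback $(-)^*$ on the first argument of $(\cdot,\cdot)_\sfF$ and the single pole of $z^{-1}\cC_i\cC_jx_1$. Once these are pinned down the identification with the Yukawa coupling is immediate. It is also worth remarking that, as everything is stated over the formal neighbourhood $\hLLo$, no convergence issue intervenes.
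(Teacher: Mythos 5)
Your proof is correct and follows essentially the same route as the paper: compute $\Nabla^2 C^{(0)}=\bT$ using $\Nabla dq_n^i=0$, then evaluate $\Nabla\bT=\Omega_t(\Nabla d\bp\otimes d\bq)$ via \eqref{eq:Nabla_dq+dp}, the isotropy of $\sfP_t$ and of $\pr^*\sfF$, and the fact that $M(s,z)$ preserves $\Omega$, reducing to $\Omega\bigl(z^{-1}\cC_i\cC_j x_1\,ds^i\otimes ds^j,\tnabla\bx\bigr)=\bY$. The only difference is that you spell out the final residue and self-adjointness/commutativity step which the paper leaves implicit; this is a harmless elaboration, not a different argument.
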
 
\begin{proof}
Using $\Nabla d q_n^i=0$, we have:
\[
\Nabla^2 C^{(0)} = \Nabla 
\left(\sum_{n=0}^\infty \sum_{i=0}^N p_{n,i} d q_n^i\right) 
= \sum_{n=0}^\infty \sum_{i=0}^N 
dp_{n,i} \otimes dq_n^i = \bT
\]
Using $\Nabla d\bq =0$, we have:
\[
\Nabla \bT =  \Nabla \Omega_t(d \bp \otimes d\bq) 
= \Omega_t( (\Nabla d \bp) \otimes d \bq) 
= \Omega_t(\Nabla (d\bq +d\bp) \otimes (d \bq + d\bp)) 
\]
Using \eqref{eq:Nabla_dq+dp}, \eqref{eq:dq+dp} 
and the fact that $M(s,z)$ preserves the 
symplectic form, we have:
\begin{align*} 
\Nabla \bT & = \sum_{i=0}^N \sum_{j=0}^N \Omega\left(
[z^{-1}\cC_i(s,0) \cC_j(s,0) x_1]_{\sfP} ds^i\otimes ds^j
\otimes  
(d \bx - z^{-1} \cC(s,z) \bx)\right) \\
& = \Omega\left(z^{-1}\cC_i(s,0) \cC_j(s,0) x_1 ds^i\otimes ds^j
\otimes  
(d \bx - z^{-1} \cC(s,z) \bx)\right) 
\end{align*} 
This equals $\bY$. 
\end{proof} 

\begin{lemma}[genus-zero pole structure] 
\label{lem:genuszeropole}
The genus-zero potential $C^{(0)}$ is an element of 
$P_t^5 \C [q_1,q_2,P_t q_3, P_t^2 q_4,\dots][\![P_t^{-2} q_0]\!]$ 
where $P_t=P(t,q_1)$. 
\end{lemma}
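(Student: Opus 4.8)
The plan is to compute the pole order of $C^{(0)}$ along the discriminant $P_t = P(t,q_1)$ by tracking through the definition $C^{(0)} = \frac{1}{2}\sum_{n,i} p_{n,i} q_n^i$ and using the pole-order bookkeeping already established in Lemma~\ref{lem:sx-poleorder}. First I would recall from \eqref{eq:formal_flat_coordinates} that $\bq + \bp = M(s,z)\bx$, so that $p_{n,i}$ is the coefficient of $\xi_n^i$ in $M(s,z)\bx - \bq$; equivalently, $p_{n,i}$ is obtained by applying the projection $\Pi_t \colon \sfF_t[z^{-1}] \to \sfP_t$ (along $\sfF_t$) to $M(s,z)\bx$ and reading off coefficients. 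The key input is that $p_{n,i}$, as a function on $\hLLo$, should be homogeneous of degree one in the dilation vector field (this was used in the proof that $p_{n,i} = \partial C^{(0)}/\partial q_n^i$), so its pole order must be compatible with that of $q_n^i$ via $p_{n,i} = \partial_{q_n^i} C^{(0)}$ --- but to pin down the exponent I need a direct estimate.

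The main step is to substitute the expressions from Lemma~\ref{lem:sx-poleorder} --- namely $s^i \in P_t\,\C[q_1,q_2,P_t q_3, P_t^2 q_4,\dots][\![P_t^{-2}q_0]\!]$ and $x_n^i \in \delta_{n,1} q_1^i + P_t^{2-n}\C[q_1,q_2,P_t q_3,\dots][\![P_t^{-2}q_0]\!]$ --- into the formula for $\bp$. Since $M(s,z) = \id + \sum_{n>0}\sum_I s^I M_{I,m} z^{-n+m}$ with $s^I$ a monomial of degree $n$ in the $s^i$, each term $s^I M_{I,m} x_l z^{-n+m+l}$ contributing to $p_{u,i}$ (where the $z$-power is negative, i.e.\ $-n+m+l < 0$, since only $\sfP_t$-components survive) carries a factor $P_t^n$ from $s^I$ together with $P_t^{2-l}$ from $x_l$; using $n \ge l - m \ge l$ (as $m \ge 0$ and we need $-n+m+l<0$, i.e.\ $n > m+l \ge l$, actually $n \ge l+1$), one gets at least $P_t^{n} P_t^{2-l} \in P_t^{3}\C[\dots]$-ish, and then multiplying by $q_u^i \in \delta_{u,1}q_1 + P_t^{2-u}\C[\dots]$ and summing one tracks the worst case. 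I would carry out this accounting carefully, being attentive to the $\delta_{l,1} P_t^{-1} q_1$ correction terms appearing in Lemma~\ref{lem:sx-poleorder}'s intermediate equations \eqref{eq:hq0}, and arrive at $p_{n,i} \in P_t^{3}\C[q_1,q_2,P_t q_3,\dots][\![P_t^{-2}q_0]\!]$ (the degree-one homogeneity forcing the precise shift), whence $p_{n,i}\,q_n^i \in P_t^{3}\cdot(\delta_{n,1}q_1 + P_t^{2-n}\C[\dots]) \subset P_t^{5}\C[q_1,q_2,P_t q_3, P_t^2 q_4,\dots][\![P_t^{-2}q_0]\!]$, and the sum over $n,i$ stays in this ring. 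One should double-check that the $n=0$ term $p_{0,i} q_0^i$ also lands in $P_t^5(\cdots)$: here $q_0^i \in P_t^2 \C[\dots][\![P_t^{-2}q_0]\!]$ and $p_{0,i}\in P_t^3\C[\dots]$, so the product is again in $P_t^5(\cdots)$.

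The main obstacle I anticipate is the precise power of $P_t$: getting $P_t^5$ rather than a smaller or larger power requires carefully matching the homogeneity constraint (degree one in the dilation field $\sum q_n^i \partial_{q_n^i} = \sum x_n^i \partial_{x_n^i}$) against the pole-order estimates, and in particular verifying that the leading $P_t^{-1}$-singular piece in the $\hq_0$-equation of \eqref{eq:hq0} does not propagate to worsen the bound for $p_{n,i}$ below $P_t^3$. An alternative, cleaner route that avoids some of this bookkeeping is to use potentiality: by Proposition~\ref{prop:potentiality}, $\Nabla^3 C^{(0)} = \bY$, where $\bY \in ((\bOmega^1)^{\otimes 3})^2_{-3}$ is regular along $P_t = 0$, and by Lemma~\ref{lem:poleorder-Nabla} each application of $\Nabla$ raises the pole order by at most one while the change of frame from $dt^i, dx_n^i$ to $dq_n^i$ (governed by \eqref{eq:flat-alg}) introduces controlled powers of $P_t$; integrating up three times from a regular cubic tensor, together with the homogeneity of $C^{(0)}$, then yields the stated pole bound. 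I would present the direct substitution argument as the primary proof and mention the potentiality argument as a consistency check.
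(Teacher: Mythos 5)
Your final assembly step has a genuine gap. Writing $\cS := \C[q_1,q_2,P_t q_3,P_t^2q_4,\dots][\![P_t^{-2}q_0]\!]$, the containment you assert, $P_t^{3}\cdot\bigl(\delta_{n,1}q_1+P_t^{2-n}\cS\bigr)\subset P_t^{5}\cS$, is false for every $n\ge 1$: $q_1$ carries no power of $P_t$, and $P_t^{3}\cdot P_t^{2-n}\cS=P_t^{5-n}\cS$, so with a uniform bound $p_{n,i}\in P_t^{3}\cS$ the terms $p_{n,i}q_n^i$ with $n\ge 1$ land nowhere near $P_t^5\cS$. Even the correct $n$-dependent bound $p_{n,i}\in P_t^{3+n}\cS$ (which is what the Lemma itself yields a posteriori, by differentiating $P_t^5\cS$) leaves the $n=1$ term at $P_t^{4}\cS$ only: from the ring memberships of $p_{1,i}$ and $q_1^i$ alone one cannot conclude $\sum_i p_{1,i}q_1^i\in P_t^{5}\cS$; the missing power comes from the homogeneity of $P(t,q_1)$ in $q_1$ (Euler's identity $\sum_i q_1^i\partial_{q_1^i}P_t=(N+1)P_t$), a cancellation that your term-by-term accounting never invokes. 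The appeal to "degree-one homogeneity forcing the precise shift" does not help here, since the dilation grading is independent of the $P_t$-filtration.

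The paper's proof avoids all of this by a reduction you are missing: since $C^{(0)}|_{q_0=0}=C^{(0)}|_{s=0}=0$ and the formal variable of $\cS$ is $P_t^{-2}q_0$, it suffices to prove the single estimate $p_{0,i}=\partial C^{(0)}/\partial q_0^i\in P_t^{3}\cS$; integrating in the $q_0$-direction then costs exactly $q_0=P_t^{2}\cdot(P_t^{-2}q_0)$ and gives $C^{(0)}\in P_t^{5}\cS$, with no need to touch $p_{n,i}$ for $n\ge1$ at all. That estimate is obtained by the expansion you describe, $p_{0,i}=\Omega(M(s,z)\bx,e_i)=\sum_{n>m+l}\sum_I s^I h_{n-m-l;ij}[M_{I,m}x_l]^j$, where the one dangerous term $(n,m,l)=(2,0,1)$ --- precisely the $P_t^{-1}$-leading piece of \eqref{eq:hq0} that you rightly flag --- is disposed of by the second statement of Lemma~\ref{lem:sx-poleorder}, namely $\sum_i s^i\cC_i(0,0)q_1\in P_t^{2}\C^{N+1}[q_1,q_2,P_tq_3,\dots][\![P_t^{-2}q_0]\!]$ combined with $M_{i_1i_2,0}=\cC_{i_1}(0,0)\cC_{i_2}(0,0)$; you cite the lemma but never deploy this part, so the worry you raise is left unresolved. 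Your alternative route via $\Nabla^3C^{(0)}=\bY$ has the same defect: regularity of the third derivatives does not bound the pole of the potential unless one integrates in a direction along which the function is known to vanish --- which is again the $q_0=0$ reduction.
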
 
\begin{proof} 
Set $\cS:= \C [q_1,q_2,P_t q_3, P_t^2 q_4,\dots][\![P_t^{-2} q_0]\!]$. 
Note that we have $C^{(0)}|_{q_0=0} = C^{(0)}|_{s=0} = 0$. 
Thus it suffices to show that $p_{0,i} = \partial C^{(0)}/\partial q_0^i 
\in P_t^3 \cS$. 
We set $\Omega_t(e_j z^{-n}, e_i) = h_{n;ij} \in \C$. 
Using \eqref{eq:formal_flat_coordinates} and 
the expansion \eqref{eq:M-expansion} of $M(s,z)$, 
we have:
\begin{align*} 
p_{0,i} & = \Omega_t(\bq+ \bp,e_i) 
= \Omega(M(s,z) \bx, e_i) \\
& = \sum_{n>0, m\ge 0, l\ge 1} 
\sum_{I=(i_1,\dots,i_n)} 
s^I \Omega_t( M_{I,m} z^{-n+m+l} x_l, e_i) \\
& = \sum_{\substack{n>0, m\ge 0, l\ge 1\\ n>m+l}}  
\sum_{I=(i_1,\dots,i_n)}   \sum_{j=0}^N 
s^I h_{n-m-l; ij} [M_{I,m} x_l ]^j
\end{align*} 
By Lemma~\ref{lem:sx-poleorder}, 
we have that $s^i \in P_t\cS$ and $x_l \in \delta_{l,1} q_1+ P_t^{2-l} \cS$. 
From this we find that all the terms on the right-hand side belong to 
$P_t^3 \cS$ except perhaps for the following one, which arises from $(n,m,l)=(2,0,1)$:
\[
\sum_{i_1=0}^N \sum_{i_2=0}^N\sum_{j=0}^N 
s^{i_1}s^{i_2} h_{1,ij}[M_{i_1i_2,0} q_1]^j
\]
But the differential equation \eqref{eq:inverse_fundsol} 
for $M(s,z)$ shows that $M_{i_1i_2,0}= \cC_{i_1}(0,0)\cC_{i_2}(0,0)$. 
The second part of Lemma~\ref{lem:sx-poleorder} 
now shows that the above sum lies in $P_t^3 \cS$ as well. 
\end{proof} 

\begin{remark} 
The genus-zero potential $C^{(0)}$ may only be defined on 
the formal neighborhood $\hLLo$ whereas the Yukawa 
coupling $\Nabla^3 C^{(0)} =\bY$ is globally defined. 
The data $C^{(0)}$ and $\Nabla$ 
depend on the choice of a parallel pseudo-opposite module $\sfP$ 
whereas $\bY=\Nabla^3 C^{(0)}$ does not. 
\end{remark} 
\begin{remark}
The genus-zero potential is homogeneous of degree 2 
with respect to the dilation vector field.  
\end{remark}

\begin{remark}[$L^2$-neighbourhood] 
\label{rem:L2-neighbourhood} 
Let $U\subset \cM$ be an open set  
with co-ordinates $s^0,\dots,s^N$ centred 
at a point in $U$ 
and let $\sfP$ be an opposite module on $U$. 
Then $\sfP$ defines a flat trivialization 
$\sfF|_U \cong (z\sfP/\sfP) \otimes \C[\![z]\!]$ 
(Proposition~\ref{prop:flat_trivialization}). 
Suppose that we can trivialize 
$z\sfP/\sfP$ by a $\nabla^0$-flat frame 
over $U$. 
This defines a trivialization $\sfF|_U \cong 
\C^{N+1} \otimes \cO_U[\![z]\!]$. 
Using the local co-ordinate system 
$\{s^i,x_n^i\}$ associated to this trivialization, 
we can define the \emph{$L^2$-subspace} 
$L^2(\LL)$ of $\LL$ as:
\[
L^2(\LL) = \left\{ (s,\bx)\in \LL|_U 
\,\Big|\, s\in U, \ \sum_{n=1}^\infty 
\sum_{i=0}^N |x_n^i|^2 < \infty\right\}
\] 
This has the structure of a complex Hilbert manifold.  
In this case, $\bp$ is a strictly negative power series 
in $z$ with respect to the trivialization 
(since it belongs to $\sfP$). 
Because the inverse fundamental solution 
$M(s,z)$ in \eqref{eq:inverse_fundsol} is holomorphic 
over $U\times \C^\times$, 
$\bq$ and $\bp$ given by \eqref{eq:formal_flat_coordinates} 
belong to $L^2(S^1,\C^{N+1})$ 
when $(s,\bx) \in L^2(\LL)$. 
The genus-zero potential $C^{(0)}$ defined in 
\eqref{eq:genuszeropot} therefore converges to a holomorphic 
function on $L^2(\LL)$. 
Moreover, the inverse function theorem for 
Hilbert spaces implies that the map 
$(s,\bx) \mapsto \bq$ defines a local isomorphism 
between $L^2(\LLo)$ and $\C^{N+1}\otimes L^2(S^1,\C)$. 
This means that $\{q_n^i\}$ is a co-ordinate system 
on an $L^2$-neighbourhood of each point in $L^2(\LLo)$.  
\end{remark} 

\begin{remark}[Nuclear neighbourhood]  
\label{rem:nuclear-neighbourhood} 
Following~\cite[\S 8.4]{CI:convergence}, 
we define the space $\C\{\!\{z,z^{-1}\}\!\}$ 
of formal Laurent series in $z$ to be 
\[
\C\{\!\{z,z^{-1}\}\!\}  
= \big\{ \ba \in \C[\![z,z^{-1}]\!] \,:\, \|\ba\|_n <\infty \ 
\text{ for all } n\gg 0\big\} 
\]
where $\|\cdot\|_n$, $n=0,1,2,\dots$ is a family of 
Hilbert norms defined by 
\[
\|\ba \|_n = \left( 
\sum_{l \in \Z} \frac{|a_l|^2}{|\Gamma(\frac{1}{2}+l)|^2} 
e^{2nl} \right)^{1/2} \qquad 
\text{for} \quad \ba =  \sum_{l\in\Z} a_l z^l. 
\]
We set:
\begin{align*}
  \C\{\!\{z\}\!\} = \C\{\!\{z,z^{-1}\}\!\} \cap \C[\![z]\!]
  &&
  \C\{\!\{z^{-1}\}\!\} = \C\{\!\{z,z^{-1}\}\!\} \cap \C[\![z^{-1}]\!]
\end{align*}
Then $\C\{\!\{z\}\!\}$ is a nuclear Frechet space\footnote
{This space is Laplace-dual to the space of entire functions on $\C$;
see~\cite[Remark 8.6]{CI:convergence}.}  
whose topology is given by the countable norms\footnote{All of the norms $\|\cdot\|_n$ are well-defined on 
$\C\{\!\{z\}\!\}$.} $\|\cdot\|_n$; 
$\C\{\!\{z^{-1}\}\!\}$ is the inductive limit 
of the Hilbert space completions of $\C[z^{-1}]$ 
with respect to $\|\cdot\|_n$ and is a nuclear (DF) space. 
We also know that $\C\{\!\{z,z^{-1}\}\!\}$ is a topological 
ring~\cite[Lemma 8.5]{CI:convergence}. 
Let us consider the same situation as in the previous 
Remark~\ref{rem:L2-neighbourhood}. 
We introduce a \emph{nuclear subspace} of $\LL$ 
which is an infinite-dimensional complex manifold 
modelled on $\C\{\!\{z\}\!\}$: 
\[
\cN(\LL): = \left\{ (s,\bx) \in \LL|_U \;\Big|\; s\in U, \ 
\sup_{0\le i\le N,\, l \ge 0} 
\left( e^{n l} |x_l^i|/l! \right)< \infty,  
\  \text{for all } n \ge 0 \right\}
\]
This contains $L^2(\LL)$ as a 
proper subspace. 
The genus-zero potential $C^{(0)}$ in this section 
defines an analytic function on this nuclear subspace. 
This follows from the method of~\cite{CI:convergence}, as follows.  
Because now $\cC(s,z)$ is independent of $z$, 
the inverse fundamental solution $M(s,z)$ satisfying 
\eqref{eq:inverse_fundsol} and the initial 
condition $M(0,z) = \id$ can be written as 
$M(s,z) = \id + \sum_{n=1}^\infty M_n(s)z^{-n}$ 
with 
\[
M_n (s) = \int_{0\le s_1 \le \cdots\le s_n \le s} 
(-\cC(s_1)) \cdots (-\cC(s_n))  
\]
where $s_1,\dots,s_n$ are on the line segment 
$[0,s]\subset U$. 
Therefore, after shrinking $U$ if necessary, 
we obtain the estimate 
\[
\|M_n(s)\| \le C^n \frac{1}{n!}, \quad s \in U 
\]
for some $C>0$. 
Using the results in~\cite[\S 8.4]{CI:convergence}, 
one finds easily that for $(s,\bx)\in \cN(\LL)$, 
$(\bq,\bp)$ defined by \eqref{eq:formal_flat_coordinates} 
belongs to 
$\C^{N+1}\otimes \C\{\!\{z,z^{-1}\}\!\}$. 
Thus $C^{(0)} = \frac{1}{2} \Omega(\bp,\bq)$ converges 
to a holomorphic function on $\cN(\LL)$, since 
$\C\{\!\{z,z^{-1}\}\!\}$ is a ring.  
Moreover, one can use the Nash--Moser inverse function theorem 
to show that map $(s,\bx) \mapsto \bq$ defines a 
local isomorphism between $\cN(\LLo)$ and $\C^{N+1}\otimes 
\C\{\!\{z\}\!\}$ 
by the same method as~\cite[\S 8.5]{CI:convergence}, i.e.\ 
$\{q_n^i\}$ gives a co-ordinate system 
on a nuclear neighbourhood of each point in $\cN(\LLo)$. 
\end{remark} 

\begin{remark} 
\label{rem:L2subspace_from_TP}
When the cTP structure $(\sfF,\nabla,(\cdot,\cdot)_\sfF)$ 
is the completion of a TP structure $(\cF,\nabla,(\cdot,\cdot)_\cF)$ 
(see Remark~\ref{rem:completion-of-TP}), the total 
space $\LL$ has standard $L^2$- and nuclear subspaces 
induced from the TP structure $\cF$. 
\end{remark} 

\begin{example}[Genus-zero Gromov--Witten potential~\cite{CI:convergence}]  
\label{exa:Amodel-genuszero} 
Recall from \S\ref{subsec:dilatonshift} 
that the genus-zero descendant Gromov--Witten 
potential $\cF^0_X$ of $X$ can be viewed as a function 
on $\cH_+$ via the Dilaton shift. 
Here we explain that the construction in this section 
starting from the A-model TEP structure of $X$ 
(Example~\ref{ex:AmodelTP}) 
gives rise to the genus-zero 
descendant Gromov--Witten potential $\cF^0_X$ 
under an identification of certain flat co-ordinates on $\LLo$ with 
the linear co-ordinates  $\{q_n^i\}$ 
on $\cH_+$ in \S\ref{subsec:Givental-symplecticvs}.  

As in Example~\ref{ex:AmodelTP}, we assume 
that the non-descendant genus-zero potential  $F^0_X$ 
(\S\ref{sec:convergence}) is convergent 
and defines an analytic function over an open subset 
$\cM_{\rm A}\subset H_X\otimes \C$ 
(after the specialization $Q_1 = \cdots = Q_r = 1$); 
then we have the A-model cTP structure 
$(\sfF, \nabla, (\cdot,\cdot)_{\sfF})$ 
over $\cM_{\rm A}$.  
We use the standard opposite module $\sfP_{\rm std}$ 
described in Example~\ref{ex:Amodel-opposite}. 
The associated standard trivialization of the A-model cTP structure $\sfF$ 
(given by the basis in equation~\ref{eq:basisproperties}) 
together with the linear co-ordinates $\{t^i\}$ on $H_X$  
gives an algebraic local co-ordinate system $\{t^i, x_n^i\}$ 
on the total space $\LL$ of $\sfF$. 
The standard trivialization also defines subspaces 
$L^2(\LL) \subset \cN(\LL)\subset \LL$ 
as in Remarks~\ref{rem:L2-neighbourhood},~\ref{rem:nuclear-neighbourhood}. 
Let $M(t,z)$ be the inverse fundamental solution  
\eqref{eq:inversefundamentalsolution} in Gromov--Witten theory.  
This is analytic on $\cM_{\rm A} \times \C^\times$ 
after specialization of Novikov variables $Q_1 = \cdots = Q_r = 1$. 
The flat co-ordinate system $\{q_n^i \}$ on $\cN(\LL)$ 
is given by the formula (cf.~equation~\ref{eq:formal_flat_coordinates})
\begin{equation} 
\label{eq:flatcoord-GW}
\bq + \bp = M(t,z) \bx \Bigr|_{Q_1=\cdots =Q_r=1} 
\end{equation} 
where:
\begin{align*}
  \bq = \sum_{n=0}^\infty q_n^i \phi_i z^n 
  &&
  \bp = \sum_{n=0}^\infty p_{n,i}\phi^i (-z)^{-n-1}
  &&
  \bx = \sum_{n=1}^\infty \sum_{i=0}^N 
  x_n^i \phi_i z^n
\end{align*}
By~\cite[Lemmas~8.5,~8.8]{CI:convergence}, 
we know that $(\bq,\bp)$ here belongs to a 
\emph{nuclear version $\cH^{\rm NF}$ of the Givental space} for $X$~\cite[Definition 8.7]{CI:convergence}:
\begin{align} 
\label{eq:nuclear-Giventalsp} 
\cH^{\rm NF} := H_X \otimes \C\{\!\{z,z^{-1}\}\!\} = \cH^{\rm NF}_+ \oplus \cH^{\rm NF}_-  
&& \text{where} &&
\begin{aligned}[t]
\cH^{\rm NF}_+ &:= H_X\otimes \C\{\!\{z\}\!\} \\
\cH^{\rm NF}_- &:= H_X\otimes z^{-1}\C\{\!\{z^{-1}\}\!\}  
\end{aligned}
\end{align}
whenever $(t,\bx)\in \cN(\LL)$. 
Then the map $(t,\bx) \mapsto \bq$ defines 
a local isomorphism between $\cN(\LLo)$ and 
$\cH_+^{\rm NF}$~\cite[\S 8.5]{CI:convergence}. 
The genus-zero potential is defined by
\begin{equation} 
\label{eq:genuszeropot-GW} 
C^{(0)} = \frac{1}{2} 
\sum_{i=0}^N \sum_{n=0}^\infty p_{n,i} q_n^i
\end{equation} 
 (cf.~equation~\ref{eq:genuszeropot}).
This is a holomorphic function on $\cN(\LL)$. 
In this setting, we have: 
\begin{itemize} 
\item The genus-zero descendant potential $\cF^0_X$ 
is NF-convergent~\cite[Theorem 7.8]{CI:convergence}, that is, 
the power series \eqref{eq:genus_g_descendantpot} 
converges absolutely and uniformly on a polydisc of the form 
$|t_l^i| < \epsilon (l!) /C^l$, $|Q_i| < \epsilon$ 
for some $\epsilon>0$ and $C>0$. 

\item As $\cF^0_X$ is NF-convergent, 
the specialization $\cF^0_{X,\rm an}$ of  
$\cF^0_X$ to $Q_1= \cdots =Q_r=1$ makes sense 
as a holomorphic function on a domain $U \subset 
\cH_+^{\rm NF}$~\cite[\S 8.1]{CI:convergence} 
via the Dilaton shift from \S\ref{subsec:dilatonshift}
(see Definition~\ref{def:F^g_Xan} below). 

\item When $t$ is sufficiently close to the large radius limit 
\eqref{eq:LRLnbhd} and $\bx\in z\cH_+^{\rm NF}$ 
is sufficiently close to $-z \unit$, the flat co-ordinate 
$\bq = [M(t,z) \bx]_+|_{Q_1= \cdots =Q_r=1}$ 
of the point $(t,\bx) \in \cN(\LL)$ 
belongs to $U$. Then we have 
$C^{(0)} = \cF^0_{X,\rm an}(\bq)$~\cite[Theorem 8.12]{CI:convergence}. 
\end{itemize} 
Although the normalization 
for the inverse fundamental solution $M(t,z)$ 
in Gromov--Witten theory 
is different from the one that we used in the general construction 
(see Remark~\ref{rem:difference-normalization}), 
the same argument as in this section (\S\ref{subsec:flatstronL}) 
proves that the co-ordinates $q_n^\alpha$ on $\cN(\LLo)$ 
defined by \eqref{eq:flatcoord-GW} 
are flat with respect to $\Nabla^{\sfP_{\rm std}}$, 
and that the third derivative of $C^{(0)}$ in \eqref{eq:genuszeropot-GW}
with respect to $\Nabla^{\sfP_{\rm std}}$ 
coincides with the Yukawa coupling over $\cN(\LLo)$. 
In particular we have: 
\begin{equation} 
\label{eq:genuszerojet-flat} 
\Nabla^{n-3} \bY 
= \sum_{l_1=0}^\infty \cdots \sum_{l_n=0}^\infty 
\sum_{i_1=0}^N \cdots \sum_{i_n=0}^N
\parfrac{^{n} \cF^{(0)}_{X, \rm an}}
{q_{l_1}^{i_1} \cdots \partial q_{l_{n}}^{i_n}} 
dq^{i_1}_{l_1} \otimes \cdots \otimes dq^{i_n}_{l_n}   
\end{equation} 
with $\Nabla = \Nabla^{\sfP_{\rm std}}$. 
\end{example} 

\subsection{Propagator}
Given two pseudo-opposite modules 
$\sfP_1$, $\sfP_2$ for a cTP structure $\sfF$, 
we now define a bivector field on the space $\LLo$, called the propagator $\Delta$.
Let $\Pi_i \colon \sfF[z^{-1}] \to \sfF$, $i\in \{1,2\}$,  
be the projection along $\sfP_i$ 
given by the decomposition 
$\sfF[z^{-1}] = \sfP_i \oplus \sfF$.
\begin{definition} 
\label{def:propagator}
The \emph{propagator} 
$\Delta=\Delta(\sfP_1,\sfP_2)$ 
associated to pseudo-opposite modules 
$\sfP_1$, $\sfP_2$ 
is the section of 
$\sHom_{\bcO}(\bOmegao^1\otimes \bOmegao^1,\bcO)$  
defined by 
\begin{align*}
  \Delta(\omega_1, \omega_2) := 
  \Omega^\vee(\Pi_1^* \KS^{*-1}\omega_1, 
  \Pi_2^* \KS^{*-1} \omega_2), 
  && \omega_1,\omega_2 \in \bOmega^1.  
\end{align*}
Here
$\KS^*\colon \pr^*\sfF^\vee \to 
\bOmega^1$ is the dual Kodaira--Spencer map 
(Definition~\ref{def:KS}) 
and $\Omega^\vee\colon \sfF[z^{-1}]^\vee 
\otimes \sfF[z^{-1}]^\vee \to \cO_\cM$ is
the dual symplectic form (\ref{eq:dualsymp}). 
One can identify $\Delta$ with the push-forward of 
the Poisson bivector on $\sfF[z^{-1}]$ along $\Pi_1 \otimes \Pi_2$. 
\end{definition}  

\begin{proposition} 
\label{pro:prop-elementary} 
Let $\Delta= \Delta(\sfP_1,\sfP_2)$ be the propagator 
associated to pseudo-opposite modules $\sfP_1$, $\sfP_2$. 
\begin{enumerate}
\item The propagator $\Delta$ is symmetric:
$\Delta(\omega_1, \omega_2) = \Delta(\omega_2,\omega_1)$.   
\item If $\sfP_1, \sfP_2$ are parallel, 
then:
\[
d \Delta(\omega_1,\omega_2) = \Delta(\Nabla^{\sfP_1} \omega_1, \omega_2) 
+ \Delta(\omega_1, \Nabla^{\sfP_2} \omega_2)
\]
\end{enumerate}
(See Proposition~\ref{prop:propagator-curved} below for the non-parallel case.) 
\end{proposition}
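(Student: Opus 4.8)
The plan is to transport both assertions, via the dual Kodaira--Spencer isomorphism $\KS^*\colon \pr^*\sfF^\vee \to \bOmegao^1$ of Definition~\ref{def:KS}, back to elementary statements about the splittings $\sfF[z^{-1}] = \sfF \oplus \sfP_i$. Set $\varphi_i := \KS^{*-1}\omega_i \in \pr^*\sfF^\vee$, so that by Definition~\ref{def:propagator} one has $\Delta(\omega_1,\omega_2) = \Omega^\vee(\Pi_1^*\varphi_1,\Pi_2^*\varphi_2)$, where $\Pi_i^*\varphi_i \in \pr^*\sfF[z^{-1}]^\vee$ is the functional that vanishes on $\pr^*\sfP_i$ and restricts to $\varphi_i$ on $\pr^*\sfF$. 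Before anything else I would record two facts: first, that $\sfF \subset \sfF[z^{-1}]$ is isotropic for $\Omega$ --- because for $s_1,s_2\in\sfF$ the pairing $((-)^*s_1,s_2)_\sfF$ lies in $\cO_\cM[\![z]\!]$ and so has vanishing residue in \eqref{eq:symplecticform} --- and hence, using $\sfF\oplus\sfP_i=\sfF[z^{-1}]$ and the isotropy of $\sfP_i$, that $\sfF^\perp=\sfF$; second, that under the isomorphism $\sfF[z^{-1}]^\vee\cong\sfF[z^{-1}]$ induced by $\Omega$ the element $\Pi_i^*\varphi$ corresponds to a vector lying in $\sfP_i$ (it annihilates $\sfP_i$ under $\Omega(\cdot,-)$), while the difference $D^*\varphi := \Pi_2^*\varphi-\Pi_1^*\varphi$ annihilates $\sfF$ (both $\Pi_1,\Pi_2$ restrict to the identity on $\sfF$), so corresponds to a vector in $\sfF^\perp=\sfF$. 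All of this passes to $\LLo$.

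For part (1), I would then simply chase the decomposition $\Pi_2^*\varphi_2 = \Pi_1^*\varphi_2 + D^*\varphi_2$, using that $\Omega^\vee$ vanishes on any pair of the form $\Pi_i^*(\cdot)\otimes\Pi_i^*(\cdot)$ (isotropy of $\sfP_i$) and on any pair of the form $D^*(\cdot)\otimes D^*(\cdot)$ (isotropy of $\sfF$), together with the antisymmetry of $\Omega$ (hence of $\Omega^\vee$): starting from $\Delta(\omega_1,\omega_2) = \Omega^\vee(\Pi_1^*\varphi_1,\Pi_2^*\varphi_2)$ one gets successively $\Omega^\vee(\Pi_1^*\varphi_1,D^*\varphi_2) = \Omega^\vee(\Pi_2^*\varphi_1,D^*\varphi_2) = -\Omega^\vee(D^*\varphi_2,\Pi_2^*\varphi_1) = \Omega^\vee(\Pi_1^*\varphi_2,\Pi_2^*\varphi_1) = \Delta(\omega_2,\omega_1)$. (Equivalently, this is the assertion that $\Delta$ is the push-forward of the Poisson bivector along $\Pi_1\otimes\Pi_2$, which one checks is symmetric because $\Pi_i\circ\Omega^{-1}\circ\Pi_i^*=0$ and $\Omega^{-1}D^*$ lands in $\sfF$.) The only point requiring care here is keeping the signs in the $\Omega/\Omega^\vee$ and $\KS^*$ identifications consistent; the computation itself is routine.

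For part (2), assume $\sfP_1,\sfP_2$ parallel. The two ingredients I would isolate are: (a) $\Omega^\vee$ is $\tnabla^\vee$-flat on $\pr^*\sfF[z^{-1}]^\vee$, i.e.\ $d\,\Omega^\vee(\alpha,\beta)=\Omega^\vee(\tnabla^\vee\alpha,\beta)+\Omega^\vee(\alpha,\tnabla^\vee\beta)$ --- which is the pull-back to $\LL$ of the $\nabla$-flatness of $(\cdot,\cdot)_\sfF$ in Definition~\ref{def:cTP}, read off through \eqref{eq:symplecticform} and \eqref{eq:tnablavee-def}; and (b) when $\sfP_i$ is parallel the subsheaf $\mathrm{Ann}(\pr^*\sfP_i)=\Pi_i^*\,\pr^*\sfF^\vee\subset\pr^*\sfF[z^{-1}]^\vee$ is stable under $\tnabla^\vee$, since for $\alpha$ in it and $s$ a local section of $\pr^*\sfP_i$ one has $\langle\tnabla^\vee\alpha,s\rangle=-\langle\alpha,\tnabla s\rangle=0$; consequently $\tnabla^\vee\Pi_i^*\varphi_i$ lies in $\bOmega^1\otimes\mathrm{Ann}(\pr^*\sfP_i)$ and equals $(\id\otimes\Pi_i^*)$ applied to $(\id\otimes\KS^{*-1})\Nabla^{\sfP_i}\omega_i$, by Definition~\ref{def:Nabla}. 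Granting these, I would differentiate
\[
d\,\Delta(\omega_1,\omega_2) = \Omega^\vee(\tnabla^\vee\Pi_1^*\varphi_1,\ \Pi_2^*\varphi_2) + \Omega^\vee(\Pi_1^*\varphi_1,\ \tnabla^\vee\Pi_2^*\varphi_2)
\]
and substitute the expressions from (b) into each summand, unwinding the definition of $\Delta$ in each term. I expect the main obstacle here to be purely bookkeeping: confirming via \eqref{eq:tnablavee}--\eqref{eq:tnablavee-CD} that $\tnabla^\vee\Pi_i^*\varphi_i$ really lives in $\bOmega^1\otimes\pr^*\sfF^\vee$ (so that the lift of its $\sfF^\vee$-class back to $\mathrm{Ann}(\pr^*\sfP_i)$ is canonical, which is exactly where parallelness of $\sfP_i$ enters), and matching the contraction convention implicit in the notation $\Delta(\Nabla^{\sfP_1}\omega_1,\omega_2)$ --- namely that the covariant-derivative slot of $\Nabla^{\sfP_1}\omega_1$ is the one passed through to the answer --- with the tensor-slot structure of $\Omega^\vee$ in the display. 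No genuine difficulty is expected beyond that.
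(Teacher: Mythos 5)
Your proposal is correct and follows essentially the same route as the paper: part (1) rests on exactly the same facts (isotropy of $\Image\Pi_i^*=\sfP_i^\perp$, isotropy of $\Image(\Pi_1^*-\Pi_2^*)\subset\sfF^\perp$, and antisymmetry of $\Omega^\vee$), and part (2) differentiates $\Omega^\vee(\Pi_1^*\varphi_1,\Pi_2^*\varphi_2)$ using the $\tnabla^\vee$-flatness of $\Omega^\vee$ together with the $\tnabla^\vee$-stability of $\sfP_i^\perp$ coming from parallelness. The only difference is cosmetic: in (2) you encode parallelness as the single identity $\tnabla^\vee\Pi_i^*\varphi_i=(\id\otimes\Pi_i^*\KS^{*-1})\Nabla^{\sfP_i}\omega_i$, whereas the paper splits off the $(\Pi_2^*-\Pi_1^*)$-terms and argues modulo the annihilator of $\pr^*\sfF$ --- an equivalent reorganization of the same computation.
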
 
\begin{proof} 
Write $\varphi_i := \KS^{*-1} \omega_i \in \pr^*\sfF^\vee$ 
for $i=1,2$. 
Because  $\Image \Pi_i^* = \sfP_i^\perp$ and 
$\Image(\Pi_1^* - \Pi_2^*)\subset \sfF^\perp$, 
these subspaces are isotropic with respect to $\Omega^\vee$. 
Hence we have: 
\begin{align*}
0 & = \Omega^\vee((\Pi_1^* - \Pi_2^*) \varphi_1, 
(\Pi_1^* - \Pi_2^*) \varphi_2)  
= - \Omega^\vee(\Pi_1^* \varphi_1, \Pi_2^* \varphi_2) 
- \Omega^\vee(\Pi_2^* \varphi_1, \Pi_1^* \varphi_2) \\ 
& = -\Omega^\vee(\Pi_1^* \varphi_1, \Pi_2^* \varphi_2) 
+ \Omega^\vee(\Pi_1^* \varphi_2, \Pi_2^* \varphi_1)
\end{align*} 
This shows that $\Delta$ is symmetric.
For Part (2), we have:
\begin{align}
\label{eq:dDelta}
\begin{split}  
d \Delta(\omega_1,\omega_2)&= 
d \Omega^\vee( \Pi_1^* \varphi_1, \Pi_2^* \varphi_2) \\ 
& = \Omega^\vee( \tnabla^\vee \Pi_1^* \varphi_1, \Pi_2^* \varphi_2) 
+ \Omega^\vee( \Pi_1^* \varphi_1, \tnabla^\vee \Pi_2^* \varphi_2)  \\ 
& = \Omega^\vee( \tnabla^\vee \Pi_1^* \varphi_1, 
(\Pi_2^* - \Pi_1^*) \varphi_2) 
+ \Omega^\vee( ( \Pi_1^* - \Pi_2^* )\varphi_1, 
\tnabla^\vee \Pi_2^* \varphi_2) \\ 
& \quad + \Omega^\vee(\tnabla^\vee \Pi_1^* \varphi_1, \Pi_1^*\varphi_2) 
+ \Omega^\vee(\Pi_2^* \varphi_1, \tnabla^\vee \Pi_2^* \varphi_2)  
\end{split} 
\end{align} 
Note that $\Image \Pi_i^* = \sfP_i^\perp$ is preserved by $\tnabla^\vee$ 
because $\sfP_i$ is parallel. Therefore the two terms in the last line 
vanish. Because  both 
$\Pi_1^* (\tnabla^\vee \Pi_1^* \varphi_1|_{\sfF}) - 
\tnabla^\vee \Pi_1^* \varphi_1$ and $(\Pi_1^*-\Pi_2^*) \varphi_2$ 
lie in $\sfF^\perp$, we have:
\begin{align*} 
\Omega^\vee( \tnabla^\vee \Pi_1^* \varphi_1, 
(\Pi_2^* - \Pi_1^*) \varphi_2) & = 
\Omega^\vee(\Pi_1^* (\tnabla^\vee \Pi_1^* \varphi_1|_{\sfF}), 
(\Pi_2^* - \Pi_1^*)\varphi_2 ) \\
& = \Omega^\vee( \Pi_1^* \KS^{*-1} 
\Nabla^{\sfP_1} \omega_1, (\Pi_2^* - \Pi_1^*) \varphi_2)  \\ 
& = \Omega^\vee( \Pi_1^* \KS^{*-1} 
\Nabla^{\sfP_1} \omega_1, \Pi_2^*  \varphi_2)  
= \Delta(\Nabla^{\sfP_1} \omega_1, \omega_2)
\end{align*} 
Similarly we have 
$\Omega^\vee( ( \Pi_1^* - \Pi_2^* )\varphi_1, \tnabla^\vee \Pi_2^* \varphi_2) 
= \Delta(\omega_1, \Nabla^{\sfP_2} \omega_2)$. 
The conclusion follows.   
\end{proof} 

We introduce tensor notation. 
Let $\{\sx^\mu\}$ denote an arbitrary local 
co-ordinate system on $\LL$ (or on the formal neighbourhood 
$\hLL$ of $\LL_t$). 
For example, this could be an algebraic local co-ordinate system 
$\{t^i, x_n^i\}$ (\S\ref{subsec:totalspace}) associated to 
a local trivialization of $\sfF$, or a flat co-ordinate system 
(\S\ref{subsec:flatstronL}) on $\hLL$ associated 
to a parallel pseudo-opposite module. 
In this co-ordinate system, 
we write the Yukawa coupling and the propagator as 
\begin{align*} 
\bY = C^{(0)}_{\mu\nu\rho} d\sx^\mu \otimes 
d\sx^\nu \otimes d \sx^\rho,  \quad 
\Delta = \Delta^{\mu \nu} \partial_\mu \otimes \partial_\nu, 
\quad 
\text{where } \partial_\mu = \parfrac{}{\sx^\mu}, \ 
\partial_\nu = \parfrac{}{\sx^\nu}.  
\end{align*} 
Here we adopt Einstein's summation convention for repeated indices.  
The Christoffel symbol of the connection 
$\Nabla = \Nabla^\sfP$ on $\LLo$ (for a pseudo-opposite $\sfP$)  
is defined by 
\begin{align} 
\label{eq:Christoffel}
\Nabla_\nu d \sx^\mu = - \Gamma^\mu_{\nu\rho}  d \sx^\rho && 
\Nabla_\nu \partial_\rho = \Gamma^\mu_{\nu\rho}  \partial_\mu  
\end{align} 
where $\Nabla_\nu = \Nabla_{\partial/\partial \sx^\nu}$. 
Note that  $\Gamma^\mu_{\nu\rho} 
= \Gamma^\mu_{\rho\nu}$ because 
$\Nabla$ is torsion free; 
also $\Delta^{\mu\nu} = \Delta^{\nu\mu}$ 
by the previous Proposition. 
The propagator has the following key properties.
\begin{proposition}
\label{prop:difference_conn} 
Let $\sfP_i$ be pseudo-opposite modules 
and ${\Gamma^{(i)}}^\mu_{\nu\rho}$ denote the Christoffel 
symbols of $\Nabla^{\sfP_i}$, $i=1,2$. 
Let $\Delta = \Delta(\sfP_1,\sfP_2)$ be the associated propagator. Then:
\begin{enumerate}
\item $(\Nabla^{\sfP_2} - \Nabla^{\sfP_1}) \omega = 
  \iota(\iota_\omega \Delta) \bY$ for $\omega\in\bOmegao^1$. 
  In tensor notation:
  \[
  (\Nabla^{\sfP_2}_\mu - \Nabla^{\sfP_1}_\mu )d\sx^\nu 
  = ({\Gamma^{(1)}}^\nu_{\mu\rho} - 
  {\Gamma^{(2)}}^\nu_{\mu\rho}) d\sx^\rho
  = \Delta^{\nu \sigma} C^{(0)}_{\sigma \mu\rho} d\sx^\rho
  \]
\item[(2)] If $\sfP_1,\sfP_2$ are parallel, we have 
  $(\Nabla^{\sfP_1}\Delta)(\omega_1\otimes \omega_2) 
  = \iota(\iota_{\omega_1}\Delta\otimes 
  \iota_{\omega_2}\Delta)\bY$ for $\omega_1,\omega_2 \in \bOmegao^1$, that is:
  \[
  \Nabla^{\sfP_1}_\mu \Delta^{\nu\rho} (:= 
  \partial_\mu \Delta^{\nu\rho} +  
  {\Gamma^{(1)}}^\nu_{\mu\sigma} \Delta^{\sigma\rho} 
  + {\Gamma^{(1)}}^\rho_{\mu\sigma} \Delta^{\nu\sigma})  
  = \Delta^{\nu\sigma} C^{(0)}_{\sigma \mu \tau} \Delta^{\tau\rho}
  \]
\end{enumerate}
(See Proposition~\ref{prop:propagator-curved} below for the non-parallel case.) 
\end{proposition}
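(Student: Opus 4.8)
The plan is to prove both identities by computing the relevant objects "upstairs" on $\pr^*\sfF^\vee$ and then transporting them through the dual Kodaira--Spencer isomorphism $\KS^* \colon \pr^*\sfF^\vee \cong \bOmegao^1$. The key bookkeeping device is that the difference of the two projections, $\Pi_1^* - \Pi_2^* \colon \pr^*\sfF^\vee \to \pr^*\sfF[z^{-1}]^\vee$, lands in the isotropic subspace $\sfF^\perp \subset \pr^*\sfF[z^{-1}]^\vee$, exactly as exploited in the proof of Proposition~\ref{pro:prop-elementary}. I expect the unwinding of the $\Upsilon$/Yukawa identity from Lemma~\ref{lem:Yukawa-KS} to be the component that requires the most care, because it is the place where the residue pairing and the algebraic co-ordinates interact.

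For Part (1): fix $\omega \in \bOmegao^1$ and set $\varphi := \KS^{*-1}\omega \in \pr^*\sfF^\vee$. By Definition~\ref{def:Nabla}, $\Nabla^{\sfP_i}\omega$ corresponds under $\KS^*$ to the $\sfF^\vee$-component of $\tnabla^\vee\Pi_i^*\varphi$, i.e.\ to $\Pi_i^*\bigl((\tnabla^\vee\Pi_i^*\varphi)|_{\sfF}\bigr)$ pulled back via $\KS^*$. First I would compute the difference
\[
\tnabla^\vee\Pi_1^*\varphi - \tnabla^\vee\Pi_2^*\varphi
= \tnabla^\vee\bigl((\Pi_1^* - \Pi_2^*)\varphi\bigr),
\]
which lies in $\bOmega^1 \hotimes \sfF^\vee$ modulo the image of $\sfF^\perp$ under $\tnabla^\vee$; since the $\sfP_i$ are not assumed parallel here, the cleaner route is to compute directly in an algebraic local co-ordinate system $\{t^i, x_n^i\}$ using the formulas \eqref{eq:tnablavee} for $\tnabla^\vee$ and \eqref{eq:KS_coord} for $\KS^*$. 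The upshot is that the $\sfF^\vee$-part of the difference is controlled by the $z^{-1}$-part of $(\Pi_1^* - \Pi_2^*)\varphi$ fed back through $z^{-1}\cC(t,0)$; pairing this against a test vector and using the definition of $\Delta$ \eqref{def:propagator} together with Lemma~\ref{lem:Yukawa-KS} identifies the result as $\Delta^{\nu\sigma}C^{(0)}_{\sigma\mu\rho}\,d\sx^\rho$. Concretely, one checks the claimed tensor identity ${\Gamma^{(1)}}^\nu_{\mu\rho} - {\Gamma^{(2)}}^\nu_{\mu\rho} = \Delta^{\nu\sigma}C^{(0)}_{\sigma\mu\rho}$ in the algebraic co-ordinate system, where $\KS^*(\varphi_0^i) = -[\cC(t,0)x_1]^i$ makes the appearance of $C^{(0)}_{\sigma\mu\rho}$ transparent.

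For Part (2): now assume $\sfP_1, \sfP_2$ parallel, so each $\Nabla^{\sfP_i}$ is flat (Proposition~\ref{prop:Nabla-torsionfree}) and $\Image\Pi_i^* = \sfP_i^\perp$ is $\tnabla^\vee$-invariant. Differentiate the defining relation $\Delta(\omega_1,\omega_2) = \Omega^\vee(\Pi_1^*\varphi_1, \Pi_2^*\varphi_2)$ along $\Nabla^{\sfP_1}$: by the Leibniz rule and $\tnabla^\vee$-flatness of $\Omega^\vee$ we get two terms, one with $\tnabla^\vee\Pi_1^*\varphi_1$ and one with $\tnabla^\vee\Pi_2^*\varphi_2$. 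In the first term, replace $\tnabla^\vee\Pi_1^*\varphi_1$ by its $\sfF^\vee$-projection $\Pi_1^*\KS^{*-1}\Nabla^{\sfP_1}\omega_1$ (legitimate since the correction lies in $\sfF^\perp$ and $(\Pi_1^* - \Pi_2^*)\varphi_2 \in \sfF^\perp$, or rather after inserting $\Pi_2^*\varphi_2 = \Pi_1^*\varphi_2 + (\Pi_2^*-\Pi_1^*)\varphi_2$); this produces $(\Nabla^{\sfP_1}\Delta)$-type corrections to the naive derivative, i.e.\ it accounts for the $\Gamma^{(1)}$ terms in the covariant derivative $\Nabla^{\sfP_1}_\mu\Delta^{\nu\rho}$. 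The second term, involving $\tnabla^\vee\Pi_2^*\varphi_2$, must be converted to a $\Nabla^{\sfP_1}$-derivative rather than a $\Nabla^{\sfP_2}$-derivative; this is where Part (1) enters — the mismatch $\Nabla^{\sfP_2} - \Nabla^{\sfP_1}$ is precisely $\iota(\iota_\bullet\Delta)\bY$, and substituting it produces the term $\Delta^{\nu\sigma}C^{(0)}_{\sigma\mu\tau}\Delta^{\tau\rho}$ on the right-hand side. Collecting terms and using symmetry of $\Delta$ gives the stated formula.

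The main obstacle I anticipate is the second, subtler manipulation in Part~(2): correctly tracking which projection ($\Pi_1^*$ versus $\Pi_2^*$) is acting when one commutes $\tnabla^\vee$ past it, and verifying that the "correction" terms that do not obviously vanish reassemble exactly into the Christoffel contributions of $\Nabla^{\sfP_1}_\mu\Delta^{\nu\rho}$ plus the single Yukawa term — without spurious contributions from the non-$z^{-1}$-linear part of $\Nabla^\sfP$. The safest way to discharge this is to do the computation in the flat co-ordinate system associated to $\sfP_1$ (so that ${\Gamma^{(1)}}^\mu_{\nu\rho} = 0$ and $C^{(0)}_{\mu\nu\rho} = \partial_\mu\partial_\nu\partial_\rho C^{(0)}$ by Proposition~\ref{prop:potentiality}), reducing the identity to $\partial_\mu\Delta^{\nu\rho} = \Delta^{\nu\sigma}C^{(0)}_{\sigma\mu\tau}\Delta^{\tau\rho}$, and then check that equation against the explicit description of $\Delta$ coming from $\Pi_1, \Pi_2$; the general-co-ordinate statement then follows by naturality of all the objects involved. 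Everything else — symmetry, the elementary isotropy observations, the Leibniz computations — is routine given Propositions~\ref{pro:prop-elementary} and~\ref{prop:Nabla-torsionfree}.
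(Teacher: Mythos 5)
Your proposal is correct, and Part (2) is exactly the paper's argument: the Leibniz identity you re-derive, $d\Delta(\omega_1,\omega_2)=\Delta(\Nabla^{\sfP_1}\omega_1,\omega_2)+\Delta(\omega_1,\Nabla^{\sfP_2}\omega_2)$, is precisely Proposition~\ref{pro:prop-elementary}(2) and could simply be cited, after which converting $\Nabla^{\sfP_2}$ to $\Nabla^{\sfP_1}$ via Part (1) gives the Yukawa term, as you say. The only real divergence is in Part (1): the paper avoids coordinates entirely by first proving the identity $\Omega\big(\KS(\iota_\omega\Delta),\beta\big)=\Pair{(\Pi_2^*-\Pi_1^*)\varphi}{\beta}$ for $\beta\in\pr^*\sfF[z^{-1}]$, and then pairing $(\Nabla^{\sfP_2}-\Nabla^{\sfP_1})\omega$ against $X\otimes Y$ and using the tautological section, so that the second derivative $\tnabla_X\tnabla_Y\bx$ produces $\bY(\iota_\omega\Delta,X,Y)$ directly — no appeal to Lemma~\ref{lem:Yukawa-KS} or to the explicit form of $\tnabla^\vee$ is needed, and parallelness of the $\sfP_i$ is manifestly never used. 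Your coordinate route works too, but the sentence beginning ``the upshot is'' hides the one computation that matters: since $(\Pi_2^*-\Pi_1^*)\varphi$ is a combination of the $\varphi_m^a$ with $m\le -1$, only its $\varphi_{-1}^a$-component survives the projection of $\tnabla^\vee(\Pi_2^*-\Pi_1^*)\varphi$ to $\pr^*\sfF^\vee$ (via $\cC(t,0)$ landing on $\varphi_0^j$), and you must then check that this component is exactly the one recorded by $\Delta$ through $\Omega^\vee$; spelling that out is what makes the coordinate argument complete, and the paper's identity above is the invariant packaging of the same fact. Your fallback of verifying Part (2) in $\sfP_1$-flat coordinates is legitimate (the identity is tensorial and the flat coordinates exist on the formal neighbourhood of each fibre, which suffices), but it is more work than the two-line argument from Proposition~\ref{pro:prop-elementary}(2) plus Part (1).
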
 

\begin{proof}
Set $\varphi = \KS^{*-1}\omega \in \pr^*\sfF^\vee$. 
Note that $\iota_\omega \Delta$ is a section of $\bThetao$. 
For $\beta \in \pr^* \sfF[z^{-1}]$, we have:
\begin{align*} 
\Omega(\KS (\iota_\omega \Delta), \beta) 
& = \Pair{\KS(\iota_\omega \Delta)}{- \iota_\beta\Omega}  
= \Pair{\KS(\iota_\omega \Delta)}{-(\iota_\beta \Omega)|_{\pr^*\sfF}} \\
& = - \Pair{\iota_\omega \Delta}
{\KS^* ((\iota_\beta \Omega)|_{\pr^*\sfF})} \\ 
& = - \Delta(\omega,\KS^* ((\iota_\beta \Omega)|_{\pr^*\sfF})) 
 \\
& = - \Omega^\vee(\Pi_1^* \varphi, 
\Pi_2^*((\iota_\beta \Omega)|_{\pr^*\sfF})) 
&&  \!\!\!\!\!\!\!\!\!\!\!\!\!\!\!\!\!\!\!\!\!\!\!\!\!\!\!\!\!
\text{(by the definition of $\Delta$)} 
\\
& = - \Omega^\vee((\Pi_1^* - \Pi_2^*) \varphi, 
\Pi_2^* ((\iota_\beta \Omega)|_{\pr^*\sfF}))) 
&&  \!\!\!\!\!\!\!\!\!\!\!\!\!\!\!\!\!\!\!\!\!\!\!\!\!\!\!\!\! 
\text{(since $\Image \Pi_2^*=(\pr^*\sfP_2)^\perp$ is isotropic)} 
\\
& = - \Omega^\vee((\Pi_1^* -\Pi_2^*)\varphi, 
\iota_\beta\Omega)
\end{align*} 
The last line follows from the fact that 
both $(\Pi_1^*-\Pi_2^*)\varphi$ and 
$\iota_\beta\Omega- \Pi_2^* ((\iota_\beta \Omega)|_{\pr^*\sfF}))$ 
lie in the isotropic subspace $(\pr^*\sfF)^\perp$. 
Thus:
\begin{equation} 
\label{eq:iomegaDelta} 
\Omega(\KS (\iota_\omega \Delta), \beta) 
= \Pair{(\Pi_2^* -\Pi_1^*)\varphi}{\beta}. 
\end{equation} 
For $X, Y\in \bThetao$ and the tautological section $\bx$ 
of $\pr^*\sfF$, we have:
\begin{align*} 
\Pair{(\Nabla^{\sfP_2}-\Nabla^{\sfP_1}) \omega}{X\otimes Y}
&= \Pair{(\id \otimes \KS^*) \tnabla^\vee 
(\Pi_2^* -\Pi_1^*) \varphi}{X\otimes Y} \\ 
&= \Pair{\tnabla^\vee(\Pi_2^*-\Pi_1^*) \varphi}
{X\otimes \tnabla_Y \bx} \\
& = \Pair{\tnabla^\vee_X(\Pi_2^* - \Pi_1^*) \varphi}{\tnabla_{Y}\bx}\\
&=X\Pair{(\Pi_2^*-\Pi_1^*)\varphi}{\tnabla_Y \bx} 
- \Pair{(\Pi_2^*-\Pi_1^*)\varphi}{\tnabla_X\tnabla_Y \bx} 
\end{align*}
Because $(\Pi_2^*-\Pi_1^*)\varphi$ vanishes on $\pr^*\sfF$, 
the first term vanishes. 
By \eqref{eq:iomegaDelta}, we now have:
\[
\Pair{(\Nabla^{\sfP_2}-\Nabla^{\sfP_1}) \omega}{X\otimes Y}
 = - \Omega(\KS(\iota_\omega\Delta), \tnabla_X \tnabla_Y \bx)
 = \bY(\iota_\omega\Delta, X, Y)
\]
This proves Part (1). 
For Part (2), using Proposition~\ref{pro:prop-elementary}(2), 
we have:
\begin{align*} 
d\Delta(\omega_1,\omega_2) - \Delta(\Nabla^{\sfP_1} \omega_1,\omega_2) 
- \Delta(\omega_1,\Nabla^{\sfP_1} \omega_2) 
& = \Delta(\omega_1, (\Nabla^{\sfP_2} -\Nabla^{\sfP_1}) \omega_2) 
\end{align*}
This equals $\iota(\iota_{\omega_1} \Delta \otimes 
\iota_{\omega_2}\Delta) \bY$ 
by Part (1).  
\end{proof} 
\begin{proposition} 
\label{prop:Deltasum} 
Let $\sfP_1,\sfP_2,\sfP_3$ be pseudo-opposite modules 
and let 
$\Delta_{ij}=\Delta(\sfP_i, \sfP_j)$ denote the corresponding propagators. 
We have:
\[
\Delta_{13} = \Delta_{12} + \Delta_{23} 
\]
In particular, $\Delta(\sfP_1,\sfP_2) = - \Delta(\sfP_2,\sfP_1)$. 
\end{proposition}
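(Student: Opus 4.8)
The plan is to deduce the additivity $\Delta_{13} = \Delta_{12} + \Delta_{23}$ from the same elementary properties of isotropic subspaces of the symplectically self-dual module $\pr^*\sfF[z^{-1}]$ that were used in the proof of Proposition~\ref{pro:prop-elementary}, with no analysis required at all. Writing $\varphi_i := \KS^{*-1}\omega_i \in \pr^*\sfF^\vee$, recall that $\Image \Pi_j^* = (\pr^*\sfP_j)^\perp$ is isotropic for $\Omega^\vee$ (by the isotropy condition (Opp2) on $\sfP_j$), that $\sfF^\perp$ is isotropic for $\Omega^\vee$ (because $\pr^*\sfF$ is Lagrangian, which follows from (Opp1) together with non-degeneracy of the pairing), and that each difference $\Pi_i^* - \Pi_j^*$ has image in $\sfF^\perp$ because $\Pi_i$ and $\Pi_j$ both restrict to the identity on $\pr^*\sfF$.

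First I would record these three facts, or simply refer to the corresponding lines in the proof of Proposition~\ref{pro:prop-elementary}. Then the argument is a two-step bilinear manipulation. Starting from $\Delta_{13}(\omega_1,\omega_2) = \Omega^\vee(\Pi_1^*\varphi_1, \Pi_3^*\varphi_2)$, I split $\Pi_3^*\varphi_2 = \Pi_2^*\varphi_2 + (\Pi_3^* - \Pi_2^*)\varphi_2$; the first piece produces $\Delta_{12}(\omega_1,\omega_2)$, and in the term $\Omega^\vee(\Pi_1^*\varphi_1,(\Pi_3^* - \Pi_2^*)\varphi_2)$ I split $\Pi_1^*\varphi_1 = \Pi_2^*\varphi_1 + (\Pi_1^* - \Pi_2^*)\varphi_1$. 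Since $(\Pi_1^* - \Pi_2^*)\varphi_1$ and $(\Pi_3^* - \Pi_2^*)\varphi_2$ both lie in the isotropic subspace $\sfF^\perp$, the cross term they contribute vanishes, leaving $\Omega^\vee(\Pi_2^*\varphi_1, (\Pi_3^* - \Pi_2^*)\varphi_2) = \Omega^\vee(\Pi_2^*\varphi_1, \Pi_3^*\varphi_2) - \Omega^\vee(\Pi_2^*\varphi_1,\Pi_2^*\varphi_2)$, whose last term vanishes because $\Image \Pi_2^*$ is isotropic. Collecting terms gives $\Delta_{13}(\omega_1,\omega_2) = \Delta_{12}(\omega_1,\omega_2) + \Delta_{23}(\omega_1,\omega_2)$ for all $\omega_1,\omega_2$.

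The final assertion follows by taking $\sfP_3 = \sfP_1$: additivity then reads $\Delta(\sfP_1,\sfP_1) = \Delta(\sfP_1,\sfP_2) + \Delta(\sfP_2,\sfP_1)$, and $\Delta(\sfP_1,\sfP_1)(\omega_1,\omega_2) = \Omega^\vee(\Pi_1^*\varphi_1,\Pi_1^*\varphi_2) = 0$ because $\Image \Pi_1^*$ is isotropic, so $\Delta(\sfP_1,\sfP_2) = -\Delta(\sfP_2,\sfP_1)$. I do not anticipate a genuine obstacle; the only point deserving a sentence of care is the isotropy of $\sfF^\perp$ with respect to $\Omega^\vee$, i.e.\ that $\pr^*\sfF$ is maximal isotropic --- this is exactly what forces the cross terms to drop out --- and it is already implicit in the paper's earlier treatment of the propagator.
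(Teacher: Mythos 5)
Your proof is correct and follows essentially the same route as the paper's: both are the same short bilinear manipulation using that each $\Image\Pi_i^*=\sfP_i^\perp$ is isotropic and that each difference $\Pi_i^*-\Pi_j^*$ lands in the isotropic subspace $\sfF^\perp$, the only difference being that you split in the second argument first while the paper rewrites $\Delta_{13}=\Omega^\vee((\Pi_1^*-\Pi_3^*)\varphi_1,\Pi_3^*\varphi_2)$ and splits in the first. The deduction of antisymmetry from the case $\sfP_3=\sfP_1$ also matches the paper.
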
 
\begin{proof} 
Putting $\varphi_i = \KS^{*-1}\omega_i
\in \pr^*\sfF^\vee$, we have:
\begin{align*}
\Delta_{13}(\omega_1,\omega_2) 
& = \Omega^\vee((\Pi_1^*-\Pi_3^*)\varphi_1,\Pi_3^*\varphi_2) \\
&= \Omega^\vee((\Pi_1^*-\Pi_2^*)\varphi_1,\Pi_3^*\varphi_2) 
+ \Omega^\vee((\Pi_2^*-\Pi_3^*)\varphi_1,\Pi_3^*\varphi_2) \\
&= \Omega^\vee((\Pi_1^*-\Pi_2^*)\varphi_1, \Pi_2^*\varphi_2) 
+ \Omega^\vee(\Pi_2^*\varphi_1,\Pi_3^*\varphi_2) \\ 
&= \Delta_{12}(\omega_1,\omega_2) + \Delta_{23}(\omega_1,\omega_2)
\end{align*} 
We used the fact that $\Image \Pi_i^* = \sfP_i^\perp$ is isotropic 
and that $\Image (\Pi_i^* - \Pi_j^*)$ is contained in the 
isotropic subspace $\sfF^\perp$. 
The last statement follows from the case $\sfP_1=\sfP_3$. 
\end{proof} 

\subsubsection{Givental's Propagator} 
\label{subsubsec:Giventalpropagator}
Suppose that we have two opposite modules $\sfP_1,\sfP_2$ over $U$
and that we have the corresponding 
two trivializations 
\[
\Phi_i \colon 
\C^{N+1} \otimes \cO_U[\![z]\!] \to \sfF|_U, \quad i=1,2 
\]
such that:
\begin{itemize} 
\item $\sfP_i = \Phi_i(\C^{N+1} \otimes z^{-1}\cO_U[z^{-1}] )$, $i=1,2$.  
\item The values $g_{ij}= (\Phi_1(e_i), \Phi_1 (e_j))_{\sfF}$ and 
$\tg_{ij} = (\Phi_2(e_i), \Phi_2(e_j))_{\sfF}$ are constant.
\end{itemize} 
Here $e_0,\dots,e_N$ are the standard basis of $\C^{N+1}$. 
Such a trivialization arises from the flat trivialization 
(see Proposition~\ref{prop:flat_trivialization}) 
associated to $\sfP_i$ and a $\nabla^0$-flat frame of $z\sfP_i/\sfP_i$. 
Let $R(z) =\Phi_2^{-1} \circ \Phi_1 = R_0 + R_1 z + R_2 z^2 + 
\cdots \in GL(N+1,\cO[\![z]\!])$ 
denote the gauge transformation between the two trivializations: 
\[
R(z) \colon 
\C^{N+1} \otimes \cO_U[\![z]\!] \overset{\Phi_1}{\longrightarrow} 
\sfF \overset{\Phi_2^{-1}}{\longrightarrow} 
\C^{N+1} \otimes \cO_U[\![z]\!]
\]
Let $g$,~$\tg\colon \C^{N+1} \otimes \C^{N+1} \to \C$ denote 
the pairings with the Gram matrices $(g_{ij})$, $(\tg_{ij})$. 
Then the gauge transformation intertwines these pairings:  
\begin{equation} 
\label{eq:gaugetr-unitarity} 
\tg(R(-z) v, R(z) w) = g(v, w), \quad v,w \in \C^{N+1}
\end{equation} 
\begin{definition}[\cite{Givental:quantization}] 
\label{def:Giventalprop}
\emph{Givental's propagator} is a collection of 
elements $V^{(n,j),(m,i)} \in \cO_U$, 
$0 \leq n,m < \infty$, $0\le i,j\le N$ defined by the formula 
\begin{equation} 
\label{eq:Giventalprop}
\sum_{n=0}^\infty \sum_{m=0}^\infty (-1)^{n+m} V^{(n,j), (m,i)} w^n z^m 
= g\left(e^j, \frac{R(w)^\dagger R(z) -\id}{z+w} e^i\right)  
\end{equation} 
where $R(w)^\dagger=R(-w)^{-1}$ denotes the adjoint of $R(w)$ 
with respect to $g$ and $\tg$ (see equation~\ref{eq:gaugetr-unitarity}) 
and $e^i = \sum_j g^{ij} e_j$ 
with $(g^{ij})$ the matrix inverse to 
$(g_{ij})$.  
\end{definition} 
Let $\varphi_m^i$ 
be the frame of $\pr^*\sfF[z^{-1}]^\vee$ defined by 
the trivialization $\Phi_1$ 
(cf.~equation~\ref{eq:frame_varphi}): 
\begin{align*} 
\varphi_m^i 
\colon \pr^*\sfF[z^{-1}] \to \bcO
&&
\varphi_m^i(s) = [\Phi_1^{-1} s]_m^i
\end{align*} 
where $\Phi_1 \colon \C^{N+1}\otimes \bcO(\!(z)\!) 
\cong \pr^*\sfF[z^{-1}]$  
and we followed Notation~\ref{nota:[]_^}. 
\begin{lemma} 
\label{lem:V}
$V^{(n,j),(m,i)} = 
- \left[R(z)^{-1} [R(z) (-z)^{-n-1} e^j  ]_+
\right]^i_m  
= \Omega^\vee(\Pi_1^* \varphi_n^j, \Pi_2^* \varphi_m^i)$, 
where $[\cdots]_+$ denotes the non-negative part as a $z$-series. 
\end{lemma}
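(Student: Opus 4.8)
The plan is to verify both equalities in Lemma~\ref{lem:V} by direct computation using the definitions of Givental's propagator and the dual symplectic form, treating the two claimed expressions separately and linking them through the generating-function identity \eqref{eq:Giventalprop}.

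First I would unwind the right-hand side $\Omega^\vee(\Pi_1^*\varphi_n^j, \Pi_2^*\varphi_m^i)$. Using the trivialization $\Phi_1$, the frame $\{\varphi_m^i\}$ identifies $\pr^*\sfF[z^{-1}]^\vee$ with $\C^{N+1}\otimes\bcO(\!(z)\!)$ via the pairing $g$; concretely $\varphi_m^i$ corresponds (up to the sign convention built into $\Omega$, cf.\ \eqref{eq:symplecticform}) to the element $e^i z^{-m-1}$ or $e^i(-z)^{-m-1}$ of $\sfF[z^{-1}]$ under the isomorphism $\sfF[z^{-1}]\cong\sfF[z^{-1}]^\vee$, $s\mapsto\Omega(s,\cdot)$. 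Then $\Pi_1^*\varphi_n^j$ is the projection of this element to $\sfP_1^\perp = (\pr^*\sfP_1)^\perp$; since $\Pi_1$ is projection \emph{onto} $\sfF$ along $\sfP_1$, its adjoint $\Pi_1^*$ projects \emph{onto} $\sfP_1^\perp$ along $\sfF^\perp$. I would rewrite $\sfP_1^\perp$ in terms of $\Phi_1$ (where $\sfP_1 = \Phi_1(z^{-1}\C^{N+1}[z^{-1}])$, hence $\sfP_1^\perp$ is the span of the non-negative $z$-powers) and $\sfP_2^\perp$ in terms of $\Phi_1$ via the gauge transformation $R(z)=\Phi_2^{-1}\Phi_1$. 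This is where the operator $[R(z)(-z)^{-n-1}e^j]_+$ naturally appears: expressing the $\sfP_2$-projection of $\Phi_1(e^j(-z)^{-n-1})$ in the $\Phi_1$-frame requires passing through $\Phi_2$, taking the polynomial (non-negative-power) part there, and pulling back. Carefully tracking the identification of $\varphi_m^i$ with a functional then yields $\Omega^\vee(\Pi_1^*\varphi_n^j,\Pi_2^*\varphi_m^i) = -[R(z)^{-1}[R(z)(-z)^{-n-1}e^j]_+]^i_m$, which is the middle expression.

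Next I would connect the middle expression to the generating function \eqref{eq:Giventalprop}. Writing $R(z)(-z)^{-n-1}e^j$ and extracting the non-negative part, then applying $R(z)^{-1}$, produces after summing over $n,m$ with the signs $(-1)^{n+m}$ the contour-integral / partial-fraction identity
\begin{equation*}
\sum_{n,m\ge 0}(-1)^{n+m} w^n z^m\, [R(z)^{-1}[R(z)(-z)^{-n-1}e^j]_+]^i_m
= -g\!\left(e^i,\frac{R(w)^\dagger R(z)-\id}{z+w}e^j\right),
\end{equation*}
which is exactly the standard manipulation behind Givental's propagator (as in \cite{Givental:quantization}); the $R(w)^\dagger$ arises because the unitarity relation \eqref{eq:gaugetr-unitarity} lets one trade $g(e^i,R(z)^{-1}(\,\cdot\,))$ for a pairing involving $R(-w)^{-1}=R(w)^\dagger$ after the partial-fraction step $\frac{1}{z+w}=\sum(-1)^{n}w^n z^{-n-1}$ (done on the appropriate side). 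Comparing with \eqref{eq:Giventalprop} and matching indices gives $V^{(n,j),(m,i)}$ equal to the middle expression, completing the chain.

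The main obstacle I anticipate is bookkeeping of signs and of the two competing conventions for negative $z$-powers: the frame $\varphi_m^i$ in \eqref{eq:frame_varphi} extracts the coefficient of $z^m$ (no sign twist), whereas $\sfP_i$ is spanned by $z^{-1}\C^{N+1}[z^{-1}]$ and the symplectic form \eqref{eq:symplecticform} pairs $f(-z)$ against $g(z)$ before taking the residue, so the element of $\sfF[z^{-1}]$ dual to $\varphi_m^i$ carries a factor $(-1)^{m+1}$ or similar. Getting these straight is what forces the $(-1)^{n+m}$ in \eqref{eq:Giventalprop} and the leading minus signs in the Lemma; I would fix the conventions once at the start (by testing on the trivial case $R=\id$, where $V=0$ and both sides should visibly vanish) and then propagate them. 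The rest — partial fractions, the $[\cdot]_+$ bookkeeping, and invoking \eqref{eq:gaugetr-unitarity} — is routine once a single consistent normalization is pinned down.
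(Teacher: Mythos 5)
Your plan is correct and follows essentially the same route as the paper's proof: express $\Pi_2$ in the $\Phi_1$-trivialization as $s\mapsto R(z)^{-1}[R(z)s]_+$ to evaluate $\Omega^\vee(\Pi_1^*\varphi_n^j,\Pi_2^*\varphi_m^i)$ in the frame, and match the middle expression to the generating function \eqref{eq:Giventalprop} by expanding $1/(z+w)=\sum_{n\ge 0}(-1)^nw^nz^{-n-1}$ and using unitarity $R(w)^\dagger=R(-w)^{-1}$ (your swap of the indices $i,j$ in that identity is harmless, since the generating function is symmetric under $(j,w)\leftrightarrow(i,z)$). The paper simply carries out the two steps in the opposite order, so no genuinely new idea is involved.
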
 
\begin{proof} 
The first equality follows from the calculation: 
\begin{align*} 
\text{equation \eqref{eq:Giventalprop}}
& = g\left( \frac{R(z)^\dagger R(w) - \id}{z+w} e^j, e^i\right) 
= g\left( R(-z)^{-1} 
\left ( \frac{R(w) - R(-z)}{z+w} e^j \right),  e^i\right) \\ 
& = - g\left( R(-z)^{-1} \left [
R(-z) \frac{e^j}{z+w} \right ]_+, e^i\right) 
\quad \text{when $|w|<|z|$} 
\\ 
& = -\sum_{n=0}^\infty 
(-1)^{n} 
g\left(R(-z)^{-1} \left[R(-z) z^{-n-1}e^j\right]_+, 
e^i\right) w^n \\ 
& = - \sum_{n=0}^\infty \sum_{m=0}^\infty (-1)^{n+m} 
\left[R(z)^{-1} \left[R(z) (-z)^{-n-1} e^j \right]_+
\right]^i_m  w^n z^m   
\end{align*} 
In the second line, we expanded $e^j/(z+w)$ in power series in $z^{-1}$ 
(i.e.~around $z=\infty$). 
Under the trivialization $\Phi_1$, 
the projection $\Pi_2$ can be presented as 
\[
\Pi_2(e_h z^n) = R(z)^{-1} [R(z) e_h z^n]_+, \quad n\in \Z.  
\]
Therefore, for $m\ge 0$:
\begin{align*} 
\Pi_2^* \varphi_m^i  
&= \sum_{n\in \Z} \sum_h \left[
R(z)^{-1} \left[R(z) e_h z^n\right]_+\right]_m^i \; \varphi_n^h \\
&= \varphi_m^i  + \sum_{n=0}^\infty \sum_h 
\left[ R(z)^{-1} \left[R(z) e_h z^{-n-1}\right]_+ \right]_m^i 
\; \varphi_{-n-1}^h
\end{align*} 
Consequently, under the isomorphism 
$\sfF[z^{-1}]\cong \sfF[z^{-1}]^\vee$, 
$v\mapsto \iota_v \Omega$, 
the section $\Pi_2^* \varphi_m^i\in 
\sfF[z^{-1}]^\vee$ corresponds to:
\[
v_m^i =  e^i (-z)^{-m-1} + \sum_{n=0}^\infty \sum_h 
\left[ R(z)^{-1} [R(z) e_h z^{-n-1}]_+ \right]_m^i 
\; (-z)^n e^h
\]
Hence we have 
$\Omega^\vee(\Pi_1^* \varphi_n^j, \Pi_2^* \varphi_m^i) 
= \pair{\Pi_1^* \varphi_n^j}{v_m^i} 
= \pair{\varphi_n^j}{[v_m^i]_+} = V^{(n,j),(m,i)}$. 
\end{proof} 

\begin{proposition} 
\label{prop:Giventalprop=prop} 
For $t\in \cM$, let $\{q_n^i\}_{n\ge 0, 0\le i\le N}$ 
be the flat co-ordinate system 
(Definition~\ref{def:flatcoordinate_on_LL}) 
on the formal neighbourhood $\hLLo$ of $\LLo_t$ associated 
to the trivialization $\Phi_1$ and 
the opposite module $\sfP_1$. 
The propagator $\Delta = \Delta(\sfP_1,\sfP_2)$ 
restricted to the fiber $\LLo_t$ 
can be written in terms of the flat co-ordinates as 
\[
\Delta\Bigr|_{\LLo_t} = \sum_{n=0}^\infty \sum_{m=0}^\infty \sum_{i=0}^N \sum_{j=0}^N
V^{(n,j),(m,i)} \parfrac{}{q_n^j} \otimes \parfrac{}{q_m^i}   
\]
where $V^{(n,j),(m,i)}$ is Givental's propagator in 
Definition~\ref{def:Giventalprop}. 
\end{proposition}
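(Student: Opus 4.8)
The plan is to compare the two intrinsic descriptions of the propagator: the definition $\Delta(\omega_1,\omega_2) = \Omega^\vee(\Pi_1^*\KS^{*-1}\omega_1, \Pi_2^*\KS^{*-1}\omega_2)$ from Definition~\ref{def:propagator}, and the combinatorial formula for $V^{(n,j),(m,i)}$ in Definition~\ref{def:Giventalprop}. The bridge between them is Lemma~\ref{lem:V}, which already identifies $V^{(n,j),(m,i)} = \Omega^\vee(\Pi_1^*\varphi_n^j, \Pi_2^*\varphi_m^i)$ where $\varphi_m^i$ is the frame of $\pr^*\sfF[z^{-1}]^\vee$ attached to the trivialization $\Phi_1$. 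So what remains is a purely co-ordinate bookkeeping task: to show that, after restricting to the fiber $\LLo_t$ and passing to the flat co-ordinate system $\{q_n^i\}$ associated to $(\Phi_1, \sfP_1)$, the vector field $\KS^{-1}\Pi_1^*\varphi_n^j$ becomes $\partial/\partial q_n^j$ (for $n \ge 0$).

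First I would recall that the flat co-ordinates are defined (Definition~\ref{def:flatcoordinate_on_LL}) by $\bq + \bp = M(s,z)\bx$, where $M(s,z)$ implements parallel transport to the fiber $\sfF_t[z^{-1}]$ and is normalized by $M(0,z) = \id$; in particular at the base point $t$ itself (so $s = 0$) we have $\bq + \bp = \bx$, i.e.\ the flat co-ordinate $q_n^i$ restricted to $\LLo_t$ is literally the algebraic co-ordinate $x_n^i$ for $n \ge 1$ and $q_0^i$ vanishes there. The key computation in \S\ref{subsec:flatstronL}, equations~\eqref{eq:dq+dp}--\eqref{eq:KSinv_dq+dp}, shows that $\KS^{*-1}(d\bq + d\bp) = \sum_{n\ge 0}\sum_i M(s,z)e_i z^n \otimes \varphi_n^i$ as a section of $\sfF_t[z^{-1}]\hotimes \pr^*\sfF^\vee$; restricting to $\LLo_t$ (where $M = \id$) this reads $\KS^{*-1}(dq_n^i) = \varphi_n^i$ for $n \ge 1$ and, reading off the $\sfP_t$-components, the relation between $dq_0^i$ and $\varphi_0^i$ via $\Pi_1$. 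Dualizing: the vector field $\partial/\partial q_n^i$ on $\LLo_t$ corresponds under $\KS$ to $\Pi_1^*\varphi_n^i \in \pr^*\sfF^\vee$ -- this is exactly the statement that the derivative $D\bq$ coincides with the Kodaira--Spencer map, as flagged in Definition~\ref{def:flatcoordinate_on_LL}. Once this identification is in hand, we substitute into Definition~\ref{def:propagator}: writing $\Delta = \Delta^{\mu\nu}\partial_\mu \otimes \partial_\nu$ in the flat co-ordinates, the coefficient $\Delta^{(n,j),(m,i)}$ is $\Delta(dq_n^j, dq_m^i) = \Omega^\vee(\Pi_1^*\varphi_n^j, \Pi_2^*\varphi_m^i) = V^{(n,j),(m,i)}$ by Lemma~\ref{lem:V}, which is precisely the asserted formula.

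The main obstacle, I expect, is the careful treatment of the indices with $n = 0$ or $m = 0$ and the interaction between the two projections $\Pi_1$, $\Pi_2$. The frame $\{\varphi_n^i : n \in \Z\}$ of $\pr^*\sfF[z^{-1}]^\vee$ runs over all integers, and $\KS^*$ is an isomorphism $\pr^*\sfF^\vee \cong \bOmegao^1$ only after passing to the quotient $\pr^*\sfF^\vee$ (so the $\varphi_n^i$ with $n < 0$ are not directly co-ordinate differentials); one must check that $\Pi_1^*\varphi_n^j$ for $n \ge 0$ lands in $\pr^*\sfF^\vee$ and that its $\KS^*$-image is $dq_n^j$ on the fiber, using that $\Pi_1$ is the projection along $\sfP_1 = \Phi_1(\C^{N+1}\otimes z^{-1}\cO[z^{-1}])$ so that $\Pi_1^*\varphi_n^j = \varphi_n^j$ for $n \ge 0$ up to a correction supported in negative powers (exactly as computed for $\Pi_2$ in the proof of Lemma~\ref{lem:V}). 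The bookkeeping is routine but the sign conventions in $\Omega^\vee$ and the dilaton-shift-free normalization $M(0,z) = \id$ (as opposed to the Gromov--Witten normalization, cf.\ Remark~\ref{rem:difference-normalization}) need to be tracked with care to land on the stated formula without an extraneous sign.
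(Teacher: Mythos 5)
Your proposal is correct and follows essentially the same route as the paper: restrict equation~\eqref{eq:KSinv_dq+dp} to $s=0$ (where $M=\id$) to get $\KS^{*-1}dq_n^i=\varphi_n^i$ on $\LLo_t$ for all $n\ge 0$, then evaluate $\Delta(dq_n^j,dq_m^i)=\Omega^\vee(\Pi_1^*\varphi_n^j,\Pi_2^*\varphi_m^i)=V^{(n,j),(m,i)}$ via Lemma~\ref{lem:V}. The only wobbles are inessential: the $n=0$ case also follows directly by comparing the $\sfF_t$-components in \eqref{eq:KSinv_dq+dp} (no special treatment via $\Pi_1$ is needed), and the intermediate ``dualizing'' sentence identifying $\partial/\partial q_n^i$ with $\Pi_1^*\varphi_n^i$ under $\KS$ is type-confused but unnecessary, since $\Delta$ is defined directly on one-forms.
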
 
\begin{proof}
Restricting \eqref{eq:KSinv_dq+dp} to the fiber $\LL_t$ 
(i.e.\  $s=0$), we have:
\begin{equation} 
\label{eq:flatcoord-atthecentre}
\KS^{*-1}d q_n^j = \varphi_n^j  \quad 
\text{over $\LL_t$}
\end{equation} 
Hence $\Delta(dq_n^j, dq_m^i)|_{\LL_t} = 
\Omega^\vee(\Pi_1^* \varphi_n^j, \Pi_2^* \varphi_m^i) 
= V^{(n,j),(m,i)}$ by Lemma~\ref{lem:V}.  
\end{proof} 

\begin{remark} 
\label{rem:propagator-explicit}
In terms of the algebraic co-ordinates $(t^i, x_n^i)_{n\ge 1, 0\le i\le N}$ 
on $\LL$ associated to the trivialization $\Phi_1$, 
the propagator $\Delta = \Delta(\sfP_1,\sfP_2)$ 
can be written as 
\begin{align*} 
\Delta(d t^a \otimes d t^b) 
& = [K(x_1)^{-1} e_i]^a [K(x_1)^{-1} e_j]^b 
V^{(0,i),(0,j)}\\
\Delta(d t^a \otimes d x_n^b) 
&= -[K(x_1)^{-1}e_i]^a V^{(0,i),(n,b)} 
+ [K(x_1)^{-1} e_i]^a [K(x_{n+1}) K(x_1)^{-1}e_i]^b 
V^{(0,i),(0,j)} \\
\Delta( d x^a_m \otimes d x^b_n) 
&= V^{(m,a),(n,b)} 
- [K(x_{m+1})K(x_1)^{-1}e_i]^a V^{(0,i),(n,b)} \\
& \qquad 
- [K(x_{n+1})K(x_1)^{-1}e_j]^b V^{(m,a),(0,j)} \\ 
& \qquad + [K(x_{m+1})K(x_1)^{-1}e_i]^a 
[K(x_{n+1})K(x_1)^{-1} e_j]^b 
V^{(0,i),(0,j)} 
\end{align*} 
where $K(x_n)$ is as in Example~\ref{exa:Nabla-explicit}.  
\end{remark}

\subsubsection{Difference One-Form}
\begin{definition}
For two pseudo-opposite modules $\sfP$ and $\sfQ$, 
we define a one-form on $\LLo$ by 
\begin{equation} 
\label{eq:difference1-form} 
\omega_{\sfP\sfQ} = \frac{1}{2} 
\sum_{\mu,\nu,\rho} 
C^{(0)}_{\mu\nu\rho} \Delta^{\nu\rho}(\sfP,\sfQ) 
d\sx^\mu 
= \frac{1}{2} 
\sum_{0\le i,j,h \le N} C^{(0)}_{i j h} 
\Delta^{j h}(\sfP,\sfQ) dt^i  
\end{equation} 
where in the second expression the indices $i,j,h$ are 
labels of the $t$-variables 
of an algebraic local co-ordinate system 
$\{t^i,x_n^i\}_{n\ge 1, 0\le i\le N}$. 
We call $\omega_{\sfP\sfQ}$ the \emph{difference one-form}, because it appears as the difference of genus-one one-point functions 
\eqref{eq:Feynman-genusone}. 
By Proposition~\ref{prop:Deltasum}, we have 
$\omega_{\sfP\sfQ} + \omega_{\sfQ \sfR} 
= \omega_{\sfP\sfR}$ for any three pseudo-opposite 
modules $\sfP,\sfQ,\sfR$. 
\end{definition} 

\begin{lemma} 
\label{lem:difference1-form} 
The difference one-form $\omega_{\sfP\sfQ}$ 
is pulled-back from the base $\cM$;  
we have:
\[
\omega_{\sfP\sfQ} = \sum_{i=0}^N 
\frac{1}{2} 
\Tr_{\sfF_0}\big((\Pi_{\sfP} - \Pi_{\sfQ})\nabla_i\big) dt^i
\]
\end{lemma}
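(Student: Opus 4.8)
The plan is to contract the Yukawa coupling with the propagator using Lemma~\ref{lem:Yukawa-KS}, which reduces both assertions to a fibrewise trace identity in $\End(\sfF_0)$ that is in turn powered by the Lagrangian geometry of $(\sfF[z^{-1}],\Omega)$. By Lemma~\ref{lem:Yukawa-KS} (see \eqref{eq:Yukawa-KS}), in a local trivialization of $\sfF$ we have $\bY=\sum_{i,f,g}\big(e_f,\cC_i(t,0)e_g\big)_{\sfF_0}\,dt^i\otimes\KS^*(\varphi_0^f)\otimes\KS^*(\varphi_0^g)$. Contracting the last two slots with $\Delta=\Delta(\sfP,\sfQ)$ and using $\KS^{*-1}\circ\KS^*=\id$ together with Definition~\ref{def:propagator} gives
\[
\omega_{\sfP\sfQ}=\frac12\sum_{i=0}^{N}\Bigg(\sum_{f,g}\big(e_f,\cC_i(t,0)e_g\big)_{\sfF_0}\;\Omega^\vee\big(\Pi_{\sfP}^{*}\varphi_0^{f},\Pi_{\sfQ}^{*}\varphi_0^{g}\big)\Bigg)\,dt^i .
\]
Every ingredient of the coefficient — the frame $e_f$ of $\sfF_0$, the residue operator $\cC_i(t,0)$, the dual frame $\varphi_0^f$, the projections $\Pi_{\sfP},\Pi_{\sfQ}$, and $\Omega^\vee$ — is a datum on the base, so the coefficient is a section of $\cO_{\cM}$ and $\omega_{\sfP\sfQ}$ is pulled back from $\cM$; this is the first assertion. (Alternatively one can invoke Proposition~\ref{prop:difference_conn}(1) to read the contraction as a trace of $\Nabla^{\sfP}-\Nabla^{\sfQ}$, which is visibly supported in base directions since $C^{(0)}_{\mu\nu\rho}=0$ unless $\mu,\nu,\rho$ are $t$-indices.)

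Next I would interpret the right-hand side. Because $\Pi_{\sfP}-\Pi_{\sfQ}$ vanishes on $\sfF$, the composite $(\Pi_{\sfP}-\Pi_{\sfQ})\circ\nabla_i\colon\sfF\to\sfF$ is $\cO_{\cM}$-linear, and it annihilates $z\sfF$ since $\nabla_i(z\sfF)\subset\sfF$; hence it descends to an endomorphism of $\sfF_0$ whose trace is $\Tr_{\sfF_0}\big((\Pi_{\sfP}-\Pi_{\sfQ})\nabla_i\big)$. Writing it in the frame, only the polar part $-z^{-1}\cC_i(t,0)e_g$ of $\nabla_ie_g$ survives the projection difference, so the descended endomorphism is $-\Psi\circ\cC_i(t,0)$, where $\Psi\in\End(\sfF_0)$ is the well-defined operator sending $v\bmod z\sfF$ to $(\Pi_{\sfP}-\Pi_{\sfQ})(z^{-1}\tilde v)\bmod z\sfF$ for any lift $\tilde v\in\sfF$ — well-defined precisely because $\Pi_{\sfP}-\Pi_{\sfQ}$ kills $\sfF$. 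Thus $\Tr_{\sfF_0}\big((\Pi_{\sfP}-\Pi_{\sfQ})\nabla_i\big)=-\Tr_{\sfF_0}\big(\cC_i(t,0)\Psi\big)$, and it remains to match this with the coefficient above.

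The key fibrewise identity is $\Omega^\vee\big(\Pi_{\sfP}^{*}\varphi_0^{f},\Pi_{\sfQ}^{*}\varphi_0^{g}\big)=-\sum_k g^{gk}\,\Psi^{f}_{\,k}$, where $(g^{gk})$ is the inverse Gram matrix of $(\cdot,\cdot)_{\sfF_0}$ at $t$ and $\Psi e_k=\sum_f\Psi^{f}_{\,k}e_f$ in $\sfF_0$. I would prove it using that $\sfF_t$, $\sfP_t$, $\sfQ_t$ are Lagrangian in $(\sfF_t[z^{-1}],\Omega)$: $\Pi_{\sfP}-\Pi_{\sfQ}$ takes values in $\sfF_t$ and its transpose takes values in the annihilator $\sfF_t^{\perp}$, which the symplectic isomorphism $v\mapsto\iota_v\Omega$ carries back onto $\sfF_t$; a one-line residue computation shows $\iota_{\tilde{e}^{g}}\Omega$ agrees with $\varphi_0^{g}$ on $\sfF_t$ for $\tilde{e}^{g}:=-\sum_k g^{gk}e_k z^{-1}$, whence $\Pi_{\sfQ}^{*}\varphi_0^{g}$ corresponds under this isomorphism to $(\id-\Pi_{\sfQ})\tilde{e}^{g}$ and $\Omega^\vee(\Pi_{\sfP}^{*}\varphi_0^{f},\Pi_{\sfQ}^{*}\varphi_0^{g})=\langle\varphi_0^{f},(\Pi_{\sfP}-\Pi_{\sfQ})\tilde{e}^{g}\rangle$, which reads off as $-\sum_k g^{gk}\Psi^{f}_{\,k}$. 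Substituting into the coefficient and using the self-adjointness $(\cC_i(t,0)s_1,s_2)_{\sfF_0}=(s_1,\cC_i(t,0)s_2)_{\sfF_0}$ from \S\ref{subsec:Yukawa} to transfer $\cC_i(t,0)$ onto the $e_f$-slot, the Gram matrices cancel and the coefficient becomes $-\Tr_{\sfF_0}(\cC_i(t,0)\Psi)=\Tr_{\sfF_0}\big((\Pi_{\sfP}-\Pi_{\sfQ})\nabla_i\big)$, which is the stated formula.

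The delicate step is this last identity: one must keep careful track of which vectors lie in $\sfF_t$, $\sfP_t$, $\sfQ_t$, or their various annihilators, and remember that $(\cdot,\cdot)_{\sfF}$ is in general $z$-dependent, so that only its residue $(\cdot,\cdot)_{\sfF_0}$ contributes to $\Omega$. When $\sfP,\sfQ$ come from flat trivializations with constant Gram matrices this can be short-circuited: Lemma~\ref{lem:V} and Proposition~\ref{prop:Giventalprop=prop} identify $\Omega^\vee(\Pi_{\sfP}^{*}\varphi_0^{f},\Pi_{\sfQ}^{*}\varphi_0^{g})$ with Givental's propagator coefficient $V^{(0,f),(0,g)}=-\big[R(z)^{-1}[R(z)(-z)^{-1}e^{f}]_{+}\big]^{g}_{0}$, from which one computes directly; but the argument above handles the general case of arbitrary pseudo-opposite modules.
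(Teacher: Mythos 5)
Your argument is correct, and it takes a different (more hands-on) route than the paper. The paper's own proof is shorter given Proposition~\ref{prop:difference_conn}(1): it first notes that $(\Pi_{\sfP}-\Pi_{\sfQ})\nabla_i$ descends to an $\cO_\cM$-linear endomorphism of $\sfF_0$, then uses the identity $\Nabla^{\sfQ}_i-\Nabla^{\sfP}_i=\KS^{-1}(\Pi_{\sfQ}-\Pi_{\sfP})\tnabla_i\KS$ together with Proposition~\ref{prop:difference_conn}(1) to see that this difference of connections induces on the quotient $\bTheta/\KS^{-1}(\pr^*(z\sfF))\cong\sfF_0$, with basis $\{\partial/\partial t^j\}$, exactly the operator $(\Pi_{\sfQ}-\Pi_{\sfP})\tnabla_i$, and reads off the trace formula from the coefficient $-C^{(0)}_{ijh}\Delta^{h\mu}$ — this is precisely your parenthetical alternative. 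What you do instead is bypass the connection difference entirely: you contract the linear-algebraic form of the Yukawa coupling from Lemma~\ref{lem:Yukawa-KS} with the propagator in its defining form, which makes the first assertion (pullback from the base) immediate since $\Omega^\vee(\Pi_{\sfP}^*\varphi_0^f,\Pi_{\sfQ}^*\varphi_0^g)$ is manifestly a function on $\cM$, and you then identify this propagator coefficient with the matrix entries of your operator $\Psi$ by an explicit symplectic/residue computation — essentially a generalization of Lemma~\ref{lem:V} (Givental's $V^{(0,f),(0,g)}$) from opposite modules with constant Gram matrix to arbitrary pseudo-opposite modules. I checked the delicate steps: the residue computation showing $\iota_{\tilde e^{g}}\Omega|_{\sfF_t}=\varphi_0^g$, the identification of $\Pi_{\sfQ}^*\varphi_0^g$ with $(\id-\Pi_{\sfQ})\tilde e^{g}$ (using isotropy of $\sfF$ and $\sfQ$ and $\Pi_{\sfP}\Pi_{\sfQ}=\Pi_{\sfQ}$), the well-definedness of $\Psi$, the reduction $\nabla_i e_g\equiv -z^{-1}\cC_i(t,0)e_g$ modulo $\sfF$, and the use of self-adjointness of $\cC_i(t,0)$ to cancel the Gram matrices — all signs agree with the paper's conventions (e.g.\ your $(\id-\Pi_{\sfQ})\tilde e^{g}$ matches the vector $v_0^i$ appearing in the proof of Lemma~\ref{lem:V} in the flat case), and the end result matches the stated formula. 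The trade-off is that your route is longer and more computational, but it is self-contained, covers general pseudo-opposite modules without invoking Proposition~\ref{prop:difference_conn}, and makes the base-dependence of $\omega_{\sfP\sfQ}$ visible at the very first step rather than only a posteriori.
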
 
\begin{proof} 
One can easily check that the operator 
$(\Pi_{\sfP} - \Pi_{\sfQ}) \nabla_i$ 
defines an $\cO_{\cM}$-linear endomorphism 
of $\sfF_0=\sfF/z\sfF$. 
By Proposition~\ref{prop:difference_conn}(1), we have:
\[
(\Nabla^{\sfQ}_i - \Nabla^{\sfP}_i) 
(\partial/\partial t^j)
= -C^{(0)}_{ijh} \Delta^{h \mu}(\sfP,\sfQ) 
(\partial/\partial \sx^\mu)
\]
Here $\sx^\mu$ can be either $t^i$ or $x_n^i$. 
The minus sign here is because we are working with connections
on the tangent bundle $\bTheta$. 
On the other hand, by the definition of $\Nabla^{\sfP}$ and 
$\Nabla^{\sfQ}$, we have 
$\Nabla^{\sfQ}_i - \Nabla^{\sfP}_i = 
\KS^{-1}(\Pi_{\sfQ} - \Pi_{\sfP}) \tnabla_i \KS$. 
Hence $\Nabla^{\sfQ}_i - \Nabla^{\sfP}_i$ 
induces a map  $\bTheta/\KS^{-1}( \pr^*(z\sfF)) 
\to \bTheta/\KS^{-1} (\pr^*(z\sfF))$  
which is conjugate to 
$(\Pi_{\sfQ} - \Pi_{\sfP})\tnabla_i \in \End(\sfF_0)$. 
Because $\{\partial/\partial t^i\}$ is a basis of 
$\bTheta/\KS^{-1}(\pr^*(z\sfF)) \cong \sfF_0$, 
the conclusion follows. 
\end{proof} 

\subsection{Grading and Filtration}
Recall that we introduced a grading and an increasing 
filtration on $\bcO$ and $\bOmega^1$ in \S\ref{subsec:totalspace}. 
The grading and the filtration 
on $\pr^*\sfF[z^{-1}]^\vee$ are defined as follows. 
For a pull-back $\pr^*\varphi 
\in \pr^{-1}\sfF[z^{-1}]^\vee$ of 
$\varphi \in \sfF[z^{-1}]^\vee$, 
we set $\deg (\pr^* \varphi) = 0$.
The grading on $\pr^*\sfF[z^{-1}]^\vee$ 
is determined by this and the grading on $\bcO$. 
To define the filtration, recall that $\sfF[z^{-1}]^\vee$ is 
the projective limit of the sequence: 
\[
\cdots \surj (z^{-2} \sfF)^\vee \surj (z^{-1}\sfF)^\vee 
\surj \sfF^\vee \surj (z \sfF)^\vee  \surj  \cdots
\]
Let $\sfF[z^{-1}]^\vee_n\subset 
\sfF[z^{-1}]^\vee$ be the kernel of 
$\sfF[z^{-1}]^\vee \surj (z^{n+2}\sfF)^\vee$. 
This defines an increasing filtration of 
$\sfF[z^{-1}]^\vee$ by subsheaves. 
The filtration on $\pr^*\sfF[z^{-1}]^\vee$ is 
induced from this and the filtration on $\bcO$: 
\[
(\pr^*\sfF[z^{-1}]^\vee)_n = \sum_{i+j \le n} 
\bcO_i \cdot \pr^{-1} (\sfF[z^{-1}]^\vee_j) 
\] 
Note that the filtration on $\bcO$ is bounded from below: 
$\{0\} = \bcO_{-1} \subset \bcO_0 \subset \bcO_1 \subset \cdots$ 
whereas the filtration on $\sfF[z^{-1}]^\vee$ is 
unbounded in both directions. 
The grading and the filtration on $\pr^*\sfF^\vee$ 
are induced from those on $\pr^*\sfF[z^{-1}]^\vee$ 
by the surjection $\pr^*\sfF[z^{-1}]^\vee 
\surj \pr^*\sfF^\vee$. 

Take a local trivialization 
$\sfF|_U \cong \C^{N+1} \otimes \cO_U[\![z]\!]$. 
This defines a local frame  
$\{e_i z^n\}_{n\in \Z, 0\le i\le N}$ 
for $\pr^* \sfF[z^{-1}]$ 
and the dual local frame 
$\{\varphi_n^i\}_{n\in \Z, 0\le i\le N}$ 
for $\pr^* \sfF[z^{-1}]^\vee$ 
(see equation~\ref{eq:frame_varphi}). 
The image of $\varphi^i_n$ under $\pr^*\sfF[z^{-1}]^\vee
\surj \pr^*\sfF^\vee$ is denoted by the same symbol. 
Note that we have 
\[
\deg \varphi_n^i = 0, \quad 
\filt\varphi_n^i = n -1. 
\]
Here as before $\filt(y)$ denotes the least number $m$ 
such that $y$ belongs to the $m$th filter. 

\begin{lemma} \ 
\label{lem:grading-filtration}
\begin{enumerate}
\item  The dual Kodaira--Spencer map $\KS^*\colon \pr^*\sfF^\vee \to \bOmega^1$ 
(respectively its inverse $\KS^{*-1}$) raises (respectively lowers) 
the degree by one and preserves the filtration. 
\item The connection $\tnabla^\vee \colon 
\pr^*\sfF[z^{-1}]^\vee \to \pr^* \sfF[z^{-1}]^\vee 
\otimes \bOmega^1$ preserves both the grading and 
the filtration. 
\item Let $\Pi \colon \pr^*\sfF[z^{-1}] \to \pr^* \sfF$ 
denote the projection along a pseudo-opposite module $\sfP$. 
The dual map $\Pi^* \colon \pr^*\sfF^\vee \to \pr^*\sfF[z^{-1}]^\vee$ 
preserves the grading and the filtration. 
\item The pairing $\Omega^\vee \colon \pr^*\sfF[z^{-1}]^\vee
\otimes \pr^*\sfF[z^{-1}]^\vee \to \bcO$ preserves the grading 
and raises the filtration by three. 
\end{enumerate}
\end{lemma}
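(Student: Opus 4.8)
The plan is to verify all four assertions by a direct computation in a local trivialization $\sfF|_U\cong\C^{N+1}\otimes\cO_U[\![z]\!]$, reading everything off the explicit frame formulas already established: \eqref{eq:KS_coord} for $\KS^*$, \eqref{eq:tnablavee} for $\tnabla^\vee$, together with the elementary expansions of $\Pi^*$ and of the duality $s\mapsto\iota_s\Omega$ on the frame $\{\varphi_n^i\}$. I keep in mind the weights recorded in \S\ref{subsec:totalspace} and just above the Lemma: $\deg\varphi_n^i=0$, $\filt\varphi_n^i=n-1$; $\deg dt^i=0$, $\deg dx_n^i=1$; $\filt dt^i=-1$, $\filt dx_n^i=n-1$; on $\bcO$, $\deg x_n^i=1$, $\filt x_n^i=n-1$; and $P(t,x_1)$ has degree $N+1$ and filtration $0$, so on $\LLo$ the function $P^{-1}$ has degree $-(N+1)$ and filtration $0$. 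Once the claims are checked on frame elements, extension by $\bcO$-(bi)linearity is automatic, since $\deg(df)=\deg f$ and $\filt(df)\le\filt f$ for $f\in\bcO$.

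Assertions (2) and (3) are visible from the formulas. Writing $\cC(t,z)=\sum_{p\ge0}\cC^{(p)}(t)z^p$, \eqref{eq:tnablavee} gives $\tnabla^\vee\varphi_n^i=\sum_{l\le n+1}\sum_{j,k}[\cC^{(n+1-l)}_k(t)e_j]^i\,dt^k\otimes\varphi_l^j$, whose coefficients are functions on the base (degree $0$, filtration $0$); each summand has degree $0=\deg\varphi_n^i$ and filtration $(-1)+(l-1)=l-2\le n-1=\filt\varphi_n^i$, which is (2). For (3), $\Pi=\Pi_\sfP$ is $\cO_\cM$-linear and restricts to the identity on the non-negative-power part of $\sfF[z^{-1}]$, so $\Pi^*\varphi_n^i=\varphi_n^i+\sum_{c<0}\sum_k\pi^{(n,i)}_{c,k}\,\varphi^k_c$ with $\pi^{(n,i)}_{c,k}\in\cO_\cM(U)$ depending only on $\sfP$; every correction term has degree $0$ and filtration $c-1<-1\le n-1$, so $\Pi^*$ preserves the degree and the filtration, and the same follows for the induced map $\pr^*\sfF^\vee\to\pr^*\sfF[z^{-1}]^\vee$ because the filtration on $\pr^*\sfF^\vee$ is by definition the induced one.

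For (1), \eqref{eq:KS_coord} reads $\KS^*\varphi_0^i=-\sum_k[\cC^{(0)}_k(t)x_1]^i\,dt^k$ and, for $n\ge1$, $\KS^*\varphi_n^i=dx_n^i-\sum_{m=1}^{n+1}\sum_k[\cC^{(n+1-m)}_k(t)x_m]^i\,dt^k$. The leading term ($dx_n^i$, or the $dt$-combination when $n=0$) has degree exactly $1$ and filtration exactly $n-1$, while each correction term $[\cC^{(n+1-m)}_kx_m]^i\,dt^k$ has degree $1$ and filtration $(m-1)+(-1)=m-2\le n-1$; hence $\KS^*$ raises the degree by one and is strict for the filtration on the frame, so it raises the degree by one and preserves the filtration on all of $\bOmega^1$. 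For the inverse, I invert this triangular system over $\LLo$: the matrix $\big(-[\cC^{(0)}_k(t)x_1]^i\big)_{i,k}$ has determinant $\pm P(t,x_1)$, hence an inverse with entries of degree $-1$ and filtration $0$ (cofactors of degree $N$ and filtration $0$, divided by $P$); solving first for $dt^k$ in terms of $\{\KS^*\varphi_0^i\}$ and then for $dx_n^i$ in terms of $\KS^*\varphi_n^i$ and $\{\KS^*\varphi_0^i\}$ expresses $\KS^{*-1}$ of each basic one-form as an $\bcO$-combination of the $\varphi$'s with degree and filtration controlled exactly as needed, so $\KS^{*-1}$ lowers the degree by one and preserves the filtration. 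Tracking the filtration through this inversion — in particular checking that the factors of $P^{-1}$ do not raise it — is the one place that needs a little care, and I expect it to be the main obstacle.

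Finally, for (4) I record two facts about the frame. First, $s\mapsto\iota_s\Omega$ sends $\varphi_n^i$ to an element $s_n^i\in z^{-n-1}\sfF$: realizing $\varphi_n^i$ as a functional vanishing on $z^{n+1}\sfF$ and using that $(z^{n+1}\sfF)^{\perp_\Omega}=z^{-n-1}\sfF$ — which follows from the fact that $\Omega(e_lz^d,e_kz^c)\ne0$ forces $d+c\le-1$, together with the non-degeneracy of the induced pairing on $\sfF_0$ — shows that $s_n^i$ has lowest $z$-power at least $-n-1$. Second, since $((-)^*\,z^a\sfF,\,z^b\sfF)_\sfF\subset z^{a+b}\cO_\cM[\![z]\!]$, the residue $\Omega(z^a\sfF,z^b\sfF)$ vanishes unless $a+b\le-1$; applied to $s_n^i$ and $s_m^j$, this shows $\Omega^\vee(\varphi_n^i,\varphi_m^j)=\Omega(s_n^i,s_m^j)$ lies in $\cO_\cM(U)$ (degree $0$, filtration $0$) and vanishes whenever $n+m\le-2$. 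Thus $\Omega^\vee$ is degree-preserving on the frame, and it sends $\varphi_n^i\otimes\varphi_m^j$ (filtration $n+m-2$) to something of filtration $0\le(n+m-2)+3$ — the bound being vacuous precisely when the pairing vanishes, and sharp when $n+m=-1$; extending $\bcO$-bilinearly yields that $\Omega^\vee$ preserves the grading and raises the filtration by at most three.
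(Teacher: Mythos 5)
Your proposal is correct and takes essentially the same route as the paper: a frame-by-frame check using \eqref{eq:KS_coord}, \eqref{eq:tnablavee}, the explicit form of $\Pi^*$ on the frame, and the vanishing of $\Omega^\vee(\varphi_n^i,\varphi_m^j)$ unless $n+m+1\ge 0$, with the remark that $d\colon\bcO\to\bOmega^1$ preserves degree and filtration taking care of the extension from frame elements to general sections. You simply fill in details the paper leaves implicit (the explicit inversion of $\KS^*$ over $\LLo$, where $P^{-1}$ has filtration zero, and the justification of the vanishing condition via $s_n^i\in z^{-n-1}\sfF$), and these details check out.
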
 
\begin{proof} 
Part (1) follows easily from \eqref{eq:KS_coord}.
Notice that $d \colon \bcO \to \bOmega^1$ 
preserves the degree and the filtration. 
Part (2) follows from this and \eqref{eq:tnablavee}. 
Part (3) is obvious from the definition. 
For Part (4), notice that $\Omega^\vee(\varphi_m^i, \varphi_n^j)$ 
is in $\cO_U$ and of degree zero. 
Hence $\Omega^\vee$ preserves the grading. 
Also, for $f,g\in \bcO$:
\[
\filt(\Omega^\vee(f \varphi_m^i, g \varphi_n^j)) 
\le \filt(f) + \filt(g) + (m+n+1) 
= \filt(f \varphi_m^i) + \filt(g \varphi_n^j) 
+ 3
\]
The first inequality follows from the fact 
that $\Omega^\vee(\varphi_m^i,\varphi_n^j)$ vanishes  
unless $m+n+1\ge 0$.  
\end{proof} 
\begin{proposition}
\label{prop:grading-filtration-Nabla} 
The connection $\Nabla \colon \bOmegao^1 \to 
\bOmegao^1 \otimes \bOmegao^1$ associated to a 
pseudo-opposite module $\sfP$ preserves the grading 
and the filtration. 
\end{proposition}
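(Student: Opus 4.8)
The claim is that the connection $\Nabla = \Nabla^\sfP$ on $\bOmegao^1$ preserves the grading and the filtration introduced in \S\ref{subsec:totalspace}. The plan is to factor $\Nabla$ through the building blocks appearing in its very definition (Definition~\ref{def:Nabla}), namely
\[
\bOmegao^1 \xrightarrow{\ \KS^{*-1}\ } \pr^*\sfF^\vee \xrightarrow{\ \Pi^*\ } \pr^*(z^{-1}\sfF)^\vee \xrightarrow{\ \tnabla^\vee\ } \bOmega^1 \otimes \pr^*\sfF^\vee \xrightarrow{\ \id \otimes \KS^*\ } \bOmegao^1 \otimes \bOmegao^1,
\]
and then invoke Lemma~\ref{lem:grading-filtration} for each arrow. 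Concretely, $\KS^{*-1}$ lowers the degree by one and preserves the filtration (Lemma~\ref{lem:grading-filtration}(1)); $\Pi^*$ preserves both grading and filtration (Lemma~\ref{lem:grading-filtration}(3)); $\tnabla^\vee$ preserves both (Lemma~\ref{lem:grading-filtration}(2)); and finally $\id \otimes \KS^*$ raises the degree by one and preserves the filtration (Lemma~\ref{lem:grading-filtration}(1), applied in the second tensor slot, together with the fact that $d\colon \bcO \to \bOmega^1$ preserves degree and filtration for the first slot). Composing, the net effect on degrees is $-1 + 0 + 0 + 1 = 0$ and on the filtration is neutral in each step, so $\Nabla$ preserves the grading and the filtration.

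First I would make precise which projection $\Pi^*$ is used in Definition~\ref{def:Nabla}: the cotangent version of $\Nabla$ is induced by $\pr^*\sfF^\vee \xrightarrow{\Pi^*} \pr^*(z^{-1}\sfF)^\vee \xrightarrow{\tnabla^\vee} \bOmega^1 \otimes \pr^*\sfF^\vee$, where $\Pi^*$ is the dual of the projection $\Pi\colon \sfF[z^{-1}] = \sfF \oplus \sfP \to \sfF$. One should check that $\Pi^*$, as a map landing in $\pr^*(z^{-1}\sfF)^\vee$ rather than in all of $\pr^*\sfF[z^{-1}]^\vee$, still preserves grading and filtration; but this is immediate since it is the restriction of the grading/filtration-preserving map of Lemma~\ref{lem:grading-filtration}(3) followed by the filtration-preserving projection $\pr^*\sfF[z^{-1}]^\vee \surj \pr^*(z^{-1}\sfF)^\vee$. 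Then I would note that $\tnabla^\vee$ restricted to $\pr^*(z^{-1}\sfF)^\vee$ lands in $\bOmega^1 \otimes \pr^*\sfF^\vee$ (the diagram \eqref{eq:tnablavee-CD}) and still preserves grading and filtration — this follows from Lemma~\ref{lem:grading-filtration}(2) together with the explicit formula \eqref{eq:tnablavee}, since passing to a quotient such as $\pr^*\sfF^\vee$ can only lower the filtration.

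The one place that requires genuine care — and the main potential obstacle — is the final arrow $\id \otimes \KS^*$ and the interaction with the exterior-derivative factor $d$ that is implicit in writing $\Nabla$ as a connection (the Leibniz term). One must verify that when one expands $\Nabla\omega$ for $\omega = f\varphi$ with $f \in \bcO$ a coefficient function and $\varphi$ a pull-back frame element, the term $df \otimes \omega$ contributes correctly: here $d\colon \bcO^n_l \to (\bOmega^1)^n_{l+1}$ — but recall from \eqref{eq:grading_filtration_coord} that $dt^i$ has filtration $-1$, so $df$ actually lies in $(\bOmega^1)^n_l$, hence $d$ preserves the filtration on $\bcO$ (this is exactly the observation used in the proof of Lemma~\ref{lem:grading-filtration}(2)). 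With that in hand, the Leibniz term is harmless. The remaining bookkeeping is to track the filtration through the tensor-product conventions $((\bOmega^1)^{\otimes 2})^n_l = \sum_{l_1+l_2\le l} \sum_{n_1+n_2 = n} (\bOmega^1)^{n_1}_{l_1}\otimes(\bOmega^1)^{n_2}_{l_2}$: one checks that if $\omega \in (\bOmegao^1)^n_l$ then each intermediate object stays in the appropriate filtered piece, and the output $\Nabla\omega$ lies in $((\bOmegao^1)^{\otimes 2})^n_l$. This is a routine (if slightly tedious) chase of indices through \eqref{eq:KS_coord}, \eqref{eq:tnablavee}, and the filtration definitions, and I would either present it compactly using the factorization above or, alternatively, verify it directly on the explicit formula of Example~\ref{exa:Nabla-explicit} in the opposite-module case and then remark that the general pseudo-opposite case differs only by the $z$-dependence of $\cC(t,z)$, which does not affect degree or filtration counts.
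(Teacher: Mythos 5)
Your proposal is correct and follows essentially the same route as the paper: the paper's proof consists precisely of writing $\Nabla \omega = \KS^*\bigl((\tnabla^\vee \Pi_\sfP^* \KS^{*-1}\omega)|_{\pr^*\sfF}\bigr)$ and citing Lemma~\ref{lem:grading-filtration}, exactly the factorization and degree/filtration count ($-1+0+0+1=0$) you give. Your extra remarks about the Leibniz term and the quotient $\pr^*\sfF[z^{-1}]^\vee \surj \pr^*\sfF^\vee$ are sound but already absorbed into parts (1)--(3) of that lemma, so no further argument is needed.
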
 
\begin{proof}
This follows from $\Nabla \omega 
= \KS^* 
((\tnabla^\vee \Pi_\sfP^* \KS^{*-1}\omega)|_{\pr^*\sfF})$ 
and Lemma~\ref{lem:grading-filtration}. 
\end{proof} 
\begin{proposition} 
\label{prop:grading-filtration-prop}
Let $\sfP_1,\sfP_2$ be pseudo-opposite modules.  
The propagator 
\[
\Delta(\sfP_1,\sfP_2) \colon \bOmegao^1 \otimes \bOmegao^1 \to \bcO
\]
 lowers the 
degree by two and raises the filtration by two, that is,
$\deg \Delta = -2$, $\filt\Delta \le 2$. 
\end{proposition}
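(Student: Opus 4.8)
The plan is to read both claims off the factorisation
\[
\Delta(\omega_1,\omega_2)=\Omega^\vee\!\bigl(\Pi_1^{*}\KS^{*-1}\omega_1,\ \Pi_2^{*}\KS^{*-1}\omega_2\bigr)
\]
of Definition~\ref{def:propagator}, combined with the grading/filtration calculus of Lemma~\ref{lem:grading-filtration}. The degree statement is immediate: by Lemma~\ref{lem:grading-filtration}(1) the map $\KS^{*-1}$ lowers degree by one, while $\Pi_i^{*}$ (Lemma~\ref{lem:grading-filtration}(3)) and $\Omega^\vee$ (Lemma~\ref{lem:grading-filtration}(4)) preserve the grading; hence $\deg\Delta(\omega_1,\omega_2)=(\deg\omega_1-1)+(\deg\omega_2-1)$, i.e.\ $\deg\Delta=-2$.

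For the filtration the same three maps give only $\filt\Delta\le 3$, since $\Omega^\vee$ raises filtration by three; the work of the proof is to recover the last unit by exploiting isotropy. Fix a local trivialisation $\sfF|_U\cong\C^{N+1}\otimes\cO_U[\![z]\!]$ and the associated frame $\{\varphi_n^{i}\}$ of $\pr^{*}\sfF[z^{-1}]^\vee$, so that the annihilator $\sfF^\perp:=(\pr^{*}\sfF)^\perp$ equals $\bigoplus_{m\le-1}\bcO\varphi_m^{i}$, the frame of $\pr^{*}\sfF^\vee$ is $\{\varphi_n^{i}:n\ge 0\}$, and $\filt\varphi_n^{i}=n-1$. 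Put $\varphi_i:=\KS^{*-1}\omega_i\in\pr^{*}\sfF^\vee$, so $\filt\varphi_i\le\filt\omega_i=:l_i$; let $\iota(\varphi_i)$ be the lift of $\varphi_i$ to $\pr^{*}\sfF[z^{-1}]^\vee$ determined by the frame (no $\varphi_m^{i}$ with $m\le-1$ components) and set $\rho_i:=\Pi_i^{*}\varphi_i-\iota(\varphi_i)\in\sfF^\perp$. The crux is the sharpened bound
\[
\filt(\rho_i)\le l_i-1 .
\]
This holds because, for $m\ge 0$, the projection $\Pi_i$ along the pseudo-opposite module $\sfP_i$ is the identity on $\pr^{*}\sfF$ and only ``tilts'' the basis vectors $e_j z^{k}$ ($k\le-1$) within $\sfF[z^{-1}]$; hence $\Pi_i^{*}\varphi_m^{j}-\varphi_m^{j}$ lies in $\sfF^\perp$ with coefficients pulled back from $\cM$ (filtration $\le 0$), and pairing those against the frame elements $\varphi_n^{i}$ with $n\le-1$ (filtration $\le -2$) gives $\filt(\Pi_i^{*}\varphi_m^{j}-\varphi_m^{j})\le -2\le\filt\varphi_m^{j}-1$. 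Summing over $m$ against the coefficients of $\varphi_i$, whose filtrations are $\le l_i-m+1$, yields the estimate.

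To conclude, expand $\Delta(\omega_1,\omega_2)=\Omega^\vee\!\bigl(\iota(\varphi_1)+\rho_1,\ \iota(\varphi_2)+\rho_2\bigr)$ into four terms. The term $\Omega^\vee(\rho_1,\rho_2)$ vanishes because $\sfF^\perp$ is isotropic for $\Omega^\vee$ ($\pr^{*}\sfF$ being Lagrangian in $\pr^{*}\sfF[z^{-1}]$). For the remaining three, recall from the proof of Lemma~\ref{lem:grading-filtration}(4) that $\Omega^\vee(\varphi_m^{i},\varphi_n^{j})\in\cO_U$ and vanishes unless $m+n+1\ge 0$. In $\Omega^\vee(\iota\varphi_1,\iota\varphi_2)$ the indices satisfy $m,n\ge 0$, so $m+n\ge 0$ and each term has filtration $\le (l_1-m+1)+(l_2-n+1)\le l_1+l_2+2$; in $\Omega^\vee(\iota\varphi_1,\rho_2)$ and $\Omega^\vee(\rho_1,\iota\varphi_2)$ the nonzero terms have $m+n\ge-1$, and the improved bound $\filt(\rho_i)\le l_i-1$ exactly absorbs the extra $1$, again giving filtration $\le l_1+l_2+2$. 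Hence $\filt\Delta(\omega_1,\omega_2)\le\filt\omega_1+\filt\omega_2+2$, i.e.\ $\filt\Delta\le 2$.

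The main obstacle is precisely the sharpened estimate $\filt(\rho_i)\le l_i-1$: the formal calculus of Lemma~\ref{lem:grading-filtration} yields only $\filt(\rho_i)\le l_i$, and one genuinely needs the concrete action of $\Pi_i^{*}$ on the frame $\{\varphi_n^{i}\}$ (for opposite modules this is the explicit description in terms of the gauge transformation $R(z)$ of \S\ref{subsubsec:Giventalpropagator}, cf.\ the proof of Lemma~\ref{lem:V}). As an independent check, for opposite modules both $\deg\Delta=-2$ and $\filt\Delta\le 2$ can be verified term by term from the explicit expression for $\Delta$ in Remark~\ref{rem:propagator-explicit}, the only filtration-critical contribution being the one containing $K(x_{m+1})K(x_1)^{-1}$.
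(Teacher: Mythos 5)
Your proof is correct, and it reaches the sharpened bound by a somewhat different bookkeeping than the paper. The paper's proof is shorter: it uses isotropy of $\Image \Pi_1^* = (\pr^*\sfP_1)^\perp$ to rewrite
\[
\Delta(\omega_1,\omega_2) = \Omega^\vee\bigl(\Pi_1^*\KS^{*-1}\omega_1,\ (\Pi_2^*-\Pi_1^*)\KS^{*-1}\omega_2\bigr),
\]
then observes that $(\Pi_2^*-\Pi_1^*)\varphi_l^j$ is a combination of $\varphi_m^a$ with $m\le -1$ and base-pulled-back coefficients, so the pairing against $f\varphi_n^i$ is nonzero only for $n\ge 0$, and the single estimate $\filt \le \filt(f)+\filt(g)+n+l$ gives the improvement to $+2$ in one stroke. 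You instead split each $\Pi_i^*\varphi_i$ as (canonical lift)$+\rho_i$, kill the $\rho_1\rho_2$ term by isotropy of $(\pr^*\sfF)^\perp$, and prove the sharpened bound $\filt(\rho_i)\le \filt(\omega_i)-1$; the underlying mechanism is the same in both arguments (the difference from a reference lift lies in $(\pr^*\sfF)^\perp$ with coefficients pulled back from $\cM$, combined with the vanishing of $\Omega^\vee(\varphi_m^i,\varphi_n^j)$ unless $m+n+1\ge 0$), but your four-term expansion costs a little more case analysis. Two small remarks: your appeal to the gauge transformation $R(z)$ of \S\ref{subsubsec:Giventalpropagator} is not needed and would anyway only apply to genuine opposite modules — the fact you actually use, namely that $\Pi_i$ restricts to the identity on $\pr^*\sfF$ and is defined over $\cM$, already holds for arbitrary pseudo-opposite modules, exactly as in the paper; and your treatment of the cross terms can be stated more cleanly by simply applying Lemma~\ref{lem:grading-filtration}(4) (which raises the filtration by three) to the pair consisting of the canonical lift (filtration $\le \filt(\omega_1)$) and $\rho_2$ (filtration $\le \filt(\omega_2)-1$), which yields $\filt(\omega_1)+\filt(\omega_2)+2$ without invoking the support condition $m+n\ge -1$ a second time.
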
 
\begin{proof}
Recall that the propagator $\Delta=\Delta(\sfP_1,\sfP_2)$ is defined 
by:
\[
\Delta(\omega_1,\omega_2) = 
\Omega^\vee( \Pi_{1}^* \KS^{*-1}\omega_1, 
\Pi_{2}^* \KS^{*-1} \omega_2)  
\]
By Lemma~\ref{lem:grading-filtration}, it follows that $\Delta$ lowers the 
degree by two and raises the filtration by three. 
One can improve the estimate on the filtration. 
Note that:
\[
\Delta(\omega_1,\omega_2) = 
\Omega^\vee( \Pi_{1}^* \KS^{*-1}\omega_1, 
(\Pi_{2}^*-\Pi_{1}^*) \KS^{*-1} \omega_2)
\]
We claim that $\Omega^\vee(\cdot, (\Pi_2^* -\Pi_1^*)\cdot) 
\colon \pr^* \sfF[z^{-1}]^\vee \otimes 
\pr^* \sfF^\vee \to \bcO$ 
raises the filtration only by two.  
Let $l\ge 0$. Because $(\Pi_2^*-\Pi_1^*)\varphi_l^j$ 
vanishes on $\pr^*\sfF$, one can write 
$(\Pi_2^* - \Pi_1^*)\varphi_l^j 
= \sum_{m\le -1} c_{m,a}(t) \varphi_m^a$. 
Thus $\Omega^\vee(\varphi_n^i, 
(\Pi_2^*-\Pi_1^*) \varphi_l^j) \in \cO_U$ 
can be non-zero only if $n\ge 0$,
so in particular only if $n+l \ge 0$. 
Therefore, for $f,g \in \bcO$:
\[
\filt\left( 
\Omega^\vee(f \varphi_n^i, 
(\Pi_2^*-\Pi_1^*) (g \varphi_l^j) ) \right) 
\le \filt(f) + \filt(g) + n+ l 
= \filt(f \varphi_n^i) + \filt(g \varphi_l^j)+2
\] 
The conclusion follows. 
\end{proof} 

\subsection{Local Fock Space} 
\label{subsec:Fockspace} 
We work with a local co-ordinate system $\{t^i, x_n^i\}$ 
on $\LL$ associated to a local trivialization of $\sfF$ 
(\S\ref{subsec:totalspace}). 
This local co-ordinate system is denoted also by $\{\sx^\mu\}$. 
We use the notation and summation convention 
which appeared in and above Proposition~\ref{prop:difference_conn}. 
For any $n$-tensor $C_{\mu_1,\dots,\mu_n} d\sx^{\mu_1} 
\otimes \cdots \otimes d\sx^{\mu_n} \in 
(\bOmegao^1)^{\otimes n}$, we write 
\[
\Nabla( C_{\mu_1,\dots,\mu_n} d\sx^{\mu_1}\otimes \cdots \otimes 
d\sx^{\mu_n}) 
= (\Nabla_\nu C_{\mu_1,\dots,\mu_n}) d\sx^{\nu} 
\otimes d \sx^{\mu_1} \otimes 
\cdots \otimes d \sx^{\mu_n} 
\] 
with 
\begin{equation} 
\label{eq:covariant-derivative}
\Nabla_\nu C_{\mu_1,\dots,\mu_n} := 
\partial_\nu C_{\mu_1,\dots,\mu_n}  
- \sum_{i=1}^n C_{\mu_1,\dots,\underset{i}{\rho},\dots,\mu_n} 
\Gamma^{\rho}_{\mu_i \nu} 
\end{equation} 
where $\Gamma^\rho_{\mu_i \nu}$ is the Christoffel symbol 
\eqref{eq:Christoffel} of $\Nabla$. 
Similarly, for a ``contravariant" tensor 
$C^{\mu_1,\dots,\mu_n} \partial_{\mu_1} 
\otimes \cdots \otimes \partial_{\mu_n} \in \bThetao^{\otimes n}$, 
we write 
$
\Nabla_\nu C^{\mu_1,\dots,\mu_n} = 
\partial_\nu C^{\mu_1,\dots,\mu_n}  + 
\sum_{i=1}^n C^{\mu_1,\dots,\overset{i}{\rho},\dots,\mu_n} 
\Gamma^{\mu_i}_{\nu\rho} 
$. 
\begin{definition}[local Fock space] 
\label{def:localFock}
Let $\sfP$ be a parallel pseudo-opposite module 
over an open set $U\subset \cM$. 
Let $\Nabla$ be the associated flat connection 
on the total space $\LLo$. 
Let $\{t^i,x_n^i\}_{n\ge 1, 0\le i\le N}$ be an algebraic
local co-ordinate system on $\pr^{-1}(U)$ and let
$P=P(t,x_1)$ denote the discriminant \eqref{eq:discriminant}. 
The \emph{Fock space $\Fock(U; \sfP)$ over $U$ 
associated with $\sfP$}    
consists of collections 
\[
\wave = 
\big\{\Nabla^n C^{(g)}\in (\bOmega^1)^{\otimes n}
(\pr^{-1}(U)^\circ) : \text{$g\ge 0$, $n\ge 0$, $2g-2+n> 0$}\big\}
\] 
of completely symmetric tensors  such that 
the following conditions are satisfied: 

\begin{description} 
\item[(Yukawa)]
$\Nabla^3 C^{(0)}$ is the Yukawa coupling $\bY$ 
(see \S\ref{subsec:Yukawa}); 

\item[(Jetness)] $\Nabla (\Nabla^n C^{(g)}) = \Nabla^{n+1} C^{(g)}$;

\item[(Grading \& Filtration)] 
$\Nabla^n C^{(g)} \in \big((\bOmega^1)^{\otimes n}
(\pr^{-1}(U)^\circ)\big)^{2-2g}_{3g-3}$; 

\item[(Pole)]
$P \Nabla C^{(1)}$ extends to a regular one-form on 
$\pr^{-1}(U)$, and
for $g\ge 2$:
\[
C^{(g)} \in P^{-(5g-5)} \cO(U)
[x_1,x_2,P x_3, P^2 x_4,\dots, P^{3g-4} x_{3g-2}]
\]
\end{description} 

Using local co-ordinates $\{\sx^\mu\}=\{t^i,x_n^i\}$, 
we write 
\[
\Nabla^n C^{(g)} = C^{(g)}_{\mu_1,\dots,\mu_n} 
d \sx^{\mu_1} \otimes  \cdots \otimes d \sx^{\mu_n} 
\]
where once again we use Einstein's summation convention 
for the indices $\mu_i$.  
We call $\Nabla^n C^{(g)}$ or $C^{(g)}_{\mu_1,\dots,\mu_n}$ 
the \emph{genus-$g$, $n$-point correlation functions of $\wave$}. 
\end{definition} 
\begin{remark} \ 
\label{rem:Fockspace}
\begin{enumerate}
\item We do not define $\Nabla^n C^{(g)}$ in the unstable range
$(g,n)= (0,0)$, $(0,1)$, $(0,2)$, $(1,0)$.  
The genus-zero data are given by the cubic tensor $\bY$, and
the genus-one data are given by a one-form $\Nabla C^{(1)}$.  

\item 
The fact that $C^{(1)}_{\mu\nu}=\Nabla_\mu C^{(1)}_\nu$ 
is symmetric implies that 
$\Nabla C^{(1)} =  C^{(1)}_\nu d\sx^\nu$ 
is a closed one-form. 
By (Grading \& Filtration) and (Pole), 
one can write it in local co-ordinates as 
\begin{equation} 
\label{eq:genusone-onepoint}
\Nabla C^{(1)} = \frac{1}{P(t,x_1)} 
\left( \sum_{i=0}^N F_i(t,x_1) dt^i 
+ \sum_{i=0}^N G_i(t,x_1) dx_1^i \right) 
\end{equation} 
for some homogeneous polynomials 
$F_i, G_i \in \cO(U)[x_1^0,\dots,x_1^N]$ 
of degree $N+1$ and $N$ respectively. 
The condition (Grading \& Filtration) does not prevent $F_i$ 
from containing $x_2$, but the closedness of $\Nabla C^{(1)}$ 
implies that $F_i$ does not depend on $x_2$.  
The primitive $C^{(1)} = \int \Nabla C^{(1)}$ 
is a multi-valued function defined up to a constant. 
The symmetry of $\Nabla^nC^{(g)}$ 
is automatic for $g\ge 2$ because $\Nabla$ is flat; 
the symmetry of $\Nabla^n C^{(0)} = \Nabla^{n-3} \bY$, 
$n\ge 3$ 
follows from the existence of $C^{(0)}$ in the 
formal neighbourhood $\hLL$ (Proposition~\ref{prop:potentiality}). 

\item 
Because $\Nabla^n C^{(g)} \in ((\bOmega^1)^{\otimes n})_{3g-3}$, 
we have 
\[
C^{(g)}_{\mu_1,\dots,\mu_n} 
\in \bcO_{3 g -3 - |\mu_1|-\cdots - |\mu_n|+n}
\]
where we set 
\[
|\mu| =
\begin{cases} 
n & \text{if } \sx^\mu=x_n^i \\ 
0 & \text{if } \sx^\mu = t^i
\end{cases} 
\] 
so that $d \sx^\mu \in (\bOmega^1)_{|\mu|-1}$. 
In particular, the following \emph{$(3g-2)$-jet condition} holds: 
\begin{equation} 
\label{eq:jetcondition}
C^{(g)}_{\mu_1,\dots,\mu_n} = 0 \quad \text{if
$|\mu_1|+\dots+|\mu_n| > 3 g -3 +n$} 
\end{equation} 
For $t\in U$, 
let $\{q_n^i\}_{n\ge 0,0\le i\le N}$ be the flat co-ordinate 
system (Definition~\ref{def:flatcoordinate_on_LL}) 
on the formal neighbourhood $\hLLo$ of $\LLo_t$ 
associated to $\sfP$. 
Then we have (see equation~\ref{eq:flat-alg}):
\[
\parfrac{}{q_n^i} \Bigg|_{\LLo_t}= 
\begin{cases} 
\text{a linear combination of $\parfrac{}{t^i}\Big|_{\LLo_t}$ and 
$\parfrac{}{x_m^i}\Big|_{\LLo_t}$, $m\ge 1$} 
& \text{if }n=0 \\ 
\parfrac{}{x_n^i}\Big|_{\LLo_t} & \text{otherwise} 
\end{cases} 
\]
Therefore the $(3g-2)$-jet condition implies the following 
\emph{tameness}: 
\begin{equation} 
\label{eq:tameness-Fockelement}
\parfrac{^nC^{(g)}}{ q_{l_1}^{i_1} \cdots \partial q_{l_n}^{i_n}} 
\Biggr |_{q_0=0} = 0 \quad 
\text{if $l_1 + \cdots + l_n > 3g-3 +n$}  
\end{equation} 
(cf.~\cite{Givental:nKdV}). Notice that we need the restriction to $q_0=0$ here. 

\item 
The discriminant depends on the choice 
of local co-ordinates $\{t^i\}$ on $U$, and making a different choice 
changes it as $P \to f(t) P$ for some function 
$f(t)$ on $U$. 
Note however that the condition (Pole) 
does not depend on the choice of co-ordinates. 

\item  Recall that $\Nabla$ preserves the grading 
and the filtration (Proposition~\ref{prop:grading-filtration-Nabla}). 
Therefore (Grading \& Filtration) for genus $g\ge 1$ is equivalent 
to the condition that $\Nabla C^{(1)} \in \bcO(\pr^{-1}(U)^\circ)^0_0$ 
and $C^{(g)}\in \bcO(\pr^{-1}(U)^\circ)^{2-2g}_{3g-3}$ 
for $g\ge 2$. 
Note that (Grading \& Filtration) 
at genus zero follows from 
$\bY \in ((\bOmega^1)^{\otimes 3})^2_{-3}$. 

\item 
The condition (Pole) for $g\ge 2$ is equivalent to 
the fact that $C^{(g)}$ has the following expansion: 
\[
C^{(g)} = \sum_{n=0}^\infty \sum_{\substack
{L =(l_1,\dots,l_n) \\ l_a \ge 2 \text{  for all $a$}}} 
\sum_{I = (i_1,\dots, i_n)} 
\frac{1}{n!} 
\frac{f_{g,L,I}(t,x_1)}{P(t,x_1)^{5g-5+2n-(l_1+\cdots + l_n)}} 
x_{l_1}^{i_1} \cdots x_{l_n}^{i_n} 
\]
for some polynomials $f_{g,L,I}(t,x_1)$ in $x_1$. 
This can be further rephrased as
\[
P^{5g-5+2n - (l_1+\cdots +l_n) } 
\parfrac{^nC^{(g)}}{x_{l_1}^{i_1} \cdots \partial x_{l_n}^{i_n}} 
\quad 
\text{extends regularly to $\pr^{-1}(U)$ for $l_1,\dots,l_n \ge 1$.}  
\]
Since $C^{(g)} \in \bcO(\pr^{-1}(U)^\circ)_{3g-3}$, 
the exponent $5g-5+2n-(l_1+\cdots+l_n) 
\ge 2g-2+n$ here is always positive 
unless the derivative is zero. 
Note that $\partial/\partial x_1^i$ 
raises the pole order by at most one, and that $\partial/\partial x_2^i$ 
does not raise the pole order.  
\end{enumerate}
\end{remark} 

\subsection{Pole Order Along The Discriminant}

We next study the pole order of 
tensors $\Nabla^n C^{(g)}$ in 
algebraic local co-ordinates and also in 
flat co-ordinates on $\hLLo$. 
\begin{lemma} 
\label{lem:4,5-tensor} 
Let $\Nabla = \Nabla^{\sfP}$ be the connection 
associated to a pseudo-opposite module $\sfP$ 
over $U$. 
\begin{enumerate}
\item The $4$-tensor $\Nabla \bY$ is completely symmetric\footnote
{This is obvious if $\sfP$ is parallel, but we do not assume here 
that $\sfP$ is parallel.} 
and is regular on $\pr^{-1}(U)$. 

\item  We have 
\[
\Nabla \bY = (\id^{\otimes 3} \otimes \KS^*) \Xi 
\]
for some regular section $\Xi$ of 
$(\bOmega^1)^{\otimes 3} \otimes \pr^* \sfF^\vee$ 
over $\pr^{-1}(U)$ 
(cf.\ Lemma~\ref{lem:Yukawa-KS}). 

\item For $n \geq 5$, the $n$-tensor $P^{n-5}\Nabla^{n-3} \bY$ is regular on $\pr^{-1}(U)$. 
\end{enumerate}
\end{lemma}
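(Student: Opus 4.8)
The plan is to prove the three parts in order, bootstrapping from the regularity of the Kodaira--Spencer map and the connection $\tnabla^\vee$. For Part~(1), I would start from Lemma~\ref{lem:Yukawa-KS}, which writes $\bY = (\id \otimes (\KS^*)^{\otimes 2})\pr^*\Upsilon$ for a regular section $\Upsilon$ of $\Omega^1_\cM \otimes \sfF^\vee \otimes \sfF^\vee$. Since $\Nabla$ on $\bOmegao^1$ arises as $\Nabla\omega = \KS^*\bigl((\tnabla^\vee \Pi_\sfP^* \KS^{*-1}\omega)|_{\pr^*\sfF}\bigr)$, the first covariant derivative of $\bY$ can be re-expressed by moving the $\KS^{*-1}$'s through to hit $\pr^*\Upsilon$: the two outer factors of $\bY$ are already images under $\KS^*$, so $\KS^{*-1}$ is applied only formally to those slots, cancelling; and the new $\Nabla$-derivative acts by $\tnabla^\vee$ composed with the \emph{regular} maps $\Pi_\sfP^*$ and projection to $\pr^*\sfF$. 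The key point is that none of $\tnabla^\vee$, $\Pi_\sfP^*$, $\pr^*\Upsilon$ has a pole along $P=0$ --- only $\KS^{*-1}$ does, via Lemma~\ref{lem:poleorder-Nabla} --- and in $\Nabla\bY$ all three inverse Kodaira--Spencer maps are absorbed. Complete symmetry of $\Nabla\bY$ follows from the potentiality result Proposition~\ref{prop:potentiality} in the formal neighbourhood (valid for parallel $\sfP$), and for general (curved) $\sfP$ one checks directly that the antisymmetrisation of $\Nabla\bY$ involves the torsion (which vanishes, Proposition~\ref{prop:Nabla-torsionfree}) and the curvature contracted against $\bY$; I would spell this out as a short index computation showing the curvature terms cancel by the symmetries of $\bY$ and the first Bianchi-type identity.

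For Part~(2), I would make the above informal cancellation precise: define $\Xi := (\id^{\otimes 2}\otimes \tnabla^\vee \Pi_\sfP^*)\,\pr^*\Upsilon$ regarded as a section of $(\bOmega^1)^{\otimes 3}\otimes \pr^*\sfF^\vee$, where the first two cotangent slots are $\KS^*$-images of the $\sfF^\vee$-factors of $\pr^*\Upsilon$ and the third slot is a genuine $\bOmega^1$-factor from differentiating in the $\cM$-directions while the last $\pr^*\sfF^\vee$-slot is the one not yet converted by $\KS^*$. Since $\tnabla^\vee$ (equation~\ref{eq:tnablavee}), $\Pi_\sfP^*$, and $\pr^*\Upsilon$ are all regular on $\pr^{-1}(U)$, so is $\Xi$; and applying $\id^{\otimes 3}\otimes \KS^*$ to the last factor recovers $\Nabla\bY$ exactly by unwinding the definition of $\Nabla$. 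This is essentially a bookkeeping exercise tracking which Kodaira--Spencer maps get cancelled and which get applied, using the explicit formulae \eqref{eq:KS_coord} and \eqref{eq:tnablavee}; I would present it as a one-line identity plus a sentence verifying regularity of each ingredient.

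For Part~(3), I would argue by induction on $n\ge 4$, taking the base case $n=4$ from Part~(1) ($\Nabla\bY$ regular, so $P^{-1}\Nabla\bY$ trivially satisfies $P^{n-5}(\cdots)$ regular for $n=4$, i.e.\ $P^{-1}\Nabla\bY$ is regular which is even stronger). At step $n\to n+1$ I differentiate $\Nabla^{n-3}\bY = P^{-(n-5)}\cdot(\text{regular } n\text{-tensor }T_n)$ once more with $\Nabla$. By Lemma~\ref{lem:poleorder-Nabla}, $\Nabla$ raises the pole order along $P=0$ by at most one; the Leibniz rule applied to $P^{-(n-5)}T_n$ produces a term $-(n-5)P^{-(n-4)}(\Nabla P)\otimes T_n$ plus $P^{-(n-5)}\Nabla T_n$, and the latter can raise the order by one more to $P^{-(n-4)}$. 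Either way $\Nabla^{n-2}\bY$ has pole order at most $n-4 = (n+1)-5$, which is the claim. The main obstacle --- and the step I'd expect to take the most care --- is making Part~(1)'s cancellation of \emph{all three} $\KS^{*-1}$ factors rigorous for curved $\sfP$, since naively $\Nabla$ of a $(\KS^*)^{\otimes 2}$-image reintroduces a $\KS^{*-1}$ on the slot being differentiated; the resolution is that $\Nabla_\mu$ on a cotangent tensor contracts the Christoffel symbols, and $\Gamma^\sigma_{\mu\rho} = \KS^{-1}\bigl((\Pi_\sfP\tnabla_\mu - \text{flat part})\KS\bigr)^\sigma_\rho$ contributes exactly the terms of $\Xi$ that pair $\Upsilon$'s remaining $\sfF^\vee$-slot against $\tnabla_\mu \bx$, so the pole cancellation is genuine rather than accidental. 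I would verify this carefully in algebraic local co-ordinates using the explicit $\Nabla$ from Example~\ref{exa:Nabla-explicit}.
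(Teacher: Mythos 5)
There is a genuine gap, and it sits exactly where the real work of this lemma lies: the base case of your induction in Part~(3). The statement for $n=5$ is that $\Nabla^2\bY$ itself is regular, and this does \emph{not} follow from Part~(1) together with Lemma~\ref{lem:poleorder-Nabla}: knowing only that $\Nabla\bY$ is regular and that each application of $\Nabla$ raises the pole order along $P=0$ by at most one gives pole order $\le n-4$ for $\Nabla^{n-3}\bY$, i.e.\ $P^{n-4}\Nabla^{n-3}\bY$ regular, which is one power of $P$ weaker than the claim. Your attempted base case at $n=4$ is moreover false as stated: Part~(1) gives regularity of $\Nabla\bY$, not of $P^{-1}\Nabla\bY$ (the latter would mean $\Nabla\bY$ vanishes along the discriminant). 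The missing ingredient is a separate argument that $\Nabla^2\bY$ is regular; in the paper this is done by writing $\Nabla\bY=\sum_a \kappa_a\otimes\lambda_a\otimes\mu_a\otimes\nu_a$ with all factors regular and $\nu_a$ in the image of $\KS^*$ applied to a regular section of $\pr^*\sfF^\vee$ (this is Part~(2)), observing that $\Nabla\nu_a$ is then regular by the definition of $\Nabla$, and using the \emph{complete symmetry} of $\Nabla\bY$ to trade the potentially singular terms $\Nabla\kappa_a\otimes\cdots$ for $\Nabla\nu_a\otimes\cdots$ modulo the kernel of the map $(\bOmega^1)^{\otimes_\C 4}\to(\bOmega^1)^{\otimes_{\bcO}4}$, whose generators visibly produce regular $5$-tensors after applying $\Nabla\otimes\id^{\otimes 3}$. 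Once $\Nabla^2\bY$ is regular, your inductive step with Lemma~\ref{lem:poleorder-Nabla} is fine and gives the stated exponent $n-5$.

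Two secondary points. In Part~(1) your cancellation argument glosses over the slot of $\bY$ that is \emph{not} a $\KS^*$-image: $\bY=(\id\otimes(\KS^*)^{\otimes 2})\pr^*\Upsilon$ has only two $\KS^*$-factors, and $\Nabla dt^i$ is genuinely singular, so "all the inverse Kodaira--Spencer maps are absorbed" is not literally true; the regularity comes from the identity $\sum_i[\cC_i(t,0)x_1]^f\,\Nabla dt^i=-\Nabla\KS^*(\varphi_0^f)-\sum_i\bigl(d[\cC_i(t,0)x_1]^f\bigr)\otimes dt^i$, which is the computation you would discover in local co-ordinates but should be made explicit (and your formula for $\Xi$ omits the corresponding terms). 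For the symmetry of $\Nabla\bY$ when $\sfP$ is curved, the mechanism is not "torsion plus curvature cancelling via a Bianchi identity" --- for a general symmetric $3$-tensor the antisymmetrisation of its covariant derivative is not controlled by curvature alone. The clean argument is to compare with a parallel pseudo-opposite module $\sfQ$ on the formal neighbourhood of each point (Lemma~\ref{lem:existence_opposite}): by Proposition~\ref{prop:difference_conn}(1), $\Nabla^\sfP\bY-\Nabla^\sfQ\bY$ is a sum of terms of the form $C^{(0)}\Delta C^{(0)}$ with $\Delta=\Delta(\sfP,\sfQ)$ symmetric, hence symmetric, and $\Nabla^\sfQ\bY$ is symmetric by potentiality.
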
 
\begin{proof} 
Using \eqref{eq:Yukawa-KS}, we calculate: 
(writing $\cC_i = \cC_i(t,0)$) 
\begin{align}
\label{eq:four-derivatives} 
\begin{split}  
\Nabla \bY &=  \sum (d(\cC_i e_f, e_g)_{\sfF_0}) \otimes 
dt^i \otimes \KS^*(\varphi_0^f) 
\otimes \KS^* (\varphi_0^g) \\
& + \sum (\cC_i e_f, e_g)_{\sfF_0} d\sx^\nu \otimes \Nabla_\nu dt^i 
\otimes \KS^*(\varphi_0^f) 
\otimes \KS^*(\varphi_0^g) 
\\ 
& + \sum(\cC_i e_f, e_g)_{\sfF_0} d\sx^\nu \otimes dt^i 
\otimes \Nabla_\nu \KS^*(\varphi_0^f) 
\otimes \KS^*(\varphi_0^g) \\
& + \sum(\cC_i e_f, e_g)_{\sfF_0} d\sx^\nu \otimes dt^i 
\otimes \KS^*(\varphi_0^f) 
\otimes \Nabla_\nu \KS^*(\varphi_0^g)
\end{split} 
\end{align} 
The first term is regular and is in the image of 
$\id^{\otimes 3} \otimes \KS^*$. 
So are the third and the fourth terms, because 
$\Nabla \KS^*(\varphi_0^i) 
= \KS^*( (\tnabla^\vee\Pi^* \varphi_0^i)|_{\pr^*\sfF})$. 
Using $\KS^*(\varphi_0^f) = -\sum_j [\cC_j x_1]^f dt^j$ 
(see equation~\ref{eq:KS_coord}), we can rewrite the second term as:
\begin{align*} 
\sum(\cC_i \cC_j x_1, e_g)_{\sfF_0} (\Nabla dt^i) \otimes dt^j 
\otimes \KS^*(\varphi_0^g)  
= \sum (\cC_j e_f, e_g)_{\sfF_0}  [\cC_i x_1]^f (\Nabla dt^i) 
\otimes dt^j \otimes \KS^*(\varphi_0^g) 
\end{align*} 
Using $\KS^*(\varphi_0^f) = -\sum_i [\cC_i x_1]^f dt^j$ again, 
we have:
\[
\sum_i  [\cC_i x_1]^f (\Nabla dt^i) = 
- \Nabla(\KS^*(\varphi_0^f)) - \sum_i (d[\cC_i x_1]^f) \otimes dt^i 
\]
The right-hand side is regular on $\pr^{-1}(U)$ 
for the same reason as before. 
Thus the second term of \eqref{eq:four-derivatives} 
is regular and is in the image of $\id^{\otimes 3} \otimes \KS^*$. 
This establishes the regularity of $\Nabla \bY$ and Part (2). 
Next we show that $\Nabla \bY$ is symmetric. 
Take an opposite module $\sfQ$ in the formal neighbourhood 
of $t\in \cM$ (see Lemma~\ref{lem:existence_opposite}). 
We have by Proposition~\ref{prop:difference_conn}(1) 
\begin{align*} 
\Nabla^{\sfP}_\mu C^{(0)}_{\nu \rho \sigma} & = 
\partial_\mu C^{(0)}_{\nu\rho \sigma} - C^{(0)}_{\tau \rho \sigma} 
{\Gamma^{\sfP}}^{\tau}_{\mu\nu} - 
C^{(0)}_{\nu \tau \sigma} 
{\Gamma^{\sfP}}^{\tau}_{\mu \rho} - 
C^{(0)}_{\nu \rho \tau} 
{\Gamma^{\sfP}}^{\tau}_{\mu \sigma} \\ 
& = 
\Nabla^{\sfQ}_\mu C^{(0)}_{\nu\rho \sigma} 
- C^{(0)}_{\tau \rho \sigma} 
\Delta^{\tau\kappa} C^{(0)}_{\kappa \mu\nu} 
- C^{(0)}_{\nu \tau \sigma} 
\Delta^{\tau\kappa} C^{(0)}_{\kappa \mu \rho} 
- C^{(0)}_{\nu\rho \tau} 
\Delta^{\tau \kappa} C^{(0)}_{\kappa \mu \sigma} 
\end{align*}
where ${\Gamma^{\sfP}}^{\tau}_{\mu\nu}$ denotes 
the Christoffel symbol (see equation~\ref{eq:Christoffel}) of 
$\Nabla^{\sfP}$ and $\Delta = \Delta(\sfP,\sfQ)$ 
is the propagator. 
Because the propagator is symmetric 
(Proposition~\ref{pro:prop-elementary}) 
and the tensor $\Nabla^{\sfQ}_{\mu} C^{(0)}_{\nu\rho\sigma}$ 
associated to a parallel $\sfQ$ is symmetric, 
we find that $\Nabla^{\sfP} \bY$ is also symmetric. 
Finally we prove Part (3). 
We can write 
\[
\Nabla \bY = \sum_a  \kappa_a \otimes \lambda_a\otimes 
\mu_a \otimes \nu_a 
\]
with $\kappa_a, \lambda_a, \mu_a, \nu_a \in \bOmega^1$ regular
and $\nu_a$ the image under $\KS^*$ of a regular section 
of $\pr^*\sfF^\vee$. 
Then we have:
\begin{equation} 
\label{eq:5-tensor} 
\Nabla^2 \bY = \sum_a \left[ 
(\Nabla \kappa_a) \otimes \lambda_a 
\otimes \mu_a \otimes \nu_a) + \cdots  + 
(\kappa_a \otimes \lambda_a \otimes \mu_a \otimes \Nabla \nu_a)
\right] 
\end{equation} 
Note that $\Nabla \nu_a$ is regular, by the definition 
of $\Nabla$. We claim that the difference 
\[
\sum_a (\Nabla \kappa_a)\otimes \lambda_a \otimes \mu_a \otimes \nu_a 
- 
\sum_a (\Nabla \nu_a) \otimes \lambda_a \otimes \mu_a \otimes \kappa_a
\]
is regular.  
Since $\Nabla \bY$ is symmetric, the image of 
\[
\sum_a  \kappa_a \otimes_\C \lambda_a \otimes_\C \mu_a \otimes_\C \nu_a 
- \sum_a \nu_a \otimes_\C \lambda_a \otimes_\C \mu_a \otimes_\C \kappa_a 
\in \bOmega^1 \otimes_\C \bOmega^1\otimes_\C \bOmega^1 
\otimes_\C \bOmega^1  
\]
in $\bOmega^1\otimes_{\bcO} \bOmega^1\otimes_{\bcO} 
\otimes \bOmega^1  \otimes_{\bcO} \bOmega^1$ 
is zero. Therefore it is generated by elements of the form 
\begin{align*} 
& f \phi_1 \otimes_\C \phi_2 \otimes_\C \phi_3 \otimes_\C \phi_4 
- \phi_1 \otimes_\C f \phi_2 \otimes_\C \phi_3 \otimes_\C \phi_4, \\
& \phi_1 \otimes_\C f\phi_2 \otimes_\C \phi_3 \otimes_\C \phi_4 
- \phi_1 \otimes_\C \phi_2 \otimes_\C f\phi_3 \otimes_\C \phi_4, \\
& \phi_1 \otimes_\C \phi_2 \otimes_\C f \phi_3 \otimes_\C \phi_4 
- \phi_1 \otimes_\C \phi_2 \otimes_\C \phi_3 \otimes_\C f \phi_4 
\end{align*} 
with $f\in \bcO$ regular and regular sections 
$\phi_i \in \bOmega^1$. 
If one applies $\Nabla \otimes \id^{\otimes 3}$ 
to any of these generators and maps it to 
$\bOmega^1\otimes_{\bcO} \bOmega^1\otimes_{\bcO} 
\bOmega^1 \otimes_{\bcO} \bOmega^1 \otimes_{\bcO} \bOmega^1$, 
we always get a regular 5-tensor. 
This proves the claim. 
Because $\Nabla \nu_a$ is regular, 
it follows that every term in \eqref{eq:5-tensor} is regular. 
Part (3) is proved. 
Part (4) follows from Part (1), Part (3), and Lemma~\ref{lem:poleorder-Nabla}. 
\end{proof} 

\begin{proposition} 
For $\wave =\{\Nabla^n C^{(g)}\}_{2g-2+n>0}
\in \Fock(U;\sfP)$,  
$P^{\max(5g-5+n,0)} \Nabla^n C^{(g)}$ 
extends to a regular $n$-tensor 
on $\pr^{-1}(U)$. 
\end{proposition}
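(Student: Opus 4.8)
The plan is to reduce the statement to a single pole-order estimate for the connection $\Nabla$ and then induct on $n$, with the (Yukawa) and (Pole) conditions supplying the base cases.

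First I would establish the key estimate: \emph{if $\eta$ is a tensor on $\pr^{-1}(U)^\circ$ such that $P^k\eta$ extends regularly to $\pr^{-1}(U)$ for some $k\ge 0$, then $P^{k+1}\Nabla\eta$ extends regularly to $\pr^{-1}(U)$.} By Lemma~\ref{lem:poleorder-Nabla} the connection on one-forms raises the pole order along $P=0$ by at most one; since $\Nabla d\sx^\mu = -\Gamma^\mu_{\nu\rho}\,d\sx^\rho$ with the $d\sx^\rho$ regular, the Christoffel symbols $\Gamma^\mu_{\nu\rho}$ have at worst a simple pole along $P=0$. The partial derivatives $\partial/\partial t^i$ and $\partial/\partial x_n^i$ likewise raise the pole order by at most one, because $P=P(t,x_1)$ is a polynomial in $t$ and $x_1$ only. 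Hence formula~\eqref{eq:covariant-derivative} shows that $\Nabla$ raises the pole order by at most one on $n$-tensors for every $n$. To sharpen this to the estimate above, I would write $\eta=P^{-k}\eta'$ with $\eta'$ regular and compute
\[
P^{k+1}\Nabla\eta = -k\,dP\otimes\eta' + P\,\Nabla\eta',
\]
both terms being regular ($dP$ is regular, and $P\,\Nabla\eta'$ is regular by the pole-raising bound applied to the regular tensor $\eta'$).

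Next I would handle the base cases. For $g=0$ one has $\Nabla^n C^{(0)} = \Nabla^{n-3}\bY$, and $P^{\max(n-5,0)}\Nabla^{n-3}\bY$ is regular for all $n\ge3$: for $n=3$ this is the regularity of $\bY$, for $n=4$ it is Lemma~\ref{lem:4,5-tensor}(1), and for $n\ge5$ it is Lemma~\ref{lem:4,5-tensor}(3); since $\max(5\cdot0-5+n,0)=\max(n-5,0)$, this already proves the proposition when $g=0$. For $g=1$ the base case is $n=1$, where $P\,\Nabla C^{(1)}$ is regular by the (Pole) condition. For $g\ge2$ the base case is $n=0$: the (Pole) condition gives $C^{(g)}\in P^{-(5g-5)}\cO(U)[x_1,x_2,Px_3,\dots,P^{3g-4}x_{3g-2}]$, and each generator of this ring is a regular function on $\pr^{-1}(U)$, so $P^{5g-5}C^{(g)}$ is regular.

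Finally, for $g\ge1$ I would induct on $n$, using the (Jetness) identity $\Nabla^{n+1}C^{(g)}=\Nabla(\Nabla^n C^{(g)})$ together with the key estimate. In the stable range with $g\ge1$ one has $5g-5+n\ge0$, so $\max(5g-5+n,0)=5g-5+n$ grows by exactly one at each step; thus if $P^{5g-5+n}\Nabla^n C^{(g)}$ is regular, applying the estimate with $\eta=\Nabla^n C^{(g)}$ and $k=5g-5+n$ shows $P^{5g-5+n+1}\Nabla^{n+1}C^{(g)}$ is regular. Together with the base cases this closes the induction. I expect the only genuinely nontrivial point to be the pole-order estimate for $\Nabla$ on higher tensors; once that is in hand, everything else is bookkeeping with the $\max$ and the Leibniz rule.
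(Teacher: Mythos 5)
Your proof is correct and takes essentially the same route as the paper: the paper's own (two-line) argument disposes of genus zero via Lemma~\ref{lem:4,5-tensor} and settles $g\ge 1$ by combining the (Pole) condition with the pole-order bound of Lemma~\ref{lem:poleorder-Nabla}. Your inductive bookkeeping through (Jetness), together with the Leibniz-rule estimate extending the pole-order bound from one-forms to $n$-tensors, just makes explicit what the paper leaves implicit.
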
 
\begin{proof} 
At genus zero, this was shown in the previous lemma. 
At higher genera, it follows from Lemma~\ref{lem:poleorder-Nabla} 
and the fact that $C^{(g)}$ has poles of order $5g-5$ 
along $P(t,x_1)=0$.  
\end{proof} 

\begin{proposition}[Pole structure in flat co-ordinates]  
\label{prop:pole-flat} 
Let $\sfP$ be a parallel pseudo-opposite module 
over an open set $U\subset \cM$ 
and let $t\in U$. 
Let $\{q_n^i\}_{n\ge 0, 0\le i\le N}$ 
be a flat co-ordinate system on the formal neighbourhood 
$\hLLo$ of $\LLo_t$ 
associated to $\sfP$ 
(see Definition~\ref{def:flatcoordinate_on_LL}).  
For any element $\wave =\{\Nabla^n C^{(g)}\}
\in \Fock(U;\sfP)$, we have 
\begin{equation} 
\label{eq:pole-formalization} 
\parfrac{^n C^{(g)}}{q_{l_1}^{i_1} \cdots \partial q_{l_n}^{i_n}} 
\in P_t^{-(5g-5 +2n -(l_1+\cdots + l_n))} 
\C[q_1, q_2, P_t q_3, P_t^2 q_4,\dots][\![P_t^{-2} q_0]\!]  
\end{equation} 
whenever $2g-2+n>0$, where $P_t=P(t,q_1)$. 
\end{proposition}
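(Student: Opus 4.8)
The plan is to reduce the statement to Lemma~\ref{lem:sx-poleorder}, Lemma~\ref{lem:genuszeropole}, and the condition (Pole) of Definition~\ref{def:localFock}, by carefully tracking powers of the discriminant under the co-ordinate change $\{s^i,x_n^i\}\leftrightarrow\{q_n^i\}$. Write $\cS:=\C[q_1,q_2,P_tq_3,P_t^2q_4,\dots][\![P_t^{-2}q_0]\!]$ for the ring appearing in the statement, with $P_t=P(t,q_1)$; note that $P_t\in\cS$. Since $\{q_n^i\}$ is a flat co-ordinate system for $\Nabla=\Nabla^\sfP$ on $\hLLo$, the covariant derivative is ordinary partial differentiation in these co-ordinates, so for $\wave=\{\Nabla^nC^{(g)}\}\in\Fock(U;\sfP)$ the object $\partial^nC^{(g)}/\partial q_{l_1}^{i_1}\cdots\partial q_{l_n}^{i_n}$ is simply an iterate of the operators $\partial/\partial q_{l_j}^{i_j}$ applied to a single base datum: the genus-zero potential $C^{(0)}$ of Definition~\ref{def:genuszeropot} (for $g=0$, using $\Nabla^3C^{(0)}=\bY$ from Proposition~\ref{prop:potentiality}), the function $C^{(g)}\in\bcO(\pr^{-1}(U)^\circ)$ itself (for $g\ge2$), or a component of the one-form $\Nabla C^{(1)}$ (for $g=1$, $n\ge1$). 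It therefore suffices to (i) verify the asserted pole bound for these base data in the flat co-ordinates, and (ii) understand how $\partial/\partial q_l^i$ moves powers of $P_t$.

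For (ii) I would isolate the following \emph{master estimate}: for every $a\in\Z$ and $l\ge0$ one has $\partial/\partial q_l^i\colon P_t^a\,\cS\to P_t^{a+l-2}\,\cS$. The clean way to prove this is to rescale the fibre variables: put $\tilde q_n:=P_t^{\,n-2}q_n$ for $n\ne1$ (so $\tilde q_0=P_t^{-2}q_0$, $\tilde q_2=q_2$, $\tilde q_3=P_tq_3$, and so on) and $\tilde q_1:=q_1$, so that $\cS=\C[\tilde q_1,\tilde q_2,\tilde q_3,\dots][\![\tilde q_0]\!]$ is a power series ring in $\tilde q_0$ over a polynomial ring, and $P_t=P(t,\tilde q_1)$ is a homogeneous polynomial in $\tilde q_1$ alone. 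For $l\ne1$ the variable $q_l$ enters $\cS$ and $P_t$ only through $\tilde q_l=P_t^{\,l-2}q_l$ (not through $P_t$ itself), so $\partial/\partial q_l^i=P_t^{\,l-2}\,\partial/\partial\tilde q_l^i$, which visibly sends $P_t^a\cS$ to $P_t^{a+l-2}\cS$. For $l=1$ the operator also differentiates the factors of $P_t$ hidden in the $\tilde q_n$, and one computes $\partial/\partial q_1^i=\partial/\partial\tilde q_1^i+P_t^{-1}(\partial_{q_1^i}P_t)\,E'$, where $E'=\sum_{n\ge3}(n-2)\,\tilde q_n^j\partial/\partial\tilde q_n^j-2\,\tilde q_0^j\partial/\partial\tilde q_0^j$ is a weighted Euler operator preserving $\cS$; since $\partial_{q_1^i}P_t\in\cS$, this maps $P_t^a\cS$ into $P_t^{a-1}\cS=P_t^{a+1-2}\cS$, as required.

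For (i), the genus-zero case is exactly Lemma~\ref{lem:genuszeropole}, which says $C^{(0)}\in P_t^5\,\cS$. For $g\ge2$ (and $n=0$), condition (Pole) gives $C^{(g)}\in P^{-(5g-5)}\,\cO(U)[x_1,x_2,Px_3,\dots,P^{3g-4}x_{3g-2}]$ with $P=P(t,x_1)$, and by Lemma~\ref{lem:sx-poleorder} each generator $x_1,x_2,P^{m-2}x_m$ lies in $\cS$ once one knows that $P(t,x_1)$ and $P_t=P(t,q_1)$ differ by a unit of $\cS$. This unit statement is the crucial bookkeeping point: expanding the polynomial $P(t,\cdot)$ about $q_1$ gives $P(t,x_1)=P_t\bigl(1+P_t\,h\bigr)$ for some $h\in\cS$, and because $x_1-q_1$ vanishes on the central fibre $\{s=0\}=\{q_0=0\}$, Lemma~\ref{lem:sx-poleorder} forces $(x_1-q_1)/P_t$, and hence $h$, to lie in the ideal $(P_t^{-2}q_0^0,\dots,P_t^{-2}q_0^N)$; thus $1+P_t h$ is invertible in the $q_0$-adically complete ring $\cS$, so $P(t,x_1)^k\cS=P_t^k\cS$ for all $k\in\Z$ and $C^{(g)}\in P_t^{-(5g-5)}\cS$. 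For $g=1$ ($n=1$) I would write $\Nabla C^{(1)}$ in the explicit form \eqref{eq:genusone-onepoint}, contract with $\partial/\partial q_l^j$ to obtain $P(t,x_1)^{-1}\bigl(\sum_kF_k(t,x_1)\,\partial t^k/\partial q_l^j+\sum_kG_k(t,x_1)\,\partial x_1^k/\partial q_l^j\bigr)$, and estimate each factor by Lemma~\ref{lem:sx-poleorder}, the unit statement, and the master estimate: $F_k,G_k\in\cS$, the Jacobian entries $\partial t^k/\partial q_l^j=\partial s^k/\partial q_l^j$ and $\partial x_1^k/\partial q_l^j$ lie in $P_t^{l-1}\cS$, and $P(t,x_1)^{-1}\in P_t^{-1}\cS$, so $\partial C^{(1)}/\partial q_l^j\in P_t^{l-2}\cS$, which is the asserted bound with $g=1$, $n=1$.

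Granting (i) and (ii), the Proposition follows by iterating the master estimate: starting from a base exponent ($5$ when $g=0,n=0$; $-(5g-5)$ when $g\ge2,n=0$; $l_1-2$ when $g=1,n=1$) and applying $\partial/\partial q_{l_j}^{i_j}$ for each remaining index shifts the exponent of $P_t$ by $l_j-2$, producing $-(5g-5+2n-(l_1+\cdots+l_n))$ in all three cases. The step I expect to be the main obstacle is establishing the unit statement $P(t,x_1)=P_t\cdot(\text{unit of }\cS)$ --- equivalently, that the discriminant evaluated along the tautological section and along the flat-co-ordinate section agree up to a $q_0$-adically invertible factor --- since this is precisely what makes the two natural notions of pole order along the discriminant interchangeable; the $l=1$ case of the master estimate, where $\partial/\partial q_1$ silently differentiates the powers of $P_t$ built into the generators of $\cS$, is the other spot where the exponents must be tracked with care.
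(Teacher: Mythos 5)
Your proposal is correct and follows essentially the same route as the paper's proof: Lemma~\ref{lem:genuszeropole} at genus zero, condition (Pole) together with Lemma~\ref{lem:sx-poleorder} and the invertibility of $P/P_t$ in $\cS$ for $g\ge 2$, the explicit form \eqref{eq:genusone-onepoint} for $g=1$, $n=1$, and then the derivative bookkeeping (your ``master estimate'' $\partial/\partial q_l^i\colon P_t^a\cS\to P_t^{a+l-2}\cS$) that the paper leaves implicit in its ``it suffices'' reductions. The only slips are cosmetic and do not affect the argument: the discriminant in (Pole) is $P(t+s,x_1)$, so its base-point dependence must also be expanded (using $s^i\in P_t\cS$ and the vanishing of $s$ along $q_0=0$), and your factorisation should read $P/P_t\in 1+(P_t^{-2}q_0^0,\dots,P_t^{-2}q_0^N)\cS$ rather than $1+P_th$ with $h\in\cS$ --- which is exactly what the $q_0$-adic invertibility argument needs.
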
 
\begin{proof} 
At genus zero, this follows from Lemma~\ref{lem:genuszeropole}. 
For $g\ge 2$, it suffices to show that 
$C^{(g)}$ lies in $P_t^{-(5g-5)} \cS$ where 
$\cS:= \C[q_1, q_2, P_t q_3, P_t^2 q_4, \dots][\![P_t^{-2} q_0]\!]$. 
Let $s=(s^0,\dots,s^N)$ be a local co-ordinate on $\cM$ centred 
at $t\in \cM$ as in \S\ref{subsec:flatstronL},
and write $t+s$ for the corresponding point 
in a neighbourhood of $t$. 
The condition (Pole) implies that
\[
C^{(g)} \in P^{-(5g-5)} \cO(U)[x_1,x_2, P x_3, P^2 x_4,
\dots, P^{3g-4} x_{3g-2}]
\] 
where $P=P(t+s,x_1)$. 
By Lemma~\ref{lem:sx-poleorder}, we have 
$x_1^i \in q_1^i + P_t \cS$, $x_n^i\in P_t^{2-n} \cS$, $n\ge 2$,
and $s^i \in P_t \cS$. 
Thus we have 
\[
P/P_t = P(t+s, x_1) /P(t,q_1) \in \cS
\]
We also have $P_t/P \in \cS$, because $(P/P_t)|_{q_0=0} = 1$ 
so that $P/P_t$ is invertible in $\cS$. 
Therefore:
\[
P^{-(5g-5)} \in P_t^{-(5g-5)} \cS, \quad 
P^{n-2} x_n \in \cS \quad (n \ge 2)
\]
Hence $C^{(g)} \in P_t^{-(5g-5)}\cS$ for $g\ge 2$. 
For $g=1$, it suffices to show \eqref{eq:pole-formalization} 
for $n=1$. 
Using the expression \eqref{eq:genusone-onepoint}, we have:
\[
\parfrac{C^{(1)}}{q_l^j} 
= \frac{1}{P} 
\sum_{i=0}^N \left(F_i(t+s, x_1) \parfrac{s^i}{q_l^j} 
+ G_i(t+s, x_1) \parfrac{x_1^i}{q_l^j} \right) 
\] 
Because $P^{-1} \in P_t^{-1} \cS$, $F_i(t+s,x_1), G_i(t+s,x_1) \in \cS$  
and $\partial s^i/\partial q_l^i, 
\partial x_1^i /\partial q_l^j \in P_t^{l-1} \cS$, 
the right-hand side here belongs to $P_t^{l-2} \cS$. 
\end{proof} 

\begin{remark} 
The previous proposition implies, and is implied by  
\[
\parfrac{^n C^{(g)}}{q_{l_1}^{i_1} \cdots \partial q_{l_n}^{i_n}} 
\Bigg|_{q_0=0} 
= \frac{f_{g,I,L}(q_1,q_2,\dots)}{P(t,q_1)^{5g-5 +2n -(l_1+\cdots + l_n)}}  
\]
for some polynomial $f_{g,I,L}(q_1,q_2,\dots)$. 
By tameness \eqref{eq:tameness-Fockelement}, 
the exponent $5g-5+2n - (l_1+ \cdots + l_n)$ is positive 
unless the derivative vanishes. 
\end{remark} 

\subsection{Transformation Rule and Fock Sheaf} 
\label{subsec:transformation}
As outlined in \S\ref{subsec:motivation-Focksheaf}, 
we now define a Fock sheaf by gluing local Fock spaces 
using a transformation rule. 

Let $\{\sy^\mu\}$ denote the fiber co-ordinates of the tangent 
bundle $\bTheta$ dual to $\{\sx^\mu\}$. A general point 
on the total space of $\bTheta$ can be written as:
\[
\sum_{\mu} \sy^\mu \left( \parfrac{}{\sx^\mu}\right)_{\sx}  \in \bTheta_{\sx}
\]
Much as we did for $\LL$, one can give a co-ordinate-free definition of the total space of the 
tangent bundle $\bTheta$, as 
a certain ringed space.
To avoid excessive formalism, however, we will work in terms of local co-ordinates 
$\{\sx^\mu, \sy^\mu\}$ on $\bTheta$, regarding any polynomial 
(or formal power series) expression 
in $\sx^\mu$,~$\sy^\mu$ as a regular (or formal) function 
on the total space of $\bTheta$.

\begin{definition}[jet potential] 
\label{def:jetpotential}
An element $\wave= \{\Nabla^n C^{(g)}\}_{g,n}$
of $\Fock(U;\sfP)$ is encoded by the following formal function 
$\cW$ on the total space of $\bTheta|_{\pr^{-1}(U)^\circ}$: 
\begin{align*} 
\cW(\sx,\sy) 
& = \sum_{g=0}^\infty \hbar^{g-1}\cW^g(\sx,\sy) \\
\intertext{where:}
\cW_g(\sx,\sy) & = \sum_{n=\max(3-2g,0)}^\infty 
\frac{1}{n!} 
C^{(g)}_{\mu_1,\dots,\mu_n}(\sx) \sy^{\mu_1}\cdots \sy^{\mu_n} \notag
\end{align*} 
We call $\cW^g$ the \emph{genus-$g$ jet potential}  
and $\exp(\cW)$ the \emph{total jet potential} 
associated to $\wave$.  
\end{definition} 

\begin{remark} \ 
  \begin{enumerate}
  \item For a fixed $\sx \in \LLo$, $\cW^g(\sx,\sy)$ should be viewed as a packaging of the multilinear tensors $\{\Nabla^n C^{(g)}\}_n$ on the tangent space $\bTheta_{\sx}$.  This can be identified with the Taylor expansion at $\sx$ of the potential $C^{(g)}$ \emph{in flat co-ordinates}.  Namely the linear co-ordinates $\sy^\mu$ on $\bTheta_{\sx}$ play the role of flat co-ordinates on $\LL$ centred at $\sx$ such that $d \sy^\mu|_{\sx} = d \sx^\mu|_{\sx}$.  Notice however that the constant term at genus one and the quadratic term at genus zero are ignored.

  \item Let $y_n^i$ be the fiber co-ordinate dual to $\partial/\partial x_n^i$ for $n\ge 1$ and let $y_0^i$ be the fiber co-ordinate dual to $\partial/\partial t^i$.  Then the jet potential $\cW(\sx,\sy)$ is an element of:
    \[ 
    \hbar^{-1} \cO(U)\left[ \{x_n^i\}_{n\ge 1, 0\le i\le N}, P(t,x_1)^{-1}, \{y_n^i\}_{n\ge 2, 0\le i\le N} \right] [\![y_0^0,\dots,y_0^N, y_1^0,\dots,y_1^N]\!]  [\![ \hbar]\!]
    \] 
    Moreover, the $\hbar^{-1}$-coefficient of $\cW$ (i.e.\ $\cW^0$) vanishes along $y_0^0=\cdots = y_0^N=0$ and thus $\exp(\cW(\sx,\sy))$ is a well-defined formal Laurent series in $\hbar$ (infinite in both directions).
  \end{enumerate}
\end{remark}

To describe the transformation rule (or Feynman rule), we use the following terminology 
for graphs. A graph $\Gamma$ is given by four finite 
sets  $V(\Gamma)$, $E(\Gamma)$, $L(\Gamma)$, $F(\Gamma)$ 
called (the sets of) \emph{vertices}, \emph{edges}, \emph{legs} and 
\emph{flags} respectively, 
together with incidence maps 
\begin{align*}
  \pi_V \colon F(\Gamma)  \to V(\Gamma) &&
  \pi_E \colon F(\Gamma) \to E(\Gamma) \sqcup L(\Gamma) 
\end{align*}
such that $|\pi_E^{-1}(e)|=2$ for each $e\in E(\Gamma)$ 
and $|\pi_E^{-1}(l)|=1$ for each $l \in L(\Gamma)$. 
We assign to an edge $e$ a closed interval $I_e \cong [0,1]$, 
to a leg $l$ a half-open interval $H_l \cong [0,1)$,
and to a vertex $v$ a point $p_v$, 
and fix identifications $\pi_E^{-1}(e) \cong \partial I_e$, 
$\pi_E^{-1}(l) \cong \partial H_l$. 
By identifying $I_e$,~$H_l$,~$p_v$ via the map 
$\pi_V \colon F(\Gamma) \cong \bigsqcup \partial I_e \sqcup 
\bigsqcup \partial H_l 
\to V(\Gamma) \cong \{p_v\}$, we get a topological 
realization $|\Gamma|$ of the graph $\Gamma$.  
We say that $\Gamma$ is connected if $|\Gamma|$ is connected; 
we also write $\chi(\Gamma)=\chi(|\Gamma|)=|V(\Gamma)|-|E(\Gamma)|$ 
for the topological Euler characterisitic of $|\Gamma|$. 

\begin{definition}[transformation rule] 
\label{def:transformation} 
Let $\sfP_1$, $\sfP_2$ be parallel pseudo-opposite modules 
over $U \subset \cM$ 
and let $\Delta = \Delta(\sfP_1,\sfP_2)$ be the propagator. 
The \emph{transformation rule}  $T(\sfP_1, \sfP_2)\colon 
\Fock(U;\sfP_1) \to \Fock(U;\sfP_2)$ 
is a map which assigns, to the jet potential $\exp(\cW)$ 
for an element of $\Fock(U;\sfP_1)$, 
the jet potential $\exp(\hcW)$ for an element of $\Fock(U;\sfP_2)$ 
given by:  
\begin{equation} 
\label{eq:transformationrule-jet} 
\exp(\hcW(\sx,\sy)) =  
\exp\left(\frac{\hbar}{2}\Delta^{\mu\nu} \partial_{\sy^\mu}
\partial_{\sy^\nu}\right) 
\exp(\cW(\sx,\sy))
\end{equation} 
Let $\wave = \{C^{(g)}_{\mu_1,\dots,\mu_n}\}_{g,n}$, 
$\widehat\wave=\{\hC^{(g)}_{\mu_1,\dots,\mu_n} \}_{g,n}$ 
be the correlation functions corresponding respectively to 
$\cW$, $\hcW$. 
The above formula is equivalent to the following 
\emph{Feynman rule}: 
\[
\hC^{(g)}_{\mu_1,\dots,\mu_n} 
= \sum_\Gamma \frac{1}{|\Aut(\Gamma)|} 
\Cont_{\Gamma}(\wave, \Delta)_{\mu_1,\dots,\mu_n}
\]
Here the summation is over all connected decorated graphs 
$\Gamma$ such that:
\begin{itemize}
\item 
To each vertex $v\in V(\Gamma)$ is 
assigned a non-negative integer $g_v\ge 0$, called the genus; 
  
\item 
$\Gamma$ has $n$ labelled legs: an isomorphism 
$L(\Gamma) \cong \{1,2,\dots,n\}$ is given; 

\item $\Gamma$ is stable, i.e.~$2 g_v - 2 + n_v>0$ for every vertex $v$.  
Here $n_v = |\pi_V^{-1}(v)|$ denotes the number of edges or 
legs incident to $v$; 

\item $g = \sum_{v} g_v + 1- \chi(\Gamma)$. 
\end{itemize}
We put the index $\mu_i$ at the $i$th leg, the correlation function $\Nabla^{n_v} C^{(g_v)}$ 
on the vertex $v$, and the propagator $\Delta$ 
on every edge. 
Then 
$\Cont_\Gamma(\wave, \Delta
)_{\mu_1,\dots,\mu_n}$ 
is defined to be the contraction of all these tensors 
with the indices $\mu_1,\dots,\mu_n$ 
on the legs fixed. 
Here $\Aut(\Gamma)$ denotes the automorphism group 
of the decorated graph $\Gamma$. 
\end{definition} 

\begin{example} \ 
  \begin{enumerate}
  \item  The Feynman rule for genus-zero 
    three point correlation functions is trivial: 
    \begin{equation} 
      \label{eq:Yukawa-Feynman}
      \hC^{(0)}_{\mu \nu \rho} 
      = C^{(0)}_{\mu \nu \rho}
    \end{equation} 
    since there is only one genus-zero stable graph 
    with three legs. 
    This is compatible with the fact that the 
    Yukawa coupling was defined independently 
    of the choice of pseudo-opposite module.

  \item The Feynman rule for genus-one one-point functions  
    is given by:
    \begin{equation} 
      \label{eq:Feynman-genusone} 
      \hC^{(1)}_\mu = C^{(1)}_\mu + 
      \frac{1}{2} C^{(0)}_{\mu \nu \rho} \Delta^{\nu\rho} 
      = C^{(1)}_\mu + \left( \omega_{\sfP_1\sfP_2} \right)_\mu 
    \end{equation} 
    Here $\omega_{\sfP_1\sfP_2}$ is the difference one-form 
    defined in \eqref{eq:difference1-form} 
    and comes from the graph $\mu \boldsymbol{-\!\bigcirc}$. 
    
  \item The Feynman rule for genus-two potentials is given by:
    \begin{align*} 
      \hC^{(2)} = & C^{(2)} + \frac{1}{2} C^{(1)}_{\mu\nu} \Delta^{\mu\nu} 
      + \frac{1}{2} C^{(1)}_\mu \Delta^{\mu\nu} C^{(1)}_\nu 
      + \frac{1}{2} C^{(1)}_\mu \Delta^{\mu\nu} C^{(0)}_{\nu \rho \sigma} 
      \Delta^{\rho \sigma} \\
      & + \frac{1}{8} C^{(0)}_{\mu\nu\rho \sigma} \Delta^{\mu\nu} 
      \Delta^{\rho \sigma} 
      + \frac{1}{8} \Delta^{\mu\nu} 
      C^{(0)}_{\mu\nu\rho} \Delta^{\rho \sigma} 
      C^{(0)}_{\sigma \tau \omega} \Delta^{\tau\omega} 
      + \frac{1}{12} 
      C^{(0)}_{\mu\nu\rho} 
      \Delta^{\mu\mu'} \Delta^{\nu\nu'} \Delta^{\rho\rho'} 
      C^{(0)}_{\mu' \nu' \rho'}. 
    \end{align*} 
    cf.~\cite[Figure~1]{ABK}.
  \end{enumerate}
\end{example} 

\begin{remark} 
We can use the flat co-ordinate system $\{q_n^i\}_{n\ge 0,0\le i\le N}$ 
associated to the parallel pseudo-opposite $\sfP_1$ (see Definition~\ref{def:flatcoordinate_on_LL}) to expand the 
$n$-point correlation functions as follows: 
\[
\Nabla^n C^{(g)} = \sum_{\substack{
l_1,\dots,l_n\ge 0 \\ 
0\le i_1,\dots,i_n\le N}}  
\parfrac{^n C^{(g)}}{q_{l_1}^{i_1} 
\cdots \partial q_{l_n}^{i_n}} 
dq_{l_1}^{i_1} \otimes \cdots \otimes dq_{l_n}^{i_n}
\]
Written in flat co-ordinates, the transformation rule above 
matches with the action of Givental's quantized operators on tame potentials. 
This will be explained in \S\ref{subsec:Givental-Global} below. 
\end{remark}

We show in Lemmas~\ref{lem:jetness}--\ref{lem:pole} below 
that the transformation rule is well-defined, i.e.~that
$\widehat\wave = \{\hC^{(g)}_{\mu_1,\dots,\mu_n}\}$ 
in Definition~\ref{def:transformation} 
satisfies the conditions in Definition~\ref{def:localFock}. 
Observe first that the tensor $\hC^{(g)}_{\mu_1,\dots,\mu_n}$ 
defined by the Feynman rule is automatically completely symmetric. 
We already saw in \eqref{eq:Yukawa-Feynman} that $\hC^{(0)}_{\mu\nu\rho}$ 
is the Yukawa coupling. 
Let $\Nabla$, $\hNabla$ denote the 
flat connections on $\LLo$ associated with $\sfP_1$, $\sfP_2$ 
respectively. 

\begin{lemma}[Jetness] 
\label{lem:jetness} 
$\hNabla({\hNabla}^n \hC^{(g)}) = {\hNabla}^{n+1} \hC^{(g)}$, 
i.e.\ 
$\hNabla_\nu \hC^{(g)}_{\mu_1,\dots,\mu_n} 
= \hC^{(g)}_{\nu,\mu_1,\dots,\mu_n}$ 
(see equation~\ref{eq:covariant-derivative} for this notation). 
\end{lemma}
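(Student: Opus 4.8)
The statement asserts that the Feynman-rule output $\hC^{(g)}_{\mu_1,\dots,\mu_n}$ satisfies the Jetness condition with respect to the \emph{new} connection $\hNabla = \Nabla^{\sfP_2}$. The natural strategy is to differentiate the Feynman sum term by term using $\hNabla$ and show the result reassembles into the Feynman sum with one extra leg. The key algebraic input is Proposition~\ref{prop:difference_conn}: $\hNabla$ and $\Nabla$ differ by the propagator, precisely $\hNabla_\nu \omega = \Nabla_\nu \omega - \iota(\iota_\omega \Delta)\bY$ on one-forms, and more importantly $\hNabla_\nu \Delta^{\alpha\beta} = \Nabla_\nu\Delta^{\alpha\beta} - \Delta^{\alpha\sigma}C^{(0)}_{\sigma\nu\tau}\Delta^{\tau\beta}$ together with $\Nabla_\nu \Delta^{\alpha\beta} = \Delta^{\alpha\sigma}C^{(0)}_{\sigma\nu\tau}\Delta^{\tau\beta}$, so in fact $\hNabla \Delta = 0$. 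This last fact is the crucial simplification: when we apply $\hNabla_\nu$ to $\Cont_\Gamma(\wave,\Delta)_{\mu_1,\dots,\mu_n}$, the Leibniz rule produces (i) terms where $\hNabla_\nu$ hits a vertex tensor $\Nabla^{n_v}C^{(g_v)}$, and (ii) terms where it hits a propagator on an edge. By $\hNabla\Delta = 0$ the edge terms would vanish — except that $\hNabla$ acting on the vertex tensors must be rewritten in terms of $\Nabla$ (the connection under which $\wave$ satisfies Jetness), which reintroduces propagator-contracted Yukawa couplings.

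\textbf{Main steps.} First I would fix the cleanest bookkeeping: work with the total jet potential and the exponential formula \eqref{eq:transformationrule-jet}, $\exp(\hcW) = \exp\bigl(\tfrac{\hbar}{2}\Delta^{\mu\nu}\partial_{\sy^\mu}\partial_{\sy^\nu}\bigr)\exp(\cW)$, rather than the graph sum directly; this trades combinatorics for calculus. The Jetness condition $\hNabla_\nu \hC^{(g)}_{\mu_1,\dots,\mu_n} = \hC^{(g)}_{\nu\mu_1\dots\mu_n}$ is equivalent to the statement that $\hcW(\sx,\sy)$ satisfies a first-order PDE relating its $\sx$-derivative (corrected by the Christoffel symbols $\hGamma$ of $\hNabla$) to its $\sy$-derivative — concretely $\bigl(\partial_{\sx^\nu} - \hGamma^\rho_{\nu\mu}\sy^\mu \partial_{\sy^\rho}\bigr)\hcW = \partial_{\sy^\nu}\hcW$, i.e.\ $\hcW$ is "$\hNabla$-flat" in the sense that its jet at each point is the Taylor expansion of a single function in $\hNabla$-flat coordinates. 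Since $\wave \in \Fock(U;\sfP_1)$, the input $\cW$ satisfies the analogous equation with $\Nabla$ and $\Gamma$. So the task reduces to: conjugate the $\Nabla$-flatness equation for $\cW$ by the operator $D := \exp\bigl(\tfrac{\hbar}{2}\Delta^{\mu\nu}\partial_{\sy^\mu}\partial_{\sy^\nu}\bigr)$ and check it becomes the $\hNabla$-flatness equation for $\hcW = D \cW$ (on exponentials). Second step: compute the commutator of the flat-transport operator $L_\nu := \partial_{\sx^\nu} - \Gamma^\rho_{\nu\mu}\sy^\mu\partial_{\sy^\rho} - \partial_{\sy^\nu}$ with $D$. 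Using $\partial_{\sx^\nu}\Delta^{\mu\mu'} = -\Nabla_\nu\Delta^{\mu\mu'} - \Gamma^\mu_{\nu\sigma}\Delta^{\sigma\mu'} - \Gamma^{\mu'}_{\nu\sigma}\Delta^{\mu\sigma}$ and $\Nabla_\nu\Delta^{\mu\mu'} = \Delta^{\mu\sigma}C^{(0)}_{\sigma\nu\tau}\Delta^{\tau\mu'}$ (Proposition~\ref{prop:difference_conn}(2)), together with $[\partial_{\sy^\nu},\partial_{\sy^\mu}] = 0$ and $[\sy^\mu\partial_{\sy^\rho}, \partial_{\sy^\alpha}\partial_{\sy^\beta}]$, one finds $[L_\nu, D] = D \circ \bigl(\text{operator built from } \Delta, C^{(0)}, \text{and } \partial_{\sy}\bigr)$. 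Third step: show that this correction operator, when hitting $\exp(\cW)$, exactly accounts for the difference $\hGamma - \Gamma$ acting in $L_\nu$ versus $\hat L_\nu$ — this is where $(\hGamma - \Gamma)^\rho_{\nu\mu} = \Delta^{\rho\sigma}C^{(0)}_{\sigma\nu\mu}$ from Proposition~\ref{prop:difference_conn}(1) enters — plus possibly a term that hits the Yukawa part of $\cW$ (the $\sy^3$ term of $\cW^0$), which contributes $C^{(0)}_{\sigma\nu\tau}$ contracted with two propagators. Assembling these, the equation $\hat L_\nu \exp(\hcW) = 0$ should drop out.

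\textbf{Expected obstacle.} The genuine difficulty is the careful tracking of the \emph{genus-zero} contributions. The jet potential $\cW$ contains $\cW^0 = \sum_{n\ge 3}\tfrac{1}{n!}C^{(0)}_{\mu_1\dots\mu_n}\sy^{\mu_1}\cdots\sy^{\mu_n}$, and the "$\hbar$" in $D = \exp(\tfrac{\hbar}{2}\Delta\partial_{\sy}\partial_{\sy})$ interacts with the $\hbar^{-1}$ in front of $\cW^0$; derivatives $\partial_{\sy}\partial_{\sy}$ hitting the $\sy^3$-term of $\cW^0$ produce a factor of $\hbar^{-1}$ that cancels the $\hbar$, giving exactly the $O(\hbar^0)$ and $O(\hbar^{-1})$ corrections that must match the Yukawa-coupling terms in $\hGamma - \Gamma$. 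Keeping the $\hbar$-bookkeeping straight while simultaneously managing the index contractions — and in particular verifying that no spurious terms survive at order $\hbar^{-1}$ (which would signal a failure of the genus-zero consistency, i.e.\ potentiality of $C^{(0)}$) — is the delicate part. The latter is precisely where I would invoke Proposition~\ref{prop:potentiality} ($\Nabla^3 C^{(0)} = \bY$) and the symmetry of $\Delta$ (Proposition~\ref{pro:prop-elementary}). Everything else is Leibniz-rule bookkeeping that I would relegate to a direct computation.
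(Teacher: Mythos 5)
Your overall route --- encode Jetness as a transport equation for the jet potential and conjugate it by the operator $D=\exp\bigl(\tfrac{\hbar}{2}\Delta^{\mu\nu}\partial_{\sy^\mu}\partial_{\sy^\nu}\bigr)$, using Proposition~\ref{prop:difference_conn} --- is a viable generating-function repackaging of the paper's argument (the paper instead differentiates the Feynman sum graph by graph, converting differentiated propagators and the $(\hNabla-\Nabla)$-correction into genus-zero trivalent vertices and then matching the three types of vertices carrying the new leg $\nu$). But two concrete points in your plan are wrong as stated. First, the identity you call ``the crucial simplification'', $\hNabla\Delta=0$, is false: the difference of connections acts on \emph{both} upper indices of $\Delta$, so with the paper's sign conventions ($\hGamma^{\nu}_{\mu\sigma}-\Gamma^{\nu}_{\mu\sigma}=-\Delta^{\nu\rho}C^{(0)}_{\rho\mu\sigma}$) one gets
\[
\hNabla_{\mu}\Delta^{\nu\rho}
=\Nabla_{\mu}\Delta^{\nu\rho}-2\,\Delta^{\nu\sigma}C^{(0)}_{\sigma\mu\tau}\Delta^{\tau\rho}
=-\,\Delta^{\nu\sigma}C^{(0)}_{\sigma\mu\tau}\Delta^{\tau\rho},
\]
which is also what Proposition~\ref{prop:difference_conn}(2) gives with the roles of $\sfP_1,\sfP_2$ exchanged, since $\Delta(\sfP_2,\sfP_1)=-\Delta(\sfP_1,\sfP_2)$. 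So the edge terms do \emph{not} drop out; in fact they must not, because in the expansion of $\hC^{(g)}_{\nu\mu_1\dots\mu_n}$ the graphs whose $\nu$-vertex is a genus-zero trivalent vertex attached to two propagators are produced exactly by the differentiated edges (this is the paper's $C_{\rm prop}$). If those edge contributions vanished, your bookkeeping would fail to reproduce this class of graphs. The correct mechanism is that the edge term $-\Delta C^{(0)}\Delta$ combines with the two $(\hNabla-\Nabla)$-corrections on the vertex indices contracted at that edge (each $+\Delta C^{(0)}\Delta$) to give a single $+\Delta C^{(0)}\Delta$ insertion --- equivalently, do as the paper does and differentiate with $\Nabla=\Nabla^{\sfP_1}$ instead of $\hNabla$ from the start.

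Second, the target equation $\hat L_\nu\exp(\hcW)=0$ is not the right formulation of Jetness, because the jet potentials omit the unstable terms. With $L_\nu=\partial_{\sx^\nu}-\Gamma^{\rho}_{\nu\mu}\sy^{\mu}\partial_{\sy^\rho}-\partial_{\sy^\nu}$, Jetness of $\wave$ is equivalent to
\[
L_\nu\,\cW \;=\; -\frac{1}{2\hbar}\,C^{(0)}_{\nu\alpha\beta}\sy^{\alpha}\sy^{\beta}\;-\;C^{(1)}_{\nu},
\]
the inhomogeneous terms coming from the missing quadratic part at genus zero and the missing constant at genus one; the analogous equation for $\hcW$ carries $\hC^{(1)}_{\nu}=C^{(1)}_{\nu}+\tfrac12 C^{(0)}_{\nu\alpha\beta}\Delta^{\alpha\beta}$, i.e.\ the difference one-form \eqref{eq:difference1-form}. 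So the commutator $[L_\nu,D]$ must be shown to produce precisely these inhomogeneous corrections (the $\hbar\,\Delta\,\partial_{\sy}\partial_{\sy}$ hitting the cubic Yukawa term of $\cW^0$ is where the genus-one shift comes from), not to vanish. There are also sign slips to repair against the paper's conventions: $\hNabla_\nu\omega=\Nabla_\nu\omega+\iota(\iota_\omega\Delta)\bY$ and $(\hGamma-\Gamma)^{\rho}_{\nu\mu}=-\Delta^{\rho\sigma}C^{(0)}_{\sigma\nu\mu}$. With the identity for $\hNabla\Delta$ corrected, the target equation stated with its unstable right-hand sides, and the signs fixed, your conjugation argument does close, and it is then essentially equivalent to the paper's graph-by-graph proof rather than a shortcut around it.
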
 
\begin{proof}
We have  
\begin{equation} 
\label{eq:NablaprimehC}
\hNabla_\nu\hC^{(g)}_{\mu_1,\dots,\mu_n} 
=\Nabla_\nu \hC^{(g)}_{\mu_1,\dots,\mu_n} 
+ (\hNabla - \Nabla)_\nu \hC^{(g)}_{\mu_1,\dots,\mu_n} 
\end{equation} 
By the Feynman rule for $\hC^{(g)}_{\mu_1,\dots,\mu_n}$, 
we can write the first term as 
\[
\Nabla_\nu \hC^{(g)}_{\mu_1,\dots,\mu_n} 
= C_{\rm vert} + C_{\rm prop} 
\]
where $C_{\rm vert}$ and $C_{\rm prop}$ arise 
from the vertex and the propagator 
differentiations respectively. 
The term $C_{\rm vert}$ is the sum over stable graphs 
with one extra leg $\nu$ attached to a vertex $v$; 
note that the vertex $v$ 
satisfies $2 g_v -2 + n_v>1$.  
The term $C_{\rm prop}$ is the sum over stable graphs 
with the differentiated propagator $\Nabla_\nu \Delta$ 
on one of the edges. 
By Proposition~\ref{prop:difference_conn}(2), 
we can replace the edge $\Nabla_\nu \Delta$ 
by the genus-zero trivalent vertex with the leg $\nu$: 
\[
\begin{xy} 
{\ar@{-}(20,5) *=0 {\bullet}; (20,-5) *=0 {\bullet} 
_{\Nabla_\nu\Delta}}, 
{\ar@{->}(30,0); (40,0)}, 
{\ar@{-}(50,5) *=0 {\bullet}; (60,0) *=0 {\bullet}
^(.4){\Delta}}, 
{\ar@{-}(50,-5) *=0{\bullet} ; (60,0)*=0{\bullet}
_(.4){\Delta}}, 
{\ar@{-}(60,0) ; (70, 0) *{\nu} _<{g=0}} 
\end{xy} 
\]
By Proposition~\ref{prop:difference_conn}(1), the second term 
of \eqref{eq:NablaprimehC} is:
\[
(\hNabla - \Nabla)_\nu \hC^{(g)}_{\mu_1,\dots,\mu_n} 
= \sum_{i=1}^n \sum_\Gamma \frac{1}{\Aut(\Gamma)}
\Cont_\Gamma(\wave, \Delta)_{
\mu_1,\dots,\underset{i}{\sigma},\dots,\mu_n} 
\Delta^{\sigma \rho} C^{(0)}_{\rho \mu_i \nu}
\]
Namely, we add to the leg $\mu_i$ the genus-zero 
trivalent vertex: 
\begin{equation} 
\label{eq:leg-modification} 
\begin{xy} 
{\ar@{-}(0,0)*=0{\bullet}; (20,0)*{\mu_i}}, 
{\ar@{->}(30,0); (40,0)}, 
{\ar@{-}(50,0)*=0{\bullet};
(60,0)*=0{\bullet}_>{g=0}^{\Delta}}, 
{\ar@{-}(60,0);(70,4)*{\mu_i}},
{\ar@{-}(60,0);(70,-4)*{\nu}} 
\end{xy} 
\end{equation} 
On the other hand, by the Feynman rule, 
$\hC^{(g)}_{\nu, \mu_1,\dots,\mu_n}$ 
can be written as a summation over genus-$g$ stable graphs 
with legs $\nu,\mu_1,\dots,\mu_n$; 
if $v$ denotes the vertex incident to the leg $\nu$, 
we have the following three cases:  
\begin{itemize}
\item $n_v + 2 g_v -2 >1$
\item $g_v=0$, $n_v=3$ and $v$ has only one leg $\nu$
\item $g_v=0$, $n_v=3$ and $v$ has two legs $\nu,\mu_i$ 
\end{itemize}
These three cases correspond to $C_{\rm vert}$, 
$C_{\rm prop}$ and $(\hNabla - \Nabla)_\nu
\hC^{(g)}_{\mu_1,\dots,\mu_n}$ 
respectively. 
\end{proof} 

\begin{lemma}[Grading \& Filtration] 
\label{lem:GrFil} 
$\hNabla^n\hC^{(g)}\in 
((\bOmega^1)^{\otimes n})^{2-2g}_{3g-3}$. 
\end{lemma}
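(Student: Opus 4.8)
The plan is to verify the membership graph by graph using the Feynman rule of Definition~\ref{def:transformation}. By the Jetness identity (Lemma~\ref{lem:jetness}), the tensor $\hNabla^n\hC^{(g)}$ has components $\hC^{(g)}_{\mu_1,\dots,\mu_n}$, so it is enough to show that for every connected stable decorated graph $\Gamma$ contributing to $\hC^{(g)}_{\mu_1,\dots,\mu_n}$ the contraction $\Cont_\Gamma(\wave,\Delta)_{\mu_1,\dots,\mu_n}$ lies in $\big((\bOmega^1)^{\otimes n}\big)^{2-2g}_{3g-3}$; the claim for the finite sum $\hC^{(g)}_{\mu_1,\dots,\mu_n}=\sum_\Gamma|\Aut(\Gamma)|^{-1}\Cont_\Gamma$ then follows at once.

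First I would record the grading/filtration type of the building blocks. Each vertex tensor $\Nabla^{n_v}C^{(g_v)}$ lies in $\big((\bOmega^1)^{\otimes n_v}\big)^{2-2g_v}_{3g_v-3}$ by the Grading \& Filtration condition on $\wave\in\Fock(U;\sfP_1)$ from Definition~\ref{def:localFock} (for $g_v=0$ this is consistent since $\Nabla^{n_v}C^{(0)}=\Nabla^{n_v-3}\bY$ with $\bY\in\big((\bOmega^1)^{\otimes 3}\big)^2_{-3}$ by Definition~\ref{def:Yukawa}, and $\Nabla=\Nabla^{\sfP_1}$ preserves the grading and the filtration by Proposition~\ref{prop:grading-filtration-Nabla}). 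The propagator $\Delta=\Delta(\sfP_1,\sfP_2)$, viewed as a contraction map, has $\deg\Delta=-2$ and $\filt\Delta\le2$ by Proposition~\ref{prop:grading-filtration-prop}.

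Then I would assemble the count. Write $V=|V(\Gamma)|$ and $E=|E(\Gamma)|$; since each edge carries two flags and each leg one, $\sum_v n_v=n+2E$, and $\Cont_\Gamma$ is obtained from $\bigotimes_v\Nabla^{n_v}C^{(g_v)}\in(\bOmega^1)^{\otimes(n+2E)}$ by applying $\Delta$ to the $E$ pairs of one-form slots sitting on flags of edges and multiplying the resulting elements of $\bcO$ back in. As the grading and the filtration on $(\bOmega^1)^{\otimes m}$ are additive, resp.\ subadditive, under tensor products and under multiplication by $\bcO$ (by the definitions in \S\ref{subsec:totalspace}), each propagator application shifts the degree by $-2$ and the filtration by at most $+2$; hence
\begin{align*}
\deg\Cont_\Gamma &= \sum\nolimits_v(2-2g_v)-2E = 2V-2\sum\nolimits_v g_v-2E,\\
\filt\Cont_\Gamma &\le \sum\nolimits_v(3g_v-3)+2E = 3\sum\nolimits_v g_v-3V+2E.
\end{align*}
Since $\Gamma$ is connected its topological realization deformation-retracts onto the subcomplex spanned by its vertices and edges, so $\chi(\Gamma)=V-E$, and the identity $g=\sum_v g_v+1-\chi(\Gamma)$ of Definition~\ref{def:transformation} gives $\sum_v g_v=g-1+V-E$. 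Substituting yields $\deg\Cont_\Gamma=2V-2(g-1+V-E)-2E=2-2g$ and $\filt\Cont_\Gamma\le3(g-1+V-E)-3V+2E=3g-3-E\le 3g-3$, which is exactly the desired membership.

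I expect the point needing care to be not the arithmetic but the justification that ``$\deg\Delta=-2$, $\filt\Delta\le2$'' survive when $\Delta$ is applied to two slots belonging to (possibly distinct) vertex factors inside the large tensor product $(\bOmega^1)^{\otimes m}$, rather than on $(\bOmega^1)^{\otimes2}$ in isolation. This is a formal consequence of the compatibility of the $\bcO$-module structure with the grading and the filtration on $(\bOmega^1)^{\otimes m}$ set up in \S\ref{subsec:totalspace}, together with filtration-subadditivity of tensor contraction, but it is the step where one must argue carefully rather than merely count exponents.
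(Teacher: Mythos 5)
Your proof is correct and follows essentially the same route as the paper: estimate each graph contribution using $\deg\Delta=-2$, $\filt\Delta\le 2$ together with the vertex bounds $\deg = 2-2g_v$, $\filt\le 3g_v-3$, then sum using $g=\sum_v g_v+1-\chi(\Gamma)$ to get degree $2-2g$ and filtration $\le 3g-3-|E(\Gamma)|\le 3g-3$. Your extra remark about compatibility of the contraction with the grading and filtration on large tensor products is left implicit in the paper but is a reasonable point to make explicit.
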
 
\begin{proof} 
Let $\Gamma$ be a decorated graph contributing to the Feynman 
rule of $\hC^{(g)}_{\mu_1,\dots,\mu_n}$. 
We estimate the grading and the filtration 
of the contribution from $\Gamma$. 
Recall that $\deg \Delta = -2$, $\filt \Delta \le 2$ 
by Proposition~\ref{prop:grading-filtration-prop} 
and $\deg \Nabla = 0$, $\filt \Nabla \le 0$ by 
Proposition~\ref{prop:grading-filtration-Nabla}. 
The degree can be calculated as:
\[
\sum_{v\in V(\Gamma)} (2-2g_v) + 
\sum_{e\in E(\Gamma)} (-2) = 2 - 2 g
\]
The filtration is estimated as:
\[
\sum_{v\in V(\Gamma)} (3 g_v -3 ) + \sum_{e\in E(\Gamma)} 2 = 
3 g - 3 - |E(\Gamma)| \le 3 g -3
\]
The conclusion follows.   
\end{proof} 

\begin{lemma}[Pole] 
\label{lem:pole} 
Let $P=P(t,x_1)$ be the discriminant \eqref{eq:discriminant}. 
Then $P\hNabla \hC^{(1)}$ extends to a regular 1-form on 
$\pr^{-1}(U)$ and $\hC^{(g)}$ belongs to 
$P^{-(5g-5)}\cO(U)[x_1,x_2,P x_3, P^2 x_4,\dots]$ 
for $g\ge 2$.  
\end{lemma}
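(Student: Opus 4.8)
The plan is to handle genus one directly and then prove the $g\ge 2$ statement by a fibrewise pole count carried out in flat co-ordinates. First, genus one: by the Feynman rule \eqref{eq:Feynman-genusone} one has $\hC^{(1)}_\mu = C^{(1)}_\mu + \tfrac{1}{2} C^{(0)}_{\mu\nu\rho}\Delta^{\nu\rho}$ with $\Delta = \Delta(\sfP_1,\sfP_2)$, so that, as one-forms on $\LLo$,
\[
\hNabla\hC^{(1)} = \Nabla^{\sfP_1}C^{(1)} + \omega_{\sfP_1\sfP_2},
\]
where $\omega_{\sfP_1\sfP_2}$ is the difference one-form \eqref{eq:difference1-form}. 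The condition (Pole) for the input $\wave\in\Fock(U;\sfP_1)$ says $P\,\Nabla^{\sfP_1}C^{(1)}$ extends to a regular one-form on $\pr^{-1}(U)$, while Lemma~\ref{lem:difference1-form} shows that $\omega_{\sfP_1\sfP_2}$ is pulled back from $\cM$, hence is already regular on $\pr^{-1}(U)$. Therefore $P\,\hNabla\hC^{(1)}$ extends regularly to $\pr^{-1}(U)$.

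For $g\ge 2$ I would fix $t\in U$, let $\{q_n^i\}_{n\ge 0}$ be the flat co-ordinate system on $\hLLo$ associated with $\sfP_1$ (Definition~\ref{def:flatcoordinate_on_LL}), and set $P_t=P(t,q_1)$ and $\cS=\C[q_1,q_2,P_t q_3,P_t^2 q_4,\dots]$. The key point is that the Feynman rule for $\hC^{(g)}$ has no legs and involves no covariant derivatives, so — using that the $q_n^i$ are flat for $\Nabla^{\sfP_1}$, i.e.\ $\Nabla^{\sfP_1}dq_n^i=0$ — it suffices to restrict every ingredient to the fibre $\LLo_t$ and expand in the $q$'s. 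There the vertex tensor $\Nabla^{n_v}C^{(g_v)}$ has components $\partial^{n_v}C^{(g_v)}/\partial q_{l_1}^{i_1}\cdots\partial q_{l_{n_v}}^{i_{n_v}}$, which by Proposition~\ref{prop:pole-flat} (evaluated at $q_0=0$, as in the remark following it) lie in $P_t^{-(5g_v-5+2n_v-(l_1+\cdots+l_{n_v}))}\cS$ and vanish for $l_1+\cdots+l_{n_v}$ large; and by Proposition~\ref{prop:Giventalprop=prop} the propagator $\Delta(\sfP_1,\sfP_2)$ restricted to $\LLo_t$ has constant components $V^{(n,j),(m,i)}\in\cO(U)\subset\cS$, in particular no pole along $P_t=0$.

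Now let $\Gamma$ be a connected stable graph of genus $g$ contributing to the Feynman rule, with vertices of genera $g_v$ and valences $n_v$ and with $|E(\Gamma)|$ edges. Each edge identifies a leg of label $l_e'$ at one vertex with a leg of label $l_e''$ at another, so the total of the leg-labels equals $\sum_{e\in E(\Gamma)}(l_e'+l_e'')\ge 0$, while $\sum_v n_v=2|E(\Gamma)|$. Since $\cS$ is a ring, each term of $\Cont_\Gamma(\wave,\Delta)|_{\LLo_t}$ (indexed by a choice of leg-labels) lies in $P_t^{-a}\cS$ with
\[
a=\sum_v\bigl(5g_v-5+2n_v\bigr)-\sum_{e\in E(\Gamma)}(l_e'+l_e'')\le 5\sum_v g_v-5|V(\Gamma)|+4|E(\Gamma)|,
\]
and the identity $g=\sum_v g_v+|E(\Gamma)|-|V(\Gamma)|+1$ gives $5g-5=5\sum_v g_v+5|E(\Gamma)|-5|V(\Gamma)|\ge a$. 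Summing over the finitely many stable genus-$g$ graphs yields $\hC^{(g)}|_{\LLo_t}\in P_t^{-(5g-5)}\cS$ for every $t\in U$. Together with the filtration bound $\hC^{(g)}\in\bcO^{2-2g}_{3g-3}$ from Lemma~\ref{lem:GrFil} — which limits the $q$-variables that can occur — and with the change of co-ordinates of Lemma~\ref{lem:sx-poleorder}, this is equivalent, by the reformulation in Remark~\ref{rem:Fockspace}(6) and the remark after Proposition~\ref{prop:pole-flat}, to the assertion $\hC^{(g)}\in P^{-(5g-5)}\cO(U)[x_1,x_2,Px_3,P^2x_4,\dots]$ on $\pr^{-1}(U)$.

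The numerical inequality $a\le 5g-5$ is immediate once the Euler-characteristic identity for $\Gamma$ is in hand, so the main obstacle I anticipate is bookkeeping. One must check that the stated pole bounds for the vertices are legitimate in all the relevant cases (for $g_v=0$ the tensors $\Nabla^{n_v-3}\bY$ are in fact regular for $n_v\le 4$ by Lemma~\ref{lem:4,5-tensor}, and for larger $n_v$ the pole, once passed to flat co-ordinates, is concentrated on $q_0$-derivatives), and that the fibrewise computation, together with the passage between the $\sfP_1$-flat co-ordinates $\{q_n^i\}$ and the algebraic co-ordinates $\{t^i,x_n^i\}$, genuinely recovers the (Pole) condition of Definition~\ref{def:localFock} rather than merely a pointwise pole-order estimate.
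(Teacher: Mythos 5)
Your proposal is correct and follows essentially the same route as the paper: the genus-one case via the Feynman rule \eqref{eq:Feynman-genusone} and the regularity of $\omega_{\sfP_1\sfP_2}$, and for $g\ge 2$ the fibrewise computation in the flat co-ordinates of $\sfP_1$, using Proposition~\ref{prop:pole-flat} for the vertices, the constancy of the propagator along $\LLo_t$ (Proposition~\ref{prop:Giventalprop=prop}, equation~\ref{eq:flatcoord-atthecentre}), and the Euler-characteristic identity to bound the pole order by $5g-5$. Your slightly different bookkeeping (subtracting the leg labels rather than bounding $\sum_v(5g_v-5+2n_v)=5g-5-|E(\Gamma)|$ directly) yields the same estimate, and your closing worry is resolved exactly as in the paper: since $q_n^i|_{\LL_t}=x_n^i$ for $n\ge 1$ and the fibrewise containment holds at every $t\in U$, the global statement in the algebraic co-ordinates follows.
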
 
\begin{proof} 
By the Feynman rule \eqref{eq:Feynman-genusone} at genus one, 
$\hNabla \hC^{(1)}$ differs from 
$\Nabla C^{(1)}$ by a regular one-form $\omega_{\sfP_1\sfP_2}$ 
on $U$ (see equation~\ref{eq:difference1-form}). 
Thus $P \hNabla \hC^{(1)}$ is regular. 

For $g\ge 2$, we apply the Feynman rule to correlation functions 
\emph{written in flat co-ordinates}. 
Take a point $t\in U$ and flat co-ordinates 
$\{q_n^i\}_{n\ge 0, 0\le i\le N}$ 
on the formal neighbourhood $\hLLo$ of $\hLLo_t$ 
associated to $\sfP_1$. 
Take a graph $\Gamma$ (without legs) 
which contributes to the Feynman rule for $\hC^{(g)}$.  
By Proposition~\ref{prop:pole-flat}, 
the vertex term for $v\in V(\Gamma)$ 
\[
\parfrac{^n C^{(g)}}{q_{l_1}^{i_1} \cdots \partial 
q_{l_{n}}^{i_{n}}} 
\Biggr |_{q_0=0} \qquad 
g=g_v, \ n=n_v 
\]
belongs to $P_t^{-(5g_v-5+2n_v-(l_1+\cdots+l_m))} 
\C[q_1,q_2,P_tq_3, P_t^2 q_4,\dots]$ 
with $P_t = P(t,q_1)$. 
The propagator 
\[
\Delta(dq_n^i , dq_m^j ) \bigr|_{q_0=0}
= \Omega^\vee(\Pi_1^* \varphi_n^i, \Pi_2^*\varphi_m^j) 
\]
is constant (see equation~\ref{eq:flatcoord-atthecentre}) along 
the fiber $\LL_t=\{q_0=0\}$. 
Using the formulae: 
\begin{align*}
  \sum_{v\in V(\Gamma)} (g_v- 1) = g-1 - |E(\Gamma)| &&
  \sum_{v\in V(\Gamma)} n_v = 2 |E(\Gamma)| 
\end{align*}
we bound the pole order 
of $\Cont_\Gamma(\wave, \Delta)|_{q_0=0}$ along $P_t=0$ 
from above as:
\[
\sum_{v\in V(\Gamma)} (5g_v-5 + 2n_v) 
= 5g-5 - |E(\Gamma)| 
\le 5g-5
\]
Thus:
\[
\hC^{(g)}\bigr|_{\LLo_t} \in P_t^{-(5g-5)} 
\C[q_1,q_2,P_t q_3,P_t^2 q_4,\dots]
\]
Since $q_n^i|_{\LL_t} = x_n^i$ ($n\ge 1$) and 
this holds for every point $t$, 
the conclusion follows. 
\end{proof} 

\begin{proposition}[cocycle condition] 
\label{prop:cocycle} 
Let $\sfP_1,\sfP_2, \sfP_2$ 
be parallel pseudo-opposite modules over an open set $U$. 
The transformation rules $T_{ij} = T(\sfP_i, \sfP_j)
\colon \Fock(U;\sfP_i) \to \Fock(U;\sfP_j)$ 
satisfy the cocycle condition:  
\[
T_{13} = T_{23} \circ T_{12}. 
\] 
\end{proposition}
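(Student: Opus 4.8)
The plan is to observe that, once the data of a Fock element is repackaged into a jet potential as in Definition~\ref{def:jetpotential}, the transformation rule \eqref{eq:transformationrule-jet} is simply a Gaussian-type differential operator acting on the fiber variables $\sy^\mu$, and that the cocycle condition then follows immediately from the additivity of propagators (Proposition~\ref{prop:Deltasum}).

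First I would fix an algebraic local co-ordinate system $\{\sx^\mu\}$ on $\pr^{-1}(U)$ with dual fiber co-ordinates $\{\sy^\mu\}$ on $\bTheta$, and note that $\wave \mapsto \exp(\cW)$ is a bijection between $\Fock(U;\sfP)$ and total jet potentials, since the correlation functions $C^{(g)}_{\mu_1,\dots,\mu_n}$ are exactly the Taylor coefficients of $\cW^g$ in $\sy$. Hence it suffices to prove $T_{13} = T_{23}\circ T_{12}$ at the level of jet potentials. By Lemmas~\ref{lem:jetness}--\ref{lem:pole} the maps $T_{12}$, $T_{23}$, $T_{13}$ are each well-defined, so the composite $T_{23}\circ T_{12}$ and all the manipulations below take place inside spaces of honest jet potentials.

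Next, writing $\Delta_{ij} = \Delta(\sfP_i,\sfP_j) = \Delta_{ij}^{\mu\nu}\partial_\mu\otimes\partial_\nu$ in these co-ordinates, I would use that each $\Delta_{ij}^{\mu\nu}$ is a function of $\sx$ only, so that the operators $\Delta_{12}^{\mu\nu}\partial_{\sy^\mu}\partial_{\sy^\nu}$ and $\Delta_{23}^{\mu\nu}\partial_{\sy^\mu}\partial_{\sy^\nu}$ (whose coefficients do not involve $\sy$ and which differentiate only in $\sy$) commute. Consequently
\[
\exp\!\left(\tfrac{\hbar}{2}\Delta_{23}^{\mu\nu}\partial_{\sy^\mu}\partial_{\sy^\nu}\right)
\exp\!\left(\tfrac{\hbar}{2}\Delta_{12}^{\mu\nu}\partial_{\sy^\mu}\partial_{\sy^\nu}\right)
= \exp\!\left(\tfrac{\hbar}{2}\bigl(\Delta_{12}^{\mu\nu}+\Delta_{23}^{\mu\nu}\bigr)\partial_{\sy^\mu}\partial_{\sy^\nu}\right),
\]
and Proposition~\ref{prop:Deltasum} gives $\Delta_{12}^{\mu\nu}+\Delta_{23}^{\mu\nu} = \Delta_{13}^{\mu\nu}$. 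Applying the left-hand operator to $\exp(\cW)$ and comparing with \eqref{eq:transformationrule-jet} yields $T_{23}(T_{12}(\exp\cW)) = T_{13}(\exp\cW)$, as desired.

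The only thing to be careful about --- and this is really the same issue already settled in the proof that the Feynman rule is well-defined (Lemmas~\ref{lem:jetness}--\ref{lem:pole}) --- is that expanding the exponentiated second-order operators and applying them term by term to the two-sided formal Laurent series $\exp(\cW)$ is legitimate: for each fixed $(g,n)$ only finitely many decorated graphs contribute to $\hC^{(g)}_{\mu_1,\dots,\mu_n}$, so both $T_{23}\circ T_{12}$ and $T_{13}$ are given by the same finite sums, and the displayed operator identity is a genuine identity of maps on jet potentials. Note that there is no genus-one scalar ambiguity to track, because an element of $\Fock(U;\sfP)$ records $\Nabla C^{(1)}$ and its higher covariant derivatives rather than the potential $C^{(1)}$ itself. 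Thus the cocycle condition is a formal corollary of Proposition~\ref{prop:Deltasum}, the real content having been absorbed into the well-definedness lemmas.
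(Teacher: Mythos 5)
Your proof is correct and is essentially the same as the paper's, which simply notes that the cocycle condition is immediate from the definition \eqref{eq:transformationrule-jet} and the additivity of propagators (Proposition~\ref{prop:Deltasum}); you have just spelled out the commutation of the Gaussian operators and the well-definedness points that the paper leaves implicit.
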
 
\begin{proof}
This is immediate from the definition 
\eqref{eq:transformationrule-jet} 
and Proposition~\ref{prop:Deltasum}. 
\end{proof} 

We define the Fock sheaf over $\cM$ under the following assumption. 
(The Fock sheaf without this assumption will be considered in 
the next section \S\ref{subsec:curved}.)  
\begin{assumption}[Covering Assumption]  
\label{assump:covering} 
There exists an open covering $\{U_\alpha\}_{\alpha \in A}$ 
of $\cM$ such that we can find a parallel 
pseudo-opposite module $\sfP_\alpha$ over $U_\alpha$ 
for each $\alpha\in A$.   
\end{assumption} 

\begin{definition}[Fock sheaf] 
\label{def:Focksheaf} 
The \emph{Fock sheaf} is a sheaf of sets over $\cM$ 
which is obtained by gluing the local Fock 
spaces $\Fock(U_\alpha;\sfP_\alpha)$ over $U_\alpha$ 
by the transformation rule: 
\[
T_{\alpha\beta} = T(\sfP_\alpha,\sfP_\beta) \colon 
\Fock(U_{\alpha\beta}; \sfP_\alpha) \longrightarrow 
\Fock(U_{\alpha\beta}; \sfP_\beta) 
\]
where $U_{\alpha\beta} = U_\alpha \cap U_\beta$. 
More precisely, we define the set $\Fock(U)$ for an open set $U$ 
as the equalizer of the sequence: 
\[
\begin{CD} 
\displaystyle 
\prod_{\alpha: U\cap U_\alpha \neq \varnothing} 
\Fock(U \cap U_\alpha; \sfP_\alpha) 
\quad 
\begin{xy} 
{(0,-1) \ar (10,-1)_{\pi_2}}, 
{(0,1) \ar (10,1)^{\pi_1}}
\end{xy} 
\quad 
\displaystyle \prod_{(\alpha,\beta): U\cap U_{\alpha\beta} \neq 
\varnothing} \Fock(U \cap U_{\alpha\beta} ; \sfP_\alpha) 
\end{CD} 
\]
where $\pi_1(\{u_\alpha\}_\alpha) = 
\{u_\alpha|_{U\cap U_{\alpha\beta}} \}_{\alpha, \beta}$ 
and $\pi_2(\{u_\alpha\}_\alpha ) 
= \{ T_{\beta\alpha} (u_\beta|_{
U\cap U_{\alpha\beta}}) \}_{\alpha, \beta}$, that is:
\[
\Fock(U) = 
\left \{ \{\wave_\alpha \in \Fock(U\cap U_\alpha;\sfP_\alpha) \}_{\alpha\in A} 
\, \Big | \, T_{\alpha\beta} \wave_\alpha|_{U\cap U_\alpha \cap U_\beta} 
= \wave_\beta|_{U \cap U_\alpha \cap U_\beta} \right\}
\] 
\end{definition} 

\begin{remark} 
Note that $\Fock(U)$ is not a $\C$-vector space 
but is just a \emph{set}. 
This is because the transformation rule is not $\C$-linear. 
A natural $\C$-linear structure should be considered on 
the space of \emph{exponentiated} potentials 
$\exp(C^{(1)} + \hbar C^{(2)}+ \hbar^2 C^{(3)}+ \cdots)$.  
In fact, we can construct a Fock sheaf of $\C$-vector spaces 
by choosing certain ``orientation data'' and regard 
these exponentiated potentials as sections of the sheaf. 
We hope to discuss this issue elsewhere. 
\end{remark} 

\subsection{Anomaly Equation For Curved Polarizations} 
\label{subsec:curved} 
In this section we introduce a Fock space for possibly 
curved pseudo-opposite modules.  Correlation functions associated with a curved pseudo-opposite module satisfy, instead of the jetness condition, a certain anomaly equation.  As we explain in \S\ref{sec:HAE} below, when the curved pseudo-opposite module is the so-called complex conjugate polarization, the anomaly equation becomes the celebrated \emph{holomorphic anomaly equation} of Bershadsky--Cecotti--Ooguri--Vafa~\cite{BCOV:HA,BCOV:KS}.

Recall that a pseudo-opposite module for a cTP structure 
is said to be \emph{curved} if it is not preserved by $\nabla$ 
(Definition~\ref{def:opposite}). 
For a curved pseudo-opposite module $\sfQ$, 
$(\Nabla^{\sfQ})^n C^{(g)}$ is not necessarily symmetric,
because $\Nabla^{\sfQ}$ is not flat. 
The completely symmetric correlation functions 
associated to a curved pseudo-opposite module $\sfQ$ are defined in a 
different way, as follows. 
Suppose that we are given an element 
$\{C_{\sfP;\mu_1,\dots,\mu_n}^{(g)}\}\in \Fock(U; \sfP)$ 
for a parallel pseudo-opposite module $\sfP$. 
For each $t\in \cM$, there is a unique parallel pseudo-opposite 
module $\tsfQ(t)$ in the formal neighbourhood of $t$ 
such that $\tsfQ(t)_t = \sfQ_t$ (this is the parallel translation of $\sfQ_t$:
see the proof of Lemma~\ref{lem:existence_opposite}). 
From the transformation rule, we obtain correlation functions 
\[
\{C^{(g)}_{\tsfQ(t);\mu_1,\dots,\mu_n}\} = 
T(\sfP, \tsfQ(t)) \left( \{C_{\sfP; \mu_1,\dots,\mu_n}^{(g)}\}  
\right) 
\]
over the formal neighbourhood of $\LLo_t$.  
Restricting these to the fiber $\LLo_t$ and varying the point $t$,  
we obtain the correlation functions $C^{(g)}_{\sfQ;\mu_1,\dots,\mu_n}$ 
associated to $\sfQ$ 
such that 
\[
C^{(g)}_{\sfQ;\mu_1,\dots,\mu_n}\Bigr|_{\LLo_t} 
= C^{(g)}_{\tsfQ(t);\mu_1,\dots,\mu_n}\Bigr|_{\LLo_t} 
\]
for every $t$. 
Because the propagator $\Delta(\sfP,\tsfQ(t))$ 
coincides with $\Delta(\sfP,\sfQ)$ along the fiber $\LLo_t$, 
the new correlation functions $C^{(g)}_{\sfQ;\mu_1,\dots,\mu_n}$ 
can be described using \emph{the same Feynman rule}
as before. 
 
\begin{definition}
\label{def:under-curvedopposite} 
Let $\wave_\sfP= \{C^{(g)}_{\sfP;\mu_1,\dots,\mu_m}\}_{g,m}$ 
be an element of the local Fock space $\Fock(U;\sfP)$ 
associated to a parallel pseudo-opposite module $\sfP$ over $U$,
and let $\exp(\cW_{\sfP}(\sx,\sy))$ denote the corresponding jet potential 
(Definition~\ref{def:jetpotential}). 
Let $\sfQ$ be a (not necessarily parallel) pseudo-opposite 
module over $U$. 
We define a set 
$\wave_\sfQ= \{C^{(g)}_{\sfQ;\mu_1\dots,\mu_n}: \text{$g\ge 0$, $n\ge 0$, $2g-2+n>0$}\}$ 
of completely symmetric tensors 
by the same Feynman rule as appears in Definition~\ref{def:transformation}:
\[
C^{(g)}_{\sfQ;\mu_1,\dots,\mu_n} 
= \sum_{\Gamma} \frac{1}{|\Aut(\Gamma)|} 
\Cont_{\Gamma}(\wave_\sfP, \Delta(\sfP,\sfQ))
_{\mu_1,\dots,\mu_n}
\]
We write 
\[
\wave_\sfQ = T(\sfP,\sfQ) \wave_\sfP 
\]
and call $\wave_\sfQ$ the \emph{correlation functions 
under $\sfQ$ corresponding to $\wave_\sfP$}. 
The \emph{jet potential} associated to $\sfQ$ 
\begin{equation} 
\label{eq:jetpotential-curved}
\cW_{\sfQ} (\sx,\sy) = \sum_{g,n \ge 0, 2g-2+n>0} 
\frac{\hbar^{g-1}}{n!} 
C^{(g)}_{\sfQ; \mu_1,\dots,\mu_n}(\sx) \sy^{\mu_1} 
\cdots \sy^{\mu_n} 
\end{equation} 
is related to the jet potential $\cW_\sfP$ associated to $\wave_\sfP$ 
by the same formula \eqref{eq:transformationrule-jet}
as before: 
\begin{equation*} 
\exp(\cW_{\sfQ}(\sx,\sy)) = \exp\left( 
\frac{\hbar}{2} \Delta^{\mu\nu}(\sfP,\sfQ) 
\partial_{\sy^\mu} \partial_{\sy^\nu} 
\right) \exp(\cW_{\sfP}(\sx,\sy))
\end{equation*} 
\end{definition} 

\begin{proposition} 
The correlation functions under a curved pseudo-opposite module $\sfQ$ 
corresponding to a Fock space element 
satisfy the conditions (Yukawa), (Grading \& Filtration) 
and (Pole) but not necessarily the condition 
(Jetness) in Definition~\ref{def:localFock}. 
\end{proposition}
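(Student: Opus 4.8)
The plan is to verify each of the three claimed conditions by tracking how the defining properties of a Fock space element behave under the Feynman rule, using that the propagator $\Delta(\sfP,\sfQ)$ and the connection $\Nabla^\sfP$ restricted to each fiber $\LLo_t$ agree with those for the \emph{parallel translate} $\tsfQ(t)$. This is the key observation already recorded before the statement: the new correlation functions $C^{(g)}_{\sfQ;\mu_1,\dots,\mu_n}$, when restricted to $\LLo_t$, coincide with the output of the genuine transformation rule $T(\sfP,\tsfQ(t))$, to which Lemmas~\ref{lem:jetness}--\ref{lem:pole} and the (Yukawa) discussion of \S\ref{subsec:transformation} all apply verbatim.

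For (Yukawa): since the only connected stable graph with three legs and no internal structure contributing to $C^{(0)}_{\sfQ;\mu\nu\rho}$ is the single trivalent vertex, the Feynman rule gives $C^{(0)}_{\sfQ;\mu\nu\rho} = C^{(0)}_{\sfP;\mu\nu\rho} = \bY_{\mu\nu\rho}$ exactly as in \eqref{eq:Yukawa-Feynman}; the Yukawa coupling is intrinsic to the cTP structure and independent of the pseudo-opposite module, so nothing about parallelism is used here. For (Grading \& Filtration): the proof of Lemma~\ref{lem:GrFil} is purely a bookkeeping argument on $\deg$ and $\filt$ of vertices ($\deg = 2-2g_v$, $\filt \le 3g_v-3$), propagators ($\deg = -2$, $\filt\le 2$ by Proposition~\ref{prop:grading-filtration-prop}), and the contraction operation. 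None of these statements requires $\sfP$ or $\sfQ$ to be parallel — Proposition~\ref{prop:grading-filtration-prop} is stated for arbitrary pseudo-opposite modules — so the same Euler-characteristic count $\sum_v(2-2g_v) + \sum_e(-2) = 2-2g$ and $\sum_v(3g_v-3) + \sum_e 2 = 3g-3-|E(\Gamma)| \le 3g-3$ goes through unchanged. For (Pole): one restricts to each fiber $\LLo_t$, invokes the identification with $T(\sfP,\tsfQ(t))$ over the formal neighbourhood, and applies Lemma~\ref{lem:pole} (which works with flat co-ordinates $\{q_n^i\}$ for the parallel module $\sfP_1 = \sfP$ at each $t$, uses Proposition~\ref{prop:pole-flat} for the vertex pole orders and the constancy of $\Delta$ along $\LL_t$ via \eqref{eq:flatcoord-atthecentre}), obtaining $\hC^{(g)}|_{\LLo_t} \in P_t^{-(5g-5)}\C[q_1,q_2,P_tq_3,\dots]$ for $g\ge 2$ and regularity of $P_t\hNabla\hC^{(1)}$ along the fiber; since $q_n^i|_{\LL_t} = x_n^i$ and this holds for all $t$, the global pole bound follows, exactly as at the end of the proof of Lemma~\ref{lem:pole}.

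The one genuinely new point — and the place where (Jetness) must \emph{fail} — is to confirm that $\Nabla^\sfQ$ is not flat, so that the symmetrized tensors produced by the Feynman rule do not in general satisfy $\Nabla^\sfQ(\Nabla^\sfQ)^n C^{(g)}_\sfQ = (\Nabla^\sfQ)^{n+1}C^{(g)}_\sfQ$; here one cites Proposition~\ref{prop:Nabla-torsionfree}, which asserts flatness of $\Nabla^\sfP$ precisely under the parallel hypothesis and whose proof (decomposing $\tnabla$ into a block-triangular form with $C$-block vanishing) breaks down for curved $\sfQ$. The mild subtlety, and what I expect to be the main technical obstacle to state cleanly rather than hard, is that the tensors $C^{(g)}_{\sfQ;\mu_1,\dots,\mu_n}$ are defined fiberwise — glued from the formal-neighbourhood data $T(\sfP,\tsfQ(t))$ restricted to $\LLo_t$ — so one must check that the fiberwise identity in each of (Yukawa), (Grading \& Filtration), (Pole) assembles into a statement about honest regular/graded/filtered tensors on $\pr^{-1}(U)^\circ$. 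This is routine because the grading, filtration and pole conditions are all pointwise-in-$t$ conditions on the fiber variables $x_n^i$ (as the reformulations in Remark~\ref{rem:Fockspace}(3),(6) make explicit) and the propagator and vertex contributions depend holomorphically on $t$; I would note this and invoke the existing fiberwise-to-global passage already used implicitly in Definition~\ref{def:under-curvedopposite}.
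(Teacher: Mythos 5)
Your proposal is correct and follows essentially the same route as the paper, whose entire proof is the observation that (Yukawa) is immediate from the Feynman rule while the arguments of Lemmas~\ref{lem:GrFil} and~\ref{lem:pole} go through verbatim for a curved pseudo-opposite module (using, as you do, that $\Delta(\sfP,\sfQ)$ agrees with $\Delta(\sfP,\tsfQ(t))$ along each fiber $\LLo_t$). Your extra remarks on the failure of (Jetness) and the fiberwise-to-global assembly are consistent with the paper's setup (the precise failure being quantified later by the anomaly equation) and do not change the substance of the argument.
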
 
\begin{proof} 
The proofs of Lemmas~\ref{lem:GrFil},~\ref{lem:pole}  
work for curved pseudo-opposite modules too.  
\end{proof} 

We will shortly (in Theorem~\ref{thm:anomaly} below) describe an anomaly equation that gives a substitute for the jetness condition for correlation functions under a curved pseudo-opposite module. 
The simplest case of this anomaly equation is the 
\emph{curvature condition} for the genus-one one-point function: 
the one-form $C^{(1)}_{\sfQ;\mu} d\sx^\mu$ is not necessarily 
closed, but its derivative $d (C^{(1)}_{\sfQ;\mu} d\sx^\mu)$ 
equals a certain two-form $\vartheta_\sfQ$ associated to $\sfQ$. 

\begin{lemma} 
\label{lem:curvature2form} 
Let $\sfP$ be a parallel pseudo-opposite module 
and $\sfQ$ be a (not necessarily parallel) pseudo-opposite module. 
Let $\omega_{\sfP\sfQ}$ denote the difference one-form 
\eqref{eq:difference1-form} between $\sfP$ and $\sfQ$. 
The two-form $\vartheta_\sfQ = d \omega_{\sfP \sfQ}$ does not 
depend on the choice of a parallel $\sfP$ 
and vanishes if $\sfQ$ is parallel. 
\end{lemma}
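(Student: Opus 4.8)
The statement has two parts: (a) the two-form $\vartheta_\sfQ=d\omega_{\sfP\sfQ}$ is independent of the choice of the parallel pseudo-opposite module $\sfP$; and (b) $\vartheta_\sfQ=0$ when $\sfQ$ is itself parallel. The key tool is Lemma~\ref{lem:difference1-form}, which exhibits $\omega_{\sfP\sfQ}$ as the pullback from $\cM$ of the one-form $\sum_i\frac12\Tr_{\sfF_0}\big((\Pi_\sfP-\Pi_\sfQ)\nabla_i\big)dt^i$, together with the additivity $\omega_{\sfP\sfQ}+\omega_{\sfQ\sfR}=\omega_{\sfP\sfR}$ recorded in the definition of the difference one-form (a consequence of Proposition~\ref{prop:Deltasum}).

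For part (a), I would argue as follows. Let $\sfP$ and $\sfP'$ be two parallel pseudo-opposite modules. By additivity, $\omega_{\sfP'\sfQ}=\omega_{\sfP'\sfP}+\omega_{\sfP\sfQ}$, so $\vartheta_\sfQ$ computed from $\sfP'$ differs from the one computed from $\sfP$ by $d\omega_{\sfP'\sfP}$. Thus it suffices to show that $\omega_{\sfP'\sfP}$ is closed whenever both $\sfP'$ and $\sfP$ are parallel. Here I would use Lemma~\ref{lem:difference1-form}: $\omega_{\sfP'\sfP}=\frac12\sum_i\Tr_{\sfF_0}\big((\Pi_{\sfP'}-\Pi_{\sfP})\nabla_i\big)dt^i$, an honest one-form on the base $\cM$. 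To see it is closed, note that $(\Pi_{\sfP'}-\Pi_\sfP)$ maps $\sfF$ into $z^{-1}\sfF\cap(\sfP+\sfP')$ and, more to the point, since both polarizations are parallel, the operator-valued one-form $A:=(\Pi_{\sfP'}-\Pi_\sfP)\nabla$ on $\sfF_0$ is itself covariantly closed for the induced flat connections; equivalently, writing $\nabla=\Pi_\sfP\nabla+(\mathrm{id}-\Pi_\sfP)\nabla$ and similarly for $\sfP'$, the difference of the two resulting flat connections on $\sfF_0$ is $z\cdot$(something), and its trace one-form is closed because the trace of the curvature of each flat connection vanishes and $\Tr$ kills commutator terms. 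Concretely, I expect this reduces to $d\,\Tr(B)=\Tr(dB)$ and the identity $d\big((\Pi_{\sfP'}-\Pi_\sfP)\nabla\big)= -\big((\Pi_{\sfP'}-\Pi_\sfP)\nabla\big)\wedge\nabla+\cdots$ whose trace vanishes by parallelism; I would spell this out using the block-triangular form of $\tnabla$ from the proof of Proposition~\ref{prop:Nabla-torsionfree}.

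For part (b), suppose $\sfQ$ is parallel. Then $\omega_{\sfP\sfQ}$ is the difference of two one-forms each attached to a parallel polarization, and by Proposition~\ref{prop:difference_conn}(1) it encodes the difference $\Nabla^{\sfQ}-\Nabla^{\sfP}$ of two \emph{flat} connections on $\LLo$. I would show $d\omega_{\sfP\sfQ}=0$ directly: differentiate the formula $\omega_{\sfP\sfQ}=\frac12 C^{(0)}_{\mu\nu\rho}\Delta^{\nu\rho}\,d\sx^\mu$ and use Proposition~\ref{prop:difference_conn}(2), namely $\Nabla^{\sfP}_\mu\Delta^{\nu\rho}=\Delta^{\nu\sigma}C^{(0)}_{\sigma\mu\tau}\Delta^{\tau\rho}$, together with potentiality $\Nabla^{\sfP}_\mu C^{(0)}_{\nu\rho\sigma}=C^{(0)}_{\mu\nu\rho\sigma}$ (Proposition~\ref{prop:potentiality}) and the fact that $\Nabla^\sfP$ is flat and torsion-free. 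The covariant derivative $\Nabla^\sfP_\lambda\big(C^{(0)}_{\mu\nu\rho}\Delta^{\nu\rho}\big)=C^{(0)}_{\lambda\mu\nu\rho}\Delta^{\nu\rho}+2\,C^{(0)}_{\mu\nu\rho}\Delta^{\nu\sigma}C^{(0)}_{\sigma\lambda\tau}\Delta^{\tau\rho}$ is manifestly symmetric in $\lambda$ and $\mu$, using the total symmetry of $C^{(0)}$ with four indices and of $\Delta$; since $d\omega_{\sfP\sfQ}$ is the antisymmetrization of this covariant derivative (the Christoffel terms cancel upon antisymmetrization because $\Nabla^\sfP$ is torsion-free), it vanishes. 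Alternatively, part (b) follows from part (a) by taking $\sfP'=\sfQ$: then $\vartheta_\sfQ=d\omega_{\sfQ\sfQ}=d\,0=0$.

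\textbf{Main obstacle.} The delicate point is part (a): proving $d\omega_{\sfP'\sfP}=0$ for two arbitrary parallel polarizations. The hard part will be organizing the computation of $d\,\Tr_{\sfF_0}\big((\Pi_{\sfP'}-\Pi_\sfP)\nabla\big)$ cleanly — one must be careful that $\Pi_{\sfP'}-\Pi_\sfP$ is not $\cO_\cM$-linear-compatible with $\nabla$ in an obvious way, so the naive Leibniz manipulation produces terms involving $\nabla$ of the projectors that are not individually regular at $z=0$; the saving grace is that their \emph{trace} on the finite-rank quotient $\sfF_0$ is well-defined and the curvature contributions cancel. I would handle this by passing to the flat trivialization associated to $\sfP$ (Proposition~\ref{prop:flat_trivialization}), in which $\Pi_\sfP$ becomes the constant projection onto non-negative powers of $z$ and $\nabla=d-z^{-1}\cC(t)$ with $\cC$ independent of $z$, reducing the closedness of $\omega_{\sfP'\sfP}$ to a finite-dimensional identity about the $z^0$-coefficient of $\Pi_{\sfP'}\cC$.
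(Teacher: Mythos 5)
Your proposal is essentially correct and its decisive computation is exactly the paper's: covariantly differentiate $C^{(0)}_{\nu\rho\tau}\Delta^{\rho\tau}$ using Proposition~\ref{prop:difference_conn}(2) together with potentiality, observe the resulting $2$-tensor is symmetric, and conclude $d\omega=0$ since $\Nabla^{\sfP}$ is torsion-free. The one remark worth making is that the ``main obstacle'' you flag in part (a) is illusory: the computation you carry out in part (b) is valid for \emph{any} pair of parallel pseudo-opposite modules (Proposition~\ref{prop:difference_conn}(2) only needs both modules parallel), so applying it with $\sfQ=\sfP'$ immediately gives $d\omega_{\sfP\sfP'}=0$, and independence of $\sfP$ then follows from the additivity $\omega_{\sfP\sfQ}-\omega_{\sfP'\sfQ}=\omega_{\sfP\sfP'}$. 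This is precisely the order of the paper's proof, and it makes your sketched trace-of-curvature argument via Lemma~\ref{lem:difference1-form}, with its delicate Leibniz manipulations, unnecessary. Finally, a small slip: in your displayed formula the second term should read $C^{(0)}_{\mu\nu\rho}\Delta^{\nu\sigma}C^{(0)}_{\sigma\lambda\tau}\Delta^{\tau\rho}$ without the factor $2$; this does not affect the symmetry argument.
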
 

\begin{proof}
When both $\sfP$ and $\sfQ$ are parallel, $\omega_{\sfP\sfQ}$ 
arises as the difference \eqref{eq:Feynman-genusone} 
of the genus-one 1-forms $C^{(1)}_\mu d\sx^\mu$, 
which are closed. 
More directly, by Proposition~\ref{prop:difference_conn}(2), 
we have 
\[
2 \left( \Nabla^{\sfP} \omega_{\sfP\sfQ} \right)_{\mu\nu}= 
(\Nabla^{\sfP}_\mu C^{(0)}_{\nu\rho\tau}) 
\Delta^{\rho\tau} 
+ C^{(0)}_{\nu\rho\tau} (\Nabla^{\sfP}_\mu \Delta^{\rho\tau})
= C^{(0)}_{\mu\nu\rho\tau} \Delta^{\rho\tau}
+ C^{(0)}_{\nu\rho\tau} \Delta^{\rho\sigma} 
C^{(0)}_{\sigma\mu\lambda} \Delta^{\lambda\tau}  
\] 
where $\Delta = \Delta(\sfP,\sfQ)$. 
This 2-tensor is symmetric with respect to $\mu$ and $\nu$; 
thus  $\omega_{\sfP\sfQ}$ is closed. 
Because $\omega_{\sfP\sfQ} - \omega_{\sfP' \sfQ} = 
\omega_{\sfP\sfP'}$, it follows that $d\omega_{\sfP\sfQ}$ 
does not depend on the choice of parallel $\sfP$. 
\end{proof} 

\begin{definition}
\label{def:curvature2form} 
The two-form $\vartheta_\sfQ := 
d\omega_{\sfP\sfQ} \in \pr^*\Omega_\cM^2$ in the 
above Lemma is called the \emph{curvature two-form} of $\sfQ$. 
This is the pull-back of a two-form on $\cM$. 
We will give an explicit and intrinsic formula 
in \eqref{eq:curvature-Lambda} below. 
\end{definition} 

\begin{proposition}[curvature condition] 
\label{prop:curvature_condition} 
For a genus-one one point function $C^{(1)}_{\sfQ;\mu} d\sx^\mu$ 
under $\sfQ$ corresponding to a Fock space element 
in $\Fock(U;\sfP)$ with $\sfP$ a parallel pseudo-opposite module, 
we have:
\[
d(C^{(1)}_{\sfQ;\mu} d\sx^\mu ) = \vartheta_\sfQ
\]
\end{proposition}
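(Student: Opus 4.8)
The statement to prove is that for a genus-one one-point function $C^{(1)}_{\sfQ;\mu}d\sx^\mu$ under a (possibly curved) pseudo-opposite module $\sfQ$, corresponding via the Feynman rule to a Fock space element over a \emph{parallel} pseudo-opposite module $\sfP$, one has $d(C^{(1)}_{\sfQ;\mu}d\sx^\mu) = \vartheta_\sfQ$. The plan is to reduce this to the genus-one Feynman rule \eqref{eq:Feynman-genusone} and then invoke Lemma~\ref{lem:curvature2form}. First I would recall that, by the construction in Definition~\ref{def:under-curvedopposite}, the correlation functions under $\sfQ$ are computed by exactly the same Feynman graphs as in the parallel case, with the vertices given by the $\sfP$-correlation functions $\{C^{(g)}_{\sfP;\mu_1\dots\mu_n}\}$ and the edges labelled by the propagator $\Delta = \Delta(\sfP,\sfQ)$. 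At genus one with one leg there are precisely two stable graphs: the single genus-one vertex with one leg, and the genus-zero trivalent vertex with one leg and one self-loop carrying the propagator. Hence the Feynman rule gives
\[
C^{(1)}_{\sfQ;\mu} = C^{(1)}_{\sfP;\mu} + \tfrac12 C^{(0)}_{\mu\nu\rho}\Delta^{\nu\rho}
= C^{(1)}_{\sfP;\mu} + (\omega_{\sfP\sfQ})_\mu,
\]
where the second equality is the definition \eqref{eq:difference1-form} of the difference one-form $\omega_{\sfP\sfQ}$ (valid here since the Yukawa coupling is independent of the choice of pseudo-opposite module).

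Next I would take the exterior derivative of the one-form $C^{(1)}_{\sfQ;\mu}d\sx^\mu = C^{(1)}_{\sfP;\mu}d\sx^\mu + \omega_{\sfP\sfQ}$. Since $\sfP$ is parallel, $\{C^{(g)}_{\sfP;\mu_1\dots\mu_n}\}$ satisfies the (Jetness) condition of Definition~\ref{def:localFock}, so in particular $C^{(1)}_{\sfP;\mu\nu} = \Nabla^{\sfP}_\mu C^{(1)}_{\sfP;\nu}$ is symmetric in $\mu,\nu$; as already noted in Remark~\ref{rem:Fockspace}(2) this forces $C^{(1)}_{\sfP;\mu}d\sx^\mu$ to be a closed one-form, so $d(C^{(1)}_{\sfP;\mu}d\sx^\mu) = 0$. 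Therefore $d(C^{(1)}_{\sfQ;\mu}d\sx^\mu) = d\omega_{\sfP\sfQ}$, which is exactly the definition of the curvature two-form $\vartheta_\sfQ$ in Definition~\ref{def:curvature2form}. Lemma~\ref{lem:curvature2form} guarantees that $d\omega_{\sfP\sfQ}$ does not depend on the choice of the parallel module $\sfP$ (and vanishes if $\sfQ$ happens itself to be parallel), so $\vartheta_\sfQ$ is a well-defined invariant of $\sfQ$, and the identity holds.

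The one point that deserves a little care — and which I expect to be the only genuine subtlety rather than a formal bookkeeping matter — is the passage between the ``formal neighbourhood'' description of $\wave_\sfQ$ and its global description on $\LLo$. In Definition~\ref{def:under-curvedopposite} the correlation functions under $\sfQ$ are defined first on each fibre $\LLo_t$, using the parallel translation $\tsfQ(t)$ of $\sfQ_t$, and then assembled; one must check that the genus-one one-form so obtained is globally $C^{(1)}_{\sfP;\mu}d\sx^\mu + \omega_{\sfP\sfQ}$ with $\omega_{\sfP\sfQ}$ the \emph{global} difference one-form built from $\Delta(\sfP,\sfQ)$ — not merely its fibrewise restriction. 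This is exactly the content of the remark in Definition~\ref{def:under-curvedopposite} that ``the propagator $\Delta(\sfP,\tsfQ(t))$ coincides with $\Delta(\sfP,\sfQ)$ along the fibre $\LLo_t$'', together with Lemma~\ref{lem:difference1-form} which shows $\omega_{\sfP\sfQ}$ is pulled back from $\cM$ and hence determined by its fibrewise data at varying $t$. Once this identification is in place the exterior derivative computation above is immediate, and no further calculation (in particular no use of the explicit formula for $\vartheta_\sfQ$ to come in \eqref{eq:curvature-Lambda}) is needed.
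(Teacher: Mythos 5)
Your proof is correct and is essentially the paper's own argument: the genus-one Feynman rule \eqref{eq:Feynman-genusone} gives $C^{(1)}_{\sfQ;\mu}d\sx^\mu = C^{(1)}_{\sfP;\mu}d\sx^\mu + \omega_{\sfP\sfQ}$, the $\sfP$-term is closed by (Jetness), and $d\omega_{\sfP\sfQ}=\vartheta_\sfQ$ by Definition~\ref{def:curvature2form}. Your extra remark on the fibrewise construction and the coincidence of $\Delta(\sfP,\tsfQ(t))$ with $\Delta(\sfP,\sfQ)$ along fibres is a careful reiteration of the point already made in Definition~\ref{def:under-curvedopposite}, not a departure from the paper's route.
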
 
\begin{proof} 
This follows from the Feynman rule at genus zero: 
$C^{(1)}_{\sfQ;\mu} d\sx^\mu = 
C^{(1)}_{\sfP;\mu} d\sx^\mu + \omega_{\sfP\sfQ}$ 
(see equation~\ref{eq:Feynman-genusone}) and 
the definition of $\vartheta_\sfQ$. 
\end{proof} 

Let $\sfP$ be a parallel pseudo-opposite module and let $\sfQ$ be 
a possibly curved pseudo-opposite module. 
An element of $\Fock(U;\sfP)$ 
induces correlation functions under $\sfQ$. 
Conversely, an element of $\Fock(U;\sfP)$ 
can be uniquely reconstructed from a genus-one one-point function 
and higher-genus zero-point functions under $\sfQ$. 

\begin{proposition} 
\label{prop:Fockspace-curved} 
Let $\sfQ$ be a pseudo-opposite module over $U$. 
Assume that we have a one-form 
$C^{(1)}_{\sfQ;\mu}d\sx^\mu \in \bOmega^1$ and 
functions 
$C^{(g)}_{\sfQ}\in \bcO$ 
for $g\ge 2$ over $\pr^{-1}(U)^\circ$ 
satisfying the following conditions: 
\begin{itemize} 
\item \emph{(Grading \& Filtration)}  
$C^{(1)}_{\sfQ;\mu}d\sx^\mu \in (\bOmega^1)^0_0$; 
$C^{(g)}_{\sfQ}\in \bcO^{2-2g}_{3g-3}$;  

\item \emph{(Curvature)} $d (C^{(1)}_{\sfQ;\mu} d \sx^\mu) = \vartheta_\sfQ$, 
where  $\vartheta_\sfQ$ is the curvature two-form 
in Definition~\ref{def:curvature2form}; 

\item \emph{(Pole)} $P ( C^{(1)}_{\sfQ;\mu} d \sx^\mu) $ 
extends to a regular 1-form on $\pr^{-1}(U)$; 
for $g\ge 2$, $C^{(g)}_{\sfQ} \in P^{-(5g-5)} 
\cO(U)[x_1,x_2,P x_3, P^2 x_4, \dots,P^{3g-4}x_{3g-2}]$;
\end{itemize} 
where $P = P(t,x_1)$ is the discriminant \eqref{eq:discriminant}. 
For a parallel pseudo-opposite module $\sfP$ over $U$, there exists a 
unique Fock space element 
$\wave_\sfP= \{C^{(g)}_{\sfP;\mu_1,\dots,\mu_n}\}_{g,n} \in 
\Fock(U;\sfP)$ such that 
\begin{itemize} 
\item $C^{(1)}_{\sfQ;\mu}$ is  
the genus-one one-point correlation function under $\sfQ$ 
corresponding to $\wave_\sfP$; 

\item For $g \ge 2$, $C^{(g)}_{\sfQ}$ is 
the genus-$g$ zero-point correlation function under $\sfQ$ 
corresponding to $\wave_\sfP$. 
\end{itemize} 
The formula 
\[
\wave_\sfQ = T(\sfP,\sfQ) \wave_\sfP 
\]
reconstructs the multi-point correlation functions 
$\wave_{\sfQ} = \{ C^{(g)}_{\sfQ;\mu_1,\dots,\mu_n}\}$ under $\sfQ$ 
that satisfy the conditions (Yukawa), (Grading \& Filtration) 
and (Pole) in Definition~\ref{def:localFock}. 
The multi-point correlation functions $C^{(g)}_{\sfQ;\mu_1,\dots,\mu_n}$ 
are independent of the choice of $\sfP$. 
\end{proposition}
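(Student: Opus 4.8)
The plan is to reduce the statement to an inversion problem for the Feynman-rule map $T(\sfP,\sfQ)$ at a single point of $\cM$, then to propagate the construction over $\pr^{-1}(U)^\circ$ using the filtration and the flatness of $\Nabla^\sfP$. First I would fix a parallel $\sfP$ over $U$ and observe that the map sending a Fock element $\wave_\sfP=\{C^{(g)}_{\sfP;\mu_1,\dots,\mu_n}\}$ to its genus-one one-point function and its higher-genus zero-point functions \emph{under $\sfQ$} is the composition of the transformation rule $T(\sfP,\sfQ)$ with the projection onto the $(g,n)$ with $n\le 1$. Since the Feynman-rule expression for $C^{(1)}_{\sfQ;\mu}$ (resp.\ $C^{(g)}_{\sfQ}$, $g\ge 2$) contains, among its summands, the trivial graph consisting of a single vertex decorated with $C^{(1)}_{\sfP;\mu}$ (resp.\ $C^{(g)}_{\sfP}$), plus terms involving only $C^{(h)}_{\sfP}$ with $h\le g$ and strictly fewer ``$\hbar$-weight'' (i.e.\ graphs with at least one edge, hence Euler characteristic dropping), the system is triangular with respect to the genus. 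This means one can solve recursively: set $C^{(1)}_{\sfP;\mu}d\sx^\mu := C^{(1)}_{\sfQ;\mu}d\sx^\mu - \omega_{\sfP\sfQ}$ (using the genus-zero Feynman rule \eqref{eq:Feynman-genusone}), and then for each $g\ge 2$ define $C^{(g)}_{\sfP}$ as $C^{(g)}_{\sfQ}$ minus the sum of all graph contributions that involve only lower-genus or fewer-$\hbar$-weight vertex terms (these are already determined). One then recovers the multi-point functions $C^{(g)}_{\sfP;\mu_1,\dots,\mu_n}$ for $n\ge 1$ by the Jetness condition $\Nabla^\sfP$-differentiation, which is well-defined because $\Nabla^\sfP$ is flat (Proposition~\ref{prop:Nabla-torsionfree}).

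The next block of steps is to verify that the $\wave_\sfP$ so constructed genuinely lies in $\Fock(U;\sfP)$, i.e.\ satisfies (Yukawa), (Jetness), (Grading \& Filtration) and (Pole). (Yukawa) and (Jetness) are automatic from the construction. For (Grading \& Filtration) and (Pole) I would run the estimates of Lemmas~\ref{lem:GrFil} and~\ref{lem:pole} \emph{in reverse}: these lemmas show that a graph with $|E(\Gamma)|$ edges contributes something of filtration $3g-3-|E(\Gamma)|$ and pole order $\le 5g-5-|E(\Gamma)|$ for zero-point functions, so if $C^{(g)}_{\sfQ}$ satisfies the stated bounds and all lower-genus $C^{(h)}_{\sfP}$ do too, then the ``corrected'' $C^{(g)}_{\sfP}$ obtained by subtracting the non-trivial-graph contributions still satisfies the same bounds (the subtracted terms are no worse). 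The curvature condition $d(C^{(1)}_{\sfQ;\mu}d\sx^\mu)=\vartheta_\sfQ=d\omega_{\sfP\sfQ}$ is precisely what makes $C^{(1)}_{\sfP;\mu}d\sx^\mu = C^{(1)}_{\sfQ;\mu}d\sx^\mu-\omega_{\sfP\sfQ}$ a closed one-form, which is the (Jetness)/symmetry requirement at genus one; this is the one place the hypotheses on $\sfQ$ are used essentially and must be invoked carefully.

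For uniqueness of $\wave_\sfP$: if two Fock elements give the same $n\le 1$ data under $\sfQ$, then by the triangular structure they agree at all genera as zero- and one-point functions under $\sfQ$, hence (by invertibility of $T(\sfP,\sfQ)$, or equivalently by running the recursion backwards) they agree as $\wave_\sfP$. Then I would establish independence of $\sfP$: given two parallel pseudo-opposite modules $\sfP$, $\sfP'$, the cocycle condition (Proposition~\ref{prop:cocycle}, $T(\sfP',\sfQ)=T(\sfP,\sfQ)\circ T(\sfP',\sfP)$ as Feynman rules, extended to curved targets via the pointwise parallel-translation trick described just before Definition~\ref{def:under-curvedopposite}) shows that the correlation functions under $\sfQ$ produced from $\wave_{\sfP}=T(\sfP',\sfP)\wave_{\sfP'}$ coincide with those produced from $\wave_{\sfP'}$; since the reconstruction is the inverse of this procedure, the $\sfQ$-correlation functions are the same regardless of the auxiliary parallel module chosen. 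The final ``reconstruction'' claim that $\wave_\sfQ=T(\sfP,\sfQ)\wave_\sfP$ satisfies (Yukawa), (Grading \& Filtration), (Pole) is then just the already-proven Proposition stating these properties hold for curved targets.

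The main obstacle I anticipate is the bookkeeping for the recursion's well-posedness: one must check that when solving for $C^{(g)}_{\sfP}$ (a function, $g\ge 2$) and its $\Nabla^\sfP$-jets, the graph contributions being subtracted really only involve vertex data of genus $<g$, or of genus $g$ but with at least one edge — and that the latter, upon expanding the genus-$g$ multi-point vertex terms via Jetness, never reintroduces $C^{(g)}_{\sfP}$ itself (this is where the stability condition $2g_v-2+n_v>0$ and $g=\sum g_v+1-\chi(\Gamma)$ force any graph with $C^{(g)}$-valued vertex and $\ge 1$ edge to have total genus $>g$, hence cannot appear). Making this triangularity precise — essentially an induction on $g$ with an inner induction on the number of edges — is the technical heart; everything else reduces to citing Lemmas~\ref{lem:GrFil}, \ref{lem:pole}, Propositions~\ref{prop:difference_conn}, \ref{prop:Deltasum}, \ref{prop:cocycle} and the curvature lemma.
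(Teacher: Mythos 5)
Your proposal is correct and follows essentially the same route as the paper: invert the Feynman rule $T(\sfP,\sfQ)$ inductively on the genus and the number of insertions, impose Jetness via $\Nabla^{\sfP}$ to recover the multi-point functions, use the curvature hypothesis to make the genus-one jets symmetric, verify (Grading \& Filtration) and (Pole) by rerunning the arguments of Lemmas~\ref{lem:GrFil} and~\ref{lem:pole}, and deduce independence of $\sfP$ from the cocycle condition together with uniqueness of the reconstruction. (One minor point: a tree graph with one genus-$g$ vertex and at least one edge is ruled out by stability of its genus-zero leaves rather than by having total genus $>g$, but your triangularity conclusion is correct as stated.)
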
 
\begin{proof} 
We solve for the Fock space element 
$\wave_{\sfP}= \{C^{(h)}_{\sfP;\nu_1,\dots,\nu_m}\}_{h,m}$ 
satisfying the Feynman rule (see equation~\ref{eq:Feynman-genusone}) 
\begin{align*} 
C^{(1)}_{\sfQ;\mu}d\sx^\mu & = 
C^{(1)}_{\sfP;\mu} d\sx^\mu
+ \omega_{\sfP\sfQ} \\ 
C^{(g)}_{\sfQ} & = \sum_{\Gamma} \frac{1}{|\Aut(\Gamma)|} 
\Cont_\Gamma(\wave_\sfP, 
\Delta(\sfP, \sfQ)) 
\end{align*} 
inductively on the genus and the number of insertions. 
Imposing the jetness: 
\begin{align*} 
(\Nabla^\sfP)^{n-1}  C^{(1)}_{\sfP;\mu} d\sx^{\mu} 
&= C^{(1)}_{\sfP;\mu_1,\dots,\mu_n} 
d\sx^{\mu_1} \otimes \cdots \otimes d\sx^{\mu_n} 
\\ 
(\Nabla^\sfP)^n C^{(g)}_{\sfP} 
& =  C^{(g)}_{\sfP;\mu_1,\dots,\mu_n} 
d\sx^{\mu_1}  \otimes \cdots \otimes d\sx^{\mu_n} \quad 
(g\ge 2) 
\end{align*} 
and the condition 
$C^{(0)}_{\sfP;\mu_1,\dots,\mu_n} 
=(\Nabla^{\sfP})^{n-3}\bY$, 
we can uniquely determine the symmetric tensors  
$C^{(g)}_{\sfP;\mu_1,\dots,\mu_n}$.  
The genus-one tensors 
$C^{(1)}_{\sfP;\mu_1,\dots,\mu_n}$ 
become completely symmetric by the curvature condition 
$d(C^{(1)}_{\sfQ;\mu} d\sx^\mu) = \vartheta_{\sfQ}
= d \omega_{\sfP\sfQ}$. 
It remains to check that the reconstructed 
correlation functions $C^{(g)}_{\sfP;\mu_1,\dots,\mu_n}$ 
satisfy the conditions (Grading \& Filtration) and (Pole) 
in Definition~\ref{def:localFock}. 
At genus one, $C^{(1)}_{\sfP;\mu} d\sx^\mu$ 
satisfies (Grading \& Filtration) and (Pole) 
because so does $C^{(1)}_{\sfQ;\mu} d\sx^\mu$. 
Note that the conditions (Grading \& Filtration) are
stable under $\Nabla^{\sfP}$ 
by Proposition~\ref{prop:grading-filtration-Nabla}.  
Suppose that (Grading \& Filtration) and 
(Pole) are satisfied up to genus $g-1$. 
We can write $C^{(g)}_{\sfP}$ as the 
sum of $C^{(g)}_{\sfQ}$ and the Feynman graph 
contributions from lower genus $n$-point functions 
$C^{(h)}_{\sfP;\mu_1,\dots,\mu_n}$. 
Therefore the argument of Lemmas~\ref{lem:GrFil} and~\ref{lem:pole} applies here too. 

Finally we check that $\wave_\sfQ$ is independent of the choice 
of parallel $\sfP$. Suppose we have two parallel pseudo-opposite 
modules $\sfP_1,\sfP_2$. The above procedure gives 
two Fock space elements $\wave_{\sfP_1} \in \Fock(U;\sfP_1)$, 
$\wave_{\sfP_2} \in \Fock(U;\sfP_2)$. 
Then $T(\sfP_1,\sfQ) \wave_{\sfP_1}$ and $T(\sfP_2,\sfQ)\wave_{\sfP_2}$ 
have the same genus-one one point functions and 
higher-genus zero-point functions. On the other hand we can write 
$T(\sfP_2,\sfQ) \wave_{\sfP_2} = T(\sfP_1,\sfQ) T(\sfP_2,\sfP_1) 
\wave_{\sfP_2}$ by the cocycle condition for the transformation rule 
(Proposition~\ref{prop:cocycle}). The above reconstruction procedure 
implies that $\wave_{\sfP_1} = T(\sfP_2,\sfP_1) \wave_{\sfP_2}$. 
Therefore $T(\sfP_1,\sfQ) \wave_{\sfP_1} = T(\sfP_2,\sfQ)\wave_{\sfP_2}$. 
\end{proof} 

\begin{remark} 
Since an opposite module exists in the formal neighbourhood of every 
point $t\in \cM$ (Lemma~\ref{lem:existence_opposite}), the reconstruction of 
multi-point correlation functions satisfying (Curvature), 
(Pole), and (Grading \& Filtration) from the data $\{C^{(1)}_{\sfQ;\mu}, 
C^{(2)}_{\sfQ},C^{(3)}_{\sfQ}, \dots\}$ is always possible, even without 
Assumption~\ref{assump:covering}. 
\end{remark} 

In view of the above Proposition, we make the following definition 
for the local Fock space with respect to a possibly curved 
opposite module (cf.~Definition~\ref{def:localFock}). 
This definition does not rely on Assumption~\ref{assump:covering}. 

\begin{definition}[local Fock space and transformation rule: general case] 
\label{def:localFock_transformation_general}
Let $\sfQ$ be a (not necessarily parallel) pseudo-opposite module 
over an open set $U\subset \cM$. 
The \emph{local Fock space} $\Fock(U;\sfQ)$ consists of 
collections 
$\{C^{(1)}_{\sfQ;\mu}, C^{(2)}_{\sfQ}, C^{(3)}_{\sfQ}, \dots\}$ 
satisfying the conditions (Curvature), (Grading \& Filtration),
and (Pole) in Proposition~\ref{prop:Fockspace-curved}, 
where 
\[
C^{(1)}_{\sfQ;\mu}d\sx^\mu \in \bOmega^1(\pr^{-1}(U)^\circ )  
\quad 
\text{and} 
\quad   
C^{(g)}_{\sfQ}\in \bcO (\pr^{-1}(U)^\circ ) \quad g \ge 2.  
\] 
Proposition~\ref{prop:Fockspace-curved} allows us to reconstruct 
\emph{multi-point correlation functions}  
$C^{(g)}_{\sfQ;\mu_1,\dots,\mu_n}$  
from the data $\{C^{(1)}_{\sfQ;\mu},C^{(2)}_\sfQ, 
C^{(3)}_\sfQ,\dots\}$ 
and they define the associated jet potential 
$\cW_\sfQ(\sx,\sy)$ as in \eqref{eq:jetpotential-curved}. 
For two pseudo-opposite modules $\sfQ_1$, $\sfQ_2$ over 
$U$, the \emph{transformation rule} $T(\sfQ_1,\sfQ_2) \colon 
\Fock(U;\sfQ_1) \to \Fock(U;\sfQ_2)$ is defined 
in terms of  jet potentials (reconstructed thus)
in the same way as Definition~\ref{def:transformation}: 
\[
\exp(\cW_{\sfQ_2}(\sx,\sy)) = \exp\left( 
\frac{\hbar}{2} \Delta^{\mu\nu}(\sfQ_1,\sfQ_2)
\partial_{\sy^\mu} \partial_{\sy^\nu} 
\right) \exp(\cW_{\sfQ_1}(\sx,\sy))
\]
This can be also described by the Feynman rule in 
Definition~\ref{def:transformation}. 
\end{definition}

\begin{remark} 
For parallel $\sfQ$, the above definition reduces 
to the original definitions of local Fock spaces and the 
transformation rule. 
Multi-point correlation functions under a parallel $\sfQ$ 
can be obtained by the covariant derivative $\Nabla^{\sfQ}$ 
from zero-point correlation functions. 
The transformation rule for general pseudo-opposite 
modules satisfies the cocycle condition: 
the same proof as Proposition~\ref{prop:cocycle} works. 
\end{remark} 

Let $\sfQ$ be a possibly curved pseudo-opposite module 
over $U$. 
The reconstruction of multi-point correlation functions 
$C^{(g)}_{\sfQ;\mu_1,\dots,\mu_n}$ from 
the data $\{C^{(1)}_{\sfQ;\mu}, C^{(2)}_{\sfQ}, C^{(3)}_{\sfQ}, \dots\}$ 
in Proposition~\ref{prop:Fockspace-curved} was implicit. 
We can describe it in a more explicit and intrinsic way,
without reference to a parallel pseudo-opposite module.  
For this purpose, we introduce the following ``background torsion''
$\Lambda_\sfQ$ associated to $\sfQ$. 

\begin{definition} 
\label{def:background torsion} 
Let $\sfQ$ be a pseudo-opposite module.  
The (background) \emph{torsion} of $\sfQ$ is 
an operator $\Lambda_{\sfQ} \colon 
\bOmegao^1 \times \bOmegao^1 \to \bOmegao^1$ 
defined by 
\[
\Lambda_{\sfQ} (\omega_1, \omega_2) = 
\Omega^\vee(\tnabla^\vee \Pi^* \varphi_1, 
\Pi^* \varphi_2) 
\]
where $\varphi_i := (\KS^*)^{-1} \omega_i$, $i\in\{1,2\}$, 
and $\Pi\colon \pr^*\sfF[z^{-1}] \to \pr^*\sfF$ is the projection 
along $\sfQ$. 
Recall that $\tnabla^\vee\colon \pr^*\sfF[z^{-1}]^\vee 
\to \bOmega^1 \hotimes \pr^*\sfF[z^{-1}]^\vee$ 
is the connection dual to $\tnabla$ (see \S\ref{subsec:conn-LL}). 
\end{definition} 

\begin{lemma}\ 
\label{lem:background torsion}
\begin{enumerate}
\item The operator $\Lambda_\sfQ(\omega_1,\omega_2)$ 
is symmetric, $\bcO$-bilinear, and takes values in 
$\pr^*\Omega_{\cM}^1$. 
\item  A pseudo-opposite module $\sfQ$ is parallel if and only if 
$\Lambda_\sfQ =0$. 
\end{enumerate}
\end{lemma}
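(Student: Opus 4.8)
The claim has two parts. For part (1), the plan is to pin down the properties of $\Lambda_\sfQ$ directly from its definition $\Lambda_\sfQ(\omega_1,\omega_2) = \Omega^\vee(\tnabla^\vee \Pi^*\varphi_i, \Pi^*\varphi_2)$. The $\bcO$-bilinearity is immediate since $\KS^*$, $\Pi^*$, $\Omega^\vee$ are all $\bcO$-linear and $\tnabla^\vee$ is a connection so it introduces only a one-form factor; that one-form factor lands in $\pr^*\Omega^1_\cM$ because $\tnabla^\vee$, viewed on $\pr^*\sfF[z^{-1}]^\vee$, is $\pr^*$ of the base connection $\nabla^\vee$ --- here I would invoke the factorization recorded in \eqref{eq:tnablavee-CD} and the fact (Lemma~\ref{lem:grading-filtration}(2)) that $\tnabla^\vee$ preserves the relevant structure. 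For symmetry, the key observation is that $\Image \Pi^* = \sfQ^\perp$ is isotropic with respect to $\Omega^\vee$, exactly as exploited in the proof of Proposition~\ref{pro:prop-elementary}: differentiating $0 = \Omega^\vee(\Pi^*\varphi_1, \Pi^*\varphi_2)$ with respect to the base connection $\tnabla^\vee$ (which preserves $\sfQ^\perp$ only if $\sfQ$ is parallel --- so I must \emph{not} use that here) gives $\Omega^\vee(\tnabla^\vee\Pi^*\varphi_1,\Pi^*\varphi_2) + \Omega^\vee(\Pi^*\varphi_1, \tnabla^\vee\Pi^*\varphi_2) = d\,\Omega^\vee(\Pi^*\varphi_1,\Pi^*\varphi_2) = 0$, hence $\Lambda_\sfQ(\omega_1,\omega_2) = -\Omega^\vee(\Pi^*\varphi_1,\tnabla^\vee\Pi^*\varphi_2)$. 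This is not yet visibly $\Lambda_\sfQ(\omega_2,\omega_1)$; to close the gap I would decompose $\tnabla^\vee\Pi^*\varphi_2 = \Pi^*\bigl((\tnabla^\vee\Pi^*\varphi_2)|_{\pr^*\sfF}\bigr) + \text{(a term in $\sfF^\perp$)}$ and pair against $\Pi^*\varphi_1 \in \sfQ^\perp$; since $\sfF^\perp$ pairs trivially against... no --- more carefully, use that $\Pi^*\varphi_1$ and $\Pi^*\varphi_2$ both lie in $\sfQ^\perp$ and that the $\sfF$-component of $\tnabla^\vee\Pi^*\varphi_2$ recovers $\KS^{*-1}$ of a covariant derivative, reducing the expression to a symmetric combination of Yukawa-type tensors (this mirrors the manipulation in the proof of Proposition~\ref{prop:difference_conn}).

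\textbf{Part (2): the curvature characterization.} The forward implication, $\sfQ$ parallel $\Rightarrow \Lambda_\sfQ = 0$, should be essentially formal: if $\nabla\sfQ\subset \Omega^1_\cM\otimes\sfQ$, then $\tnabla^\vee$ preserves $\sfQ^\perp = \Image\Pi^*$, so $\tnabla^\vee\Pi^*\varphi_1 \in \bOmega^1\hotimes\sfQ^\perp$, and then $\Omega^\vee(\tnabla^\vee\Pi^*\varphi_1, \Pi^*\varphi_2)$ is a pairing of two elements of the isotropic subspace $\sfQ^\perp$, hence vanishes. For the converse, I would argue contrapositively: if $\sfQ$ is not parallel, the failure of $\nabla$ to preserve $\sfQ$ is measured by the off-diagonal block $C \in \Hom(\sfQ, \Omega^1_\cM\otimes\sfF)$ in the block decomposition of $\tnabla$ under $\sfF[z^{-1}] = \sfF\oplus\sfQ$ (the transpose of the block appearing in the proof of Proposition~\ref{prop:Nabla-torsionfree}), and $\Lambda_\sfQ$ should be exactly this block transported to $\bOmega^1$ via $\KS^*$ and $\Omega^\vee$. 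Concretely, $\Omega^\vee(\tnabla^\vee\Pi^*\varphi_1, \Pi^*\varphi_2)$ picks out the component of $\tnabla^\vee\Pi^*\varphi_1$ \emph{outside} $\sfQ^\perp$ (the rest pairs to zero), and that component is precisely the image of $C$; since $\KS^*$ and $\Omega^\vee$ are (fiberwise) isomorphisms on the relevant pieces over $\LLo$, $\Lambda_\sfQ = 0$ forces $C = 0$, i.e.\ $\sfQ$ parallel.

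\textbf{Main obstacle.} I expect the symmetry statement in part (1) to be the delicate point. Unlike the parallel case, here $\tnabla^\vee$ does not preserve $\Image\Pi^* = \sfQ^\perp$, so one cannot simply quote the isotropy argument verbatim; the ``background torsion'' nomenclature in fact signals that $\Lambda_\sfQ$ is the symmetric part of something whose antisymmetric part would-be-zero for parallel $\sfQ$. The cleanest route is probably to compare $\Lambda_\sfQ$ with the connection $\Nabla^\sfQ$ (torsion-free by Proposition~\ref{prop:Nabla-torsionfree}, which holds for curved $\sfQ$ too) and express $\Lambda_\sfQ(\omega_1,\omega_2)$ as $-(\Nabla^\sfQ\omega_1)(\ldots)$-type object plus a manifestly symmetric Yukawa contraction; torsion-freeness of $\Nabla^\sfQ$ then delivers symmetry of $\Lambda_\sfQ$. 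I would also keep an eye on the distinction between $\bOmega^1$ and $\pr^*\Omega^1_\cM$ throughout --- the assertion that $\Lambda_\sfQ$ takes values in the latter is what makes it a genuine ``tensor on the base'' and follows from tracking which factor of $\tnabla^\vee = \pr^*\nabla^\vee$ carries the form degree, exactly as in Lemma~\ref{lem:difference1-form}.
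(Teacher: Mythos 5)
Your overall strategy is the paper's: differentiate the isotropy relation $\Omega^\vee(\Pi^*\varphi_1,\Pi^*\varphi_2)=0$ with respect to $\tnabla^\vee$ (which is legitimate for curved $\sfQ$, since it only uses flatness of the pairing), and for part (2) use that $(\pr^*\sfQ)^\perp$ is maximally isotropic. But the two steps you flag as delicate are not handled correctly as written. First, symmetry: having derived $\Lambda_\sfQ(\omega_1,\omega_2) = -\Omega^\vee(\Pi^*\varphi_1,\tnabla^\vee\Pi^*\varphi_2)$, you declare this ``not yet visibly $\Lambda_\sfQ(\omega_2,\omega_1)$'' and propose a detour through $\Nabla^{\sfQ}$, torsion-freeness, and unspecified ``Yukawa-type'' contractions mirroring Proposition~\ref{prop:difference_conn}, which you never carry out. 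The missing observation is simply that $\Omega^\vee$ is \emph{antisymmetric} (it is dual to the symplectic form \eqref{eq:symplecticform}), so $-\Omega^\vee(\Pi^*\varphi_1,\tnabla^\vee\Pi^*\varphi_2)=\Omega^\vee(\tnabla^\vee\Pi^*\varphi_2,\Pi^*\varphi_1)=\Lambda_\sfQ(\omega_2,\omega_1)$; this is exactly how the paper closes the argument, in one line. As written, your ``main obstacle'' paragraph is not a proof of symmetry, and a reduction via Proposition~\ref{prop:difference_conn} would in any case drag in the propagator against an auxiliary parallel module, which this Lemma is precisely meant to avoid.

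Second, the claim that $\Lambda_\sfQ$ takes values in $\pr^*\Omega^1_\cM$. Your justification --- that $\tnabla^\vee$ is the pull-back of the base connection, so the one-form factor automatically lies in $\pr^*\Omega^1_\cM$ --- is not correct as stated: $\varphi_1=\KS^{*-1}\omega_1$ genuinely depends on the fibre co-ordinates, so $\tnabla^\vee\Pi^*\varphi_1$ does have $dx_n^i$-components. What is true, and what the paper uses, is that in the fibre directions the connection acts as the plain derivative $\partial_{n,i}$, hence $\tnabla^\vee_{n,i}$ preserves $(\pr^*\sfQ)^\perp$ (because $\sfQ$ is pulled back from $\cM$), and therefore $\pair{\Lambda_\sfQ(\omega_1,\omega_2)}{\partial_{n,i}}=\Omega^\vee(\tnabla^\vee_{n,i}\Pi^*\varphi_1,\Pi^*\varphi_2)=0$ by isotropy of $(\pr^*\sfQ)^\perp$: it is this vanishing of the fibre components, not any pull-back property of $\tnabla^\vee\Pi^*\varphi_1$ itself, that puts $\Lambda_\sfQ$ in $\pr^*\Omega^1_\cM$. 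A related small glitch: $\bcO$-linearity in $\omega_1$ is not ``immediate'' from the shape of the formula --- the Leibniz term $df\otimes\Pi^*\varphi_1$ has to be killed by isotropy, or, as the paper does, deduced from linearity in $\omega_2$ together with symmetry. Your part (2) --- parallel implies $\Lambda_\sfQ=0$ by isotropy, and conversely $\Lambda_\sfQ=0$ says $\tnabla^\vee$ of $(\pr^*\sfQ)^\perp=\Pi^*\pr^*\sfF^\vee$ is $\Omega^\vee$-orthogonal to it and hence contained in it by maximal isotropy --- is essentially the paper's argument and is fine, provided you note that over $\LLo$ the forms $\omega_1,\omega_2$ sweep out all of $\pr^*\sfF^\vee$ via $\KS^{*-1}$.
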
 
\begin{proof} 
Write $\varphi_i := (\KS^*)^{-1}\omega_i$. 
Because $\Omega^\vee(\Pi^* \varphi_1, \Pi^* \varphi_2)=0$, 
we have:
\[
0 = d \Omega^\vee(\Pi^* \varphi_1, \Pi^* \varphi_2) 
= \Omega^\vee(\tnabla^\vee \Pi^* \varphi_1, \Pi^* \varphi_2) 
+ \Omega^\vee(\Pi^* \varphi_1, \tnabla^\vee \Pi^*\varphi_2) 
\]
The right-hand side equals $\Lambda_\sfQ(\omega_1,\omega_2) 
- \Lambda_{\sfQ}(\omega_2,\omega_1)$. 
By definition $\Lambda_\sfQ$ is $\bcO$-linear in $\omega_2$. 
Thus it is also $\bcO$-linear in $\omega_1$. 
Note that for a local co-ordinate system $\{t^i,x_n^i\}_{n\ge 1,0\le i\le N}$ 
on $\LL$, 
we have $\tnabla^\vee_{n,i} 
(\pr^*\sfQ)^\perp \subset (\pr^*\sfQ)^\perp$ 
for $n\ge 1$, where we write $\tnabla^\vee_{n,i} := 
\tnabla^{\vee}_{\partial /\partial x_n^i}$.  
This is because $\pr^*\sfQ$ is ``constant'' along the fiber 
of $\pr \colon \LL \to \cM$. 
Thus  
\[ 
\pair{\Lambda_\sfQ(\omega_1,\omega_2)}{\partial_{n,i}} =  
\Omega^\vee(\tnabla^\vee_{n,i} \Pi^* \varphi_1, \Pi^* \varphi_2) 
=0 \qquad n\ge 1 
\]
since $\Pi^*\varphi_1 \in (\pr^*\sfQ)^\perp 
\subset \pr^*\sfF[z^{-1}]^\vee$.  
This proves Part (1). 
Note that $\sfQ$ is parallel if and only if 
$\nabla^\vee$ preserves 
$\sfQ^\perp = \Pi^*\sfF^\vee$. 
This happens if and only if 
$\nabla^\vee (\Pi^*\sfF^\vee)$ 
is perpendicular to 
$\Pi^*\sfF^\vee$ with respect to $\Omega^\vee$,
since $\Pi^* \sfF^\vee$ is maximally isotropic. 
Part (2) follows. 
\end{proof} 

We use the following co-ordinate expression: 
\[
\Lambda_\sfQ (d\sx^\mu, d\sx^\nu) = 
{\Lambda_{\sfQ}}_{\rho}^{\mu\nu} d\sx^\rho 
= {\Lambda_{\sfQ}}_{i}^{\mu\nu} dt^i 
\]
where $\{\sx^\mu\}=\{t^i,x_n^i\}$ 
is a local co-ordinate system on $\LL$ and 
$\{t^i\}_{i=0}^N$ is a local co-ordinate system on $\cM$. 
We need to generalize Propositions~\ref{pro:prop-elementary} and~\ref{prop:difference_conn} to curved pseudo-opposite modules.  

\begin{proposition}[cf.\ Propositions~\ref{pro:prop-elementary},~\ref{prop:difference_conn}]
\label{prop:propagator-curved} 
Let $\sfQ_1$, $\sfQ_2$ be possibly curved pseudo-opposite modules 
and $\Delta = \Delta(\sfQ_1,\sfQ_2)$ be the propagator. 
\begin{enumerate}
\item We have:
\[d \Delta(\omega_1,\omega_2) = \Delta(\Nabla^{\sfQ_1} \omega_1, 
\omega_2) + \Delta(\omega_1, \Nabla^{\sfQ_2} \omega_2) 
+ \Lambda_{\sfQ_1}(\omega_1,\omega_2) - 
\Lambda_{\sfQ_2}(\omega_1, \omega_2)
\]
\item We have 
\[
\Nabla^{\sfQ_1}_\mu \Delta^{\nu\rho} 
(:= 
\partial_\mu \Delta^{\nu\rho} +  
{\Gamma^{(1)}}^\nu_{\mu\sigma} \Delta^{\sigma\rho} 
+ {\Gamma^{(1)}}^\rho_{\mu\sigma} \Delta^{\nu\sigma})  
= 
{\Lambda_{\sfQ_1}}_{\mu}^{\nu\rho} 
- {\Lambda_{\sfQ_2}}_{\mu}^{\nu\rho}
+ \Delta^{\nu\sigma} C^{(0)}_{\sigma \mu \tau} \Delta^{\tau\rho}
\]
where ${\Gamma^{(1)}}^\nu_{\mu\rho}$ 
are Christoffel coefficients of $\Nabla^{\sfQ_1}$ (see 
equation~\ref{eq:Christoffel}). 
\end{enumerate}
\end{proposition}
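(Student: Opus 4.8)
The plan is to generalize the proofs of Propositions~\ref{pro:prop-elementary}(2) and~\ref{prop:difference_conn}(1)--(2) by keeping track of the extra terms that arise when $\sfP_i$ (now $\sfQ_i$) is not parallel. The key observation is that parallelism was used in exactly one place in those proofs: to conclude that $\tnabla^\vee$ preserves $\Image \Pi_i^* = (\pr^*\sfQ_i)^\perp$, so that certain terms vanish. Without parallelism, these terms are precisely captured by the background torsion $\Lambda_{\sfQ_i}$ of Definition~\ref{def:background torsion}.

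First I would prove Part (1). Writing $\varphi_i = \KS^{*-1}\omega_i$ and repeating the computation \eqref{eq:dDelta}, one obtains
\[
d\Delta(\omega_1,\omega_2) = \Omega^\vee(\tnabla^\vee \Pi_1^*\varphi_1, \Pi_2^*\varphi_2) + \Omega^\vee(\Pi_1^*\varphi_1, \tnabla^\vee \Pi_2^*\varphi_2).
\]
In the parallel case one rewrote each summand using isotropy of $(\pr^*\sfF)^\perp$ and $(\pr^*\sfQ_i)^\perp$; now $\tnabla^\vee \Pi_1^*\varphi_1$ need not lie in $(\pr^*\sfQ_1)^\perp$, so I split it as its $(\pr^*\sfQ_1)^\perp$-component plus a remainder. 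The $(\pr^*\sfQ_1)^\perp$-component contributes $\Delta(\Nabla^{\sfQ_1}\omega_1,\omega_2)$ exactly as before (using the definition of $\Nabla^{\sfQ_1}$ via $\Pi_1^*\circ\tnabla^\vee$ restricted to $\pr^*\sfF$), while the remainder pairs against $\Pi_2^*\varphi_2$; adding and subtracting $\Pi_1^*\varphi_2$ and using that $\Image(\Pi_1^*-\Pi_2^*)\subset(\pr^*\sfF)^\perp$ is isotropic, together with $\Omega^\vee(\tnabla^\vee\Pi_1^*\varphi_1,\Pi_1^*\varphi_2) = \Lambda_{\sfQ_1}(\omega_1,\omega_2)$ by Definition~\ref{def:background torsion}, gives the term $\Lambda_{\sfQ_1}(\omega_1,\omega_2)$. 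Symmetrically the second summand produces $\Delta(\omega_1,\Nabla^{\sfQ_2}\omega_2) - \Lambda_{\sfQ_2}(\omega_1,\omega_2)$ (the sign coming from antisymmetry of $\Omega^\vee$ and the orientation in the definition of $\Lambda$). The bookkeeping of signs is the one place I would be careful, checking consistency against the parallel case where $\Lambda_{\sfQ_i}=0$ recovers Proposition~\ref{pro:prop-elementary}(2).

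For Part (2), I would simply translate Part (1) into tensor notation. Expanding $d\Delta(\omega_1,\omega_2)$ with the Leibniz rule for $\Nabla^{\sfQ_1}$ on both slots and for $\Nabla^{\sfQ_1}$ versus $\Nabla^{\sfQ_2}$ on the second slot, and applying Proposition~\ref{prop:difference_conn}(1) in its curved form --- namely $(\Nabla^{\sfQ_2}-\Nabla^{\sfQ_1})\omega = \iota(\iota_\omega\Delta)\bY$, whose proof goes through verbatim since it only used flatness of $\tnabla$ and the identity \eqref{eq:iomegaDelta}, neither of which needs parallelism --- one converts $\Delta(\omega_1,(\Nabla^{\sfQ_2}-\Nabla^{\sfQ_1})\omega_2)$ into $\iota(\iota_{\omega_1}\Delta\otimes\iota_{\omega_2}\Delta)\bY$, i.e.\ the term $\Delta^{\nu\sigma}C^{(0)}_{\sigma\mu\tau}\Delta^{\tau\rho}$. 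Combining this with the $\Lambda_{\sfQ_1}-\Lambda_{\sfQ_2}$ contribution from Part (1) yields the stated formula for $\Nabla^{\sfQ_1}_\mu\Delta^{\nu\rho}$.

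The main obstacle is not conceptual but notational: correctly identifying which component of $\tnabla^\vee\Pi_i^*\varphi_i$ lands where, and tracking the signs through the several isotropy cancellations so that the final formula has $+\Lambda_{\sfQ_1}-\Lambda_{\sfQ_2}$ with the correct placement of indices. I would first verify that $(\Nabla^{\sfQ_2}-\Nabla^{\sfQ_1})\omega = \iota(\iota_\omega\Delta)\bY$ still holds without parallelism (re-examining the proof of Proposition~\ref{prop:difference_conn}(1) line by line), since Part (2) rests on it, and then assemble the pieces.
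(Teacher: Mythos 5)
Your proposal is correct and follows essentially the same route as the paper: Part (1) is the computation \eqref{eq:dDelta} from Proposition~\ref{pro:prop-elementary}(2), with the two terms that vanished under parallelism now identified as $\Lambda_{\sfQ_1}(\omega_1,\omega_2)-\Lambda_{\sfQ_2}(\omega_1,\omega_2)$ via Definition~\ref{def:background torsion} and the symmetry of $\Lambda$, and Part (2) follows by combining Part (1) with Proposition~\ref{prop:difference_conn}(1). Note only that Proposition~\ref{prop:difference_conn}(1) is already stated and proved for arbitrary (possibly curved) pseudo-opposite modules, so the re-verification you propose is unnecessary.
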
 
\begin{proof} 
Part (1) is essentially shown in the proof of Proposition~\ref{pro:prop-elementary}. 
In fact, this formula appears in \eqref{eq:dDelta}. 
The last two terms of \eqref{eq:dDelta}, which vanish there, 
correspond to $\Lambda_{\sfQ_1}(\omega_1,\omega_2) - 
\Lambda_{\sfQ_2}(\omega_1,\omega_2)$.  

Part (2) is also similar to the proof of Proposition~\ref{prop:difference_conn}(2). 
Using Part (1), we have 
\begin{align*} 
(\Nabla^{\sfQ_1} \Delta) (\omega_1,\omega_2) 
& = d \Delta(\omega_1,\omega_2) 
- \Delta (\Nabla^{\sfQ_1}\omega_1,\omega_2) 
- \Delta( \omega_1, \Nabla^{\sfQ_1} \omega_2) \\ 
& = \Lambda_{\sfQ_1}(\omega_1,\omega_2) 
- \Lambda_{\sfQ_2}(\omega_1,\omega_2) 
+ \Delta(\omega_1, (\Nabla^{\sfQ_2} -\Nabla^{\sfQ_1}) \omega_2).  
\end{align*}
The conclusion follows from Proposition~\ref{prop:difference_conn}(1).  
\end{proof} 

Let $\sfQ$ be a possibly curved pseudo-opposite module 
and $\sfP$ be a parallel pseudo-opposite module, both defined over $U$. 
Let $\wave_{\sfQ} =\{C^{(g)}_{\sfQ;\mu_1,\dots,\mu_n}\}$ 
be the correlation functions under $\sfQ$ corresponding 
to an element $\wave_{\sfP} =
\{C^{(g)}_{\sfP;\mu_1,\dots,\mu_n}\} \in \Fock(U;\sfP)$ 
(see  Definition~\ref{def:under-curvedopposite}). 
By differentiating the Feynman rule expressing 
$C^{(g)}_{\sfQ;\mu_1,\dots,\mu_n}$ in terms of 
$C^{(h)}_{\sfP; \nu_1,\dots,\nu_m}$ and 
$\Delta(\sfP, \sfQ)$, 
we obtain the following \emph{anomaly equation}. 
\begin{theorem}[anomaly equation] 
\label{thm:anomaly} 
Multi-point correlation functions under a possibly 
curved pseudo-opposite module $\sfQ$ satisfy the 
following anomaly equation (see equation~\ref{eq:covariant-derivative} for the notation
for covariant derivatives): 
\begin{equation}
\label{eq:anomalyequation} 
C^{(g)}_{\sfQ;\mu_1 \dots \mu_n} 
= \Nabla_{\mu_1}^{\sfQ} C^{(g)}_{\sfQ;\mu_2\dots \mu_n} 
+ \frac{1}{2} 
\sum_{\substack{\{2,\dots,n\} = I \sqcup J \\ 
k+ l = g}} 
C^{(k)}_{\sfQ; \mu_{I}, \alpha} 
{\Lambda_{\sfQ}}_{\mu_1}^{\alpha\beta}
C^{(l)}_{\sfQ; \mu_{J}, \beta}  
+ \frac{1}{2} 
C^{(g-1)}_{\sfQ;\mu_2\dots\mu_n\alpha\beta} 
{\Lambda_{\sfQ}}^{\alpha\beta}_{\mu_1}
\end{equation}  
where $\mu_I$ stands for $\mu_{i_1},\dots,\mu_{i_p}$ 
if $I = \{i_1,\dots,i_p\}$ and similarly for $\mu_J$.  
\end{theorem}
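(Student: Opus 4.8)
The plan is to differentiate the Feynman-rule expression for $C^{(g)}_{\sfQ;\mu_2\dots\mu_n}$ with respect to $\Nabla^{\sfQ}_{\mu_1}$ and to identify each resulting term with a term on the right-hand side of \eqref{eq:anomalyequation}, exactly paralleling the proof of the Jetness property in Lemma~\ref{lem:jetness}. Recall that $C^{(g)}_{\sfQ;\mu_2\dots\mu_n} = \sum_\Gamma |\Aut(\Gamma)|^{-1}\Cont_\Gamma(\wave_\sfP,\Delta)_{\mu_2\dots\mu_n}$ with $\Delta=\Delta(\sfP,\sfQ)$, where $\sfP$ is a parallel pseudo-opposite module; the key new ingredient compared to Lemma~\ref{lem:jetness} is that $\sfP$ is parallel but $\sfQ$ is not, so Proposition~\ref{prop:propagator-curved} replaces Proposition~\ref{prop:difference_conn}. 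First I would write
\[
\Nabla^{\sfQ}_{\mu_1} C^{(g)}_{\sfQ;\mu_2\dots\mu_n}
= \Nabla^{\sfP}_{\mu_1} C^{(g)}_{\sfQ;\mu_2\dots\mu_n}
+ (\Nabla^{\sfQ}_{\mu_1} - \Nabla^{\sfP}_{\mu_1}) C^{(g)}_{\sfQ;\mu_2\dots\mu_n}.
\]
For the first summand, one differentiates the Feynman sum term by term: differentiating a vertex $\Nabla^{\sfP\,n_v}C^{(g_v)}_{\sfP}$ produces (by Jetness for $\wave_\sfP$) a vertex with one more leg labelled $\mu_1$, which assembles into the term $\Nabla^{\sfP}$ acting tamely — these contributions, after the correction below, become part of $C^{(g)}_{\sfQ;\mu_1\mu_2\dots\mu_n}$; differentiating a propagator edge gives $\Nabla^{\sfP}_{\mu_1}\Delta^{\alpha\beta}$, which by Proposition~\ref{prop:propagator-curved}(2) equals $\Delta^{\alpha\sigma}C^{(0)}_{\sigma\mu_1\tau}\Delta^{\tau\beta} + {\Lambda_{\sfQ}}_{\mu_1}^{\alpha\beta} - {\Lambda_{\sfP}}_{\mu_1}^{\alpha\beta}$, and since $\sfP$ is parallel ${\Lambda_{\sfP}}=0$ (Lemma~\ref{lem:background torsion}(2)).

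The two terms $\Delta^{\alpha\sigma}C^{(0)}_{\sigma\mu_1\tau}\Delta^{\tau\beta}$ and ${\Lambda_{\sfQ}}_{\mu_1}^{\alpha\beta}$ are the crux. The first corresponds graph-theoretically to inserting a genus-zero trivalent vertex with leg $\mu_1$ in the middle of an edge (the picture already drawn in the proof of Lemma~\ref{lem:jetness}); summing over all edges and over the two cases where the new vertex carries the leg $\mu_1$ alone or shares the leg $\mu_i$ with an external leg, together with the vertex-differentiation contributions, reconstructs precisely $C^{(g)}_{\sfQ;\mu_1\mu_2\dots\mu_n}$ built by the Feynman rule with legs $\mu_1,\dots,\mu_n$ — this is the same combinatorial bookkeeping as in Lemma~\ref{lem:jetness}, which one should state rather than re-derive in full. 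The leftover $\frac12\,{\Lambda_{\sfQ}}_{\mu_1}^{\alpha\beta}$ on each edge, when the edge is cut, either disconnects the graph into two stable pieces (giving the term $\tfrac12\sum_{S_1\sqcup S_2,\ k+l=g} C^{(k)}_{\sfQ;S_1,\alpha}{\Lambda_{\sfQ}}_{\mu_1}^{\alpha\beta}C^{(l)}_{\sfQ;S_2,\beta}$) or reduces the first Betti number by one keeping it connected (giving $\tfrac12\,C^{(g-1)}_{\sfQ;\mu_2\dots\mu_n\alpha\beta}{\Lambda_{\sfQ}}^{\alpha\beta}_{\mu_1}$); the combinatorial factor $\tfrac12$ and the resolution of automorphism factors is exactly the standard cut-edge bookkeeping for stable graphs. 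Finally, the term $(\Nabla^{\sfQ}_{\mu_1}-\Nabla^{\sfP}_{\mu_1})C^{(g)}_{\sfQ;\mu_2\dots\mu_n}$ is handled by Proposition~\ref{prop:difference_conn}(1): it equals $\sum_{i=2}^n \Delta^{\alpha\beta}(\sfP,\sfQ)C^{(0)}_{\beta\mu_i\mu_1}(\text{contraction with }\alpha\text{ on leg }i)$, i.e.\ it attaches a genus-zero vertex with legs $\mu_1,\mu_i$ to each external leg $\mu_i$ — and this is precisely the third of the three graph-types that appear when one expands $C^{(g)}_{\sfQ;\mu_1\mu_2\dots\mu_n}$ by the Feynman rule.

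Assembling: the Feynman-rule sum for $C^{(g)}_{\sfQ;\mu_1\mu_2\dots\mu_n}$, classified by the valence and genus of the vertex incident to the leg $\mu_1$ (three cases: $2g_v-2+n_v>1$; genus-zero trivalent with $\mu_1$ the unique leg; genus-zero trivalent with $\mu_1,\mu_i$ both legs), matches respectively the vertex-differentiation part of $\Nabla^{\sfP}_{\mu_1}C^{(g)}_{\sfQ;\mu_2\dots\mu_n}$, the $\Delta C^{(0)}\Delta$ part of the propagator differentiation, and the $(\Nabla^{\sfQ}-\Nabla^{\sfP})$ correction; what survives is exactly the two $\Lambda_\sfQ$ terms. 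Hence $C^{(g)}_{\sfQ;\mu_1\dots\mu_n} = \Nabla^{\sfQ}_{\mu_1}C^{(g)}_{\sfQ;\mu_2\dots\mu_n} + \tfrac12\sum C^{(k)}_{\sfQ;S_1,\alpha}{\Lambda_\sfQ}_{\mu_1}^{\alpha\beta}C^{(l)}_{\sfQ;S_2,\beta} + \tfrac12 C^{(g-1)}_{\sfQ;\mu_2\dots\mu_n\alpha\beta}{\Lambda_\sfQ}^{\alpha\beta}_{\mu_1}$. The main obstacle I anticipate is purely combinatorial: carefully tracking automorphism factors $|\Aut(\Gamma)|$ and the factor $\tfrac12$ when an edge carrying $\Lambda_\sfQ$ is cut — in particular verifying that the disconnecting case produces the symmetric sum over $S_1\sqcup S_2$ with the correct multiplicity and that the genus-reducing case produces $C^{(g-1)}_{\sfQ}$ with the two new legs $\alpha,\beta$ contracted against $\Lambda_\sfQ$. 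One must also confirm that ${\Lambda_\sfQ}_{\mu_1}^{\alpha\beta}$ is a tensor on $\LLo$ of the right grading/filtration so that the right-hand side lies in the correct space; this follows from Lemma~\ref{lem:background torsion}(1) and the grading/filtration estimates of \S on Grading and Filtration, but should be checked. The analytic content is minimal — everything is an identity of formal/rational expressions — so the proof is, at bottom, the differentiation of a sum of Feynman graphs, with Propositions~\ref{prop:difference_conn} and~\ref{prop:propagator-curved} supplying the two algebraic identities that drive it.
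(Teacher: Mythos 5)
Your proposal follows exactly the paper's own route: decompose $\Nabla^{\sfQ}_{\mu_1} = \Nabla^{\sfP}_{\mu_1} + (\Nabla^{\sfQ}-\Nabla^{\sfP})_{\mu_1}$, differentiate the Feynman sum defining $C^{(g)}_{\sfQ;\mu_2\dots\mu_n}$ term by term, match the vertex derivatives, the $\Delta C^{(0)}\Delta$ propagator insertions, and the leg modifications coming from Proposition~\ref{prop:difference_conn}(1) against the three types of vertex adjacent to the new leg $\mu_1$ in the Feynman expansion of $C^{(g)}_{\sfQ;\mu_1\dots\mu_n}$ (exactly as in Lemma~\ref{lem:jetness}), and identify the leftover $\Lambda_{\sfQ}$ edge insertions with the separating and non-separating cut-edge terms, the factor $\tfrac12$ coming from the automorphism exchanging the two branches of the cut edge. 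This is precisely the argument in the paper, so the structure and all the inputs are right.

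There is, however, one concrete sign slip which, taken literally, derives the negative of the stated $\Lambda$-terms. Proposition~\ref{prop:propagator-curved}(2), applied with $\sfQ_1=\sfP$ and $\sfQ_2=\sfQ$, gives $\Nabla^{\sfP}_{\mu_1}\Delta(\sfP,\sfQ)^{\alpha\beta} = {\Lambda_{\sfP}}_{\mu_1}^{\alpha\beta} - {\Lambda_{\sfQ}}_{\mu_1}^{\alpha\beta} + \Delta^{\alpha\sigma}C^{(0)}_{\sigma\mu_1\tau}\Delta^{\tau\beta}$; since $\sfP$ is parallel the extra insertion is $-{\Lambda_{\sfQ}}_{\mu_1}^{\alpha\beta}$, not $+{\Lambda_{\sfQ}}_{\mu_1}^{\alpha\beta}$ as you quote. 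That minus sign is exactly what makes the bookkeeping close: it yields $\Nabla^{\sfQ}_{\mu_1}C^{(g)}_{\sfQ;\mu_2\dots\mu_n} = C^{(g)}_{\sfQ;\mu_1\dots\mu_n} - \tfrac12\sum C^{(k)}_{\sfQ;S_1,\alpha}{\Lambda_{\sfQ}}_{\mu_1}^{\alpha\beta}C^{(l)}_{\sfQ;S_2,\beta} - \tfrac12 C^{(g-1)}_{\sfQ;\mu_2\dots\mu_n\alpha\beta}{\Lambda_{\sfQ}}_{\mu_1}^{\alpha\beta}$, which rearranges to \eqref{eq:anomalyequation}; with your $+\Lambda_{\sfQ}$ the identical graph count would produce the anomaly equation with the opposite sign on the $\Lambda$-terms, contradicting your own final assertion. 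Correcting the quoted identity fixes the argument. (Your closing worry about the grading and filtration of $\Lambda_{\sfQ}$ is not needed for the theorem itself, which is a pointwise tensor identity rather than a Fock-space membership statement; in any case Lemma~\ref{lem:background torsion}(1) supplies what you want.)
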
 
\begin{proof} 
The argument is very similar to the proof of (Jetnesss) in Lemma~\ref{lem:jetness}. We have
\[
\Nabla^{\sfQ}_\nu C^{(g)}_{\sfQ;\mu_1,\dots,\mu_n} 
= \Nabla^{\sfP}_\nu C^{(g)}_{\sfQ;\mu_1,\dots,\mu_n} 
+ 
(\Nabla^{\sfQ}- \Nabla^{\sfP})_\nu C^{(g)}_{\sfQ;\mu_1,\dots,\mu_n}  
\]
 (cf.~equation~~\ref{eq:NablaprimehC}).
The second term here corresponds to the modification of legs 
depicted in \eqref{eq:leg-modification},
by Proposition~\ref{prop:difference_conn}(1),  
and the first term is the sum of the vertex derivative 
$C_{\rm vert}$ and the propagator derivative $C_{\rm prop}$. 
The vertex derivative $C_{\rm vert}$ is the same as in 
Lemma~\ref{lem:jetness}, 
but the propagator derivative $C_{\rm prop}$ 
has extra contributions from $-\Lambda_{\sfQ}$ 
because  
$\Nabla^{\sfP}_\mu \Delta(\sfP,\sfQ)^{\nu\rho} 
= - {\Lambda_{\sfQ}}_{\mu}^{\nu\rho} + 
\Delta(\sfP,\sfQ)^{\nu\sigma} C^{(0)}_{\sigma \mu \tau}
\Delta(\sfP,\sfQ)^{\tau\rho}$
by Proposition~\ref{prop:propagator-curved}(2). 
Hence the difference 
from the computation in Lemma~\ref{lem:jetness} arises 
from the insertion of $-\Lambda_{\sfQ}$ at internal edges. 
The second and third term on the right-hand side of \eqref{eq:anomalyequation}
correspond respectively to the cases where (i) the chosen edge 
separates the graph or (ii) the chosen edge does not 
separate the graph. The factor $1/2$ comes from  
automorphisms exchanging the two branches 
of the edge. 
\end{proof} 

\begin{remark} 
\label{rem:Fockspace-curved} 
The anomaly equation gives a substitute for (Jetness) 
for correlation functions under a curved pseudo-opposite 
module. 
Note that the parallel pseudo-opposite module $\sfP$ does not appear 
explicitly in the anomaly equation. 
Therefore we can define the local Fock space for $\sfQ$ 
as the set of symmetric tensors $\{C_{\sfQ;\mu_1,\dots,\mu_n}^{(g)} 
: \text{$g\ge 0$, $n\ge 0$, $2g-2+n>0$}\}$ satisfying the conditions 
(Yukawa), (Grading \& Filtration), (Pole) in Definition~\ref{def:localFock} 
and the anomaly equation \eqref{eq:anomalyequation}.  The condition (Curvature) is contained in the anomaly equation: 
see Remark~\ref{rem:curvature}.
\end{remark}

\begin{example} 
The anomaly equation allows us to calculate 
$C^{(g)}_{\sfQ;\mu_1\dots\mu_n}$ 
iteratively in terms of $C^{(h)}_{\sfQ}$, $h\le g$, 
$C^{(1)}_{\sfQ;\nu} d\sx^\nu$, 
$C^{(0)}_{\tau \rho \sigma}$ and 
their $\Nabla^{\sfQ}$-derivatives. 
We also need $\Lambda_\sfQ$ and its derivatives 
in the iteration process. 
For example: 
\allowdisplaybreaks
\begin{align*} 
C^{(0)}_{1234} & 
= \Nabla_1 C^{(0)}_{123} \\ 
C^{(0)}_{12345} & 
= \Nabla_1 \Nabla_2 C^{(0)}_{345} 
+\left[ C^{(0)}_{23\alpha} \Lambda^{\alpha\beta}_1 
C^{(0)}_{45\beta} 
+ (2\leftrightarrow 4) + (2 \leftrightarrow 5)\right] \\ 
C^{(0)}_{123456} & 
=  
\Nabla_1\Nabla_2 \Nabla_3 C^{(0)}_{456}  \\ 
& + \frac{1}{2} 
\sum_{\{3,4,5,6\}=I \sqcup J} 
(\Nabla_1 C^{(0)}_{I \alpha}) 
\Lambda^{\alpha\beta}_2 
C^{(0)}_{J \beta} 
+ C^{(0)}_{I \alpha}  
(\Nabla_1 \Lambda^{\alpha\beta}_2)  
C^{(0)}_{J \beta} 
+ C^{(0)}_{I \alpha}
\Lambda^{\alpha\beta}_2 
(\Nabla_1C^{(0)}_{J \beta})  \\
& + 
\sum_{\{2,3,4,5,6\} = I \sqcup J, 
\, |I| =3,\, |J| =2} 
C^{(0)}_{I\alpha} \Lambda^{\alpha\beta}_1 
C^{(0)}_{J\beta} \\
C^{(1)}_{12} &
= \Nabla_1 C^{(1)}_{2} 
+ \frac{1}{2} C^{(0)}_{2\alpha\beta} \Lambda^{\alpha\beta}_1 \\ 
C^{(1)}_{123} & 
= \Nabla_1\Nabla_2 C^{(1)}_3 
+ \frac{1}{2} (\Nabla_1 C^{(0)}_{3\alpha\beta}) 
\Lambda_2^{\alpha\beta} 
+ \frac{1}{2} 
C^{(0)}_{3\alpha\beta} (\Nabla_1 \Lambda_2^{\alpha\beta}) 
+ C^{(1)}_{\alpha} \Lambda^{\alpha\beta}_1 C^{(0)}_{23\beta} 
 + \frac{1}{2}(\Nabla_2 C^{(0)}_{3\alpha\beta}) \Lambda^{\alpha\beta}_1 
 \\ 
C^{(2)}_1 & 
= \Nabla_1 C^{(2)} 
+ \frac{1}{2} 
C^{(1)}_\alpha \Lambda^{\alpha\beta}_1 C^{(1)}_\beta
\end{align*}
where we omit the super/subscript $\sfQ$ 
on $\Nabla$, $\Lambda$ and $C^{(g)}_{\mu_1,\dots,\mu_n}$. 
(Here we used the numbers $1,2,3,4,5,6$ in place of 
small Greek letters; 
$\Nabla_1\Nabla_2 C^{(0)}_{345}$ denotes the $d\sx^1\otimes 
d\sx^2 \otimes d\sx^3\otimes d\sx^4 \otimes d\sx^5$-component 
of $\Nabla^2 \bY$.) 
It is not obvious that these formulas give symmetric tensors 
$C^{(g)}_{\sfQ;\mu_1,\dots,\mu_n}$, but this is ensured 
by general theory. 
\end{example}

We now calculate the curvature of the connection $\Nabla^\sfQ$ 
and relate it to the curvature two-form $\vartheta_\sfQ$ 
(Definition~\ref{def:curvature2form}) 
which appears in the condition (Curvature) in Proposition~\ref{prop:Fockspace-curved}. 

\begin{proposition}[curvature] 
\label{prop:curvature} 
Let $\sfQ$ be a pseudo-opposite module 
and let $\Lambda = \Lambda_{\sfQ}$ be the torsion of $\sfQ$. 
Let $(\Nabla^{\sfQ})^2$ denote the curvature of 
$\Nabla^{\sfQ}$ on $\bOmegao^1$, which is an 
$\End(\bOmegao^1)$-valued $2$-form on $\LLo$. 
(Note that it is minus the transpose of the curvature on the tangent bundle.)
We also use a local co-ordinate system $\{t^i\}_{i=0}^N$ on $\cM$ 
which has Roman letters as indices. 
\begin{enumerate}
\item The curvature of $\Nabla^{\sfQ}$ is given by: 
\[
(\Nabla^{\sfQ})^2 d\sx^\nu= 
C^{(0)}_{\mu_1 \rho \tau} \Lambda^{\tau\nu}_{\mu_2} (d\sx^{\mu_1} 
\wedge d\sx^{\mu_2} )\otimes d\sx^\rho
= C^{(0)}_{i j k} \Lambda^{k \nu}_l (dt^i \wedge dt^l) \otimes dt^j
\]
\item The curvature two-form $\vartheta_\sfQ$ is 
half of the trace of $(\Nabla^{\sfQ})^2$: 
\begin{align} 
\label{eq:curvature-Lambda}
\begin{split}  
\vartheta_\sfQ & = \frac{1}{2} \Tr((\Nabla^{\sfQ})^2) 
= \frac{1}{2} 
C^{(0)}_{ijk} \Lambda^{jk}_l dt^i \wedge dt^l \\
& = -\frac{1}{4} 
\sum_{a=0}^N \sum_{b=0}^N 
\Tr_{\sfF_0}\left(\Pi_\sfQ \nabla_a \Pi_{\sfQ} 
\nabla_b - \Pi_{\sfQ} \nabla_b \Pi_{\sfQ} \nabla_a\right)  
dt^a \wedge dt^b.  
\end{split}
\end{align}  
The trace here makes sense since 
$(\Nabla^{\sfQ})^2 d\sx^\nu$ above lies in the finite rank 
subbundle $\pr^*\Omega^2_{\cM} \otimes \pr^*\Omega^1_{\cM}$. 
\end{enumerate}
\end{proposition}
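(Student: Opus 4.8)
The plan is to compute the curvature of $\Nabla^{\sfQ}$ directly from its definition as the composition $\KS^* \circ (\tnabla^\vee \Pi^*(\cdot))|_{\pr^*\sfF} \circ \KS^{*-1}$, and then extract the curvature two-form $\vartheta_\sfQ$ by taking a trace. First I would fix a local trivialization of $\sfF$ and the associated frame $\{\varphi_n^i\}$ of $\pr^*\sfF[z^{-1}]^\vee$, and recall from Definition~\ref{def:Nabla} that $\Nabla^{\sfQ}\omega = \KS^*\big((\tnabla^\vee \Pi^*\KS^{*-1}\omega)|_{\pr^*\sfF}\big)$. The key input is that $\tnabla^\vee$ is flat, so $(\tnabla^\vee)^2 = 0$; the failure of $\Nabla^{\sfQ}$ to be flat therefore comes entirely from the projection $\Pi^*$ and the restriction to $\pr^*\sfF$. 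Concretely, writing $\tnabla^\vee \Pi^*\varphi = \Pi^*(\text{piece in }\sfF^\vee) + (\text{piece in }\sfQ^\vee\cap\sfF^{\perp})$, the first piece re-enters the iteration as a $\Nabla^{\sfQ}$-term while the second piece, upon the next application of $\tnabla^\vee$ and restriction, produces the curvature. I expect the second covariant derivative $(\Nabla^{\sfQ})^2\omega$ to reduce, after cancellations using $(\tnabla^\vee)^2=0$ and the definition of $\Lambda_{\sfQ}$ (Definition~\ref{def:background torsion}), to a contraction of the Yukawa coupling $\bY$ with $\Lambda_{\sfQ}$. This is analogous to Proposition~\ref{prop:difference_conn}(1), which already expresses $\Nabla^{\sfP_2}-\Nabla^{\sfP_1}$ via $\Delta$ and $\bY$; here the role of the propagator difference is played infinitesimally by $\Lambda_{\sfQ}$, via Proposition~\ref{prop:propagator-curved}.

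For part (1), the cleanest route is: take a parallel pseudo-opposite module $\sfP$ locally (which exists in the formal neighbourhood by Lemma~\ref{lem:existence_opposite}, and globally under Assumption~\ref{assump:covering}), use Proposition~\ref{prop:difference_conn}(1) to write $\Nabla^{\sfQ}_\mu d\sx^\nu = \Nabla^{\sfP}_\mu d\sx^\nu + \Delta^{\nu\sigma}(\sfP,\sfQ) C^{(0)}_{\sigma\mu\rho}d\sx^\rho$, then compute $(\Nabla^{\sfQ})^2$ by applying $\Nabla^{\sfQ}$ again. Since $\Nabla^{\sfP}$ is flat (Proposition~\ref{prop:Nabla-torsionfree}), the curvature equals $\Nabla^{\sfP}$ applied to the correction term plus the correction term composed with itself; using Proposition~\ref{prop:propagator-curved}(2) to differentiate $\Delta$, the Bianchi-type identity $\Nabla^{\sfP}\bY = \Nabla^{\sfP}\Nabla^3 C^{(0)}$ being totally symmetric (Lemma~\ref{lem:4,5-tensor}(1) and Proposition~\ref{prop:potentiality}), and the symmetry of $\Delta$, the terms quadratic in $\Delta$ and the $\Nabla^{\sfP}\Delta$ terms should cancel in the antisymmetrization over $(\mu_1,\mu_2)$, leaving exactly $C^{(0)}_{\mu_1\rho\tau}\Lambda^{\tau\nu}_{\mu_2}(d\sx^{\mu_1}\wedge d\sx^{\mu_2})\otimes d\sx^\rho$. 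The restriction to the base indices $\{t^i\}$ follows because $\Lambda_{\sfQ}$ is pulled back from $\cM$ (Lemma~\ref{lem:background torsion}(1)) and $C^{(0)}_{ijk}$ only involves $\sfF_0$-data.

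For part (2), recall $\vartheta_\sfQ = d\omega_{\sfP\sfQ}$ with $\omega_{\sfP\sfQ} = \tfrac12 C^{(0)}_{ijh}\Delta^{jh}dt^i$; differentiating and using Proposition~\ref{prop:propagator-curved}(2) together with the symmetry of $\bY$, the $\Delta$-quadratic term drops by symmetry and one is left with $\vartheta_\sfQ = \tfrac12 C^{(0)}_{ijk}\Lambda^{jk}_l\,dt^i\wedge dt^l$, which is manifestly $\tfrac12\Tr((\Nabla^{\sfQ})^2)$ by part (1). The final equality in \eqref{eq:curvature-Lambda} is then obtained by unwinding the definitions of $C^{(0)}_{ijk}$ (via $\KS^*$ and $\bY$, Lemma~\ref{lem:Yukawa-KS}) and of $\Lambda_{\sfQ}$ (via $\tnabla^\vee$ and $\Pi^*$), and identifying the resulting pairing on $\sfF_0$ with the trace $\Tr_{\sfF_0}(\Pi_{\sfQ}\nabla_a\Pi_{\sfQ}\nabla_b - \Pi_{\sfQ}\nabla_b\Pi_{\sfQ}\nabla_a)$; the sign and the factor $-1/4$ come from the identity $\nabla_a\Pi_{\sfQ} = -\Pi_{\sfQ}\nabla_a\Pi_{\sfQ} + \Pi_{\sfQ}\nabla_a$ applied to the residue term and from the normalization in the definition of $\Lambda_{\sfQ}$ (mirroring the computation in Lemma~\ref{lem:difference1-form}).

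\textbf{Main obstacle.} The hard part will be the bookkeeping in part (1): tracking which pieces of $\tnabla^\vee\Pi^*\varphi$ land in $\pr^*\sfF^\vee$ versus $\sfF^{\perp}$, and verifying that all non-$\Lambda$ contributions to $(\Nabla^{\sfQ})^2$ cancel after antisymmetrization. This requires care because $\sfQ$ is curved, so $\Pi^*_{\sfQ}$ does not commute with $\tnabla^\vee$ even ``up to $\sfF^{\perp}$'' in the naive way; one must genuinely use the definition of $\Lambda_{\sfQ}$ as the obstruction. A secondary subtlety is justifying that $(\Nabla^{\sfQ})^2 d\sx^\nu$ really lies in the finite-rank subbundle $\pr^*\Omega^2_{\cM}\otimes\pr^*\Omega^1_{\cM}$ so that the trace is defined — this follows once the explicit formula in part (1) is established, since $\Lambda_{\sfQ}$ has legs only along $\cM$, but it should be remarked on explicitly.
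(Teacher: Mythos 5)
Your proposal is correct and follows essentially the same route as the paper: write $\Nabla^{\sfQ}_\mu d\sx^\nu=\Nabla^{\sfP}_\mu d\sx^\nu+C^{(0)}_{\mu\rho\sigma}\Delta^{\sigma\nu}d\sx^\rho$ for a reference parallel $\sfP$ (Proposition~\ref{prop:difference_conn}), compute the curvature using the flatness of $\Nabla^{\sfP}$ and $\Nabla^{\sfP}_\mu\Delta^{\nu\rho}=-\Lambda^{\nu\rho}_\mu+\Delta^{\nu\tau}C^{(0)}_{\tau\mu\sigma}\Delta^{\sigma\rho}$ from Proposition~\ref{prop:propagator-curved}(2), with the symmetric and $\Delta$-quadratic terms cancelling under antisymmetrization, and then get $\vartheta_\sfQ$ as the antisymmetrization of $\Nabla^{\sfP}\omega_{\sfP\sfQ}$. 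The only point where you are vaguer than the paper is the final $\Tr_{\sfF_0}$ identity, which the paper obtains directly by observing that the trace on $\bOmegao^1$ is minus the trace on $\bThetao\cong\pr^*\sfF$ (i.e.\ of the curvature of $\Pi_\sfQ\tnabla$) and that $\Pi_\sfQ\nabla_a\Pi_\sfQ\nabla_b-\Pi_\sfQ\nabla_b\Pi_\sfQ\nabla_a$ annihilates $z\sfF$ and hence descends to $\sfF_0$, rather than by unwinding $\KS^*$ and the definition of $\Lambda_\sfQ$.
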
 

\begin{proof} 
By Proposition~\ref{prop:difference_conn}(1), 
for a reference parallel pseudo-opposite module $\sfP$, we have 
\[
\Nabla^\sfQ_\mu d\sx^\nu = 
\Nabla^\sfP_\mu d\sx^\nu + 
C^{(0)}_{\mu\rho\sigma} \Delta^{\sigma\nu}  d\sx^\rho 
\]
where $\Delta =\Delta(\sfP,\sfQ)$. 
Because $\Nabla^\sfP$ is flat, one can calculate 
the curvature of $\Nabla^\sfQ$ by regarding the tensor 
$C^{(0)}_{\mu\rho\sigma} \Delta^{\sigma\nu}$ 
as the Christoffel symbol: 
\begin{align*} 
&[\Nabla^{\sfQ}_{\mu_1}, \Nabla^{\sfQ}_{\mu_2}] d\sx^\nu \\ 
&= \left[ 
\Nabla^\sfP_{\mu_1} 
(C^{(0)}_{\mu_2\rho\sigma} \Delta^{\sigma\nu}) 
- \Nabla^\sfP_{\mu_2} 
(C^{(0)}_{\mu_1\rho\sigma} \Delta^{\sigma\nu}) 
+ C^{(0)}_{\mu_1\rho \sigma} \Delta^{\sigma\tau} 
C^{(0)}_{\mu_2 \tau\sigma'} \Delta^{\sigma' \nu} 
- C^{(0)}_{\mu_2\rho \sigma} \Delta^{\sigma\tau} 
C^{(0)}_{\mu_1 \tau\sigma'} \Delta^{\sigma' \nu} 
\right]  d\sx^\rho
\end{align*} 
This formula can be easily shown when $\sx^\mu$ are flat co-ordinates 
with respect to $\Nabla^\sfP$. 
Then observe that the right-hand side is tensorial 
with respect to $\mu_1,\mu_2,\rho,\nu$. 
By Proposition~\ref{prop:propagator-curved}, we have 
$\Nabla^{\sfP}_\mu \Delta^{\nu\rho} = -\Lambda_\mu^{\nu\rho} 
+ \Delta^{\nu\tau} C^{(0)}_{\tau \mu \sigma} \Delta^{\sigma\rho}$. 
Using this we arrive at:
\[
[\Nabla^\sfQ_{\mu_1}, \Nabla^{\sfQ}_{\mu_2}] d\sx^\nu 
= (C^{(0)}_{\mu_1\rho\tau} \Lambda^{\tau \nu}_{\mu_2} 
- C^{(0)}_{\mu_2 \rho \tau} \Lambda^{\tau\nu}_{\mu_1}) d\sx^\rho
\]
This proves Part (1). 
Because $\Nabla^{\sfP}$ is torsion-free, 
$\vartheta_\sfQ = d \omega_{\sfP\sfQ}$ is the 
anti-symmetrization of $\Nabla^{\sfP} \omega_{\sfP\sfQ}
\in (\bOmegao^1)^{\otimes 2}$, i.e.\ (see equation~\ref{eq:difference1-form}):  
\[
d\omega_{\sfP\sfQ} = 
\frac{1}{2} 
\Nabla^{\sfP}_\sigma (C^{(0)}_{\mu\nu\rho} \Delta^{\nu\rho}) 
d\sx^\sigma \wedge d\sx^\mu
\]
The first line of \eqref{eq:curvature-Lambda} follows from 
this and $\Nabla^{\sfP}_\mu \Delta^{\nu\rho} = -\Lambda_\mu^{\nu\rho} 
+ \Delta^{\nu\tau} C^{(0)}_{\tau \mu \sigma} \Delta^{\sigma\rho}$. 
To see the second line of \eqref{eq:curvature-Lambda}, note that 
the trace of $(\Nabla^\sfQ)^2$ on $\bOmegao^1$ is minus  
the trace of $(\Nabla^\sfQ)^2$ on $\bThetao$, and therefore 
is minus the trace of the curvature of the connection 
$\Pi_\sfQ \tnabla$ on $\pr^*\sfF$. 
On the other hand, the operator $\Pi_\sfQ \nabla_a \Pi_{\sfQ} 
\nabla_b - \Pi_{\sfQ} \nabla_b \Pi_{\sfQ} \nabla_a$ vanishes 
on $z\sfF$ (since $\nabla_a \nabla_b =\nabla_b \nabla_a$) 
and defines an $\cO_\cM$-linear endomorphism of 
$\sfF_0 = \sfF/z\sfF$; this means that the trace of 
the curvature operator $\Pi_\sfQ \nabla_a \Pi_\sfQ \nabla_b - \Pi_\sfQ \nabla_b 
\Pi_\sfQ \nabla_a$ on $\sfF$ is well-defined and coincides with 
the trace of the induced operator on $\sfF_0$. 
\end{proof} 

\begin{remark} 
\label{rem:curvature}
Part (2) of Proposition~\ref{prop:curvature} says, heuristically, that 
one can think of $\vartheta_\sfQ$ as the curvature of a 
line bundle ``$\det(\bOmegao^1)^{1/2}$".  
The anomaly equation in Theorem~\ref{thm:anomaly} at genus one gives:
\[
C^{(1)}_{\mu\nu} = \Nabla^{\sfQ}_\mu C^{(1)}_\nu 
+ \frac{1}{2} C^{(0)}_{\alpha\beta \nu} 
{\Lambda_{\sfQ}}_{\mu}^{\alpha\beta}
\]
The fact that $C^{(1)}_{\mu\nu}$ is symmetric now 
implies the curvature condition in Proposition~\ref{prop:curvature_condition}. 
In fact,  we have 
\[
d( C^{(1)}_\nu d\sx^\nu) 
= 
(\Nabla^{\sfQ}_{\mu} C^{(1)}_\nu) d\sx^\mu \wedge d\sx^\nu 
= \frac{1}{2} 
C^{(0)}_{\alpha\beta\mu} \Lambda^{\alpha\beta}_{\nu} 
d\sx^\mu \wedge d\sx^\nu = \vartheta_\sfQ 
\]
by Proposition~\ref{prop:curvature}(2). Therefore the curvature 
condition is a special case of the anomaly equation. 
\end{remark}

\subsection{Logarithmic Case} 
\label{subsec:logarithmic}
We have hitherto studied the case where 
the connection $\nabla$ of the underlying cTP structure 
is smooth. 
In this section we allow logarithmic singularities for the 
connection---in other words, we consider $\log$-cTP structures rather than cTP-structures---
 and generalize the construction of a Fock sheaf 
to this case.
This extra generality is important in applications to mirror symmetry: 
genus-zero Gromov--Witten theory (or quantum cohomology) 
naturally defines a $\log$-cTEP structure near the large radius limit point. 
Almost all the discussions in this section are 
parallel to the previous ones. 

\subsubsection{$\log$-cTP and $\log$-cTEP Structures}

We introduce the notions of $\log$-cTP and $\log$-cTEP structure 
(cf.~Definition~\ref{def:cTP}). 
As before, we write $\cM$ for the base complex manifold and 
$\hA^1 = \Spf \C[\![z]\!]$ for the formal 
neighbourhood of the origin in $\C$. 
Let $(-)\colon \cM \times \hA^1 \to \cM \times \hA^1$ 
denote the map sending $(t, z)$ to $(t,-z)$. 
For a normal crossing divisor $D \subset \cM$, we write 
$\Omega^1_\cM(\log D)$ for the sheaf of 
one-forms on $\cM$ with logarithmic poles along $D$. 
This is a locally free sheaf; its dual, the logarithmic tangent sheaf, 
is denoted by $\Theta_\cM(\log D)$. 

\begin{definition}[cf.~Definition~\ref{def:cTP}]
\label{def:logDcTEP} 
Let $D$ be a normal crossing divisor in $\cM$. 
\begin{enumerate}
\item  
A \emph{$\log$-cTP structure} $(\sfF, \nabla, (\cdot,\cdot)_{\sfF})$
with base $(\cM,D)$ consists of a locally free $\cO_\cM[\![z]\!]$-module
$\sfF$ of rank $N+1$ and a meromorphic flat connection:
\[
\nabla \colon \sfF \to \Omega_{\cM}^1(\log D) 
\otimes_{\cO_{\cM}} z^{-1} \sfF
\]
together with a non-degenerate pairing:
\[
(\cdot,\cdot)_{\sfF} \colon 
(-)^* \sfF \otimes_{\cO_{\cM}[\![z]\!]} \sfF \to \cO_{\cM}[\![z]\!] 
\]
which satisfy the properties listed in Definition~\ref{def:cTP}(1). 

\item  A \emph{$\log$-cTEP structure} with base $(\cM,D)$ is a $\log$-cTP 
structure with base $(\cM,D)$ such that the connection $\nabla$ 
is extended in the $z$-direction with a pole of order 2 along $z=0$. 
More precisely, it is a $\log$-cTP structure $(\sfF, \nabla, (\cdot,\cdot)_\sfF)$ 
equipped with an $\cO_{\cM}$-module map 
$\nabla_{z\partial_z} \colon \sfF \to z^{-1}\sfF$ satisfying 
the properties listed in Definition~\ref{def:cTP}(2). 
Combining the $\cM$-direction and the $z$-direction, we have a 
meromorphic flat connection:
\[
\nabla \colon \sfF \to 
\left( \Omega_{\cM}^1(\log D) \oplus \cO_{\cM}z^{-1}dz \right)  
\otimes_{\cO_{\cM}} z^{-1} \sfF 
\]
\end{enumerate}
We sometimes refer to $D$ as the \emph{singularity divisor}. 
\end{definition} 

\begin{remark} 
A closely related notion of $\log$-trTLEP structure has been introduced 
by Reichelt~\cite{Reichelt:reconstruction}. 
\end{remark} 

\begin{remark} 
$\log$-cTP and $\log$-cTEP structures should be 
viewed as sheaves on $\cM \times \hA^1$. 
The letter ``c" for $\log$-cTP and $\log$-cTEP means the completion 
with respect to the $z$-adic topology. 
One can similarly define the corresponding analytic structures 
over $\cM \times \C$: these are $\log$-TP or $\log$-TEP structures 
(cf.~\S\ref{subsec:TP_TEP}). 
\end{remark}

\begin{example}
\label{ex:log_cTEP_A}
A key example is the \emph{A-model $\log$-cTEP structure} 
given by the quantum cohomology of a smooth projective variety $X$. 
Roughly speaking, this is obtained from the A-model cTEP structure 
(Example~\ref{ex:AmodelTP}, Remark~\ref{rem:completion-of-TP}) 
by taking the quotient of the base by $H^2(X;2\pi\iu \Z)$ and 
partially compactifying it by adding a normal crossing divisor. 
We use the notation from \S\ref{sec:nota}. 
Let $H^2(X;2\pi\iu \Z)$ act on the vector space 
$H_X \otimes \C$ by translation. 
By the Divisor Equation, the (extended) Dubrovin connection 
(see \S\ref{subsec:Dubrovin_conn}) is invariant under 
this action and descends to the quotient space $H_X \otimes \C/H^2(X;2\pi\iu \Z)$. 
The quotient space is partially compactified to $\C^{N+1}$ via the map: 
\begin{align*}
& H_X \otimes \C/H^2(X;2\pi\iu \Z) \hookrightarrow \C^{N+1}  \\ 
& \left[t = \textstyle \sum_{i=0}^N t^i \phi_i \right] 
\longmapsto  
(t^0,q_1,\dots,q_r, t^{r+1},\dots,t^N) 
\end{align*} 
where $q_i = e^{t^i}$ for $1\le i\le r$. 
The complement of the open embedding is the normal crossing 
divisor $q_1 q_2 \cdots q_r =0$. 
The partial compactification here depends on the choice of 
a nef basis $\phi_1,\dots,\phi_r$ of $H^2(X;\Z)$. 
Suppose that $F^0_X$ is convergent in the sense of 
\S\ref{sec:convergence}. Then the A-model $\log$-cTEP structure 
is defined over the base $(\cMbar_{\rm A}, D)$: 
\begin{align*}
  \cMbar_{\rm A} &= \{ (t^0,q_1,\dots,q_r, t^{r+1},\dots,t^N)\in \C^{N+1} : 
  |t^i|<\epsilon, |q_i|<\epsilon \} \\
  D & = \{q_1q_2 \cdots q_r = 0\} 
\end{align*}
with $\epsilon>0$ sufficiently small, 
by the following data (cf.~equation~\ref{eq:AmodelTP}):
\begin{itemize} 
      \item $\sfF = H_X \otimes_\Q \cO_{\cMbar_{\rm A}}[\![z]\!]$;
      \item $\nabla = d - \frac{1}{z} 
( (\phi_0 *) dt^0 + \sum_{i=1}^r (\phi_i*) \frac{dq_i}{q_i} + 
\sum_{j=r+1}^N (\phi_j *) dt^j ) 
        +(\frac{1}{z^2} (E *) + \frac{1}{z} \mu) dz $;  
      \item $((-)^*\alpha, \beta)_{\sfF} = \int_X \alpha(-z) \cup
        \beta(z)$
\end{itemize}
\end{example}

\begin{recall} 
The following objects associated to a $\log$-cTP structure 
$(\sfF,\nabla,(\cdot,\cdot)_\sfF)$ 
are defined exactly as in the non-logarithmic case. 
We do not repeat their definitions. 
\begin{itemize} 
\item the dual sheaves $(z^n \sfF)^\vee$, $\sfF[z^{-1}]^\vee$: see
\eqref{eq:dualmodules}; 
\item the symplectic pairing $\Omega\colon \sfF[z^{-1}]\otimes_{\cO_{\cM}} 
\sfF[z^{-1}]\to \cO_\cM$: see \eqref{eq:symplecticform}; 
\item the dual symplectic pairing 
$\Omega^\vee \colon \sfF[z^{-1}]^\vee \otimes_{\cO_{\cM}}\sfF[z^{-1}]^\vee
\to \cO_{\cM}$: see \eqref{eq:dualsymp}; 
\item the dual flat connection 
$\nabla^\vee \colon 
(z^{-1} \sfF)^\vee \to 
\Omega^1_{\cM}(\log D) \otimes _{\cO_{\cM}} \sfF^\vee$: see 
\eqref{eq:dual_flat_connection}; 
\item the dual frame 
$x_n^i \colon \sfF[z^{-1}]|_U \to \cO_U$, $n\in \Z$, $0\le i\le N$ 
associated to a trivialization $\sfF|_U \cong \C^{N+1}\otimes \cO_U[\![z]\!]$ 
over $U$: see \eqref{eq:frame_x}. 
\end{itemize} 
\end{recall} 

\subsubsection{The Total Space of a $\log$-cTP Structure} 
\label{subsubsec:totalspace_logcTP} 
Let $(\sfF,\nabla,(\cdot,\cdot)_{\sfF})$ 
a $\log$-cTP structure with base $(\cM,D)$. 
\begin{definition}[cf.~Definition~\ref{def:totalspace}] 
The \emph{total space} $\LL$ of a $\log$-cTP structure 
$(\sfF,\nabla,(\cdot,\cdot)_{\sfF})$ is the total space of the infinite-dimensional vector bundle associated to $z\sfF$. 
As a set, $\LL$ consists of pairs $(t,\bx)$ such that 
$t\in\cM$ and $\bx \in z \sfF_t$. 
We write $\pr \colon \LL \to \cM$ for the natural projection. 
We equip $\LL$ with the structure of a ringed space 
as in Definition~\ref{def:totalspace}; 
we denote by $\bcO$ the structure sheaf of $\LL$. 
\end{definition}

An algebraic local co-ordinate system on $\LL$ is given 
similarly to the non-logarithmic case; for the sake of 
exposition we shall always use local co-ordinates of the 
following type, which are compatible with logarithmic 
singularities. 

\begin{definition} 
\label{def:alg_coord_log}
Let $U\subset \cM$ be a co-ordinate neighbourhood 
with co-ordinates $\{t^0,q_1,\dots,q_r,t^{r+1},\dots, t^M\}$ 
such that $D \cap U$ is given by $\{q_1 q_2 \cdots q_r = 0\}$.  
Choose a trivialization of $\sfF|_U \cong \C^{N+1} \otimes \cO_U[\![z]\!]$ 
over $U$ and define the corresponding 
dual frame $x_n^i \in \sfF[z^{-1}]^\vee$. 
We call the set 
\[
\{ t^0, q_1, \dots, q_r, t^{r+1},\dots, t^M \} \cup  
\{ x_n^i : \text{$0 \le i \le N$, $n\ge 1$} \}
\]
an \emph{algebraic local co-ordinate system} on $\LL$. 
We also write $q_i = e^{t^i}$ for $1\le i\le r$,
so that we have: 
\[
\text{$\frac{dq_i}{q_i} = dt^i$ and $q_i \parfrac{}{q_i} = \parfrac{}{t^i}$} 
\qquad \qquad (1\le i\le r)
\]
Abusing notation, we write $f(t) = f(t^0,t^1,\dots,t^r,t^{r+1},\dots, t^M)$ 
to denote a function $f \colon U \to \C$ on $U$, where 
the identification $t^i = \log q_i$ with $1\le i\le r$ is understood. 
\end{definition}

Using algebraic local co-ordinates on $\LL|_U$, one has (as before): 
\[
\bcO(\pr^{-1}(U)) = 
\cO(U)\left[x_n^i : \text{$n\ge 1$, $0\le i\le N$}\right]
\]
The ring $\bcO(\pr^{-1}(U))$ is equipped with a grading and filtration 
as in Definition~\ref{def:totalspace}. 

\subsubsection{Miniversality} 
Let $(\sfF,\nabla, (\cdot,\cdot)_\sfF)$ be a $\log$-cTP structure 
with base $(\cM,D)$. 
Here and hereafter we restrict to the case where $M=N$, that is, 
$\dim \cM = \rank \sfF$. 
Choose a trivialization $\sfF|_U \cong \C^{N+1} \otimes \cO_U[\![z]\!]$ 
over $U$. We can write the connection $\nabla$ in the form 
\begin{equation}
\label{eq:definition_of_C}
\nabla s = d s - \frac{1}{z} \cC(t,z) s 
\end{equation}
with 
\begin{equation} 
\label{eq:logarithmic_C}
\cC(t,z) = \sum_{i=0}^M \cC_i(t,z) dt^i = 
\cC_0(t,z) dt^0 + \sum_{i=1}^r \cC_i(t,z) \frac{dq_i}{q_i} 
+ \sum_{j=r+1}^N \cC_j(t,z) dt^j 
\end{equation} 
where $s \in \C^{N+1} \otimes \cO_U[\![z]\!]  \cong \sfF|_U$ and
$\cC_i(t,z) \in \End(\C^{N+1})\otimes \cO_U[\![z]\!]$. 
The residual part $\cC(t,0) = (-z\nabla)|_{z=0}$ determines a section of
$\End(\sfF_0) \otimes \Omega_U^1(\log D)$, independently of
the choice of trivialization.

\begin{example}
In the case of the A-model $\log$-cTEP structure (Example~\ref{ex:log_cTEP_A}), we have $\cC(t,z) = (\phi_0*)dt^0 
+ \sum_{i=1}^r (\phi_i*) \frac{dq_i}{q_i} + 
\sum_{i=r+1}^N (\phi_i*) dt^i$.
\end{example}

\begin{definition}[cf.~Definition~\ref{def:possibilityofdilaton}] 
  \label{def:miniversal_log-cTP} 
For a $\log$-cTP structure $(\sfF,\nabla, (\cdot,\cdot)_{\sfF})$, 
we define:
  \begin{align*}
    \sfF_{0,t}^\circ & := 
    \{ x_1 \in \sfF_{0,t} :
    \text{$\Theta_\cM(\log D)_t \to \sfF_{0,t}$, $\ v\mapsto \iota_v \cC(t,0) x_1$
      is an isomorphism}\}  \\  
    \LLo & := \{ (t,\bx) \in \LL : \text{$t\in \cM$,  
    $\bx \in z \sfF_t$,  
    $(\bx/z)|_{z=0} \in \sfF_{0,t}^\circ$} \} \\
    \sfF_0^\circ & := \bigcup_{t\in \cM} \sfF_{0,t}^\circ
  \end{align*}
These are Zariski open subsets of, respectively, $\sfF_{0,t}$, $\LL$, and
$\sfF_0$.  If for every point $t\in \cM$, $\sfF_{0,t}^\circ$ 
is non-empty, then we say that $(\sfF,\nabla, (\cdot,\cdot)_{\sfF})$ 
is \emph{miniversal}.
A miniversal $\log$-cTP structure necessarily satisfies $\dim \cM =\rank \sfF$. 
\end{definition}

Henceforth all $\log$-cTP structures are assumed to be miniversal 
unless otherwise stated.  
Choose a trivialization of $\sfF|_U$ and present the connection $\nabla$ 
in terms of the trivialization as in \eqref{eq:definition_of_C}, \eqref{eq:logarithmic_C}.  
The \emph{discriminant} in the logarithmic situation is defined to be 
\begin{equation} 
\label{eq:log_discriminant}
P(t,x_1) := (-1)^{N+1}\det(\cC_0(t,0) x_1, \cC_1(t,0) x_1, \dots, \cC_N(t,0) x_1) 
\end{equation} 
(cf.~equation~\ref{eq:discriminant}).  This is a polynomial in $x_1$ of degree $N+1$ and belongs to 
$\cO(U)[x_1^0,\dots,x_1^N]$. 
The set $\LLo$ is the complement of the zero-locus of $P(t,x_1)$.  
More invariantly, $P(t,x_1) dt^0\wedge \cdots \wedge dt^N$ 
should be thought of as a section of the line bundle 
$\pr^*(\det(\sfF_0) \otimes \Omega^{N+1}_\cM(\log D))$ 
over $\LL$, and $\LLo$ is the complement of the zero-locus. 
The ring of regular functions over 
$\pr^{-1}(U)^\circ := \pr^{-1}(U) \cap \LLo$ is:
\[
\bcO(\pr^{-1}(U)^\circ)  
= \cO(U)[\{x_n^i\}_{n\ge 1,0\le i\le N}, 
P(t,x_1)^{-1}]
\]
As before, the grading and filtration on $\bcO(\pr^{-1}(U))$ 
descends to $\bcO(\pr^{-1}(U)^\circ)$. 

\subsubsection{Logarithmic One-Forms and Vector Fields on $\LL$}
We need to consider the sheaves of \emph{logarithmic} 
one-forms and vector fields on the total space $\LL$. 
In terms of algebraic local co-ordinates $\{t^i, q_j = e^{t^j},x_n^k\}$, 
they are defined by: 
\begin{align*} 
\bOmega^1(\log D) & = 
 \bigoplus_{j=0}^N \bcO dt^j 
 \oplus 
\bigoplus_{n =1}^\infty 
\bigoplus_{i=0}^N  \bcO d x_n^i \\
\bTheta(\log D) &= \sHom(\bOmega^1(\log D),\bcO) = 
\prod_{j=0}^N \bcO \partial_j \times
\prod_{n=1}^\infty
\prod_{i=0}^N \bcO \partial_{n,i}
\end{align*} 
where we set $\partial_j = \partial/\partial t^j$ and 
$\partial_{n,i} = \partial/\partial x_n^i$. 
Recall that $dt^i = dq_i/q_i$ and $\partial_i = q_i (\partial/\partial q_i)$ 
for $i=1,\dots,r$. 
The grading and filtration on $\bOmega^1(\log D)$ is given by 
\eqref{eq:grading_filtration_coord}. 

\subsubsection{The Yukawa Coupling and the Kodaira--Spencer Map} 
\label{subsubsec:Yukawa_KS_log}
The Yukawa coupling and Kodaira--Spencer map can also be
adapted to the logarithmic setting. 
Let $\{t^i, q_j = e^{t^j}, x_n^k\}$ be an algebraic local 
co-ordinate system on the total space $\LL$ and write the connection 
endomorphism as $\cC(t,z) = \sum_{i=0}^N \cC_i(t,z) dt^i$ 
(see equation~\ref{eq:logarithmic_C}). 

\begin{definition}[cf.~Definition~\ref{def:Yukawa}] 
The \emph{Yukawa coupling} is a cubic tensor:
  \[
  \bY = \sum_{i=0}^N \sum_{j=0}^N \sum_{k=0}^N C^{(0)}_{ijk} dt^i \otimes dt^j \otimes dt^k \in
  \big((\bOmega^1(\log D))^{\otimes 3}\big)^2_{-3}
  \]
where 
\[
    C_{ijk}^{(0)}(t,\bx) = \big( \cC_i(t,0) x_1,
    \cC_j(t,0) \cC_k(t,0) x_1 \big)_{\sfF_0}
\] 
with $x_1 = (\bx/z)|_{z=0}$. 
Recall again that $d t^i = dq_i/q_i$ for $i=1,\dots,r$. 
\end{definition}

The pulled-back sheaves $\pr^*(z^n \sfF)$, $\pr^* \sfF[z^{-1}]$, 
$\pr^*(z^n\sfF)^\vee$, $\pr^*\sfF[z^{-1}]^\vee$ on $\LL$ 
are defined as in \eqref{eq:pullbacks}. 
The connection $\nabla$ induces a flat connection $\tnabla:=\pr^*\nabla$ 
on $\pr^* \sfF[z^{-1}]$ 
(cf.~equation~\ref{eq:tnabla} and \S\ref{subsec:conn-LL}): 
\begin{equation*} 
\tnabla \colon \pr^*\sfF[z^{-1}] \to \bOmega^1(\log D)
\hotimes \pr^*(\sfF[z^{-1}])  
\end{equation*} 
such that $\tnabla \pr^*(z^n\sfF) \subset 
\bOmega^1(\log D) \hotimes \pr^*(z^{n-1}\sfF)$. 
The dual connection  
\[
\tnabla^\vee \colon \pr^* \sfF[z^{-1}]^\vee \to 
\bOmega^1(\log D) \hotimes \pr^* \sfF[z^{-1}]^\vee
\]
is defined by $\pair{\tnabla^\vee \varphi}{s} 
:= d\pair{\varphi}{s} - \pair{\varphi}{\tnabla s}$. 
The explicit presentation \eqref{eq:tnablavee} of $\tnabla^\vee$ 
holds also in the logarithmic case; we also have 
a commutative diagram similar to \eqref{eq:tnablavee-CD}. 

\begin{definition}[cf.~Definition~\ref{def:KS}] 
Define the \emph{tautological section} $\bx$ of $\pr^* (z\sfF)$ by 
  \[
  \bx(t,\bx) = \bx   
  \] 
where $(t,\bx)$ denotes the point $\bx \in z \sfF_t$ on $\LL$. 
The \emph{Kodaira--Spencer map} 
$\KS \colon \bTheta(\log D) \to \pr^*\sfF$ 
and the \emph{dual Kodaira--Spencer map} 
$\KS^* \colon \pr^*\sfF^\vee \to \bOmega(\log D)$ 
are defined by:
\begin{align*}
  \KS(v) = \tnabla_v \bx &&
  \KS^*(\varphi) = \varphi(\tnabla \bx)
\end{align*}
The maps $\KS$ and $\KS^*$ are isomorphisms over $\LLo\subset \LL$.
\end{definition}

\begin{notation}
As before we denote by $\bThetao(\log D)$ 
the restriction of $\bTheta(\log D)$ to $\LLo \subset \LL$, and 
denote by $\bOmegao^1(\log D)$ the restriction of 
$\bOmega^1(\log D)$ to $\LLo \subset \LL$. 
\end{notation}

\subsubsection{Opposite Modules and Logarithmic Frobenius Manifolds}

We extend the notion of (pseudo-) opposite modules 
to the setting of $\log$-cTP structures.

\begin{definition}[cf.~Definition~\ref{def:opposite}]
\label{def:opposite_log-cTP} 
A \emph{pseudo-opposite module} $\sfP$ for a $\log$-cTP structure 
$(\sfF,\nabla, (\cdot,\cdot)_{\sfF})$ is an $\cO_{\cM}$-submodule 
$\sfP$ of $\sfF[z^{-1}]$ satisfying the conditions 
\begin{description}
\item[(Opp1)] (opposedness) 
$\sfF[z^{-1}] = \sfF \oplus \sfP$ and 
\item[(Opp2)] (isotropy) 
$\Omega(\sfP,\sfP) = 0$. 
\end{description}
A pseudo-opposite module $\sfP$ is said to be \emph{parallel} 
if it satisfies 
\begin{description}
\item[(Opp3)] $\nabla \sfP \subset \Omega^1_{\cM}(\log D) \otimes \sfP$. 
\end{description} 
If $\sfP$ satisfies (Opp1--Opp3) and
\begin{description}
\item[(Opp4)] ($z^{-1}$-linearity) 
$z^{-1}\sfP \subset \sfP$
\end{description} 
then it is called an \emph{opposite module}.  

Suppose that $(\sfF,\nabla,(\cdot,\cdot)_\sfF)$ is a $\log$-cTEP structure. 
An opposite module $\sfP$ for the underlying $\log$-cTP structure is said to be 
\emph{homogeneous} if it satisfies  
\begin{description} 
\item[(Opp5)] (homogeneity) 
$\nabla_{z\partial_z} \sfP \subset \sfP$. 
\end{description} 
\end{definition} 

\begin{example}[cf.~Example~\ref{ex:Amodel-opposite}]
\label{exa:log_Amodel_opposite}
The A-model $\log$-cTEP structure (Example~\ref{ex:log_cTEP_A}) 
is equipped with a 
standard homogeneous opposite module $\sfP_{\rm std} = 
H_X \otimes_\Q z^{-1} \cO_{\cMbar_{\rm A}}[z^{-1}]$. 
\end{example} 

An opposite module always exists in a formal neighbourhood of a point 
outside the singularity divisor $D$ by virtue of Lemma~\ref{lem:existence_opposite}. 
However it is not clear if Lemma~\ref{lem:existence_opposite} 
can be generalized to a point on the divisor $D$. 
In practice, in a geometric example such as the A-model $\log$-cTEP 
structure, one can often find an opposite module that extends across $D$.  

Similarly to \S\ref{subsec:opp-Frob}, by choosing 
an opposite module $\sfP$ and a primitive section 
$\zeta$ for a miniversal $\log$-cTP (or $\log$-cTEP) structure, 
one can equip the base with 
a \emph{logarithmic} Frobenius manifold 
structure (with or without Euler vector field) 
in the sense of  Reichelt~\cite{Reichelt:reconstruction}. 
The argument is completely parallel to Proposition~\ref{prop:flat_trivialization} and Remark~\ref{rem:opposite_conformalcase} 
and we give only the statement.  

\begin{proposition}[{cf.~Proposition~\ref{prop:flat_trivialization}, 
Remark~\ref{rem:opposite_conformalcase},\cite[Propositions~1.10,~1.11]{Reichelt:reconstruction}}]
Consider a $\log$-cTP structure $(\sfF,\nabla,(\cdot,\cdot)_\sfF)$ 
with base $(\cM,D)$. 
Let $\sfP$ be an opposite module for $(\sfF,\nabla,(\cdot,\cdot)_\sfF)$ 
over $U$. Then:
\begin{itemize}
\item[(i)] The natural maps $\sfF_0 = \sfF/z\sfF \leftarrow 
\sfF \cap z \sfP \rightarrow z \sfP/\sfP$ 
are isomorphisms of $\cO_U$-modules. 
\item[(ii)] We have $\sfF = (\sfF \cap z \sfP )\otimes \C[\![z]\!] 
\cong (z\sfP/\sfP) \otimes \C[\![z]\!]$, which we call a \emph{flat trivialization}. 
Note that $z\sfP/\sfP$ is a locally free coherent $\cO_U$-module 
with a logarithmic flat connection, and let $\nabla^0 \colon z\sfP/\sfP 
\to \Omega^1_U(\log D) \otimes_{\cO_U} (z\sfP/\sfP) $ denote the 
flat connection induced by $\nabla$. 
\item[(iii)] Under the flat trivialization, the connection 
$\nabla$ takes the form 
\[
\nabla = \nabla^0 - \frac{1}{z} \cC(t) 
\]
where $\cC(t) \in \Omega^1_U(\log D) \otimes_{\cO_U}
\End(z\sfP/\sfP)$ is independent of $z$.
\item[(iv)] Under the flat trivialization, the pairing 
$(\cdot,\cdot)_{\sfF}$ induces and can be recovered from 
a $z$-independent symmetric pairing 
\[
(\cdot,\cdot)_{z\sfP/\sfP} \colon (z\sfP/\sfP) 
\otimes (z\sfP/\sfP) \to \cO_U 
\]
which is flat with respect to $\nabla^0$. 
\item[(v)] Assume that there exists a section $\zeta$ of 
$\sfF$ over $U$ which is flat with respect to $\nabla^0$ 
in the flat trivialization and whose image 
under $\sfF \to \sfF_0 = \sfF/z\sfF$ lies in $\sfF_0^\circ$. 
(This assumption implies the miniversality of 
$(\sfF,\nabla,(\cdot,\cdot)_\sfF)$.) 
We call such a section $\zeta$ a \emph{primitive section} 
associated to $\sfP$. 
Then the base $U$ carries the structure of a \emph{logarithmic Frobenius manifold without Euler vector field}.
It consists of 
\begin{itemize} 
\item A flat symmetric $\cO_U$-bilinear 
metric $g: \Theta_U(\log D) \otimes_{\cO_U} \Theta_U(\log D) \to \cO_U$, defined by:
\[
g(v_1,v_2) = 
(z\nabla_{v_1} \zeta, z\nabla_{v_2} \zeta)_{\sfF}
\]

\item A commutative and associative product  
$* \colon \Theta_U(\log D) \otimes_{\cO_U} \Theta_U(\log D)
\to \Theta_U(\log D)$, defined by:
\[
z\nabla_{v_1} z\nabla_{v_2} \zeta = -z\nabla_{v_1*v_2} \zeta
\]

\item A flat identity vector field $e\in \Theta_U(\log D)$ for the product $*$, 
defined by:
\[
- z \nabla_e \zeta= \zeta
\]
\end{itemize} 
such that the connection $\nabla^\lambda_v = 
\nabla^{\rm LC}_v - \lambda (v *)$ on the logarithmic 
tangent sheaf $\Theta_U(\log D)$ is a flat pencil of 
connections with parameter $\lambda$. 
Here $\nabla^{\rm LC}$ denotes the Levi-Civita  
connection for the metric $g$. 
\item[(vi)] Suppose now that $(\sfF,\nabla,(\cdot,\cdot)_\sfF)$ is 
a miniversal $\log$-cTEP structure with base $(\cM,D)$. 
Miniversality implies that there exists a unique logarithmic vector 
field $E \in \Theta_{\cM}(\log D)$ such that:
\[
(\nabla_{z\partial_z} + \nabla_E ) \sfF \subset \sfF
\]
This is called the \emph{Euler vector field}. 
Suppose that we have a homogeneous opposite module $\sfP$ 
for $(\sfF,\nabla,(\cdot,\cdot)_\sfF)$ over $U$. 
This defines a flat trivialization of $\sfF$ as above. 
Suppose also that there exists a section $\zeta$ of 
$\sfF$ over $U$ such that $\zeta$ is flat respect to $\nabla^0$ 
in the flat trivialization, satisfies the homogeneity condition
\[
(\nabla_{z\partial_z} + \nabla_{E}) \zeta = - \frac{\hc}{2} \zeta
\]
for some $\hc \in \C$, and is such that the image of $\zeta$ 
under $\sfF \to \sfF_0 = \sfF/z\sfF$ lies in $\sfF_0^\circ$.  
Then $U$ carries the structure of a logarithmic Frobenius manifold. 
It is given by the structures $(g,*,e)$ from Part {\rm (v)} and the 
Euler vector field $E$, which satisfy the additional properties 
listed in \eqref{eq:Euler_compatibility}. 
\end{itemize}
\end{proposition}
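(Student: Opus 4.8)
The proof plan is to reduce everything to the non-logarithmic situation already established in Proposition~\ref{prop:flat_trivialization} and Remark~\ref{rem:opposite_conformalcase}, following the strategy of Reichelt~\cite[Propositions~1.10,~1.11]{Reichelt:reconstruction}. The key observation is that, given a $\log$-cTP structure together with an opposite module $\sfP$ over $U$, the flat trivialization argument in the proof of Proposition~\ref{prop:flat_trivialization} is formal in the sense that it never uses the regularity of $\nabla$ away from $D$: it only uses the opposedness, isotropy, $z^{-1}$-linearity and parallelness of $\sfP$, together with the fact that $\nabla(\sfF) \subset \Omega^1_{\cM}(\log D) \otimes z^{-1}\sfF$ and $\nabla(z\sfP) \subset \Omega^1_{\cM}(\log D) \otimes z\sfP$. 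All of these hold verbatim in the logarithmic case with $\Omega^1_{\cM}$ replaced by $\Omega^1_{\cM}(\log D)$ throughout. So the proof of (i)--(iv) is obtained by copying the proof of Proposition~\ref{prop:flat_trivialization}(i)--(iv) word for word, inserting ``$\log D$'' in the appropriate spots; I would simply state this and indicate the one line (in the proof of (iii)) where $\Omega^1_U$ becomes $\Omega^1_U(\log D)$.

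For (v), the point is that the primitive section $\zeta$ produces, via $v \mapsto -z\nabla_v\zeta = \cC_v(t)\zeta$, an $\cO_U$-linear isomorphism $\Theta_U(\log D) \cong \sfF \cap z\sfP$ (here miniversality in the sense of Definition~\ref{def:miniversal_log-cTP}, which involves $\Theta_\cM(\log D)$ rather than $\Theta_\cM$, is exactly what is needed, and the image of $\zeta$ in $\sfF_0^\circ$ guarantees invertibility). Transporting the $z$-independent pairing $(\cdot,\cdot)_{z\sfP/\sfP}$ and the connection $\cC(t)$ along this isomorphism gives the metric $g$, the product $*$, and the identity $e$ as stated; flatness of $g$ follows from the $\nabla^0$-flatness of $(\cdot,\cdot)_{z\sfP/\sfP}$, commutativity and associativity of $*$ from the corresponding properties of the operators $\cC_i(t,0)$ established in \S\ref{subsec:Yukawa} (these hold equally in the logarithmic case), and the flat-pencil property $\nabla^\lambda_v = \nabla^{\rm LC}_v - \lambda(v*)$ follows from the fact that $\nabla^0 - \lambda z^{-1}\cC(t)$, read off from $\nabla = \nabla^0 - z^{-1}\cC(t)$ via $\lambda = z^{-1}$, is a flat pencil for each value of the parameter. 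This is precisely the content of Reichelt's definition of a logarithmic Frobenius manifold, so again the argument is a transcription of the non-logarithmic case with $\Theta_U$, $\Omega^1_U$ everywhere decorated with $\log D$; I would refer to Proposition~\ref{prop:flat_trivialization}(v) and to \cite[Proposition~1.10]{Reichelt:reconstruction} and leave the verification to the reader, as is done there.

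For (vi), the first task is to establish existence and uniqueness of the logarithmic Euler vector field $E \in \Theta_\cM(\log D)$ with $(\nabla_{z\partial_z} + \nabla_E)\sfF \subset \sfF$. In the non-logarithmic case this is recalled in Remark~\ref{rem:opposite_conformalcase} following \cite[\S3.2]{Iritani:Ruan}; the logarithmic version works the same way, because $\nabla_{z\partial_z}\sfF \subset z^{-1}\sfF$ and the residue $\cC(t,0) \in \End(\sfF_0)\otimes\Omega^1_\cM(\log D)$ together with miniversality (the map $\Theta_\cM(\log D)_t \to \sfF_{0,t}$, $v \mapsto \iota_v\cC(t,0)x_1$ being an isomorphism for suitable $x_1$) uniquely pins down $E$ as the logarithmic vector field whose action cancels the residue of $\nabla_{z\partial_z}$. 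Granting this, the homogeneity hypothesis $(\nabla_{z\partial_z}+\nabla_E)\zeta = -\tfrac{\hc}{2}\zeta$ together with (Opp5) forces, exactly as in Remark~\ref{rem:opposite_conformalcase}, the compatibility relations \eqref{eq:Euler_compatibility} between $E$ and the structures $(g,*,e)$ of Part~(v), which upgrade the Frobenius manifold without Euler field to a genuine logarithmic Frobenius manifold in the sense of Reichelt.

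The one genuinely new point, and the place I expect mild friction, is the existence/uniqueness of the logarithmic Euler vector field in (vi): one must check that the cancellation condition for the residue of $\nabla_{z\partial_z}$, which a priori only determines a \emph{meromorphic} vector field on $\cM$, in fact has logarithmic poles along $D$ — equivalently that the residue endomorphism, when inverted against the miniversality isomorphism, produces a section of $\Theta_\cM(\log D)$ rather than one with worse poles. This follows because $\cC(t,0)$ is already a section of $\End(\sfF_0)\otimes\Omega^1_\cM(\log D)$, so its ``inverse'' pairs against $\Theta_\cM(\log D)$; I would spell this out in a sentence. Everything else is a faithful translation of the arguments already given in the excerpt, so the proof is short: transcribe Proposition~\ref{prop:flat_trivialization} and Remark~\ref{rem:opposite_conformalcase} with $\log D$ inserted, cite \cite[Propositions~1.10,~1.11]{Reichelt:reconstruction} for the definition of logarithmic Frobenius manifold, and note the one place where logarithmic poles must be tracked.
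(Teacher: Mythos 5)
Your proposal is correct and matches the paper's own treatment: the paper states this result without proof, remarking only that the argument is completely parallel to Proposition~\ref{prop:flat_trivialization} and Remark~\ref{rem:opposite_conformalcase} with $\Omega^1_U$, $\Theta_U$ replaced by their logarithmic counterparts, which is exactly your reduction. Your extra sentence checking that the Euler vector field in (vi) is genuinely logarithmic (because $\cC(t,0)$ takes values in $\End(\sfF_0)\otimes\Omega^1_\cM(\log D)$ and miniversality is formulated with $\Theta_\cM(\log D)$) is a reasonable, and harmless, elaboration of what the paper leaves implicit.
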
 

\begin{example} 
The A-model $\log$-cTEP structure (Example~\ref{ex:log_cTEP_A}) 
equipped with the standard homogeneous opposite module $\sfP_{\rm std}$ 
(Example~\ref{exa:log_Amodel_opposite}) yields the standard logarithmic 
Frobenius manifold structure on the base $\cMbar_{\rm A}$. 
\end{example} 

\subsubsection{Flat Connection on the Total Space} 
A pseudo-opposite module $\sfP$ determines flat connections on the
logarithmic tangent sheaf and logarithmic cotangent sheaf of $\LLo$, as follows.  

\begin{definition}[cf.~Definition~\ref{def:Nabla}] 
  \label{def:Nabla_log}
  Let $\sfP$ be a pseudo-opposite module for a $\log$-cTP structure
  $(\sfF,\nabla, (\cdot,\cdot)_{\sfF})$, and let $\Pi \colon
  \sfF[z^{-1}] \to \sfF$ be the projection along $\sfP$.  
The composition of the maps:
  \begin{align*}
    & \xymatrix{
      \pr^* \sfF \ar[rr]^-{\tnabla} && 
      \bOmega^1(\log D) \hotimes \pr^*(z^{-1} \sfF) 
      \ar[rr]^-{\id \otimes \Pi} &&
      \bOmega^1(\log D) \hotimes \pr^* \sfF
    } \\
    & \xymatrix{
      \pr^* \sfF^\vee \ar[rr]^-{\Pi^\vee} && 
      \pr^* (z^{-1} \sfF)^\vee \ar[rr]^-{\tnabla^\vee} &&
      \bOmega^1(\log D) \otimes \pr^* \sfF^\vee 
    }
  \end{align*}
  (restricted to $\LLo$) with the Kodaira--Spencer isomorphisms $\KS
  \colon \bThetao(\log D) \to \pr^*\sfF$, $\KS^* \colon \pr^*\sfF^\vee
  \to \bOmegao^1(\log D)$ induces connections:
  \begin{equation}
    \label{eq:Nabla_log}
    \begin{aligned}
      &\Nabla \colon \bThetao(\log D) \to 
      \bOmegao^1(\log D) \hotimes \bThetao(\log D) \\
      &\Nabla \colon \bOmegao^1(\log D) \to 
      \bOmegao^1(\log D) \otimes \bOmegao^1(\log D) 
    \end{aligned}
  \end{equation}
  where $\bOmegao^1(\log D) \hotimes \bThetao(\log D) :=
  \projlim_n\big(\bOmegao^1(\log D) \otimes \big( \bThetao(\log D) /
  \KS^{-1}(\pr^* (z^n \sfF)) \big) \big)$. 
  The connection on $\bOmegao^1(\log D)$ also induces the connection 
  on logarithmic $n$-tensors: 
  \[
 \Nabla \colon \bOmegao^1(\log D)^{\otimes n} 
 \to \bOmegao^1(\log D) \otimes \bOmegao^1(\log D)^{\otimes n}
 \]
\end{definition}

The connections in \eqref{eq:Nabla_log} are dual to each other. 
The argument of Proposition~\ref{prop:Nabla-torsionfree} 
shows the following: 

\begin{proposition} 
The flat connection $\Nabla$ on $\bThetao(\log D)$ associated 
to a pseudo-opposite module $\sfP$ is torsion-free. 
It is flat if $\sfP$ is parallel. 
\end{proposition}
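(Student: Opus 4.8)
The plan is to reduce this logarithmic statement to the smooth case already established in Proposition~\ref{prop:Nabla-torsionfree}, by working locally and observing that all the relevant computations take place over the open locus where the connection has no poles. First I would note that both the torsion-freeness and the flatness are local statements on $\LLo$, and that $\LLo$ decomposes into the part lying over $\cM \setminus D$ and the part lying over $D$. Over $\cM \setminus D$, the $\log$-cTP structure restricts to an ordinary cTP structure in the sense of Definition~\ref{def:cTP}, and the connection $\Nabla$ of Definition~\ref{def:Nabla_log} restricts to the connection of Definition~\ref{def:Nabla}; so both assertions follow immediately from Proposition~\ref{prop:Nabla-torsionfree}. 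The point is therefore to show that torsion-freeness and (when $\sfP$ is parallel) flatness propagate across the divisor $D$ as well.

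The key observation is that the logarithmic tangent sheaf $\bThetao(\log D)$ is a \emph{locally free} sheaf on $\LLo$ and that the Kodaira--Spencer isomorphism $\KS \colon \bThetao(\log D) \xrightarrow{\sim} \pr^*\sfF$ is an isomorphism of coherent sheaves over all of $\LLo$, including the part over $D$. Hence the torsion tensor of $\Nabla$, which a priori is a section of $\bOmegao^1(\log D) \otimes \bOmegao^1(\log D) \otimes \bThetao(\log D)$ (using the antisymmetrization in the first two slots), is a section of a locally free sheaf; similarly the curvature $\Nabla^2$ is a section of $\bigwedge^2 \bOmegao^1(\log D) \otimes \End(\bThetao(\log D))$, again locally free. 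Both of these sections vanish on the dense open set lying over $\cM \setminus D$, by the smooth case. A section of a locally free (in particular torsion-free) $\bcO$-module that vanishes on a dense open subset of $\LLo$ vanishes identically, since $\LLo$ is reduced and irreducible over each component of $\cM$. This gives both conclusions.

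Alternatively, and perhaps more in the spirit of the excerpt, I would simply rerun the proof of Proposition~\ref{prop:Nabla-torsionfree} verbatim: for $v_1, v_2 \in \bThetao(\log D)$ and the tautological section $\bx$,
\[
\Nabla_{v_1} v_2 - \Nabla_{v_2} v_1
= \KS^{-1}\Pi\big(\tnabla_{v_1}\tnabla_{v_2}\bx - \tnabla_{v_2}\tnabla_{v_1}\bx\big)
= \KS^{-1}\Pi\big(\tnabla_{[v_1,v_2]}\bx\big) = \Nabla_{[v_1,v_2]},
\]
where one uses that $[v_1,v_2] \in \bThetao(\log D)$ (the logarithmic tangent sheaf is closed under Lie bracket, since $D$ is normal crossing) and that $\tnabla = \pr^*\nabla$ remains flat as a connection with logarithmic poles, the flatness $[\nabla_{v_1},\nabla_{v_2}] = \nabla_{[v_1,v_2]}$ being part of Definition~\ref{def:logDcTEP}(1) via the properties inherited from Definition~\ref{def:cTP}(1). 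For flatness when $\sfP$ is parallel, the argument reduces, exactly as before, to the flatness of $(\Pi \otimes \id) \circ \nabla$ on $\sfF$, which follows by writing $\tnabla$ in block-triangular form with respect to $\sfF[z^{-1}] = \sfF \oplus \sfP$ using (Opp3) in the logarithmic sense ($\nabla \sfP \subset \Omega^1_\cM(\log D) \otimes \sfP$); the diagonal blocks $A = (\Pi \otimes \id)\circ\nabla$ and $B$ are then flat logarithmic connections because $\nabla$ is.

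I do not expect a serious obstacle here: the only points requiring a moment's care are that the logarithmic tangent sheaf is a Lie subalgebra of $\bTheta$ (standard for normal crossing divisors, and used implicitly in Definition~\ref{def:logDcTEP} for the base) and that the completed tensor product $\bOmegao^1(\log D) \hotimes \bThetao(\log D)$ behaves well enough that the identity displayed above makes sense slot-by-slot; both are routine given the setup in \S\ref{subsubsec:totalspace_logcTP} and \S\ref{subsubsec:Yukawa_KS_log}. The mild subtlety, if any, is notational bookkeeping: one must make sure $\Nabla$ lands in $\bOmegao^1(\log D)$-valued tensors rather than tensors with worse poles in the $t$-directions along $D$, but this is built into the definition since $\tnabla^\vee$ and $\KS^*$ are defined with logarithmic poles and $\Pi^*$ is $\cO_\cM$-linear hence preserves the logarithmic structure.
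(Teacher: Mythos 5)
Your proposal is correct, and your second argument is exactly the paper's proof: the paper simply observes that the argument of Proposition~\ref{prop:Nabla-torsionfree} carries over verbatim, using that $\bThetao(\log D)$ is closed under Lie bracket and that opposedness and (Opp3) hold with $\Omega^1_{\cM}(\log D)$ in place of $\Omega^1_{\cM}$. Your first argument (restricting to the dense open locus over $\cM\setminus D$ and extending the vanishing of the torsion and curvature tensors across $D$ by local freeness of the logarithmic sheaves) is a valid alternative but is not needed.
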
 

In the non-logarithmic case, given a parallel pseudo-opposite module, 
we constructed in \S\ref{subsec:flatstronL} 
the genus-zero potential and a flat co-ordinate system 
on the formal neighbourhood $\hLLo_t$ of $\LLo_t$ in $\LLo$.
The construction there does not work if $t$ is on the singularity 
divisor $D$, but works if $t$ is away from $D$. 

\subsubsection{Propagators} 
In the logarithmic case, propagators are defined as \emph{logarithmic} bivector fields. 
\begin{definition}[cf.~Definition~\ref{def:propagator}]
  \label{def:propagator_log}
  Let $\sfP_1$,~$\sfP_2$ be pseudo-opposite modules for the
  $\log$-cTP structure $(\sfF,\nabla, (\cdot,\cdot)_{\sfF})$.  
Let $\Pi_i \colon \sfF[z^{-1}] \to \sfF$, $i \in \{1,2\}$, be the 
projection along $\sfP_i$ defined by the decomposition $\sfF[z^{-1}]
  = \sfP_i \oplus \sfF$.  The \emph{propagator}
  $\Delta=\Delta(\sfP_1,\sfP_2) \in
  \sHom_{\bcO}(\bOmegao^1(\log D) \otimes
  \bOmegao^1(\log D),\bcO)$
  is defined by:
  \begin{align*}
    \Delta(\omega_1, \omega_2) = 
    \Omega^\vee\big(\Pi_1^* \KS^{*-1}\omega_1, 
    \Pi_2^* \KS^{*-1} \omega_2\big)
    && \omega_1,\omega_2 \in \bOmegao^1(\log D)
  \end{align*}
  The logarithmic bivector field $\Delta$ is identified, via the
  Kodaira--Spencer isomorphism $\KS^*$, with the push-forward along
  $\Pi_1 \times \Pi_2$ of the Poisson bivector field on $\sfF[z^{-1}]$
  defined by $\Omega^\vee$.
\end{definition}  

The propagator in the logarithmic case satisfies the same properties 
as in the non-logarithmic case. The proofs are completely parallel 
and are omitted. 

\begin{proposition}[cf.~Propositions~\ref{pro:prop-elementary},~\ref{prop:difference_conn}] 
\label{prop:propagator_log}
Let $\sfP_1,\sfP_2$ be pseudo-opposite modules for 
the $\log$-cTP structure $(\sfF,\nabla,(\cdot,\cdot)_{\sfF})$ 
and let $\Delta = \Delta(\sfP_1,\sfP_2)$ be the propagator. 
Then:
\begin{enumerate}
\item $\Delta$ is symmetric, i.e.~$\Delta(\omega_1,\omega_2) 
= \Delta(\omega_2,\omega_1)$; 
\item $(\Nabla^{\sfP_2} - \Nabla^{\sfP_1}) \omega 
= \iota(\iota_\omega) \bY$ for $\omega \in \bOmegao^1(\log D)$; 
\item if $\sfP_1,\sfP_2$ are parallel, we have 
$(\Nabla^{\sfP_1} \Delta)(\omega_1\otimes \omega_2) 
= \iota(\iota_{\omega_1} \Delta \otimes \iota_{\omega_2} \Delta) \bY$ 
for $\omega_1,\omega_2 \in \bOmegao^1(\log D)$. 
\end{enumerate}
\end{proposition}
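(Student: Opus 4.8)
\textbf{Proof plan for Proposition~\ref{prop:propagator_log}.}
The plan is to reduce everything to the non-logarithmic arguments of Propositions~\ref{pro:prop-elementary} and~\ref{prop:difference_conn}, observing that the only feature those proofs actually use is the formal structure of the pull-back sheaves $\pr^*\sfF$, $\pr^*\sfF[z^{-1}]$ and their duals, together with the maps $\tnabla$, $\tnabla^\vee$, $\KS$, $\KS^*$, $\Pi_i$ and the symplectic pairings $\Omega$, $\Omega^\vee$. Passing from the smooth to the logarithmic setting replaces $\bOmega^1$ by $\bOmega^1(\log D)$ everywhere, but none of the algebraic identities among these objects change: $\tnabla$ still satisfies $\tnabla\pr^*(z^n\sfF)\subset \bOmega^1(\log D)\hotimes\pr^*(z^{n-1}\sfF)$, the dual connection $\tnabla^\vee$ is defined by the same Leibniz formula $\pair{\tnabla^\vee\varphi}{s}=d\pair{\varphi}{s}-\pair{\varphi}{\tnabla s}$, the presentation \eqref{eq:tnablavee} and the commutative square analogous to \eqref{eq:tnablavee-CD} hold verbatim, and $\KS$, $\KS^*$ remain isomorphisms over $\LLo$ (now between logarithmic tangent/cotangent sheaves and $\pr^*\sfF$, $\pr^*\sfF^\vee$). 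So the strategy is: state the logarithmic analogues of the two key structural facts used in the smooth proofs --- namely that $\Image\Pi_i^* = \sfP_i^\perp$ and $\Image(\Pi_1^*-\Pi_2^*)\subset\sfF^\perp$ are isotropic for $\Omega^\vee$, and that $\tnabla^\vee$ preserves $\sfP_i^\perp$ when $\sfP_i$ is parallel (here ``parallel'' is with respect to the logarithmic connection, condition (Opp3) in Definition~\ref{def:opposite_log-cTP}) --- and then run the identical contraction computations.

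Concretely, for Part (1) I would repeat the computation in the proof of Proposition~\ref{pro:prop-elementary}: write $\varphi_i=\KS^{*-1}\omega_i\in\pr^*\sfF^\vee$, expand $0=\Omega^\vee((\Pi_1^*-\Pi_2^*)\varphi_1,(\Pi_1^*-\Pi_2^*)\varphi_2)$ using isotropy of $\Image\Pi_i^*$, and read off $\Delta(\omega_1,\omega_2)=\Delta(\omega_2,\omega_1)$. For Part (2), I would reproduce the argument of Proposition~\ref{prop:difference_conn}(1): using the tautological section $\bx$, compute $\pair{(\Nabla^{\sfP_2}-\Nabla^{\sfP_1})\omega}{X\otimes Y}$ for logarithmic vector fields $X,Y\in\bThetao(\log D)$ by expanding via the Leibniz rule for $\tnabla^\vee$, noting that $(\Pi_2^*-\Pi_1^*)\varphi$ annihilates $\pr^*\sfF$ so the ``first term'' drops out, and identify the result with $\bY(\iota_\omega\Delta,X,Y)$ via the identity \eqref{eq:iomegaDelta} (whose derivation is purely formal and transfers directly). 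For Part (3), I would combine Part (1)'s Leibniz identity $d\Delta(\omega_1,\omega_2)=\Delta(\Nabla^{\sfP_1}\omega_1,\omega_2)+\Delta(\omega_1,\Nabla^{\sfP_2}\omega_2)$ (valid for parallel $\sfP_1,\sfP_2$ by the same computation \eqref{eq:dDelta}, since $\tnabla^\vee$ preserves the parallel $\sfP_i^\perp$) with Part (2) applied to the second slot, exactly as in the proof of Proposition~\ref{prop:difference_conn}(2).

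The main point requiring care --- though it is more bookkeeping than a genuine obstacle --- is checking that the pole-order/filtration gymnastics and the statement ``$\tnabla^\vee$ preserves $\sfP_i^\perp$ for parallel $\sfP_i$'' go through when $\nabla$ has a logarithmic pole along $D$. The subtlety is that ``parallel'' in Definition~\ref{def:opposite_log-cTP} means $\nabla\sfP\subset\Omega^1_\cM(\log D)\otimes\sfP$, so $\tnabla^\vee$ maps $\sfP_i^\perp$ into $\bOmega^1(\log D)\hotimes\sfP_i^\perp$ rather than into the smooth cotangent sheaf; but since all the contractions in the proof are taken against logarithmic vector fields and everything is evaluated in $\bOmega^1(\log D)$-valued tensors, this is exactly the right target and no genuine difficulty arises. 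I would therefore simply remark that the proofs of Propositions~\ref{pro:prop-elementary} and~\ref{prop:difference_conn} apply mutatis mutandis, with $\bOmega^1$ replaced by $\bOmega^1(\log D)$ throughout, and omit the repeated calculations as the authors indicate.
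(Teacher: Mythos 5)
Your proposal is correct and matches the paper's approach: the paper itself omits the proof, stating that the arguments of Propositions~\ref{pro:prop-elementary} and~\ref{prop:difference_conn} carry over verbatim to the logarithmic setting, which is precisely what you spell out (including the correct observation that ``parallel'' now means $\nabla\sfP\subset\Omega^1_{\cM}(\log D)\otimes\sfP$, so $\tnabla^\vee$ preserves $\sfP_i^\perp$ with values in logarithmic forms, exactly the right target for the contractions).
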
 
\begin{proposition}[cf.~Proposition~\ref{prop:Deltasum}]
\label{prop:Deltasum_log} 
Let $\sfP_1,\sfP_2,\sfP_3$ be pseudo-opposite modules 
and let $\Delta_{ij} = \Delta(\sfP_i,\sfP_j)$, $i,j\in \{1,2,3\}$,
be the propagators. Then 
$\Delta_{13} = \Delta_{12} + \Delta_{23}$. 
\end{proposition}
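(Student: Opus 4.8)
\textbf{Proof proposal for Proposition~\ref{prop:Deltasum_log}.}
The plan is to repeat verbatim the proof of Proposition~\ref{prop:Deltasum} in the non-logarithmic case, since that argument is purely algebraic and uses only the decompositions $\sfF[z^{-1}] = \sfF \oplus \sfP_i$ together with the isotropy properties of the images of the dual projections $\Pi_i^*$ with respect to $\Omega^\vee$. None of these ingredients is sensitive to whether $\nabla$ has logarithmic poles along $D$, so the only thing to check is that every object appearing in the proof has a sensible meaning in the logarithmic setting.

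First I would fix pseudo-opposite modules $\sfP_1,\sfP_2,\sfP_3$ and, given logarithmic one-forms $\omega_1,\omega_2 \in \bOmegao^1(\log D)$, set $\varphi_i := \KS^{*-1}\omega_i \in \pr^*\sfF^\vee$, using that the dual Kodaira--Spencer map $\KS^* \colon \pr^*\sfF^\vee \to \bOmegao^1(\log D)$ is an isomorphism over $\LLo$ in the logarithmic case (this is part of the logarithmic Kodaira--Spencer setup in \S\ref{subsubsec:Yukawa_KS_log}). Then I would record the two isotropy facts that drive the computation: $\Image\Pi_i^* = \sfP_i^\perp$ is isotropic for $\Omega^\vee$ (because $\sfP_i$ is isotropic by (Opp2)), and $\Image(\Pi_i^* - \Pi_j^*) \subset \sfF^\perp$, which is isotropic because $\sfF$ is isotropic for $\Omega$. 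With these in hand, the chain of equalities
\[
\Delta_{13}(\omega_1,\omega_2)
= \Omega^\vee\big((\Pi_1^*-\Pi_3^*)\varphi_1, \Pi_3^*\varphi_2\big)
= \Omega^\vee\big((\Pi_1^*-\Pi_2^*)\varphi_1, \Pi_2^*\varphi_2\big)
+ \Omega^\vee\big(\Pi_2^*\varphi_1, \Pi_3^*\varphi_2\big)
= \Delta_{12}(\omega_1,\omega_2) + \Delta_{23}(\omega_1,\omega_2)
\]
goes through exactly as before, where one splits $\Pi_1^* - \Pi_3^* = (\Pi_1^*-\Pi_2^*) + (\Pi_2^*-\Pi_3^*)$ and discards the terms that pair two elements of a common isotropic subspace. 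The identity $\Delta(\sfP_1,\sfP_2) = -\Delta(\sfP_2,\sfP_1)$ then follows by specializing $\sfP_1 = \sfP_3$, since $\Delta(\sfP_1,\sfP_1) = 0$.

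There is essentially no obstacle here: the statement is a formal consequence of the additivity of the differences of dual projections and the isotropy of $\sfF$ and of each $\sfP_i$, none of which interacts with the logarithmic structure on $\cM$. The only bookkeeping point worth a sentence is that $\Delta_{ij}$ is now a section of $\sHom_{\bcO}(\bOmegao^1(\log D)\otimes\bOmegao^1(\log D),\bcO)$ rather than of $\sHom_{\bcO}(\bOmegao^1\otimes\bOmegao^1,\bcO)$, so the identity is an equality of such logarithmic bivector fields; but since $\bOmega^1 \subset \bOmega^1(\log D)$ and the formula is $\bcO$-bilinear, checking it on logarithmic one-forms is the natural (and sufficient) thing to do. Hence I would simply write ``the proof is identical to that of Proposition~\ref{prop:Deltasum}'' after flagging the two isotropy inputs above.
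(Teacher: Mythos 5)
Your proposal is correct and matches the paper's approach: the paper itself omits the proof of Proposition~\ref{prop:Deltasum_log}, stating that the argument is completely parallel to Proposition~\ref{prop:Deltasum}, which is precisely the verbatim transfer (with the two isotropy inputs $\Image\Pi_i^* = \sfP_i^\perp$ and $\Image(\Pi_i^*-\Pi_j^*)\subset\sfF^\perp$) that you spell out. Your slight compression of the chain of equalities and the added antisymmetry remark are harmless.
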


\subsubsection{Local Fock Space}
\label{sec:localFock_log} 
Let $\{t^i,q_j = e^{t^j},x_n^i\}$ denote an algebraic local co-ordinate 
system on $\LL$ as in Definition~\ref{def:alg_coord_log}.
We write the co-ordinates $\{t^0,\log q_1,\dots,\log q_r, t^{r+1},
\dots,t^N, x_n^i\}$ as $\{\sx_\mu\}$ and use similar 
tensor notation as before, e.g.~writing 
the Yukawa coupling and propagator as 
\begin{align*} 
\bY = C^{(0)}_{\mu \nu \rho}
 d\sx^\mu \otimes d\sx^\nu \otimes d\sx^\rho &&
 \Delta = \Delta^{\mu \nu} \partial_\mu \otimes \partial_\nu
\end{align*} 
where $\partial_\nu = \partial/\partial \sx^\nu$.  
\begin{definition}[cf.~Definition~\ref{def:localFock}] 
\label{def:localFock_log}
Consider a miniversal $\log$-cTP structure $(\sfF,\nabla,(\cdot,\cdot)_{\sfF})$ 
with base $(\cM,D)$. 
Let $\sfP$ be a parallel pseudo-opposite module over 
an open set $U \subset \cM$ and let $\Nabla =\Nabla^{\sfP}$ 
be the associated flat connection on $\LLo$. 
Let $P=P(t,x_1)$ denote the discriminant \eqref{eq:log_discriminant}. 
The \emph{local Fock space} $\Fock(U;\sfP)$ consists of collections:
\[
\wave = 
\big\{ \Nabla^n C^{(g)} \in
\big(\bOmega^1(\log D)\big)^{\otimes n}
\big(\pr^{-1}(U)^\circ \big): 
\text{$g \geq 0$, $n \geq 0$, $2g-2+n>0$} 
\big \}
\]
of completely symmetric logarithmic $n$-tensors on $\pr^{-1}(U)^\circ$
such that the following conditions hold:
\begin{description}
\item[(Yukawa)] $\Nabla^3 C^{(0)}$ is the Yukawa coupling $\bY$ 
in \S\ref{subsubsec:Yukawa_KS_log}; 
\item[(Jetness)] $\Nabla(\Nabla^n C^{(g)}) = \Nabla^{n+1} C^{(g)}$;
\item[(Grading \& Filtration)] $\Nabla^n C^{(g)} \in
  \big(\big(\bOmega^1(\log D)\big)^{\otimes n} \big(\pr^{-1}(U)^\circ
  \big)\big)^{2-2g}_{3g-3}$;
\item[(Pole)] $P \Nabla C^{(1)}$ extends to a regular $1$-form on
  $\pr^{-1}(U)$, and for $g \geq 2$:
  \[
  C^{(g)} \in 
  P^{-(5g-5)} \cO(U)[x_1,x_2,Px_3,\ldots,P^{3g-4} x_{3g-2}]
  \]
\end{description}
In local co-ordinates $\{\sx^\mu\}$, we write 
\[
\Nabla^n C^{(g)} = 
C^{(g)}_{\mu_1\cdots\mu_n} d\sx^{\mu_1} \otimes \cdots \otimes
d\sx^{\mu_n}
\]
and refer to $\Nabla^n C^{(g)}$ or $C^{(g)}_{\mu_1\cdots\mu_n}$ as
\emph{$n$-point correlation functions}.
\end{definition} 

\subsubsection{Transformation Rule} 
As before we encode elements of the local Fock space $\Fock(U;\sfP)$ 
by \emph{jet potentials} on the total space of the logarithmic tangent bundle 
$\bTheta(\log D)|_{\pr^{-1}(U)^\circ}$. 
The transformation rule in the logarithmic case is then described in 
terms of jet potentials. 

Let $\{\sy^\mu\}$ denote the fiber co-ordinates of the logarithmic tangent
bundle $\bTheta(\log D)$ dual to $\{\partial/\partial \sx^\mu\}$, 
so that $(\sx,\sy)$ denotes a point in the total space of $\bTheta(\log
D)|_{\pr^{-1}(U)^\circ}$.

\begin{definition}[cf.~Definition~\ref{def:jetpotential}] 
\label{def:jetpotential_log}
Given an element $\wave = \{\Nabla^n C^{(g)}\}_{g,n}$ of
$\Fock(U;\sfP)$, we set:
\begin{align*}
  \cW(\sx,\sy) 
  & = \sum_{g=0}^\infty \hbar^{g-1} \cW^g(\sx,\sy)
  \intertext{where:}
  \cW^g(\sx,\sy) 
  & = \sum_{n=\max(3-2g,0)}^\infty  
  \frac{1}{n!} 
  C^{(g)}_{\mu_1,\dots,\mu_n}(\sx) 
  \sy^{\mu_1} \cdots \sy^{\mu_n}
\end{align*}
We call $\cW^g$ the \emph{genus-$g$ jet potential} and $\exp(\cW)$ the
\emph{total jet potential} associated to $\wave$.
\end{definition} 

\begin{definition}[cf.~Definition~\ref{def:transformation}]
\label{def:transformation_log} 
Let $\sfP_1,\sfP_2$ be parallel pseudo-opposite modules 
for the $\log$-cTP structure $(\sfF,\nabla,(\cdot,\cdot)_{\sfF})$. 
Let $\Delta$ denote the propagator $\Delta(\sfP_1,\sfP_2)$.  
The transformation rule $T(\sfP_1, \sfP_2)\colon 
\Fock(U;\sfP_1) \to \Fock(U;\sfP_2)$ is a map which assigns, to the jet
potential $\exp(\cW)$ for an element of $\Fock(U;\sfP_1)$, the jet
potential $\exp(\hcW)$ for an element of $\Fock(U;\sfP_2)$ given by:
\begin{equation} 
\label{eq:transformationrule-jet_log} 
\exp\big(\hcW(\sx,\sy)\big) =  
\exp\left(\frac{\hbar}{2}\Delta^{\mu\nu} \partial_{\sy^\mu}
\partial_{\sy^\nu}\right) 
\exp\big(\cW(\sx,\sy)\big)
\end{equation} 
The transformation rule can be also expressed via a Feynman rule. 
In the notation of Definition~\ref{def:transformation}, we have  
\[
\hC^{(g)}_{\mu_1,\dots,\mu_n} 
= \sum_\Gamma \frac{1}{|\Aut(\Gamma)|} 
\Cont_{\Gamma}(\{C^{(h)}_{\nu_1,\dots,\nu_m}\}, 
\Delta)_{\mu_1,\dots,\mu_n} 
\]
where $\{C^{(g)}_{\mu_1,\dots,\mu_n}\}$ are the correlation 
functions associated to $\cW$ and 
$\{\hC^{(g)}_{\mu_1,\dots,\mu_n}\}$ are the correlation 
functions associated to $\hcW$.  
\end{definition} 

\begin{proposition}[cf.~Lemmas~\ref{lem:jetness}--\ref{lem:pole}] 
The transformation rule in Definition~\ref{def:transformation_log} 
is well-defined, i.e.~it preserves the conditions (Yukawa), (Jetness), 
(Grading \& Filtration), and (Pole) in the definition of the local Fock space 
$\Fock(U;\sfP_i)$.
\end{proposition}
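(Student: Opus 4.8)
The plan is to reduce the logarithmic statement to the non-logarithmic one we already proved in Lemmas~\ref{lem:jetness}--\ref{lem:pole} and Proposition~\ref{prop:cocycle}, by observing that all the structural ingredients of the Feynman rule behave in the logarithmic case exactly as in the smooth case. First I would record the analogue of the grading/filtration estimates for the logarithmic sheaves: the dual Kodaira--Spencer map $\KS^*\colon \pr^*\sfF^\vee \to \bOmegao^1(\log D)$ still raises the degree by one and preserves the filtration (this follows from the explicit formulae \eqref{eq:KS_coord}, which are formally identical once $dt^i$ is read as $dq_i/q_i$ for $1\le i\le r$), the connection $\tnabla^\vee$ still preserves grading and filtration (by the presentation \eqref{eq:tnablavee}, valid verbatim in the logarithmic setting by \S\ref{subsubsec:Yukawa_KS_log}), and $\Omega^\vee$ still raises the filtration by three; hence Lemma~\ref{lem:grading-filtration} and Propositions~\ref{prop:grading-filtration-Nabla},~\ref{prop:grading-filtration-prop} hold in the logarithmic case. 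Likewise Proposition~\ref{prop:propagator_log} gives the logarithmic analogues of Propositions~\ref{pro:prop-elementary},~\ref{prop:difference_conn}, and Proposition~\ref{prop:Deltasum_log} the analogue of Proposition~\ref{prop:Deltasum}.

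With these in hand, the four conditions are checked exactly as before. For (Yukawa), the Feynman rule for the three-point genus-zero function is trivial, so $\hC^{(0)}_{\mu\nu\rho}=C^{(0)}_{\mu\nu\rho}=\bY_{\mu\nu\rho}$ as in \eqref{eq:Yukawa-Feynman}. For (Jetness), the proof of Lemma~\ref{lem:jetness} is purely combinatorial once one knows $\Nabla^{\sfP_i}$ is flat and torsion-free (true by Proposition~\ref{prop:Nabla-torsionfree}'s logarithmic counterpart), $(\Nabla^{\sfP_2}-\Nabla^{\sfP_1})\omega = \iota(\iota_\omega\Delta)\bY$ and $\Nabla^{\sfP_1}\Delta = \iota(\iota\Delta\otimes\iota\Delta)\bY$ (Proposition~\ref{prop:propagator_log}(2),(3)); the graph bookkeeping transferring a propagator derivative or a leg modification into a genus-zero trivalent vertex is insensitive to the logarithmic poles. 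For (Grading \& Filtration), the bound in Lemma~\ref{lem:GrFil}---degree $\sum_v(2-2g_v)+\sum_e(-2)=2-2g$ and filtration $\sum_v(3g_v-3)+\sum_e 2 = 3g-3-|E(\Gamma)|\le 3g-3$---uses only $\deg\Delta=-2$, $\filt\Delta\le 2$, $\deg\Nabla=0$, $\filt\Nabla\le 0$, all of which now hold for the logarithmic connection and propagator. For (Pole), the argument of Lemma~\ref{lem:pole} is run at a point $t\in U\setminus D$: the genus-one case follows from \eqref{eq:Feynman-genusone} since $\omega_{\sfP_1\sfP_2}$ is a regular (indeed pulled-back) logarithmic one-form on $U$, and for $g\ge 2$ one works in flat co-ordinates on $\hLLo_t$ for $t$ away from $D$ (where Proposition~\ref{prop:pole-flat}'s logarithmic version applies since the flat-co-ordinate construction of \S\ref{subsec:flatstronL} goes through off $D$), obtaining the pole-order estimate $\sum_v(5g_v-5+2n_v)=5g-5-|E(\Gamma)|\le 5g-5$ along $P_t=0$; since $q_n^i|_{\LL_t}=x_n^i$ and this holds for all $t\notin D$, and since both sides are regular functions of $t$ on all of $U$, the bound propagates to $\pr^{-1}(U)$. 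Finally the cocycle condition (used implicitly for consistency) follows from \eqref{eq:transformationrule-jet_log} and Proposition~\ref{prop:Deltasum_log} exactly as in Proposition~\ref{prop:cocycle}.

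The one place that genuinely requires care---and which I expect to be the main obstacle---is the (Pole) condition near the divisor $D$. The proofs of Lemmas~\ref{lem:4,5-tensor}, \ref{lem:pole} and Proposition~\ref{prop:pole-flat} all invoke the flat co-ordinate system $\{q_n^i\}$ on the formal neighbourhood of a fiber $\LLo_t$, together with the pole-order bookkeeping of Lemma~\ref{lem:sx-poleorder}; the flat-co-ordinate construction, which relies on solving $\nabla f_i=0$ with $f_i(0)=e_i$, is only available for $t$ \emph{outside} $D$ because of the logarithmic singularity of $\nabla$ along $D$. The resolution is the density argument sketched above: one verifies the required regularity of $P^{\max(5g-5+n,0)}\Nabla^n\hC^{(g)}$ on $\pr^{-1}(U\setminus D)$ fiberwise using flat co-ordinates, notes that this is a statement about a section of a coherent sheaf on $\pr^{-1}(U)$ (the discriminant $P$ and the algebraic co-ordinates $x_n^i$ extend across $D$ by construction, since $\sfF$ is a locally free $\cO_\cM[\![z]\!]$-module over all of $\cM$), and concludes that regularity on the dense open set $\pr^{-1}(U\setminus D)$ forces regularity on $\pr^{-1}(U)$; one should also check that $C^{(g)}$ actually lies in the stated ring $P^{-(5g-5)}\cO(U)[x_1,x_2,Px_3,\dots,P^{3g-4}x_{3g-2}]$ rather than merely having the right pole order, which again follows because the coefficients, being holomorphic on $U\setminus D$ and of controlled growth, extend holomorphically across the normal crossing divisor $D$ by Hartogs/Riemann extension. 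The remaining conditions (Yukawa), (Jetness), (Grading \& Filtration) involve no such subtlety, as their proofs are algebraic identities in the graded filtered ring $\bcO(\pr^{-1}(U)^\circ)$ that is defined uniformly over $\cM$.
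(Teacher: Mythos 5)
Your proposal is correct and follows essentially the same route as the paper: (Yukawa), (Jetness) and (Grading \& Filtration) are transferred verbatim using the logarithmic analogues of the propagator and grading/filtration lemmas, and (Pole) is the only delicate point, handled exactly as in the paper by running the flat-co-ordinate argument of Lemma~\ref{lem:pole} at points $t\in U\setminus D$ and then extending across $D$. The paper phrases the last step as: $\hC^{(g)}$ already lies in $\bcO(\pr^{-1}(U)^\circ)$ by the Feynman rule, so the refined pole statement on $U\setminus D$ extends by Hartogs' theorem --- which is the same mechanism as your coherent-sheaf/Hartogs--Riemann extension remark, and no appeal to growth control is actually needed.
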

\begin{proof} 
We argue as in \S\ref{subsec:transformation} 
using the co-ordinate system 
\[
\{\sx^\mu\}=
\{t^0,\log q_1,\dots,\log q_r,t^{r+1},\dots,t^N, x_n^i\}
\] 
associated to the algebraic local co-ordinate system 
$\{t^0,q_1,\dots,q_r,t^{r+1},\dots,t^N, x_n^i\}$ 
in Definition~\ref{def:alg_coord_log}. 
The Yukawa coupling does not change: $\hC^{(0)}_{\mu\nu\rho} = 
C^{(0)}_{\mu\nu\rho}$ under the transformation rule 
(see equation~\ref{eq:Yukawa-Feynman}) and the condition (Yukawa) holds. 
The condition (Jetness) for $\hC^{(g)}_{\mu_1,\dots,\mu_n}$ 
follows from the same argument as in Lemma~\ref{lem:jetness}, 
using Proposition~\ref{prop:propagator_log} 
instead of Proposition~\ref{prop:difference_conn}. 
The analogues of Propositions~\ref{prop:grading-filtration-Nabla},~\ref{prop:grading-filtration-prop} hold in the logarithmic case, 
and the condition (Grading \& Filtration) for 
$\{\hC^{(g)}_{\mu_1,\dots,\mu_n}\}$ 
follows from them and the argument in Lemma~\ref{lem:GrFil}. 
Regarding the condition (Pole), we can repeat the argument  
of Lemma~\ref{lem:pole} to show that $\hC^{(g)}$ for $g\ge 2$ belongs 
to $P^{-(5g-5)} \cO(U\setminus D)[x_1,x_2,P x_3, P^2 x_4,\dots,
P^{3g-4}x_{3g-2}]$. 
(The argument there only applies to $t\in U\setminus D$, as a flat 
co-ordinate system exists only at such $t$.) 
On the other hand, $\hC^{(g)}$ belongs to $\bcO(\pr^{-1}(U)^\circ)$ 
by the Feynman rule. The condition (Pole) now follows from
 Hartogs' extension theorem. 
\end{proof} 

The transformation rule satisfies the cocycle condition 
by virtue of Proposition~\ref{prop:Deltasum_log}.  

\begin{proposition}[cf.~Proposition~\ref{prop:cocycle}]
  The transformation rule \eqref{eq:transformationrule-jet_log} satisfies
  the cocycle condition: if $\sfP_1$,~$\sfP_2$,~$\sfP_3$ are parallel 
pseudo-opposite modules for $\sfF$ over $U$ and 
$T_{ij} = T(\sfP_i,\sfP_j)$ is the
transformation rule from $\Fock(U;\sfP_i)$ to $\Fock(U;\sfP_j)$ then
$T_{13} = T_{23} \circ T_{12}$.
\end{proposition}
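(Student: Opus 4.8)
The plan is to reduce the logarithmic cocycle identity to the non-logarithmic one, for which the argument is already recorded in Proposition~\ref{prop:cocycle}. First I would unwind the two sides of $T_{13} = T_{23}\circ T_{12}$ in terms of jet potentials: applying \eqref{eq:transformationrule-jet_log} twice gives
\[
\exp(\cW_{\sfP_3})
= \exp\left(\frac{\hbar}{2}\Delta^{\mu\nu}(\sfP_2,\sfP_3)\partial_{\sy^\mu}\partial_{\sy^\nu}\right)
\exp\left(\frac{\hbar}{2}\Delta^{\mu\nu}(\sfP_1,\sfP_2)\partial_{\sy^\mu}\partial_{\sy^\nu}\right)
\exp(\cW_{\sfP_1}),
\]
and since the two second-order constant-coefficient differential operators $\Delta^{\mu\nu}(\sfP_1,\sfP_2)\partial_{\sy^\mu}\partial_{\sy^\nu}$ and $\Delta^{\mu\nu}(\sfP_2,\sfP_3)\partial_{\sy^\mu}\partial_{\sy^\nu}$ commute (their coefficients are functions of $\sx$ only, not of $\sy$), the product of exponentials is the exponential of the sum. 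Thus $T_{23}\circ T_{12}$ is the transformation rule attached to the bivector $\Delta(\sfP_1,\sfP_2) + \Delta(\sfP_2,\sfP_3)$.

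The key step is then to invoke Proposition~\ref{prop:Deltasum_log}, which says precisely that $\Delta_{13} = \Delta_{12} + \Delta_{23}$ for logarithmic propagators; substituting this identity shows that $T_{23}\circ T_{12}$ has the same defining bivector as $T_{13}$, hence equals it. I would also note, for completeness, that the Feynman-rule formulation is equivalent: the composite of two Feynman expansions corresponds to summing over graphs whose edges carry either $\Delta_{12}$ or $\Delta_{23}$, and bilinearity of $\Cont_\Gamma$ in the propagator repackages this as a single Feynman expansion with edge-weight $\Delta_{12} + \Delta_{23}$, giving the same reduction. The only genuinely logarithmic input is that everything — the tangent sheaf $\bTheta(\log D)$, its fiber co-ordinates $\{\sy^\mu\}$, the propagators, and the jet potentials — is built consistently from the logarithmic co-ordinate system $\{t^0,\log q_1,\dots,\log q_r,t^{r+1},\dots,t^N,x_n^i\}$ of Definition~\ref{def:alg_coord_log}, so that the differential operators $\partial_{\sy^\mu}$ act on the same space of jet variables throughout the composition.

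There is no real obstacle here: the proof is formally identical to that of Proposition~\ref{prop:cocycle}, with Proposition~\ref{prop:Deltasum_log} replacing Proposition~\ref{prop:Deltasum}. The one point that warrants a sentence of care is the well-definedness of the composition $T_{23}\circ T_{12}$ as a map of local Fock spaces — i.e.\ that the intermediate object $T_{12}\wave_{\sfP_1}$ genuinely lies in $\Fock(U;\sfP_2)$ so that $T_{23}$ may be applied to it — but this is exactly the content of the preceding proposition (that the logarithmic transformation rule preserves (Yukawa), (Jetness), (Grading \& Filtration) and (Pole)), which I would cite. Consequently the proof amounts to: (i) expand both sides via \eqref{eq:transformationrule-jet_log}; (ii) commute and combine the exponentiated propagator operators; (iii) apply $\Delta_{13} = \Delta_{12} + \Delta_{23}$; (iv) conclude. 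I would write it as a two- or three-line proof.
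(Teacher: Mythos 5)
Your proof is correct and follows the same route as the paper: the cocycle condition is immediate from the jet-potential form of the transformation rule \eqref{eq:transformationrule-jet_log} together with the propagator additivity $\Delta_{13}=\Delta_{12}+\Delta_{23}$ of Proposition~\ref{prop:Deltasum_log}, exactly as in Proposition~\ref{prop:cocycle}. Your extra remarks (commutativity of the exponentiated operators and well-definedness of the intermediate Fock element) are just spelling out why the composition makes sense and are consistent with the paper's argument.
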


\subsubsection{Fock Sheaf} We now define the Fock sheaf in the logarithmic case.

\begin{assumption}[cf.~Assumption~\ref{assump:covering}]
  \label{assumption:covering_log}
  There is an open covering $\{U_\alpha\}_{\alpha \in A}$ 
  of $\cM$ such that for
  each $\alpha \in A$ there exists a parallel pseudo-opposite module 
  $\sfP_\alpha$ for $\sfF$ over $U_\alpha$.
\end{assumption}

\begin{definition}[cf.~Definition~\ref{def:Focksheaf}]  
Suppose that Assumption~\ref{assumption:covering_log} holds. 
We define the \emph{Fock sheaf} to be the sheaf of sets on $\cM$ 
obtained by gluing the local Fock spaces $\Fock(U_\alpha;\sfP_\alpha)$, 
$\alpha \in A$, using the transformation rule
\begin{align*}
  T(\sfP_\alpha,\sfP_\beta) : \Fock(U_{\alpha \beta}; \sfP_\alpha) 
\to \Fock(U_{\alpha\beta}; \sfP_\beta)
  && \alpha, \beta \in A
\end{align*}
over $U_{\alpha\beta} = U_\alpha \cap U_\beta$.
\end{definition} 

\begin{remark}
Note that the Fock sheaf in logarithmic case 
is a sheaf over all of $\cM$, not just over $\cM \setminus D$.
\end{remark}

\subsubsection{Correlation Functions Under Curved Opposite Modules} 
The discussion in \S\ref{subsec:curved} can be easily adapted to 
the logarithmic setting. 
The \emph{difference one-form} $\omega_{\sfP\sfQ}$ 
\eqref{eq:difference1-form} for pseudo-opposite modules $\sfP$, $\sfQ$ 
is now the pull-back of a logarithmic form in $\Omega^1_{\cM}(\log D)$. 
The \emph{curvature two-form} $\vartheta_\sfQ = d \omega_{\sfP\sfQ}$ 
(where $\sfP$ is a parallel pseudo-opposite module) 
in Definition~\ref{def:curvature2form} is  
the pull-back of a logarithmic form in $\Omega^2_{\cM}(\log D)$. 
We now give a definition of the local Fock space and the transformation 
rule for a general pseudo-opposite module in the logarithmic 
setting, leaving the necessary details to the reader. 

\begin{definition}[cf.~Definition~\ref{def:localFock_transformation_general}, 
Proposition~\ref{prop:Fockspace-curved}] 
Consider a $\log$-cTP structure $(\sfF,\nabla,(\cdot,\cdot)_{\sfF})$. 
Let $\sfQ$ be a (not necessarily parallel) pseudo-opposite module 
over $U$. 
The local Fock space $\Fock(U;\sfQ)$ consists of collections 
$\{C_{\sfQ,\mu}^{(1)} d\sx^\mu, C_{\sfQ}^{(1)}, C_{\sfQ}^{(2)}, 
C^{(3)}_{\sfQ}, \dots \}$ 
\begin{align*} 
C_{\sfQ,\mu}^{(1)} d\sx^\mu & \in \bOmega^1(\log D)(\pr^{-1}(U)^\circ) \\ 
C_\sfQ^{(g)} & \in \bcO(\pr^{-1}(U)^\circ)  \qquad \text{with } g\ge 2 
\end{align*} 
such that the following conditions hold: 
\begin{description} 
\item[(Grading \& Filtration)]
$C^{(1)}_{\sfQ;\mu}d\sx^\mu \in (\bOmega^1(\log D))^0_0$; 
$C^{(g)}_{\sfQ}\in \bcO^{2-2g}_{3g-3}$;  

\item[(Curvature)]
$d (C^{(1)}_{\sfQ;\mu} d \sx^\mu) = \vartheta_\sfQ$; 

\item[(Pole)] $P ( C^{(1)}_{\sfQ;\mu} d \sx^\mu) $ 
extends to a regular 1-form on $\pr^{-1}(U)$,
and for $g\ge 2$:
\[
C^{(g)}_{\sfQ} \in P^{-(5g-5)} 
\cO(U)[x_1,x_2,P x_3, P^2 x_4, \dots,P^{3g-4}x_{3g-2}]
\]
where $P = P(t,x_1)$ is the discriminant \eqref{eq:log_discriminant}. 
\end{description} 
Following the procedure in Proposition~\ref{prop:Fockspace-curved} 
in the logarithmic context, 
we can reconstruct multi-point correlation functions 
$\{C_{\sfQ;\mu_1,\dots,\mu_n}^{(g)}\}$ from 
the element $\{C_{\sfQ,\mu}^{(1)} d\sx^\mu, C_{\sfQ}^{(1)}, C_{\sfQ}^{(2)}, 
C^{(3)}_{\sfQ}, \dots \}$ in $\Fock(U;\sfQ)$; 
these multi-point functions 
again satisfy the conditions (Yukawa), (Grading\& Filtration) and (Pole) in Definition~\ref{def:localFock_log}.
(They do not necessarily satisfy (Jetness).) 
The transformation rule $T(\sfQ_1,\sfQ_2) \colon 
\Fock(U;\sfQ_1) \to \Fock(U;\sfQ_2)$ for two pseudo-opposite 
modules $\sfQ_1$, $\sfQ_2$ is defined in terms of these multi-point 
correlation functions and the Feynman rule as in Definition~\ref{def:localFock_log}: 
\[
C_{\sfQ_2;\mu_1,\dots,\mu_n}^{(g)} 
= \sum_{\Gamma} \frac{1}{|\Aut(\Gamma)|} 
\Cont_\Gamma\left(\{C_{\sfQ_1;\nu_1,\dots,\nu_m}^{(h)}\}; 
\Delta(\sfQ_1,\sfQ_2)\right)_{\mu_1,\dots,\mu_n}  
\]
or equivalently, in terms of the corresponding jet potentials 
as in \eqref{eq:transformationrule-jet_log}. 
\end{definition} 

\subsubsection{Anomaly Equation} 
Finally we remark on the anomaly equation in the logarithmic setting. 
The background torsion $\Lambda_\sfQ$ 
(Definition~\ref{def:background torsion}) is 
defined as an operator:
\[
\Lambda_{\sfQ} \colon \bOmegao^1(\log D) \otimes 
\bOmegao^1(\log D) \to \pr^*\Omega^1_{\cM}(\log D)
\]
This vanishes if and only if $\sfQ$ is parallel 
and satisfies the same properties as in Proposition~\ref{prop:propagator-curved}. 
The multi-point correlation functions 
$C^{(g)}_{\sfQ;\mu_1,\dots, \mu_n}$ under a pseudo-opposite 
module $\sfQ$ satisfy the same anomaly equation as before: 
\[
C^{(g)}_{\sfQ;\mu_1 \dots \mu_n} 
= \Nabla_{\mu_1}^{\sfQ} C^{(g)}_{\sfQ;\mu_2\dots \mu_n} 
+ \frac{1}{2} 
\sum_{\substack{\{2,\dots,n\} = S_1 \sqcup S_2 \\ 
k+ l = g}} 
C^{(k)}_{\sfQ; S_1, \alpha} 
{\Lambda_{\sfQ}}_{\mu_1}^{\alpha\beta}
C^{(l)}_{\sfQ; S_2, \beta}  
+ \frac{1}{2} 
C^{(g-1)}_{\sfQ;\mu_2\dots\mu_n\alpha\beta} 
{\Lambda_{\sfQ}}^{\alpha\beta}_{\mu_1}
\]
The curvature formulae in Proposition~\ref{prop:curvature} also 
hold in the logarithmic setting: here the curvature of $\Nabla^{\sfQ}$ 
is an $\End(\bOmegao^1(\log D))$-valued logarithmic 2-form 
on $\LLo$.

\section{Global Quantization and Givental Quantization}

\label{sec:Givental} 

In this section we explain the relationship 
between Givental quantization~\cite{Givental:quantization} 
and the global quantization constructed in \S\ref{sec:global_theory}.  
Givental defined the quantized operator $\hUU$ for a linear symplectic 
transformation $\U\in Sp(\cH)$ by specifying a certain 
normal ordering of quadratic Hamiltonians. 
When $\U$ is given by an upper-triangular loop group element 
$R = R(z) \in LGL_{+}(H_X^\C)$, 
Givental showed that $\widehat{R}$ acts on certain ancestor 
potentials satisfying the tameness condition. 
In \S\S \ref{sec:ancestor_Fock}-\ref{subsec:Givental-Global}, 
we will see that Givental's operator $\widehat{R}$
on ancestor Fock spaces (see Definition~\ref{def:quantizedoperator}) 
arises from our transformation rule (Definition~\ref{def:transformation}) 
in the formal neighbourhood of a point of $\LLo$. 
In \S\ref{subsec:global_L2}, we adapt the global quantization formalism 
in \S \ref{sec:global_theory} to the $L^2$-setting and explain 
that an $L^2$-version of the transformation rule matches with 
Givental's quantized operators for general (not necessarily 
upper or lower triangular) symplectic transformations. 

\subsection{Ancestor Fock Space} 
\label{sec:ancestor_Fock}

Let $K$ be a field containing $\Q$ and 
let $V$ be a finite dimensional $K$-vector space 
equipped with a symmetric non-degenerate 
pairing:
\[
\pair{\cdot}{\cdot}_V \colon V\otimes_K V \to K
\]
Recall Givental's tameness condition~\eqref{eq:tameness-Fockelement}.  We now introduce a Fock space for ``ancestor potentials" 
as the set of certain formal power series on $V[\![z]\!]$ 
which satisfy tameness.
Let $(q_0,q_1,q_2,\dots)$ be a sequence 
of variables in $V$ and denote a general element 
of $V[\![z]\!]$ by:
\[
\bq = \sum_{n=0}^\infty q_n z^n 
\]
Choosing a basis $\{e_i\}_{i=0}^N$ 
of $V$, we write 
$q_n = \sum_{i=0}^N q_n^i e_i$. 
For $\bD \in zV[\![z]\!]$, we introduce 
the co-ordinate system $\by = \sum_{n=0}^{\infty} y_n z^n$ 
on $V[\![z]\!]$ shifted by $\bD$:  
\[
\by = \bq + \bD
\]
Writing $\bD = \sum_{n=1}^\infty D_n z^n = 
\sum_{n=1}^\infty \sum_{i=0}^N 
D_n^i z^n e_i$, $y_n=\sum_{i=0}^N y_n^i e_i$, 
this gives: 
\[
y_n^i =\begin{cases}  
q_0^i  & n=0 \\
q_n^i + D_n^i & n\ge 1
\end{cases} 
\] 
In other words, $\by$ is an affine co-ordinate system 
on $V[\![z]\!]$ centred at $\bq(z) = -\bD$. 
This shift of co-ordinates is called the \emph{Dilaton shift} 
(cf.\ \S\ref{subsec:dilatonshift}). 
The following notions of ancestor Fock space 
and rationality for ancestor potentials were originally worked out
in a joint project with Hsian-Hua Tseng; see also~\cite{CI:convergence}.

\begin{definition}[Ancestor Fock Space] 
Let $V$ and $\bD$ be as above. 
The \emph{ancestor Fock space} $\Fockan(V,\bD)$ 
consists of formal power series 
\[
\cA = \exp\left(\sum_{g=0}^{\infty} \hbar^{g-1} \cF^g
\right) 
\] 
with $\cF^g \in K[\{y_n^i \}_{n\ge 2, 0\le i \le N}]
[\![y_0^0,\dots,y_0^N, y_1^0,\dots,y_1^N]\!]$ 
such that:
\begin{align}
\label{eq:tameness-potential} 
\begin{split} 
& \left. \cF^0 \right|_{\by=0} = 
\left.
\parfrac{\cF^0}{y_l^i} \right |_{\by=0}= 
\left. \parfrac{^2 \cF^0}{y_{l_1}^{i_1} \partial y_{l_2}^{i_2}} 
\right|_{\by=0}=0\\ 
& \left. \cF^1 \right|_{\by=0} = 0 \\ 
& \left. 
\parfrac{^n\cF^g}{y_{l_1}^{i_1} \cdots 
\partial y_{l_n}^{i_n}} 
\right |_{\by=0}
 = 0 \quad 
\text{ if $l_1+\cdots +l_n > 3g-3 + n$}
\end{split} 
\end{align} 
An element $\cA$ of $\Fockan(V,\bD)$ should be considered 
as a function on the formal neighourhood of 
$\bq(z) = -\bD \in zV[\![z]\!]$. 
We call $\cF^g$ the \emph{genus-$g$ potential} of $\cA$. 
Condition \eqref{eq:tameness-potential} is referred to as  \emph{tameness} 
of the genus-$g$ potential; cf.~the corresponding conditions \eqref{eq:jetcondition}, 
\eqref{eq:tameness-Fockelement} in the discussion of global quantization. 
When comparing with \eqref{eq:tameness-Fockelement}, 
note that the third line of \eqref{eq:tameness-potential} automatically implies: 
\[
\left.  
\parfrac{^n\cF^g}{y_{l_1}^{i_1} \cdots 
\partial y_{l_n}^{i_n}} 
\right |_{y_0=0}
 = 0 \quad 
\text{ if $l_1+\cdots +l_n > 3g-3 + n$}
\]
\end{definition} 

\begin{definition}[Rationality]
An element $\cA$ of $\Fockan(V,\bD)$ is said to be 
\emph{rational} if there exists a polynomial $P\in K[V^\vee]$ 
on $V$ with $P(-D_1)=1$ such that, whenever $(g,n) \ne(1,0)$:
\begin{equation} 
\label{eq:rationality} 
\left. 
\parfrac{^n\cF^g}{y_{l_1}^{i_1} \cdots 
\partial y_{l_n}^{i_n}} 
\right |_{\by=y_1 z = (q_1 + D_1) z}
=  
\frac{f_{g,L, I}(q_1)}{P(q_1)^{5g-5+2n - (l_1+\cdots+l_n)}}  
\end{equation} 
for some polynomials $f_{g,L,I} \in K[V^\vee]$ 
where $L = \{l_1,\dots,l_n\}$ and $I=\{i_1,\dots,i_n\}$.
By tameness \eqref{eq:tameness-potential}, 
$5g-5+2n-(l_1+\cdots+ l_n) = 
3g-3+n-(l_1+\cdots+l_n)+ 2g-2+n$ is  
positive unless the derivative vanishes or 
$(g,n)=(1,0)$. 
We call $P$ the \emph{discriminant} of $\cA$. 
We denote by $\Fockanrat(V,\bD,P)$ the set of rational 
elements in $\Fockan(V,\bD)$ with discriminant $P$. 
\end{definition} 

\begin{remark} 
\label{rem:tameness-rationality}
A potential satisfying tameness \eqref{eq:tameness-potential} 
and rationality \eqref{eq:rationality} can be expanded in 
the following form: 
\begin{align*}
\cF^{g}= 
\delta_{g,1} c^{(1)}(q_1) + 
\sum_{n:2g-2+n > 0}  
  \frac{1}{n!}
  \sum_{\substack{ 
      L : L = (l_1,\ldots,l_n) \\
      \text{$l_j \neq 1$ for all $j$} \\ 
      l_1 + \cdots + l_n \leq 3 g -3 + n}} 
 \sum_{I= (i_1,\dots, i_n)}
 c^{(g)}_{L,I}(q_1) \,
  q_{l_1}^{i_1}\cdots q_{l_n}^{i_n}
\end{align*}
with 
\[
\parfrac{c^{(1)}(q_1)}{q_1^i} 
= 
\frac{f_{1,1,i}(q_1)}{P(q_1)}, 
\qquad 
c^{(g)}_{L,I} (q_1) 
 = 
\frac{f_{g,L,I}(q_1)}{P(q_1)^{5g-5+2n
-(l_1+\cdots+l_n)}}   
\]
for some polynomials 
$f_{1,1,i}, f_{g,L,I}(q_1) \in R[V^\vee]$. 
The genus-one term $c^{(1)}(q_1)$ is in general not 
a rational function (see Example~\ref{exa:Apt} below). 
Given tameness, we can rephrase the rationality condition as follows:  
\[
\parfrac{^n \cF^g}{q_{l_1}^{i_1} \cdots \partial q_{l_n}^{i_n}} 
\in  P(q_1)^{-(5g-5+2n-(l_1+\cdots+l_n))} 
K[q_1, q_2, P(q_1) q_3, P(q_1)^2 q_4, \dots ]
[\![ P(q_1)^{-2} q_0]\!] 
\] 
for $2g-2+n>0$ (cf.~equation~\ref{eq:pole-formalization}).  
\end{remark} 

\begin{example}
\label{exa:Apt}
The ancestor Gromov--Witten potential $\cA_{{\rm pt}, t}$ 
of a point \eqref{eq:ancestorpotential} does not depend on 
$t\in H_{\rm pt}\cong \Q$ 
and coincides with the descendant potential 
$\cZ_{\rm pt}$ in \eqref{eq:totaldescendantpot}. 
This is called the \emph{Witten--Kontsevich tau-function} and
denoted by $\tau(\bq)$.  
It defines an element of $\Fockanrat(H_{\rm pt},1,-q_1)$ 
via the Dilaton shift (\S\ref{subsec:dilatonshift}): 
\[
q_n = y_n - \delta_{n,1}
\]
In fact, applying the Dilaton Equation, we find that:
\begin{equation} 
\label{eq:Fpt}
\cF^g_{\rm pt} 
= -  \frac{1}{24} \log (-q_1) \delta_{g,1}
+ 
\sum_{n:2g-2+n>0} \frac{1}{n!} 
\sum_{\substack{l_1,\dots,l_n \ge 0 \\ 
l_j \neq 1 \text{ for all $j$} \\ 
l_1+ \cdots + l_n = 3g-3+n}}
\frac{\langle \psi_1^{l_1},\dots, \psi_n^{l_n}\rangle_{g,n}^{\rm pt}
}{(-q_1)^{2g-2+n}} 
q_{l_1} \cdots q_{l_n}  
\end{equation} 
Hence we can take $P(q_1) = -q_1$. 
Note that $l_1+ \cdots + l_n =3g-3+n$ implies that: 
\[
2g-2+n = 5g-5+2n- (l_1+\cdots+l_n)
\]
\end{example}

\begin{definition}[Shift isomorphism] \ 
\label{def:shiftofdilatonshift}
\begin{enumerate}
\item  For $\bxi\in z^2 V[\![z]\!]$, 
the shift of co-ordinates $\tby = \by + \bxi$ 
preserves tameness \eqref{eq:tameness-potential} 
and defines a canonical isomorphism 
\[
T_{\bxi} \colon \Fockan(V,\bD) \cong \Fockan(V, \bD + \bxi) 
\quad 
\text{for $\bxi\in z^2 V[\![z]\!]$.}  
\] 
Thus $\Fockan(V,\bD)$ essentially
depends only on the leading term $z D_1$ of $\bD$. 

\item Let $P\in K[V^\vee]$, $\bD = \sum_{n\ge 1} D_n z^n 
\in zV[\![z]\!]$, 
$\bxi  = \sum_{n\ge 1} \xi_n z^n 
\in z V[\![z]\!]$ be such that 
$P(-D_1) =1$ and $P(-D_1 - \xi_1) \neq 0$. 
A truncated Taylor expansion with respect to the shifted 
co-ordinate $\tby = \by+\bxi$ 
defines an isomorphism: 
\[
T_{\bxi} \colon \Fockanrat(V,\bD, P) \cong 
\Fockanrat(V, \bD+\bxi, P/P(-D_1-\xi_1))
\]
This is given by $T_{\bxi} \cA = \exp( \sum_{g=0}^\infty 
\hbar^{g-1} T_{\bxi} \cF^g)$ with  
\[
T_{\bxi}\cF^g = 
\sum_{n: 2g-2+n>0} \sum_{\substack{L=(l_1,\dots,l_n) \\ 
l_j \neq 1 \text{ for all $j$}
}} 
\sum_{I= (i_1,\dots,i_n)} 
\frac{1}{n!} 
\parfrac{^n \cF^g}{y_{l_1}^{i_1} \cdots \partial y_{l_n}^{i_n}}
\Biggr|_{\by= (\ty_1- \xi_1) z} 
(\ty_{l_1}^{i_1}- \xi_{l_1}^{i_2}) \cdots (\ty_{l_n}^{i_n} - \xi_{l_n}^{i_n}) 
\]
where we set $\xi_0^i=0$, $\xi_n = \sum_{i=0}^N \xi_n^i$ 
for $n\ge 1$. 
It is easy to check that this shift preserves 
tameness \eqref{eq:tameness-potential} 
and rationality \eqref{eq:rationality}. 
Note that the Taylor expansion of $T_{\bxi}\cF^1$ 
is truncated so that it is zero at the shifted origin $\tby=0$. 
\end{enumerate}
\end{definition} 

Let $(V,\pair{\cdot}{\cdot}_V)$, 
$(W,\pair{\cdot}{\cdot}_W)$ be $K$-vector spaces 
with perfect pairings. 
A $K[\![z]\!]$-module isomorphism 
$R \colon V[\![z]\!]  \to W[\![z]\!]$ is said to be
\emph{unitary} if it satisfies
\[
\pair{R(-z) v_1}{R(z) v_2}_W = \pair{v_1}{v_2}_V 
\]
for all $v_1, v_2 \in V$. 
In this slightly more abstract setting, 
Givental's propagator from \S\ref{subsubsec:Giventalpropagator}  
can be described as follows. 

\begin{definition}[\!\cite{Givental:quantization}]
Let $R:V[\![z]\!] \to W[\![z]\!]$ be a unitary isomorphism.  
\emph{Givental's propagator} associated to $R$ 
is a bivector field $\Delta$ on
$V[\![z]\!]$ defined by
\[
  \Delta = \sum_{n=0}^\infty \sum_{m=0}^\infty 
  \sum_{i = 0}^N \sum_{j = 0}^N
  \Delta^{(n, i), (m,j)} \parfrac{}{q_n^i} \otimes 
  \parfrac{}{q_m^j}
  \]
  with: 
  \[
  \sum_{n=0}^\infty \sum_{m=0}^\infty 
  \sum_{i = 0}^N \sum_{j = 0}^N
  \Delta^{(n,i),(m,j)} (-1)^{n+m}
  w^n z^m= \Pair{e^i}{ 
    \frac{R(w)^\dagger R(z) - \id}{z+w} e^j}_V
\]
where $\{e^i\}$ is a basis of $V$ dual to $\{e_i\}$ 
with respect to $\pair{\cdot}{\cdot}_V$,
and $R(w)^\dagger$ denotes the adjoint of 
$R(w)$ with respect to $\pair{\cdot}{\cdot}_V$ 
and $\pair{\cdot}{\cdot}_W$. 
(Unitarity implies that $R(w)^\dagger = R(-w)^{-1}$.) 
\end{definition}

\begin{definition}[Givental~\cite{Givental:quantization}] 
\label{def:quantizedoperator}
For a unitary isomorphism $R \colon V[\![z]\!] \to W[\![z]\!]$, 
the quantized operator 
\[
\hR: \Fockan(V, \bD) \to \Fockan(W, R \bD)
\]
is defined as follows.  
For a given element $\cA \in \Fockan(V,\bD)$, we set
\[
\tcA = \exp\left(\frac{\hbar}{2} \Delta \right) \cA 
\in \Fockan(V,\bD) 
\]
where $\Delta$ is Givental's propagator associated to $R$,
and then push $\tcA$ forward along the identification $R(z) \colon
V[\![z]\!] \cong W[\![z]\!]$, so that:
\[
(\hR \cA) ( \bq ) := \tcA(R^{-1} \bq )
\]
\end{definition} 

\begin{theorem}[{Givental~\cite{Givental:nKdV}, 
Coates--Iritani~\cite{CI:convergence}}] 
\label{thm:quantization-welldefined} 
The quantized operator $\hR$ is well-defined, i.e.~it preserves the tameness condition \eqref{eq:tameness-potential}. 
Moreover, $\hR$ preserves rationality 
and induces an operator  
\[
\hR \colon \Fockanrat(V,\bD,P) \longrightarrow 
\Fockanrat(W, R\bD, P\circ R_0^{-1}) 
\]
where $R = R_0 + R_1 z + R_2 z^2 + R_3 z^3 + \cdots$ 
with $R_n \in \End_K(V,W)$. 
\end{theorem}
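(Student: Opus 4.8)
The plan is to prove the statement in two stages, corresponding to the two assertions: (a) $\hR$ preserves tameness, and (b) $\hR$ preserves rationality with the stated transformation of the discriminant. For (a) I would reduce to the well-definedness of Givental's quantization on tame ancestor potentials, which is exactly the content of Givental~\cite{Givental:nKdV} and of Coates--Iritani~\cite{CI:convergence}. Concretely, I would first observe that the operator $\exp(\tfrac{\hbar}{2}\Delta)$ applied to $\cA\in\Fockan(V,\bD)$ is a well-defined formal power series: each correlation-function coefficient $\partial^n\cF^g/\partial y^{i_1}_{l_1}\cdots\partial y^{i_n}_{l_n}|_{\by=0}$ is modified by a \emph{finite} sum over stable graphs whose vertices carry lower-genus or fewer-point correlators of $\cA$ and whose edges carry $\Delta^{(n,i),(m,j)}$, and that the tameness of $\cA$ forces each such graph sum to be finite (the Euler characteristic bookkeeping $\sum_v(g_v-1)=g-1-|E(\Gamma)|$, $\sum_v n_v=2|E(\Gamma)|+n$ caps the number of edges, and each vertex contributes only finitely many terms by the $(3g_v-2)$-jet condition). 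This is the same graph-theoretic argument as in Lemmas~\ref{lem:jetness}--\ref{lem:pole} of~\S\ref{subsec:transformation}, and in fact I would just invoke those once the dictionary ``$\Delta(\sfP_1,\sfP_2)$ along a fiber $\equiv$ Givental's $\Delta$ associated to $R$'' (Proposition~\ref{prop:Giventalprop=prop}) is in place. Then one checks that the new coefficients again satisfy $l_1+\cdots+l_n\le 3g-3+n$ whenever nonzero: pushing forward along $R(z)$ is a triangular change of the variables $q_n^i$ (it mixes $q_m^i$ with $m\le n$ into $q_n^i$ since $R\in GL(V[\![z]\!])$ has nonnegative $z$-powers), so it cannot increase the ``level'' $l_1+\cdots+l_n$; combined with the graph estimate this gives tameness of $\hR\cA$. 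The Dilaton shift $\bD\mapsto R\bD$ is forced by the fact that $R$ is a $K[\![z]\!]$-module map and $\bq\mapsto R^{-1}\bq$ sends the shifted origin $\bq=-\bD$ to $\bq=-R\bD$.

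For (b), the rationality claim, I would run the same Feynman-graph computation but now tracking poles along the discriminant rather than jet-levels, mirroring Lemma~\ref{lem:pole} and Proposition~\ref{prop:pole-flat}. Fix $\cA\in\Fockanrat(V,\bD,P)$. For each graph $\Gamma$ contributing to a coefficient of $\hR\cA$, the vertex term at $v$ lies in $P(q_1)^{-(5g_v-5+2n_v-(l_1+\cdots+l_{n_v}))}K[q_1,q_2,P q_3,\dots][\![P^{-2}q_0]\!]$ by the rationality of $\cA$ (Remark~\ref{rem:tameness-rationality}), while the propagator $\Delta^{(n,i),(m,j)}$ is a \emph{constant} (it is a fixed element of $K$, being built from the $R_k$). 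Multiplying the vertex contributions and summing the exponents using $\sum_v(5g_v-5+2n_v)=5g-5-|E(\Gamma)|\le 5g-5$ (the same identities $\sum_v(g_v-1)=g-1-|E|$, $\sum_v n_v=2|E|$) bounds the pole order of $\Cont_\Gamma$ by $5g-5+2n-(l_1+\cdots+l_n)$, with the polynomial numerators of bounded degree, so the graph sum lies in the claimed ring. Finally, pushing forward along $R(z)$ replaces $q_1$ by $R_0 q_1 + (\text{lower })$, hence on the locus $\by=y_1 z$ (where only $q_1$ survives) the discriminant $P(q_1)$ becomes $P(R_0^{-1}q_1)=(P\circ R_0^{-1})(q_1)$; one checks $(P\circ R_0^{-1})(-R_0 D_1)=P(-D_1)=1$, so $P\circ R_0^{-1}$ is a legitimate discriminant for $\Fockanrat(W,R\bD,\cdot)$. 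This establishes $\hR\colon\Fockanrat(V,\bD,P)\to\Fockanrat(W,R\bD,P\circ R_0^{-1})$.

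The main obstacle, and the step I would spend the most care on, is the \emph{finiteness} and the precise pole/jet bookkeeping in the presence of the push-forward by $R(z)$: one must be sure that re-expanding $\tcA(R^{-1}\bq)$ in the new variables $\bq$ does not destroy either tameness or the rational-function form of the coefficients. The subtlety is that $R^{-1}$ has infinitely many $z$-coefficients, so a priori each new coefficient could be an infinite sum of old ones; the resolution is that tameness of $\tcA$ (already established) means only finitely many old coefficients at each fixed jet-level are nonzero, so the sum truncates — but writing this out cleanly, keeping track of the $\by=(q_1+D_1)z$ evaluation versus the $\by=0$ evaluation, is where the real work lies. Everything else is an application of the graph combinatorics of~\S\ref{subsec:transformation} together with the identification in Proposition~\ref{prop:Giventalprop=prop}. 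I would also remark that since the statement is attributed to Givental~\cite{Givental:nKdV} and Coates--Iritani~\cite{CI:convergence}, the cleanest exposition is to cite those for (a) and to observe that the rationality refinement (b) follows by the identical inductive argument used to prove the (Pole) condition in Lemma~\ref{lem:pole}, specialized to the fiber over a point where $\cC(t,z)$ is constant.
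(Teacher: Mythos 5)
Since the paper itself gives no proof of this theorem --- it is quoted from Givental~\cite{Givental:nKdV} and Coates--Iritani~\cite{CI:convergence} --- the relevant comparison is with the argument of those references, which is also the argument of Lemmas~\ref{lem:jetness}--\ref{lem:pole} and Propositions~\ref{prop:Giventalprop=prop},~\ref{prop:pole-flat} for the transformation rule, and your sketch reproduces exactly that: finite stable-graph sums with jet/pole bookkeeping for the $\exp(\frac{\hbar}{2}\Delta)$ step (the propagator coefficients being constants in $K$), followed by the triangular substitution $\bq\mapsto R^{-1}\bq$, which sends $P(q_1)$ to $P(R_0^{-1}q_1)$ along $q_0=0$ and whose a priori infinite re-expansion is truncated by tameness, precisely as you say. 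The outline is correct and essentially the same as the cited proof; only tidy the bookkeeping --- with $n$ legs one has $\sum_v n_v = 2|E(\Gamma)|+n$ (not $2|E(\Gamma)|$), the identity giving tameness is $\sum_v(3g_v-3+n_v)=3g-3+n-|E(\Gamma)|$, and the ``triangularity'' point is cleaner when stated as: a derivative in a new variable at level $l$ is a finite combination of derivatives in old variables at levels $\ge l$, so vanishing above total level $3g-3+n$ is inherited.
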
 

\begin{remark} 
\label{rem:quantizedoperator-shiftisom}
When combined with the shift isomorphism in 
Definition~\ref{def:shiftofdilatonshift}, 
the quantized operator gives a map 
\[
T_{\bD'- R \bD} \circ \hR \colon \Fockan(V, \bD) \longrightarrow \Fockan(W, \bD') 
\]
for $\bD\in zV[\![z]\!]$, $\bD'\in zW[\![z]\!]$ such that 
$\bD' - R \bD \in z^2 W[\![z]\!]$. 
On the subspace of rational elements, we have a map: 
\[
T_{\bD'-R\bD} \circ 
\hR \colon 
\Fockanrat(V,\bD, P) \longrightarrow 
\Fockanrat(V, \bD', P\circ R_0^{-1}/P(-R_0^{-1}D'_1)) 
\]
when $\bD' = \sum_{n=1}^\infty D_n' z^n \in z W[\![z]\!]$  
satisfies $P(- R_0^{-1} D'_1) \neq 0$.  
\end{remark} 

\subsection{Global Quantization is Compatible with Givental Quantization} 
\label{subsec:Givental-Global}
We now show that Givental's quantized operator 
on ancestor Fock spaces 
(Definition~\ref{def:quantizedoperator}) 
arises from our transformation rule (Definition~\ref{def:transformation}) 
in the formal neighbourhood of a point of $\LLo$.   
Suppose that we are given a 
miniversal\footnote{See Assumption~\ref{assump:miniversal} for miniversality.}
cTP structure $(\sfF,\nabla,(\cdot,\cdot)_{\sfF})$ 
over $\cM$ as in Definition~\ref{def:cTP}. 
A \emph{unitary frame} at $t\in \cM$ is a $\C[\![z]\!]$-linear 
isomorphism 
\[
\Phi \colon V[\![z]\!] \cong \sfF_t 
\]
with a $\C$-vector space $V$ 
such that 
\[
\pair{v_1}{v_2}_V := (\Phi(v_1), \Phi(v_2))_{\sfF} 
\]
is independent of $z$ for any $v_1, v_2 \in V$. 
A unitary frame $\Phi$ admits a unique extension to an isomorphism 
$V(\!(z)\!) \cong \sfF_t[z^{-1}]$ 
of $\C(\!(z)\!)$-modules, which we also denote by $\Phi$.  
The following lemma is obvious from the proof of 
Lemma~\ref{lem:existence_opposite}. 

\begin{lemma} 
\label{lem:unitaryframe-opposite}
Let $V$ be a vector space over $\C$ of dimension $(N+1) = \rank \sfF$. 
A unitary frame $\Phi \colon V[\![z]\!] \cong \sfF_t$ 
at $t\in \cM$ defines a unique opposite module $\sfP$ 
over the formal neighbourhood of $t$ such that 
$\sfP_t = \Phi(z^{-1} V[z^{-1}])$. 
Conversely, any opposite module over the formal neighbourhood 
of $t$ determines a gauge equivalence 
class of unitary frame. 
\end{lemma}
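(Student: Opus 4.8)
The statement to prove is Lemma~\ref{lem:unitaryframe-opposite}, which asserts a dictionary between unitary frames $\Phi\colon V[\![z]\!]\cong\sfF_t$ at a point $t\in\cM$ and opposite modules $\sfP$ over the formal neighbourhood $\hcM$ of $t$, up to gauge equivalence. The plan is to break the proof into two directions, using the construction carried out in the proof of Lemma~\ref{lem:existence_opposite} as the technical backbone.

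\textbf{From a unitary frame to an opposite module.} Given a unitary frame $\Phi\colon V[\![z]\!]\cong\sfF_t$, set $e_i=\Phi(v_i)$ for a basis $\{v_i\}$ of $V$; then $(e_i,e_j)_{\sfF}$ is $z$-independent by the unitarity hypothesis, so the $e_i$ are exactly a basis of $\sfF_t$ of the type produced in the proof of Lemma~\ref{lem:existence_opposite}. First I would define $\sfP_t:=\Phi(z^{-1}V[z^{-1}])=\bigoplus_i\C[z^{-1}]z^{-1}e_i\subset\sfF_t[z^{-1}]$; as observed there, this is opposite to $\sfF_t$, closed under $z^{-1}$, and isotropic for $\Omega_t$ because $(\sfP_t,\sfP_t)_{\sfF}\subset z^{-2}\C[z^{-1}]$ has no residue. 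Next I would extend $\sfP_t$ to $\hcM$ by solving for $\nabla$-flat sections $f_i(s)\in\widehat{\sfF[z^{-1}]}$ with $f_i(0)=e_i$ (using a regular system of parameters $s_0,\dots,s_N$ at $t$), exactly as in the cited proof, and put $\sfP=\bigoplus_i\C[z^{-1}][\![s_0,\dots,s_N]\!]z^{-1}f_i$. This is parallel by construction, and (Opp1), (Opp2), (Opp4) propagate from the central fiber because $\nabla$ preserves the pairing and the flat frame $\{f_i\}$ diagonalises it with $z$-independent Gram matrix. Uniqueness of the flat extension with prescribed value at $t$ gives that $\sfP$ is the \emph{unique} opposite module over $\hcM$ with $\sfP_t=\Phi(z^{-1}V[z^{-1}])$: any such $\sfP'$ must, by (Opp3), consist of $\nabla$-flat sections, and a $\C[z^{-1}][\![s]\!]$-basis of $z\sfP'$ restricting to the $e_i$ at $t$ is forced to be $\{zf_i\}$.

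\textbf{From an opposite module to a gauge class of unitary frame.} Conversely, let $\sfP$ be an opposite module over $\hcM$. By Proposition~\ref{prop:flat_trivialization}(i),(ii) applied over $\hcM$, the flat trivialization gives $\sfF|_{\hcM}\cong(z\sfP/\sfP)\otimes\C[\![z]\!]$, and by (iv) the induced pairing on $z\sfP/\sfP$ is $z$-independent and $\nabla^0$-flat. Restricting to $t$ and choosing any basis of the fibre $(z\sfP/\sfP)_t$ — or equivalently of $\sfF_t\cap z\sfP_t$ — that is orthonormalisable with respect to this constant pairing yields a $\C[\![z]\!]$-isomorphism $\Phi\colon V[\![z]\!]\cong\sfF_t$ with $(\Phi(v_1),\Phi(v_2))_{\sfF}$ constant, i.e.\ a unitary frame, and by construction $\Phi(z^{-1}V[z^{-1}])=\sfP_t$. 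Two such frames $\Phi,\Phi'$ produced from the same $\sfP$ differ by a $\C[\![z]\!]$-linear automorphism $R(z)=\Phi'^{-1}\circ\Phi$ of $V[\![z]\!]$ that preserves the constant pairing, i.e.\ $R$ is unitary in the sense of \S\ref{subsubsec:Giventalpropagator}; this is precisely the notion of gauge equivalence. Combined with the first direction (applied to $\Phi$ and to $\Phi'$, both of which recover $\sfP$ by uniqueness), this shows the two constructions are mutually inverse at the level of (opposite modules) $\leftrightarrow$ (gauge classes of unitary frames).

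The only genuinely delicate point, and the one I would write out carefully, is the \emph{uniqueness} clause in the first direction: that there is no parallel, $z^{-1}$-closed, isotropic complement to $\sfF$ over $\hcM$ other than the flat extension of $\sfP_t$. This follows from the rigidity of flat sections — condition (Opp3) forces the would-be basis of $z\sfP'$ to be $\nabla$-flat once it is flat modulo each power of $\fm_t$, and induction on the $\fm_t$-adic filtration pins it down — but it is worth stating explicitly, since the whole bijection hinges on it. The remaining verifications (that $\Omega_t$-isotropy and $z^{-1}$-linearity are inherited by the flat extension, and that unitarity of $R$ is exactly gauge equivalence) are routine given Proposition~\ref{prop:flat_trivialization} and equation~\eqref{eq:gaugetr-unitarity}, so I would dispatch them in a sentence each rather than belabour them.
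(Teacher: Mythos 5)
Your overall strategy is the paper's own: the paper disposes of this lemma by pointing back at the proof of Lemma~\ref{lem:existence_opposite}, and your forward direction (basis with $z$-independent Gram matrix, flat extension $f_i$ of $e_i$, $\sfP=\bigoplus_i\C[z^{-1}][\![s]\!]z^{-1}f_i$) is exactly that argument. One small imprecision there: (Opp3) does not say that elements of a competing $\sfP'$ are $\nabla$-flat, only that $\sfP'$ is $\nabla$-stable; the clean way to get uniqueness is to note that $\nabla$ induces a connection on $\widehat{\sfF[z^{-1}]}/\sfP'\cong\sfF|_{\hcM}$, that a flat section of this quotient vanishing at $t$ vanishes identically (solve order by order in $\fm_t$), hence every flat extension of a vector in $\sfP'_t=\sfP_t$ stays in $\sfP'$, so $\sfP\subseteq\sfP'$, and opposedness of both then forces equality. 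Your filtration sketch can be repaired along these lines, so I would not call this a gap.

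The genuine flaw is in the converse direction, in your description of the gauge ambiguity. You assert that two frames arising from the same $\sfP$ differ by a unitary $R(z)=\Phi'^{-1}\circ\Phi$ and that this "is precisely the notion of gauge equivalence." But \emph{any} two unitary frames of $\sfF_t$ differ by a unitary $R(z)$ (the computation you would do to check unitarity of $R$ uses only unitarity of $\Phi$ and $\Phi'$, not that they induce the same opposite module), so with that notion of equivalence all unitary frames form a single class and the asserted correspondence with opposite modules collapses. Indeed, a non-constant unitary loop $R(z)$ moves $z^{-1}V[z^{-1}]$: this is exactly the situation of \S\ref{subsubsec:Giventalpropagator} and Theorem~\ref{thm:transformationrule-Giventalquantization}, where frames adapted to two \emph{different} opposite modules are related by a nontrivial $R(z)$ which feeds the propagator. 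What the lemma needs, and what you must prove, is the sharper statement: $\Phi$ and $\Phi'$ induce the same $\sfP_t$ if and only if $R=\Phi'^{-1}\circ\Phi$ is $z$-independent (an isometry of $(V,\pair{\cdot}{\cdot}_V)$). This is a short but essential computation: writing $R=\sum_{k\ge 0}R_kz^k$ and extending $\C(\!(z)\!)$-linearly, $R(z^{-1}v)=\sum_k R_kv\,z^{k-1}$ lies in $z^{-1}V[z^{-1}]$ for all $v$ only if $R_k=0$ for all $k\ge 1$. With "gauge equivalence'' meaning "differ by a constant isometry'' (equivalently, by a change of $\nabla^0$-flat frame of $z\sfP/\sfP$ in the flat trivialization of Proposition~\ref{prop:flat_trivialization}), your construction of a frame from $\sfP$ then really does produce a well-defined gauge class, and the two directions are mutually inverse; this is also the ambiguity implicitly used in Definition~\ref{def:formalizationmap}. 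As written, your proof omits this verification and substitutes a notion of equivalence under which the converse statement is vacuous.
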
 

\begin{definition}[Formalization map]
\label{def:formalizationmap} 
Let $\sfP$ be an opposite module over an open set $U$. 
By the preceding lemma, $\sfP$ associates to a point $t\in U$ 
a unitary frame $\Phi \colon V[\![z]\!] \cong \sfF_t$ 
such that $\Phi(z^{-1} V[z^{-1}]) = \sfP_{t}$, where $V$ is a 
$\C$-vector space. 
Let $e_0,\dots,e_N$ be a basis of $V$. 
Recall from Definition~\ref{def:flatcoordinate_on_LL}
that the trivialization $\Phi^{-1} \colon \sfF_t \cong V[\![z]\!] = 
\bigoplus_{i=0}^N \C[\![z]\!] e_i$ and the opposite module $\sfP$ 
define a flat co-ordinate system $\{q_n^i\}_{n\ge 0, 0\le i\le N}$ 
on the formal neighbourhood $\hLLo$ of $\LLo_t$. 
Write: 
\[
\bq = \sum_{n=0}^\infty \sum_{i=0}^N q_n^i e_i z^n 
\colon \hLLo \longrightarrow  V[\![z]\!]
\] 
Take a point $\sx \in \LLo_t$ and let   
$-\bD = \bq|_{\sx} \in z V[\![z]\!]$ 
be the co-ordinate of $\sx$. 
The \emph{formalization map} 
$\For_\sx \colon \Fock(U;\sfP) \to \Fockan(V,\bD)$ 
is defined by the Taylor expansion: 
\[
\For_\sx ( \wave ) = \exp\left( 
\sum_{g = 0} ^\infty 
\sum_{n: 2g-2 + n >0} 
\sum_{l_1,\dots,l_n \ge 0}
\sum_{0\le i_1,\dots,i_n \le N} 
\frac{\hbar^{g-1}}{n!} 
\parfrac{^nC^{(g)}}{q_{l_1}^{i_1} \cdots \partial q_{l_n}^{i_n}}  
(\sx) 
y_{l_1}^{i_1} \cdots y_{l_n}^{i_n} 
\right) 
\]
where $\wave = \{ \Nabla^n C^{(g)}\} \in \Fock(U;\sfP)$ 
and $\by = \sum_{n=0}^\infty \sum_{i=0}^N y_n^i e_i z^n 
= \bD +\bq$.  
\end{definition} 
\begin{remark} \ 
  \begin{enumerate}
  \item The formalization $\For_\sx(\wave)$ is nothing but the jet potential $\exp(\cW(\sx,\sy))$ (Definition~\ref{def:jetpotential}) at the point $\sx$.  A small difference here is that $\For_{\sx}(\wave)$ is written in a specific co-ordinate system $\{y_n^i\}$ on $T_{\sx} \LLo$, induced by the flat co-ordinate system $\{q_n^i\}$ associated to a trivialization of $\sfF_{\pr(\sx)}$, whereas the jet potential is defined abstractly without a specific choice of co-ordinates.

  \item Because $C^{(0)}$, $\Nabla C^{(0)}$, $\Nabla^2 C^{(0)}$, $C^{(1)}$ are not defined, the Taylor series $\For_\sx(\wave)$ is truncated at genus zero and one.
  \end{enumerate}
\end{remark}

\begin{lemma} 
\label{lem:formalization-shiftisom}
The image of the formalization map $\For_\sx$ lies in 
the subspace $\Fockanrat(V, \bD, P_{t,D_1})$ of rational elements 
with discriminant 
\[
P_{t,D_1}(q_1) = P(t, q_1)/P(t,-D_1) \qquad  q_1\in V
\]
where $P(t,q_1)$
is the discriminant \eqref{eq:discriminant} on the total space $\LL$ 
written in terms of the unitary frame $\Phi$ which we used to define $\For_\sx$. 
Moreover we have the commutative diagram: 
\[
\xymatrix{
  \Fock(U;\sfP) \ar[rr]^-{\For_\sx} \ar@{=}[d] && \Fockanrat(V,\bD, P_{t,D_1}) \ar[d]^{T_{\bD'-\bD}}\\  
  \Fock(U;\sfP) \ar[rr]^-{\For_{\sx'}} && \Fockanrat(V,\bD', P_{t,D_1'}) 
}
\]
where $\bD' $ is an element of $zV[\![z]\!]$ 
such that $\sx ' = \Phi(-\bD') \in \LLo_t$ 
and the right vertical arrow is the shift isomorphism 
defined in Definition~\ref{def:shiftofdilatonshift}. 
\end{lemma}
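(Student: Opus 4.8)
\textbf{Proof plan for Lemma~\ref{lem:formalization-shiftisom}.}
The plan is to verify the two assertions separately: first that $\For_\sx(\wave)$ is a rational ancestor potential with the claimed discriminant, and then that the formalization maps at two points $\sx,\sx'$ of the same fiber $\LLo_t$ differ by the shift isomorphism $T_{\bD'-\bD}$. For the first assertion, I would unwind the definition of $\For_\sx$: by construction it is the Taylor expansion at $\sx$ of the jet potential, so its genus-$g$ part is $\sum_n \frac{1}{n!}\,\partial^n C^{(g)}/\partial q_{l_1}^{i_1}\cdots\partial q_{l_n}^{i_n}(\sx)\,y_{l_1}^{i_1}\cdots y_{l_n}^{i_n}$. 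Tameness \eqref{eq:tameness-potential} is then immediate from the $(3g-2)$-jet condition \eqref{eq:tameness-Fockelement}, which holds for any Fock space element (Remark~\ref{rem:Fockspace}(3)); the vanishing of the low-order genus-zero and genus-one terms is exactly the truncation built into Definition~\ref{def:jetpotential}/Definition~\ref{def:formalizationmap}. Rationality \eqref{eq:rationality} with discriminant $P(t,q_1)$ (evaluated at the fiber $\LLo_t$, i.e.\ $s=0$) is precisely the content of Proposition~\ref{prop:pole-flat}: restricting \eqref{eq:pole-formalization} to $q_0=0$ gives
\[
\parfrac{^n C^{(g)}}{q_{l_1}^{i_1}\cdots \partial q_{l_n}^{i_n}}\Bigg|_{q_0=0}
= \frac{f_{g,L,I}(q_1,q_2,\dots)}{P(t,q_1)^{5g-5+2n-(l_1+\cdots+l_n)}},
\]
and one checks that the numerator is in fact polynomial in $q_1$ only (the $x_n=q_n$ with $n\ge 2$ contribute polynomially after clearing the $P_t$-powers, by the same bookkeeping as in Remark~\ref{rem:Fockspace}(6)). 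The normalization discriminant in $\Fockanrat$ is required to take value $1$ at $q_1=-D_1$, which forces the rescaling $P_{t,D_1}(q_1)=P(t,q_1)/P(t,-D_1)$; note $P(t,-D_1)\ne 0$ because $\sx=\Phi(-\bD)\in\LLo_t$ means $(\bx/z)|_{z=0}=-D_1\in\sfF_{0,t}^\circ$, i.e.\ the discriminant is nonzero there.

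For the commutativity of the diagram, the key observation is that the flat co-ordinate system $\{q_n^i\}$ on $\hLLo$ depends only on the trivialization $\Phi^{-1}\colon\sfF_t\cong V[\![z]\!]$ and on $\sfP_t$ — \emph{not} on the choice of basepoint $\sx$ (Definition~\ref{def:flatcoordinate_on_LL}). Consequently the two formalizations $\For_\sx(\wave)$ and $\For_{\sx'}(\wave)$ are Taylor expansions of the \emph{same} family of functions $C^{(g)}$, expanded in the same co-ordinates $\{q_n^i\}$ but centred at the two different points $\bq|_\sx=-\bD$ and $\bq|_{\sx'}=-\bD'$ of $\LLo_t\subset zV[\![z]\!]$. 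The difference $\bD'-\bD = \bq|_\sx-\bq|_{\sx'}$ lies in $zV[\![z]\!]$; I would check it actually lies in $z^2V[\![z]\!]$ — equivalently that $D_1=D_1'$ — only if the $q_1$-co-ordinate agrees, which is \emph{not} automatic, so here the relevant shift isomorphism is the one from Definition~\ref{def:shiftofdilatonshift}(2) (the truncated-Taylor-expansion version valid when $P(-D_1')\ne 0$), not the naive one from part (1). The identity $\For_{\sx'}(\wave)=T_{\bD'-\bD}\big(\For_\sx(\wave)\big)$ is then, term by term, exactly the statement that re-expanding a function's Taylor series around a nearby point is implemented by the operator $T_{\bxi}$ of Definition~\ref{def:shiftofdilatonshift}(2) — with the genus-one truncation on both sides matching because $T_{\bxi}\cF^1$ is by definition truncated to vanish at the new origin, just as $\For_{\sx'}$ truncates the genus-one jet. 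The discriminants transform correctly: $P_{t,D_1'}(q_1)=P(t,q_1)/P(t,-D_1') = P_{t,D_1}(q_1)/P_{t,D_1}(-D_1')$, which is precisely the rule in Definition~\ref{def:shiftofdilatonshift}(2).

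The main obstacle I anticipate is not conceptual but bookkeeping: one must be careful that the rational structure (the precise powers of $P_t$ allowed in denominators and the polynomial dependence of numerators on $q_1$ versus higher $q_n$) is genuinely preserved under the change of basepoint, i.e.\ that re-expanding around $\sx'$ does not worsen the pole order along $P_t=0$. This is controlled by Proposition~\ref{prop:pole-flat} together with the second statement of Lemma~\ref{lem:sx-poleorder} (the fact that $\sum_i s^i\cC_i(0,0)q_1\in P_t^2\mathcal{S}$), which is exactly what makes the shift $\bxi=\bD'-\bD$ have $\xi_1$-component small enough that $T_{\bxi}$ in Definition~\ref{def:shiftofdilatonshift}(2) stays within $\Fockanrat$. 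So the proof is essentially a matter of assembling Proposition~\ref{prop:pole-flat}, the jet condition, and the definition of $T_{\bxi}$, and checking the normalizations of the discriminant line up — a routine but slightly delicate verification.
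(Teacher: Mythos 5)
Your proposal is correct and follows essentially the same route as the paper, whose proof is just the observation that tameness is the $(3g-2)$-jet condition \eqref{eq:tameness-Fockelement}, rationality is Proposition~\ref{prop:pole-flat}, and commutativity of the diagram is immediate from the definitions of $\For_\sx$ and $T_{\bxi}$. Your closing worry about pole orders under re-expansion is already absorbed by the fact that $T_{\bxi}$ in Definition~\ref{def:shiftofdilatonshift}(2) preserves rationality (equivalently, by applying Proposition~\ref{prop:pole-flat} directly at $\sx'$), so no extra estimate is needed.
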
 
\begin{proof} 
The tameness of the formalization was established in 
\eqref{eq:tameness-Fockelement} and rationality 
was established in Proposition~\ref{prop:pole-flat}. 
The commutativity of the diagram is obvious from 
the definition. 
\end{proof} 
 
\begin{theorem} 
\label{thm:transformationrule-Giventalquantization} 
The transformation rule for the Fock sheaf is compatible with 
Givental's quantized operator $\hR$ in the following sense. 
Let $\sfP$, $\sfP'$ be two 
opposite modules over $U$ and 
let $\Phi\colon V[\![z]\!] \cong \sfF_t$, $\Phi' \colon V'[\![z]\!] 
\cong \sfF_t$ be the corresponding unitary frames at 
$t\in U$ via Lemma~\ref{lem:unitaryframe-opposite}. 
Let $R$ denote the unitary isomorphism 
\[
R := \Phi'^{-1} \circ \Phi \colon V[\![z]\!] 
\overset{\cong}{\longrightarrow} V'[\![z]\!] 
\] 
and let $\bD\in zV[\![z]\!]$, $\bD' \in zV'[\![z]\!]$ be 
such that $\sx = \Phi(-\bD)\in \LLo_t$ and 
$\sx' = \Phi'(-\bD') \in \LLo_t$. 
Let $P(t,q_1)$, $q_1\in V$, $P'(t,q'_1)$, $q_1'\in V'$ 
be the discriminants \eqref{eq:discriminant} 
written in terms of the trivializations $\Phi$ and $\Phi'$ respectively. 
Then we have $P'(t,q_1') = P(t,R_0^{-1} q_1')$
for $R_0 = R|_{z=0}$. 
Set:
\begin{align*}
  P_{t,D_1}(q_1) = P(t,q_1)/P(t,-D_1) &&
  P'_{t,D_1'}(q_1') = P'(t,q_1')/P'(t,-D'_1)
\end{align*}
Then there is a commutative diagram: 
\begin{align*} 
\xymatrix{
\Fock(U;\sfP) \ar[rr]^{T(\sfP,\sfP')} \ar[d]^{\For_\sx} 
&
& \Fock(U;\sfP') \ar[d]^{\For_{\sx'}} \\ 
\Fockanrat(V,\bD,P_{t,D_1}) \ar[rr]^{T_{\bD'-R\bD}\circ \hR}
& & 
\Fockanrat(V',\bD',P'_{t,D_1'}) }
\end{align*} 
\end{theorem}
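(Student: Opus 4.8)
The plan is to reduce the statement to a direct comparison of two Feynman-type rules applied to the same input data, after matching all the geometric ingredients (discriminants, propagators, flat co-ordinates) on the two sides. First I would dispose of the elementary claim that $P'(t,q_1') = P(t,R_0^{-1} q_1')$: the discriminant \eqref{eq:discriminant} is built from the residual connection $\cC(t,0)$, and under the change of trivialization given by $R(z)$ the operator $\cC(t,0)$ is conjugated by $R_0 = R|_{z=0}$ while the argument $x_1$ is transformed by $R_0$ as well; since $P$ is a determinant this gives $P'(t,q_1') = \det(R_0)^{\pm1}$-times $P(t,R_0^{-1}q_1')$, and the overall scalar is absorbed because the discriminant is only well-defined up to a unit on the base (Remark~\ref{rem:Fockspace}(4)) — more precisely it is a section of $\pr^*(\det\sfF_0\otimes K_\cM)$, and the change of co-ordinates here is fiberwise only. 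This also immediately gives compatibility of the normalized discriminants $P_{t,D_1}$, $P'_{t,D_1'}$ with the shift isomorphism and with $P\circ R_0^{-1}$ as in Theorem~\ref{thm:quantization-welldefined} and Remark~\ref{rem:quantizedoperator-shiftisom}, so that the target spaces in the two routes around the square genuinely coincide.

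Next I would establish that the two routes around the diagram agree. By Lemma~\ref{lem:formalization-shiftisom} the formalization $\For_\sx$ is exactly the jet potential $\exp(\cW(\sx,\sy))$ written out in the co-ordinates $\{y_n^i\}$ on $T_\sx\LLo$ induced by the flat co-ordinate system $\{q_n^i\}$ attached to $\Phi$ and $\sfP$. The transformation rule $T(\sfP,\sfP')$ is, by Definition~\ref{def:transformation}, the operator $\exp\bigl(\tfrac{\hbar}{2}\Delta^{\mu\nu}\partial_{\sy^\mu}\partial_{\sy^\nu}\bigr)$ applied to $\exp(\cW)$, with $\Delta = \Delta(\sfP,\sfP')$. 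So after composing with $\For_{\sx'}$ on the top route we obtain, at the point $\sx$ and then re-expanded about $\sx'$, the expression $T_{\bD'-\bD'}\bigl(\exp(\tfrac{\hbar}{2}\Delta)\exp(\cW)\bigr)$ — here I must be careful that $\For_{\sx'}$ is a Taylor expansion about $\sx'$ in the \emph{$\sfP'$-flat} co-ordinates, which differ from the $\sfP$-flat ones; the commutative triangle in Lemma~\ref{lem:formalization-shiftisom} and the cocycle property (Proposition~\ref{prop:cocycle}) let me first move everything to the fiber $\LLo_t$ and then change flat co-ordinate systems. On the bottom route, $T_{\bD'-R\bD}\circ\hR$ applied to $\For_\sx(\wave)$ is, unwinding Definition~\ref{def:quantizedoperator}, $T_{\bD'-R\bD}$ applied to $\bigl(\exp(\tfrac{\hbar}{2}\Delta_{\mathrm{Giv}})\cA\bigr)\circ R^{-1}$ where $\Delta_{\mathrm{Giv}}$ is Givental's propagator attached to $R$. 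The point is then: (i) $\Delta_{\mathrm{Giv}}$ equals the restriction of $\Delta(\sfP,\sfP')$ to the fiber $\LLo_t$ written in the $\sfP$-flat co-ordinates — this is precisely Proposition~\ref{prop:Giventalprop=prop} combined with Lemma~\ref{lem:V} (and with the abstract reformulation in \S\ref{sec:ancestor_Fock} of Givental's propagator, which is visibly the same formula as Definition~\ref{def:Giventalprop}); and (ii) the linear identification $R(z)\colon V[\![z]\!]\cong V'[\![z]\!]$ is exactly the change between the $\sfP$-flat and $\sfP'$-flat co-ordinate systems restricted to $\LLo_t$, because both are obtained from the same parallel-transport construction (Definition~\ref{def:flatcoordinate_on_LL}, equation~\eqref{eq:flatcoord-atthecentre}) but with the two different trivializations $\Phi$, $\Phi'$, whose ratio is $R$. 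Given (i) and (ii), the two routes produce literally the same operation on power series, and the diagram commutes.

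Concretely I would organize the bookkeeping as follows. Step 1: prove the discriminant identity and deduce that all four corners and arrows of the square are well-defined maps between the stated rational ancestor Fock spaces, using Theorem~\ref{thm:quantization-welldefined}, Remark~\ref{rem:quantizedoperator-shiftisom} and Lemma~\ref{lem:formalization-shiftisom}. Step 2: reduce to the case $\sx=\sx'$ mapped into the fiber $\LLo_t$ — i.e.\ absorb the shift isomorphisms $T_{\bD'-R\bD}$ and $T_{\bD'-\bD'}$ — using Lemma~\ref{lem:formalization-shiftisom} on both sides; this is where the commutative triangle of that lemma does the work. Step 3: identify $\KS^{*-1}dq_n^i|_{\LLo_t} = \varphi_n^i$ (equation~\eqref{eq:flatcoord-atthecentre}) for both $\sfP$ and $\sfP'$, and hence recognize the change of flat co-ordinates on the fiber as the linear map dual to $R$. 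Step 4: invoke Proposition~\ref{prop:Giventalprop=prop} to identify $\Delta(\sfP,\sfP')|_{\LLo_t}$ with Givental's propagator $\Delta_{\mathrm{Giv}}$ expressed in the $\{q_n^i\}$. Step 5: observe that $\exp(\tfrac{\hbar}{2}\Delta)$ is a constant-coefficient differential operator in the $\{y_n^i\}$ when restricted to the fiber (the propagator at $q_0=0$ is constant, again by \eqref{eq:flatcoord-atthecentre}), so it commutes past the pure-linear change of variables $R$, and the two composites collapse to the identity of operations on $\Fockanrat$.

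\textbf{The main obstacle.} The real subtlety, and the step I expect to take the most care, is Step 2–3: matching the \emph{two different flat co-ordinate systems} near $\sx$ (one attached to $\sfP$, one to $\sfP'$) with the single linear identification $R(z)$ and the Dilaton-type shifts $\bD,\bD'$. The formalization maps $\For_\sx$ and $\For_{\sx'}$ are Taylor expansions in genuinely different co-ordinates on $\hLLo$, and $\For_{\sx'}\circ T(\sfP,\sfP')$ is a priori ``expand about $\sx'$ in $\sfP'$-co-ordinates the result of a $\sfP$-co-ordinate operation.'' Making rigorous that this equals ``change linearly by $R$, then shift by $\bD'-R\bD$, then apply the Givental propagator'' requires carefully tracking that the nonlinear parts of the two flat co-ordinate changes (the higher-order terms in $s$ in \eqref{eq:flat-alg}, governed by $M(s,z)$) restrict trivially to the fiber $\LLo_t$ and therefore contribute nothing to the comparison there — which is exactly the content of \eqref{eq:flatcoord-atthecentre}, but it has to be applied with some care to both the propagator and the change-of-co-ordinate Jacobian simultaneously. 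Everything else is a mechanical comparison of explicit formulas.
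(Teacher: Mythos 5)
Your proposal is correct and follows essentially the same route as the paper's own (very terse) proof: the paper likewise observes that $\For_\sx$ is just the jet potential at $\sx$ written in the $\Phi$-induced linear co-ordinates on $\bTheta_\sx$ (via $d\bq = \Phi^{-1}\circ\KS$, equation~\eqref{eq:flatcoord-atthecentre}), invokes Proposition~\ref{prop:Giventalprop=prop} to identify $\Delta(\sfP,\sfP')|_{\LLo_t}$ with Givental's propagator, and disposes of the base-point discrepancy $\sx\neq\sx'$ with Lemma~\ref{lem:formalization-shiftisom}; your Steps 2--5 are exactly this argument with the bookkeeping (change of flat co-ordinates along the fiber being the linear map $R$, constancy of the propagator on $\LLo_t$) made explicit. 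The only difference is expository detail, so no further comparison is needed.
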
 
\begin{proof} 
By definition, the formalization map $\For_{\sx}$ assigns 
to a Fock space element the jet potential at $\sx$ viewed as a 
function on $V[\![z]\!]$, where $V[\![z]\!]$ is identified with 
$\bTheta_\sx$ via $d \bq = \Phi^{-1} \circ \KS \colon 
\bTheta_\sx \cong V[\![z]\!]$. 
On the other hand, we showed in 
Proposition~\ref{prop:Giventalprop=prop} that 
Givental's propagator coincides with the propagator 
for global quantization written in the frame 
$V[\![z]\!] \cong \bTheta_\sx$. 
The statement follows immediately from this, 
the definitions of $T(\sfP,\sfP')$ and $\hR$, 
and Lemma~\ref{lem:formalization-shiftisom}.  
\end{proof} 

\subsection{Global Quantization in the $L^2$-Setting}
\label{subsec:global_L2} 

We now describe global quantization  
in the $L^2$-setting and explain its relation 
to Givental's quantization. 
In particular we describe the quantization $\hUU$ 
of a symplectic transformation $\U \in Sp(\cH)$ 
which is not necessarily lower or upper triangular. 
One may notice a similarity between the $L^2$-formalism 
in this section and the Segal--Wilson Grassmannian \cite{Segal--Wilson}; 
whereas the general theory in \S \ref{sec:global_theory} 
is closer in spirit to the Sato Grassmannian \cite{Sato--Sato}. 
The $L^2$-formalism here also follows closely 
the heuristic argument in \S \ref{sec:globalquantization:motivation}. 
Since the discussion is analogous to \S \ref{sec:global_theory}, 
we will omit most of the details. 

In this section, we fix a miniversal TP structure 
$(\cF = \cO(F),\nabla,(\cdot,\cdot)_{\cF})$ with base $\cM$. 
We write $(\sfF,\nabla,(\cdot,\cdot)_{\sfF})$ for the 
corresponding cTP structure. 
Consider the space 
\[
\cH_t = L^2(\{t\} \times S^1, F) 
\]
of $L^2$-sections over $\{t\} \times S^1$.  
This has a non-degenerate symplectic form 
\[
\Omega_t(u,v) = \frac{1}{2\pi\iu} \int_{S^1} \big(u(-z),v(z)\big)_\cF \, dz  
\]
and contains the Lagrangian subspace 
\[
\F_t := \big\{ s(z) \in \cH_t : \text{$s$ is the boundary value of 
a holomorphic section over $\{t\}\times \D$}\big\}
\]
where $\D = \{z\in \C: |z|<1\}$ is the unit open disc. 
The pair $(\cH_t,\Omega_t)$ is an analogue of Givental's 
symplectic space (\S \ref{subsec:Givental-symplecticvs}) 
and $\F_t$ corresponds to a tangent space to the 
Givental cone (\S \ref{subsec:Lagrangian_TP}).  We fix a separable complex Hilbert space $\cH$ equipped 
with an orthonormal basis\footnote{The $L^2$-metric does 
not play a role.} 
$\{e^\alpha, f_\alpha : \alpha\in \Z_{\ge 0}\}$ 
and a symplectic form:
\begin{align*}
  \Omega(e^\alpha,f_\beta) = \delta_{\alpha\beta} &&
  \Omega(e^\alpha,e^\beta) = \Omega(f_\alpha,f_\beta) = 0
\end{align*}
We call $\{e^\alpha,f_\alpha\}$ the Darboux basis of $\cH$. 
We write $\{p_\alpha,q^\alpha : \alpha\in \Z_{\ge 0}\}$ for 
the dual linear co-ordinates on $\cH$, so that we have 
$\Omega = \sum_{\alpha} dp_\alpha \wedge dq^\alpha$. 
We have the standard decomposition $\cH = \cH_+ \oplus \cH_-$,  
where $\cH_+$ is spanned by $f_\alpha$ 
and $\cH_-$ is spanned by $e^\alpha$. We write 
\begin{align*}
  \bp = \sum_{\alpha=0}^\infty p_\alpha f^\alpha \in \cH_- && 
  \bq = \sum_{\alpha=0}^\infty q^\alpha e_\alpha \in \cH_+ 
\end{align*}
for variables in $\cH_\pm$. 

\begin{definition}[cf.~unitary frame in \S \ref{subsec:Givental-Global}] 
\label{def:Darboux_frame} 
A \emph{Darboux frame} of the TP structure $(\cF,\nabla,(\cdot,\cdot)_\cF)$ 
at $t\in \cM$ is an isomorphism  
\[
\Phi_t \colon \cH \to \cH_t 
\] 
of topological vector spaces such that:
\begin{enumerate} 
\item $\Phi_t$ intertwines the symplectic forms $\Omega$ and $\Omega_t$; 
\item the projection $\Phi_t^{-1}(\F_t) \to \cH_+$ along $\cH_-$ 
is an isomorphism.
\end{enumerate} 
\end{definition} 
\noindent Suppose that a Darboux frame $\Phi_t$ at $t$ is given. 
When $t'$ is close to $t$, 
parallel translation by $\nabla$ defines a symplectic isomorphism 
\[
P_{tt'} \colon \cH_t \cong \cH_{t'} 
\]
and thus the Darboux frame $\Phi_t$ induces a 
frame $\Phi_{t'} = P_{tt'} \circ \Phi_t \colon \cH \cong \cH_{t'}$ 
that respects the symplectic forms. We note that condition (2) 
remains true for $\Phi_{t'}$ whenever $t'$ is sufficiently close to $t$. 
Therefore a Darboux frame at any point extends to 
its small neighbourhood by parallel translation. 

\begin{example} 
\label{exa:Darbouxframe_by_triv} 
Suppose that we have a trivialization 
$\phi\colon \C^{N+1} \otimes\cO_{\C^\times} \cong 
\cF|_{\{t\}\times \C^\times}$ 
such that:
\begin{itemize} 
\item $(\phi(e_i)(-z),\phi(e_j)(z))_{\cF} = \delta_{ij}$; 
\item  letting $\cF^{(\infty)}$ be the extension of $\cF|_{\{t\}\times \C}$ across 
$z=\infty$ such that the sections $\{\phi(e_i) : 0\le i\le N\}$ 
extend to $z=\infty$ and form a basis there, we have that $\cF^{(\infty)}$ 
is trivial as a holomorphic vector bundle over $\Proj^1$.  
\end{itemize} 
This induces a Darboux frame, by identifying $\cH$ with 
the space 
$L^2(S^1,\C^{N+1})$ equipped with the Darboux basis 
$\{\text{$e_i (-z)^{-n-1}$, $e_i z^n$} : \text{$n \ge 0$, $0\le i\le N$}\}$. 
The subspace $\cH_+$ corresponds to the space of non-negative Fourier series 
$\sum_{n\ge 0} a_n z^n$ and the subspace $\cH_-$ corresponds to the space of 
strictly negative Fourier series $\sum_{n<0} a_n z^n$. 
Condition (2) follows from the triviality of $\cF^{(\infty)}$. 
\end{example} 

\begin{example} 
\label{exa:Darbouxframe_fundsol}
This is a special case of Example \ref{exa:Darbouxframe_by_triv}. 
Suppose that the genus-zero Gromov--Witten potential $F^0_X$ 
is convergent. 
Then the fundamental solution $L(t,z)$ 
(equation \ref{eq:fundamentalsolution}) with $Q=1$ 
defines a Darboux frame of the A-model TP structure 
(Example \ref{ex:AmodelTP}), 
by identifying $\cH$ with 
Givental's symplectic vector space (\S \ref{subsec:Givental-symplecticvs}) 
for $X$. 
\end{example} 

\begin{example} 
\label{exa:compatible_with_L2}
We say that a parallel pseudo-opposite module 
$\sfP$ for $(\sfF,\nabla,(\cdot,\cdot)_\sfF)$ 
is \emph{compatible with the $L^2$-structure} if:
\begin{itemize} 
\item every element of $\sfP_t \subset \sfF_t[z^{-1}]$ extends to 
a holomorphic section of $F|_{\{t\}\times \D^*}$ over the 
unit punctured disc $\D^* = \{z\in \C : 0<|z|<1\}$ 
and has an $L^2$-boundary value along $S^1$. 
Thus $\sfP_t$ is a subspace of $\cH_t = L^2(\{t\} \times S^1, F)$; 
\item the $L^2$-closure $\Proj_t$ of $\sfP_t$ is complementary to 
$\F_t$, i.e.~$\cH_t= \Proj_t \oplus \F_t$. 
\end{itemize}
Then we can find a Darboux frame $\Phi_t$ such that 
$\Phi_t(\cH_-) = \Proj_t$. When this holds, we say that the 
\emph{Darboux frame $\Phi_t$ is compatible with $\sfP$}. 
Given a Darboux frame, one may not be able to find a 
parallel pseudo-opposite module compatible with the Darboux 
frame. 
Darboux frames from Example \ref{exa:Darbouxframe_by_triv} 
are compatible with the corresponding opposite modules. 
\end{example} 

Let $\Phi$ be a Darboux frame extended by parallel translation to a simply-connected 
open set $U\subset \cM$. 
We consider the map from the $L^2$-subspace $L^2(\LLo)|_U$ 
(see Remarks \ref{rem:L2-neighbourhood}, \ref{rem:L2subspace_from_TP}) 
into $\cH$: 
\[
\iota \colon L^2(\LLo)|_U \to \cH \qquad 
(t,\bx) \mapsto \Phi_t^{-1} \bx
\]
Miniversality implies that the differential $d\iota$ is 
injective and that $d\iota(T_{(t,\bx)}L^2(\LLo)) =\Phi_t^{-1}\F_t$. 
Therefore $\iota$ is a Lagrangian immersion. 
The image $\cL=\iota(L^2(\LLo)|_U)$ is preserved by multiplication by 
$\C^\times$ and we call it the \emph{Givental cone} 
associated to the Darboux frame $\Phi$. 
The projection $\cL \to \cH_+$ along $\cH_-$ 
is a local isomorphism (by the inverse function theorem for 
Hilbert manifolds) and therefore $\cL$ 
can be locally written as the graph 
\[
\cL = \left\{(\bp,\bq) \in \cH: p_\alpha = \parfrac{C^{(0)}}{q^\alpha} \right\}
\]
of the differential of a holomorphic function\footnote
{For holomorphic functions in infinite dimensions, we refer the reader 
to~\cite{Chae}.}
$C^{(0)} \colon \cH_+ \to \C$. 
The function $C^{(0)}$ is defined up to a constant; we can 
fix the constant ambiguity by requiring that $C^{(0)}$ is homogeneous 
of degree two with respect to the dilation of co-ordinates 
$\bq$. Thus we have:
\[
C^{(0)} = \frac{1}{2} 
\sum_{\alpha=0}^\infty q^{\alpha}\parfrac{C^{(0)}}{q^\alpha} 
= \frac{1}{2} \Omega(\bp,\bq) \Bigr |_{\cL} 
\]
We call $C^{(0)}$ the \emph{genus-zero potential associated 
to $\Phi$}. 
This is an $L^2$-version of the genus-zero potential in 
\S \ref{subsec:flatstronL}
(see also Remark \ref{rem:L2-neighbourhood}). 
The third derivative 
\[
C^{(0)}_{\alpha\beta\gamma} = \parfrac{C^{(0)}}{
q^\alpha \partial q^\beta \partial q^\gamma} 
\]
coincides with the Yukawa coupling on $L^2(\LLo)$, via
the projection $L^2(\LLo) \looparrowright \cH \to \cH_+$. 
(Here $\looparrowright$ means an immersion.) 

\begin{definition} 
\label{def:Darboux_close} 
Let $\Phi_1$, $\Phi_2$ be Darboux frames of the TP structure 
$(\cF,\nabla,(\cdot,\cdot)_\cF)$ at $t$. We say that 
$\Phi_1$ and $\Phi_2$ are \emph{close} if the map 
\[
\Pi_+ \Phi_2^{-1}\Phi_1 \colon 
\cH_- \xrightarrow{\Phi_1} \cH_t \xrightarrow{\Phi_2^{-1}} \cH 
\xrightarrow{\Pi_+} \cH_+ 
\]
is of trace class. Here $\Pi_+$ denotes the projection 
along $\cH_-$. Being close is an equivalence relation. 
\end{definition} 

Given two Darboux frames $\Phi_1,\Phi_2$, we have a 
symplectic transformation $\U$ such that $\Phi_1 = \Phi_2 \U$. 
We write $\U$ in the block matrix form: 
\begin{equation} 
\label{eq:U_block} 
\U = \begin{pmatrix} 
A & B \\ 
C & D 
\end{pmatrix} 
\end{equation} 
where $A \in \Hom(\cH_-,\cH_-)$, $B\in \Hom(\cH_+,\cH_-)$, 
$C \in \Hom(\cH_-,\cH_+)$, $D \in \Hom(\cH_+,\cH_+)$. 
The frame $\Phi_1$ is close to $\Phi_2$ 
if and only if $C$ is of trace class. 
Using the basis $\{e^\alpha,f_\alpha\}$, 
we regard $A,B,C,D$ as infinite matrices, 
writing $A e^\beta = {A_\alpha}^\beta e^\alpha$, 
$B f_\beta = B_{\alpha\beta} e^\alpha$, 
$C e^\beta = C^{\alpha\beta} f_\alpha$, 
$D f_\beta = {D^\alpha}_\beta f_\alpha$. 
The symplectic property of $\U$ implies that: 
\[
\U^{-1} 
= \begin{pmatrix} 
D^\tr & -B^\tr \\ 
-C^\tr & A^\tr 
\end{pmatrix}  
\]
where $\tr$ stands for the transpose. 
In particular, we see that $\Phi_1$ is close to $\Phi_2$ if and only if 
$\Phi_2$ is close to $\Phi_1$. 
\begin{example} 
All Darboux frames arising from the method of 
Example \ref{exa:Darbouxframe_by_triv} 
are close to each other. In fact, the symplectic transformation $\U$ 
relating two Darboux frames in Example \ref{exa:Darbouxframe_by_triv} 
is given by the multiplication by a loop group element 
$\gamma(z) \in C^\infty(S^1,GL_{N+1}(\C))$, which is 
the gauge transformation between the two trivializations. 
In this case, the operator $C\in \Hom(\cH_-,\cH_+)$ 
is given by $f(z) \mapsto [\gamma(z) f(z) ]_+$ with $f(z)\in \cH_-$.  
It is easy to see that this defines a linear operator 
of trace class (see e.g.~\cite[Proposition 2.3]{Segal--Wilson}). 
If moreover $\gamma(z)$ is a Laurent polynomial loop, we can see 
that $C$ is a finite rank operator. (This is the typical 
situation when $\U$ arises from the monodromy of a TEP structure.)  
\end{example}

Let $\cL_i$, $i\in \{1,2\}$, be the Givental cones associated 
to the Darboux frame $\Phi_i$, $i\in \{1,2\}$.  
The symplectic transformation $\U$ maps $\cL_1$ isomorphically 
onto $\cL_2$: $\U \cL_1 = \cL_2$. 
By identifying the two Givental cones via $\U$, 
we will mainly work with $\cL_1$. 
For a point $\sx\in \cL_1$, we have 
$\cH = T_\sx \cL_1 \oplus \cH_- 
= T_\sx \cL_1 \oplus \U^{-1}\cH_-$. Thus 
the symplectic form $\Omega$ defines two isomorphisms  
\begin{align}
\label{eq:sharps}
\begin{split}   
\sharp_1 \colon & \cH_- \cong (T_\sx \cL_1)'  
\quad \qquad v \mapsto \iota_v \Omega = \Omega(v, \cdot)  \\ 
\sharp_2 \colon & \U^{-1}\cH_- \cong (T_\sx \cL_1)' 
\quad \; v \mapsto \iota_v \Omega = \Omega(v,\cdot) 
\end{split} 
\end{align} 
where $(T_\sx \cL_1)'$ means the topological dual of $T_\sx \cL_1$. 
We define the propagator in the $L^2$-setting as follows. 

\begin{definition}[cf.~Definition \ref{def:propagator}]
\label{def:propagator_L2} 
The propagator $\Delta = \Delta(\Phi_1,\Phi_2)$ associated to the two Darboux 
frames $\Phi_1$, $\Phi_2$ is the bivector field 
$\Delta$ on $\cL_1$ defined by 
\[
\Delta(v_1,v_2) = \Omega\big(\sharp_1^{-1}(v_1), \sharp_2^{-1}(v_2)\big)
\]
with $v_1,v_2 \in (T_\sx \cL_1)'$. 
\end{definition} 

\noindent The propagator is symmetric.

The projection $\cL_1 \to \cH_+$ along $\cH_-$ 
defines a local co-ordinate system $(q^0,q^1,q^2,q^3,\dots)$ 
on $\cL_1$. We will find a co-ordinate expression for the propagator. 
We write $\Delta^{\alpha\beta} =  \Delta(dq^\alpha,dq^\beta)$.
Let $C^{(0)}$ denote the genus-zero potential 
associated with $\Phi_1$. Define $\tau$ to be the matrix with coefficients: 
\[
\tau_{\alpha\beta} = \parfrac{^2C^{(0)}}{q^\alpha \partial q^\beta}
\]
This defines a bounded bilinear form $\cH_+\times \cH_+ \to \C$; 
it can be also viewed as  
a bounded linear operator $\cH_+ \to \cH_-$. 

\begin{lemma} 
\label{lem:propagator_traceclass} 
The operator $C \tau + D \colon \cH_+ \to \cH_+$ is an 
isomorphism and 
the propagator is given by:
\[
\Delta^{\alpha\beta} = -\left[(C \tau + D)^{-1} C\right]^{\alpha\beta}
\]
In particular, if $\Phi_1$ and $\Phi_2$ are close, 
the propagator $\Delta^{\alpha\beta}$ is of trace class 
as a linear operator $(T_\sx \cL_1)' \to T_\sx \cL_1$. 
\end{lemma}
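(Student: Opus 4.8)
\textbf{Proof plan for Lemma~\ref{lem:propagator_traceclass}.}
The plan is to compute the propagator directly in the flat co-ordinate system $(q^\alpha)$ on $\cL_1$ obtained by projection along $\cH_-$, using the explicit block form \eqref{eq:U_block} of $\U$ and the two isomorphisms $\sharp_1,\sharp_2$ in \eqref{eq:sharps}. First I would fix a point $\sx\in\cL_1$ and identify the tangent space $T_\sx\cL_1$ with $\cH_+$ via the projection, so that a covector $dq^\alpha$ corresponds to the functional $\bv\mapsto v^\alpha$ where $v^\alpha$ is the $e_\alpha$-component of the $\cH_+$-part of $\bv$. Since $\cL_1$ is the graph of $dC^{(0)}$, a tangent vector has the form $\bv = \bw + \tau\bw$ with $\bw\in\cH_+$ and $\tau\colon\cH_+\to\cH_-$ the Hessian operator defined by $\tau_{\alpha\beta}=\partial^2 C^{(0)}/\partial q^\alpha\partial q^\beta$; here I am using that $C^{(0)}$ is homogeneous of degree two so $\tau$ is the relevant symmetric bounded operator. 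The first step is therefore to express $\sharp_1^{-1}(dq^\alpha)\in\cH_-$ explicitly: since $\cH=T_\sx\cL_1\oplus\cH_-$ and $\Omega(e^\gamma,f_\delta)=\delta_{\gamma\delta}$, one checks that $\sharp_1^{-1}(dq^\alpha)=e^\alpha$ (the element of the Darboux basis), because $\Omega(e^\alpha,\bw+\tau\bw)=w^\alpha$ by the pairing relations and the isotropy $\Omega(e^\alpha,e^\gamma)=0$.

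The second step is to compute $\sharp_2^{-1}(dq^\beta)\in\U^{-1}\cH_-$. Writing a general element of $\U^{-1}\cH_-$ as $\U^{-1}(\sum_\gamma c_\gamma e^\gamma)=\sum_\gamma c_\gamma(D^\tr e^\gamma - C^\tr e^\gamma)$ using the explicit form of $\U^{-1}$ from the symplectic property, I would impose $\Omega(\U^{-1}\sum_\gamma c_\gamma e^\gamma,\ \bw+\tau\bw)=w^\beta$ for all $\bw\in\cH_+$. Expanding this pairing using the block decomposition of $\cH_\pm$ and the Darboux relations produces a linear equation on the coefficient vector $(c_\gamma)$ whose coefficient operator is exactly $C\tau+D\colon\cH_+\to\cH_+$ (the $\cH_+$-to-$\cH_+$ part picks up $D$ from the $D^\tr$ term paired against $\bw$, and $C\tau$ from the $C^\tr$ term paired against $\tau\bw$, after transposing). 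Solving gives $(c_\gamma)=((C\tau+D)^{-1})^{\cdot\beta}$ and in particular shows $C\tau+D$ must be invertible for the Darboux-frame conditions to be consistent — this invertibility is forced by condition (2) in Definition~\ref{def:Darboux_frame} applied to $\Phi_2$, namely that the projection $\Phi_2^{-1}(\F_t)\to\cH_+$ along $\cH_-$ is an isomorphism, which unwinds precisely to $C\tau+D$ being an isomorphism; I would spell out this equivalence carefully. Then $\Delta^{\alpha\beta}=\Omega(\sharp_1^{-1}(dq^\alpha),\sharp_2^{-1}(dq^\beta))=\Omega(e^\alpha,\U^{-1}\sum_\gamma c_\gamma e^\gamma)$, and evaluating this pairing — the only surviving term being the $\cH_+$-component $-C^\tr$ contracted against $e^\alpha$ — yields $\Delta^{\alpha\beta}=-[(C\tau+D)^{-1}C]^{\alpha\beta}$, possibly up to checking a sign and the transpose convention.

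For the final trace-class assertion: if $\Phi_1$ and $\Phi_2$ are close then $C$ is of trace class by Definition~\ref{def:Darboux_close} (equivalently by the discussion following \eqref{eq:U_block}), while $(C\tau+D)^{-1}$ is a bounded operator by the invertibility established above; since the trace-class operators form a two-sided ideal in the bounded operators, $(C\tau+D)^{-1}C$ is of trace class, and hence so is $\Delta$ viewed as a map $(T_\sx\cL_1)'\to T_\sx\cL_1$. The main obstacle I anticipate is the careful bookkeeping in the second step: matching the index/transpose conventions for the infinite matrices $A,B,C,D$ against the intrinsic pairing computation, and rigorously deducing the invertibility of $C\tau+D$ from the Darboux-frame transversality condition rather than merely asserting it. Everything else is routine linear algebra in Hilbert space once these identifications are pinned down; I would also remark that when $C$ has finite rank (the Laurent-polynomial-loop case) the same formula shows $\Delta$ has finite rank, which is what makes the Feynman sums finite in the $L^2$-setting.
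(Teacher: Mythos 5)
Your proposal is correct and follows essentially the same route as the paper: both compute the propagator from the block form \eqref{eq:U_block}, using the graph description of $T_\sx\cL_1$ via the Hessian $\tau$, deducing invertibility of $C\tau+D$ from the Darboux-frame transversality (the paper phrases this as the tangent map of $\U$ in the two flat co-ordinate systems being $C\tau+D$, which is the same unwinding you describe), and evaluating $\Omega(\,\cdot\,,\U^{-1}\,\cdot\,)$ to pick out $-C^\tr$. The only differences are organizational — you solve directly for $\sharp_2^{-1}(dq^\beta)$ while the paper pulls back covectors from $\cL_2$ along $(C\tau+D)^\tr$ — and your trace-class argument (trace class as a two-sided ideal, $(C\tau+D)^{-1}$ bounded) is exactly what the lemma's final assertion needs.
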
 
\begin{proof} 
The projection $\cL_2 \to \cH_+$ along $\cH_-$ introduces 
co-ordinates $(q^0,q^1,q^2,\dots)$ on $\cL_2$. 
The tangent map $\cH_+ \cong T_\sx \cL_1 \to 
T_{\U(\sx)} \cL_2 \cong \cH_+$ of $\U$ is given 
in these co-ordinates as: 
\[ 
\bq \mapsto 
\begin{pmatrix} 
\tau \bq \\
\bq
\end{pmatrix} 
\xmapsto{\phantom{X}\U\phantom{X}} 
\begin{pmatrix}
(A \tau + B) \bq \\
(C\tau + D) \bq
\end{pmatrix}
\mapsto 
(C\tau + D)\bq.  
\] 
Thus $C\tau + D$ is a linear isomorphism. 
These co-ordinates on $\cL_1$,~$\cL_2$ identify the 
cotangent spaces $(T_\sx \cL_1)'$,~$(T_{\U(\sx)} \cL_2)'$ 
with $\cH_-$. 
Using these co-ordinatizations and the above identification $T_\sx \cL_1 \cong 
T_{\U(\sx)}\cL_2$, we can view the propagator 
as the bilinear form 
$(T_\sx \cL_1)' \times (T_{\U(\sx)} \cL_2)' \to \C$ given by:
\[
\bp_1 \times \bp_2 \longmapsto \Omega(\bp_1,\U^{-1} \bp_2) 
= - \bp_1 \cdot (C^\tr \bp_2)
\]
Since the covector $\bp_2\in (T_{\U(\sx)}\cL_2)'$ 
corresponds to the covector $(C\tau + D)^{\tr} \bp_2 \in (T_\sx\cL_1)'$, 
the conclusion follows.  
\end{proof} 

We give a definition of the local Fock space in the $L^2$-setting. 
The definition here is very simple. 
\begin{definition}[cf.~Definition \ref{def:localFock}]
\label{def:Fockspace_L2} 
Let $\Phi$ be a Darboux frame and let $\cL$ be the 
Givental cone associated to $\Phi$. 
For an open subset $\cU$ of $\cL$, 
the local Fock space $\Fock_{L^2}(\cU,\Phi)$ consists of tuples 
\[
\{dC^{(1)}, C^{(2)}, C^{(3)}, \dots\}
\] 
where $dC^{(1)}$ is a holomorphic closed one-form on $\cU$ and 
$C^{(g)}$, $g\ge 2$, are holomorphic functions on $\cU$. 
We call $C^{(g)}$ the genus-$g$ potential. 
\end{definition} 

\begin{remark} 
\label{rem:Fockspace_comparison} 
Suppose that a Darboux frame $\Phi$ is compatible with 
a parallel pseudo-opposite module $\sfP$. When $\cU 
\subset \cL$ is the image of an open subset of 
$L^2(\LLo)|_U$, there is a natural restriction map 
$\Fock(U;\sfP) \to \Fock_{L^2}(\cU;\Phi)$. 
\end{remark} 

\begin{remark} 
The $n$-fold derivative of the genus-$g$ potential defines an $n$-tensor: 
\[
C^{(g)}_{\alpha_1\dots\alpha_n} = \frac{\partial^n C^{(g)}}
{\partial q^{\alpha_1} \cdots \partial q^{\alpha_n}} 
\]
At each point $\sx \in \cL$, 
this defines a bounded multi-linear form on $T_\sx \cL$. 
\end{remark} 

We now describe the transformation rule in the $L^2$-setting. 
Let $\Phi_1$,~$\Phi_2$ be Darboux frames which are close to 
each other in the sense of Definition \ref{def:Darboux_close}. 
Let $\cL_i$ be the Givental cone associated to 
$\Phi_i$ for $i=1,2$ and let $\Delta = \Delta(\Phi_1,\Phi_2)$ 
be the propagator. Let $\U = \Phi_2^{-1}\Phi_1$  be the 
symplectic transformation. 
As usual we introduce co-ordinates on $\cL_1$ 
by the projection $\cL_1 \to \cH_+$ along $\cH_-$ 
and we regard the genus-zero potential $C^{(0)}$ associated to 
$\Phi_1$ as a function on $\cL_1$. 
We have another genus-zero potential $\hC^{(0)} \colon \cH_+ \to \C$ 
associated to the Darboux frame $\Phi_2$. Via the identification 
$\U \colon \cL_1 \xrightarrow{\cong} \cL_2$ followed by the projection 
$\cL_2 \to \cH_+$, we also regard $\hC^{(0)}$ 
as a function on $\cL_1$. Although the functions $C^{(0)}$, 
$\hC^{(0)}$ do not match, the third derivatives match:  
\[
C^{(0)}_{\alpha\beta\gamma} = \hC^{(0)}_{\alpha\beta\gamma}   
\]
as they are the Yukawa coupling. 

\begin{definition}[transformation rule in the $L^2$-setting; cf.~Definition \ref{def:transformation}] 
\label{def:transformation_L2} 
Let $\Phi_1,\Phi_2$ be Darboux frames which are close to each other.  
We use notation as above. 
Let $\cU\subset \cL_1$ be an open subset. 
For an element $\{d C^{(1)},C^{(2)},C^{(3)},\dots\}$ 
of $\Fock_{L^2}(\cU;\Phi_1)$, we define a tuple 
\[
\left\{\hC^{(g)}_{\alpha_1,\dots,\alpha_n}: g\ge 0, n\ge 0, 2g-2+n>0
\right\}
\]
of holomorphic tensors on $\cU$ 
by the same Feynman rule as in Definition \ref{def:transformation}: 
\[
\hC^{(g)}_{\alpha_1,\dots,\alpha_n} = 
\sum_{\Gamma} \frac{1}{|\Aut(\Gamma)|} 
\Cont_\Gamma( \{C^{(h)}_{\beta_1,\dots,\beta_m}\}, 
\Delta)_{\alpha_1,\dots,\alpha_n} 
\]
where $\Gamma$ ranges over all decorated stable graphs 
with legs $\alpha_1,\dots,\alpha_n$ as in 
Definition \ref{def:transformation}.  
We can check, by a similar argument to the previous case, 
that the new correlators satisfy the jetness condition
\[
\parfrac{\hC^{(g)}_{\alpha_1\dots\alpha_n}}{q^{\beta}} 
= \hC^{(g)}_{\beta \alpha_1 \dots \alpha_n} 
\]
and therefore they are determined by 
the tuple $\{d \hC^{(1)}, \hC^{(2)},\hC^{(3)},\dots\}$. 
We can regard $\hC^{(g)}_{\alpha_1 \dots \alpha_n}$ 
as a tensor on $\U(\cU) \subset \cL_2$ via the identification 
$\U \colon \cL_1 \cong \cL_2$. 
Therefore we obtain a \emph{transformation rule} 
\[
\hUU \colon \Fock_{L^2}(\cU;\Phi_1) \to 
\Fock_{L^2}(\U(\cU);\Phi_2) 
\]
sending $\{dC^{(1)},C^{(2)},C^{(3)},\dots\}$ 
to $\{d\hC^{(1)},\hC^{(2)},\hC^{(3)},\dots\}$. 
Integrating $dC^{(1)}$ and $d \hC^{(1)}$ locally to holomorphic functions  
$C^{(1)}$ and $\hC^{(1)}$, we consider the total potentials:
\begin{align*} 
\cZ &= \exp\left(
\frac{1}{\hbar} C^{(0)} + C^{(1)} + C^{(2)} \hbar + C^{(3)} \hbar^2 
\cdots \right)  \\ 
\hcZ &= \exp \left(\frac{1}{\hbar} 
\hC^{(0)} + \hC^{(1)} + \hC^{(2)} \hbar + \hC^{(3)} \hbar^2 
\cdots \right)
\end{align*} 
With this notation, we write 
\[
\hcZ \propto \hUU \cZ.
\] 
where $\propto$ indicates that 
we have a constant ambiguity at genus one. 
\end{definition} 

\begin{remark} 
In the above definition, it is important that $\Phi_1$ and $\Phi_2$ 
are close to each other in the sense of Definition \ref{def:Darboux_close}. 
The closeness implies that $\Delta$ is of trace class by Lemma 
\ref{lem:propagator_traceclass}, 
and thus ensures that the contraction 
$\Cont(\Gamma)_{\alpha_1\dots\alpha_n} = 
\Cont_\Gamma(\{C^{(h)}_{\beta_1\dots\beta_m}\},
\Delta)_{\alpha_1\dots\alpha_n}$ over a graph $\Gamma$ 
defines a bounded multi-linear form on $T_\sx \cL_1$. 
We can prove this by induction on the number of 
edges: by removing one edge from $\Gamma$ we can write 
\[
\Cont(\Gamma)_{\alpha_1\dots\alpha_n} 
= \begin{cases} 
\Cont(\Gamma_1)_{\alpha_{i_1}\dots\alpha_{i_k} \beta_1} 
\Delta^{\beta_1\beta_2} 
\Cont(\Gamma_2)_{\alpha_{j_1}\dots \alpha_{j_l} \beta_2} 
& \text{separating case;} \\ 
\Cont(\Gamma')_{\alpha_1\dots\alpha_n\beta_1\beta_2} \Delta^{\beta_1
\beta_2} & \text{non-separating case} \\
\end{cases} 
\]
where $\{i_1,\dots,i_k\} \sqcup \{j_1,\dots,j_l\} = \{1,\dots,n\}$. 
In the former case, the well-definedness follows from the fact that 
$\Delta$ is a bounded bilinear form and the induction hypothesis; 
in the latter case it follows from the fact that $\Delta$ is 
of trace class and the induction hypothesis. 
\end{remark} 

\begin{remark} 
The jetness of the new correlation functions 
$\hC^{(g)}_{\alpha_1\dots\alpha_n}$ follows from 
the formula: 
\[
\partial_\alpha \Delta= (C\tau + D)^{-1} C (\partial_\alpha \tau)  
(C\tau + D)^{-1} C = \Delta (\partial_\alpha\tau) \Delta.  
\] 
This is an analogue of Proposition \ref{prop:difference_conn}(2). 
\end{remark}

The following proposition is obvious from the definition. 

\begin{proposition} 
Let $\Phi_1$,~$\Phi_2$ be Darboux frames and 
$\sfP_1$,~$\sfP_2$ be parallel pseudo-opposite modules for 
the cTP structure 
$(\sfF,\nabla,(\cdot,\cdot)_\sfF)$ over $U$. 
Suppose that $\Phi_i$ is compatible with $\sfP_i$ 
for $i=1,2$. 
Then the transformation rule $\hUU$ above coincides with the 
transformation rule $T(\sfP_1,\sfP_2)$ 
from Definition \ref{def:transformation} under the identification  
$L^2(\LLo)|_U \cong \cL_1$. 
\end{proposition}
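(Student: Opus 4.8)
The plan is to show that the two transformation rules coincide by checking that their ingredients match term by term, since both are given by \emph{the same Feynman rule}: the vertex tensors are the correlation functions of the Fock space element, the propagator is the bivector field on the Givental cone, and the legs carry fixed indices. So it suffices to verify three things: (i) under the compatibility hypothesis, the Givental cone $\cL_1$ associated to $\Phi_1$ is identified with (an open subset of) $L^2(\LLo)|_U$, and similarly for $\Phi_2$ and $\cL_2$; (ii) under this identification, the $L^2$-genus-zero potential $C^{(0)}$ associated to $\Phi_1$ (and its derivatives, in particular the Yukawa coupling $C^{(0)}_{\alpha\beta\gamma}$) agrees with the genus-zero potential of \S\ref{subsec:flatstronL} associated to $\sfP_1$; and (iii) the $L^2$-propagator $\Delta(\Phi_1,\Phi_2)$ of Definition~\ref{def:propagator_L2} coincides with the propagator $\Delta(\sfP_1,\sfP_2)$ of Definition~\ref{def:propagator} restricted to $L^2(\LLo)|_U$. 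Once (i)--(iii) are in place, the restriction map $\Fock(U;\sfP_i) \to \Fock_{L^2}(\cU;\Phi_i)$ of Remark~\ref{rem:Fockspace_comparison} intertwines the two Feynman rules simply because the data plugged into the graph sums are identical, and the conclusion follows.

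First I would establish (i). By Example~\ref{exa:compatible_with_L2}, the hypothesis that $\Phi_i$ is compatible with the parallel pseudo-opposite $\sfP_i$ means $\Phi_{i,t}(\cH_-) = \Proj_{i,t}$, the $L^2$-closure of $(\sfP_i)_t$, and $\Phi_{i,t}(\cH_+) $ is a complement transversal to $\F_t$. The embedding $\iota \colon L^2(\LLo)|_U \to \cH$, $(t,\bx)\mapsto \Phi_{1,t}^{-1}\bx$, is a Lagrangian immersion whose image is $\cL_1$; meanwhile the flat co-ordinates $\{q_n^i\}$ of Definition~\ref{def:flatcoordinate_on_LL} are, by construction (equation~\ref{eq:formal_flat_coordinates} together with the $L^2$-convergence established in Remark~\ref{rem:L2-neighbourhood}), precisely the coefficients of the $\cH_+$-component of $\Phi_{1,t}^{-1}\bx$. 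Hence the projection $\cL_1 \to \cH_+$ along $\cH_-$, composed with $\iota$, is exactly the flat-co-ordinate chart $(t,\bx)\mapsto \bq$; this is the statement that the two co-ordinatizations of the total space agree. The same argument applies to $\Phi_2$ and $\sfP_2$.

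Next, for (ii): both genus-zero potentials are normalized to be homogeneous of degree two in $\bq$ with $p_{n,i} = \partial C^{(0)}/\partial q_n^i$, i.e.\ both equal $\tfrac12 \Omega(\bp,\bq)$ restricted to the cone. Since $\bp$ is in each case the $\cH_-$- (respectively $\sfP$-) component of $\Phi^{-1}_{t}\bx$, and these components agree by compatibility, the two genus-zero potentials agree as holomorphic functions on $L^2(\LLo)|_U$; in particular their third derivatives, the Yukawa couplings, agree. For (iii), I would compare the two definitions of the propagator: in Definition~\ref{def:propagator_L2}, $\Delta(\Phi_1,\Phi_2)$ pairs covectors via $\sharp_1^{-1}$ (dual to $\cH_- = \Phi_{1,t}(\cH_-) = \Proj_{1,t}$) and $\sharp_2^{-1}$ (dual to $\U^{-1}\cH_- = \Phi_{1,t}^{-1}\Phi_{2,t}(\cH_-) = $ the $\Proj_{2,t}$-component pulled back); in Definition~\ref{def:propagator}, $\Delta(\sfP_1,\sfP_2)$ is $\Omega^\vee(\Pi_1^*\KS^{*-1}(\cdot),\Pi_2^*\KS^{*-1}(\cdot))$, the push-forward of the Poisson bivector along $\Pi_{\sfP_1}\otimes\Pi_{\sfP_2}$. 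Since $\Pi_{\sfP_i}$ is the projection onto $\sfF$ along $\sfP_i$, and $\Phi_{i,t}$ sends $\cH_+$ (the transversal to $\cH_-\cong\Proj_{i,t}$) onto the $L^2$-closure of $\F_t$ which is where $\KS$ lands, the two projections agree after taking $L^2$-closures; the Kodaira--Spencer identification $\KS$ matches $d\iota$ (noted just after Definition~\ref{def:KS}), so the two bivectors coincide on $L^2(\LLo)|_U$. The main obstacle is a functional-analytic bookkeeping issue rather than a conceptual one: one must check that the algebraic identities of \S\ref{sec:global_theory} (opposedness, isotropy, parallelism, the formulae for $\Delta$ and $C^{(0)}$) continue to hold after passing to $L^2$-completions, i.e.\ that taking $L^2$-boundary values and closures commutes with the projections $\Pi_{\sfP_i}$ and the pairing $\Omega$. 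This follows because $\sfP_i$ is compatible with the $L^2$-structure (Example~\ref{exa:compatible_with_L2}) and because the inverse fundamental solution $M(s,z)$ is holomorphic on $U\times\C^\times$, but it requires care to state cleanly; I would dispatch it by noting that both sides are continuous in the $L^2$-topology and agree on the dense algebraic locus, so they agree everywhere.
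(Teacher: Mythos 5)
Your proposal is correct and takes essentially the same route as the paper, which in fact offers no argument at all beyond declaring the proposition "obvious from the definition": since both transformation rules are given by the identical Feynman sum, one only needs to match the vertex data (genus-zero potential/flat co-ordinates) and the propagators, which is exactly what you spell out.

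One intermediate claim in your step (iii) is false as stated and should be removed: compatibility gives $\Phi_{i,t}(\cH_-)=\Proj_{i,t}$, but it does \emph{not} give that $\Phi_{i,t}$ sends $\cH_+$ onto (the $L^2$-closure of) $\F_t$ --- the image of $\cH_+$ is merely some Lagrangian complement of $\Proj_{i,t}$, in general different from $\F_t$. Fortunately nothing in the comparison needs this: the $L^2$-propagator of Definition~\ref{def:propagator_L2} depends only on the subspaces $\cH_-$, $\U^{-1}\cH_-$ and the tangent space $T_\sx\cL_1=d\iota\big(T_{(t,\bx)}L^2(\LLo)\big)=\Phi_{1,t}^{-1}\F_t$, and under compatibility these correspond exactly to $\Proj_{1,t}$, $\Proj_{2,t}$ and $\F_t$, i.e.\ to the decompositions $\cH_t=\F_t\oplus\Proj_{i,t}$ defining $\Delta(\sfP_1,\sfP_2)$ (via the characterization of $\Pi_i^*\KS^{*-1}\omega$ as the unique $\Omega$-representative of $\omega$ lying in the Lagrangian $\sfP_i$). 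Likewise the flat $q$-co-ordinates, the genus-zero potential $\tfrac12\Omega(\bp,\bq)$, and the flat connection used for the vertex covariant derivatives are all determined by the projection along $\sfP_1$ alone, so the term-by-term matching goes through exactly as you describe.
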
 

\begin{remark} 
  \label{rem:very_detailed_remark}
Here we describe the relationship to Givental's quantization \cite{Givental:quantization} 
of a symplectic transformation $\U \in Sp(\cH)$. 
In Givental's formalism, we regard the total potential 
$\cZ$ (respectively $\hcZ$) as a function 
on $\cH_+$ via the projection $\cL_1 \to \cH_+$ along $\cH_-$ 
(respectively via the projection $\cL_2 \to \cH_+$ along $\cH_-$). 
We assume that the component $C\in \Hom(\cH_-,\cH_+)$ 
of $\U$ (see equation~\ref{eq:U_block}) is of trace class as before. 
There are two cases\footnote{Unfortunately these terminologies 
are opposite to the shape of the matrix $\U$.}: 
\begin{description} 
\item[lower triangular] $\U$ preserves $\cH_-$, i.e.~$C=0$. 
\item[upper triangular]  $\U$ preserves $\cH_+$, i.e.~$B=0$. 
\end{description} 
We describe Givental's quantized operator $\hUU$ in these two cases. 
More generally we decompose $\U$ into the product $\U_+ \U_-$ 
of a lower-triangular transformation $\U_-$ and an upper-triangular 
transformation $\U_+$ and define $\hUU = \hUU_+ \hUU_-$. 

In the lower-triangular case, $\hUU$ acts on the higher-genus 
potentials $C^{(1)}, C^{(2)}, C^{(3)},\dots$ 
by the change of variables $\bq \to D^{-1}\bq$ 
and on the genus-zero potential $C^{(0)}$ by the same change 
of variables followed by the shift by a quadratic function. 
We define (see \cite[Proposition 5.3]{Givental:quantization} and 
Remark \ref{rem:lower_triangular}):
\[
(\hUU \cZ)(\bq) = 
e^{\frac{1}{2\hbar}\Omega(B D^{-1}\bq, \bq)} \cZ(D^{-1}\bq)
\]
This coincides with our transformation rule in Definition \ref{def:transformation_L2} 
as in this case we have $\Delta =0$ and the transformation rule 
is essentially a co-ordinate change. 

In the upper-triangular case, the quantized operator $\hUU$ is 
more complicated. 
The symplectic condition for $\U$ now reads: 
\begin{align*}
  A = (D^\tr)^{-1} &&
  A^\tr C = C^\tr A
\end{align*}
Givental's propagator $V$ is defined by the formula 
(cf.~\S \ref{subsubsec:Giventalpropagator}):
\[
V^{\alpha\beta} = - (A^\tr C)^{\alpha\beta} = - (D^{-1} C)^{\alpha \beta}
\]
This is a symmetric tensor of trace class. 
Givental's propagator $V$ arises from the definition 
of $\Delta$ by replacing 
$(T_\sx \cL)'$ in the isomorphisms \eqref{eq:sharps} 
with $(\cH_+)'$, namely, if we write 
$\flat_1 \colon \cH_- \cong (\cH_+)'$, 
$\flat_2 \colon \U^{-1}\cH_- \cong (\cH_+)'$ 
for the isomorphisms given by the symplectic form, we have:
\[
V^{\alpha\beta} = \Omega\left(\flat_1^{-1} dq^\alpha, 
\flat_2^{-1} dq^\beta\right)
\]
Givental's quantized operator $\hUU$ is given 
by the formula \cite[Proposition 7.3]{Givental:quantization}: 
\begin{equation} 
\label{eq:quantization_Givental} 
(\hUU \cZ)(\bq) = \left( \exp\left( \frac{\hbar}{2} 
V^{\alpha \beta} \partial_{q^\alpha}\partial_{q^\beta} \right) 
\cZ\right)(D^{-1}\bq) 
\end{equation} 
We show that the right-hand side is well-defined if $\U$ is close to the identity 
(in the operator norm), 
and gives the same result as the transformation rule from 
Definition \ref{def:transformation_L2}.  
Suppose that the total potential $\cZ$ is defined 
in a neighbourhood of $\bq_1\in \cH_+$ which 
is the projection of $\sx = (\bp_1,\bq_1) \in \cL_1$ 
to $\cH_+$. Here $\bp_1 = d C^{(0)}(\bq_1)$. 
Let 
\[
\bq_2 = [\U \sx]_+ = C \bp_1 + D \bq_1 
\] 
denote the projection of the point $\U\sx \in \cL_2$ to $\cH_+$.  
We show that the total potential $\hcZ = \hUU \cZ$ is well-defined 
in a neighbourhood of $\bq_2$ (when $\U$ is close to the identity): 
we shall evaluate the right-hand side of 
\eqref{eq:quantization_Givental} at $\bq = \bq_2$. 
We also write 
\begin{equation}
\label{eq:q2q1}
\bq_2' = D^{-1} \bq_2 = D^{-1} C \bp_1 + \bq_1 
= - V \bp_1 + \bq_1 
\end{equation} 
and assume that $\cZ$ is analytically continued to $\bq_2'$. 
This is possible if $V\bp_1$ is small, so if 
$\U$ is close to the identity. 
The formula \eqref{eq:quantization_Givental} can be written 
as a similar Feynman rule: 
\[
\hC^{(g)}(\bq_2) = \sum_{\Gamma} \frac{1}{|\Aut(\Gamma)|}
\Cont_\Gamma\left(\{C^{(h)}_{\alpha_1\dots\alpha_n}(\bq_2')\}, V\right) 
\]
where $\Gamma$ now ranges over connected decorated graphs without legs 
which are not necessarily stable: 
we allow genus-zero vertices of $\Gamma$ to have 
one or two incident edges. There are infinitely many such graphs, 
and the convergence of the above sum is nontrivial. 
We consider the following process of collapsing graphs 
and reduce the above sum to a sum over stable graphs. 
Let  $\Gamma$ be a possibly unstable decorated graph 
without legs. 
We collapse every subtree of $\Gamma$ consisting of  
genus-zero vertices to its root vertex (see Figure \ref{fig:collapsing_trees}). 
\begin{figure}[htbp]
\centering 
\includegraphics[bb=44 579 486 787]{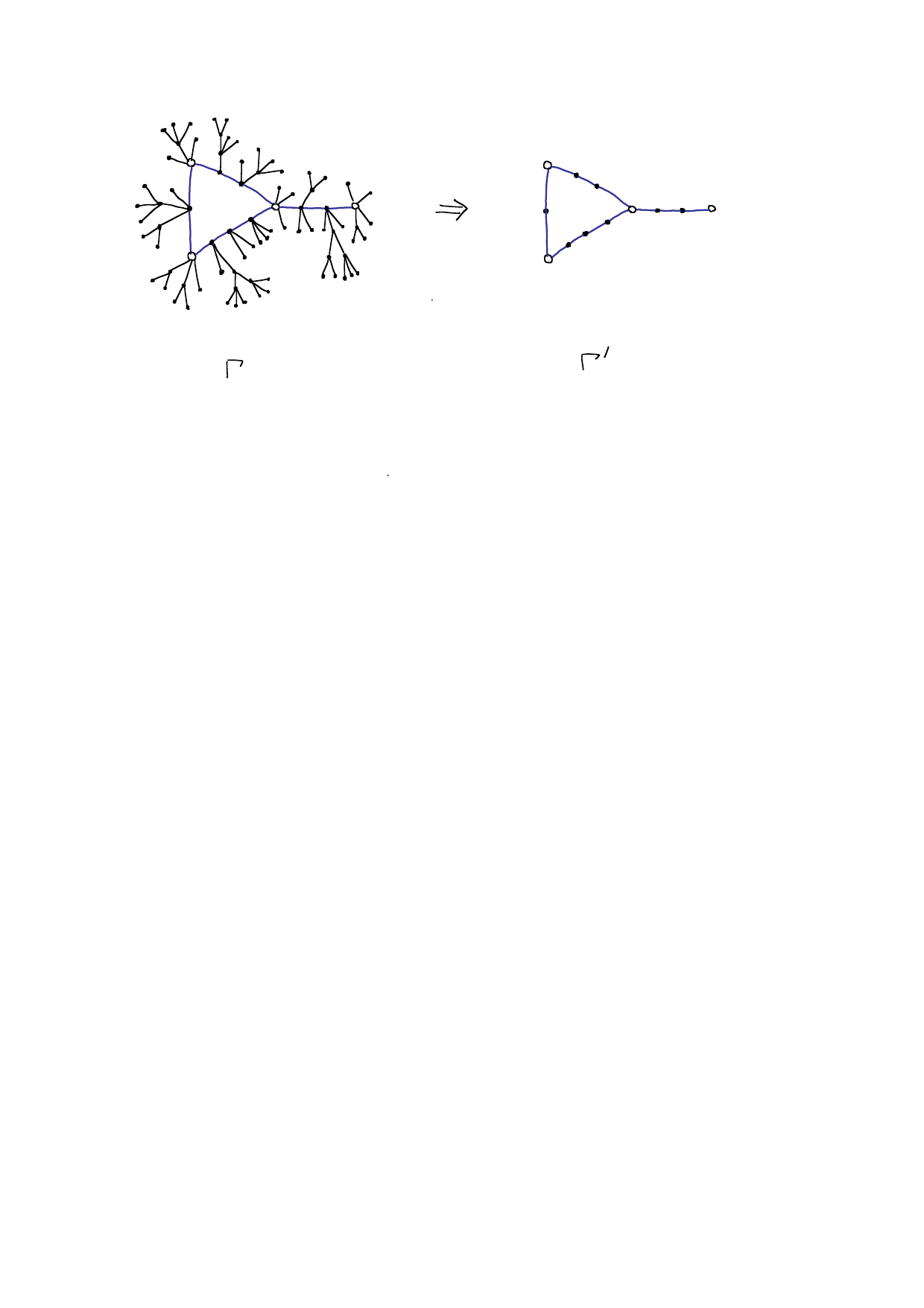} 
\caption{Collapsing subtrees: black vertices are of genus zero and 
white vertices in $\Gamma'$ are of genus $\ge 1$ 
or have more than two edges.} 
\label{fig:collapsing_trees}
\end{figure}
Let $\Gamma'$ be the graph obtained from $\Gamma$ by 
this tree collapsing. 
The graph $\Gamma'$ can be still unstable, as it can 
contain genus-zero two-valent vertices. 
This happens if $\Gamma'$ is an affine $A_n$ graph 
as in Figure \ref{fig:Atilde} or if $\Gamma'$ contains 
$A_n$ subgraphs as in Figure \ref{fig:An}. 
If $\Gamma'$ is not an affine $A_n$ graph, we collapse 
every $A_n$ subgraph of $\Gamma'$ to an edge 
to obtain a stable graph $\Gamma''$. 

\begin{figure}[htbp]
\centering
\includegraphics[bb=109 630 495 712]{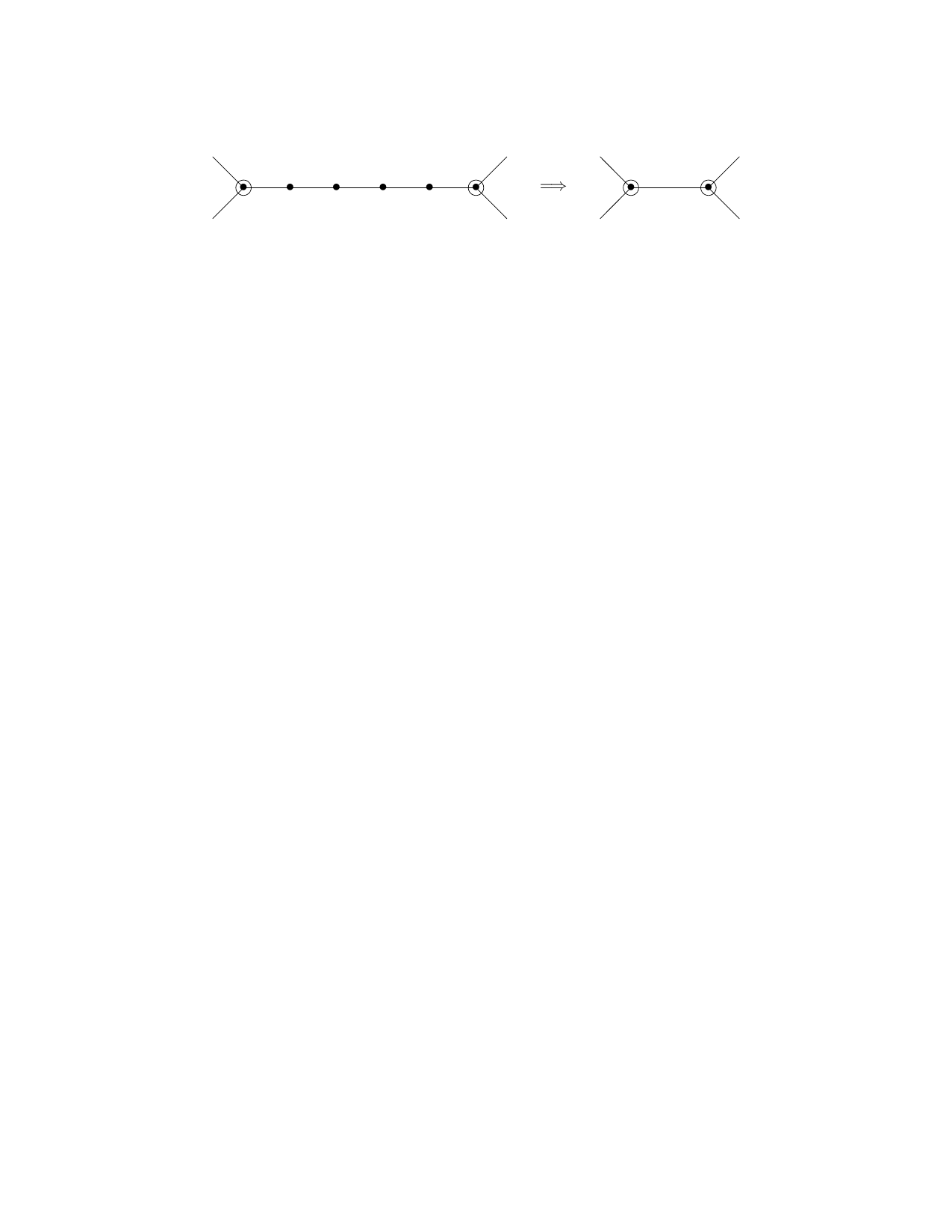}
\caption{$A_n$ subgraph: 
the encircled vertices are either of higher genus $g\ge 1$ 
or have more than two edges; uncircled vertices are of 
genus zero.}
\label{fig:An} 
\end{figure}
\begin{figure}[htbp]
\centering
\includegraphics[bb=121 617 483 721]{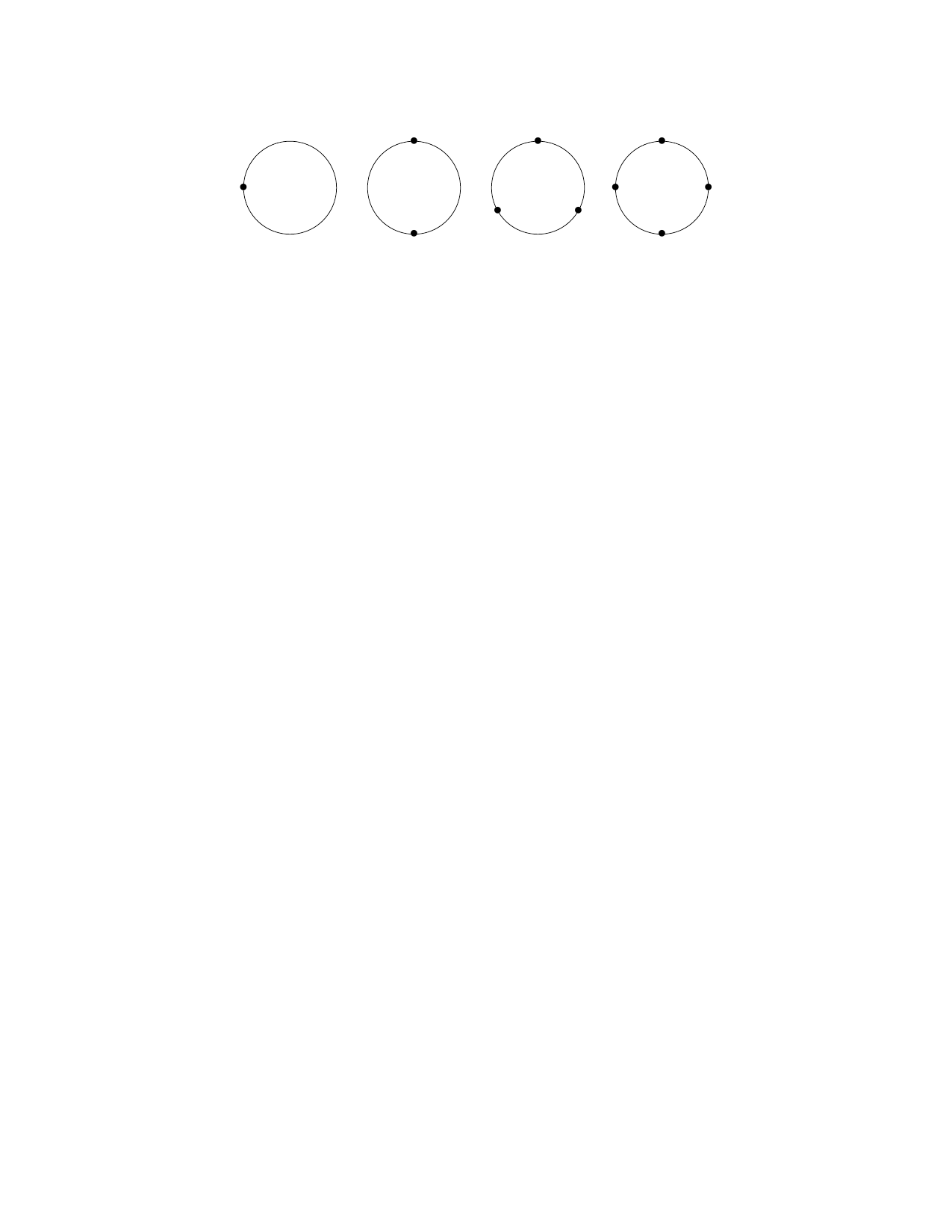}
\caption{Affine $A_n$ graphs: all vertices are 
of genus zero}
\label{fig:Atilde}
\end{figure} 

We first compute the contribution of tree graphs 
with only genus-zero vertices. We claim that 
\[
[V \bp_1]^\alpha = V^{\alpha\beta}  
\left( \begin{array}{c}
\text{the sum of contributions of trees} \\ 
\text{with one leg labelled by $\beta$} 
\end{array} \right) 
\]
Using \eqref{eq:q2q1}, we have 
\begin{align*} 
[V \bp_1]^\alpha & = \left[V dC^{(0)}(\bq_1)\right]^\alpha 
= \left[V d C^{(0)}(\bq_2' + V \bp_1)\right]^\alpha  \\
& =  \sum_{n=0}^\infty \sum_{\gamma_1,\dots,\gamma_n} 
\frac{1}{n!} 
V^{\alpha\beta} C^{(0)}_{\beta\gamma_1\dots\gamma_n}(\bq_2')  
[V\bp_1]^{\gamma_1} \cdots [V\bp_q]^{\gamma_n} 
\end{align*} 
We can use this equation\footnote{We can view $V \bp_1$ as a fixed point 
for the mapping $x\mapsto V dC^{(0)}(\bq_2'+x)$: if $V$ is 
sufficiently small, we have a unique fixed point in a neighbourhood of $x=0$ 
by the contraction mapping principle. 
The sum over trees in question is precisely 
the limit of the sequence $\{x_n\}$ defined recursively by 
$x_{n+1} = V dC^{(0)}(\bq_2'+x_n)$ together with $x_0 = 0$.} 
recursively to solve for $V \bp_1$ for a given $\bq_2'$: 
the answer can be written as the sum over tree graphs. 
The claim follows. This sum over tree graphs converges 
if $\|V\|$ is small, so if $\U$ is close to the identity. 

We fix a graph $\Gamma'$ and sum over contributions from 
all the graphs which collapse to $\Gamma'$. 
This amount to replacing each vertex term 
$C^{(g)}_{\alpha_1\dots\alpha_k}(\bq_2')$ with 
\[
C^{(g)}_{\alpha_1\dots\alpha_k}(\bq_1) = 
C^{(g)}_{\alpha_1\dots\alpha_k}(\bq_2' + V \bp_1) = 
\sum_{n=0}^\infty \frac{1}{n!} C^{(g)}_{\alpha_1\dots\alpha_k 
\beta_1\dots\beta_n}(\bq_2') [V \bp_1]^{\beta_1} \cdots [V\bp_1]^{\beta_n}  
\]
where we again used the relation \eqref{eq:q2q1}. 
The Taylor series is convergent if $V \bp_1$ is sufficiently small. 
In other words, the contribution of each $\Gamma'$ 
is given by the contraction:
\[
\frac{1}{\Aut(\Gamma')} \Cont_{\Gamma'}(
C^{(h)}_{\alpha_1\dots\alpha_n}(\bq_1),
V)
\]
We now fix a stable decorated graph $\Gamma''$ and 
sum over contributions from all $\Gamma'$ which collapse to $\Gamma''$. 
This amounts to replacing the propagator $V^{\alpha\beta}$ with 
\[
(1 - V\tau(\bq_1) )^{-1} V = 
\sum_{n=0}^\infty
\sum_{\gamma_1,\dots,\gamma_n} 
V^{\alpha \gamma_1} \tau_{\gamma_1\gamma_2}(\bq_1)
V^{\gamma_2\gamma_3} \tau_{\gamma_3\gamma_4}(\bq_1)  
V^{\gamma_4 \gamma_5} 
\cdots \tau_{\gamma_{n-1}\gamma_n}(\bq_1) V^{\gamma_n \beta}
\]
where we set $\tau_{\alpha\beta}(\bq) = C^{(0)}_{\alpha\beta}(\bq)$. 
Each summand is a contribution from an $A_n$ graph. 
On the other hand, by Lemma \ref{lem:propagator_traceclass}, 
the propagator of Definition \ref{def:propagator_L2} is given by:
\[
\Delta(\bq_1) =  - (C \tau(\bq_1) + D)^{-1} C = 
(1 - V \tau(\bq_1))^{-1} V
\]
Therefore the contribution of each stable graph $\Gamma$ 
is: 
\[
\frac{1}{|\Aut(\Gamma)|} \Cont_{\Gamma}(
C^{(h)}_{\alpha_1\dots\alpha_n}(\bq_1), \Delta(\bq_1))
\]
We have shown that Givental's quantized operator 
matches with our transformation rule except possibly at genus one. 
At genus one, we need to compute the contribution from 
affine $A_n$ graphs $\Gamma'$. 
This is: 
\[
\log \det\big(1- V\tau(\bq_1)\big) = 
\sum_{n=1}^\infty \frac{1}{n} 
\Tr\big( (V \tau(\bq_1))^n \big) 
\]
where $1/n$ is the symmetry factor of the affine $A_n$ graph. 
This sum converges if $V$ is small. Recall that an operator 
has a determinant if it differs from the identity 
by an operator of trace class.  
Therefore we have:
\[
\hC^{(1)}(\bq_2) = C^{(1)}(\bq_1) + \log \det(1 - V \tau(\bq_1))
\]
This gives an integrated form of the genus-one transformation rule. 
\end{remark}

\section{The Gromov--Witten Wave Function}
\label{sec:GW-wavefunction}

We next explain how one can regard the Gromov--Witten potential of $X$ 
as a section of the Fock sheaf associated to the genus-zero 
Gromov--Witten theory of $X$. 
For this, we need the following convergence assumption 
on Gromov--Witten potentials. 
\begin{assumption}[Convergence] \ 
\label{assump:convergence} 
\begin{enumerate}
\item The genus-zero Gromov--Witten potential 
$F_X^0$ converges in the sense of \S\ref{sec:convergence}; 
in particular its restriction to $Q_1 = \cdots =Q_r =1$ defines an 
analytic function on a region $\cM_{\rm A} \subset H_X\otimes \C$ 
of the form \eqref{eq:LRLnbhd}. 
We denote by $*$ the analytic quantum product over $\cM_{\rm A}$ 
defined by the third derivatives (see equation~\ref{eq:bigQC}) of 
$F_X^0|_{Q_1= \cdots =Q_r=1}$. 
\item Recall that, for any target space $X$, 
the genus-$g$ ancestor potential $\bar\cF^g_X$, $g=0,1,2,\dots$ 
(see equation~\ref{eq:ancestor}) can be expanded as a power series 
in $y_0$,~$y_2$,~$y_3$,~$y_4$,\ldots: 
\[
\bar\cF^g_X 
= 
\delta_{g,1} c^{(1)}(t, y_1;Q) + 
\sum_{n \colon 2g-2+n > 0} \frac{1}{n!} 
\sum_{\substack{L = (l_1,\dots, l_n)\\ 
l_1 + \cdots + l_n \le 3g-3 +n \\
l_j \neq 1 \text{ for all  $j$}}}  
\sum_{I =(i_1,\dots,i_n)} 
c^{(g)}_{L,I} (t, y_1; Q)  y_{l_1}^{i_1} \cdots y_{l_n}^{i_n}
\]
where each coefficient $c^{(g)}_{L,I}(t,y_1;Q)$ belongs to:
\[
\Q[\![t^0, e^{t^1} Q_1,\dots,e^{t^r}Q_r, t^{r+1},\dots,t^N] \!] [\![y_1^0,\dots y_1^N]\!]
\]
(This follows from the Divisor Equation.) 
In particular the restriction to $Q_1 = \cdots = Q_r=1$ makes sense. 
We assume that the restriction 
\[
c^{(g)}_{L,I}(t,y_1) 
= c^{(g)}_{L,I}(t,y_1;Q)|_{Q_1= \cdots = Q_r=1}
\]
takes the form: 
\begin{equation} 
\label{eq:rationality_for_ancestor} 
\parfrac{c^{(1)}(t,y_1)}{y_1^i}  = \frac{f_{1,1,i}(t,q_1)}{
\det(-q_1*)},  \quad 
c^{(g)}_{L,I}(t,y_1)  
= \frac{f_{g,L,I}(t,q_1)}{\det(-q_1 *)^{5g-5+2n-(i_1+\cdots+i_n)}} 
\end{equation} 
under the Dilaton shift  $q_1=y_1 - \unit$, for some polynomials  
\[
f_{1,1,i}(t,q_1), \ 
f_{g,L,I}(t, q_1)\in  
\Q[\![t^0,e^{t^1},\dots,e^{t^r},t^{r+1},\dots,t^N]\!]
[q_1^0,\dots,q_1^N]
\]
Cf.~the rationality condition appearing in 
Remark~\ref{rem:tameness-rationality}. 
\item The polynomials $f_{1,1,i}(t,q_1)$,~$f_{g,L,I}(t,q_1)$ 
in {\rm (2)} are convergent as functions in $t$ and belong to 
$\cO(\cM_{\rm A})[q_1^0,\dots,q_1^N]$. 
\end{enumerate}
\end{assumption} 
\begin{remark} 
Assumption~\ref{assump:convergence} is equivalent to 
the notion of convergence for the total ancestor potential $\cA_X$ 
introduced in~\cite[Definition 3.13]{CI:convergence}; it implies that $\cA_X$ is an element of 
$\Fockanrat(H_X, \unit z, \det(-q_1*_t))$ 
for $t\in \cM_{\rm A}$. 
We showed in~\cite[Theorem 6.5]{CI:convergence} that Assumption~\ref{assump:convergence} is satisfied when the quantum cohomology 
of $X$ is convergent and generically semisimple. 
\end{remark}
\begin{remark} 
It is not difficult to show that the rationality condition 
\eqref{eq:rationality_for_ancestor} holds at genus zero and one. 
For example, at genus one, the term $c^{(1)}(t,y_1)$ appearing in 
the assumption is given by~\cite{Dijkgraaf--Witten:meanfield}: 
\[
c^{(1)}(t,y_1) = -\frac{1}{24} \log \sdet(-q_1*_t) 
\] 
where $\sdet(-q_1*_t) = \det_{\rm ev}(-q_1*_t)
/\det_{\rm odd}(-q_1*_t)$ denotes the superdeterminant of the 
quantum product $(-q_1*_t)$ on the total cohomology ring 
$H^{\rm even}(X) \oplus H^{\rm odd}(X)$. 
Note that the determinant $\det(-q_1*)$ in the assumption 
is the one on the \emph{even} part. 
One can easily check that, when $q_1, t$ are in $H^{\rm even}(X)$, 
every irreducible factor of $\det_{\rm odd}(-q_1*_t)$ is a factor of 
$\det_{\rm even}(-q_1*_t)$, and thus the rationality condition 
\eqref{eq:rationality_for_ancestor} holds for $c^{(1)}(t,y_1)$. 
\end{remark} 

Assumption~\ref{assump:convergence} 
ensures that $\bar\cF^g_X|_{y_0=0, Q_1= \cdots=Q_r=1}$ for $g\ge 2$ 
and $d(\bar\cF^1_X|_{y_0=0, Q_1= \cdots = Q_r=1})$
depend analytically on $t\in \cM_{\rm A}$, rationally 
on $y_1$, and polynomially on $y_2,\dots,y_{3g-2}$.
Therefore the following definition makes sense. 

\begin{definition}[Gromov--Witten wave function] 
\label{def:GW-wave}
Suppose that Assumption~\ref{assump:convergence} holds. 
Then we have the A-model cTEP structure 
$(\sfF,\nabla,(\cdot,\cdot)_{\sfF})$ over $\cM_{\rm A}$ 
(see Example~\ref{ex:AmodelTP} and Remark~\ref{rem:completion-of-TP}). 
The associated Fock sheaf $\Fock_X$ over $\cM_{\rm A}$ 
is called the \emph{A-model Fock sheaf} for $X$.  
Let $\{\phi_i \}_{i=0}^N$ be a homogeneous
basis of $H_X$ as in \S\ref{subsec:bases} 
and let $\{t^i, x_n^i\}_{n\ge 1, 
0\le i \le N}$ be the algebraic co-ordinates on the total space $\LL$ 
of the A-model cTEP structure. 
Let $\sfP_{\rm std}$ denote the standard opposite module 
from Example~\ref{ex:Amodel-opposite}. 
The Gromov--Witten potentials of $X$ define 
a \emph{Gromov--Witten wave function} 
$\wave_X  = \{ \Nabla^n C^{(g)}_X \}_{g,n} 
\in \Fock_X(\cM_{\rm A}; \sfP_{\rm std})$ 
by 
\begin{align*} 
\Nabla^3 C_X^{(0)} &= \bY = 
\sum_{i=0}^N \sum_{j=0}^N \sum_{k=0}^N
dt^i \otimes dt^j  \otimes dt^k 
\int_X (\phi_i * \phi_j * x_1) \cup (\phi_k * x_1) 
\bigg|_{Q_1= \cdots = Q_r =1} 
 \\
\Nabla C_X^{(1)} & = d(F^1_X(t) + \bar{\cF}_X^1)
\Big |_{y_0 =0, Q_1 = \cdots = Q_r =1} \\ 
C_X^{(g)} &= \bar{\cF}^g_X \Big |_{y_0 = 0, Q_1 = \cdots = Q_r =1} 
\qquad (g\ge 2) 
\end{align*} 
and their covariant derivatives with respect to 
$\Nabla = \Nabla^{\sfP_{\rm std}}$.
Here we used the Dilaton shift: 
\[
y_n^i = x_n^i + \delta_n^1 \delta^i_0 \qquad n\ge 1 
\]
to identify the variables $\{t^i,y_n^i\}$ on the right-hand side 
with the co-ordinates $\{t^i, x_n^i\}$ on $\LL$,
and $F^1_X(t)$ is the \emph{non-descendant} genus-one 
Gromov--Witten potential: 
\[
F^1_X(t) = \sum_{n=0}^\infty \sum_{\substack{
d\in \NE(X) \\ (n,d)\neq (0,0)}}
\frac{Q^d}{n!} 
\corr{t,\dots,t}_{1,n,d} 
\]
with $t = \sum_{i=0}^N t^i \phi_i$. 
(Assumption~\ref{assump:convergence} implies in particular that $F^1_X(t)|_{Q_1= \cdots = Q_r=1}$ 
converges on $\cM_{\rm A}$.) 

\begin{remark} 
Supposing again that Assumption~\ref{assump:convergence} holds, we have 
the A-model $\log$-cTEP structure (Example~\ref{ex:log_cTEP_A}) 
with base $(\cMbar_{\rm A},D)$
and the associated Fock sheaf $\overline{\Fock}_X$ over 
$\cMbar_{\rm A}$. The Gromov--Witten wave function 
$\wave_X$ extends to an element of 
$\overline{\Fock}_X(\cMbar_{\rm A};\sfP_{\rm std})$,  
where $\sfP_{\rm std}$ is the standard opposite module from 
Example~\ref{exa:log_Amodel_opposite}. 
\end{remark} 

\end{definition} 

\begin{remark} 
One can check that the Gromov--Witten wave function satisfies 
the conditions (Yukawa), (Jetness), (Grading \& Filtration) and 
(Pole) in Definition~\ref{def:localFock}. (Yukawa) and (Jetness) 
are obvious. The Dilaton equation 
(see, e.g.~\cite[Theorem 8.3.1]{AGV}) 
\begin{multline*} 
\corr{\alpha_1 \bar{\psi}_1^{k_1},\dots,\alpha_m \bar{\psi}_m^{k_m}, 
\bar{\psi}_{m+1} : 
t,\dots,t}_{g,1+m+n,d}^X \\
= (2g-2+m) \corr{\alpha_1\bar{\psi}_1^{k_1},\dots,
\alpha_m \bar{\psi}_m^{k_m} : t,\dots,t }_{g,m+n,d}^X 
\end{multline*} 
shows that we have, for $g\ge 1$: 
\[
\sum_{n=0}^\infty \sum_{i=0}^N 
(\delta_{n,1} \delta_{i,0} -y_n^i)\parfrac{}{y_n^i} 
\bar\cF_X^g = (2g-2) \bar\cF_X^g + \delta_{g,1} 
\frac{1}{24} F^1_X(t) 
\]
where the last term arises from the unstable term $(g,m)=(1,0)$ 
and the fact that $\int_{\Mbar_{1,1}} \psi = \frac{1}{24}$. 
This means that the function $\bar\cF^g_X$ for $g\ge 2$, or the one-form $d\bar\cF^1_X$ at genus one,
is homogeneous of degree $2-2g$ with respect to the Dilaton-shifted 
variables $x_n^i = - \delta_{n,1} \delta_{i,0}+ y_n^i$. 
The grading condition follows. 
The filtration condition follows from the dimension 
formula $\dim \Mbar_{g,n} = 3g-3+n$ 
(see equation~\ref{eq:filtration}). 
(Pole) follows from Assumption~\ref{assump:convergence}, 
in particular from \eqref{eq:rationality_for_ancestor}. 
\end{remark}

For the rest of this section (\S\ref{sec:GW-wavefunction}) we will assume that
Assumption~\ref{assump:convergence} holds.
In our previous paper~\cite{CI:convergence}, 
we studied analytic properties of various Gromov--Witten 
potentials under this assumption.  
We need to review some of these results. 
Recall from Remark~\ref{rem:nuclear-neighbourhood} 
that we have the nuclear subspace of the 
total space $\LL$ of the A-model cTEP structure: 
\[
\cN(\LL) = \left\{ (t,\bx) \in \LL : 
\text{$t\in \cM_{\rm A}$,
$\sup_{0\le i\le N,\, l\ge 0} (e^{nl} |x_l^i|/l!) <\infty$ 
for all $n\ge 0$}\right\}
\]
As we explained in Example~\ref{exa:Amodel-genuszero}, 
there is a holomorphic mapping (see equation~\ref{eq:flatcoord-GW})
\begin{equation} 
\label{eq:flatcoordinate-in-GWtheory}
\bq = [M(t,z) \bx]_+ \Bigr|_{Q_1= \cdots =Q_r=1} 
\colon \cN(\LL) \longrightarrow 
\cH_+^{\rm NF} 
\end{equation} 
taking values in the positive part of a nuclear version of 
Givental's symplectic space \eqref{eq:nuclear-Giventalsp}.  
Here $M(t,z)$ is the inverse fundamental solution 
\eqref{eq:inversefundamentalsolution} in Gromov--Witten theory. 
This map $\bq$ is a local isomorphism between $\cN(\LLo)$ 
and $\cH_+^{\rm NF}$~\cite[\S 8.5]{CI:convergence} 
and gives a flat co-ordinate system on $\cN(\LLo)$ 
with respect to $\Nabla^{\sfP_{\rm std}}$. 
For $(t,\bx) = (t, - z\unit)$, we have 
$\bq = -z \unit + t$. 

We showed in~\cite[Theorem 7.9]{CI:convergence} 
that the total descendant potential $\cZ_X$ is NF-convergent 
under Assumption~\ref{assump:convergence}, i.e.~that the 
power series \eqref{eq:genus_g_descendantpot} defining 
each genus-$g$ descendant potential 
$\cF^g_X$ converges uniformly and absolutely on an 
infinite-dimensional polydisc of the form: 
\begin{equation} 
\label{eq:NF-convergencedomain}
\begin{cases}
  |t_l^i| < \epsilon \frac{l!}{C^l} & \text{$0\le i\le N$, $l\ge 0$} \\
  |Q_i| <\epsilon & 0 \leq i \leq N
\end{cases}
\end{equation} 
for some $\epsilon>0$ and $C>0$ independent of $g$. 
Define an open subset $U\subset \cH_+^{\rm NF}$ by  
\begin{equation} 
\label{eq:convergencedomain-U} 
U:= \bigcup_{
\substack{\delta \in H^2(X;\C) \\ 
\Re(\delta_i) < \log \epsilon 
} } \left[ 
e^{-\delta/z} 
\left(-\unit z + \left\{\bt \in \cH_+^{\rm NF} 
\, :\, |t_l^i| < \epsilon l!/C^l, \ 0\le i\le N,\ l\ge 0 \right\} 
\right) \right]_+  
\end{equation} 
where $\epsilon$, $C$ are the constants in \eqref{eq:NF-convergencedomain} 
and we write $\bt = \sum_{l=0}^\infty \sum_{i=0}^N t_l^i \phi_i z^l$.  
The Divisor Equation justifies the following definition 
(see~\cite[Lemma 8.1]{CI:convergence}): 
\begin{definition}[\!\!{\cite[Definition-Proposition 8.2]{CI:convergence}}] 
\label{def:F^g_Xan} 
Under Assumption~\ref{assump:convergence}, 
there is an analytic function $\cF^g_{X, \rm an}\colon U \to \C$ 
such that  
\begin{align*} 
\cF^g_{X,\rm an}([e^{-\delta/z} \bq]_+) & = \cF^g_X(\bq) 
\Big|_{
Q_1=e^{\delta_1},\dots,Q_r=e^{\delta_r}} \\
& + \delta_{g,0}\frac{1}{2}  \Omega(e^{-\delta/z} \bq, [e^{-\delta/z} \bq]_+) 
-\delta_{g,1}\frac{1}{24}  \int_X \delta \cup c_{D-1}(X) 
\end{align*} 
where $\delta = \sum_{i=1}^r \delta_i \phi_i \in H^2(X;\C)$ 
and $\bq = \bt - z \unit$    
are chosen so that $Q_i = e^{\delta_i}$ and $t_l^i$ satisfy 
\eqref{eq:NF-convergencedomain}. 
We call $\cF^g_{X,\rm an}$ the \emph{specialization of 
$\cF^g_X$ to $Q_1=\cdots =Q_r=1$}. 
Note that the domain $U$ contains the locus $-z \unit + t$ with $t$ in 
a neighbourhood \eqref{eq:LRLnbhd} of the large radius limit. 
\end{definition}

\begin{theorem}[analytic version of the Ancestor-Descendant relation;
cf.~\S\ref{subsec:AncDec}] 
\label{thm:ancdec-analytic} 
When $t\in \cM_{\rm A}$ is sufficiently close to the 
large radius limit \eqref{eq:LRLnbhd} and $\bx\in z\cH_+^{\rm NF}$ 
is sufficiently close to $-z \unit$, the flat co-ordinate  
$\bq = [M(t,z) \bx]_+|_{Q_1=\cdots=Q_r=1}$ 
\eqref{eq:flatcoordinate-in-GWtheory} 
of the point $(t,\bx) \in \cN(\LL)$ lies 
in the domain $U$ for $\cF^g_{X,\rm an}$. 
For $g\ge 1$ and for such $(t,\bx) \in \cN(\LL)$, 
we have:
\[
\cF^g_{X,\rm an}(\bq) = \delta_{g,1} F_X^1(t) +
\bar\cF^g_t \Big|_{y_0=0, y_1 = x_1 - \unit, 
y_l = x_l (l\ge 2),  Q_1=\cdots = Q_r= 1}  
\]
In particular, in a neighbourhood of such a point $(t,\bx) \in \cN(\LL)$, 
the Gromov--Witten wave function (Definition~\ref{def:GW-wave}) 
can be written in terms of 
flat co-ordinates \eqref{eq:flatcoordinate-in-GWtheory} as: 
\begin{align*} 
\Nabla^3 C^{(0)}_X & = \bY = \sum_{l=0}^\infty \sum_{m=0}^\infty \sum_{n=0}^\infty \sum_{i=0}^N \sum_{j=0}^N \sum_{h=0}^N 
\parfrac{^3\cF^0_{X,\rm an}(\bq)}{q_l^i \partial q_n^j \partial q_m^h}
dq_l^i \otimes dq_n^j \otimes dq_m^h \\ 
\Nabla C^{(1)}_X &  = d\cF^1_{X,\rm an}(\bq) \\ 
C^{(g)}_X & = \cF^g_{X,\rm an}(\bq) \qquad \text{for $g\ge 2$.}
\end{align*} 
\end{theorem}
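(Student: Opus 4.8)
The statement is essentially a translation of the Ancestor--Descendant relation of \S\ref{subsec:AncDec} into the analytic, nuclear setting, combined with the definition of the Gromov--Witten wave function. The plan is to prove the displayed identity
\[
\cF^g_{X,\rm an}(\bq) = \delta_{g,1} F_X^1(t) + \bar\cF^g_t \Big|_{y_0=0,\, y_1 = x_1 - \unit,\, y_l = x_l\ (l\ge 2),\, Q_1=\cdots = Q_r= 1}
\]
first, and then observe that the three expressions for $\Nabla^3 C^{(0)}_X$, $\Nabla C^{(1)}_X$, $C^{(g)}_X$ in flat co-ordinates follow immediately by combining this identity with Definition~\ref{def:GW-wave} and the fact that $\{q_n^i\}$ is a flat co-ordinate system for $\Nabla^{\sfP_{\rm std}}$, as recorded in Example~\ref{exa:Amodel-genuszero}. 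The first sentence of the theorem --- that the flat co-ordinate $\bq = [M(t,z)\bx]_+|_{Q=1}$ of a point $(t,\bx)$ near $(t,-z\unit)$ lies in the domain $U$ of $\cF^g_{X,\rm an}$ --- is a matter of unwinding the definition \eqref{eq:convergencedomain-U} of $U$ together with the $e^{-\delta/z}$-normalization of $M(t,z)$ from \eqref{eq:M-divisoreq}: when $t$ is near the large radius limit \eqref{eq:LRLnbhd}, $M(t,z)\bx$ is close to $e^{-\delta/z}(-z\unit + t')$, so its non-negative part lies in the appropriate translate appearing in the union defining $U$.

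\textbf{Key steps.} First I would recall from~\cite[\S5]{Givental:quantization} (proof in Coates--Givental~\cite[Appendix~2]{Coates--Givental}) the original Ancestor--Descendant relation as an equality of formal power series over the Novikov ring, in the form already displayed in \S\ref{subsec:AncDec}: $\cF^g(\bq) = \delta_{g,1} F^1(t) + \bar\cF^g_t|_{y_0=0,\, y_1 = x_1+\unit,\, y_l = x_l}$, where $\bq = [M(t,z)\bx]_+$. (Note the sign convention difference: in Definition~\ref{def:GW-wave} the Dilaton shift is $y_n^i = x_n^i + \delta_n^1\delta^i_0$, so on the total space $\LL$ the tautological fibre variable $\bx$ plays the role of $\bq$ in the heuristic discussion of \S\ref{subsec:dilatonshift} after subtracting $-z\unit$; I would fix conventions carefully here to match the statement.) Second, I would invoke the NF-convergence results of~\cite{CI:convergence} --- specifically~\cite[Theorems~7.8,~7.9]{CI:convergence} for the descendant potentials and Assumption~\ref{assump:convergence} for the ancestor potentials --- which guarantee that both sides of the formal identity converge, absolutely and uniformly, on explicit polydiscs of the form \eqref{eq:NF-convergencedomain}. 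Third, I would observe that the specialization map $Q_i \mapsto 1$ combined with the Divisor Equation (in the form \eqref{eq:M-divisoreq} for $M$, and the corresponding statement~\cite[Lemma~8.1]{CI:convergence} for the potentials underlying Definition~\ref{def:F^g_Xan}) identifies the $Q=1$ specialization of the formal identity with the desired analytic identity on the overlap of the relevant convergence domains. Since both sides are holomorphic functions on a connected open set containing the locus $(t,-z\unit)$ with $t$ near the large radius limit, and the formal identity gives equality of all Taylor coefficients there, the two analytic functions coincide by the identity theorem. Fourth, I would note that the ``in particular'' clause is then purely formal: the flat co-ordinates $\{q_n^i\}$ pull back $\cF^g_{X,\rm an}$ to $C^{(g)}_X$ by Definition~\ref{def:GW-wave} and Theorem~\ref{thm:ancdec-analytic}'s own identity, and $\Nabla^{\sfP_{\rm std}}$-covariant derivatives become ordinary partial derivatives in flat co-ordinates by Example~\ref{exa:Amodel-genuszero} (in particular equation~\ref{eq:genuszerojet-flat}), giving the displayed tensor formulas.

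\textbf{Main obstacle.} The conceptual content is already established in~\cite{CI:convergence} and \S\ref{subsec:AncDec}; the work here is bookkeeping, and the delicate point is the interplay between three different normalizations and specializations: (i) the $e^{-\delta/z}$-twisted normalization of $M(t,z)$ in Gromov--Witten theory versus the base-point normalization $M(0,z)=\id$ used in the abstract construction of \S\ref{subsec:flatstronL} (Remark~\ref{rem:difference-normalization}); (ii) the Dilaton shift $\bq = \bt - z\unit$ used to regard potentials as functions on $\cH_+$ versus the shift $y_n^i = x_n^i + \delta_n^1\delta^i_0$ relating fibre co-ordinates on $\LL$ to ancestor variables; and (iii) the passage from formal power series over $\Lambda$ to analytic functions via Definition~\ref{def:F^g_Xan}, which introduces the correction terms $\frac{1}{2}\Omega(\cdots)$ at genus zero and $-\frac{1}{24}\int_X \delta\cup c_{D-1}(X)$ at genus one. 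The hard part will be verifying that the genus-zero and genus-one correction terms in Definition~\ref{def:F^g_Xan} are exactly compensated by (respectively) the homogeneity of $\cF^0$ that makes it drop out after restriction to $\LL$ (the calculation in \S\ref{subsec:geometricquantization} showing $\cZ' = \exp(-\cF^0/\hbar)\cZ$) and the constant $\frac{1}{24} F^1_X(t)$ appearing in the Dilaton-equation identity quoted in the Remark following Definition~\ref{def:GW-wave}; once these match up, the remaining higher-genus statement ($g\ge 2$) has no correction terms and follows directly. I expect that checking this compatibility at genus one, where the $F^1_X(t)$ term, the $-\frac{1}{24}\int_X\delta\cup c_{D-1}(X)$ term, and the closedness of $\Nabla C^{(1)}_X$ all interact, will require the most care, but it is entirely a finite computation with no genuinely new input.
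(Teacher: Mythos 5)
Your proposal is correct and follows essentially the same route as the paper: the paper likewise deduces the analytic identity from the formal Ancestor--Descendant relation of \S\ref{subsec:AncDec} together with the NF-convergence results of \cite{CI:convergence}, writing $\bq = [e^{-\delta/z}\tbq]_+$ with $\tbq$ in the convergence polydisc, applying Definition~\ref{def:F^g_Xan} and the Divisor Equation \eqref{eq:M-divisoreq} for $M$, and then using the Divisor Equation for $F^1_X$ and $\bar\cF^g_t$ to pass from $(t-\delta,\, Q_i=e^{\delta_i})$ back to $(t,\, Q=1)$, with the genus-zero tensor formula delegated to Example~\ref{exa:Amodel-genuszero} exactly as you propose. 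One small correction to your ``main obstacle'' paragraph: the genus-one constant $-\frac{1}{24}\int_X\delta\cup c_{D-1}(X)$ in Definition~\ref{def:F^g_Xan} is cancelled by the Divisor Equation for $F^1_X$ (via its unstable term $\corr{\delta}^X_{1,1,0}$), not by the Dilaton-equation constant $\frac{1}{24}F^1_X(t)$, and no identity-theorem step is needed since the formal relation already converges when evaluated at $Q_i=e^{\delta_i}$ inside the polydisc.
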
 
\begin{proof} 
By equation \eqref{eq:M-divisoreq},  $M(t,z)$ satisfies  
\[
e^{\delta/z} M(t, z)|_{Q_1= \cdots =Q_r= 1} 
=  M(t-\delta,z)|_{Q_1= e^{\delta_1},\dots,Q_r= e^{\delta_r}} 
\]
for $\delta = \sum_{i=1}^r \delta_i \phi_i \in H^2(X;\C)$. 
Since $\bq \in U$, we can write $\bq = [e^{-\delta/z} 
\tbq]_+$ for some $\delta\in H^2(X;\C)$ with 
$\Re(\delta_i) <\log \epsilon$ and 
$\tbq = -z\unit + \bt$ with $|t_l^i|< \epsilon l!/C^l$. 
Then: 
\[
\tbq = [e^{\delta/z} \bq]_+ 
= [e^{\delta/z} M(t,z)\bx]_+ \Bigr|_{Q_1= \dots=Q_r=1}
= [M(t-\delta,z)\bx]_+\Bigr|_{Q_1= e^{\delta_1},\dots,Q_r=e^{\delta_r}}
\]
Thus we have for $g\ge 1$: 
\begin{align*} 
& \cF^g_{X,\rm an}(\bq) 
= \cF^g_X(\tbq) \Bigr|_{Q_1=e^{\delta_1},\dots, 
Q_r = e^{\delta_r}} - \delta_{g,1} \frac{1}{24} 
\int_X \delta \cup c_{D-1}(X) 
\qquad 
\text{(by definition)} \\
& = \delta_{g,1} \left(F^1_X(t-\delta) - \frac{1}{24} \int_X \delta \cup c_{D-1}(X)
\right ) 
 + \bar\cF^g_{t-\delta} \Bigr
|_{y_0=0, y_1= x_1+\unit, x_l=y_l (l\ge 2), 
Q_1= e^{\delta_1},\dots
Q_r = e^{\delta_r}} 
\end{align*} 
by the original version of the ancestor-descendant relation 
(\S\ref{subsec:AncDec}). 
The conclusion follows from the Divisor Equation for 
$F^1_X(t)$ and $\bar\cF^g_{t}$. 
The formula for $\Nabla^3 C^{(0)}_X$ appeared in Example~\ref{exa:Amodel-genuszero}, \eqref{eq:genuszerojet-flat}.  
\end{proof} 

\subsection{The Jet-Descendant Relation}
We next give a generalization of the Ancestor-Descendant relation---called the 
Jet-Descendant relation---which justifies the name ``jet" 
for the jet potential $\cW_X$ \eqref{eq:jetpotential-GW} in Gromov--Witten 
theory. 
For a sequence $(t_0,t_1,t_2,\dots)$ of variables in $H_X$, 
we write $\bt(\psi) = \sum_{n=0}^\infty t_k \psi^k$. 
Define generalized (inverse) fundamental solutions  
(cf.~equations~\ref{eq:fundamentalsolution} and~\ref{eq:inversefundamentalsolution}) by 
\begin{align*} 
L(\bt,z) v & = v + 
\sum_{d\in \NE(X)} \sum_{n=0}^{\infty} \sum_{\epsilon=0}^N 
\frac{Q^d}{n!} 
\corr{\frac{v}{z-\psi}, \bt(\psi),\dots,\bt(\psi), \phi^\epsilon}_{0,n+2,d}^X 
\phi_\epsilon
\\
M(\bt,z) v & = v + 
\sum_{d\in \NE(X)} \sum_{n=0}^{\infty} \sum_{\epsilon=0}^N 
\frac{Q^d}{n!} 
\corr{\frac{\phi^\epsilon}{-z-\psi}, \bt(\psi),\dots,\bt(\psi), v}_{0,n+2,d}^X 
\phi_\epsilon
\end{align*} 
A result of Dijkgraaf--Witten~\cite{Dijkgraaf--Witten:meanfield} 
(see also~\cite[equation 2]{Givental:symplectic},~\cite[Proposition 4.6]{Getzler:jetspace}) 
shows that 
\begin{align} 
\label{eq:Lbt=Ltau}
L(\bt, z) = L(\tau(\bq),z) &&
M(\bt,z) = M(\tau(\bq),z) 
\end{align}
for 
\begin{equation} 
\label{eq:tau(bq)}
\tau(\bq) := \sum_{\epsilon=0}^N 
\parfrac{^2\cF_X^0}{q_0^0 \partial q_0^\epsilon}(\bq) \phi^\epsilon 
\end{equation} 
and thus that $M(\bt,z) = L(\bt, z)^{-1}$ 
(recall from \S\ref{subsec:Dubrovin_conn} that $M(t,z) = L(t,z)^{-1}$). 
Here the Dilaton shift $\bq =- z \unit + \bt$ 
is used. 
\begin{theorem}[Jet-Descendant relation]  
\label{thm:jet-descendant} 
We regard the jet potential 
$\cW_X= \sum_{g=0}^\infty \hbar^{g-1} W^g_X$ 
\eqref{eq:jetpotential-GW} 
in Gromov--Witten theory as a function of $\bq = -z \unit + \bt 
= -z \unit + \sum_{n=0}^\infty t_n z^n$ and 
$\by=\sum_{n=0}^\infty y_n z^n$. 
Introduce a new variable $\bs = \sum_{n=0}^\infty s_n z^n$, 
$s_n\in H_X$ depending on $(\bq, \by)$ as:
\begin{equation}  
\label{eq:s-y-relation} 
\bs = [M(\bt,z) \by]_+
\end{equation} 
Then we have 
\begin{equation}
  \label{eq:jet-descendant}
  \begin{aligned} 
    \cW^g_X & =  \cF^g_X(\bq + \bs)   \qquad \quad \text{for $g\ge 2$} \\ 
    \cW^1_X & = \cF^1_X(\bq + \bs) - \cF^1_X(\bq)  \\ 
    \cW^0_X & = [\cF^0_X(\bq+\bs)]_{\ge 3}
  \end{aligned}
\end{equation}
where $[\cF^0_X(\bq+\bs)]_{\ge 3}$ denotes the degree 
$\ge 3$ part with respect to $\bs$, i.e.~the Taylor expansion of 
$\cF^0_X(\bq+\bs)$ in $\bs$ with the constant, linear, and quadratic terms removed.
\end{theorem}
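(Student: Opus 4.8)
The plan is to bootstrap from the Ancestor--Descendant relation (\S\ref{subsec:AncDec} and its analytic incarnation, Theorem~\ref{thm:ancdec-analytic}), which is exactly the special case of the Jet--Descendant relation obtained by restricting the jet variables to the locus $y_0 = 0$, $y_1 = x_1 z$, $y_n = x_n z^n$ for $n \ge 2$ --- equivalently, the case where $\bq$ is constrained to lie on $\cL$. The new statement differs in two ways: first, $\bq$ is an arbitrary point of $z\cH_+^{\rm NF}$ (dilaton-shifted), not necessarily on $\cL$; second, the jet variables $\by$ are unconstrained. So the goal is to replace the geometric change of frame $\bx \mapsto M(t,z)\bx$ that appears in the ancestor case by the change of frame $\by \mapsto [M(\bt,z)\by]_+ = \bs$ indexed by the \emph{big} parameter $\bt$, using the Dijkgraaf--Witten identity \eqref{eq:Lbt=Ltau}--\eqref{eq:tau(bq)} to reconcile the two: $M(\bt,z) = M(\tau(\bq),z)$ depends on $\bq$ only through the small parameter $\tau(\bq) \in H_X$, so the jet-theoretic fundamental solution is governed by the same ODE and the same ancestor potential as before, just re-based at the moving point $\tau(\bq)$.

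First I would unwind the definition of $\cW^g_X$ in \eqref{eq:jetpotential-GW}. By definition $\cW^g_X$ is the generating function of mixed correlators $\corr{y_{k_1}\bar\psi_1^{k_1},\dots : t_{l_1}\psi_{m+1}^{l_1},\dots}$ with ancestor insertions $\by$ and descendant insertions $\bt$. The standard genus-zero topological recursion relation / the comparison of $\psi$ and $\bar\psi$ classes (as used in the proof of the Kontsevich--Manin formula, cf.~Coates--Givental \cite[Appendix~2]{Coates--Givental} and Givental \cite[\S5]{Givental:quantization}) expresses $\bar\psi_i = \psi_i - (\text{boundary corrections})$, and summing the boundary corrections against the descendant insertions $\bt$ produces precisely the operator series $M(\bt,z)$ acting on the ancestor insertions. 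Concretely, an ancestor insertion $y_k\bar\psi^k$ gets converted, after absorbing all the rational tails carrying $\bt$-insertions, into a descendant insertion whose cohomology-valued $z$-series is $[M(\bt,z)\by]_+ = \bs$; this is the content of \eqref{eq:s-y-relation}. Hence the generating function $\cW_X$ of mixed potentials equals the generating function of pure descendant potentials $\cF_X$ evaluated at the shifted point $\bq + \bs$, up to the unstable-range subtractions: the $(g,m) = (1,0)$ term is missing from $\bar\cF^1$, which accounts for subtracting $\cF^1_X(\bq)$ at genus one, and the $(0,m)$ terms with $m < 3$ are missing at genus zero, which accounts for taking only the degree-$\ge 3$ part $[\cF^0_X(\bq+\bs)]_{\ge 3}$ in $\bs$. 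This bookkeeping of unstable terms is routine once the main identity is set up; the genus-one subtraction also matches the fact (cf.~the Remark after Definition~\ref{def:GW-wave}, with the $\int_{\Mbar_{1,1}}\psi = \tfrac{1}{24}$ term) that forgetting the map to $\Mbar_{1,0}$ is ill-defined.

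The hard part will be making the manipulation rigorous \emph{as an identity of formal power series over the Novikov ring} (and then, under Assumption~\ref{assump:convergence}, as analytic functions on the relevant nuclear domain, as in Theorem~\ref{thm:ancdec-analytic}). The subtlety is that $\cW^g_X$ is a power series in \emph{two} infinite families of variables $\{t_n^i\}$ and $\{y_n^i\}$, and the substitution $\bs = [M(\bt,z)\by]_+$ mixes them: one must check that the composite $\cF^g_X(\bq+\bs)$ is a well-defined element of the same ring, i.e.~that the substitution does not destroy the formal-power-series structure, and that the rearrangement of the (triple-indexed) sum over graphs with ancestor and descendant insertions is legitimate. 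For the formal statement this is a matter of tracking the Novikov/degree filtration carefully; for the analytic statement one invokes the NF-convergence results of \cite{CI:convergence} together with the analyticity of $M(t,z)$ on $\cM_{\rm A}\times\C^\times$, exactly as in the proof of Theorem~\ref{thm:ancdec-analytic}. I would organize the argument so that the purely combinatorial identity (Kontsevich--Manin / Dijkgraaf--Witten) is stated first over the Novikov ring, and then the specialization $Q_i = 1$ and the passage to the nuclear domain is handled by a convergence lemma citing \cite{CI:convergence}; the Dijkgraaf--Witten identity \eqref{eq:Lbt=Ltau} is what guarantees $M(\bt,z)$ makes sense after specialization, since it reduces to the already-controlled $M(\tau(\bq),z)$.
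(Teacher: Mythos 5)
Your proposal follows essentially the same route as the paper: the heart of the argument is the comparison of $\psi$- and $\bar\psi$-classes (Kontsevich--Manin / Coates--Givental), with the boundary corrections summed against the descendant insertions $\bt$ producing exactly the series $M(\bt,z)$, so that each ancestor insertion $\by(\bar\psi)$ trades for the descendant insertion $\bs(\psi)$ with $\bs=[M(\bt,z)\by]_+$, and the genus-one and genus-zero subtractions arise from the unstable range exactly as you describe. The only peripheral mismatch is that Theorem~\ref{thm:jet-descendant} is a purely formal identity over the Novikov ring proved directly by this class comparison rather than by bootstrapping from the Ancestor--Descendant relation, and the specialization/convergence issues you raise (where the Dijkgraaf--Witten identity $M(\bt,z)=M(\tau(\bq),z)$ actually enters) belong to the analytic version, Theorem~\ref{thm:jetdec-analytic}.
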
 
\begin{proof} 
The proof is a straightforward generalization of the 
argument in~\cite{Kontsevich--Manin:relations,Coates--Givental}. 
The key point is the following comparison of $\psi$-classes. 
Let $\psi_i$ denote the cotangent line class on $X_{g,m+l,d}$ 
and $\bar\psi_i$ denote the cotangent line class 
pulled back from $\Mbar_{g,m}$ (both at the $i$th marking). 
Then the class $\psi_i-\bar\psi_i$ is Poincar\'{e} dual 
to the divisor consisting of stable maps whose 
$i$th marking is on a component contracted under 
the forgetful morphism $X_{g,m+l,d} \to \Mbar_{g,m}$, i.e. 
\[
\psi_i - \bar\psi_i = \sum_{L_1 \sqcup L_2 = \{1,\dots,l\}}
\sum_{d= d_1 + d_2} \left [X_{0,3+|L_1|,d_1} \times_X 
X_{0,m+|L_2|, d_2}\right]^{\rm vir}
\]
For any cohomology-valued polynomials $a_1(\psi,\bar\psi), \dots, 
a_m(\psi,\bar\psi)$  in two variables $\psi$ and $\bar\psi$, we write 
\begin{multline*} 
\corr{a_1(\psi,\bar\psi),\dots,a_m(\psi,\bar\psi)}_{g,m}(\bt)  \\
=\sum_{d\in \NE(X)} \sum_{l = 0}^\infty 
\frac{Q^d}{l!}
\corr{a_1(\psi_1,\bar\psi_1),\dots,a_m(\psi_m,\bar\psi_m) :
\bt(\psi_{m+1}),\dots,\bt(\psi_{m+l}) }_{g,m+l,d}^X 
\end{multline*} 
where $\bt(\psi_i) = \sum_{n=0}^\infty t_n \psi_i^n$ as before. 
Then the above relation shows that
\[
\corr{\phi_i \psi^{a+1}\bar\psi^b, \dots}_{g,m}(\bt) = 
\corr{\phi_i \psi^a \bar\psi^{b+1}, \dots}_{g,m}(\bt) 
+ \sum_{\epsilon=0}^N 
\corr{\phi_i \psi^a, \phi_\epsilon}_{0,2}(\bt) 
\corr{\phi^\epsilon \bar\psi^b, \dots }_{g,m}(\bt)  
\]
where dots denote arbitrary insertions and are the same in all places. 
Using this repeatedly, we find that: 
\[
\corr{\phi_i \psi^n,\dots}_{g,m}(\bt)= 
\corr{\phi_i \bar\psi^n,\dots}_{g,m}(\bt) + 
\sum_{k=0}^{n-1} \sum_{\epsilon=0}^N 
\corr{\phi_i \psi^{k}, \phi_\epsilon}_{0,2} (\bt) 
\corr{\phi^\epsilon \bar\psi^{n-k-1},\dots}_{g,m} (\bt)
\]
Multiplying by $s_n^i$ and summing over all $n\ge 0$ and $0\le i\le N$ yields
\[
\corr{\bs(\psi),\dots}_{g,m}(\bt) = 
\corr{\by(\bar\psi), \dots}_{g,m}(\bt) 
\]
for $\bs(\psi) = \sum_{n=0}^\infty \sum_{i=0}^N 
s_n^i \phi_i \psi^n$ and 
$\by(\bar\psi) = \sum_{n=0}^\infty \sum_{i=0}^N 
y_n^i \phi_i \bar\psi^n$, 
where $\by$ is given by:
\[
y_n^i = s_n^i + \sum_{l=0}^\infty \sum_{j=0}^N s_{l+n+1}^j 
\corr{\phi_j \psi^l, \phi^i}_{0,2}(\bt)
\]
This is equivalent to $\by = [L(\bt,z) \bs]_+$ and to \eqref{eq:s-y-relation}. 
Repeating the same argument at other markings gives:
\[
\corr{\bs(\psi),\dots,\bs(\psi)}_{g,m}(\bt) 
= \corr{\by(\bar\psi),\dots \by(\bar\psi)}_{g,m}(\bt)
\]
Note that the right-hand side makes sense only for 
$2g-2+m>0$. We have, for $g\ge 2$:
\begin{align*} 
\cF^g_X(\bq+\bs) & 
= \sum_{m=0}^\infty 
\frac{1}{m!} \corr{\bs(\psi),\dots,\bs(\psi)}_{g,m}(\bt) \\ 
& = \sum_{m=0}^\infty 
\frac{1}{m!} \corr{\by(\bar\psi),\dots,\by(\bar\psi)}_{g,m}(\bt) 
= \cW^g_X
\end{align*}
For $g=0$ or $1$, restricting the above summation to the range 
$m\ge 3$ or $m\ge 1$ respectively yields the remaining parts of \eqref{eq:jet-descendant}. 
\end{proof} 
\begin{remark} 
When restricted to $\bq = -z\unit + t$ and $y_0=0$, 
the Jet-Descendant relation above reduces 
to the Ancestor-Descendant relation from \S\ref{subsec:AncDec}. 
\end{remark} 

Next we study the analyticity of 
the Gromov--Witten jet potential $\cW_X$ \eqref{eq:jetpotential-GW} 
and discuss the specialization to $Q_1=\cdots =Q_r=1$.  
More precisely, we regard $\cW_X$ as a formal power series in $\by
= \sum_{n=0}^\infty y_n z^n$ with 
coefficients in analytic functions in $\bq = -z\unit + \bt 
= -z \unit + \sum_{n=0}^\infty t_n z^n$. 
Firstly recall that, under 
Assumption~\ref{assump:convergence}, the descendant potentials 
$\cF^g_X$, $g=0,1,2,\dots$, are NF-convergent on the region 
\eqref{eq:NF-convergencedomain}. 
This means that the function $\tau(\bq)$ introduced 
in \eqref{eq:tau(bq)} is also convergent on the same region. 
Since $\tau(\bq)|_{Q=\bt=0} = 0$, after taking a bigger $C$ 
or a smaller $\epsilon$ if necessary, 
$M(\bt,z) = M(\tau(\bq),z)$ is convergent on the region 
\eqref{eq:NF-convergencedomain}. 
Thus Theorem~\ref{thm:jet-descendant} 
implies that each Taylor coefficient of $\cW^g$ with respect to  
$\by$ converges to an analytic function on the region 
\eqref{eq:NF-convergencedomain}. 
The Divisor Equation shows that: 
\[
\cW^g_X([e^{-\delta/z}\bq]_+,\by) 
= \cW^g_X(\bq, \by)
\Bigr|_{Q_1\to e^{\delta_1}Q_1,\dots, Q_r\to e^{\delta_r} Q_r} 
\]
This justifies the following definition 
(cf.~Definition~\ref{def:F^g_Xan}).
\begin{definition} 
Let $U\subset \cH_+^{\rm NF}$ be the domain in 
\eqref{eq:convergencedomain-U}. 
Under Assumption~\ref{assump:convergence}, 
there exists a formal power series $\cW^g_{X,\rm an}(\bq,\by)$ 
in the variable $\by =\sum_{n=0}^\infty \sum_{i=0}^N 
y_n^i \phi_i z^n$ with coefficients in analytic functions 
in $\bq$ over $U$ with the following property: 
\[
\cW^g_{X,\rm an} ([e^{-\delta/z}\bq]_+, \by) = 
\cW^g_X(\bq, \by)\Bigr|_{Q_1=e^{\delta_1},\dots, Q_r= e^{\delta_r}}  
\]
where $(\bt=\bq+z \unit, Q_i = e^{\delta_i})$ lies in the convergence 
domain \eqref{eq:NF-convergencedomain} for $\cW_X^g$. 
We refer to $\cW^g_{X,\rm an}$ as the \emph{specialization 
of $\cW^g_X$ to $Q_1=\cdots =Q_r =1$}. 
\end{definition} 

The Divisor Equation for $\bar \cF^g_X$ implies that
\begin{equation} 
\label{eq:jet-ancestor} 
\cW^g_{X,\rm an}\Bigr|_{\bq= -z\unit + t}
= \bar\cF^g_t\Bigr|_{Q_1=\cdots =Q_r=1}
\end{equation} 
as a formal power series in $\by$, 
for $t$ in a neighbourhood \eqref{eq:LRLnbhd} 
of the large radius limit. 
Let $\tau_{\rm an}\colon U \to H_X\otimes \C$ be the 
holomorphic map defined by
\[
\tau_{\rm an} (\bq ) = \sum_{\epsilon=0}^N 
\parfrac{^2\cF^0_{X,\rm an}}{
q_0^0 \partial q_0^\epsilon} (\bq) \phi^\epsilon
\]
(cf.~equation~\ref{eq:tau(bq)}).
One can check directly from the definition of $\cF^0_{X, \rm an}$ that 
\begin{equation} 
\label{eq:tauan-tau} 
\tau_{\rm an}([e^{-\delta/z}\bq]_+) = \tau(\bq)\Bigr|_{Q_1= e^{\delta_1},\dots,
Q_r = e^{\delta_r}} + \delta 
\end{equation} 
when $(\bt = \bq + z \unit, Q_i = e^{\delta_i})$ satisfy 
\eqref{eq:NF-convergencedomain}. 
The following theorem follows from a routine application 
of the Divisor Equation, much as in Theorem~\ref{thm:ancdec-analytic}.  
We leave the details to the reader. 
\begin{theorem}[analytic version of Jet-Descendant relation] 
\label{thm:jetdec-analytic} 
Let $\bq\in \cH_+^{\rm NF}$ 
be in the convergence domain \eqref{eq:convergencedomain-U} 
for $\cF^g_{X,\rm an}$ and $\cW^g_{X,\rm an}$. 
Then:
\begin{align*} 
\cW^g_{X,\rm an}(\bq, \by) & = \cF^g_{X,\rm an}(\bq+\bs) 
\qquad g \ge 2 \\ 
\cW^1_{X,\rm an}(\bq, \by) & = \cF^1_{X,\rm an}(\bq + \bs) 
- \cF^1_{X,\rm an}(\bq)\\ 
\cW^0_{X,\rm an}(\bq, \by) & = 
[\cF^0_{X,\rm an}(\bq + \bs)]_{\ge 3} 
\end{align*} 
where $\bs = [M(\tau_{\rm an}(\bq),z) \by]_+|_{Q_1= \cdots = Q_r=1}$.  These are identities of
formal power series in $\by$. 
\end{theorem}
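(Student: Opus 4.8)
The plan is to deduce Theorem~\ref{thm:jetdec-analytic} from the formal Jet-Descendant relation (Theorem~\ref{thm:jet-descendant}) exactly as the analytic Ancestor-Descendant relation (Theorem~\ref{thm:ancdec-analytic}) was deduced from its formal counterpart. The essential mechanism is the Divisor Equation, which controls how each of the objects involved --- $\cF^g_X$, $\cW^g_X$, $\tau(\bq)$ and the fundamental solution $M(\bt,z)$ --- transforms under the shift $Q_i \mapsto e^{\delta_i} Q_i$, and thus how their specializations to $Q_1=\cdots=Q_r=1$ are defined.

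First I would set up notation: fix $\bq \in U$ lying in the convergence domain \eqref{eq:convergencedomain-U}, write $\bq = [e^{-\delta/z}\tbq]_+$ with $\Re(\delta_i) < \log\epsilon$ and $\tbq = -z\unit + \bt$ satisfying \eqref{eq:NF-convergencedomain}, so that by Definition~\ref{def:F^g_Xan} and the definition of $\cW^g_{X,\rm an}$ we have $\cF^g_{X,\rm an}(\bq) = \cF^g_X(\tbq)|_{Q_i=e^{\delta_i}}$ (up to the explicit genus-zero and genus-one correction terms) and $\cW^g_{X,\rm an}(\bq,\by) = \cW^g_X(\tbq,\by)|_{Q_i=e^{\delta_i}}$. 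Next I would apply the formal identity \eqref{eq:jet-descendant} at the point $\tbq$: this expresses $\cW^g_X(\tbq,\by)$ in terms of $\cF^g_X(\tbq + \bs)$ where $\bs = [M(\bt,z)\by]_+$. The key computation is then to identify $M(\bt,z)|_{Q_i=e^{\delta_i}}$ with $M(\tau_{\rm an}(\bq),z)|_{Q_1=\cdots=Q_r=1}$: by \eqref{eq:Lbt=Ltau} one has $M(\bt,z) = M(\tau(\bq),z)$, and the Divisor-Equation identity \eqref{eq:tauan-tau} says $\tau_{\rm an}(\bq) = \tau(\tbq)|_{Q_i=e^{\delta_i}} + \delta$; combining this with the $e^{\delta/z}$-equivariance of $M$ (as in equation~\eqref{eq:M-divisoreq}, used already in the proof of Theorem~\ref{thm:ancdec-analytic}) gives $[M(\bt,z)\by]_+|_{Q_i=e^{\delta_i}} = [M(\tau_{\rm an}(\bq),z)\by]_+|_{Q_1=\cdots=Q_r=1} = \bs$ with the $\bs$ of the statement. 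Substituting $\tbq + \bs$ back into $\cF^g_X$, applying $Q_i\mapsto e^{\delta_i}$ and re-reading the result through Definition~\ref{def:F^g_Xan} converts $\cF^g_X(\tbq+\bs)|_{Q_i=e^{\delta_i}}$ into $\cF^g_{X,\rm an}(\bq+\bs)$ --- here one must note that $[e^{-\delta/z}(\tbq+\bs)]_+ = \bq + \bs$ because $\bs$ is already a non-negative power series and $e^{-\delta/z}$-translation followed by projection is linear, which requires only checking that $\tbq + \bs$ still lies in the convergence region, a routine shrinking-of-constants argument.

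For genus zero and one one repeats the same substitution, being careful about the correction terms. At genus two and above there are no corrections and the identity is clean. At genus one, the $-\frac{1}{24}\int_X\delta\cup c_{D-1}(X)$ terms appearing in $\cF^1_{X,\rm an}(\bq+\bs)$ and $\cF^1_{X,\rm an}(\bq)$ cancel in the difference $\cF^1_{X,\rm an}(\bq+\bs) - \cF^1_{X,\rm an}(\bq)$, which matches $\cW^1_X(\tbq,\by) = \cF^1_X(\tbq+\bs)-\cF^1_X(\tbq)$. At genus zero one must track the quadratic symplectic term $\frac12\Omega(e^{-\delta/z}\bq, [e^{-\delta/z}\bq]_+)$; since $\cW^0_X$ is by definition the degree-$\ge 3$ part of $\cF^0_X(\tbq+\bs)$ in $\bs$, and the symplectic correction is quadratic in its argument, one checks that passing to the degree-$\ge 3$ part in $\bs$ kills exactly the terms that differ, so $\cW^0_{X,\rm an}(\bq,\by) = [\cF^0_{X,\rm an}(\bq+\bs)]_{\ge 3}$ holds on the nose.

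The main obstacle is purely bookkeeping rather than conceptual: one must verify that all the specializations are taken over a common domain, i.e.\ that after the shift $\bs$ the argument $\bq+\bs$ still lies in the domain $U$ on which $\cF^g_{X,\rm an}$ and $\cW^g_{X,\rm an}$ are defined, and that the Divisor-Equation translations are legitimate as identities of honest (NF-)convergent series, not merely formal ones. This is handled exactly as in \cite{CI:convergence} and in the proof of Theorem~\ref{thm:ancdec-analytic}: one shrinks $\epsilon$ and enlarges $C$ finitely many times so that $\tau(\bq)$, $M(\tau(\bq),z)$ and the composite $\bs$ all converge on the relevant polydisc, using that $\tau(\bq)$ and $\bs$ vanish at the base point $Q=\bt=\by=0$. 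Granting these routine uniformities, the theorem follows by direct substitution, which is why the excerpt says the details can be left to the reader.
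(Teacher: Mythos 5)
Your overall route is exactly the one the paper intends: specialize the formal Jet--Descendant relation (Theorem~\ref{thm:jet-descendant}) at $\tbq=-z\unit+\bt$ with $\bq=[e^{-\delta/z}\tbq]_+$, and push everything through the Divisor Equation as in the proof of Theorem~\ref{thm:ancdec-analytic} (the paper itself only says "routine application of the Divisor Equation; details left to the reader"). However, the central bookkeeping step of your second paragraph is misstated, and as written both of its sub-claims are false, although the two errors happen to cancel. Let $\bs'=[M(\bt,z)\by]_+$ be the shift appearing in the formal relation at $\tbq$. The divisor identity $e^{\delta/z}M(\tau_{\rm an}(\bq),z)|_{Q=1}=M(\bt,z)|_{Q_i=e^{\delta_i}}$ (obtained from \eqref{eq:M-divisoreq}, \eqref{eq:Lbt=Ltau} and \eqref{eq:tauan-tau}) does \emph{not} give $\bs'|_{Q_i=e^{\delta_i}}=\bs$, because multiplication by $e^{\pm\delta/z}$ does not commute with the truncation $[\cdot]_+$ on non-negative series. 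The correct relation is $\bs'|_{Q_i=e^{\delta_i}}=[e^{\delta/z}\bs]_+$, equivalently $\bs=[e^{-\delta/z}\,\bs'|_{Q_i=e^{\delta_i}}]_+$, since $e^{\pm\delta/z}$ times a strictly negative $z$-series is again strictly negative and drops out under $[\cdot]_+$. For the same reason your later assertion $[e^{-\delta/z}(\tbq+\bs)]_+=\bq+\bs$ is false with your identification of $\bs$: in general $[e^{-\delta/z}\bs]_+\neq\bs$.

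The repaired argument keeps $\bs'$ throughout: $\cW^g_{X,\rm an}(\bq,\by)=\cW^g_X(\tbq,\by)|_{Q_i=e^{\delta_i}}=\cF^g_X(\tbq+\bs')|_{Q_i=e^{\delta_i}}$ by Theorem~\ref{thm:jet-descendant}, and Definition~\ref{def:F^g_Xan}, applied with the same $\delta$ at the point $\tbq+\bs'$, identifies this (for $g\ge 2$) with $\cF^g_{X,\rm an}\big([e^{-\delta/z}(\tbq+\bs')]_+\big)$, where $[e^{-\delta/z}(\tbq+\bs')]_+=\bq+[e^{-\delta/z}\bs']_+=\bq+\bs$ --- precisely because of the twist you suppressed. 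So the theorem does follow by your method once the two compensating misstatements are fixed. Your treatment of the low-genus corrections then goes through: at genus one the $-\tfrac{1}{24}\int_X\delta\cup c_{D-1}(X)$ terms cancel in the difference (both instances use the same $\delta$), and at genus zero the symplectic correction has degree at most two in $\bs'$, hence is killed by $[\cdot]_{\ge 3}$; note also that $\bs\mapsto[e^{\delta/z}\bs]_+=\bs'$ is an invertible linear change, so truncating by degree in $\bs$ or in $\bs'$ (or in $\by$) is the same operation. The convergence-domain bookkeeping is as you describe and as in \cite{CI:convergence}.
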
 

\begin{lemma} \ 
\label{lem:tau-as-inverse} 
\begin{enumerate}
\item Let $\bx = \sum_{n=1}^\infty x_n z^n$,  
$\bq = \sum_{n=0}^\infty q_n z^n$ be variables 
in $zH_X[\![z]\!]$ and $H_X[\![z]\!]$ 
respectively. 
The formula $\bq = [M(t,z) \bx]_+$ defines 
an isomorphism over the Novikov ring $\Lambda$ 
between the formal neighbourhood of $(t, \bx) = (0,-z\unit)$ 
in $H_X \times zH_X[\![z]\!]$ 
and the formal neighbourhood of $\bq = -z \unit$ 
in $H_X[\![z]\!]$.  
The inverse map is given by 
\[
t = \tau(\bq), \quad \bx = [L(\tau(\bq),z)\bq]_+
\]
where $\tau$ is given in \eqref{eq:tau(bq)}. 
\item Let $t\in \cM_{\rm A}$ be sufficiently close to the large 
radius limit point and let $\bx\in \cH_+^{\rm NF}$ be 
sufficiently close to $-z \unit$ so that the flat co-ordinate 
$\bq = [M(t,z)\bx]_+|_{Q_1= \cdots =Q_r=1}$ 
\eqref{eq:flatcoordinate-in-GWtheory}
of the point $\sx = (t,\bx) \in \cN(\LLo)$ lies 
in the domain $U \subset \cH_+^{\rm NF}$ 
of $\cF^0_{X,\rm an}$ \eqref{eq:convergencedomain-U}. 
Then we have $t = \tau_{\rm an}(\bq)$.  
\end{enumerate}
\end{lemma}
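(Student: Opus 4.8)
The plan is to prove Part~(1) as an identity of formal power series over the Novikov ring $\Lambda$ and then to deduce Part~(2) by specialising the Novikov variables and invoking the convergence results of \cite{CI:convergence}. First I would observe that $M(t,z)$ and $L(t,z)$ both lie in $\id + z^{-1}\End(H_X)[\![z^{-1}]\!]$, so each preserves the subspace $\cH_-$ of strictly negative powers of $z$. Hence if $\bq = [M(t,z)\bx]_+$ then the full point $f := M(t,z)\bx$ of the Lagrangian cone $\cL$ is recovered from $\bq$ as $f = \bq + d\cF^0_X(\bq)$ (the $\cH_-$-part being fixed by the defining equation of $\cL$), and since $\bx \in zH_X[\![z]\!] \subset \cH_+$ has no negative part we get $\bx = [L(t,z)f]_+ = [L(t,z)\bq]_+$, the term $L(t,z)\,d\cF^0_X(\bq)$ being absorbed into $\cH_-$. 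Thus the whole statement reduces to recovering the base point, i.e.~to the identity $t = \tau(\bq)$.

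The identity $t = \tau(\bq)$ is exactly the change of variables underlying the classical Ancestor--Descendant relation from \S\ref{subsec:AncDec}, and I would prove it by the string-equation manipulation of Kontsevich--Manin~\cite{Kontsevich--Manin:relations}, reformulated by Givental and carried out in detail by Coates--Givental~\cite{Coates--Givental}: given $\bq = [M(t,z)\bx]_+$, the combination $\sum_\epsilon \partial^2_{q_0^0 q_0^\epsilon}\cF^0_X(\bq)\,\phi^\epsilon$ collapses to $t$ after using the string equation for $\cF^0_X$ together with the ODE $\partial_{t^j}M(t,z) = -z^{-1}M(t,z)(\phi_j\ast_t)$. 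Equivalently, one may invoke the Dijkgraaf--Witten identity \eqref{eq:Lbt=Ltau}, which characterises $\tau(\bq)$ as the unique primary parameter with $M(\tau(\bq),z) = M(\bq + z\unit, z)$, and check that the jet $\bq + z\unit$ produced by $\bq = [M(t,z)\bx]_+$ has $M$-solution equal to the base-point solution $M(t,z)$. I expect this identification to be the only substantive point of Part~(1); everything else is formal triangularity.

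For Part~(2), I would write $t = \delta + t'$ with $\delta \in H^2(X;\C)$, $\Re\delta_i \ll 0$, and $t'$ in the convergence region near the large radius limit, so that by the hypothesis $\sx \in \cN(\LLo)$, the inclusion $\bq \in U$, and the results of \cite{CI:convergence} every point appearing in the argument lies in a domain where the flat-coordinate map, $\tau_{\rm an}$ and $\cF^0_{X,\rm an}$ are genuine analytic functions coinciding with their power-series expansions. By \eqref{eq:M-divisoreq} we have $M(t,z)|_{Q_1=\cdots=Q_r=1} = e^{-\delta/z}\,M(t',z)|_{Q_i = e^{\delta_i}}$, and since $e^{-\delta/z}$ carries $\cH_-$ into $\cH_-$ this gives $\bq = [M(t,z)\bx]_+|_{Q=1} = [e^{-\delta/z}\bq']_+$ where $\bq' := [M(t',z)\bx]_+|_{Q_i = e^{\delta_i}}$. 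Part~(1), applied over $\Lambda$ with Novikov variables specialised to $e^{\delta}$, gives $t' = \tau(\bq')|_{Q_i=e^{\delta_i}}$, and then \eqref{eq:tauan-tau} yields $\tau_{\rm an}(\bq) = \tau_{\rm an}([e^{-\delta/z}\bq']_+) = \tau(\bq')|_{Q_i=e^{\delta_i}} + \delta = t' + \delta = t$, as required. The only care needed is to verify, using $\sx \in \cN(\LLo)$ and the description \eqref{eq:convergencedomain-U} of $U$, that $\bq'$ lies in the NF-convergence domain \eqref{eq:NF-convergencedomain} so that \eqref{eq:tauan-tau} applies.
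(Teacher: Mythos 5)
Your overall route is the same as the paper's: reduce Part~(1) by the upper/lower-triangularity of $M$ and $L$ to the single identity $t=\tau(\bq)$ along $\bq=[M(t,z)\bx]_+$, prove that identity using the string equation together with the Dijkgraaf--Witten identity \eqref{eq:Lbt=Ltau}, and deduce Part~(2) from Part~(1) by the Divisor Equation \eqref{eq:M-divisoreq} and \eqref{eq:tauan-tau} after checking that all specializations of Novikov variables land in the convergence domains. Your Part~(2) argument is essentially identical to the paper's (which simply says ``by Part~(1), \eqref{eq:tauan-tau} and the argument of the analytic ancestor--descendant relation''), and your bookkeeping $\tau_{\rm an}(\bq)=\tau(\bq')|_{Q_i=e^{\delta_i}}+\delta=t$ is correct.

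The soft spot is the crux of Part~(1), which you correctly isolate but do not actually prove. Your route (b) is circular as stated: ``check that the jet $\bq+z\unit$ has $M$-solution equal to the base-point solution $M(t,z)$'' is, via \eqref{eq:Lbt=Ltau} and the injectivity of $t\mapsto M(t,z)$ near the origin, precisely the assertion $\tau(\bq)=t$ that you are trying to establish, so no independent verification is offered. Your route (a) gestures at the right circle of ideas (Kontsevich--Manin, Coates--Givental), but note that the string equation in Dilaton-shifted variables does not compute $\sum_\epsilon \partial^2_{q_0^0 q_0^\epsilon}\cF^0_X(\bq)\,\phi^\epsilon$ directly; it yields the identity $0=q_0+\sum_{n,i,\epsilon}q_{n+1}^i\,\partial^2\cF^0_X/\partial q_n^i\partial q_0^\epsilon\,\phi^\epsilon$, which involves different second derivatives, so ``string equation plus the ODE for $M$'' alone does not make the combination \eqref{eq:tau(bq)} ``collapse to $t$'' without a further input of TRR/Dijkgraaf--Witten type. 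The paper closes this gap cleanly: since $\bx\in zH_X[\![z]\!]$, the base point $t$ is characterized implicitly by the vanishing of the $z^0$-coefficient, $[L(t,z)\bq]_0=0$; by \eqref{eq:Lbt=Ltau} one has $[L(\tau(\bq),z)\bq]_0=[L(\bt,z)\bq]_0$, and the displayed string-equation identity shows this is identically zero, whence $t=\tau(\bq)$ by uniqueness of the implicit solution. If you replace the vague ``collapses to $t$'' step by this characterization-plus-verification (or by the equivalent tangent-space argument $T_{M(t,z)\bx}\cL=T_t$ with injectivity of $t\mapsto T_t$), your proof is complete and coincides with the paper's.
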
 
\begin{proof} 
(1) It was explained in~\cite[Remark 8.4]{CI:convergence} 
that $\bq = [M(t,z) \bx]_+$ defines an isomorphism 
between the formal neighbourhoods of $(t,\bx) = (0,-z\unit)$ 
and $\bq = -z\unit$. 
Since $L(t,z) = M(t,z)^{-1}$, we have:
\[
\bx = [L(t,z) \bq]_+
\]
The variable $t$ is determined implicitly by the equation 
$[L(t,z) \bq]_0 =0$, where $[\cdots]_0$ denotes the coefficient 
of $z^0$. It now suffices to show that 
$[L(\tau(\bq),z)\bq]_0=0$. 
By \eqref{eq:Lbt=Ltau}, we have 
$[L(\tau(\bq),z)\bq]_0 = [L(\bt,z)\bq]_0$ 
under the Dilaton shift $\bq = -z \unit + \bt$. 
Then: 
\begin{align*} 
[L(\bt,z)\bq]_0 &= \left[ 
\bq + \sum_{d\in \NE(X)} \sum_{n=0}^\infty 
\sum_{\epsilon=0}^N 
\corr{\frac{\bq}{z-\psi},\bt(\psi),\dots,\bt(\psi),\phi_\epsilon}_{0,2+n,d} 
\frac{Q^d}{n!}  \phi^{\epsilon} 
 \right]_0 \\
& = q_0 + \sum_{n=0}^\infty \sum_{i,\epsilon=0}^N 
q_{n+1}^i \parfrac{^2\cF^0_X}{q_n^i \partial q_0^\epsilon} 
\phi^\epsilon
\end{align*} 
The String equation (see~\cite{Givental:symplectic}) shows that 
the last expression is identically zero.   

(2) Part (2) follows from Part (1), \eqref{eq:tauan-tau} and an argument similar to 
the proof of Theorem~\ref{thm:ancdec-analytic}. 
\end{proof}

\begin{theorem} 
\label{thm:jet-GW} 
Let $\wave_X$ denote the Gromov--Witten wave function 
(Definition~\ref{def:GW-wave}) of $X$. 
Let $t\in \cM_{\rm A}$ be sufficiently close to the large 
radius limit point and let $\bx\in \cH_+^{\rm NF}$ be 
sufficiently close to $-z \unit$ so that the flat co-ordinate 
$\bq = [M(t,z)\bx]_+|_{Q_1= \cdots =Q_r=1}$ 
\eqref{eq:flatcoordinate-in-GWtheory}
of the point $\sx = (t,\bx) \in \cN(\LLo)$ lies 
in the domain $U \subset \cH_+^{\rm NF}$ 
for $\cF^g_{X,\rm an}$ and $\cW^g_{X,\rm an}$. 
Let $\For_{\sx}$ be the formalization map 
(Definition~\ref{def:formalizationmap}) at $\sx$ 
associated to the standard unitary frame 
$H_X \otimes \C[\![z]\!]  \to \sfF_t$ 
of the A-model cTEP structure. 
Then we have 
\[
\For_\sx \wave_X = \exp\left(\sum_{g=0}^\infty 
\hbar^{g-1}\cW_{X,\rm an}^g(\bq, \by) \right)  
\] 
as a formal power series in $\by$. 
In particular, by \eqref{eq:jet-ancestor}, 
\[
\For_{\sx} \wave_X = \cA_{X,t}\Bigr|_{Q_1= \cdots =Q_r=1} 
\]
when $\sx = (t,\bx) = (t,-z \unit)$. 
\end{theorem}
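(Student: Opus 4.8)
The plan is to identify both sides of the claimed identity term by term in the formal variable $\by$, using the explicit formulas that are already available. First I would recall that, by Definition~\ref{def:formalizationmap}, $\For_\sx \wave_X$ is by construction the jet potential $\exp(\cW(\sx,\sy))$ of the Fock space element $\wave_X$, written in the coordinate system $\{y_n^i\}$ on $T_\sx\LLo$ coming from the flat coordinate system $\{q_n^i\}$ associated to the standard unitary frame (which corresponds to the standard opposite module $\sfP_{\rm std}$). So the $n$-point, genus-$g$ coefficient of $\For_\sx \wave_X$ is $\frac{1}{n!}\big(\partial^n C^{(g)}_X/\partial q_{l_1}^{i_1}\cdots\partial q_{l_n}^{i_n}\big)(\sx)$, where $C^{(g)}_X$ is as in Definition~\ref{def:GW-wave}. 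What must be shown is that this packaging of Taylor coefficients equals the formal power series $\exp(\sum_g \hbar^{g-1}\cW^g_{X,\mathrm{an}}(\bq,\by))$.

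The key input is Theorem~\ref{thm:jetdec-analytic} (the analytic Jet--Descendant relation), together with Theorem~\ref{thm:ancdec-analytic} (the analytic Ancestor--Descendant relation). Theorem~\ref{thm:ancdec-analytic} already tells us that, in flat coordinates $\bq$, we have $\Nabla^3 C^{(0)}_X = \Nabla^3\cF^0_{X,\mathrm{an}}(\bq)$, $\Nabla C^{(1)}_X = d\cF^1_{X,\mathrm{an}}(\bq)$, and $C^{(g)}_X = \cF^g_{X,\mathrm{an}}(\bq)$ for $g\ge 2$, so the potentials $C^{(g)}_X$ are exactly the specialized descendant potentials $\cF^g_{X,\mathrm{an}}$ expressed in the flat coordinate $\bq$. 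Hence the $\by$-Taylor expansion at $\sx$ of the jet potential of $\wave_X$ is the $\by$-Taylor expansion of $\sum_g\hbar^{g-1}\cF^g_{X,\mathrm{an}}(\bq+\bu)$ in the increment $\bu$ (with the genus-$0$ constant/linear/quadratic and genus-$1$ constant terms removed, matching the fact that $C^{(0)},\Nabla C^{(0)},\Nabla^2 C^{(0)},C^{(1)}$ are not part of the Fock data). On the other hand Theorem~\ref{thm:jetdec-analytic} states exactly that $\cW^g_{X,\mathrm{an}}(\bq,\by)=\cF^g_{X,\mathrm{an}}(\bq+\bs)$ for $g\ge 2$, $\cW^1_{X,\mathrm{an}}(\bq,\by)=\cF^1_{X,\mathrm{an}}(\bq+\bs)-\cF^1_{X,\mathrm{an}}(\bq)$, and $\cW^0_{X,\mathrm{an}}(\bq,\by)=[\cF^0_{X,\mathrm{an}}(\bq+\bs)]_{\ge 3}$, where $\bs=[M(\tau_{\mathrm{an}}(\bq),z)\by]_+|_{Q=1}$. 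Thus the content of the theorem is that the increment $\bu$ used in the Fock-space Taylor expansion (which is the image of $\by\in T_\sx\LLo$ under the Kodaira--Spencer map, read in the trivialization) coincides with the increment $\bs$ appearing in the Jet--Descendant relation. I would verify this by computing the Kodaira--Spencer/flat-coordinate differential: the map $\by\mapsto\bu$ is $d\bq$ evaluated at $\sx$, which by \eqref{eq:KSinv_dq+dp} (restricted to the fiber, as in \eqref{eq:flatcoord-atthecentre}) and the Gromov--Witten normalization \eqref{eq:flatcoord-GW} is precisely $\by\mapsto[M(t,z)\by]_+$; and by Lemma~\ref{lem:tau-as-inverse}(2) we have $t=\tau_{\mathrm{an}}(\bq)$, so $[M(t,z)\by]_+=[M(\tau_{\mathrm{an}}(\bq),z)\by]_+=\bs$. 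This matches the two increments and proves the identity of formal power series in $\by$.

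The final assertion, that $\For_\sx\wave_X=\cA_{X,t}|_{Q_1=\cdots=Q_r=1}$ when $\sx=(t,-z\unit)$, then follows by restricting: at $\bx=-z\unit$ we have $\bq=-z\unit+t$, and \eqref{eq:jet-ancestor} gives $\cW^g_{X,\mathrm{an}}|_{\bq=-z\unit+t}=\bar\cF^g_t|_{Q=1}$; exponentiating and summing over $g$ recovers the specialized total ancestor potential $\cA_{X,t}$ from \eqref{eq:ancestorpotential}. The main obstacle I expect is purely bookkeeping: matching the Dilaton shifts and the precise ranges of genus/insertions on the two sides (the jet potential on the global side omits $C^{(0)}$ through $\Nabla^2 C^{(0)}$ and $C^{(1)}$, while the Jet--Descendant relation already incorporates the $[\,\cdot\,]_{\ge 3}$ truncation at genus zero and the subtraction of $\cF^1_{X,\mathrm{an}}(\bq)$ at genus one), and checking that the coordinate $-\bD=\bq|_\sx$ of $\sx$ used in Definition~\ref{def:formalizationmap} is compatible with the variable $\by=\bD+\bq$ so that the Taylor expansions are taken about the correct point. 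None of this requires new ideas beyond carefully invoking Theorems~\ref{thm:ancdec-analytic} and~\ref{thm:jetdec-analytic} and Lemma~\ref{lem:tau-as-inverse}.
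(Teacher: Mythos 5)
Your proposal is correct and follows essentially the same route as the paper's proof: it combines the analytic Ancestor--Descendant relation (Theorem~\ref{thm:ancdec-analytic}), the analytic Jet--Descendant relation (Theorem~\ref{thm:jetdec-analytic}), and Lemma~\ref{lem:tau-as-inverse}(2), with the only real work being the comparison of the two normalizations of the flat coordinates (the at-$t$-normalized $\bq_t=[M(t,z)^{-1}\bq]_+$ used in the formalization versus the Gromov--Witten-normalized $\bq$), exactly as the paper does. Your explicit verification that the jet increment transforms by $\by\mapsto[M(t,z)\by]_+=\bs$ is just a slightly more detailed rendering of the paper's remark that the coordinate change $\bq\mapsto\bq_t$ matches the change $\by\mapsto\bs$.
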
 
\begin{proof} 
Recall that the formalization map (Definition~\ref{def:formalizationmap}) 
at $\sx=(t,\bx)$ 
is defined as a truncated Taylor expansion of the potential 
with respect to the flat co-ordinates associated to a given unitary 
frame of $\sfF_t$. The standard unitary frame at $t\in H_X$ defines 
the following flat co-ordinate system on $\LLo$ 
(see Definition~\ref{def:flatcoordinate_on_LL} and 
equation~\ref{eq:formal_flat_coordinates}): 
\[
(t+s, \bx) \mapsto \bq_t = [M(t,z)^{-1} M(t+s,z) \bx]_+ \Bigr|_{Q_1 = \cdots = Q_r =1} 
\] 
Note that the inverse fundamental solution in \eqref{eq:formal_flat_coordinates} 
is normalized by the condition that it is identity at $t$ and so we need the 
factor $M(t,z)^{-1}$ here. This is related to the flat co-ordinate $\bq$ 
in \eqref{eq:flatcoordinate-in-GWtheory} by a linear transformation 
$\bq_t = [M(t,z)^{-1}\bq]_+|_{Q_1= \cdots = Q_r= 1}$. 
Also, by Lemma~\ref{lem:tau-as-inverse}, we have 
$t = \tau_{\rm an}(\bq)$. 
The analytic version of Ancestor-Descendant relation 
(Theorem~\ref{thm:ancdec-analytic}) shows that 
$\For_\sx(\wave_X)$ is a truncated Taylor expansion of 
$\exp(\sum_{g=0}^\infty \hbar^{g-1} \cF^g_{X,\rm an}(\bq))$ 
with respect to $\bq_t$. 
Since the co-ordinate change $\bq \mapsto \bq_t$ here 
is the same as the co-ordinate change $\by \mapsto \bs$ 
of jet co-ordinates 
in the Jet-Descendant relation (Theorem~\ref{thm:jetdec-analytic}), 
the conclusion follows.
\end{proof} 

\begin{remark} 
Theorem~\ref{thm:jet-GW} shows that the jet potential \eqref{eq:jetpotential-GW} 
in Gromov--Witten theory can be identified with the jet potential 
(Definition~\ref{def:jetpotential}) associated with 
the Gromov--Witten wave function. 
\end{remark} 

\begin{corollary} 
With notation as in Theorem \ref{thm:jet-GW}, the formalization 
of the Gromov--Witten wave function at $\sx=(t,\bx)\in \LLo$ 
associated to the standard unitary frame 
$H_X\otimes \C[\![z]\!] \to \sfF_t$ is given by 
\[
\For_\sx \wave_X = \exp
\left(- \bar\cF^1_{X,t}\Bigr|_{\by =\bx + z\unit, 
Q_1=\cdots =Q_r =1}\right) 
\cA_{X,t}\Bigr|_{\by \to \by+\bx+z\unit,  
Q_1 = \cdots = Q_r = 1}.   
\]
\end{corollary} 
\begin{proof} 
Combine the latter statement of Theorem \ref{thm:jet-GW} 
with Lemma \ref{lem:formalization-shiftisom}. 
\end{proof} 

\section{The Semisimple Case}
\label{sec:semisimple_case}

In this section we use Givental's formula for the higher-genus potentials 
associated to a semisimple Frobenius manifold to define a canonical global 
section of the Fock sheaf for any tame semisimple cTEP structure.  
This global section is called the \emph{Givental wave function}.   
We use a theorem of Teleman to show that if $X$ has generically 
semisimple quantum cohomology then the Givental wave function 
for the A-model cTEP structure associated to $X$ coincides 
with the Gromov--Witten wave function for $X$. 

\subsection{Semisimple Opposite Module} 
\label{subsec:semisimpleopposite}
Recall from Definition~\ref{def:cTP} that a cTEP structure is a cTP structure such that the connection $\nabla$ is extended in
  the $z$-direction with a pole of order 2 along $z=0$. 
Let $\cU \colon \sfF_0 \to \sfF_0$ denote the residual part 
of the connection 
defined by 
\begin{align*}
  \cU \colon \sfF_{0} \to \sfF_{0} &&
  \text{$\cU [\alpha] = [z^2 \nabla_{\partial_z} \alpha]$ for $\alpha \in \sfF$.}  
\end{align*}
Flatness of the pairing implies that 
$\cU$ is self-adjoint with respect to $(\cdot,\cdot)_{\sfF_0}$. 

\begin{definition} 
\label{def:tamesemisimple}
A cTEP structure $(\sfF,\nabla,(\cdot,\cdot)_{\sfF})$ 
over $\cM$ is said to be \emph{tame semisimple} at $t\in \cM$ 
if the residual part $\cU\in \End(\sfF_{0,t})$ at $t$ 
is a semisimple endomorphism without repeated 
eigenvalues.  
\end{definition} 

The following proposition shows that any tame semisimple cTEP 
structure of rank $N+1$ is locally isomorphic to the A-model cTEP structure 
(Remark \ref{rem:completion-of-TP}) of $N+1$ points. 
This can be viewed, module the treatment on 
the pairing, as a special case 
of the classical Levelt--Turrittin theorem 
on the formal structure of irregular connections 
(see, e.g.~\cite[Chapter II, Theorem 5.7]{Sabbah:isomonodromic}). 
In fact, the existence of a pairing makes the proof easier. 
In the context of semisimple Frobenius manifolds, similar 
results has appeared in Dubrovin \cite[Lecture 4]{Dubrovin:Painleve} 
and Givental \cite[\S1.3]{Givental:semisimple}. 

\begin{proposition} 
\label{prop:semisimpletriv} 
Suppose that a cTEP structure 
$(\sfF,\nabla,(\cdot,\cdot)_{\sfF})$ is tame semisimple at $t_0\in \cM$. 
Then there exists a trivialization over a neighbourhood of $t_0$ 
\[
\Phiss \colon \C^{N+1}\otimes \cO[\![z]\!]  \cong \sfF
\]
such that $(\Phiss(e_i), \Phiss(e_j))_{\sfF} = \delta_{ij}$ and 
\[
\Phiss^* \nabla = \bigoplus_{i=0}^N 
\left( 
d -d(u_i/z)\right) 
\]
where $u_0,\dots,u_N$ are the eigenvalues of $\cU$. 
Moreover, the trivialization $\Phiss$ is unique up to 
reordering and changing the signs of the basis elements: $e_i \mapsto 
\pm e_{\sigma(i)}$, $\sigma \in \mathfrak{S}_{N+1}$. 
We call $\Phiss$ the \emph{semisimple trivialization} 
of $(\sfF,\nabla,(\cdot,\cdot)_{\sfF})$. 
\end{proposition}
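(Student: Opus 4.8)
The plan is to construct the semisimple trivialization $\Phiss$ order by order in $z$, starting from the spectral decomposition of $\cU$ at the level of $\sfF_0$ and then lifting it to $\sfF$ using the flatness conditions, and finally to establish uniqueness by examining the recursion. First I would work at $z=0$: since $(\sfF,\nabla,(\cdot,\cdot)_{\sfF})$ is tame semisimple at $t_0$, the residual endomorphism $\cU\in\End(\sfF_{0,t_0})$ has $N+1$ distinct eigenvalues $u_0,\dots,u_N$, and because $\cU$ is self-adjoint for the non-degenerate pairing $(\cdot,\cdot)_{\sfF_0}$, the eigenspaces are one-dimensional and mutually orthogonal. Choosing a unit-norm generator $\pi_i^{(0)}$ of each eigenline (normalizing $(\pi_i^{(0)},\pi_i^{(0)})_{\sfF_0}=1$, possible after shrinking $\cM$ so that the $u_i$ stay distinct and the norms stay non-vanishing, and noting the sign ambiguity $\pi_i^{(0)}\mapsto\pm\pi_i^{(0)}$) gives a $z=0$ frame in which $\cU$ is diagonal and the pairing is the identity. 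This data varies holomorphically in $t$ near $t_0$ by the distinctness of eigenvalues.

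Next I would lift this to a formal frame $e_i(t,z)=\Phiss(e_i)\in\sfF$ by solving the differential equations order by order. Writing $\Phiss^{-1}\nabla_{z\partial_z}$ and $\Phiss^{-1}\nabla_v$ ($v\in\Theta_\cM$) and imposing that $\Phiss^*\nabla$ be the diagonal connection $\bigoplus_i(d-d(u_i/z))$, the requirement becomes: the connection matrix $\Omega(t,z)=\cC(t,z)/z + (\text{$z$-direction part})$ must be conjugated into diagonal form by a gauge transformation $\Phiss = \Phiss^{(0)}(\id + A_1 z + A_2 z^2 + \cdots)$. Expanding the compatibility equation $[\nabla_v,\nabla_{z\partial_z}]=0$ order by order in $z$, one gets at each order a commutator equation of the shape $[\,\diag(u_i)\,, A_{k}] = (\text{known from }A_1,\dots,A_{k-1})$; since $\cU$ has distinct eigenvalues, $\ad(\diag(u_i))$ is invertible on off-diagonal matrices, so this determines the off-diagonal part of $A_k$ uniquely, while the diagonal part of $A_k$ is then fixed by the requirement that the $\cM$-direction connection also diagonalizes (using that the eigenvalue functions $u_i$ are exactly the coordinates appearing in the flat diagonal connection — i.e.\ the $u_i$ serve as canonical coordinates, which is where tameness and miniversality enter). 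The flatness of $\nabla$ guarantees consistency of this recursion, and the flatness of the pairing $(\cdot,\cdot)_{\sfF}$ together with the $z$-sesquilinearity forces $(\Phiss(e_i),\Phiss(e_j))_{\sfF}=\delta_{ij}$ to all orders: once it holds at $z=0$, the identity $d((-)^*\Phiss(e_i),\Phiss(e_j))_{\sfF}=((-)^*\nabla\Phiss(e_i),\Phiss(e_j))_{\sfF}+((-)^*\Phiss(e_i),\nabla\Phiss(e_j))_{\sfF}$ and its $z\partial_z$ analogue, combined with the diagonality of the transformed connection, show the pairing matrix is constant in $t$ and $z$, hence $\delta_{ij}$.

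For uniqueness, I would argue that any two such trivializations differ by a gauge transformation $G(t,z)\in GL(N+1,\cO[\![z]\!])$ that (a) commutes with the diagonal connection $\bigoplus_i(d-d(u_i/z))$ and (b) preserves the standard pairing. Commuting with the $z^{-1}$-pole term $-\diag(u_i/z)$ forces $G_0=G|_{z=0}$ to be block-diagonal with respect to the (distinct!) eigenvalues, i.e.\ diagonal; commuting with the $d$ part forces $G$ to be $t$-independent; feeding this back into the higher-order part of the equation (again using invertibility of $\ad(\diag(u_i))$) forces $G$ to be $z$-independent and diagonal, $G=\diag(c_i)$; and preserving $(\cdot,\cdot)=\delta_{ij}$ forces $c_i=\pm1$. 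Allowing also a permutation $\sigma$ of the eigenvalues (which is the only freedom in labelling the $u_i$) gives exactly the stated ambiguity $e_i\mapsto\pm e_{\sigma(i)}$.

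The main obstacle I expect is the bookkeeping in the recursion for $A_k$: one must simultaneously diagonalize the $z$-direction connection $\nabla_{z\partial_z}$ \emph{and} all of the $\cM$-direction connections $\nabla_v$, and it is not a priori obvious that the diagonal parts (the ``resonance-free'' data) can be chosen consistently — this is precisely where one needs that the $u_i$ are local coordinates (canonical coordinates) so that $\nabla_{\partial/\partial u_i}$ has the right residue structure, and where one invokes the flatness $[\nabla_v,\nabla_{z\partial_z}]=0$ and $[\nabla_{v_1},\nabla_{v_2}]=0$ to close the induction. I would handle this by first establishing that tame semisimplicity implies the eigenvalues $u_i$ form a local coordinate system on $\cM$ (a standard consequence of miniversality plus distinct eigenvalues, essentially Dubrovin's canonical coordinates), reducing everything to the single-variable $z$-recursion in each canonical direction, which then goes through by the invertibility of $\ad$ on off-diagonal matrices as above.
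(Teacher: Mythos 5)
Your overall strategy matches the paper's in outline: take an orthonormal eigenframe of $\cU$ at $z=0$ (with the $\pm$ sign ambiguity), extend it by a formal gauge transformation $R(z)=\id+R_1z+R_2z^2+\cdots$ whose off-diagonal parts are determined by invertibility of $\ad(U)$, $U=\diag(u_0,\dots,u_N)$, and read off uniqueness from the same recursion. The genuine soft spot is exactly the one you flag: how the \emph{diagonal} parts of the $R_k$ are fixed. You propose to fix them ``by the requirement that the $\cM$-direction connection also diagonalizes'', using that the $u_i$ are canonical coordinates. But imposing diagonality in the $\cM$-direction only yields differential equations in $t$ for the diagonal entries, so it determines them at best up to constants of integration (this is the familiar fact that, without using $\nabla_{z\partial_z}$, the formal $R$-matrix of a semisimple Frobenius manifold is unique only up to right multiplication by constant diagonal series $\exp(\sum_k a_kz^k)$). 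Unless those constants are fed back into the $z$-direction equation, neither existence with the stated normalization nor uniqueness up to $e_i\mapsto\pm e_{\sigma(i)}$ follows from your recursion as written.

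The paper avoids this by running the recursion on the $z$-direction ODE alone: expanding $\partial_zR+z^{-2}[U,R]+z^{-1}VR=0$, the off-diagonal part of $R_n$ is determined by $\ad(U)$ at order $n$ (solvability uses that the diagonal of the inhomogeneous term vanishes, which comes from antisymmetry of $V_0$, i.e.\ flatness of the pairing), while the diagonal part of $R_n$ appears \emph{algebraically} at order $n+1$, since $[U,R_{n+1}]$ has zero diagonal; so both parts are determined uniquely with no integration. Unitarity $R(-z)^{\mathrm T}R(z)=\id$ (hence orthonormality of the frame to all orders) and the diagonalization in the $\cM$-direction are then deduced \emph{a posteriori}: flatness gives constraints on the transformed $t$-direction matrices $A_j(z)$, and the symmetry $A_j(-z)^{\mathrm T}=A_j(z)$ coming from flatness of the pairing forces $A_j=\partial_jU$. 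In particular the paper needs neither canonical coordinates nor miniversality for this proposition, whereas your route presupposes that the $u_i$ form a coordinate system. If you reorganize your induction so that the diagonal parts are read off from the next order of the $z$-equation, the argument closes, and your treatment of the pairing (constancy in $t$ and $z$ once the connection is fully diagonal, hence $\delta_{ij}$ from the $z=0$ normalization) does go through.
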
 
\begin{proof} 
The operator $\cU$ has distinct eigenvalues 
$u_0,\dots,u_N$ in a neighbourhood of $t_0$. 
Throughout the proof, we fix this neighbourhood and 
work over it. 
Let $\delta_i\in \sfF_0$, $i\in\{0,\dots,N\}$,
be the eigensection of $\cU$ with eigenvalue $u_i$. 
We normalize $\delta_i$ by the condition 
$(\delta_i, \delta_i)_{\sfF_0} =1$. 
Because $\cU$ is self-adjoint, it follows that
$(\delta_i, \delta_j)_{\sfF_0} = \delta_{ij}$. 
There exist lifts $\hdelta_i\in \sfF$ of $\delta_i$ 
such that $(\hdelta_i,\hdelta_j)_{\sfF}=1$. 
In the local basis $\hdelta_0,\dots,\hdelta_N$, 
we write the connection in the form  
\[
\nabla = d - \frac{1}{z} \sum_{j} C_j(z)dt^j+ (U + zV(z)) \frac{dz}{z^2} 
\]
where $\{t^j\}$ is a local co-ordinate system on $\cM$, 
$U = \diag(u_0,\dots,u_N)$, and 
$C_j(z), 
V(z)\in \End(\C^{N+1})\otimes \cO[\![z]\!]$. 
The fact that $\nabla$ preserves $(\cdot,\cdot)_{\sfF}$
implies that $V(-z) + V(z)^{\rm T}=0$. 

If we have a trivialization $\Phiss$ satisfying the 
conditions in the statement, then
$[\Phiss(e_i)]$ is an eigenvector of $\cU$ of eigenvalue $u_i$ 
and $([\Phiss(e_i)], [\Phiss(e_i)])_{\sfF_0} = 1$. 
Therefore, up to the choice of signs of $\hdelta_i$, we have 
\[
\bigl( \Phiss(e_0), \dots, \Phiss(e_N) \bigr) 
= 
\bigl(
\hdelta_0, \dots, \hdelta_N\bigr) R(z) 
\]
for some $(N+1,N+1)$-matrix $R(z)$ 
with entries in $\cO[\![z]\!]$ such that $R(0)=I$. 
It thus suffices to show that there 
exists a unique gauge transformation 
$R(z) \in GL(N+1,\cO[\![z]\!])$ such that $R(0)=I$ and:
\begin{align}
\label{eq:R-diffeq} 
R(z)^{-1} \circ \nabla \circ R(z) & = d - d(U/z) \\
\label{eq:R-unitary}
R(-z)^{\rm T} R(z) & = \id
\end{align} 
The differential equation \eqref{eq:R-diffeq} 
in the $z$-direction is: 
\[
\partial_z R + z^{-2}[U,R] + z^{-1} VR =0
\]
Writing $V(z) = V_0 + V_1 z + V_2 z^2 +\cdots$ 
and $R(z) = I + R_1 z + R_2 z^2 +\cdots$, we find:  
\begin{align}
\label{eq:recursion-R} 
\begin{split} 
& [U, R_1] + V_0 = 0 \\ 
& R_1 + [U,R_2] + V_1 + V_0 R_1 = 0 \\ 
& nR_n + [U, R_{n+1}] + V_n + V_{n-1} R_1 + \cdots + V_0 R_n =0 
\quad (n\ge 1)
\end{split}  
\end{align} 
We claim that the equations \eqref{eq:recursion-R} 
can be solved inductively and uniquely. 
The off-diagonal part of $R_1$ can be determined from the first 
equation: 
\[
(R_1)_{ij} = - \frac{(V_0)_{ij}}{u_i - u_j} \quad i\neq j. 
\]
Here the solvability is ensured by $(V_0)_{ii}=0$, 
which holds because $V_0$ is anti-symmetric. 
The second equation gives the diagonal part of $R_1$: 
\[
(R_1)_{ii} = -(V_1)_{ii} - \sum_{j : j \neq i} 
(V_0)_{ij} (R_1)_{ji}
\]
Similarly, we can solve for $R_2,R_3,\dots$ inductively. 
We check that $R$ constructed in this way satisfies 
unitarity \eqref{eq:R-unitary}. 
From the differential equation for $R$, we find:
\[
\parfrac{}{z} (R(-z)^{\rm T} R(z)) = - \frac{1}{z^2} [U, R(-z)^{\rm T} R(z)]
\]
Writing $R(-z)^{\rm T} R(z) = I + M_1 z + M_2 z^2 + \cdots$ gives:
\begin{align*} 
[U,M_1]& =0 \\ 
nM_n& = -[U, M_{n+1}] \quad (n\ge 1)
\end{align*}  
The first equation shows that $M_1$ is diagonal. 
The second equation with $n=1$ shows that $M_1$ is off-diagonal. 
Thus $M_1=0$. Hence $[U,M_2]=0$ and $M_2$ is diagonal. 
The second equation with $n=2$ shows that $M_2$ is off-diagonal 
and $M_2=0$. Repeating this, we find that $M_n=0$ for all $n\ge 1$. 

Finally we show that $R(z)$ satisfies the differential equation 
\eqref{eq:R-diffeq} in the $t$-direction.  
Note that $R(z)$ in the above construction 
depends analytically on $t\in \cM$. 
We can write 
\begin{equation} 
\label{eq:connectiontransformedbyR}
R(z)^{-1} \circ \nabla \circ R(z) = 
d - \frac{1}{z} \sum_j A_j(z) dt^j +U  \frac{dz}{z^2}
\end{equation} 
for some $A_j(z) \in \End(\C^{N+1}) \otimes \cO[\![z]\!]$.
The flatness of the connection yields: 
\[
\partial_j U - A_j(z) + z \partial_z A_j - z^{-1} [A_j(z), U]=0
\]
Writing $A_j(z)= A_{j,0}+ A_{j,1} z + A_{j,2} z^2+ \cdots$, 
we have 
\begin{align*} 
- [A_{j,0}, U] &= 0 \\ 
\partial_j U - A_{j,0} - [A_{j,1},U] &= 0 \\ 
-[A_{j,2},U] &=0 \\ 
(n-1) A_{j,n} - [A_{j,n+1}, U] &=0 \quad (n\ge 2)
\end{align*} 
The first equation shows that $A_{j,0}$ is a diagonal matrix. 
The second equation shows $A_{j,0} = \partial_j U$ 
and $[A_{j,1},U]=0$. Hence $A_{j,1}$ is diagonal.  
The third equation shows that $A_{j,2}$ is diagonal.  
The fourth equation with $n=2$ shows that $A_{j,2} =0$ and 
that $A_{j,3}$ is diagonal. 
Repeating this, we find that $A_{j,n} =0$ for all $n\ge 2$. 
It remains to show that $A_{j,1} =0$. 
We know that $\nabla$ preserves the pairing $(\cdot,\cdot)_{\sfF}$ 
and that $R(z)$ is unitary \eqref{eq:R-unitary},  
thus the connection \eqref{eq:connectiontransformedbyR} 
also preserves the diagonal pairing. 
This shows that $A_j(-z)^{\rm T} = A_j(z)$. 
In particular $A_{j,1}$ is anti-symmetric. 
As we have already seen that $A_{j,1}$ is diagonal, 
$A_{j,1} =0$. 
\end{proof} 

\begin{definition} 
\label{def:semisimple_opposite}
Let  $(\sfF,\nabla,(\cdot,\cdot)_{\sfF})$ 
be a cTEP structure which is 
tame semisimple over an open set $\cMss \subset \cM$. 
The \emph{semisimple opposite module}  
for $(\sfF,\nabla,(\cdot,\cdot)_{\sfF})$
is an opposite module $\sfPss$ over $\cMss$ 
such that, for any point $t\in \cMss$
and a semisimple trivialization $\Phiss$ 
(Proposition~\ref{prop:semisimpletriv})
over a neighbourhood of $t$, 
we have 
\[
\sfPss = \Phiss( \C^{N+1}\otimes z^{-1}\cO[z^{-1}])   
\]
in the neighbourhood. The opposite module $\sfPss$ is independent of the choice of $\Phiss$. 
\end{definition} 

\begin{remark} 
Even if the semisimple trivialization $\Phiss$ has monodromy, 
the semisimple opposite module $\sfPss$ is single-valued on 
the tame semi-simple locus $\cMss$. 
The semisimple opposite module is automatically homogeneous:
$\nabla_{z\partial_z} \sfPss \subset \sfPss$. 
\end{remark} 

When a cTEP structure is tame semisimple and miniversal, 
the semisimple opposite module defines a (rather trivial) 
Frobenius manifold structure 
on the base by Proposition~\ref{prop:flat_trivialization} and 
Remark~\ref{rem:opposite_conformalcase}. 
Miniversality implies that the eigenvalues 
$u_0,\dots,u_N$ form a local co-ordinate system. 
The following proposition follows straightforwardly 
from Proposition~\ref{prop:semisimpletriv}. 

\begin{proposition} 
\label{prop:trivial-Frobeniusmfd} 
Let  $(\sfF,\nabla,(\cdot,\cdot)_{\sfF})$ 
be a cTEP structure which is 
tame semisimple over an open set $\cMss \subset \cM$. 
The semisimple opposite module $\sfPss$ over $\cMss$ 
defines a Frobenius 
manifold structure on $\cMss$ which is isomorphic to 
the quantum cohomology Frobenius manifold 
of $(N+1)$ points.  It is given by:
\begin{itemize} 
\item the flat metric 
\[
g\left(\parfrac{}{u_i}, \parfrac{}{u_j}\right) = \delta_{ij}
\]
\item the semisimple product 
\[
\parfrac{}{u_i} * \parfrac{}{u_j} = \delta_{ij} \parfrac{}{u_i}
\] 
\item the flat identity vector field $e = \sum_{i=0}^N \partial/\partial u_i$
\item the Euler vector field $E = \sum_{i=0}^N u_i (\partial/\partial u_i)$
\end{itemize} 
where $u_0,\dots,u_N$ are the eigenvalues of the residual part $\cU$; these
are flat co-ordinates for this Frobenius manifold.  
\end{proposition}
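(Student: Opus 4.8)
The plan is to derive everything from the semisimple trivialization $\Phiss$ produced by Proposition~\ref{prop:semisimpletriv}. Recall that by Proposition~\ref{prop:flat_trivialization}(v) and Remark~\ref{rem:opposite_conformalcase}, an opposite module together with a primitive section determines a Frobenius manifold structure, with the flat trivialization associated to $\sfPss$ being exactly the semisimple trivialization. So the first step is to identify the flat coordinates: since $\Phiss^*\nabla = \bigoplus_i(d - d(u_i/z))$, the flat connection $\nabla^0$ in the flat trivialization is the trivial connection $d$ in the frame $e_0,\dots,e_N$, and miniversality together with the fact that $\cU$ has distinct eigenvalues means that $u_0,\dots,u_N$ form a local coordinate system on $\cMss$; I would check that $\partial/\partial u_i$ corresponds under $\KS$ (or rather under the identification $\Theta_\cM \to \End(\sfF_0)$, $v\mapsto z\nabla_v$ from Remark~\ref{rem:product}) to the idempotent projecting onto the $u_i$-eigenspace of $\cU$. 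This is because $[\nabla_{z\partial_z} + \nabla_E]\sfF\subset\sfF$ forces $E = \sum u_i\,\partial/\partial u_i$, and differentiating the eigenvalue relation gives the claim on the idempotents.

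Next I would read off each of the four structures from Proposition~\ref{prop:flat_trivialization}(v) using the explicit form of $\Phiss$. For the metric: a primitive section $\zeta$ here can be taken to be $\Phiss(\sum_i e_i)$ (or any section whose image in $\sfF_0^\circ$ is the sum of normalized idempotents), and then $g(\partial/\partial u_i,\partial/\partial u_j) = (z\nabla_{\partial_{u_i}}\zeta, z\nabla_{\partial_{u_j}}\zeta)_\sfF$; since $z\nabla_{\partial_{u_i}}$ applied to $\Phiss(e_k)$ picks out the $i$-th component and $(\Phiss(e_i),\Phiss(e_j))_\sfF = \delta_{ij}$, this evaluates to $\delta_{ij}$. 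For the product: $z\nabla_{\partial_{u_i}}z\nabla_{\partial_{u_j}}\zeta = -z\nabla_{\partial_{u_i}*\partial_{u_j}}\zeta$, and because the $z\nabla_{\partial_{u_i}}$ are commuting idempotent-like operators in the semisimple frame, the product is forced to be $\partial_{u_i}*\partial_{u_j} = \delta_{ij}\partial_{u_i}$. The identity $e$ satisfies $-z\nabla_e\zeta = \zeta$, which together with the product gives $e = \sum_i\partial/\partial u_i$. The Euler field is $E = \sum u_i\partial/\partial u_i$ as already noted. Finally, matching against the quantum cohomology of $N+1$ points is immediate: the $(N+1)$-point Frobenius manifold has exactly these structures in its canonical (idempotent) coordinates, so the identification $u_i \leftrightarrow$ canonical coordinate on the $i$-th point is an isomorphism of Frobenius manifolds.

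I would also remark on why the statement is well-posed globally on $\cMss$ even though $\Phiss$ is only unique up to permutations and signs $e_i\mapsto\pm e_{\sigma(i)}$ (and may genuinely have monodromy): permutation of the $u_i$ is harmless since all the structures are symmetric in the $u_i$, and a sign change $e_i\mapsto -e_i$ changes $\zeta$ but not $g$ (which is quadratic) nor the product, identity, or Euler field. Since $\sfPss$ itself is single-valued (Remark after Definition~\ref{def:semisimple_opposite}), the resulting Frobenius structure descends to all of $\cMss$.

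The main obstacle — though it is more bookkeeping than a genuine difficulty — is verifying that the map $v\mapsto z\nabla_v$ sends $\partial/\partial u_i$ to the rank-one idempotent $\Phiss(e_i)\otimes\Phiss(e_i)^*$ in the semisimple frame, i.e.\ that the flat coordinates of the $(N+1)$-point Frobenius manifold really are the eigenvalues $u_i$ and not some reparametrization. This requires chasing the relation between the $z$-direction connection $\nabla_{z\partial_z}$, the Euler vector field characterized in Remark~\ref{rem:opposite_conformalcase}, and the residue $\cU$; once one knows $E = \sum_i u_i\partial/\partial u_i$ and that $\cU = (z\nabla_E)|_{z=0} = E*$ acts as $\mathrm{diag}(u_0,\dots,u_N)$ in the idempotent basis, everything else falls out of Proposition~\ref{prop:semisimpletriv} mechanically. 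I would organize the proof so that this identification is established once at the start, after which items (i)–(iv) of the proposition are three-line verifications each.
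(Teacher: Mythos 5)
Your proposal is correct and follows essentially the same route as the paper, which simply observes that the proposition follows from the semisimple trivialization $\Phiss$ of Proposition~\ref{prop:semisimpletriv} combined with Proposition~\ref{prop:flat_trivialization}(v) and Remark~\ref{rem:opposite_conformalcase}, taking the primitive section $\zeta=\Phiss(e_0+\cdots+e_N)$ and reading off $g$, $*$, $e$, $E$ in the frame $\Phiss(e_i)$ exactly as you do (the only point to watch is the sign $z\nabla_{\partial/\partial u_j}\Phiss(e_i)=-\delta_{ij}\Phiss(e_i)$, which cancels in every structure you compute). Your closing remark on independence of the choice of $\Phiss$ up to $e_i\mapsto\pm e_{\sigma(i)}$ is consistent with the paper's observation that $\sfPss$ is single-valued on $\cMss$.
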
 

\subsection{A Section of the Fock Sheaf via Givental's Formula} 
\label{sec:Givental_global_section}

Givental has defined an abstract ancestor potential 
associated to any semisimple Frobenius manifold~\cite{Givental:semisimple,Givental:symplectic,Givental:quantization}. 
We will see that Givental's formula gives rise 
to a global section of the Fock sheaf associated to 
a semisimple Frobenius manifold 
(or more generally to a semisimple cTEP structure).

\subsubsection{Givental's Abstract Potential}
\begin{definition}[cTEP Structure Associated to a Frobenius Manifold] 
\label{def:cTEP-of-Frobeniusmfd}
Let $(\cM,*,e,g,E)$ be a Frobenius manifold 
(see Proposition~\ref{prop:flat_trivialization} and 
Remark~\ref{rem:opposite_conformalcase} for the notation). 
The Dubrovin connection defines a miniversal 
cTEP structure $(\sfF,\nabla,(\cdot,\cdot)_{\sfF})$ over 
$\cM$: 
\begin{align*} 
\sfF &= T \cM [\![z]\!] \\ 
\nabla & = \nabla^{\rm LC} - \frac{1}{z} \sum_{i=0}^N 
\left(\parfrac{}{t^i} * \right) dt^i 
+  (E*) \frac{dz}{z^2} + \mu \frac{dz }{z} \\ 
(\alpha(z), \beta(z))_{\sfF} & = g(\alpha(-z), \beta(z)) \quad 
\text{for $\alpha(z), \beta(z) \in \cM[\![z]\!]$} 
\end{align*} 
where $\{t^i\}_{i=0}^N$ is a local co-ordinate system on $\cM$, 
$\nabla^{\rm LC}$ is the Levi-Civita connection 
of $g$, and $\mu = (1-\frac{D}{2}) \id - \nabla^{\rm LC} E 
\in \End(T\cM)$ with $D$ the conformal dimension. 
This cTEP structure is equipped with the 
standard homogeneous opposite module 
\[
\sfP_{\rm std} = z^{-1} T\cM[z^{-1}] 
\]
and the standard unitary frame $\id \colon T_u \cM[\![z]\!] \cong \sfF_u$. 
\end{definition} 

Let $(\cM,*,e,g,E)$ be a Frobenius manifold such that 
the Euler multiplication $E*$ is semisimple 
with distinct eigenvalues $u_0,\dots,u_N$. 
Such a Frobenius manifold is said to be \emph{tame semisimple}. 
Then the corresponding cTEP structure is tame semisimple.  
It is known that the co-ordinate vector fields 
$\partial/\partial u_i$, $i\in\{0,\dots,N\}$, form an 
idempotent frame for $T\cM$: 
\[
\parfrac{}{u_i} * \parfrac{}{u_j} = \delta_{ij} \parfrac{}{u_i}
\]
We set:
\[
\Delta_i = g\left(\parfrac{}{u_i}, \parfrac{}{u_i}\right)^{-1}
\]
By Proposition~\ref{prop:semisimpletriv} and its proof, 
we have locally a semisimple trivialization 
$\Phiss \colon \C^{N+1} \otimes \cO[\![z]\!] 
\cong T\cM[\![z]\!]$ such that:
\[
\Phiss(e_i) = \sqrt{\Delta_i} \parfrac{}{u_i} + O(z)
\]
In view of Example~\ref{exa:Apt}, the product of 
Witten--Kontsevich tau-functions 
\[
\cT(\bq) = \prod_{i=0}^N \tau(\bq^i) 
\quad 
\text{with $\bq = (\bq^0,\dots,\bq^N)\in \C^{N+1}[\![z]\!]$}
\]
is an element of 
$\Fockanrat\big(\C^{N+1}, (1,\dots,1), \prod_{i=0}^N (-q_1^i)\big)$. 
This is the descendant potential of $(N+1)$ points\footnote
{For $N+1$ points, the descendant potential and the ancestor potential 
are the same.}. 
For each $u\in \cM$, $\Phissu$ defines a unitary isomorphism 
between $\C^{N+1}[\![z]\!]$ and $T_u\cM[\![z]\!]$,
and we have a quantized operator 
\begin{multline*} 
T_{e-\Phissu(1,\dots,1)} \circ 
\widehat{\Phissu}
\colon \\ 
\Fockanrat(\C^{N+1}, (1,\dots,1), 
\textstyle\prod_{i=0}^N (-q_1^i) ) 
\xrightarrow{\phantom{ABCD}}
\Fockanrat( T_u\cM, e, \det(-q_1*))  
\end{multline*} 
by Theorem~\ref{thm:quantization-welldefined} 
and Remark~\ref{rem:quantizedoperator-shiftisom}. 

\begin{definition}[Givental's Formula~\cite{Givental:quantization}] 
\label{def:Giventalabstractpotential}
The \emph{abstract ancestor potential} $\cAabs_u$ is 
the element of $\Fockanrat(T_u\cM, e, \det(-q_1*))$ 
given by:
\[
\cAabs_u= 
T_{e- \Phissu(1,\dots,1)} 
\widehat{\Phissu} \cT
\]
\end{definition} 
\begin{remark} 
The abstract potential $\cAabs_u$ does not depend on the choice of a 
semisimple trivialization $\Phiss$; see~\cite[Proposition 4.3]{CI:convergence}.  
Let us study what the shift isomorphism $T_{e-\Phissu(1,\dots,1)}$ 
does to $\widehat{\Phissu}\cT$. 
Note that the genus-one potential 
$\hcF^1$ of $\widehat{\Phissu} \cT$ satisfies :
\[
\hcF^1 \Bigr|_{q_0=0, q_1 =-e} =\sum_{i=0}^N  -\frac{1}{24} 
\log \left( [\Phissu^{-1} e]^i \bigr|_{z=0} \right) 
= \frac{1}{48} \sum_{i=0}^N \log \Delta^i (u)
\]
The shift isomorphism at genus one is a truncated Taylor expansion 
and this amounts to subtracting the value at the new base point $\bq = -ez$. 
Thus we can write: 
\[
\cAabs_u =e^{-\frac{1}{48} \sum_i \log \Delta^i(u)} 
\widehat{\Phissu} \cT
\]
This is the original form of Givental's formula.  
\end{remark} 

\subsubsection{A Global Section of the Fock Sheaf Associated to 
a Semisimple cTEP Structure} 
We regard the genus-$g$ ancestor potential 
$\cF^g_{\rm pt}$ of a point as a function of 
the co-ordinates $(q_0,q_1,q_2,\dots)$ 
via the Dilaton shift $q_n = y_n - \delta_{n,1}$: see Example~\ref{exa:Apt}.
When restricted to $q_0=0$, $\cF^g_{\rm pt}$ only depends on 
finitely many variables $q_1,\dots, q_{3g-2}$. 
In this section we write 
\[
\cF^g(0,q_1,q_2,\dots,q_{3g-2}) = 
\cF^g_{\rm pt}\Bigr|_{q_0=0} 
\]
making the argument explicit. 
Note that $q_1^{5g-5} \cF^g|_{q_0=0}$ is a polynomial 
for $g\ge 2$, $\cF^1|_{q_0=0} = -\frac{1}{24} \log(-q_1)$ 
and $\cF^0|_{q_0=0} = 0$ (see equation~\ref{eq:Fpt}). 

\begin{definition} 
\label{def:Giventalwavefcn}
Let $(\sfF,\nabla,(\cdot,\cdot)_{\sfF})$ be a miniversal 
cTEP structure which is tame semisimple 
over a non-empty open subset $\cMss \subset \cM$. 
Let $u_0,\dots,u_N$ be the eigenvalues of the residual part 
$\cU$, as in Proposition~\ref{prop:trivial-Frobeniusmfd}; these give local co-ordinates on $\cMss$.  
Let $\{u_i,x_n^i\}_{n\ge 1, 0\le i\le N}$ be the local co-ordinate 
system on $\LL$ associated to a semisimple 
trivialization $\Phiss$ of $\sfF$ as in Proposition~\ref{prop:semisimpletriv}. 
Define an element $\wavess = \{\Nabla^n \Css^{(g)}\}_{n,g}$ of 
$\Fock(\cMss;\sfPss)$ (see Definition~\ref{def:localFock}) 
by 
\begin{align*} 
\Nabla^3 \Css^{(0)}  &= \bY = \sum_{i=0}^N (x_1^i)^2 (du_i)^{\otimes 3}
\\ 
\Nabla \Css^{(1)} & = 
\sum_{i=0}^N d \cF^{1}_{\rm pt}(0,x_1^i)  
= - \sum_{i=0}^N 
\frac{1}{24} \frac{dx_1^i}{x_1^i}  
\\
\Css^{(g)} & = 
\sum_{i=0}^N \cF^{g}_{\rm pt}(0,x^i_1,x^i_2,\dots,x^i_{3g-2}), 
\quad g \ge 2 
\end{align*} 
and their covariant derivatives with respect to 
$\Nabla = \Nabla^{\sfPss}$. 
The global section of the Fock sheaf (Definition~\ref{def:Focksheaf}) 
over $\cMss$ given by $\wavess$ is called 
the \emph{Givental wave function}. 
This does not depend on the choice of a semisimple trivialization 
$\Phiss$. 
\end{definition} 

\begin{remark} 
It is easy to see that $\wavess$ 
satisfies the conditions in Definition~\ref{def:localFock}. 
The condition (Grading \& Filtration) 
follows from \eqref{eq:Fpt}. 
The discriminant \eqref{eq:discriminant} 
is given by $P(t,x_1) = \prod_{i=0}^N (-x_1^i)$. 
Thus the condition (Pole) follows also from \eqref{eq:Fpt}. 
\end{remark} 

\begin{remark} 
The element $\wavess$ can be identified with 
the \emph{Gromov--Witten wave function for $(N+1)$ points} 
which was introduced in Definition~\ref{def:GW-wave}. 
(The Gromov--Witten potential of $(N+1)$ points satisfies 
the necessary convergence condition stated in 
Assumption~\ref{assump:convergence}.)
\end{remark} 

\begin{remark} 
\label{rem:presentationsofGiventalwavefcn} 
Given any pseudo-opposite module $\sfP$ over an 
open subset $U\subset \cMss$, the Givental wave function 
gives rise to the element
$\wave_{\sfP} = T(\sfPss,\sfP) \wavess  \in \Fock(U;\sfP)$ 
over $U$. 
We call $\wave_{\sfP}$ 
\emph{the (local) presentation of the 
Givental wave function under $\sfP$}.  
\end{remark} 

\begin{lemma} 
\label{lem:jetofsemisimplewavefcn}
With notation as in Definition~\ref{def:Giventalwavefcn}, the equality
\[
\Nabla^n \Css^{(g)}
=  \sum_{i=0}^N 
\sum_{l_1,\dots, l_n \ge 0} 
\corr{\psi_1^{l_1}, \dots, \psi_n^{l_n} }_{g,n}^{\rm pt} 
dx^i_{l_1} \otimes \cdots \otimes dx^i_{l_n} 
\]
holds along the locus $\{x_1^i=-1,\ x_2^i=x_3^i=\cdots =0 : 0 \leq i \leq N\}
\subset \LL$, 
where we set $x_0^i = u_i$ on the right-hand side. 
In other words, we have 
\[
\For_{-z \Phissu (1,\dots,1)}  \wavess = \cT 
\quad \text{in} \quad 
\Fockanrat\big(\C^{N+1}, (1,\dots,1), 
\textstyle\prod_{i=0}^N (- q_1^i ) \big)  
\]
where $\For_{-z \Phissu(1,\dots,1)}$ is the formalization map 
(Definition~\ref{def:formalizationmap}) 
associated to the semisimple trivialization $\Phissu$ 
at $u\in \cM$. 
\end{lemma}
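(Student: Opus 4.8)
The plan is to unwind both sides of the claimed identity and show they package the same data. First I would recall that by construction the Givental wave function $\wavess$ has, at genus $g\ge 2$, potential $\Css^{(g)} = \sum_{i=0}^N \cF^g_{\rm pt}(0,x_1^i,x_2^i,\dots,x_{3g-2}^i)$, where $\cF^g_{\rm pt}(0,q_1,q_2,\dots)$ is the ancestor (equivalently descendant, by Example~\ref{exa:Apt}) potential of a point restricted to $q_0=0$, written in the Dilaton-shifted variables; similarly $\Nabla \Css^{(1)}= -\sum_i \frac{1}{24}\, dx_1^i/x_1^i$ and $\Nabla^3\Css^{(0)} = \sum_i (x_1^i)^2 (du_i)^{\otimes 3}$. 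The key observation is that $\{u_i, x_n^i\}$ is already the flat co-ordinate system associated to the semisimple opposite module $\sfPss$ and the semisimple trivialization $\Phiss$. Indeed, in the semisimple trivialization the connection endomorphism $\cC(t,z)$ is $z$-independent and, by Proposition~\ref{prop:semisimpletriv} together with Proposition~\ref{prop:trivial-Frobeniusmfd}, the product is idempotent ($\partial_{u_i}*\partial_{u_j} = \delta_{ij}\partial_{u_i}$), so $\cC_i(t,0)$ acts diagonally. Hence the change of variables \eqref{eq:flat-alg} between algebraic co-ordinates $\{u_i,x_n^i\}$ and the flat co-ordinates $\{q_n^i\}$ is \emph{diagonal} and block-decomposed over the $N+1$ idempotent directions, reducing to the one-point case in each block; in particular $q_n^i|_{\LL_t} = x_n^i$ for $n\ge 1$, and $q_0^i$ is an affine co-ordinate on $\cM$ centred at $t$ agreeing with (a rescaling of) $u_i$.

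Next I would compute $\For_\sx(\wavess)$ at the point $\sx = -z\,\Phissu(1,\dots,1)\in\LLo_u$, i.e.\ the point whose flat co-ordinate is $\bq|_\sx = -z(1,\dots,1)$, equivalently $x_1^i=-1$, $x_n^i=0$ for $n\ge 2$, and $u$ arbitrary. By Definition~\ref{def:formalizationmap}, $\For_\sx(\wavess) = \exp\big(\sum_g \hbar^{g-1}\sum_n \frac{1}{n!} \partial^n C^{(g)}/\partial q_{l_1}^{i_1}\cdots\partial q_{l_n}^{i_n}(\sx)\, y_{l_1}^{i_1}\cdots y_{l_n}^{i_n}\big)$. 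Because the flat co-ordinates decompose into the $N+1$ one-point blocks, for $g\ge 2$ we have $C^{(g)}|_{q_0=0} = \sum_i \cF^g_{\rm pt}(0,q_1^i,q_2^i,\dots)$, and the only subtlety is whether differentiating in the $q_0$-directions before restricting produces cross-terms. Here the tameness/$(3g-2)$-jet condition \eqref{eq:tameness-Fockelement} and the pole-structure Lemma~\ref{lem:genuszeropole}/Proposition~\ref{prop:pole-flat} control the $q_0$-dependence: since $\cT$ lies in $\Fockanrat$ with discriminant $\prod_i(-q_1^i)$ and $\wavess$ is built precisely so that its formalization is tame and rational with this discriminant, the Taylor expansion at $\sx$ in the $q$-co-ordinates (including $q_0$) is forced to agree blockwise with the product of the one-point Taylor expansions. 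Concretely, the genus-$g$ coefficient of $\log \For_\sx(\wavess)$ equals $\sum_i \sum_{l_1,\dots,l_n} \langle \psi_1^{l_1},\dots,\psi_n^{l_n}\rangle_{g,n}^{\rm pt}\, y_{l_1}^i\cdots y_{l_n}^i$, with $y_0^i$ the $u_i$-direction; summing over $i$ and exponentiating gives exactly $\prod_i \tau(\bq^i) = \cT$, which is the asserted first display (and the asserted identity in $\Fockanrat$).

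The main obstacle will be bookkeeping the genus-zero and genus-one truncations and confirming that the $q_0$-derivatives contribute correctly. For $g\ge 2$ there is no truncation and the argument above is essentially immediate from the block structure; for $g=0$ the data is $\bY = \Nabla^3\Css^{(0)}$ and one must check that $\Nabla^{n-3}\bY$ restricted to $\sx$ matches the (truncated) Taylor coefficients of $\sum_i \cF^0_{\rm pt}(0,q_1^i,\dots)$, which uses Proposition~\ref{prop:potentiality} and the fact, from Example~\ref{exa:Apt}/\eqref{eq:Fpt}, that the point descendant potential has $\cF^0_{\rm pt}|_{q_0=0}=0$, so only the genus-zero two-point and three-point data enter through $\bY$ and the connection $\Nabla^{\sfPss}$. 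For $g=1$ one checks the normalization: $\Nabla\Css^{(1)} = -\sum_i \frac{1}{24} dx_1^i/x_1^i$ integrates to $-\frac{1}{24}\sum_i \log(-x_1^i)$ up to a constant, which matches $\cF^1_{\rm pt}|_{q_0=0} = -\frac{1}{24}\log(-q_1)$ summed over blocks, and the shift isomorphism in $\For_\sx$ precisely removes the value at $\bq=-z\unit$, so the genus-one term of $\For_\sx(\wavess)$ is the correctly-normalized genus-one potential of $\cT$. Once these three ranges are matched, the identity $\For_{-z\Phissu(1,\dots,1)}\wavess = \cT$ follows, and the displayed formula for $\Nabla^n\Css^{(g)}$ along $\{x_1^i=-1, x_{\ge 2}^i=0\}$ is just its unpacking using \eqref{eq:genuszerojet-flat}-type identities in flat co-ordinates.
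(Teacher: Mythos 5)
Your block-decomposition observations are fine and are essentially the same coordinate computation the paper makes: in the semisimple trivialization $M(u,z)=\bigoplus_i e^{-(u_i-u_i^*)/z}$, so along the fibre $q_n^i=x_n^i$ for $n\ge 1$ and, at the specific point $x_1^i=-1$, $x_{\ge 2}^i=0$, one has $dq_0^i=du_i$ and $dq_n^i=dx_n^i$; this is what translates the formalization statement into the first display. (Note, though, that $q_0^i$ is \emph{not} a rescaling of $u_i$ as a coordinate --- it depends on the fibre variables, cf.\ \eqref{eq:flat-alg}; only the identification of differentials at that point is true, and that is all one needs.)

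The genuine gap is at the step you yourself flag as ``the only subtlety.'' The potentials $\Css^{(g)}$, written in the algebraic coordinates $\{u_i,x_n^i\}$, do not depend on $u$ at all; the entire $q_0$-dependence of $\For_\sx\wavess$ is produced by the covariant derivatives $\Nabla^{\sfPss}$ in the base directions. The lemma asserts that these base-direction derivatives reproduce exactly the point correlators with $\psi^0$-insertions, i.e.\ that the full Taylor expansion in $\bq$ (including $q_0$) is $\cT$ and not merely something agreeing with it on $q_0=0$. Your claim that this is ``forced'' by tameness \eqref{eq:tameness-Fockelement} and rationality (Lemma~\ref{lem:genuszeropole}, Proposition~\ref{prop:pole-flat}) is not a valid argument: both sides satisfy these constraints, but the constraints do not determine the $q_0$-jet, so nothing forces agreement. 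The input actually needed is the jet-/ancestor-descendant relation for the point --- equivalently Theorem~\ref{thm:jet-GW} applied to $X=(N{+}1)$ points --- whose proof rests on the comparison of $\psi$ and $\bar\psi$ classes (Theorem~\ref{thm:jet-descendant}) and the String Equation (Lemma~\ref{lem:tau-as-inverse}). This is precisely how the paper proves the lemma: it quotes Theorem~\ref{thm:jet-GW} to get $\For_{-z\Phissu(1,\dots,1)}\wavess=\cT$ and then does the small coordinate computation above to obtain the displayed formula for $\Nabla^n\Css^{(g)}$. The same omission affects your genus-$0$ and genus-$1$ checks: matching $\Nabla^{n-3}\bY$ and $\Nabla^n\Css^{(1)}$ against correlators with $\psi^0$-insertions is again the descendant--ancestor comparison, not a consequence of Proposition~\ref{prop:potentiality} or of the normalization of $\cF^1_{\rm pt}$ alone. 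To repair the proof, either invoke Theorem~\ref{thm:jet-GW} for $(N{+}1)$ points or reprove the string-equation argument directly in each block.
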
 
\begin{proof} 
The formula $\For_{-z \Phissu(1,\dots,1)} \wavess = \cT$ 
was proved more generally for a Gromov--Witten wave function 
in Theorem~\ref{thm:jet-GW}; this lemma is a special case where 
$X$ consists of $(N+1)$ points. 
Thus it suffices to show that the former statement is equivalent 
to the latter. 
For simplicity we consider the case $N+1=\dim \cM = 1$; the general case is similar.  
Take a point $u^* \in \cMss$. 
Under the semisimple trivialization, the inverse fundamental 
solution $M$ appearing in \eqref{eq:inverse_fundsol} 
is given by 
\[
M(u,z) = e^{-(u-u^*)/z} 
\]
Therefore the flat co-ordinates $\bq$ 
associated to the unitary frame $\Phiss$ are 
given by 
\begin{equation} 
\label{eq:xandq}
\bq = [e^{-(u-u^*)/z} \bx]_+ 
\end{equation} 
(see equation~\ref{eq:formal_flat_coordinates}) with $\bx = \sum_{n=1}^\infty x_n z^n$, $\bq = \sum_{n=0}^\infty 
q_n z^n$. 
This shows that 
\[
d u = d q_0, \quad d x_n = dq_n \quad (n\ge 1)
\]
at the point $(u,\bx) = (u^*,-z) \in \LL_{u^*}$. 
The conclusion follows from the definition of the 
formalization map. 
\end{proof}

\begin{theorem} 
\label{thm:formalization-Giventalwave} 
Let $(\sfF,\nabla,(\cdot,\cdot)_{\sfF})$ 
be the tame semisimple cTEP structure associated to a tame 
semisimple Frobenius manifold $\cM$. 
Let $\wave_{\rm std}= T(\sfPss, \sfP_{\rm std}) \wavess 
\in \Fock(\cM;\sfP_{\rm std})$ 
denote the presentation of the Givental wave function 
of $(\sfF,\nabla, (\cdot,\cdot)_{\sfF})$ 
\emph{under} the standard opposite module $\sfP_{\rm std}$ 
(see Remark~\ref{rem:presentationsofGiventalwavefcn}). 
Then we have 
\[
\For_{- z e}(\wave_{\rm std}) = \cAabs_u
\]
where $\For_{-ze}$ is the formalization map at $-z e \in  \LLo_u$ 
associated with the standard unitary frame $T_u\cM[\![z]\!] \cong \sfF_u$ 
(see Definition~\ref{def:formalizationmap} and 
Lemma~\ref{lem:formalization-shiftisom}) 
and $\cAabs_u$ is the abstract potential in 
Definition~\ref{def:Giventalabstractpotential}. 
\end{theorem}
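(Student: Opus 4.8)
The plan is to reduce this to the statements already established about the formalization map (Theorem~\ref{thm:transformationrule-Giventalquantization}) and about the Givental wave function restricted to the semisimple polarization (Lemma~\ref{lem:jetofsemisimplewavefcn}). The key point is that $\For_{-ze}$ and $\For_{-z\Phissu(1,\dots,1)}$ are formalization maps associated to two different opposite modules — namely $\sfP_{\rm std}$ and $\sfPss$ — and to two different base points $-ze$ and $-z\Phissu(1,\dots,1)$ of $\LLo_u$. Theorem~\ref{thm:transformationrule-Giventalquantization} tells us precisely how such a pair of formalization maps is intertwined by $T(\sfPss,\sfP_{\rm std})$ on the one hand and by $T_{\bD'-R\bD}\circ \hR$ on the other, where $R$ is the unitary isomorphism $R=\Phi_{\rm std}^{-1}\circ\Phiss$ relating the two unitary frames at $u$.

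First I would set up the notation: identify $V=\C^{N+1}$ with the source of the semisimple trivialization $\Phissu\colon V[\![z]\!]\cong \sfF_u$, identify $W=T_u\cM$ with the source of the standard unitary frame $\Phi_{\rm std}=\id\colon T_u\cM[\![z]\!]\cong \sfF_u$, and let $R=\Phi_{\rm std}^{-1}\circ\Phissu=\Phissu\colon V[\![z]\!]\to W[\![z]\!]$ be the resulting unitary isomorphism. The point $\sx=-z\Phissu(1,\dots,1)\in\LLo_u$ has $\Phissu$-coordinate $\bD=-z(1,\dots,1)$, i.e.~$\bD=\sum_n -\delta_{n,1}(1,\dots,1)z^n$; the point $\sx'=-ze$ has $\Phi_{\rm std}$-coordinate $\bD'=-ze z$, and one checks $\Phissu(-z(1,\dots,1))=-z\Phissu(1,\dots,1)$ need not equal $-ze$, so that $\bD'-R\bD\in z^2 W[\![z]\!]$ is genuinely a nontrivial shift. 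Then Theorem~\ref{thm:transformationrule-Giventalquantization}, applied with $\sfP=\sfPss$, $\sfP'=\sfP_{\rm std}$, $\sx=-z\Phissu(1,\dots,1)$ and $\sx'=-ze$, gives the commuting square
\[
\For_{-ze}\circ T(\sfPss,\sfP_{\rm std}) = \big(T_{\bD'-R\bD}\circ\hR\big)\circ \For_{-z\Phissu(1,\dots,1)}.
\]
Applying both sides to $\wavess$ and using Lemma~\ref{lem:jetofsemisimplewavefcn}, which says $\For_{-z\Phissu(1,\dots,1)}\wavess=\cT$, the right-hand side becomes $T_{\bD'-R\bD}\circ\hR\,\cT$. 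It remains to identify this with $\cAabs_u$. By Definition~\ref{def:Giventalabstractpotential}, $\cAabs_u=T_{e-\Phissu(1,\dots,1)}\widehat{\Phissu}\cT$, and the abstract ancestor potential has Dilaton shift $e$ (i.e.~$\bq=-ze$ is its base point), so we need $T_{\bD'-R\bD}\circ\hR=T_{e-\Phissu(1,\dots,1)}\circ\widehat{\Phissu}$ as maps $\Fockanrat(V,\bD,\cdot)\to\Fockanrat(W,\bD',\cdot)$. Here $\hR=\widehat{\Phissu}$ by definition of $R$, and the shift amount matches because $\hR$ pushes the Dilaton shift $\bD=-z(1,\dots,1)$ forward to $R\bD=\Phissu(-z(1,\dots,1))=-z\Phissu(1,\dots,1)$, while $\cAabs_u$ lives at Dilaton shift $-ze=\bD'$; so both composite shift isomorphisms are $T_{\bD'-R\bD}$ (cf.~Remark~\ref{rem:quantizedoperator-shiftisom}). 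This gives $\For_{-ze}(\wave_{\rm std})=\cAabs_u$, since $\wave_{\rm std}=T(\sfPss,\sfP_{\rm std})\wavess$.

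The main obstacle I anticipate is bookkeeping around the two shift isomorphisms and the precise relation between the discriminants appearing in the various rational Fock spaces — one must check that the discriminant $\prod_i(-q_1^i)$ attached to $\cT$, transported through $\hR$ and the shift, becomes exactly $\det(-q_1*)$ as required for $\cAabs_u\in\Fockanrat(T_u\cM,e,\det(-q_1*))$, which is the content of Lemma~\ref{lem:formalization-shiftisom} together with Theorem~\ref{thm:quantization-welldefined} and the formula $P'(t,q_1')=P(t,R_0^{-1}q_1')$ from Theorem~\ref{thm:transformationrule-Giventalquantization}; since $R_0=\Phissu|_{z=0}$ diagonalizes the quantum product into idempotents, $\prod_i(-q_1^i)\circ R_0^{-1}$ is indeed $\det(-q_1*)$ up to the normalizing scalar, which is absorbed by the shift. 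A secondary subtlety is the genus-one constant ambiguity: the formalization map truncates the genus-one Taylor series at the base point, and one should verify that this truncation is compatible on both sides — but this is exactly the content of the ``truncated Taylor expansion'' clause in Definition~\ref{def:shiftofdilatonshift}(2) and the corresponding remark after Definition~\ref{def:Giventalabstractpotential} that $\cAabs_u=e^{-\frac{1}{48}\sum_i\log\Delta^i(u)}\widehat{\Phissu}\cT$, so no genuinely new work is needed. Everything else is a formal consequence of the two cited results, so once the notation is aligned the proof should be short.
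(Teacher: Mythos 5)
Your proposal is correct and follows essentially the same route as the paper, whose proof is literally the combination of Theorem~\ref{thm:transformationrule-Giventalquantization} (applied with $\sfP=\sfPss$, $\sfP'=\sfP_{\rm std}$ at the base points $-z\Phissu(1,\dots,1)$ and $-ze$), Lemma~\ref{lem:jetofsemisimplewavefcn}, and the definition of $\cAabs_u$. One small bookkeeping slip: $\bD'-R\bD$ generally does \emph{not} lie in $z^2W[\![z]\!]$, since its linear coefficient is (up to sign) $e-\sum_i\sqrt{\Delta_i}\,\partial/\partial u_i\neq 0$ in general; this is harmless because Theorem~\ref{thm:transformationrule-Giventalquantization} and Remark~\ref{rem:quantizedoperator-shiftisom} use the rational shift isomorphism of Definition~\ref{def:shiftofdilatonshift}(2), which allows shifts with nonzero linear part as long as the discriminant does not vanish at the new base point — exactly the situation here, and this is also the shift appearing in Definition~\ref{def:Giventalabstractpotential}.
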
 
\begin{proof} 
Combine
Theorem~\ref{thm:transformationrule-Giventalquantization},  
Lemma~\ref{lem:jetofsemisimplewavefcn},
and the definition of the abstract potential. 
\end{proof}

The quantum cohomology of $X$ is said to be 
\emph{generically semisimple} if the analytic quantum product $*$ 
(see Assumption~\ref{assump:convergence}, Part~1) 
is semisimple (i.e.~isomorphic as a ring to a direct sum of copies of $\C$)  
over an open dense subset of $\cM_{\rm A}$. 
This is equivalent to $*$ being semisimple at a single point. 
Then the Euler multiplication $E*$ (see equation~\ref{eq:Eulerfield}) 
has no repeated eigenvalues on an open dense subset 
$\cM_{\rm A}^{\rm ss}\subset \cM_{\rm A}$, and the A-model cTEP structure is 
tame semisimple over $\cM_{\rm A}^{\rm ss}$ 
(see Definition~\ref{def:tamesemisimple}). 
In particular, the Givental wave function defines a section of the A-model Fock sheaf $\Fock_X$ over $\cM_{\rm A}^{\rm ss}$. 
The following is a reformulation of a result of Teleman.

\begin{theorem}[Teleman~\cite{Teleman}]
\label{thm:Teleman} 
When the quantum cohomology of $X$ is generically semisimple, the 
Gromov--Witten wave function (Definition~\ref{def:GW-wave}) 
coincides with the Givental wave function (Definition~\ref{def:Giventalwavefcn}).  
\end{theorem}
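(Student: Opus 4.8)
The plan is to reduce Theorem~\ref{thm:Teleman} to a \emph{local} comparison in the formal neighbourhood of a single point of the tame semisimple locus $\cM_{\rm A}^{\rm ss}$, and then invoke Teleman's theorem~\cite{Teleman} in the form that identifies the total ancestor Gromov--Witten potential $\cA_{X,t}$ with Givental's abstract ancestor potential $\cAabs_t$ when quantum cohomology is generically semisimple. Both the Gromov--Witten wave function $\wave_X$ and the Givental wave function $\wavess$ are, by construction, global sections of the A-model Fock sheaf $\Fock_X$; so it suffices to show that their restrictions agree on the open dense subset $\cM_{\rm A}^{\rm ss}$, and since a section of a sheaf is determined by its germs, it is enough to show that for each $t\in\cM_{\rm A}^{\rm ss}$ their \emph{formalizations} at a suitable point of $\LLo_t$ coincide as elements of the appropriate rational ancestor Fock space $\Fockanrat(H_X, \unit z, \det(-q_1*_t))$.

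First I would fix $t\in\cM_{\rm A}^{\rm ss}$, work with the standard unitary frame $H_X\otimes\C[\![z]\!]\cong\sfF_t$ of the A-model cTEP structure and the point $\sx = -z\unit\in\LLo_t$, and apply the formalization map $\For_{\sx}$ of Definition~\ref{def:formalizationmap}. On the Givental side, Theorem~\ref{thm:formalization-Giventalwave} gives directly $\For_{-ze}(\wave_{\rm std}) = \cAabs_t$, where $\wave_{\rm std} = T(\sfPss,\sfP_{\rm std})\wavess$ is the presentation of the Givental wave function under $\sfP_{\rm std}$; note that $e = \unit$ in the A-model. On the Gromov--Witten side, Theorem~\ref{thm:jet-GW} gives $\For_{\sx}\wave_X = \cA_{X,t}|_{Q_1=\cdots=Q_r=1}$ when $\sx = (t,-z\unit)$ with $t$ near the large-radius limit; but the formalization of a global section of the Fock sheaf is an analytic function of $t$ on the whole tame semisimple locus (this is exactly the content of Lemma~\ref{lem:formalization-shiftisom} combined with the analyticity statements of \S\ref{sec:GW-wavefunction}, which rely on Assumption~\ref{assump:convergence}, guaranteed here by~\cite[Theorem 6.5]{CI:convergence} since quantum cohomology is convergent and generically semisimple). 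Hence the identity $\For_{(t,-z\unit)}\wave_X = \cA_{X,t}|_{Q=1}$, established near the large-radius limit, propagates by analytic continuation to all of $\cM_{\rm A}^{\rm ss}$.

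The remaining step is then precisely Teleman's uniqueness theorem: for a target with generically semisimple quantum cohomology, the total ancestor potential $\cA_{X,t}$ equals Givental's abstract potential $\cAabs_t = T_{e-\Phissu(1,\dots,1)}\widehat{\Phissu}\cT$ built from the Frobenius manifold structure on $\cM_{\rm A}^{\rm ss}$. Combining the three equalities yields $\For_{(t,-z\unit)}\wave_X = \cA_{X,t}|_{Q=1} = \cAabs_t = \For_{(t,-z\unit)}\wave_{\rm std}$ for every $t\in\cM_{\rm A}^{\rm ss}$. Since $\For_{(t,-z\unit)}$ is injective (a Fock-sheaf element is recovered from its jet at a point, by Jetness together with the reconstruction in Proposition~\ref{prop:Fockspace-curved}) and this holds on a dense open set, $\wave_X$ and the presentation of $\wavess$ under $\sfP_{\rm std}$ agree as sections of $\Fock_X$ over $\cM_{\rm A}^{\rm ss}$, hence as global sections over $\cM_{\rm A}$; one then transports along $T(\sfP_{\rm std},\sfPss)$ to conclude $\wave_X = \wavess$ in the Fock sheaf.

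The main obstacle is the bookkeeping around normalizations and co-ordinate systems: the inverse fundamental solution $M(t,z)$ used in Gromov--Witten theory is normalized by its large-radius asymptotics $M\sim e^{-\delta/z}$ (equation~\eqref{eq:M-divisoreq}), whereas the flat co-ordinates of Definition~\ref{def:flatcoordinate_on_LL} use $M$ normalized to the identity at the base point, and the shift isomorphisms $T_{\bxi}$ of Definition~\ref{def:shiftofdilatonshift} intervene whenever one moves the base point; one must check that the chain Theorem~\ref{thm:jet-GW} $\to$ analytic Ancestor--Descendant (Theorem~\ref{thm:ancdec-analytic}) $\to$ Teleman $\to$ Theorem~\ref{thm:formalization-Giventalwave} is compatible with all of these, in particular that the genus-one constant ambiguity (which the Fock sheaf ignores, working with $d\cF^1$ rather than $\cF^1$) does not create a genuine discrepancy. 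I expect this to be bookkeeping rather than a conceptual difficulty, since each individual compatibility has already been recorded in the excerpt (Lemma~\ref{lem:formalization-shiftisom}, Theorem~\ref{thm:transformationrule-Giventalquantization}, and the analytic ancestor-descendant and jet-descendant relations), so the proof is really an assembly of these pieces plus the single external input of Teleman's classification theorem.
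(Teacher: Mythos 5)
Your proposal is correct and follows essentially the same route as the paper: formalize both wave functions at $(t,-z\unit)$ with respect to $\sfP_{\rm std}$, identify the formalizations with the geometric ancestor potential $\cA_{X,t}$ (Theorem~\ref{thm:jet-GW}) and the abstract potential $\cAabs_t$ (Theorem~\ref{thm:formalization-Giventalwave}), and invoke Teleman's theorem, concluding because a Fock-sheaf element is determined by its formalization. The only extra ingredient you add, the analytic-continuation step to move beyond the large-radius hypothesis of Theorem~\ref{thm:jet-GW}, is harmless but avoidable, since it suffices to compare at a single tame semisimple point near the large radius limit, which is what the paper does.
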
 
\begin{proof} 
Both wave functions are uniquely 
determined by their formalizations at  
$(t,-z)\in \LLo$ with $t\in \cM_{\rm A}^{\rm ss}$ 
with respect to the standard opposite module $\sfP_{\rm std}$ 
(see Example~\ref{ex:Amodel-opposite}). 
Theorem~\ref{thm:jet-GW} shows that the formalization 
of the Gromov--Witten wave function is the geometric 
ancestor potential $\cA_{X,t}$ (with $Q_1=\cdots = Q_r =1$). 
Theorem~\ref{thm:formalization-Giventalwave} shows that 
the formalization of the Givental wave function is the abstract 
ancestor potential given by Givental's formula 
(Definition~\ref{def:Giventalabstractpotential}). 
Teleman~\cite{Teleman} showed that the geometric ancestor potential 
coincides with the abstract one 
(see also~\cite[Theorems 6.4, 6.5]{CI:convergence}). 
The conclusion follows.  
\end{proof} 

\begin{remark}
\label{rem:modularity}
The Givental wave function is automatically a `modular function' 
in the following sense. 
Let $\big(\sfF,\nabla,(\cdot,\cdot)_{\sfF}\big)$ be a tame 
semisimple cTEP structure over $\cM$, and let $\pi \colon 
\tcM \to \cM$ be the universal cover.  
Let $\wave$ be the Givental wave function associated 
to $\big(\sfF,\nabla,(\cdot,\cdot)_{\sfF}\big)$. 
Suppose that we have an opposite module $\sfP$ 
for $\pi^\star\big(\sfF,\nabla,(\cdot,\cdot)_{\sfF}\big)$ 
over the universal cover $\tcM$. 
The pull-back $\pi^\star \wave$ of the Givental wave function 
is obviously invariant under the group $\Gamma = \pi_1(\cM)$ 
of deck transformations, however its presentation 
$\wave_\sfP = (\pi^\star \wave)_\sfP$ 
with respect to $\sfP$ 
is not necessarily so since the opposite module $\sfP$ 
may not be single-valued on $\cM$. Instead we have the transformation property: 
\begin{equation} 
\label{eq:deck_transformation}
\gamma^\star \wave_\sfP = 
T(\sfP, \gamma^\star \sfP) \wave_\sfP 
\end{equation}
with respect to $\gamma \in \Gamma$, since 
$\gamma^\star \wave_\sfP = \wave_{\gamma^\star \sfP}$. 
Suppose moreover that the cTEP structure 
$\big(\sfF,\nabla,(\cdot,\cdot)_\sfF\big)$ 
is the restriction of a TEP structure $\big(\cF=\cO(F),\nabla,(\cdot,\cdot)_\cF\big)$ 
to the formal neighbourhood of $z=0$ and that the 
opposite module $\sfP$ defines an extension of $\cF$ across 
$z=\infty$ (see Remark \ref{rem:opposite_extension}). 
In this case, one can rephrase equation \eqref{eq:deck_transformation}
using the $L^2$-formalism in \S \ref{subsec:global_L2}  
as follows. 
The monodromy of $\big(\cF,\nabla,(\cdot,\cdot)_\cF\big)$ 
along $\gamma$ defines a symplectic transformation 
\[
\U_\gamma \colon \cH_t \to \cH_t 
\]
where recall that $\cH_t = L^2(\{t\}\times S^1, F)$.  
Then the total potential $\cZ$ associated to $\wave_\sfP$ 
(see Definition \ref{def:transformation_L2})  
transforms under $\Gamma$ as:
  \[
    \cZ (\gamma^{-1} t) \propto \hUU_\gamma \cZ(t) \qquad
    \gamma \in \Gamma
    \]
The quantization $\hUU_\gamma$ here was defined in 
\S\ref{subsec:global_L2}.
\end{remark}

\section{The Fock Sheaf and Mirror Symmetry} 
\label{sec:mirror_symmetry}

In this section we discuss applications of our global quantization 
formalism in the context of mirror symmetry. 
We consider two cases: mirrors of toric orbifolds and mirrors of 
Calabi--Yau hypersurfaces. 
In the former case, we construct a global section 
of the B-model Fock sheaf using Givental's formula, thereby proving a higher-genus version of Ruan's Crepant Transformation Conjecture for toric orbifolds.
 In the latter case, the existence of a global section of the B-model Fock sheaf
(which corresponds under mirror symmetry to the Gromov--Witten wave function) 
is conjectural; cf.~recent work of Costello--Li~\cite{Costello--Li}. 

\subsection{The Crepant Transformation Conjecture in the Toric Case}
\label{sec:toric_mirror_symmetry}

A mirror partner of a ``Fano-like" manifold $X$ is conjecturally given 
by a so-called Landau--Ginzburg model, which is a pair 
$(\check{T}, W)$ where $\check{T}$ is a (non-compact) Calabi--Yau manifold
and $W \colon \check{T} \to \C$ is a holomorphic function. 
Suppose that $X$ has generically 
semisimple quantum cohomology and 
has a family of Landau--Ginzburg models 
$W_y \colon \check{T}_y \to \C$, $y\in \cM_{\rm B}$, as a mirror. 
The space $\cM_{\rm B}$ here parametrizes the Landau--Ginzburg 
models; we call it the \emph{B-model moduli space}. 
Under mirror symmetry, the K\"{a}hler moduli space 
$\cM_{\rm A} \cap H^2(X;\C)$ of $X$ is identified with 
a small open patch of $\cM_{\rm B}$,   
and the small quantum cohomology ring of $X$ should be identified 
with the family of Jacobi rings for $W_y$ on this patch. 
Furthermore, the A-model TEP structure 
(Example~\ref{ex:AmodelTP}) constructed from $X$
should be identified 
with the B-model TEP structure constructed from $W_y$. 
Our assumption that $X$ has generically semisimple quantum cohomology corresponds to the condition 
that, for generic $y$, all critical points of $W_y$ are isolated and non-degenerate. 
In such a situation, we can extend the total descendant 
Gromov--Witten potential $\cZ_X$ to a \emph{global section} 
of the B-model Fock sheaf over an 
extended B-model moduli space 
$\cM_{\rm B}^{\rm ext}$; this is the Givental wave function for the extended B-model TEP structure. 
Furthermore it can happen that this global section restricts, 
on another small open patch of $\cM_{\rm B}$, 
to the Gromov--Witten potential $\cZ_Y$ of another space $Y$ 
which would typically be $K$-equivalent (or derived equivalent) to $X$.  Thus the global section $\cZ_X$ of the A-model Fock sheaf for $X$ would coincide, after analytic continuation, with the global section $\cZ_Y$ of the A-model Fock sheaf for $Y$.  This gives a higher-genus version of Y.~Ruan's 
conjecture about the relationship between Gromov--Witten 
theory and crepant birational (or derived categorical) geometry. 
We illustrate this in the toric setting.

Givental~\cite{Givental:ICM} and Hori--Vafa~\cite{Hori--Vafa} have described a Landau--Ginzburg model that gives a
mirror to a toric variety.  Here the Calabi--Yau manifold $\check{T}$ is $(\C^\times)^D$ and the superpotential $W \colon \check{T} \to \C$ is
a Laurent polynomial function 
on $(\C^\times)^D$ 
with Newton polytope equal to the fan polytope of 
the toric variety. 
The B-model TEP structure in this context has been  
studied by many people, including
Sabbah~\cite{Sabbah:hypperiod}, Barannikov~\cite{Barannikov:projective}, 
Douai--Sabbah~\cite{Douai--Sabbah:I, Douai--Sabbah:II}, Coates--Iritani--Tseng~\cite{CIT:wall-crossings},  Iritani~\cite{Iritani:integral}, and
Reichelt--Sevenheck~\cite{Reichelt--Sevenheck}. 
In the rest of this section (\ref{sec:toric_mirror_symmetry}) we consider the mirrors to certain toric orbifolds, following 
closely the exposition in~\cite[\S 3]{Iritani:integral}. 

\subsubsection{Toric Orbifolds as GIT Quotients}
\label{sec:GIT}
Borisov--Chen--Smith construct toric Deligne--Mumford stacks from so-called \emph{stacky fans}~\cite{BCS}.  Let $X$ be the toric Deligne--Mumford stack corresponding to the stacky fan $(\bN;\Sigma;b_1,\ldots,b_m)$, so that:
\begin{itemize} 
\item  $\bN$ is a finitely generated abelian group; 
\item  $\Sigma$ is a rational simplicial fan in $\bN_\R = \bN \otimes_\Z \R$; 
\item  $b_1,\dots,b_m\in \bN$ are such that their images in $\bN_\R$ generate the one-dimensional cones of $\Sigma$.  
\end{itemize} 
Let $\Delta\subset \bN_\R$ denote 
the convex hull of $b_1,\dots,b_m$. 
This is called the \emph{fan polytope} of $X$. We assume that:
\begin{itemize}
\item $X$ is an orbifold, i.e.~the generic isotropy of $X$ is trivial.  
This amounts to requiring that $\bN$ is torsion-free.
\item the coarse moduli space of $X$ is projective.  
This amounts to requiring that $\Delta$ contains the origin in its strict interior 
and that $\Sigma$ admits a strictly convex piecewise-linear function.
\item $X$ is weak Fano.  
This amounts to requiring that $b_1,\ldots,b_m$ lie on the boundary 
of $\Delta$. 
\item $\Delta \cap \bN$ generates $\bN$ over $\Z$.
\end{itemize}
We now explain how to construct $X$ as a GIT quotient.

Set $\Delta \cap \bN = \{b_1,\dots,b_m, b_{m+1}, \dots, b_n\}$, with $n\ge  m$,
and define a lattice $\bL\subset \Z^{n}$ by the exact sequence 
\begin{equation} 
\label{eq:fanseq}
\xymatrix{
  0 \ar[r] & \bL \ar[r] &\Z^n \ar[r]^-{\beta} & \bN \ar[r] & 0
}
\end{equation} 
where $\beta$ is the homomorphism that sends the $i$th standard basis 
vector $e_i$ to $b_i$. 
The torus $\T:=\bL \otimes \C^\times$ acts on $\C^{n}$ 
via the inclusion $\T \subset (\C^\times)^{n}$ induced 
by $\bL \subset \Z^n$. 
We denote by $\cA_{\Sigma}$ the set of anti-cones, that is, the set of 
subsets $I\subset \{1,\dots,n\}$ 
such that $I$ contains $\{m+1,\dots,n\}$ and 
that $\{b_i : i \in \{1,\dots,n\} \setminus I\}$ 
spans a cone of the fan $\Sigma$. 
Set
\[
\cU_\Sigma = \C^{n} \setminus \bigcup_{I \notin \cA_\Sigma} \C^I
\]
where $\C^I = \{ (z_1,\dots,z_n)\in \C^n : 
\text{$z_i = 0$ for $i \notin I$}\}$. 
The toric Deligne--Mumford stack $X$ is constructed as the quotient stack: 
\begin{equation} 
\label{eq:toricstack}
X = \big[\cU_\Sigma/\T\big]
\end{equation} 

Let $M\colon \Z^{n} \to \bL^\vee = \Hom(\bL,\Z)$ 
be the map dual to the inclusion $\bL \subset \Z^{n}$. 
The vector space $\bL^\vee_\R = \bL^\vee \otimes \R$ 
is canonically identified with $H^{\le 2}_{\CR}(X)$~\cite[Remark 3.5]{Iritani:integral}. 
The \emph{extended K\"{a}hler cone} is a cone $C_X \subset \bL^\vee_\R$ 
defined by:
\[
C_{X} = 
\bigcap_{I\in \cA_{\Sigma}} M(\R_{>0}^I)
\] 
Under the identification $\bL_\R^\vee \cong H^{\le 2}_{\CR}(X)$, 
this matches with the product of the ordinary 
K\"{a}hler (or ample) cone $\Amp(X)  
\subset H^2(X;\R)$ and the rays generated by positive generators 
of the twisted sectors in $H^{\le 2}_{\CR}(X)$ and $1\in H^0(X)$ 
(see~\cite[Lemma 3.2]{Iritani:integral}). 
The \emph{extended anticanonical class} 
$- K_X^{\rm ext} :=  M(e_1+ \cdots + e_n) \in \bL^\vee$ 
projects to the usual anticanonical class $-K_X\in H^2(X)$ 
and lies in the closure $\ov{C}_{X}$ of 
$C_{X}$ by the weak Fano condition.  The space $\bL^\vee$ is the space of stability conditions for the action of $\T$ on $\C^n$, and for any stability condition $\theta$ in the extended K\"ahler cone $C_X$, we have that the GIT (stack) quotient $\big[\C^n /\!\!/_\theta \T\big]$ is equal to $X$, because $\big[\C^n /\!\!/_\theta \T\big] = \big[\cU_\Sigma/\T\big]$ as in \eqref{eq:toricstack}.

\subsubsection{Birational Toric Orbifolds Arising from Variation of GIT} 
\label{sec:variation_of_GIT}
We can have several different projective stacky fan structures 
with the same fan polytope $\Delta$. 
The corresponding toric stacks are birational and are 
related by variation of GIT.
Reversing the above construction, start now with an integral 
polytope $\Delta \subset \bN_\R$ such that the origin is 
contained in its strict interior and that $\Delta \cap \bN$ generates 
$\bN$ over $\Z$. 
Set $\Delta \cap \bN = \{b_1,\dots,b_n\}$ as before. 
These vectors define the exact sequence \eqref{eq:fanseq} and thus define an action of
$\T := \bL \otimes \C^\times$ on $\C^{n}$. 
A character $\theta \in \bL^\vee = \Hom(\T,\C^\times)$ 
of $\T$ defines a stability condition for this action. 
Set $C_{\rm eff} = M(\R_{\ge 0}^n)$, where 
$M \colon \Z^n \to \bL^\vee$ denotes the map dual 
to the inclusion $\bL \subset \Z^n$ as before; this is a strictly convex cone. 
Also define $\Wall \subset C_{\rm eff}$ to be the union of the cones 
$M(\R_{\ge 0}^I)$ for all subsets $I\subset \{1,\dots,n\}$ 
such that $\{M(e_i) : i\in I\}$ does not span $\bL_\R^\vee$ over $\R$.  The walls $\Wall$ give a wall and chamber structure on $C_{\rm eff}$; this is the \emph{secondary fan} of Gelfand--Kapranov--Zelevinsky~\cite{GKZ}.
The GIT (stack) quotient $X_\theta := \big[\C^n /\!\!/_\theta \T\big]$ is empty unless the stability parameter 
$\theta$ lies in $C_{\rm eff}$. 
If $\theta\in C_{\rm eff} \setminus \Wall$ then there are no strictly $\theta$-semistable points in $\C^n$.
Take $\theta \in C_{\rm eff} \setminus \Wall$ and 
set $\cA_\theta = \{ I \subset \{1,\dots,n\} : \theta \in M(\R_{>0}^I) \}$. 
The corresponding GIT quotient $X_\theta$ 
is the projective toric Deligne--Mumford stack given by the 
following stacky fan on $\bN$: 
\begin{itemize} 
\item $b_i$ is a specified generator of a one-dimensional cone 
if and only if $\{1,\dots,n\} \setminus \{i\} \in \cA_\theta$; 

\item a subset $\{b_i: i\in I\}$ spans a cone of the fan if and only if 
$\{1,\dots,n\} \setminus I \in \cA_\theta$. 
\end{itemize} 
Note that $\cA_\theta$ coincides with the set of anti-cones for
this fan.
The corresponding extended K\"{a}hler cone $C_\theta := 
\bigcap_{I \in \cA_\theta} M(\R_{>0}^I)$ 
is the connected component of $C_{\rm eff} \setminus \Wall$ 
containing $\theta$. 
The toric stack $X_\theta$ depends on $\theta$ only via the chamber 
$C_\theta$. 
The fan polytope of $X_\theta$ is a polytope contained in 
$\Delta$ and contains the origin in its interior. 
 
Let $\Toric(\Delta)$ denote the set of smooth projective toric 
stacks arising in this way; they are parametrized 
by connected components of $C_{\rm eff} \setminus \Wall$, that is, by maximal cones in the secondary fan.
When a chamber $C_{\theta}\subset C_{\rm eff} \setminus \Wall$ 
contains the vector $M(e_1 + \cdots + e_n)$ in its closure, 
the corresponding toric stack $X_\theta$ is weak Fano. 
In this case, the fan polytope of $X_\theta$ coincides with $\Delta$ 
and all the generators of one-dimensional cones of the stacky fan
lie in the boundary of $\Delta$~\cite[Lemma 3.3]{Iritani:integral}. 
Let $\Crep(\Delta)\subset \Toric(\Delta)$ denote the subset 
consisting of toric stacks corresponding to a chamber $C_\theta$ 
with $M(e_1 + \cdots + e_n) \in \ov{C}_\theta$. 
Toric stacks from $\Crep(\Delta)$ are all $K$-equivalent and also 
derived equivalent to each other, via a composition of Fourier--Mukai transformations~\cite{Kawamata,Coates--Iritani--Jiang--Segal}.

\subsubsection{Mirror Landau--Ginzburg Models} 
\label{sec:LG_model}
Let $X$ be a toric Deligne--Mumford stack, as in \S\ref{sec:GIT}.
The mirror of $X$ 
is given by a family of 
Laurent polynomials $W_a$ on $\check{T} = \Hom(\bN, \C^\times) 
\cong (\C^\times)^D$ 
parametrized by $a =(a_1,\dots, a_{n}) \in (\C^\times)^{n}$: 
\begin{equation} 
\label{eq:LGmodel-W}
W_a(x) = \sum_{i=1}^{n} a_i x^{b_i}
\end{equation} 
The torus $\check{T} = \Hom(\bN,\C^\times)$ acts 
on the product $(\C^\times)^{n} \times \check{T}$ by 
\begin{align*}
  (a_1,\dots,a_{n}, x) \longmapsto 
  (\lambda^{b_1} a_1,\dots, \lambda^{b_{n}} a_{n}, 
  \lambda^{-1} \cdot x) &&
  \lambda \in \check{T} 
\end{align*}
and the potential $W_a(x)$ is invariant under this action. 
The family 
of Laurent polynomials $\{W_a\}_{a\in (\C^\times)^{n}}$ therefore descends to give a family over the quotient space 
$\cM_{\rm B} := (\C^\times)^n/\check{T}$: 
\[
\xymatrix{
  ((\C^\times)^n \times \check{T}) / \check{T}  \ar[r]^-{\cW} \ar[d]_\pr & \C \\
  \cM_{\rm B} 
}
\]
where $\pr$ is the projection to the first factor and 
$\cW([a,x]) = W_a(x)$. 
Note that $\cM_{\rm B}$ is identified with 
$\Hom(\bL,\C^\times) = \bL^\vee \otimes \C^\times$ 
via the exact sequence \eqref{eq:fanseq}. 
For $y\in \cM_{\rm B}$, we write $\check{T}_y := \pr^{-1}(y) 
\cong \check{T}$ and write $W_y$ for the Laurent polynomial 
$\cW$ restricted to $\check{T}_y$.  
The parameter space $\cM_{\rm B}$ is partially compactified to 
a toric variety $\ov\cM_{\rm B}$ defined by the secondary 
fan in $\bL_\R^\vee = \Hom(\bL, \R)$. 
Note that all toric stacks from $\Crep(\Delta)$ have the 
same mirror family, 
but each of them corresponds to a different torus fixed point 
in the secondary toric variety $\ov\cM_{\rm B}$. 
We call the fixed point in $\ov{\cM}_{\rm B}$ corresponding to 
a toric stack $X\in \Crep(\Delta)$ 
the \emph{large radius limit point} for $X$ and denote it by $o_X$. 
(A toric stack $X$ from $\Toric(\Delta)\setminus \Crep(\Delta)$ 
also corresponds to a fixed point $o_X \in \ov{\cM}_{\rm B}$, but  
in this case $X$ either is non-weak-Fano or has a fan polytope different 
from $\Delta$; for such $X$, genus-zero mirror symmetry 
in the form stated below does not hold.) 

\subsubsection{The B-Model TEP Structure}
\label{sec:B-model_TEP}
We now construct the B-model TEP structure from the Landau--Ginzburg 
model. Let $\cMo_{\rm B}\subset \cM_{\rm B}$ denote the (non-empty)
Zariski open subset parametrizing non-degenerate Laurent polynomials.  
Here a Laurent polynomial $W_a$ is said to be \emph{non-degenerate}~\cite[1.19]{Kouchnirenko}  
if for every face $F \subset \Delta$ of dimension 
$0\le \dim F <D$, the Laurent polynomial 
$W_{F, a} := \sum_{i: b_i \in F} a_i x^{b_i}$ 
has no critical points in $\check{T}$. 
There is a local system $R^\vee_\Z$ of relative homology groups 
over $\cMo_{\rm B}\times \C^\times$ such that 
\[
R_{\Z, (y,z)}^\vee= H_D \left( \check{T}_y, \{ x \in \check{T}_y : 
\Re(W_y(x) /z) \ll 0\} ; \Z \right) 
\]
for $(y,z) \in \cMo_{\rm B}\times \C^\times$; see~\cite[Proposition 3.12]{Iritani:integral}.
By Morse theory and Kouchnirenko's theorem~\cite{Kouchnirenko}, 
we find that $R_{\Z, (y,z)}^\vee$ is free of rank $\Vol(\Delta)$, 
with basis given by \emph{Lefschetz thimbles} of $W_y$,  
and that the intersection pairing between the 
fibers at $(y,-z)$ and $(y,z)$
\[
I^\vee \colon R^\vee_{\Z,(y,-z)} \times R^\vee_{\Z,(y,z)} \to \Z 
\]
is perfect. 
Here $\Vol(\cdot)$ denotes a normalized volume such that 
the standard simplex has volume one. 
Dualizing, we obtain a local system $R_\Z = \Hom(R_\Z^\vee, \Z)$ 
of relative cohomology groups equipped with a perfect pairing 
$I \colon R_{\Z, (y,-z)} \times R_{\Z,(y,z)} \to \Z$. 
We write $\cR = R_\Z \otimes \cO_{\cMo_{\rm B} \times \C^\times}$ 
for the corresponding locally free sheaf; this carries
a flat connection $\nabla^{\rm GM}$ and a pairing 
$I \colon (-)^* \cR \otimes \cR \to \cO_{\cMo_{\rm B}\times \C^\times}$, 
where $(-) \colon \cMo_{\rm B}\times \C \to \cMo_{\rm B} \times \C$ 
is the map sending $(y,z)$ to $(y,-z)$ as before. 

Let $\omega$ denote the invariant holomorphic volume form 
on the torus $\check{T}$ such that $\int_{\check{T}_\R} \omega 
= (2\pi\iu)^D$ with $\check{T}_\R = \Hom(\bN,\R_{>0})$. 
An oscillatory differential form of the form 
\[
\exp(W_y(x)/z) \phi(x) \omega \quad 
\text{with} \quad \phi(x) \in \C[\check{T}_y]
\]
defines a section of $\cR$ via integration over Lefschetz thimbles. 
By requiring that these sections extend across $z=0$, we can define 
a locally free extension $\cF_{\rm B}$ of $\cR$ to 
$\cMo_{\rm B}\times \C$; this extension is denoted by $\cR^{(0)}$ in~\cite[\S 3.3.2]{Iritani:integral}. 
The flat connection $\nabla^{\rm GM}$ extends to a meromorphic 
flat connection on $\cF_{\rm B}$ with poles along $z=0$. 
The B-model TEP structure\footnote{
The shift $-\frac{D}{2} \frac{dz}{z}$ of the connection was introduced 
implicitly in~\cite[Equation (53)]{Iritani:integral} as a factor 
$(-2\pi z)^{-D/2}$ in oscillatory integrals; this also shifts the pairing 
by the factor $(2\pi\iu z)^{-D}$~\cite[Equation~56]{Iritani:integral}. 
Note that $I$ is flat with respect to 
$\nabla^{\rm GM}$ and $(\cdot,\cdot)_{\rm B}$ is flat 
with respect to $\nabla$. 
The sign factor $(-1)^{\frac{D(D-1)}{2}}$ was missing in~\cite{Iritani:integral}.  
See~\cite[footnote~16]{Iritani:periods}. 
} is given by the data  
$(\cF_{\rm B}, \nabla^{\rm B}, (\cdot,\cdot)_{\rm B})$ where: 
\begin{itemize} 
\item $\cF_{\rm B}$ is as defined above.  This is a locally free sheaf of rank $\Vol(\Delta)$ 
over $\cMo_{\rm B}\times \C$; 
\item $\nabla^{\rm B} = \nabla^{\rm GM} - \frac{D}{2} \frac{dz}{z}$; 
\item $(s_1,s_2)_{\rm B} = (-1)^{\frac{D(D-1)}{2}}
(2\pi\iu z)^{-D} I(s_1,s_2)$ is a pairing 
$(-)^* \cF_{\rm B} \otimes \cF_{\rm B} \to \cO_{\cMo_{\rm B}\times \C}$. 
\end{itemize} 
See~\cite{Sabbah:hypperiod, Douai--Sabbah:I, Douai--Sabbah:II, Reichelt--Sevenheck} 
for an algebraic construction of the B-model TEP structure via 
the Fourier--Laplace transformation of the Gauss-Manin system 
associated to $W_y$. 
The B-model TEP structure can be also described as a GKZ system 
associated to the fan polytope $\Delta$~\cite{Iritani:integral, Reichelt--Sevenheck}. 

\subsubsection{The Mirror Map and an Isomorphism of TEP Structures} 
Mirror symmetry gives an isomorphism between the A-model TEP 
structure (Example~\ref{ex:AmodelTP}) and the B-model TEP 
structure, as we now explain. 
First we recall the \emph{Galois action}~\cite[\S 2.2]{Iritani:integral} 
on the A-model TEP structure. 
In general, the A-model TEP structure of a smooth Deligne--Mumford stack $X$ 
has a discrete symmetry given by the sheaf cohomology 
$H^2(X;\Z)$ of the underlying topological stack $X$. 
The base space $H^\bullet_{\CR}(X)$ of the A-model TEP structure carries  
an action of $H^2(X;\Z)$, and the TEP structure descends to 
the quotient space $H^\bullet_{\CR}(X)/H^2(X;\Z)$. 
(This is essentially due to the Divisor Equation.) Let $(\cF_{\rm A},\nabla^{\rm A}, (\cdot,\cdot)_{\rm A})/H^2(X;\Z)$ 
denote the A-model TEP structure over $(H^\bullet_{\CR}(X)/H^2(X;\Z)) \times \C$. 
Let $X$ be a toric stack from $\Crep(\Delta)$;  
recall that there is a corresponding large radius limit point $o_X\in \ov{\cM}_{\rm B}$. 
The Mirror Theorems for toric varieties~\cite{Givental:toric} and toric Deligne--Mumford stacks~\cite{CCIT:mirror} imply, by~\cite[Proposition 4.8]{Iritani:integral}, that there exist an open neighbourhood $U_X$ of $o_X$ in $\ov{\cM}_{\rm B}$,  
a mirror map $\tau \colon U_X \cap \cMo_{\rm B} 
\to H^{\le 2}_{\CR}(X)/H^2(X;\Z)$, 
and a mirror isomorphism: 
\[
\Mir: 
(\cF_{\rm B}, \nabla^{\rm B}, (\cdot,\cdot)_{\rm B}) 
\Bigr|_{(U_X\cap \cMo_{\rm B}) \times \C} 
\cong 
(\tau\times \id)^*
\left( (\cF_{\rm A}, \nabla^{\rm A}, (\cdot, \cdot)_{\rm A} ) /H^2(X;\Z) \right) 
\] 
such that:
\[
\Mir([\exp(W_y(x)/z) \omega]) = \unit 
\]
The open set $U_X \cap \cMo_{\rm B}$ here is 
isomorphic to the punctured polydisc 
$\{(q_1,\dots,q_r)\in (\C^{\times})^r: |q_a|<\epsilon\}$ for some $\epsilon>0$
(see~\cite[Lemma 3.8]{Iritani:integral}) 
and the A-model TEP structure is convergent on the image of the mirror map. 

\subsubsection{The Extended B-model TEP Structure} 
Our global quantization formalism is based on a miniversal TP structure 
(Assumption~\ref{assump:miniversal}), but 
the B-model TEP structure just discussed is not miniversal.  
So we need to unfold it to a miniversal TEP structure. 
We use a reconstruction theorem due to 
Hertling--Manin~\cite[Theorem 2.5, Lemma 3.2]{Hertling--Manin:unfoldings} 
to show that for generic $y\in \cMo_{\rm B}$,  the germ 
$(\cF_{\rm B},\nabla^{\rm B},(\cdot,\cdot)_{\rm B})
|_{(\cM_{\rm B},y)\times \C}$ of a TEP structure at $y$ 
can be extended to a miniversal TEP structure over 
$(\cMo_{\rm B},y) \times (\C^{\Vol(\Delta)-r},0)\times \C$, where 
$r = \dim \cM_{\rm B} = n - D$. 
For this, we need to check Hertling--Manin's injectivity condition (IC) and  
generation condition (GC). 
The condition (IC) says that there exists a local section $\zeta$ near $y$ 
such that the map $T_y\cMo_{\rm B} \to \cF_{\rm B}|_{(y,0)}$ defined 
by $v \mapsto z \nabla^{\rm B}_v \zeta|_{(y,0)}$ is injective. 
The condition (GC) says that the iterated derivatives 
$z \nabla^{\rm B}_{v_1} \cdots z \nabla^{\rm B}_{v_k} \zeta|_{(y,0)}$ with 
respect to local vector fields 
$v_1,\dots,v_k\in T \cMo_{\rm B}$ generate 
the fiber $\cF_{\rm B}|_{(y,0)}$. 
We claim that (IC) and (GC) holds for $\zeta = [\exp(W_y/z) \omega]$ 
and for generic $y$. 
Since the mirror map $\tau$ is an embedding and since (IC) holds 
for the A-model TEP structure, we deduce that (IC) holds for the B-model 
TEP structure at generic $y\in\cMo_{\rm B}$. 
Since the B-model TEP structure is isomorphic to a GKZ 
system~\cite[Proof of Proposition 4.8]{Iritani:integral} and since
the GKZ system is by definition cyclic, (GC) holds. 
By the universality of the unfolding~\cite[Definition 2.3]{Hertling--Manin:unfoldings},  
these local unfoldings glue together~\cite{CI:neighbourhood} 
to give a global unfolding 
$(\cF^{\rm ext}_{\rm B},\nabla^{\rm B, ext}, (\cdot,\cdot)_{\rm B, ext})$ 
of $(\cF_{\rm B},\nabla^{\rm B},(\cdot,\cdot)_{\rm B})$ 
over an extended B-model moduli space $\cM_{\rm B}^{\rm ext}$,
which is a complex manifold of dimension 
$\Vol(\Delta)$ containing a Zariski open subset of $\cMo_{\rm B}$ 
as a submanifold. 
Moreover, by universality again, the mirror map $\tau$ 
and the mirror isomorphism $\Mir$ can be extended to a neighbourhood 
$U_X^{\rm ext}$ of $U_X \cap \cMo_{\rm B}$ in $\cM_{\rm B}^{\rm ext}$,
where $X \in \Crep(\Delta)$: 
\begin{align*} 
  \tau^{\rm ext} & \colon U_X^{\rm ext} \to H^\bullet_{\CR}(X)/H^2(X;\Z) \\ 
\Mir^{\rm ext} & \colon (\cF_{\rm B}^{\rm ext}, \nabla^{\rm B,\rm ext},
(\cdot,\cdot)_{\rm B, ext} ) \Bigr|_{U_X^{\rm ext} \times \C} 
\cong 
(\tau^{\rm ext} \times \id)^* 
\left((\cF_{\rm A}, \nabla^{\rm A}, (\cdot,\cdot)_{\rm A})/H^2(X;\Z)
\right)
\end{align*} 
(More precisely, we need here the convergence of the A-model TEP structure 
over a full-dimensional base, but this follows from the reconstruction argument:
see~\cite[Lemma 2.9]{Hertling--Manin:unfoldings} and~\cite[\S 5.5]{CIR}.)  

\subsubsection{Conclusion} 
Let $\Fock_{\rm B}$ denote the Fock sheaf over 
$\cM_{\rm B}^{\rm ext}$ associated to the 
extended B-model TEP structure 
$(\cF^{\rm ext}_{\rm B},\nabla^{\rm B, ext}, (\cdot,\cdot)_{\rm B, ext})$. 
We call it the \emph{B-model Fock sheaf}. 
Via the mirror isomorphism, $\Fock_{\rm B}$ 
restricts to the A-model Fock sheaf of $X$ over $U_X^{\rm ext}$. 
The extended B-model TEP structure 
is tame semisimple (Definition~\ref{def:tamesemisimple}) 
on an open dense subset $\cM_{\rm B,\, ss}^{\rm ext}$ 
of $\cM_{\rm B}^{\rm ext}$, 
because the Jacobi ring of $W_y$ is semisimple for a generic $y
\in \cMo_{\rm B}$~\cite[Proposition~3.10]{Iritani:integral}.  
Therefore $\Fock_{\rm B}$ admits the Givental wave function 
(Definition~\ref{def:Giventalwavefcn}) over $\cM_{\rm B,\, ss}^{\rm ext}$,  
and by Theorem~\ref{thm:Teleman} (Teleman's theorem) this coincides with 
the Gromov--Witten wave function of $X$ over $U_X^{\rm ext}$. 
This proves: 
\begin{theorem}
    \label{thm:CTC_higher_genus}
There exists a global section $\wave_{\rm B}$ 
of the B-model Fock sheaf $\Fock_{\rm B}$ 
such that for every $X \in \Crep(\Delta)$, $\wave_{\rm B}$ 
restricts to the Gromov--Witten wave function of $X$ over 
the neighbourhood $U_X^{\rm ext}$ 
of the large radius limit point $o_X$ of $X$, under the identification 
$\Fock_{\rm B}|_{U_X^{\rm ext}} \cong \Fock_X$ 
given by genus-zero mirror symmetry. 
In particular, the Gromov--Witten wave functions $\wave_X$ 
associated to $X\in \Crep(\Delta)$ coincide with each other after analytic continuation.
\end{theorem}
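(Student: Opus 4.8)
The plan is to assemble \Cref{thm:CTC_higher_genus} from the pieces that have already been put in place in the preceding subsections, essentially by tracking a single global section of the B-model Fock sheaf $\Fock_{\rm B}$ through the mirror isomorphisms. First I would note that the extended B-model TEP structure $(\cF^{\rm ext}_{\rm B},\nabla^{\rm B,ext},(\cdot,\cdot)_{\rm B,ext})$ over $\cM_{\rm B}^{\rm ext}$ is miniversal (by construction via Hertling--Manin, using conditions (IC) and (GC) for $\zeta=[\exp(W_y/z)\omega]$) and is tame semisimple over a dense open subset $\cM^{\rm ext}_{\rm B,\,ss}$, since the Jacobi ring of a generic non-degenerate $W_y$ is semisimple. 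Restricting to the formal neighbourhood of $z=0$ gives a tame semisimple cTEP structure, so \Cref{def:Giventalwavefcn} applies: it carries a canonical semisimple opposite module $\sfPss$ and hence a Givental wave function $\wave_{\rm B}\in\Fock_{\rm B}(\cM^{\rm ext}_{\rm B,\,ss})$. Because the Fock sheaf is glued by the transformation rule, which (\Cref{prop:cocycle}) satisfies the cocycle condition, this $\wave_{\rm B}$ is a genuine global section over $\cM^{\rm ext}_{\rm B,\,ss}$; I would need to remark that $\cM^{\rm ext}_{\rm B}\setminus\cM^{\rm ext}_{\rm B,\,ss}$ is a proper analytic subset, so whatever identifications are made over the semisimple locus propagate.

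Next I would invoke the extended mirror isomorphism. For each $X\in\Crep(\Delta)$ we have, on a neighbourhood $U_X^{\rm ext}$ of the large radius limit $o_X$, the map $\tau^{\rm ext}$ and the isomorphism $\Mir^{\rm ext}$ of TEP structures between the extended B-model structure and $(\tau^{\rm ext}\times\id)^*$ of the A-model TEP structure of $X$ (modulo the Galois action of $H^2(X;\Z)$). Since an isomorphism of (c)TEP structures identifies bases, connections, pairings and hence total spaces $\LL$, opposite modules, propagators and the whole Feynman-rule machinery, it induces an isomorphism of Fock sheaves $\Fock_{\rm B}|_{U_X^{\rm ext}}\cong\Fock_X$ over $U_X^{\rm ext}$ (after descending the A-model side by $H^2(X;\Z)$, which is harmless because the Gromov--Witten wave function descends — this is the content of the Divisor-Equation remark surrounding \Cref{def:GW-wave}). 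Crucially, the semisimple opposite module $\sfPss$ is intrinsic to the cTEP structure, so $\Mir^{\rm ext}$ carries the B-model $\sfPss$ to the A-model $\sfPss$; therefore $\wave_{\rm B}|_{U_X^{\rm ext}}$ is sent precisely to the Givental wave function of the A-model cTEP structure of $X$ in the sense of \Cref{def:Giventalwavefcn}.

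The final step is to identify this with the Gromov--Witten wave function $\wave_X$ of \Cref{def:GW-wave}. Here I would apply \Cref{thm:Teleman} (Teleman's theorem as reformulated): since $X\in\Crep(\Delta)$ has generically semisimple quantum cohomology (the Jacobi ring of the mirror is generically semisimple, and genus-zero mirror symmetry identifies it with the small quantum cohomology), the Gromov--Witten wave function equals the Givental wave function for the A-model cTEP structure of $X$ over $\cM_{\rm A}^{\rm ss}$, hence in particular over (the image under $\tau^{\rm ext}$ of) $U_X^{\rm ext}$. Composing, $\wave_{\rm B}$ restricts to $\wave_X$ on $U_X^{\rm ext}$ for every $X\in\Crep(\Delta)$. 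The ``analytic continuation'' statement is then immediate: $\cM^{\rm ext}_{\rm B}$ is connected, the two neighbourhoods $U_{X_1}^{\rm ext}$ and $U_{X_2}^{\rm ext}$ both lie in it, and both $\wave_{X_1}$ and $\wave_{X_2}$ are restrictions of the single global section $\wave_{\rm B}$, so they are analytic continuations of one another along any path in $\cM_{\rm B}^{\rm ext}$ joining $o_{X_1}$ to $o_{X_2}$.

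The main obstacle, I expect, is not any single computation but the bookkeeping of three compatibilities that must all hold simultaneously: (i) that $\Mir^{\rm ext}$ really is an isomorphism of the \emph{extended, unfolded} TEP structures on a full-dimensional base — this rests on the universality in Hertling--Manin \cite{Hertling--Manin:unfoldings} and the gluing of local unfoldings, together with the convergence of the A-model TEP structure over a full-dimensional base, which the excerpt handles by reference to \cite{CI:neighbourhood} and \cite[\S5.5]{CIR}; (ii) that the Galois quotient by $H^2(X;\Z)$ on the A-model side is compatible with the Fock-sheaf construction, i.e.\ the Gromov--Witten wave function and the standard/semisimple opposite modules all descend; and (iii) that the semisimple opposite module $\sfPss$ is preserved under $\Mir^{\rm ext}$ — this is essentially automatic from \Cref{def:semisimple_opposite} once one knows the residual operator $\cU$ of $\nabla_{z\partial_z}$ is transported by $\Mir^{\rm ext}$, but it is the linchpin of the argument and deserves an explicit sentence. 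Everything else is a direct appeal to \Cref{thm:Teleman}, \Cref{def:Giventalwavefcn}, and \Cref{prop:cocycle}.
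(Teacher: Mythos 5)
Your proposal is correct and follows essentially the same route as the paper: construct the Givental wave function of the tame semisimple extended B-model cTEP structure over the semisimple locus of $\cM_{\rm B}^{\rm ext}$, transport it through the (extended) mirror isomorphism of TEP structures to identify $\Fock_{\rm B}|_{U_X^{\rm ext}}$ with $\Fock_X$, and invoke Theorem~\ref{thm:Teleman} to conclude it restricts to the Gromov--Witten wave function of each $X\in\Crep(\Delta)$, whence the analytic-continuation statement. The extra compatibility points you flag (intrinsicness of $\sfPss$, the Galois quotient by $H^2(X;\Z)$, universality of the unfolding) are exactly the ingredients the paper relies on implicitly, so no further changes are needed.
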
 

This is a higher-genus version of Ruan's Crepant Transformation Conjecture. 
Note that analytic continuation for sections of a Fock sheaf makes sense 
since we have the ``Identity Theorem'' for its sections, 
just as the Identity Theorem for holomorphic functions. 
Note also that the B-model Fock sheaf $\Fock_{\rm B}$ depends only on the 
cTEP structure underlying the extended B-model TEP structure 
$(\cF^{\rm ext}_{\rm B}, \nabla^{\rm B,ext}, (\cdot,\cdot)_{\rm B,ext})$: 
as discussed in Remark \ref{rem:analyticity_in_z}, 
the analytic structure of the Fock sheaf is independent of the choice of a lift of 
the cTEP structure to a TEP structure. 
On the other hand, the lift to a TEP structure constitutes a crucial 
piece of information in the \emph{genus-zero} crepant transformation conjecture. 
It also plays a role in Corollary \ref{cor:CTC_higher_genus} below.

We can rephrase Theorem \ref{thm:CTC_higher_genus} 
in terms of the $L^2$-formalism in \S \ref{subsec:global_L2}, as follows.   
Mirror symmetry implies that 
for any $X_1$, $X_2\in \Crep(\Delta)$, the A-model TEP structures 
of $X_i$ for $i=1,2$ 
are analytically continued to each other over the B-model moduli space 
$\cM_{\rm B}^{\rm ext}$. 
Recall from Example \ref{exa:Darbouxframe_fundsol} that the 
fundamental solution $L_i = L_i(\tau,z)$ 
of the Dubrovin connection of $X_i$ 
(see equation \ref{eq:fundamentalsolution}) 
defines a Darboux frame for the A-model TEP structure of $X_i$. 
The solution $L_i$ can be analytically continued along 
any path in $\cM_{\rm B}^{\rm ext}$ 
to yield a frame $L_i$ of the B-model TEP structure 
$\big(\cF_{\rm B}^{\rm ext},\nabla^{\rm B, ext}, 
(\cdot,\cdot)_{\rm B, ext}\big)$ over the universal covering 
$\tcM_{\rm B}^{\rm ext}$ of $\cM_{\rm B}^{\rm ext}$: 
\[
L_i \colon \cH^{X_i} \xrightarrow{\phantom{A}\cong\phantom{A}} 
L^2\left(\{t\} \times S^1, 
\cF_{\rm B}^{\rm ext}\big|_{\{t\}\times S^1} \right) \qquad 
\text{with} \quad 
t\in \tcM_{\rm B}^{\rm ext}  
\]
where $\cH^{X_i}$ is Givental's symplectic vector space 
(\S \ref{subsec:Givental-symplecticvs}) for $X_i$. 
The frame $L_i$ satisfies the transversality condition 
in Definition \ref{def:Darboux_frame}(2) over an  
open dense subset of $\tcM_{\rm B}^{\rm ext}$ 
and thus gives a Darboux frame there. 
Then Givental's wave function $\wave_{\rm B} 
\in \Fock_{\rm B}$ induces an element of the Fock space 
in the $L^2$-setting (Definition \ref{def:Fockspace_L2}) 
with respect to the Darboux frame $L_i$, 
by Remark \ref{rem:Fockspace_comparison}. 
As in Definition \ref{def:transformation_L2},  
the $L^2$-Fock space element here is represented by 
a total potential $\cZ_i$ that is an analytic function 
on the Givental cone associated to $L_i$. 
Note that, via the projection to $\cH^{X_i}_+$, 
$\cZ_i$ can be identified with an analytic continuation of the total descendant 
potential $\exp(\sum_{g=0}^{\infty} \hbar^{g-1} \cF^g_{X_i,\rm an})$ 
of $X_i$ by Theorem \ref{thm:ancdec-analytic} 
(see Definition \ref{def:F^g_Xan} for $\cF^g_{X_i,\rm an}$). 
Choose a path $\gamma$  from a point 
in $U_{X_1}^{\rm ext}$ to a point in $U_{X_2}^{\rm ext}$.  
Analytic continuation along the path $\gamma$ defines 
a symplectic transformation 
\[
\U_\gamma \colon \cH^{X_1} \to \cH^{X_2}  
\]
The two Darboux frames  
$L_1$ and $L_2$ are related by $L_1 = L_2 \U_\gamma$. 
Then we have 
\begin{corollary} 
\label{cor:CTC_higher_genus} 
Let $X_1$, $X_2$ be toric Deligne--Mumford stacks 
from $\Crep(\Delta)$ and let $\cZ_i$ be 
the total descendant potential for $X_i$ for $i=1,2$. 
Let $\gamma$ be a path in $\cM_{\rm B}^{\rm ext}$ 
from a point in $U_{X_1}^{\rm ext}$ 
to a point in $U_{X_2}^{\rm ext}$ 
and let $\U_\gamma$ be the symplectic transformation 
given by parallel translation along $\gamma$. 
Regarding $\cZ_i$ as an element of the $L^2$-Fock space 
as above, we have
\[
\cZ_2 \propto \hUU_\gamma \cZ_1 
\]
under analytic continuation along the path $\gamma$. 
\end{corollary}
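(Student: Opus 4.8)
The plan is to deduce this statement from Theorem \ref{thm:CTC_higher_genus} together with the comparison results between global quantization and the $L^2$-formalism established in \S\ref{subsec:global_L2}. First I would recall the set-up: by Theorem \ref{thm:CTC_higher_genus} there is a single global section $\wave_{\rm B}$ of the B-model Fock sheaf $\Fock_{\rm B}$ over $\cM_{\rm B}^{\rm ext}$ which restricts, near each large radius limit point $o_{X_i}$, to the Gromov--Witten wave function $\wave_{X_i}$ under the identification $\Fock_{\rm B}|_{U_{X_i}^{\rm ext}} \cong \Fock_{X_i}$ of genus-zero mirror symmetry. The analytically-continued fundamental solution $L_i$ gives a Darboux frame (in the sense of Definition \ref{def:Darboux_frame}) on an open dense subset of the universal cover $\tcM_{\rm B}^{\rm ext}$; by Remark \ref{rem:Fockspace_comparison}, restricting $\wave_{\rm B}$ along these frames produces elements of the $L^2$-Fock spaces $\Fock_{L^2}(\cU_i;L_i)$, which by Definition \ref{def:transformation_L2} are represented by total potentials $\cZ_i$, analytic functions on the Givental cones $\cL_i$ associated to $L_i$.

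Next I would identify $\cZ_i$ with the analytically-continued Gromov--Witten total descendant potential of $X_i$. The point is that the frame $L_i = L_i(\tau,z)$ coming from the fundamental solution \eqref{eq:fundamentalsolution} is exactly the frame used in Example \ref{exa:Darbouxframe_fundsol}, and the genus-zero potential $C^{(0)}$ associated to this frame is $\cF^0_{X_i,\rm an}$ by Theorem \ref{thm:ancdec-analytic} (cf.\ Example \ref{exa:Amodel-genuszero}); more generally the higher-genus potentials of $\wave_{X_i}$, written in the flat co-ordinates $\bq = [M(t,z)\bx]_+$, reproduce $\cF^g_{X_i,\rm an}$ via Theorem \ref{thm:ancdec-analytic}. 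Hence under the projection $\cL_i \to \cH^{X_i}_+$ along $\cH^{X_i}_-$, the potential $\cZ_i$ coincides with $\exp\big(\sum_{g\ge 0}\hbar^{g-1}\cF^g_{X_i,\rm an}\big)$, the geometric descendant potential specialized to $Q_1 = \cdots = Q_r = 1$ (Definition \ref{def:F^g_Xan}).

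Then I would invoke that $\wave_{\rm B}$ is one global section: since $L_1 = L_2\,\U_\gamma$ along $\gamma$, the two Darboux frames are related by $\U_\gamma$, and both are close to each other in the sense of Definition \ref{def:Darboux_close}---the symplectic transformation relating two frames of the form in Example \ref{exa:Darbouxframe_by_triv} is multiplication by a loop group element, whose off-diagonal block $C$ is of trace class (indeed finite rank when $\U_\gamma$ arises from monodromy). Therefore the transformation rule $\hUU_\gamma \colon \Fock_{L^2}(\cU_1;L_1) \to \Fock_{L^2}(\U_\gamma(\cU_1);L_2)$ of Definition \ref{def:transformation_L2} is defined, and by the compatibility proposition at the end of \S\ref{subsec:global_L2} (the $L^2$-transformation rule agrees with the sheaf transformation rule $T(\sfP_1,\sfP_2)$ when the frames are compatible with parallel pseudo-opposite modules, extended by the gluing property of $\wave_{\rm B}$ over intermediate charts) the image $\hUU_\gamma\cZ_1$ is precisely the presentation of $\wave_{\rm B}$ in the frame $L_2$, i.e.\ $\cZ_2$, up to the genus-one constant ambiguity indicated by $\propto$. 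Assembling these identifications gives $\cZ_2 \propto \hUU_\gamma\cZ_1$ after analytic continuation along $\gamma$.

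The main obstacle I expect is not any single deep step but the bookkeeping needed to chain the $L^2$-transformation rule along the path $\gamma$ when no single parallel opposite module exists globally: $\wave_{\rm B}$ is defined by gluing local presentations $\wave_{\sfP_\alpha}$, so one must cover $\gamma$ by charts $U_\alpha$ with parallel pseudo-opposite modules $\sfP_\alpha$, check at each overlap that the sheaf-theoretic cocycle identity $T_{\alpha\gamma} = T_{\beta\gamma}\circ T_{\alpha\beta}$ (Proposition \ref{prop:cocycle}) matches the composition of $L^2$-transformation rules, and verify that the trace-class/closeness hypothesis of Definition \ref{def:Darboux_close} propagates along the whole path. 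This is where care is required, though each piece is controlled by results already in \S\ref{sec:global_theory} and \S\ref{subsec:global_L2}; once the charts are in place the statement follows formally.
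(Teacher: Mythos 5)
Your proposal follows essentially the same route as the paper: the paper presents this corollary as a direct rephrasing of Theorem \ref{thm:CTC_higher_genus} in the $L^2$-formalism, using the Darboux frames given by the analytically continued fundamental solutions $L_i$ (with $L_1 = L_2\,\U_\gamma$), Remark \ref{rem:Fockspace_comparison} to restrict the global section $\wave_{\rm B}$ to the $L^2$-Fock spaces, and Theorem \ref{thm:ancdec-analytic} to identify the resulting total potentials with the geometric descendant potentials $\cZ_i$. Your identifications, the trace-class/closeness observation for $\U_\gamma$, and the appeal to Definition \ref{def:transformation_L2} match the paper's (largely implicit) argument, so the proposal is correct and not a genuinely different proof.
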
 

\begin{remark}
This is close to the version of higher-genus Crepant 
Transformation Conjecture proposed in 
\cite[\S5]{CIT:wall-crossings},~\cite[\S10]{Coates--Ruan}. 
\end{remark} 

Considering the case $X=X_1 = X_2$, we obtain: 
\begin{corollary} 
Let $X$ be a toric Deligne--Mumford 
stack from $\Crep(\Delta)$. The total descendant 
potential $\cZ_X$ of $X$, regarded as an element of the $L^2$-Fock 
space as above, has the following modular property with respect to the group 
$\pi_1(\cM_{\rm B}^{\rm ext})$: we have 
\begin{equation} 
\label{eq:quantum_modularity}
(\gamma^{-1})^\star \cZ_X \propto \hUU_\gamma \cZ_X  
\end{equation} 
for $\gamma \in \pi_1(\cM_{\rm B}^{\rm ext})$, where 
in the left-hand side $(\gamma^{-1})^\star$ means the pull-back 
by the deck transformation $\gamma^{-1}$ of the universal 
covering 
$\tcM_{\rm B}^{\rm ext} \to \cM_{\rm B}^{\rm ext}$. 
\end{corollary}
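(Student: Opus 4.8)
The plan is to deduce this corollary from the previous one (Corollary~\ref{cor:CTC_higher_genus}) by specializing to the case $X_1 = X_2 = X$ and taking $\gamma$ to be a loop rather than an open path. First I would fix a basepoint $t_0 \in U_X^{\rm ext} \subset \cM_{\rm B}^{\rm ext}$ and a loop $\gamma \in \pi_1(\cM_{\rm B}^{\rm ext}, t_0)$. Lifting $\gamma$ to the universal cover $\tcM_{\rm B}^{\rm ext}$, its endpoint is $\gamma^{-1}(\tilde t_0)$ (or $\gamma(\tilde t_0)$, depending on the convention for the $\pi_1$-action on the universal cover; I would fix this convention at the outset and carry it consistently). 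The Darboux frame $L = L(\tau,z)$ for the A-model TEP structure of $X$, analytically continued along $\gamma$, returns to a new frame $L'$ related to $L$ by $L = L' \U_\gamma$ where $\U_\gamma \colon \cH^X \to \cH^X$ is the monodromy symplectic transformation; this is exactly the situation of Definition~\ref{def:transformation_L2} with $\Phi_1 = L$, $\Phi_2 = L'$, and $\U = \U_\gamma$.

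Next I would invoke the fact, established en route to Theorem~\ref{thm:CTC_higher_genus}, that the Givental wave function $\wave_{\rm B} \in \Fock_{\rm B}$ is a genuine \emph{global section} of the B-model Fock sheaf, hence is single-valued on $\cM_{\rm B}^{\rm ext}$; by Remark~\ref{rem:Fockspace_comparison} it induces an element of the $L^2$-Fock space with respect to $L$, represented by the total potential $\cZ_X$ (identified via Theorem~\ref{thm:ancdec-analytic} with the analytically continued descendant potential of $X$). The key point is that the \emph{section} $\wave_{\rm B}$ being single-valued forces the \emph{presentation} $\cZ_X$ — which depends on the choice of Darboux frame $L$ — to transform according to the change of frame induced by going around $\gamma$. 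Concretely: analytic continuation of $\wave_{\rm B}$ along $\gamma$ gives back $\wave_{\rm B}$ (single-valuedness), but its $L^2$-presentation with respect to $L$ gets carried to its presentation with respect to $L' = L \U_\gamma^{-1}$; by Definition~\ref{def:transformation_L2} this is precisely $\hUU_\gamma \cZ_X$, up to the genus-one constant ambiguity indicated by $\propto$. Matching this against the deck-transformation description $(\gamma^{-1})^\star \cZ_X$ of ``continuing $\cZ_X$ around $\gamma$'' yields \eqref{eq:quantum_modularity}. This is really the specialization of Corollary~\ref{cor:CTC_higher_genus} to $X_1 = X_2$ combined with the observation that a closed path composed with itself in the base lifts to a path between two different lifts, so the formula $\cZ_2 \propto \hUU_\gamma \cZ_1$ becomes $(\gamma^{-1})^\star \cZ_X \propto \hUU_\gamma \cZ_X$.

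I would then verify the consistency conditions: that $\U_\gamma$ lies in the class of symplectic transformations for which Definition~\ref{def:transformation_L2} applies, i.e.~that the two Darboux frames $L$ and $L'$ are \emph{close} in the sense of Definition~\ref{def:Darboux_close}. This follows because the monodromy of the B-model TEP structure acts by multiplication by a loop-group element (indeed, for the GKZ/toric B-model the monodromy is typically given by a Laurent-polynomial loop, so the off-diagonal block $C$ is of finite rank, hence trace class), as already noted in the examples following Definition~\ref{def:Darboux_close}. I would also record that the transversality condition Definition~\ref{def:Darboux_frame}(2) holds for $L$ on an open dense subset of $\tcM_{\rm B}^{\rm ext}$, so the statement is about the generic locus where the $L^2$-formalism makes sense, and that the genus-zero part is automatically respected because the Yukawa coupling is monodromy-equivariant.

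The main obstacle I anticipate is purely bookkeeping rather than conceptual: pinning down the precise relationship between (i) the abstract ``single-valuedness of the global section $\wave_{\rm B}$'', (ii) the deck-transformation pull-back $(\gamma^{-1})^\star$ acting on the multivalued presentation $\cZ_X$, and (iii) the transformation rule $\hUU_\gamma$ of Definition~\ref{def:transformation_L2} — keeping all three orientation/inverse conventions mutually consistent so that the composition law $\hUU_{\gamma_1\gamma_2} \propto \hUU_{\gamma_1}\hUU_{\gamma_2}$ (which underwrites the claim that \eqref{eq:quantum_modularity} is a genuine group action, modulo the genus-one scalar cocycle $c_{\alpha\beta\gamma}$ of Definition~\ref{def:localFock}/Proposition~\ref{prop:cocycle}) comes out correctly. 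Once the conventions are fixed, the proof is a direct translation of Corollary~\ref{cor:CTC_higher_genus} to the closed-loop case, and I would keep it to a few lines.
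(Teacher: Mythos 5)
Your proposal is correct and follows essentially the same route as the paper: the corollary is obtained there by specializing Corollary~\ref{cor:CTC_higher_genus} to $X_1=X_2=X$ with $\gamma$ a loop, the single-valuedness of the global section $\wave_{\rm B}$ plus the monodromy of the Darboux frame $L$ giving exactly the mechanism spelled out in Remark~\ref{rem:modularity}, and the trace-class/closeness check being the point already recorded in the example after Definition~\ref{def:Darboux_close}. Your extra bookkeeping about conventions and the generic transversality locus is consistent with what the paper leaves implicit.
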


\begin{remark} 
The symplectic transformations $\U_\gamma$ with 
$\gamma \in \pi_1(\cM^{\rm ext}_{\rm B})$ arise from the 
monodromy of the extended B-model TEP structure 
$(\cF^{\rm ext}_{\rm B}, \nabla^{\rm B,ext}, (\cdot,\cdot)_{\rm B, ext})$ 
along $\gamma$, 
and as such, they belong to a finite-dimensional group. 
Here we describe such a group precisely. 
For a given TEP structure 
$(\cF=\cO(F),\nabla,(\cdot,\cdot)_\cF)$ with base $\cM$, the monodromy 
along a loop based at $(t,z)\in \cM\times \C^\times$ 
takes values in the group: 
\[
G_{t,z} = \{ U \in GL(F_{t,z}) : \Mon \circ U = U \circ \Mon, \  
\text{$U$ preserves $[\cdot,\cdot)$} \} 
\] 
where $\Mon$ denotes the monodromy of $\nabla$ over 
the punctured $z$-plane $\{t\} \times \C^\times_z$, and $[\cdot,\cdot)$ is 
a (not necessarily symmetric) bilinear form on $F_{t,z}$ defined by 
\[
[v,w) = (v',w)_\cF 
\]
with $v,w\in F_{t,z}$ and $v'$ the parallel translate of $v$ 
along the semicircular path $[0,1] \ni \theta \mapsto e^{-\iu\pi\theta} z$. 
A transformation $U\in G_{t,z}$ can be extended to a flat 
bundle automorphism of $F$ over $\{t\} \times \C^\times$, and 
thus defines an element $\U \in Sp(\cH_t)$. Here  
$\cH_t=L^2(\{t\} \times S^1, F)$ is the symplectic vector space 
appearing in \S\ref{subsec:global_L2}. 
For the A-model TEP structure, this group $G_{t,z}$ can be 
described more concretely as: 
\[
\left\{ \U \in Sp(\cH_{\rm poly}^X) : \text{$\U$ is $\C[z,z^{-1}]$-linear}, \ 
\U\circ (z\partial_z + \mu) 
= (z\partial_z +\mu) \circ \U, \ 
\U \circ c_1(X) = c_1(X) \circ \U 
\right\} 
\]
where $\cH_{\rm poly}^X = H_X\otimes_\Q \C[z,z^{-1}]$ is a Laurent polynomial 
version of Givental's symplectic space and $\mu$ is the grading 
operator in \eqref{eq:gradingoperator}. 
This follows from the following facts: (i) the Dubrovin connection $\nabla_{z\partial_z}$ 
in the $z$-direction 
is conjugate to $z\partial_z + \mu + c_1(X)/z$ via the fundamental solution 
$L(t,z)$ \eqref{eq:fundamentalsolution}, 
see, e.g.~\cite[Proposition 2.4]{Iritani:integral}, 
(ii) $\cH_{\rm poly}^X$ is the rational structure consisting of 
sections of moderate growth (see, e.g.~\cite[\S 7.2]{Hertling:ttstar}) 
at $z=0,\infty$ with respect to the connection $z\partial_z + \mu + c_1(X)/z$, 
and (iii) $2\pi \iu c_1(X)$ is the logarithm of the unipotent part of the monodromy.  
A similar description is discussed also in \cite[Lemma 3.16]{Iritani:Ruan}. 
\end{remark}

\begin{remark} 
After shrinking $\cM_{\rm B}^{\rm ext}$ if necessary, 
we have a retraction from $\cM_{\rm B}^{\rm ext}$ 
to a Zariski open subset of $\cMo_{\rm B}$, and thus  
a natural surjective map $p \colon 
\pi_1(\cM_{\rm B}^{\rm ext}) \to \pi_1(\cMo_{\rm B})$. 
The symplectic transformation $\U_\gamma$ 
for $\gamma\in \Ker(p)$ is trivial 
and the total potential $\cZ_X$ 
is invariant under deck transformations 
$\gamma \in \Ker(p)$. Therefore the modularity 
\eqref{eq:quantum_modularity} reduces to the group 
$\pi_1(\cMo_{\rm B})$. 
\end{remark} 

Finally we remark on an implication of our
recent joint work \cite{Coates--Iritani--Jiang} 
with Jiang in this context. 
There we calculated the symplectic transformation $\U_\gamma$ 
explicitly using so-called $I$-functions and the Mellin--Barnes method. 
For a certain choice of the path $\gamma$, we showed that 
$\U_\gamma$ is induced by an equivalence 
$\FM \colon D^b(X_1) \cong D^b(X_2)$ of triangulated categories 
via the $\hGamma$-integral structure 
\cite{Iritani:integral, Katzarkov--Kontsevich--Pantev} 
on quantum cohomology. 
In other words, we have a commutative diagram of the form 
\[
\xymatrix{ 
D^b(X_1) \ar[r]^{\FM} \ar[d] & D^b(X_2) \ar[d] \\ 
\tcH^{X_1} \ar[r]^{\U_\gamma} & \tcH^{X_2} 
} 
\]
where the vertical maps are roughly speaking given by the Chern 
character followed by the multiplication by the Gamma class, 
and $\tcH^{X_i}$ is a multi-valued variant of Givental's symplectic vector 
space (see \cite{Coates--Iritani--Jiang} for more details). 
The derived equivalence $\FM$ is given as a composition of explicit 
Fourier--Mukai transformations. 
It is likely that the fundamental groupoid\footnote
{Here it is convenient to consider the fundamental \emph{groupoid}  
instead of the fundamental \emph{group}, since we are considering 
paths connecting the large radius limit points of different 
$X_1,X_2\in \Crep(\Delta)$. } of $\cMo_{\rm B}$ 
is generated by $\pi_1(U_X \cap \cMo_{\rm B})$ for toric 
stacks $X$ from $\Crep(\Delta)$ together with the classes of paths 
$\gamma$ connecting the large radius limit points for $\Crep(\Delta)$, 
which we show in \cite{Coates--Iritani--Jiang} 
to correspond to derived equivalences. 
It is easy to see that monodromy about loops in $U_X\cap \cMo_{\rm B}$ 
corresponds to tensoring line bundles in $D^b(X)$ 
\cite[Proposition 2.10(ii)]{Iritani:integral}. 
Therefore the result in \cite{Coates--Iritani--Jiang} strongly 
suggests that the symplectic transformation $\U_\gamma$ is induced by 
a derived equivalence for every path $\gamma$ 
and that the total descendant potential $\cZ_X$ 
should be `modular' with respect to the group 
$\Auteq(D^b(X))$ of autoequivalences. 
Let $\Gamma$ be the group of autoequivalences of $D^b(X)$ 
generated by 
\begin{itemize} 
\item Fourier--Mukai functors $D^b(X_1) \xrightarrow{\cong} D^b(X_2)$ 
from \cite{Coates--Iritani--Jiang, Coates--Iritani--Jiang--Segal} 
for some $X_1,X_2 \in \Crep(\Delta)$; 

\item autoequivalences of $D^b(X')$ given by tensoring 
line bundles for some $X' \in \Crep(\Delta)$. 
\end{itemize} 
An element of $\Gamma$ is of the form 
\[
(L_k\otimes) \circ \Psi_{k-1} \circ (L_{k-1}\otimes) \circ 
\cdots \circ \Psi_1\circ (L_1\otimes) \circ \Psi_0 \circ (L_0 \otimes) 
\]
where 
$\Psi_i \colon D^b(X_i) \to D^b(X_{i+1})$, $i=0,\dots,k-1$ 
are Fourier--Mukai functors 
from \cite{Coates--Iritani--Jiang,Coates--Iritani--Jiang--Segal}, 
$L_i\in \Pic(X_i)$, $i=0,\dots,k$ are line bundles, and 
$X=X_0, X_1,\dots, X_{k-1}, X_k=X$ is a sequence in $\Crep(\Delta)$. 
Then we have the following. 
\begin{corollary}
\label{cor:modularity_autoeq} 
The total descendant potential $\cZ_X$ of a toric Deligne--Mumford 
stack $X\in \Crep(\Delta)$ satisfies the modularity 
\eqref{eq:quantum_modularity} with respect to 
the subgroup $\Gamma$ of $\Auteq(D^b(X))$. 
\end{corollary}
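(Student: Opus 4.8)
The plan is to deduce Corollary~\ref{cor:modularity_autoeq} by combining three ingredients already available: (i) the modularity of the Givental wave function $\wave_{\rm B}$ on the B-model Fock sheaf established in Theorem~\ref{thm:CTC_higher_genus} (and its $L^2$-reformulation, equation~\ref{eq:quantum_modularity}); (ii) the explicit identification, from our joint work with Jiang~\cite{Coates--Iritani--Jiang}, of the parallel-transport symplectomorphisms $\U_\gamma$ with the maps on Givental space induced by Fourier--Mukai functors via the $\hGamma$-integral structure; and (iii) the elementary identification of monodromy about the large-radius boundary divisors with line-bundle twists. So first I would recall that, by the retraction argument in the final Remark, it suffices to work with $\pi_1(\cMo_{\rm B})$ rather than $\pi_1(\cM_{\rm B}^{\rm ext})$, since elements of $\Ker(p)$ act trivially on $\cZ_X$. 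Then the total descendant potential $\cZ_X$, regarded via analytic continuation as an element of the $L^2$-Fock space attached to the Darboux frame $L_X$, satisfies $(\gamma^{-1})^\star \cZ_X \propto \hUU_\gamma \cZ_X$ for every $\gamma \in \pi_1(\cM_{\rm B}^{\rm ext})$ by the Corollary preceding this one.

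Next I would make precise the passage from the fundamental \emph{group} to the fundamental \emph{groupoid}. For each pair $X_1, X_2 \in \Crep(\Delta)$ and each path $\gamma$ connecting $o_{X_1}$ to $o_{X_2}$, Theorem~\ref{thm:CTC_higher_genus} gives $\cZ_{X_2} \propto \hUU_\gamma \cZ_{X_1}$, and \cite{Coates--Iritani--Jiang, Coates--Iritani--Jiang--Segal} identifies $\U_\gamma$ with the Givental-space lift of a Fourier--Mukai equivalence $\FM\colon D^b(X_1)\xrightarrow{\cong} D^b(X_2)$, compatibly with the $\hGamma$-integral structure as in the commutative square displayed above. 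Likewise, monodromy about a loop in $U_{X'}\cap\cMo_{\rm B}$ corresponds under the mirror isomorphism to tensoring by a line bundle on $X'$, by \cite[Proposition~2.10(ii)]{Iritani:integral}. Composing such loops and such paths — and using that the composite $\U_{\gamma_1}\U_{\gamma_2}$ of parallel transports is the parallel transport along the concatenated path, while the corresponding composite of Fourier--Mukai functors and line-bundle twists lies in $\Gamma$ by definition — one obtains: for any concatenation $\gamma$ of the generating paths, $\U_\gamma$ is the Givental-space lift of an element of $\Gamma \subset \Auteq(D^b(X))$, and $(\gamma^{-1})^\star \cZ_X \propto \hUU_\gamma \cZ_X$. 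Since each element $\Psi\in\Gamma$ arises in this way from some $\gamma$ with both endpoints at $o_X$ (take $X_1 = X_2 = X$; the generating Fourier--Mukai functors and line-bundle twists realise all generators of $\Gamma$), this yields the desired modularity of $\cZ_X$ with respect to $\Gamma$.

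The main obstacle, and the only genuinely non-formal point, is controlling \emph{which} subgroup of $\pi_1(\cM_{\rm B}^{\rm ext})$ (equivalently, of the fundamental groupoid of $\cMo_{\rm B}$ on the vertex set $\{o_X : X\in\Crep(\Delta)\}$) is generated by the explicitly-understood paths: loops in $U_X\cap\cMo_{\rm B}$ (line-bundle twists) together with the Mellin--Barnes paths connecting the large-radius points (Fourier--Mukai functors). In full generality it is only \emph{likely} — not known — that these generate the whole groupoid, which is why the statement is phrased for the subgroup $\Gamma$ rather than for all of $\Auteq(D^b(X))$. Consequently the honest proof does \emph{not} attempt to show $\U_\gamma$ is a derived-categorical lift for \emph{every} $\gamma$; it simply restricts attention to the subgroupoid generated by the known paths, transports the modularity~\eqref{eq:quantum_modularity} along it, and reads off the action of $\Gamma$ on $\cZ_X$. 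I would therefore organise the proof so that the generation question is quarantined into the \emph{definition} of $\Gamma$ and does not enter the argument: every step after that is a formal manipulation with the cocycle property of the transformation rule (Proposition~\ref{prop:cocycle}, in its $L^2$-incarnation) and the functoriality of analytic continuation.
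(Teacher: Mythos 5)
Your argument is correct and is essentially the paper's own: the corollary is deduced exactly as you do, by combining the modularity \eqref{eq:quantum_modularity} (via Corollary~\ref{cor:CTC_higher_genus}) with the identification of $\U_\gamma$ with Fourier--Mukai functors and line-bundle twists through the $\hGamma$-integral structure \cite{Coates--Iritani--Jiang,Iritani:integral}, while confining the unresolved generation question to the definition of $\Gamma$. No further comparison is needed.
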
 

\begin{remark} 
A relationship between Fourier--Mukai transformations and 
analytic continuation of solutions to the GKZ system was originally 
found by Borisov--Horja~\cite{Borisov--Horja:FM}. 
\end{remark} 

\begin{remark} 
The monodromy representation gives a homomorphism 
$\U \colon \pi_1(\cMo_{\rm B}) \to Sp(\cH^X)$ and 
the $\hGamma$-integral structure gives a homomorphism 
$\Auteq(D^b(X)) \to Sp(\cH^X)$. Homological mirror 
symmetry suggests that the former map factors through 
the latter, i.e.~we expect to have the commutative diagram: 
\[
\xymatrix{
\pi_1(\cMo_{\rm B}) \ar[r]^{\U} \ar[d] & Sp(\cH^X) \\ 
\Auteq(D^b(X)) \ar[ur] 
}
\] 
\end{remark} 

\begin{remark} 
The B-model TEP structure $\cF_{\rm B}$ over $\cM_{\rm B}^\circ$ 
has a natural real structure induced from the $\Z$-structure 
$\cR_\Z^\vee$. (More precisely, the real structure is obtained by tensoring 
the $\nabla^{\rm B}$-flat local system $z^{D/2} \cR_\Z^\vee$ 
with $\R$.)
By a result of Sabbah \cite{Sabbah:FL}, 
this real structure endows $\cF_{\rm B}$ with a pure TRP structure 
in the sense of \S \ref{subsec:TRP}. 
Since the purity is an open property, this extends to a pure 
real structure on $\cF_{\rm B}^{\rm ext}$ over a small 
neighbourhood of $\cM_{\rm B}^\circ$ in $\cM_{\rm B}^{\rm ext}$. 
Using the complex-conjugate opposite module in Definition \ref{def:cc-opposite}, 
we can present the Givental wave function $\wave_{\rm B}$ 
as \emph{single-valued} (non-holomorphic) correlation functions. 
\end{remark} 

We end this section with a discussion on singularities of the 
Givental wave function. 
Around each large radius limit point, the B-model TEP structure is identified 
with the A-model TEP structure and thus extends across a normal crossing divisor 
as a logarithmic TEP structure (see Example \ref{ex:log_cTEP_A}). 
This extension was studied in details by Reichelt--Sevenheck 
\cite{Reichelt--Sevenheck}. 
The Givental wave function $\wave_{\rm B}$ extends regularly 
across these normal crossing divisors as a section of the logarithmic 
Fock sheaf in \S \ref{subsec:logarithmic} 
since so does the Gromov--Witten wave function. 
On the other hand, a result of Milanov \cite{Milanov:analyticity} 
must imply that $\wave_{\rm B}$ extends regularly  
across the non-semisimple locus 
$\cM_{\rm B}^{\rm ext} \setminus \cM_{\rm B,\, ss}^{\rm ext}$. 
The remaining important question is then:

\begin{problem} 
Study the singularities of the Givental wave function $\wave_{\rm B}$ 
along the locus $\cM_{\rm B}\setminus \cM_{\rm B}^\circ$ 
of degenerate Laurent polynomials.  
\end{problem}
\noindent 
This problem is related to the \emph{conifold gap condition} 
(see e.g.~\cite{ASYZ}) 
in the physics literature. 

\subsection{Calabi--Yau Hypersurfaces}
\label{sec:CY}

Next we consider mirror symmetry 
for Calabi--Yau manifolds. 
In this case we cannot apply Givental's formula since 
the quantum cohomology is not semisimple. 
We consider Batyrev's mirror~\cite{Batyrev:dual} 
for toric Calabi--Yau hypersurfaces. 

Let $X$ be a weak Fano toric stack such that the fan polytope 
$\Delta$ is reflexive (i.e.~the integral distance between 
each facet of $\Delta$ and the origin is one). 
Then $X$ is Gorenstein and a generic anticanonical 
section $Y\subset X$ is a quasi-smooth Calabi--Yau orbifold~\cite{Batyrev:dual}. 
Let $W_y$ be the Laurent polynomials mirror to $X$ 
from \S\ref{sec:LG_model}. 
The Batyrev mirror of $Y$ is a Calabi--Yau compactification 
$\check{Y}_y$ of the fiber $W_y^{-1}(1)$ 
inside a toric variety $\check{X}$ with fan polytope given by the dual polytope $\Delta^*$. 
To remove the ambiguity of overall scaling, 
we consider Laurent polynomials $W_y$ as in \eqref{eq:LGmodel-W} 
with vanishing constant terms, so that $a_{i}=0$ when $b_{i}=0$. 
The corresponding moduli space $\cM_B'$ of Laurent polynomials 
is defined similarly to $\cM_B$ but deleting the zero-vector 
from the set $\{b_1,\dots,b_n\}$; it can be identified 
with (an open subset of) a toric divisor in $\ov{\cM}_{\rm B}$. 
Moreover, we require that the affine hypersurface 
$W_y^{-1}(1)$ is \emph{$\Delta$-regular}~\cite[Definition 3.3]{Batyrev:VMHS}, 
which means that $W_y$ is non-degenerate (see \S\ref{sec:B-model_TEP}) 
and $1$ is not a critical value of $W_y$. 
Let $\cM_{\rm B}^{\rm reg} \subset \cM_{\rm B}'$ 
denote the non-empty Zariski-open subset parametrizing 
$\Delta$-regular hypersurfaces $W_y^{-1}(1)$.  
We use $\cM_{\rm B}^{\rm reg}$ as the base space 
of the mirror family $\{\check{Y}_y\}$. 
Note that, as in the toric case (\S\ref{sec:toric_mirror_symmetry}),
all anticanonical hypersurfaces $Y$ in toric stacks 
$X$ from $\Crep(\Delta)$ 
have the same mirror family $\{\check{Y}_y\}$.
However, they have different large radius limit points 
in the toric compactification $\ov{\cM_{\rm B}'}$.

We describe the genus-zero mirror isomorphism 
following~\cite[\S 6]{Iritani:periods}, and suggest 
the construction of a global B-model Fock sheaf. 
Define the ambient part of the cohomology group of $Y$ to be the image 
of the pullback along the inclusion map: 
$H_{\rm amb}^*(Y) :=\Image(H_{\CR}^*(X) \to H^\bullet_{\CR}(Y))$. 
The Dubrovin connection of $Y$ preserves the subsheaf 
$\cF_{\rm A}^{\rm amb} := H_{\rm amb}^*(Y) 
\otimes \cO_{\cM_{\rm A} \times \C}$ 
with fiber $H_{\rm amb}^*(Y)$~\cite[Corollary 2.5]{Iritani:periods}, 
where $\cM_{\rm A} \subset H^\bullet_{\rm amb}(Y)$ 
denotes the convergence domain 
of the quantum product as in \eqref{eq:LRLnbhd}. 
Hence by restriction to this subsheaf, the A-model TEP structure of $Y$ 
induces a TEP structure called the \emph{ambient A-model TEP structure} 
$(\cF_{\rm A}^{\rm amb}, \nabla^{\rm A}, (\cdot,\cdot)_{\rm A})$ 
of $Y$ (cf.~\cite[Definition 6.2]{Iritani:periods}).  
On the mirror side, we consider the lowest weight piece 
$W_{D-1}(H^{D-1}(W_y^{-1}(1))) = 
\gr^W_{D-1} H^{D-1}(W_y^{-1}(1))$ 
of Deligne's mixed Hodge structure on the middle 
cohomology of the affine hypersurface $W^{-1}_y(1)$. 
It has a pure Hodge structure 
of weight $D-1$.  
As explained in~\cite[\S 6.3]{Iritani:periods}, 
this can be naturally identified with the subspace 
$H_{\rm res}^{D-1}(\check{Y}_y)
\subset H^{D-1}(\check{Y}_y)$ of cohomogy classes 
obtained as the residues of meromorphic $D$-forms (with poles 
along $\check{Y}_y$) on the ambient toric variety $\check{X}$. 
It defines the \emph{residual B-model VHS}~\cite[Definition 6.5]{Iritani:periods} 
$(\cV, \nabla^{\rm GM}, F^\bullet \cV, Q)$  
over $\cM_{\rm B}^{\rm reg}$, where: 
\begin{itemize} 
\item $\cV$ is a locally free sheaf over $\cM_{\rm B}^{\rm reg}$ 
with fiber $\cV_{y} = H_{\rm res}^{D-1}(\check{Y}_y) 
\cong \gr^W_{D-1}(H^{D-1}(W_y^{-1}(1)))$;   
\item $\nabla^{\rm GM}$ is the Gauss-Manin connection; 
\item $0\subset F^{D-1}\cV \subset \cdots \subset F^1\cV \subset F^0\cV 
= \cV$ is the Hodge filtration on $\cV$ of weight $D-1$; 
\item $Q(\alpha, \beta) = (-1)^{(D-1)(D-2)/2} 
\int_{\check{Y}_y} \alpha \cup \beta$ is the intersection form. 
\end{itemize} 
The \emph{residual B-model TEP structure} 
$(\cF_{\rm B}^{\rm res}, \nabla^{\rm B}, (\cdot,\cdot)_{\rm B})$ 
over $\cM_{\rm B}^{\rm reg}$ is defined as 
follows: 
\begin{itemize} 
\item $\cF_{\rm B}^{\rm res}$ is an algebraic locally free 
sheaf over $\cM_{\rm B}^{\rm reg}\times \C$,
given by the subsheaf of $\pi^* \cV$ with the property  
\[
\pi_* \cF_{\rm B}^{\rm res}  = z^{D-1}F^0\cV [z]+ 
z^{D-2} F^1\cV[z] + \cdots + F^{D-1}\cV[z] 
\subset \cV[z] = \pi_* \pi^* \cV 
\] 
where $\pi\colon \cM_{\rm B}^{\rm reg} 
\times \C \to \cM_{\rm B}^{\rm reg}$ 
is the projection; 
\item $\nabla^{\rm B} = \pi^* \nabla^{\rm GM} 
- \frac{D-1}{2} \frac{dz}{z}$;
\item $(\alpha(-z),\beta(z))_{\rm B} = (2\pi\iu z)^{-(D-1)}  
Q(\alpha(-z), \beta(z))$.    
\end{itemize} 
As before, each toric stack $X\in \Crep(\Delta)$ defines 
a large radius limit $o_X$ in the toric compactification 
$\ov{\cM'_{\rm B}}$ of $\cM_{\rm B}'$. 
For a neighbourhood $U_X$ of $o_X$ in $\ov{\cM'_{\rm B}}$, 
we have a mirror map $\varsigma \colon U_X \cap \cM_{\rm B}^{\rm reg} 
\to H^2_{\rm amb}(Y)/i^*H^2(X;\Z)$ 
and a mirror isomorphism\footnote{In~\cite[Theorem 6.9]{Iritani:periods}, 
the mirror isomorphism was stated for the corresponding VHSs, 
but the statement here follows easily from it.}~\cite[Theorem 6.9]{Iritani:periods}: 
\[
\Mir \colon (\cF_{\rm B}^{\rm res}, \nabla^{\rm B}, (\cdot,\cdot)_{\rm B}) 
\Bigr|_{(U_X\cap \cM_{\rm B}^{\rm reg}) \times \C}
\cong (\varsigma\times \id)^* 
\left( (\cF_{\rm A}^{\rm amb}, \nabla^{\rm A}, 
(\cdot,\cdot)_{\rm A})/i^* H^2(X;\Z) \right) 
\]
where the right-hand side is the quotient by the 
Galois action from the ambient $H^2(X;\Z)$. 

As in the previous section, we use Hertling--Manin's reconstruction theorem~\cite{Hertling--Manin:unfoldings} to unfold $\cF^{\rm res}_{\rm B}$ 
to get a miniversal TEP structure.  
This is possible when:
\begin{itemize} 
\item There exists a toric stack $X\in \Crep(\Delta)$ whose 
anticanonical hypersurface $Y$ is a smooth variety (no orbifold singularities); 
in this case the ambient cohomology $H_{\rm amb}^*(Y)$ is 
generated by $H_{\rm amb}^2(Y)$ and thus the 
ambient quantum cohomology is also generated by $H^2_{\rm amb}$ 
in a neighbourhood of the large radius limit; by the mirror 
isomorphism the generation condition (GC) holds generically 
over $\cM_{\rm B}^{\rm reg}$; 
\item There exists a toric stack $X \in \Crep(\Delta)$ 
such that the map $H^2_{\rm CR}(X) \to H^2_{\rm amb}(Y)$ 
is an isomorphism; 
in this case the mirror map $\varsigma$ is locally injective and the 
injectivity condition (IC) holds generically 
over $\cM_{\rm B}^{\rm reg}$. 
\end{itemize} 
For example, these conditions hold for a hypersurface 
$Y$ in $X= \Proj^n$ (see~\cite[Theorem 8.1]{Hertling--Manin:unfoldings}). 
Under these assumptions, we have a miniversal 
unfolding of $\cF_{\rm B}^{\rm res}$ and obtain the 
corresponding B-model Fock sheaf over a complex 
manifold of dimension $\dim H_{\rm amb}^*(Y)$ 
(which contains a Zariski open subset of $\cM_{\rm B}^{\rm reg}$).  
We conjecture the following: 
\begin{conjecture} 
\label{conj:CY}
There exists a global section of the above B-model Fock sheaf 
which restricts to 
the Gromov--Witten wave function of each Calabi--Yau 
hypersurface $Y$ in $X \in \Crep(\Delta)$ over a 
neighbourhood of the large radius limit $o_X$. 
\end{conjecture}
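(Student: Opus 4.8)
The plan is to follow the same logical structure as the proof of Theorem~\ref{thm:CTC_higher_genus}, but in the Calabi--Yau hypersurface setting, with the key difference that Givental's formula is no longer available and so we must assume the existence of a global section. First I would set up the residual B-model TEP structure $(\cF_{\rm B}^{\rm res}, \nabla^{\rm B}, (\cdot,\cdot)_{\rm B})$ over $\cM_{\rm B}^{\rm reg}$ as recalled in the excerpt, and — under the two bulleted hypotheses (one $X \in \Crep(\Delta)$ with smooth $Y$, and one $X \in \Crep(\Delta)$ with $H^2_{\CR}(X) \xrightarrow{\sim} H^2_{\rm amb}(Y)$) — invoke the Hertling--Manin reconstruction theorem~\cite{Hertling--Manin:unfoldings} to produce a miniversal unfolding $\cF_{\rm B}^{\rm res, ext}$ over an extended moduli space $\cM_{\rm B}^{\rm res, ext}$ of dimension $\dim H^*_{\rm amb}(Y)$. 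The generation condition (GC) holds generically because the ambient quantum cohomology is generated by $H^2_{\rm amb}$ near each large radius limit, and the injectivity condition (IC) holds generically because the mirror map $\varsigma$ is locally injective under the second hypothesis; together with the universality of unfoldings~\cite{CI:neighbourhood} these local unfoldings glue, and the mirror map and mirror isomorphism extend to neighbourhoods $U_X^{\rm ext}$ of the large radius limit points $o_X$. This yields the B-model Fock sheaf $\Fock_{\rm B}^{\rm CY}$ of \S\ref{sec:global_theory} over $\cM_{\rm B}^{\rm res, ext}$, and via $\Mir^{\rm ext}$ it restricts, over $U_X^{\rm ext}$, to the ambient A-model Fock sheaf of $Y$.

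Next I would observe that a global section of $\Fock_{\rm B}^{\rm CY}$, \emph{if it exists}, is determined by its formalization at a single generic point (Proposition~\ref{prop:Fockspace-curved}, the reconstruction of multi-point correlators from $\{C^{(1)}_\mu, C^{(2)}, C^{(3)}, \dots\}$, together with analytic continuation along paths in $\cM_{\rm B}^{\rm res, ext}$). So the content of the conjecture is really the assertion that such a global section exists whose restriction over $U_X^{\rm ext}$ equals the Gromov--Witten wave function $\wave_Y$ of the ambient theory of $Y$, for \emph{every} $X \in \Crep(\Delta)$ simultaneously. One direction of consistency is automatic: since all these $Y$'s share the same B-model family $\{\check Y_y\}$ and the Fock sheaf is a \emph{sheaf}, if a global section restricting to $\wave_Y$ near one $o_X$ exists at all, then by analytic continuation it restricts near every $o_{X'}$ to \emph{some} ambient A-model Fock element; the nontrivial claim is that this analytically-continued element is precisely the Gromov--Witten wave function of the $X'$-side. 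Thus the conjecture is genuinely a combination of (i) existence of a B-model global section — which in the semisimple toric case came for free from Givental's formula but here has no such source — and (ii) an analytic continuation / higher-genus crepant transformation statement.

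For the existence part (i), the plan would be to import it from an external construction of the higher-genus B-model: either the BCOV-theoretic / renormalization approach of Costello--Li~\cite{Costello--Li, SiLi:BCOV_elliptic, SiLi:VHS_Frob_gauge}, which produces (conjecturally) a solution to the holomorphic anomaly equation — hence, by the discussion of \S\ref{sec:HAE} and the anomaly equation of Theorem~\ref{thm:anomaly}, a section of the Fock sheaf presented in a complex-conjugate (K\"ahler) polarization — or the Calabi--Yau category / TQFT approach of Costello, Kontsevich--Soibelman and Katzarkov--Kontsevich--Pantev. For part (ii), once a candidate global section $\wave_{\rm B}^{\rm CY}$ is in hand, one would compare its formalization near a fixed $o_X$ with the geometric ancestor potential of the ambient Gromov--Witten theory of $Y$, using the Jet-Descendant relation (Theorem~\ref{thm:jet-GW}) on the A-side exactly as in the proof of Theorem~\ref{thm:Teleman}; the identification near $o_X$ then propagates to all other $o_{X'}$ by the gluing property of the Fock sheaf and the genus-zero mirror theorem $\Mir^{\rm ext}$.

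The hard part will be part (i): supplying a \emph{mathematically rigorous} construction of a global section of the B-model Fock sheaf in the non-semisimple case. Givental's formula, which underpins every earlier theorem in this paper, fails precisely because the quantum cohomology of a Calabi--Yau is not generically semisimple, so there is no canonical section coming from the classification of semisimple TQFTs; one genuinely needs new input — chain-level symplectic geometry, BCOV perturbative quantization, or the categorical enumerative invariants of Costello--Caldararu--Tu — none of which is currently at the level of a theorem that can simply be cited. A secondary obstacle is controlling the behaviour of any such section (and of the underlying TEP structure) along the discriminant loci $\cM_{\rm B}^{\rm reg} \setminus \cM_{\rm B}^{\rm reg, \circ}$ and at the boundary of $\cM_{\rm B}^{\rm res, ext}$, where the pole-order conditions (Pole) in Definition~\ref{def:localFock} and the compatibility with the logarithmic Fock sheaf of \S\ref{subsec:logarithmic} must be checked; this is why the statement is presented as Conjecture~\ref{conj:CY} rather than a theorem.
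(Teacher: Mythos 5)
The statement you were asked about is stated in the paper as Conjecture~\ref{conj:CY}; the paper offers no proof of it, only the surrounding construction (the residual B-model TEP structure, the Hertling--Manin unfolding under the two bulleted hypotheses, and the extended mirror isomorphism) together with a remark deferring the existence of a global section of the B-model Fock sheaf to forthcoming work of Costello--Li, and a pointer to the Chiodo--Ruan refinement at the Landau--Ginzburg point. Your proposal reproduces exactly this setup and correctly diagnoses the real obstruction: in the Calabi--Yau case quantum cohomology is not generically semisimple, so the Givental wave function --- the only source of canonical global sections used elsewhere in the paper --- is unavailable, and the existence input must come from an external higher-genus B-model construction (Costello--Li's BCOV/renormalization approach or the categorical TQFT approach), none of which can currently be cited as a theorem. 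So there is no gap to report relative to the paper, because the paper proves nothing here; your text is an accurate reconstruction of the paper's own position, and your honest conclusion that part~(i) is the missing ingredient is precisely why the statement is a conjecture rather than a theorem. Just be aware that what you have written is a programme, not a proof, and your uniqueness observation (that a global section is determined by its restriction near one large radius limit point, via reconstruction and analytic continuation) reduces the conjecture to existence plus a crepant-transformation-type comparison, which is consistent with how the paper frames it.
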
 

\begin{remark} 
The existence of a global section of the B-model Fock sheaf 
will be shown in forthcoming work by Costello--Li 
(see \cite{Costello--Li}) where they develop the mathematical 
B-model theory at higher genus. 
\end{remark} 

Sometimes we may encounter different types of 
limit points of the B-model moduli space 
$\cM_{\rm B}'$ which correspond to 
variants of Gromov--Witten theory. 
For the mirror of a quintic threefold, Chiodo--Ruan~\cite{Chiodo--Ruan} 
found that the global B-model theory (at genus zero) around 
the so-called Landau--Ginzburg point (or Gepner point) 
can be identified with a theory of $5$-spin curves (FJRW theory);
see also~\cite{CIR}. 
In particular the genus-zero Gromov--Witten theory 
and the genus-zero FJRW theory are analytic continuations
of each other. 
Chiodo--Ruan also conjecture a relationship between the higher genus theories~\cite[Conjecture 3.2.1]{Chiodo--Ruan}. 
Their conjecture, rephrased in our language, is that 
the global section in Conjecture~\ref{conj:CY} restricts to the 
FJRW wave function in a neighbourhood of the Landau--Ginzburg point.

\section{Complex-Conjugate Polarization and Holomorphic 
Anomaly} 
\label{sec:HAE}

In this section we describe how the holomorphic anomaly equation of Bershadsky--Cecotti--Ooguri--Vafa arises in our global quantization formalism, via the so-called complex-conjugate polarization.   
The holomorphic anomaly equation originally arose
in the Kodaira--Spencer theory of gravity~\cite{BCOV:HA, BCOV:KS}. 
It can be considered as a special case of the general 
anomaly equation (Theorem~\ref{thm:anomaly}), 
but strictly speaking we need to extend the holomorphic 
structure sheaf on the base $\cM$ to the real-analytic structure sheaf. 
At genus zero, the complex conjugate polarization 
gives rise to so-called $tt^*$-geometry. 

\subsection{TRP Structure and $tt^*$-Geometry} 
\label{subsec:TRP} 
In this section we work with a TRP structure 
which is a TP structure (Definition~\ref{def:TP}) 
equipped with a certain real structure.  
As usual $\cM$ denotes a complex manifold, 
$(-) \colon \cM \times \C \to \cM\times \C$ 
denotes the map sending $(t,z)$ to $(t,-z)$,
and $\pi\colon \cM \times \C \to \cM$ is the 
projection.  Let
$\gamma \colon \Proj^1\to\Proj^1$,  
$\gamma(z) = 1/\ov{z}$, denote the anti-holomorphic involution  which fixes the equator 
$S^1=\{|z|=1\}\subset \Proj^1$. 
The involution $(t,z) \mapsto (t,\gamma(z))$ on $\cM\times \Proj^1$ 
is also denoted by $\gamma$. 
For a holomorphic vector bundle $F$ over $\cM\times \C$, 
the vector bundle $\ov{\gamma^* F}$ over $\cM \times (\Proj^1\setminus \{0\})$ 
has a holomorphic structure in the $\Proj^1$-direction 
and an anti-holomorphic structure in the $\cM$-direction. 

\begin{definition}[TRP structure] 
A \emph{TRP structure} $(\cF = \cO(F), \nabla, (\cdot,\cdot)_{\cF}, \kappa)$ 
with base $\cM$ 
consists of a holomorphic vector bundle $F$ of rank $N+1$ 
over $\cM\times \C$ with the sheaf $\cF = \cO(F)$ 
of holomorphic sections, a meromorphic flat connection 
\[
\nabla \colon \cF \to \pi^* \Omega^1_{\cM} \otimes 
\cF(\cM\times \{0\})  
\]
a non-degenerate pairing 
\[
(\cdot,\cdot)_{\cF} \colon (-)^*\cF\otimes
\cF \to \cO_{\cM\times \C} 
\]
that fiberwise defines a $\C$-bilinear pairing $F_{(t,-z)}\otimes 
F_{(t,z)} \to \C$, and a real-analytic bundle map 
\[
\kappa \colon F|_{\cM \times \C^\times} \to 
\ov{\gamma^* F}|_{\cM \times \C^\times}  
\]
that fiberwise defines a $\C$-antilinear map 
$\kappa_{t,z} \colon F_{(t,z)} \to F_{(t,\gamma(z))}$, 
such that:
\begin{itemize} 
\item $(\cF,\nabla, (\cdot,\cdot)_{\cF})$ is a TP structure 
in the sense of Definition~\ref{def:TP}; 
\item $\kappa$ is an involution: $\kappa_{t,\gamma(z)} \circ \kappa_{t,z} = \id$;  
\item when restricted to $\{t\}\times \C^\times$ with $t\in \cM$, 
$\kappa$ yields an isomorphism of holomorphic vector bundles; 
in particular, we have an involution:  
\begin{equation} 
\label{eq:kappa-H0}
\kappa_t \colon H^0(\C^\times, \cO(F_t)) 
\longrightarrow  H^0(\C^\times,  \cO(\ov{\gamma^*F_t})) 
\cong H^0(\C^\times, \cO(F_t)) 
\end{equation}
where $F_t := F|_{\{t\} \times \C}$;  
\item the pairing $(\cdot,\cdot)_{\cF}$ 
is real with respect to $\kappa$, i.e.~the following diagram 
commutes: 
\begin{equation} 
\label{eq:reality-pairing} 
\begin{aligned}
  \xymatrix{
    F_{(t,-z)} \otimes F_{(t,z)} \ar[r]^-{(\cdot,\cdot)_{\cF}} \ar[d]_{\kappa_{t,-z} \otimes \kappa_{t,z}} & \C \ar[d]^{\text{complex conjugation}} \\ 
    F_{(t, -\gamma(z))} \otimes F_{(t,\gamma(z))} \ar[r]^-{(\cdot,\cdot)_{\cF}} & \C
  }
\end{aligned}
\end{equation} 
\item parallel translation by the connection $\nabla$ 
preserves $\kappa$. 
\end{itemize}  
Note that $\kappa$ defines a real involution of the bundle 
$F|_{\cM\times S^1}$, where $S^1 = \{|z| =1\}$. 
The corresponding real subbundle $F_\R = \Ker(\kappa -\id)$ 
of $F|_{\cM \times S^1}$ is equipped with a real-valued pairing 
$F_{\R,(t,-z)} \otimes_\R F_{\R, (t,z)} \to \R$ (with $z\in S^1$) 
and is flat in the $\cM$-direction. 
\end{definition} 

\begin{remark} 
A TRP structure is a TERP structure in the sense of Hertling~\cite{Hertling:ttstar} 
without ``E" i.e.~without an extension of the connection in the $z$-direction. 
``R" stands for the real structure. 
It is easy to see that a TERP(0) structure gives rise to a TRP structure 
by forgetting the connection in the $z$-direction. 
A major portion of this section \S\ref{subsec:TRP} is an adaptation 
of the framework of~\cite{Hertling:ttstar} to our setting. 
\end{remark} 

\begin{example} 
\label{exa:TERP} 
$tt^*$-geometry was discovered by 
Cecotti--Vafa~\cite{Cecotti--Vafa:top-anti-top,Cecotti--Vafa:classification} 
in their study of $N=2$ supersymmetric quantum field theory. 
There are natural TRP (or TERP) structures 
coming from geometry: the A-model and B-model. 
A TERP structure in singularity theory (the B-model) was introduced by 
Hertling~\cite{Hertling:ttstar} 
using a natural real structure on the Gauss--Manin system. 
A TERP structure in quantum cohomology (the A-model) was introduced by Iritani~\cite{Iritani:ttstar} 
using the $\hGamma$-class and the $K$-group of vector bundles. 
The real structure on quantum cohomology is different from the usual one  
coming from $H^\bullet(X,\R) \subset H^\bullet(X,\C)$.  
\end{example}

\begin{remark} 
A TRP structure is determined by the holomorphic vector bundle 
$F$ restricted to $\cM \times \{|z|\le 1\}$, 
the connection $\nabla$, the pairing $(\cdot,\cdot)_{\cF}$,
and the real subbundle $F_\R$ of $F|_{\cM\times S^1}$. 
It is given by gluing $F|_{\{|z|\le 1\}}$ 
and $\ov{\gamma^* (F|_{\{|z|\le 1\}})}$ along the circle 
via the real involution with respect to $F_\R$.  
\end{remark} 

\begin{definition}[glued bundle $\hF$] 
From a TRP structure $(\cF=\cO(F),\nabla,(\cdot,\cdot)_{\cF}, \kappa)$, one can construct 
a real-analytic complex vector bundle $\hF$ over 
$\cM\times \Proj^1$ by gluing $F$ with $\ov{\gamma^* F}$ via $\kappa$. 
The bundle $\hF$ has a fiberwise holomorphic structure 
with respect to $\pi \colon \cM \times \Proj^1 \to \cM$. 
Let $\cA_{\rm vh}(\hF)$ denote the sheaf of real-analytic 
sections of $\hF$ which are holomorphic along each fiber 
$\{t\}\times \Proj^1$ (``vh" stands for ``vertically holomorphic''). 
Let $\cA_\cM^p$ denote the sheaf of real analytic 
$p$-forms on $\cM$, and $\cA_{\cM}^p = \bigoplus_{i+j=p} \cA_{\cM}^{i,j}$ 
denote the type decomposition.   
The connection $\nabla$ on $\cF = \cO(F)$ can be extended to 
a connection $\nabla$ on $\cA_{\rm vh}(\hF)$: 
\begin{equation} 
\label{eq:nabla-hF} 
\nabla \colon \cA_{\rm vh}(\hF) \to 
\pi^*\cA^{1,0}_\cM \otimes \cA_{\rm vh}(\hF)(\cM \times \{0\}) 
\oplus 
\pi^*\cA^{0,1}_\cM \otimes \cA_{\rm vh}(\hF)(\cM \times \{\infty\})   
\end{equation} 
such that the $(0,1)$-part coincides with the $\ov\partial$-operator 
for the holomorphic bundle $F$. 
The $(1,0)$-part defines the anti-holomorphic structure of $\ov{\gamma^*F}$ 
in the $\cM$-direction. 
Since the gluing map $\kappa$ matches the pairing $(\cdot,\cdot)_{\cF}$ 
on $F$ with $\ov{\gamma^*(\cdot,\cdot)_{\cF}}$ on $\ov{\gamma^*F}$, 
there is a non-degenerate pairing 
\begin{equation} 
\label{eq:pairing-hF}
(\cdot,\cdot)_{\hF} \colon (-)^*\cA_{\rm vh}(\hF) 
\otimes_{\cA_{\cM\times \Proj^1, \rm vh}} 
\cA_{\rm vh}(\hF) \to \cA_{\cM\times \Proj^1, \rm vh} 
\end{equation} 
extending the pairing $(\cdot,\cdot)_{\cF}$ on $\cF$, 
where $\cA_{\cM \times \Proj^1, \rm vh}$ denotes the sheaf 
of real-analytic functions on $\cM\times \Proj^1$ 
which are holomorphic in the $\Proj^1$-direction. 
The pairing $(\cdot,\cdot)_{\hF}$ is $\nabla$-flat,
as in Definition~\ref{def:TP}. 
\end{definition} 

\begin{definition}[pure TRP structure]  
\label{def:pure_TRP}
A TRP structure is said to be \emph{pure} if the bundle $\hF|_{\{t\} \times \Proj^1}$ 
is trivial as a holomorphic vector bundle for every $t\in \cM$. 
(A pure TRP structure corresponds to a trTERP structure of Hertling~\cite{Hertling:ttstar} 
without ``E".) 
\end{definition} 

The involution $\kappa_t$ (see equation~\ref{eq:kappa-H0}) 
acting on the space $H^0(\C^\times, \cO(F_t))$ 
is invariant under parallel translation and 
satisfies 
\begin{align}
\label{eq:kappa_t-property} 
\kappa_t(f v ) = \ov{\gamma^*f} \cdot \kappa_t(v) &&
(\kappa_t(v), \kappa_t(w))_\cF = \ov{\gamma^* ((v,w)_\cF)}  
\end{align} 
for $f \in \cO(\C^\times)$ and $v$,~$w 
\in H^0(\C^\times, \cO(F_t))$. 
Conversely, a TRP structure is given by a TP structure 
and a translation-invariant family of involutions 
$\kappa_t$ of $H^0(\C^\times, \cO(F_t))$ 
with these properties. 
Set $\hF_t := \hF|_{\{t\}\times \Proj^1}$. 
The subspace $H^0(\C,\cO(F_t))\subset H^0(\C^\times,\cO(F_t))$ 
consists of holomorphic sections of $\hF_t$ over $\C^\times$ 
which extend to $z=0$. 
Likewise, the subspace $\kappa_t(H^0(\C, \cO(F_t))) 
\subset H^0(\C^\times, \cO(F_t))$ 
consists of holomorphic sections of $\hF_t$ over $\C^\times$ 
which extend to $z=\infty$. 
Hence \emph{a TRP structure is pure if and only if 
\begin{equation} 
\label{eq:pure-trans}
H^0(\C^\times, \cO(F_t))  = H^0(\C,\cO(F_t)) \oplus z^{-1} 
\kappa_t(H^0(\C, \cO(F_t))) 
\end{equation} 
for each $t\in \cM$}. 
One can therefore view $z^{-1} \kappa_t(H^0(\C,\cO(F_t)))$ 
as defining an opposite module for a pure TRP structure. 
It is, however, not parallel in the anti-holomorphic direction. 
\begin{remark} 
By identifying $H^0(\C^\times, \cO(F_t))$ 
with a fixed $H^0(\C^\times,\cO(F_{t_0}))$ 
by parallel translation, locally on $\cM$, a pure TRP structure is 
given by a real structure on a single infinite-dimensional symplectic 
vector space $H^0(\C^\times, \cO(F_{t_0}))$ 
such that $H^0(\C, \cO(F_t))$ and 
its complex conjugate multiplied by $z^{-1}$ 
are opposite \eqref{eq:pure-trans};
see~\cite{Iritani:ttstar} and \S\ref{subsec:Kaehler} below. 
\end{remark} 

\begin{definition}[complex conjugate opposite module] 
\label{def:cc-opposite}
Let $(\sfF,\nabla,(\cdot,\cdot)_{\sfF})$ 
denote the cTP structure associated to 
a pure TRP structure $(\cF, \nabla, (\cdot,\cdot)_{\cF}, \kappa)$, 
that is, $(\sfF,\nabla, (\cdot,\cdot)_{\sfF})$ 
is the restriction of $(\cF,\nabla,(\cdot,\cdot)_{\cF})$ 
to the formal neighbourhood of $\cM\times \{0\}$ in $\cM\times \C$. 
Let $\cA_\cM$ denote the sheaf of real-analytic functions on $\cM$ 
and set: 
\[
\AF:= \sfF \otimes_{\cO_{\cM}[\![z]\!]} \cA_{\cM}[\![z]\!]
\]
We write $\cA_{\rm vh}(\hF)(*(\cM\times \{0\}))$ for the sheaf of 
real analytic sections of $\hF$ which are meromorphic along 
each fiber with poles only along $z=0$. 
The $\cA_{\cM}[z^{-1}]$-module $\ov\AF$ 
is defined to be the push-forward of this sheaf along
$\pi \colon \cM\times \Proj^1 \to \cM$: 
\[
\ov\AF := \pi_* \left( \cA_{\rm vh}(\hF)(* (\cM\times \{0\}))\right)
\]
The purity of the TRP structure implies that
\begin{equation} 
\label{eq:opposition-cc}
\AF[z^{-1}] = \AF \oplus z^{-1} \ov\AF 
\end{equation} 
 (cf.~equation~\ref{eq:pure-trans}).  We call $z^{-1} \ov\AF$ the \emph{complex conjugate 
opposite module} or \emph{complex conjugate polarization}. 
\end{definition} 
\begin{remark} 
Note that the involution $\kappa_t$ on $H^0(\C^\times, \cO(F_t))$ 
is ill-defined on the formal version $\AF[z^{-1}]$; nonetheless 
$\ov\AF$ can be regarded as the complex conjugate of $\AF$. 
\end{remark} 

Restricting the connection \eqref{eq:nabla-hF} 
to the formal neighbourhood of $z=0$, we obtain:
\begin{align}
\label{eq:nabla-AF}
\begin{split}  
&\nabla \colon \AF[z^{-1}] \to \cA^1_\cM \otimes \AF[z^{-1}] \\ 
&\nabla \colon \AF \to \cA^{1,0}_{\cM} \otimes 
(z^{-1} \AF) \oplus \cA^{0,1}_\cM \otimes \AF \\ 
& \nabla \colon \ov\AF \to \cA^{1,0}_\cM \otimes \ov\AF 
\oplus \cA^{0,1}_\cM \otimes (z \ov\AF)  
\end{split} 
\end{align} 
The third equation means that the complex conjugate opposite module is parallel 
in the holomorphic direction, but not in the anti-holomorphic 
direction. 
Let 
\[
\Omega\colon 
\AF[z^{-1}]\otimes_{\cA_\cM}\AF[z^{-1}] 
\to \cA_\cM
\] 
denote the symplectic pairing, defined as in \eqref{eq:symplecticform}. 
Suppose that the TRP structure is pure. 
Because the pairing \eqref{eq:pairing-hF} is 
necessarily constant with respect to a holomorphic frame of 
$H^0(\Proj^1, \cO(\hF_t))$ over $\Proj^1$, 
the pairing of two elements from $z^{-1} \ov{\AF}_t$ 
has vanishing residue at $z=0$.  Thus:
\begin{equation} 
\label{eq:isotropic-cc}
\Omega(z^{-1} \ov\AF, z^{-1} \ov\AF) = 0
\end{equation} 
To summarize, we have (cf.~Definition~\ref{def:opposite}):
\begin{proposition} 
The complex conjugate opposite module $z^{-1}\ov\AF$ associated to a pure TRP 
structure is $z^{-1}$-linear, opposite to $\AF$ \eqref{eq:opposition-cc}, and
isotropic for $\Omega$ \eqref{eq:isotropic-cc}.  It is parallel in the holomorphic direction, 
but not necessarily in the anti-holomorphic direction. 
\end{proposition}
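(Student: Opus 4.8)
The plan is to establish the three asserted properties of $z^{-1}\ov\AF$ one at a time, each being a direct translation of a structural fact about the pure TRP structure that has already been assembled in the preceding discussion. The $z^{-1}$-linearity is essentially built into the definition: $\ov\AF$ is defined as the pushforward along $\pi$ of the sheaf of vertically-meromorphic real-analytic sections of $\hF$ with poles only along $\cM\times\{0\}$, and this sheaf is manifestly closed under multiplication by $z^{-1}$, so $z^{-1}(z^{-1}\ov\AF)\subset z^{-1}\ov\AF$. I would spell this out in one sentence, perhaps noting that on stalks this matches the statement that $\kappa_t(H^0(\C,\cO(F_t)))$ is a $\C[z^{-1}]$-module after multiplication by $z^{-1}$.

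For the opposedness $\AF[z^{-1}]=\AF\oplus z^{-1}\ov\AF$, which is exactly equation~\eqref{eq:opposition-cc}, the plan is to reduce to the fiberwise statement \eqref{eq:pure-trans}, namely $H^0(\C^\times,\cO(F_t))=H^0(\C,\cO(F_t))\oplus z^{-1}\kappa_t(H^0(\C,\cO(F_t)))$, which was already derived as the characterization of purity. The subtlety is that $\AF[z^{-1}]$ involves the \emph{formal} completion in $z$ and a tensor with the real-analytic structure sheaf $\cA_\cM$, so I would argue that purity is an open condition and that the fiberwise direct sum decomposition, being given by honest bundle maps (the partial connections $\nabla$ identify $\AF$ and $\ov\AF$ with the two summands in \eqref{eq:nabla-hF}), spreads out to a decomposition of sheaves of $\cA_\cM[\![z]\!]$- and $\cA_\cM[z^{-1}]$-modules. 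The key input here is the vertical triviality of $\hF|_{\{t\}\times\Proj^1}$ together with the identification of $H^0(\C,\cO(F_t))$ and $\kappa_t(H^0(\C,\cO(F_t)))$ as the sections extending over $z=0$ and over $z=\infty$ respectively.

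The isotropy $\Omega(z^{-1}\ov\AF,z^{-1}\ov\AF)=0$ is \eqref{eq:isotropic-cc}, and the plan there is the short argument already sketched: purity means $\hF_t$ is holomorphically trivial over $\Proj^1$, so the $\nabla$-flat pairing $(\cdot,\cdot)_{\hF}$ from \eqref{eq:pairing-hF} is given by a constant (i.e.\ $z$-independent) Gram matrix in a holomorphic frame of $H^0(\Proj^1,\cO(\hF_t))$; two elements of $z^{-1}\ov\AF_t$ are of the form $z^{-1}$ times sections regular at $z=\infty$, hence their pairing is $z^{-2}$ times something regular at $z=\infty$, which together with the $z$-independence of the Gram matrix forces $\Res_{z=0}$ of the pairing to vanish. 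Finally, the parallel-in-the-holomorphic-direction-but-not-the-antiholomorphic-direction claim is simply a restatement of the third line of \eqref{eq:nabla-AF}, which I would quote directly. I do not anticipate a genuine obstacle; the only point requiring care is making the passage from the fiberwise statements \eqref{eq:pure-trans} and the vertical triviality of $\hF_t$ to statements about the sheaves $\AF$, $\ov\AF$ over $\cM$ honestly use openness of purity and the fact that the relevant gluing/splitting maps are real-analytic bundle maps rather than merely pointwise linear isomorphisms.
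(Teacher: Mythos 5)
Your proposal is correct and follows essentially the same route as the paper, which states this proposition as a summary of the immediately preceding discussion: $z^{-1}$-linearity from the definition of $\ov\AF$, opposedness from purity via \eqref{eq:pure-trans}, isotropy from the constancy in $z$ of the pairing \eqref{eq:pairing-hF} in a trivializing holomorphic frame of $\hF_t$ over $\Proj^1$, and the (anti)holomorphic parallelness from the third line of \eqref{eq:nabla-AF}. Your extra care in passing from the fiberwise splitting to the sheaf-level statement \eqref{eq:opposition-cc} is a harmless elaboration of what the paper leaves implicit (a frame of $\AF\cap\ov\AF$, i.e.\ of the $tt^*$-bundle $K$, trivializes both summands real-analytically in $t$).
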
 

\begin{definition}[$tt^*$-bundle] 
\label{def:ttstarbundle}
A pure TRP structure over $\cM$ defines a real-analytic complex 
vector bundle $K$ of rank $N+1 = \rank \cF$ over $\cM$ such that 
the sheaf $\cA(K)$ of real-analytic sections is given by:
\[
\cA(K) := \pi_* \cA_{\rm vh}(\hF) \cong \AF \cap \ov\AF  
\]
This bundle is equipped with: 
\begin{itemize} 
\item a complex anti-linear involution 
$\kappa \colon K \to K$ induced by $\kappa$ 
(cf.~equation~\ref{eq:kappa-H0}); 
\item a 
Hermitian metric $h(u,v) := ((-)^*\kappa(u), v)_{\hF}$, which may not be positive-definite, 
induced from the pairing \eqref{eq:pairing-hF} and $\kappa$ 
(the Hermitian metric here is complex anti-linear in the first 
variable); 
\item a one parametric family of flat connections 
\[
\nabla^{(z)}  = D - \frac{1}{z} \cC - z \tcC 
\]
induced by the connection $\nabla$ \eqref{eq:nabla-hF} on 
$\cA_{\rm vh}(\hF)$, where $D \colon \cA(K)\to \cA^1_\cM \otimes\cA(K)$ 
is a connection on $K$, $\cC\in \End(K)\otimes \cA^{1,0}_\cM$, and
$\tcC\in \End(K)\otimes \cA^{0,1}_\cM$; 
\end{itemize} 
such that\footnote
{Here $\kappa$ acts on forms by ordinary complex conjugation 
and $h$ is extended to $K$-valued one forms sesqui-linearly. 
In holomorphic co-ordinates $\{t^i\}$ on $\cM$, 
we have $D_\oi  = \kappa \circ D_i \circ \kappa$, 
$\tcC_\oi = \kappa \circ \cC_i \circ \kappa$, 
$h(\tcC_\oi u, v) = h(u, \cC_i v)$. }:  
\begin{itemize} 
\item 
the Hermitian metric $h$ is real with respect to $\kappa$: $h(\kappa(u),\kappa(v)) = \ov{h(u,v)}$; 
\item $D$ is real with respect to $\kappa$: $D = 
\kappa \circ D \circ \kappa$; 
\item  $\tcC = \kappa \circ \cC \circ \kappa$; 
\item $D$ respects the Hermitian metric $h$; 
\item $h(\tcC u, v) = h(u, \cC v)$.  
\end{itemize} 
The $(0,1)$-part of $\nabla^{(z)}$ defines a holomorphic structure 
on $K$ depending on $z$ which corresponds to the holomorphic structure 
on $F|_{\cM\times \{z\}}$; 
in particular the holomorphic structure for $z=0$ is defined by $D'' = D^{(0,1)}$ 
and coincides with that of $F_0 = F|_{\cM\times \{0\}}$. 
Therefore $D$ can be identified with the Chern connection 
for the holomorphic Hermitian bundle $(F_0,h)$ under the natural identification 
$F_0 \cong K$. 
The flatness of $\nabla^{(z)}$ implies, in terms of holomorphic local 
co-ordinates $\{t^i\}$ on $\cM$, 
\begin{align*}
& D_{\oi} \cC_j =0, \quad D_i \tcC_\oj =0, \quad [\cC_i, \cC_j] =0, 
\quad [\tcC_\oi, \tcC_\oj]=0, \quad 
[D_i,D_j] = 0, \quad [D_\oi, D_\oj] =0, 
\\ 
& D_i \cC_j -D_j \cC_i =0, \quad D_\oi \tcC_\oj - \tcC_\oj D_\oi  =0, \quad   
[D_i, D_\oj] + [\cC_i,\tcC_{\oj}] = 0.  
\end{align*}
These are the \emph{$tt^*$-equations}. 
In particular, $\cC$ gives a holomorphic section of 
$\End(F_0)\otimes \Omega^1_\cM$ 
(via the identification $K\cong F_0$). 
\end{definition} 

\begin{definition}[pure and polarized TRP structure] 
\label{def:polarized_TRP} 
A pure TRP structure is said to be \emph{polarized} if 
the Hermitian metric $h$ on $K$ above is positive definite. 
\end{definition} 

\begin{example} 
Many TERP structures coming from geometry (see Example~\ref{exa:TERP}) 
are pure and polarized. 
For the B-model, 
Sabbah~\cite{Sabbah:FL} showed that the TERP structure 
associated to a tame function on an affine variety is pure and polarized.  
For the A-model, under the identification 
between $H^0(\C^\times, \cO(F_t))$ and the Givental space $\cH$ given by the fundamental solution $L$ 
(see \S\ref{subsec:Lagrangian_TP}), we have 
\[
z^{-1} \kappa_t (H^0(\C, \cO(F_t) )) \longrightarrow \cH_- 
\]
when $t$ approaches the large radius limit. 
This implies that the A-model TERP structure is 
pure in a neighbourhood of the large radius limit point~\cite{Iritani:ttstar}. 
When we restrict ourselves to the algebraic part $\bigoplus_{p=0}^{\dim X} 
H^{p,p}(X)$ of the quantum cohomology, it is also polarized 
in a neighbourhood of the large radius limit point~(ibid.)
\end{example} 
\subsection{The Connection $\Nablacc$ on the Total Space} 
In this section we fix a pure TRP structure 
$(\cF, \nabla,(\cdot,\cdot)_\cF, \kappa)$ over $\cM$. 
The \emph{total space} $\LL$ of the TRP structure is defined to be 
the total space of the underlying cTP structure 
$(\sfF,\nabla,(\cdot,\cdot)_\sfF)$, that is, the 
total space of the infinite-dimensional vector bundle 
associated to $z\sfF$ (see \S\ref{subsec:totalspace}). 
We assume that $(\sfF,\nabla,(\cdot,\cdot)_\sfF)$ is miniversal (Assumption~\ref{assump:miniversal}), and denote by $\pr \colon \LL \to \cM$ the natural projection. 

We need to extend the structure sheaf $\bcO$ on $\LL$ 
by adding real-analytic functions on $\cM$. Set:
\[
\AO:=(\pr^{-1}\cA_\cM) \otimes_{\pr^{-1} \cO_\cM} \bcO
\]
The sheaf $\bOmega^1$ of one-forms 
on $\LL$ is also extended as 
\[
\AOmega^1 := \AOmega^{1,0} \oplus \pr^*\cA_\cM^{0,1}
\]
where $\AOmega^{1,0}$ and $\pr^*\cA_{\cM}^{0,1}$ are 
given in terms of local co-ordinates $\{t^i, x_n^i\}$ 
(see \S\ref{subsec:totalspace}) as 
\begin{align*}
  \AOmega^{1,0} = \bigoplus_{i=0}^N \AO dt^i \oplus 
  \bigoplus_{n=1}^\infty \bigoplus_{i=0}^N \AO dx_n^i &&
  \pr^*\cA_\cM^{0,1} = \bigoplus_{i=0}^N \AO d\ov{t}^i  
\end{align*}
We also write $\ATheta = 
\ATheta^{1,0} \oplus \pr^* \cT^{0,1}_\cM$ 
for the dual sheaf $\sHom_{\AO}(\AOmega^1,\AO)$, 
where $\cT^{0,1}_\cM$ denotes the sheaf of real-analytic vector fields 
of type $(0,1)$ on $\cM$. 
The gradings and (increasing) filtrations on $\bcO$, $\bOmega^1$ 
considered in \S\ref{subsec:totalspace} can be naturally extended to 
$\AO$ and $\AOmega^{1,0}$. We set 
\begin{align*} 
\AO(\pr^{-1}(U))^n & 
= \cA(U)\otimes_{\cO(U)} \bcO(\pr^{-1}(U))^n \\
\AO(\pr^{-1}(U))_l & = \cA(U)\otimes_{\cO(U)} \bcO(\pr^{-1}(U))_l 
\end{align*} 
and for $\AOmega^{1,0}$ we set, as in \eqref{eq:grading_filtration_coord},
\begin{align*}
  \deg(dt^i) = 0 &&
  \deg (dx_n^i )= 1 && 
  \filt(dt^i) = -1 &&
  \filt( dx_n^i) = n-1
\end{align*}
where $\filt(y)$ is the least number $m$ such that $y$ belongs 
to the $m$th filter.

The framework in \S\ref{sec:global_theory} generalizes easily 
to this setting. 
The dual modules $(z^n \AF)^\vee$, $\AF[z^{-1}]^\vee$ 
are defined as in \eqref{eq:dualmodules} but replacing 
$\cO_\cM$ with $\cA_\cM$;  
the pull-backs of $z^n\AF$, $\AF[z^{-1}]$, 
$(z^n\AF)^\vee$, $\AF[z^{-1}]^\vee$ under $\pr\colon \LL \to \cM$ 
are defined as in \eqref{eq:pullbacks}. 
The pull-back of the connection $\nabla$ defined in \eqref{eq:nabla-AF} 
gives a connection 
\[
\tnabla \colon \pr^*\AF \to \AOmega^{1,0} \hotimes \pr^*(z^{-1} \AF) 
\oplus \pr^*\cA_\cM^{0,1} \otimes \pr^*\AF   
\]
on $\pr^*\AF$ (cf.~equation~\ref{eq:tnabla}).  
Note that the $(0,1)$-part of $\tnabla$ is nothing but the 
$\ov\partial$-operator defining the holomorphic structure 
$\pr^*\sfF$. 
\begin{definition}[cf.~Definition~\ref{def:KS}]
\label{def:KS-cc}
The \emph{Kodaira--Spencer map} $\KS \colon \ATheta^{1,0} \to  
\pr^*\AF$ and the \emph{dual Kodaira--Spencer map} 
$\KS^* \colon \pr^*\AF^\vee \to \AOmega^{1,0}$ are 
defined by 
\begin{align*}
  \KS(v) = \tnabla_v \bx &&
  \KS^*(\varphi) = \varphi(\tnabla^{(1,0)} \bx)  
\end{align*}
where $\bx$ is the tautological section of $\pr^*(z\AF)$.  
They are simply the base changes of the 
Kodaira--Spencer maps defined previously, and are isomorphisms 
over the open subset $\LLo\subset \LL$. 
\end{definition} 

The complex conjugate opposite module $z^{-1}\ov\AF$ (Definition~\ref{def:cc-opposite}) determines connections 
$\Nablacc$ as follows.

\begin{definition}[cf.~Definition~\ref{def:Nabla}] 
\label{def:Nabla-cc}
Let $\Picc \colon \AF[z^{-1}] = \AF \oplus z^{-1}\ov{\AF} \to \AF$ 
denote the projection along $z^{-1}\ov{\AF}$. 
Set $\AOmegao^1:=\AOmega^1|_{\LLo}$, 
$\AOmegao^{1,0}:= \AOmega^{1,0}|_{\LLo}$, 
$\AThetao^1:=\ATheta^1|_{\LLo}$,  
$\AThetao^{1,0} := \ATheta^{1,0}|_{\LLo}$. 
Consider the maps 
\[
\xymatrix{\pr^*\AF \ar[rr]^-{\tnabla} && \AOmega^1 \hotimes \pr^*(z^{-1}\AF) \ar[rr]^-{\Picc} && \AOmega^1 \hotimes \pr^*(\AF)}
\]
\[
\xymatrix{ 
  \pr^*\AF^\vee \ar[rr]^-{\Picc^*} && \pr^*(z^{-1}\AF)^\vee \ar[rr]^-{\tnabla^\vee} && \AOmega^1 \otimes \pr^*\AF^\vee 
}
\]
Via the (dual) Kodaira--Spencer isomorphisms 
$\KS \colon \AThetao^{1,0} \cong \pr^*\AF$ 
and $\KS^* \colon \pr^*\AF^\vee \cong \AOmegao^{1,0}$, 
these maps induce connections 
\begin{align*} 
& \Nablacc\colon \AThetao^{1,0} \to \AOmegao^1 \hotimes \AThetao^{1,0} \\
& \Nablacc \colon \AOmegao^{1,0} \to \AOmegao^1 \otimes 
\AOmegao^{1,0} 
\end{align*} 
on the tangent and the cotangent sheaves of type $(1,0)$.  
Here the connection $\tnabla^\vee$ dual to $\tnabla$ is defined as in
\eqref{eq:tnablavee-def}, \eqref{eq:tnablavee-CD}. 
\end{definition} 

We shall see in the next section (\S\ref{subsec:Kaehler}) 
that the connection $\Nablacc$ can be viewed as 
the Chern connection on $\LLo$ associated to a certain 
K\"{a}hler metric. 
\begin{proposition} 
The connection $\Nablacc$ on $\AThetao$ is a torsion-free 
connection whose $(0,1)$-part is the $\ov\partial$-operator defining 
the holomorphic structure $\bThetao$. 
\end{proposition}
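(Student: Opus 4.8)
The plan is to mirror the proof of Proposition~\ref{prop:Nabla-torsionfree}, making the two changes that are forced by the presence of the real-analytic directions. Recall that $\Nablacc$ is defined on $\AThetao^{1,0}$ via the Kodaira--Spencer isomorphism $\KS$ from the composite $\Picc \circ \tnabla$ on $\pr^*\AF$, exactly as in Definition~\ref{def:Nabla}, but now $\tnabla$ has both a $(1,0)$-part (with a pole along $z=0$, landing in $z^{-1}\AF$) and a $(0,1)$-part (which is the $\ov\partial$-operator of $\pr^*\sfF$, landing in $\AF$ with no pole). The key structural fact I would invoke is the third line of \eqref{eq:nabla-AF}: the complex conjugate opposite module $z^{-1}\ov\AF$ is parallel in the holomorphic direction. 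This means that along $(1,0)$-directions $\Picc$ commutes with $\tnabla^{(1,0)}$ in the relevant sense, so that the $(1,0)$-part of $\Nablacc$ behaves just like the parallel case of \S\ref{subsec:conn-LL}.

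First I would check torsion-freeness. For $v_1,v_2\in\AThetao^{1,0}$ and the tautological section $\bx$, the same computation as in Proposition~\ref{prop:Nabla-torsionfree} gives
\[
\Nablacc_{v_1} v_2 - \Nablacc_{v_2} v_1
= \KS^{-1}\Picc\bigl(\tnabla_{v_1}\tnabla_{v_2}\bx - \tnabla_{v_2}\tnabla_{v_1}\bx\bigr)
= \KS^{-1}\Picc\bigl(\tnabla_{[v_1,v_2]}\bx\bigr)
= \Nablacc_{[v_1,v_2]},
\]
using that $\tnabla$ (the full connection \eqref{eq:nabla-hF}, hence its pull-back) is flat, so in particular $[\tnabla_{v_1},\tnabla_{v_2}]=\tnabla_{[v_1,v_2]}$ for all vector fields. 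The only point needing care is that $[v_1,v_2]$ is again of type $(1,0)$ and that $\KS^{-1}\Picc$ is applied to something in $\pr^*(z^{-1}\AF)$; this is fine since $[v_1,v_2]\in\AThetao^{1,0}$ and $\KS$ is an isomorphism on $\LLo$. So torsion-freeness is essentially formal.

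Next I would identify the $(0,1)$-part. By construction, for $\ov{v}\in\pr^*\cT^{0,1}_\cM$, the component $\tnabla_{\ov v}$ acting on $\pr^*\AF$ is the $\ov\partial$-operator of the holomorphic bundle $\pr^*\sfF$, and its image lies in $\AF$ (not $z^{-1}\AF$) by \eqref{eq:nabla-AF}, so $\Picc$ acts as the identity there; hence $(\Nablacc)^{(0,1)} = \KS^{-1}\circ\ov\partial\circ\KS$ as operators on $\AThetao^{1,0}$. It remains to see that this is precisely the $\ov\partial$-operator defining the holomorphic structure on $\bThetao$. This follows because $\KS$ is an isomorphism of \emph{holomorphic} bundles on $\LLo$: both $\AThetao^{1,0}$ and $\pr^*\AF$ carry holomorphic structures (the former as the complexified holomorphic tangent sheaf, the latter as the pull-back of the holomorphic bundle $\sfF$), $\bx$ is a holomorphic section of $\pr^*(z\sfF)$, $\tnabla^{(1,0)}$ is holomorphic in the $\LL$-directions, and so $\KS(v)=\tnabla^{(1,0)}_v\bx$ is holomorphic whenever $v$ is. Conjugating $\ov\partial$ by a holomorphic isomorphism returns the $\ov\partial$-operator on the other side.

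The main obstacle I anticipate is not any single computation but the bookkeeping around the mixed $(1,0)$/$(0,1)$ nature of $\tnabla$ and the fact that $\Picc$ is only \emph{holomorphically} parallel: one must verify that $\Picc^*$ commutes with $\tnabla^\vee$ only in the $(1,0)$-directions and track where the failure in the $(0,1)$-direction does or does not matter for the two assertions at hand (for torsion-freeness it is irrelevant since we only pair $(1,0)$ vectors; for the $(0,1)$-part it is precisely the content). A secondary subtlety is making sure that $\KS$ and $\KS^*$ are genuinely holomorphic isomorphisms over $\LLo$ in the extended (real-analytic-base) setting, which I would justify by the explicit formulas \eqref{eq:KS_coord} together with the observation that $\cC(t,z)$ is holomorphic in $t$ — the real-analytic coordinates $\ov t^i$ never enter $\KS$ or $\KS^*$, only $\tnabla^{(0,1)}$. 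Once these are in place, both claims of the proposition follow from the arguments above with no further work.
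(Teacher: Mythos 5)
Your proof is correct and follows essentially the same route as the paper, which simply notes that the $(0,1)$-part claim is immediate from the definition (since $\tnabla^{(0,1)}$ lands in $\AF$, where $\Picc$ acts as the identity, and $\KS$ is a holomorphic isomorphism) and that torsion-freeness follows from the same computation as Proposition~\ref{prop:Nabla-torsionfree}. Your write-up just supplies the details the paper leaves implicit; the remark about holomorphic parallelness of $z^{-1}\ov\AF$ is harmless but not actually needed for either assertion.
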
 
\begin{proof} 
It is obvious from the definition that the $(0,1)$-part of $\Nablacc$ 
is the $\ov\partial$-operator. 
Torsion-freeness follows from the same argument as 
Proposition~\ref{prop:Nabla-torsionfree}. 
\end{proof} 

We next introduce the propagator $\Delta_{\sfP, \rm cc}$ 
and the background torsion $\Lambdacc$ 
associated to the complex conjugate opposite module. 
\begin{definition}[cf.~Definition~\ref{def:propagator}] 
\label{def:propagator-cc}
Let $\sfP$ be a pseudo-opposite module of $(\sfF, \nabla,(\cdot,\cdot)_\sfF)$ 
in the sense of Definition~\ref{def:opposite}. 
Write $\cA\sfP := \cA_\cM\otimes_{\cO_{\cM}} \sfP$ and 
let $\Pi_\sfP \colon \AF[z^{-1}] = \AF \oplus \cA\sfP \to \AF$ 
denote the projection along $\cA\sfP$.  
The \emph{propagator} $\Delta_{\sfP, \rm cc} = \Delta(\sfP, z^{-1}\ov\AF)$ 
between $\sfP$ and the complex conjugate opposite module 
$z^{-1} \ov\AF$ 
is a homomorphism $\AOmegao^{1,0} \otimes \AOmegao^{1,0} 
\to \AO$ defined by 
\[
\Delta_{\sfP, \rm cc}(\omega_1,\omega_2) = \Omega^\vee(\Pi_\sfP^*\varphi_1, 
\Picc^* \varphi_2 ) 
\]
where $\varphi_i := (\KS^*)^{-1} \omega_i$, $i\in\{1,2\}$. 
\end{definition} 

\noindent The propagator is symmetric $\Delta_{\sfP, \rm cc}(\omega_1,\omega_2) 
= \Delta_{\sfP, \rm cc}(\omega_2,\omega_1)$ (see the proof of 
Proposition~\ref{pro:prop-elementary}) 
and satisfies $\Delta_{\sfP, \rm cc} - \Delta_{\sfQ, \rm cc} = \Delta(\sfP,\sfQ)$ 
(see the proof of Proposition~\ref{prop:Deltasum}). 

\begin{definition}[cf.~Definition~\ref{def:background torsion}]
The (background) \emph{torsion} associated to the complex 
conjugate opposite module $z^{-1}\ov\AF$ is 
an operator $\Lambdacc \colon 
\AOmegao^{1,0} \times \AOmegao^{1,0} \to \pr^*\cA_{\cM}^{0,1}$  
defined by 
\[
\Lambdacc (\omega_1, \omega_2) = 
\Omega^\vee(\tnabla^\vee \Picc^* \varphi_1, 
\Picc^* \varphi_2) 
\]
where $\varphi_i := (\KS^*)^{-1} \omega_i$, $i\in\{1,2\}$.  
\end{definition} 

\noindent The background torsion takes values in $\pr^*\cA_{\cM}^{0,1}$ because 
$z^{-1}\ov\AF$ is parallel in the holomorphic direction.  
It is $\AO$-bilinear and symmetric 
$\Lambdacc(\omega_1,\omega_2) = 
\Lambdacc(\omega_2,\omega_1)$:
see the proof of Lemma~\ref{lem:background torsion}. 

We use tensor notation as in 
Proposition~\ref{prop:difference_conn} or 
Proposition~\ref{prop:propagator-curved}. 
Let $\{\sx^\mu\} = \{t^i, x_n^i\}$ denote an algebraic local co-ordinate system 
on $\LL$ (see \S\ref{subsec:totalspace}). We use Roman letters $i,j,k,\dots$ for the indices of 
co-ordinates $\{t^i\}$ on $\cM$, and Greek letters 
$\mu,\nu,\rho,\dots$ for the indices of co-ordinates $\{\sx^\mu\}$. 
We also use the Einstein summation convention, as previously. 
The following proposition is an analogue of  
Proposition~\ref{prop:difference_conn}(1) 
and Proposition~\ref{prop:propagator-curved}(2). 
We remark that the connection $\Nabla^\sfP$ associated to 
a pseudo-opposite module $\sfP$ (Definition~\ref{def:Nabla}) 
can be naturally extended to a connection on $\AOmegao^1$ 
(or on $\AThetao^1$) such that the $(0,1)$-part 
coincides with the $\ov\partial$-operator. 

\begin{proposition} 
\label{prop:propagatorcc} 
Let $\sfP$ be a pseudo-opposite module for $(\sfF,\nabla,(\cdot,\cdot)_{\sfF})$. 
\begin{enumerate}
\item The difference $\Nablacc-\Nabla^{\sfP}$ 
defines a map $\AOmega^{1,0} \to 
\pr^*(\cA_{\cM}^{1,0}\otimes \cA_{\cM}^{1,0})$  
given by:
\[
(\Nablacc - \Nabla^\sfP) d\sx^\nu = \Delta_{\sfP, \rm cc}^{\nu\sigma} 
C^{(0)}_{\sigma i j} dt^i \otimes dt^j
\]
\item The covariant derivative of the propagator gives: 
\begin{align*} 
\Nabla_\mu^{\sfP} \Delta_{\sfP, \rm cc}^{\nu\rho} 
& (:= 
\partial_\mu \Delta^{\nu\rho} +  
{\Gamma}^\nu_{\mu\sigma} \Delta^{\sigma\rho} 
+ {\Gamma}^\rho_{\mu\sigma} \Delta^{\nu\sigma})  
= {\Lambda_{\sfP}}_\mu^{\nu\rho} 
+ \Delta_{\sfP, \rm cc}^{\nu \sigma} C^{(0)}_{\sigma \mu \tau} 
\Delta_{\sfP, \rm cc}^{\tau \rho} \\ 
\Nabla_{\oi}^\sfP \Delta_{\sfP, \rm cc}^{\nu\rho} 
& = \ov\partial_i \Delta_{\sfP, \rm cc}^{\nu\rho} 
= - {\Lambdacc}^{\nu\rho}_{\, \oi}
\end{align*} 
where $\Lambda_\sfP$ is the torsion of $\sfP$ 
and $\Gamma^\nu_{\mu\rho}$ are the Christoffel coefficients 
of $\Nabla^{\sfP}$ as in \eqref{eq:Christoffel}. 
(In the first line, only the case $\mu=i$ yields non-vanishing 
results.)
\end{enumerate} 
\end{proposition}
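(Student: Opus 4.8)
The statement to be proved, Proposition~\ref{prop:propagatorcc}, is the complex-conjugate analogue of Proposition~\ref{prop:difference_conn}(1) and Proposition~\ref{prop:propagator-curved}(2). The plan is to reduce both parts to these previously-established results by exploiting the identity $\Delta_{\sfP,\rm cc} - \Delta_{\sfQ,\rm cc} = \Delta(\sfP,\sfQ)$ for two pseudo-opposite modules $\sfP,\sfQ$, together with the fact that the complex-conjugate opposite module $z^{-1}\ov\AF$ is parallel in the holomorphic direction but not in the anti-holomorphic direction. The key point is that although $z^{-1}\ov\AF$ is \emph{not} holomorphic (it is only a real-analytic family of opposite modules), it \emph{is} parallel along $(1,0)$-directions, so the purely holomorphic arguments of Propositions~\ref{pro:prop-elementary}, \ref{prop:difference_conn} and \ref{prop:propagator-curved} apply verbatim along $\cM^{1,0}$, while the $(0,1)$-directions require the extra term measured by $\Lambdacc$.

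\textbf{Step 1: Part (1).} First I would observe that $\Nablacc$ and $\Nabla^{\sfP}$ differ only in their $(1,0)$-parts: both have $(0,1)$-part equal to the $\ov\partial$-operator defining the holomorphic structure on $\bThetao$. Hence $(\Nablacc - \Nabla^\sfP)$ annihilates $\pr^*\cA^{0,1}_\cM$ and maps $\AOmega^{1,0}$ into a $(1,0)\otimes(1,0)$ piece; it suffices to work along $\cM^{1,0}$, where both $\cA\sfP$ and $z^{-1}\ov\AF$ are parallel. Then the computation is literally that of Proposition~\ref{prop:difference_conn}(1): writing $\varphi := (\KS^*)^{-1}\omega$ and using $(\Picc^*-\Pi_\sfP^*)\varphi$ vanishing on $\pr^*\AF$ together with $\tnabla^{(1,0)}$ preserving $(\pr^*\sfP)^\perp$ and $(z^{-1}\ov\AF)^\perp$, one pairs against $\tnabla_X\tnabla_Y\bx$ for $X,Y\in\AThetao^{1,0}$ and invokes \eqref{eq:iomegaDelta} (which holds verbatim with $\Delta_{\sfP,\rm cc}$ in place of $\Delta$) to land on $\bY(\iota_\omega\Delta_{\sfP,\rm cc},X,Y) = \Delta_{\sfP,\rm cc}^{\nu\sigma}C^{(0)}_{\sigma ij}\,dt^i\otimes dt^j$. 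The restriction to $\mu=i$ (i.e.\ only $t$-directions contribute on the right) is automatic since $\bY \in ((\bOmega^1)^{\otimes 3})^2_{-3}$ is pulled back from a cubic tensor on $\sfF_0$.

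\textbf{Step 2: Part (2), holomorphic direction.} For the first displayed formula in Part (2), I would repeat the proof of Proposition~\ref{prop:propagator-curved}(2): using the Leibniz-type identity $d\Delta_{\sfP,\rm cc}(\omega_1,\omega_2) = \Delta_{\sfP,\rm cc}(\Nabla^\sfP\omega_1,\omega_2) + \Delta_{\sfP,\rm cc}(\omega_1,\Nabla^\sfP\omega_2) + \Lambda_\sfP(\omega_1,\omega_2) - \Lambdacc(\omega_1,\omega_2)$ (the analogue of Proposition~\ref{prop:propagator-curved}(1), obtained by differentiating $\Omega^\vee(\Pi_\sfP^*\varphi_1,\Picc^*\varphi_2)$ and tracking which projection is non-parallel), then restricting to a $(1,0)$-direction $\partial_\mu$ and feeding in Part (1) to convert the $(\Nablacc-\Nabla^\sfP)$-term into the Yukawa contraction $\Delta_{\sfP,\rm cc}^{\nu\sigma}C^{(0)}_{\sigma\mu\tau}\Delta_{\sfP,\rm cc}^{\tau\rho}$. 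Along $(1,0)$-directions $\Lambdacc$ vanishes (since $z^{-1}\ov\AF$ is parallel there), leaving exactly ${\Lambda_\sfP}_\mu^{\nu\rho} + \Delta_{\sfP,\rm cc}^{\nu\sigma}C^{(0)}_{\sigma\mu\tau}\Delta_{\sfP,\rm cc}^{\tau\rho}$.

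\textbf{Step 3: Part (2), anti-holomorphic direction, and the main obstacle.} For the second displayed formula, $\Nabla^\sfP_{\oi}\Delta_{\sfP,\rm cc}^{\nu\rho} = \ov\partial_i\Delta_{\sfP,\rm cc}^{\nu\rho} = -{\Lambdacc}^{\nu\rho}_{\,\oi}$, the first equality is immediate because $\Nabla^\sfP$ has $(0,1)$-part equal to $\ov\partial$ and $\Delta_{\sfP,\rm cc}$ is a function. The substantive claim is $\ov\partial_i\Delta_{\sfP,\rm cc}^{\nu\rho} = -{\Lambdacc}^{\nu\rho}_{\,\oi}$: here I would differentiate $\Delta_{\sfP,\rm cc}(\omega_1,\omega_2) = \Omega^\vee(\Pi_\sfP^*\varphi_1,\Picc^*\varphi_2)$ along $\partial_{\oi}$, noting that $\Pi_\sfP^*\varphi_1$ is now \emph{holomorphic} in the $\cM$-direction (since $\sfP$ is a genuine pseudo-opposite module, holomorphic), so $\tnabla^\vee_{\oi}\Pi_\sfP^*\varphi_1 = \ov\partial_i(\Pi_\sfP^*\varphi_1)$ lies in $(\pr^*\AF)^\perp$ and pairs trivially against $\Picc^*\varphi_2$ (isotropy of $(z^{-1}\ov\AF)^\perp$); only the term $\Omega^\vee(\Pi_\sfP^*\varphi_1,\tnabla^\vee_{\oi}\Picc^*\varphi_2)$ survives, and this is exactly $-\Lambdacc(\omega_2,\omega_1)_{\oi} = -{\Lambdacc}^{\nu\rho}_{\,\oi}$ by the definition of $\Lambdacc$ and its symmetry. \textbf{The main obstacle} I anticipate is bookkeeping the $(1,0)$ versus $(0,1)$ type decomposition carefully throughout — in particular verifying that $\tnabla^\vee$ acts as claimed on $\Picc^*\varphi$ in the anti-holomorphic direction (this is where the non-parallelism of $z^{-1}\ov\AF$ in the $(0,1)$-direction enters, and it must be checked against the third line of \eqref{eq:nabla-AF}), and making sure that the Christoffel-symbol conventions and index raising via $\Delta_{\sfP,\rm cc}$ match the stated formula sign-for-sign. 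Everything else is a routine transcription of the already-proven holomorphic statements.
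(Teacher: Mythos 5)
Your proposal is correct and follows essentially the same route as the paper, which omits the proof precisely because it is a transcription of Propositions~\ref{prop:difference_conn} and~\ref{prop:propagator-curved} to the pair $(\sfP, z^{-1}\ov\AF)$, using that $z^{-1}\ov\AF$ is parallel in the $(1,0)$-direction (so $\Lambdacc$ has only $(0,1)$-components) and that $\bY$ has only $t$-components. Two small transcription slips, neither affecting the argument: in Step~2 the Leibniz identity should read $d\Delta_{\sfP,\rm cc}(\omega_1,\omega_2)=\Delta_{\sfP,\rm cc}(\Nabla^{\sfP}\omega_1,\omega_2)+\Delta_{\sfP,\rm cc}(\omega_1,\Nablacc\omega_2)+\Lambda_{\sfP}(\omega_1,\omega_2)-\Lambdacc(\omega_1,\omega_2)$, i.e.\ with $\Nablacc$ in the second slot as in Proposition~\ref{prop:propagator-curved}(1) (otherwise the $(\Nablacc-\Nabla^{\sfP})$-term you later convert via Part~(1) is not present); and in Step~3 the first term drops simply because $\Pi_{\sfP}^*\varphi_1$ is holomorphic so its $\tnabla^\vee_{\oi}$-derivative vanishes, while the isotropy actually needed is that of $(\pr^*\AF)^\perp$, which lets you replace $\Pi_{\sfP}^*$ by $\Picc^*$ in the surviving term $\Omega^\vee(\Pi_{\sfP}^*\varphi_1,\tnabla^\vee_{\oi}\Picc^*\varphi_2)$ and identify it with $-{\Lambdacc}^{\nu\rho}_{\,\oi}$.
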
 
\begin{proof} 
The proof is almost the same as Propositions~\ref{prop:difference_conn},~\ref{prop:propagator-curved} and is omitted. 
\end{proof} 

\begin{definition}[cf.~equations~\ref{eq:difference1-form} and~\ref{eq:curvature-Lambda}]
\label{def:difference_one-form_cc} 
Let $\sfP$ be a pseudo-opposite module for $(\sfF, \nabla,(\cdot,\cdot)_\sfF)$. 
The \emph{difference one-form} $\omega_{\sfP, \rm cc} 
\in \pr^*\cA^{1,0}_\cM$ associated to 
$\sfP$ and the complex conjugate opposite module $z^{-1}\ov\AF$ 
is defined to be 
\[
\omega_{\sfP, \rm cc} = \frac{1}{2}  \sum_{i=0}^N  \sum_{j=0}^N  \sum_{h=0}^N
C^{(0)}_{ijh} \Delta_{\sfP, \rm cc}^{jh} dt^i  
= \frac{1}{2} \sum_{i=0}^N \Tr_{\AF_0} 
( (\Pi_{\sfP} - \Picc) \nabla_i ) 
dt^i
\] 
where $\AF_0 = \AF/z\AF$ 
(the proof of the second equality here is the same as that of
Lemma~\ref{lem:difference1-form}). 
We have $\omega_{\sfP, \rm cc} - \omega_{\sfQ, \rm cc} = 
\omega_{\sfP\sfQ}$. 
If $\sfP$ is parallel, then the two-form $\varthetacc :=d\omega_{\sfP, \rm cc}
\in \pr^*\cA_{\cM}^2$ 
does not depend on the choice of a parallel $\sfP$. 
(This follows from the same argument as Lemma~\ref{lem:curvature2form}; see Proposition~\ref{prop:curvaturecc} 
or equation~\ref{eq:varthetacc_CCtilde} 
below for an explicit formula.) 
We call $\varthetacc$ the \emph{curvature two-form} 
of the complex conjugate opposite module. 
Both the difference one-form $\omega_{\sfP, \rm cc}$ and 
the curvature two-form $\varthetacc$ are pulled back from 
$\cM$. 
\end{definition} 

Finally we give formulas for the curvature of $\Nablacc$ and its 
trace. The proofs of these are again parallel to the argument in 
Proposition~\ref{prop:curvature}, and are omitted. 
\begin{proposition}[curvature] 
\label{prop:curvaturecc}
Let $(\Nablacc)^2$ denote the curvature of $\Nablacc$ 
on the cotangent sheaf $\AOmegao^1$, which 
is an $\End(\AOmegao^{1,0})$-valued $(1,1)$-form on $\LLo$. 
\begin{enumerate}
\item The curvature $(\Nablacc)^2$ defines a map 
$\AOmegao^{1,0} \to \pr^*(\cA^{1,1}_\cM) \otimes 
\pr^*(\cA^{1,0}_\cM)$ 
given by:
\[
(\Nablacc)^2 d\sx^\nu = 
C^{(0)}_{i j h } {\Lambdacc}_{\ov{l}}^{h \nu} 
(dt^i \wedge d\ov{t}^l ) \otimes  dt^j
\] 
\item The curvature two-form $\varthetacc$ equals\footnote
{The factor $1/2$ instead of $1/4$ 
in the second line here
is not a typo; it reflects the asymmetry 
between $i$ and $j$, cf.~\eqref{eq:curvature-Lambda}.} half of the trace of 
$(\Nablacc)^2$: 
\begin{align*}
\varthetacc & = \frac{1}{2} \Tr((\Nablacc)^2)
=  \frac{1}{2} C^{(0)}_{ijh} {\Lambdacc}_{\ov{l}}^{jh} 
dt^i \wedge d\ov{t}^l  \\
& = - \frac{1}{2} \sum_{i=0}^N \sum_{j=0}^N
\Tr_{\AF_0}(\Picc \nabla_i \Picc \ov\partial_j - 
\Picc \ov\partial_j \Picc \nabla_i) 
dt^i \wedge d\ov{t}^j
\end{align*} 
In particular, $\varthetacc$ is of type $(1,1)$. 
\end{enumerate}
\end{proposition}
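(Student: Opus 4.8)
\textbf{Proof plan for Proposition~\ref{prop:curvaturecc}.}

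The plan is to reduce both formulae to the computations already carried out in Proposition~\ref{prop:curvature}, substituting the parallel (in the holomorphic direction) complex-conjugate opposite module $z^{-1}\ov\AF$ in place of the curved opposite module $\sfQ$ there, while keeping careful track of the fact that $z^{-1}\ov\AF$ is \emph{not} parallel in the anti-holomorphic direction. First I would fix a parallel pseudo-opposite module $\sfP$ for $(\sfF,\nabla,(\cdot,\cdot)_\sfF)$ over a small open set (which exists formally at each point by Lemma~\ref{lem:existence_opposite}, and suffices since the assertions are $\cA_\cM$-local and tensorial), extend its connection $\Nabla^\sfP$ to $\AOmegao^1$ so that its $(0,1)$-part is $\ov\partial$, and record that this extended $\Nabla^\sfP$ is flat. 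Then I would invoke Proposition~\ref{prop:propagatorcc}(1), $(\Nablacc-\Nabla^\sfP) d\sx^\nu = \Delta_{\sfP,\rm cc}^{\nu\sigma} C^{(0)}_{\sigma ij}\, dt^i\otimes dt^j$, which identifies $\Delta_{\sfP,\rm cc}^{\nu\sigma}C^{(0)}_{\sigma\mu\rho}$ (supported on $\mu=i$ a holomorphic index) as the ``Christoffel tensor'' of $\Nablacc$ relative to the flat $\Nabla^\sfP$. Exactly as in the proof of Proposition~\ref{prop:curvature}(1) one writes $[\Nablacc_{\mu_1},\Nablacc_{\mu_2}]d\sx^\nu$ as the antisymmetrised covariant derivative of this tensor plus its square, verifies tensoriality (so the identity, checked in $\Nabla^\sfP$-flat coordinates, holds invariantly), and then plugs in the covariant-derivative formulae of Proposition~\ref{prop:propagatorcc}(2): $\Nabla^\sfP_i\Delta_{\sfP,\rm cc}^{\nu\rho}={\Lambda_\sfP}_i^{\nu\rho}+\Delta^{\nu\sigma}C^{(0)}_{\sigma i\tau}\Delta^{\tau\rho}$ and $\Nabla^\sfP_{\oi}\Delta_{\sfP,\rm cc}^{\nu\rho}=-{\Lambdacc}^{\nu\rho}_{\,\oi}$. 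Since $\Lambda_\sfP=0$ ($\sfP$ parallel, Lemma~\ref{lem:background torsion}(2)) the only surviving term pairs a $dt^i$ (from one $\Nablacc$) with a $d\ov t^l$ (from $\Nabla^\sfP_{\ov l}$ hitting the propagator, producing $-\Lambdacc$) and a $dt^j$ leg, giving $(\Nablacc)^2 d\sx^\nu = C^{(0)}_{ijh}{\Lambdacc}^{h\nu}_{\ov l}\,(dt^i\wedge d\ov t^l)\otimes dt^j$; in particular this lies in the finite-rank subbundle $\pr^*\cA^{1,1}_\cM\otimes\pr^*\cA^{1,0}_\cM$, which makes the trace in Part (2) well defined, as in Proposition~\ref{prop:curvature}.

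For Part (2), I would argue as in the second half of the proof of Proposition~\ref{prop:curvature}(2): the curvature two-form is $\varthetacc = d\omega_{\sfP,\rm cc}$, and since $\Nabla^\sfP$ is torsion-free this equals the antisymmetrisation of $\Nabla^\sfP\omega_{\sfP,\rm cc}\in(\AOmegao^1)^{\otimes 2}$ with $\omega_{\sfP,\rm cc}=\tfrac12 C^{(0)}_{ijh}\Delta_{\sfP,\rm cc}^{jh}\,dt^i$. Expanding $\Nabla^\sfP_\mu(C^{(0)}_{ijh}\Delta_{\sfP,\rm cc}^{jh})$ by the Leibniz rule and substituting the covariant-derivative formulae for $\Delta_{\sfP,\rm cc}$ above, one finds that the $(1,0)\wedge(1,0)$ pieces cancel upon antisymmetrisation (the $C^{(0)}\Delta C^{(0)}\Delta$ terms are symmetric in the relevant indices, just as in the parallel case where $\omega_{\sfP\sfQ}$ was closed), leaving the $(1,1)$ contribution $\tfrac12 C^{(0)}_{ijh}{\Lambdacc}^{jh}_{\ov l}\,dt^i\wedge d\ov t^l$; note the asymmetry between the index $i$ carried by $dt^i$ in $\omega_{\sfP,\rm cc}$ and the summed pair $j,h$ is exactly what produces the $\tfrac12$ here versus the $\tfrac14$ of \eqref{eq:curvature-Lambda}, as flagged in the footnote. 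Comparing with Part~(1) this is visibly $\tfrac12\Tr((\Nablacc)^2)$. Finally, to obtain the last, intrinsic expression, I would translate $\Lambdacc$ and $C^{(0)}$ back to the base via the Kodaira--Spencer isomorphisms and Lemma~\ref{lem:Yukawa-KS}: the identity $\varthetacc=d\omega_{\sfP,\rm cc}$ together with $\omega_{\sfP,\rm cc}=\tfrac12\sum_i\Tr_{\AF_0}((\Pi_\sfP-\Picc)\nabla_i)\,dt^i$ from Definition~\ref{def:difference_one-form_cc}, and the fact that $\Pi_\sfP$ is holomorphic while only $\Picc$ carries anti-holomorphic dependence, gives $\varthetacc=d\bigl(-\tfrac12\sum_i\Tr_{\AF_0}(\Picc\nabla_i)\,dt^i\bigr)$ modulo an exact holomorphic form; computing $\ov\partial_j\Tr_{\AF_0}(\Picc\nabla_i)=\Tr_{\AF_0}((\ov\partial_j\Picc)\nabla_i)$ and using $\ov\partial_j\Picc=-\Picc(\ov\partial_j\Picc)\Picc + (\ov\partial_j\Picc)$ together with $\Picc\nabla_i\Picc=\Picc\nabla_i$ (so that the operator acts $\cA_\cM$-linearly on $\AF_0$, exactly the argument used at the end of Proposition~\ref{prop:curvature}) collapses the result to $-\tfrac12\Tr_{\AF_0}(\Picc\nabla_i\Picc\ov\partial_j-\Picc\ov\partial_j\Picc\nabla_i)\,dt^i\wedge d\ov t^j$.

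The main obstacle I anticipate is bookkeeping rather than conceptual: one must be scrupulous about which slots of each tensor are holomorphic versus anti-holomorphic (the first $\Nablacc$ in $(\Nablacc)^2$ contributes $dt^i$, the second must contribute $d\ov t^l$, and the vertex-index $dt^j$ is holomorphic), and one must verify that the extension of $\Nabla^\sfP$ to $\AOmegao^1$ is flat and that $\Nabla^\sfP_{\oi}\Delta_{\sfP,\rm cc}=\ov\partial_i\Delta_{\sfP,\rm cc}=-\Lambdacc_{\,\oi}$ (the second equality of Proposition~\ref{prop:propagatorcc}(2), which encodes the non-parallelism of $z^{-1}\ov\AF$ in the anti-holomorphic direction) before the cancellation in Part~(2) goes through. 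A secondary point requiring care is the well-definedness of $\Tr((\Nablacc)^2)$: one must check, as in Proposition~\ref{prop:curvature}, that $(\Nablacc)^2 d\sx^\nu$ genuinely lands in the finite-rank subbundle $\pr^*\cA^2_\cM\otimes\pr^*\cA^{1,0}_\cM$ — which is immediate once the explicit formula in Part~(1) is established, since ${\Lambdacc}^{h\nu}_{\ov l}$ pulls back from $\cM$ and $C^{(0)}_{ijh}$ is a fixed cubic tensor. Everything else is a routine transcription of Propositions~\ref{prop:difference_conn}, \ref{prop:propagator-curved}, \ref{prop:curvature} with $\sfQ$ replaced by $z^{-1}\ov\AF$ and holomorphic/anti-holomorphic types tracked.
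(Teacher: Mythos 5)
Your plan is correct and is essentially the argument the paper intends: the paper omits the proof of Proposition~\ref{prop:curvaturecc}, saying it is parallel to Proposition~\ref{prop:curvature}, and that is exactly what you carry out --- express $\Nablacc-\Nabla^{\sfP}$ through the propagator and Yukawa coupling via Proposition~\ref{prop:propagatorcc}(1) for a flat reference $\Nabla^{\sfP}$, compute the commutators as in Proposition~\ref{prop:curvature}(1) using Proposition~\ref{prop:propagatorcc}(2) in place of Proposition~\ref{prop:propagator-curved}, note that the $(2,0)$-contributions cancel because $z^{-1}\ov\AF$ is parallel in the holomorphic direction while $\ov\partial\Delta_{\sfP,\rm cc}=-\Lambdacc$ produces exactly the stated $(1,1)$-curvature, and for Part (2) use $\varthetacc=d\omega_{\sfP,\rm cc}=\ov\partial\omega_{\sfP,\rm cc}$ together with the closedness argument of Lemma~\ref{lem:curvature2form} for the $(2,0)$-part.

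One caveat concerns your route to the last, intrinsic trace formula: you split the difference one-form and write $\Tr_{\AF_0}(\Picc\nabla_i)$ on its own, but this trace is not defined --- $\Picc\nabla_i$ is neither $\cA_\cM$-linear nor does it preserve $z\AF$, so it does not descend to $\AF_0$; only the difference $(\Pi_{\sfP}-\Picc)\nabla_i$, or the commutator combination $\Picc\nabla_i\Picc\ov\partial_j-\Picc\ov\partial_j\Picc\nabla_i$, has a well-defined trace. This is easily repaired: either apply $\ov\partial_j$ to the well-defined trace $\Tr_{\AF_0}\big((\Pi_{\sfP}-\Picc)\nabla_i\big)$ and use that $\ov\partial_j$ commutes with $\Pi_{\sfP}\nabla_i$ (flatness of $\nabla$ plus holomorphy of $\sfP$), or, closer to the template at the end of the proof of Proposition~\ref{prop:curvature}, identify $\Tr\big((\Nablacc)^2\big)$ on $\AOmegao^{1,0}$ with minus the trace of the curvature of $\Picc\tnabla$ on $\pr^*\AF$, whose mixed components $\Picc\nabla_i\Picc\ov\partial_j-\Picc\ov\partial_j\Picc\nabla_i$ vanish on $z\AF$ and hence define $\cA_\cM$-linear endomorphisms of $\AF_0$ with a well-defined trace.
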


\begin{example} 
We give explicit formulae for the quantities 
$\Nablacc$, $\Delta_{\sfP, \rm cc}$, $\Lambdacc, 
\varthetacc$ in 
terms of local co-ordinates. 
Let $\sfP$ be a reference opposite module.  
By Proposition~\ref{prop:flat_trivialization}, $\sfP$
defines (locally) a flat trivialization of $\AF$ 
by choosing a flat frame of $z\sfP/\sfP$. We denote 
it by:
\[
\Phi \colon \C^{N+1} \otimes \cA_\cM[\![z]\!] \cong \AF
\] 
The trivialization $\Phi$ induces a trivialization $\Phi_0$ of 
the holomorphic bundle $F_0 = F|_{z=0}$; 
by purity, $\Phi_0$  
extends to a trivialization of $\hF$ which is holomorphic 
along each fiber $\{t\}\times \Proj^1$. 
Restricted to the formal neighbourhood of $z=0$, 
this gives rise to a different trivialization $\Phicc$ of 
$\AF$: 
\[
\Phicc \colon \C^{N+1} \otimes \cA_\cM[\![z]\!] \cong \AF
\]
The trivialization $\Phicc$ is only real-analytic 
and coincides with $\Phi$ along $z=0$. 
The trivialization $\Phi$ induces a Frobenius-type structure 
on the trivial bundle of rank $N+1$ 
as in Proposition~\ref{prop:flat_trivialization}, i.e.~we have 
a flat connection 
\[
\Phi^* \nabla = d - \frac{1}{z} \cC, \qquad 
\cC = \cC(t) \in \End(\C^{N+1})\otimes \Omega_\cM^1 
\]
and a constant symmetric pairing 
$g_{ij} = g(e_i,e_j)= (\Phi(e_i), \Phi(e_j))_{z\sfP/\sfP}$ 
on the trivial bundle. 
On the other hand the trivialization $\Phicc$ induces a 
$tt^*$-bundle structure 
on the trivial bundle as in Definition~\ref{def:ttstarbundle}, 
i.e.~we have a flat connection 
\[
{\Phicc}^*\nabla = D - \frac{1}{z} \cC - z \tcC, \quad 
\cC \in \End(\C^{N+1}) \otimes \Omega^1_{\cM}, 
\quad 
\tcC \in \End(\C^{N+1}) \otimes \cA^{0,1}_{\cM}   
\]
($\cC$ is the same as above) 
and a complex anti-linear involution 
$\kappa \in \End_\R(\C^{N+1})\otimes \cA_{\cM}$ 
such that $D=d + h^{-1} \partial h$ is the Chern connection associated to 
the Hermitian metric $h (u,v) = g(\kappa(u), v)$ 
and $\tcC = \kappa \circ \cC \circ \kappa$. 
We write 
\[
R = R(t,\ov{t}, z) = \id + R_1 z + R_2 z^2 + R_3 z^3 + \cdots 
: =  (\Phicc)^{-1} \circ \Phi
\]
for the gauge transformation which intertwines 
the connections $R \circ \Phi^* \nabla = {\Phicc}^*\nabla \circ R$ 
and satisfies 
$g(R(t,\ov{t},-z)u, R(t,\ov{t},z)v) = g(u,v)$ 
for $u,v\in \C^{N+1}$. 

Let $\{t^i, x_n^i\}$ denote the local co-ordinate system on $\LL$ 
associated to\footnote
{On the other hand, the co-ordinate system associated to $\Phicc$ is 
not holomorphic.} 
the trivialization $\Phi$. 
Then we have (see also Example~\ref{exa:Nabla-explicit}) 
\begin{align*} 
(\Nablacc - \Nabla) dt^h &= [K(x_1)^{-1} R_1 \cC_i \cC_j]^h 
dt^i \otimes dt^j \\ 
(\Nablacc - \Nabla) dx_n^h & 
= ([R^{-1} \cC_i \cC_j x_1]^h_{n+1} 
+ [K(x_{n+1}) K(x_1)^{-1} R_1 \cC_i \cC_j x_1]^h) 
dt^i \otimes dt^j
\end{align*} 
where $K(x_n) \in \End(\C^{N+1}) \otimes \bcO$ 
is defined by $K(x_n) e_i = \cC_i x_n$ (see Notation~\ref{nota:[]_^} for $[\cdots]_n^h$). 
The curvature of $\Nablacc$ on $\AOmegao^{1,0}$ 
is given by 
\begin{align*} 
[\Nablacc_i, \Nablacc_{\oj}] dt^h& =  
- [K(x_1)^{-1} \tcC_{\oj} \cC_i \cC x_1]^h\\ 
[\Nablacc_i, \Nablacc_{\oj}] dx_n^h 
& = [R^{-1} \tcC_{\oj} \cC_i \cC x_1]^h_n 
- [K(x_{n+1}) K(x_1)^{-1} \tcC_{\oj} \cC_i \cC x_1]^h 
\end{align*} 
with all the other components being zero. In particular 
we have:
\begin{equation} 
\label{eq:varthetacc_CCtilde}
\varthetacc = \frac{1}{2} \Tr((\Nablacc)^2) 
= - \frac{1}{2} \sum_{i=0}^N \sum_{j=0}^N \Tr(\tcC_{\oj} \cC_i) dt^i \wedge d\ov{t}^j
\end{equation}
Let $\{\varphi_n^i\}$ denote the frame of 
$\pr^*\AF[z^{-1}]^\vee$ given by the trivialization 
$\Phi$ (see equation~\ref{eq:frame_varphi}). 
As in \S\ref{subsubsec:Giventalpropagator}, we set 
\begin{align*}
\begin{split} 
V^{(n,j),(m,i)}_{\rm cc} &:= 
\Omega^\vee(\varphi_n^j, \Picc^* (\varphi_m^i|_{\AF})) 
= (-1)^n g^{j u} \left[R^{-1} [R e_u z^{-n-1}]_+\right]_m^i  
\end{split} 
\end{align*}
where $(g^{ij})$ is the matrix inverse to $(g_{ij})$. 
The $V^{(n,j),(m,i)}_{\rm cc}$ depend real-analytically on $t$. 
Explicit formulae for the propagators 
$\Delta_{\sfP, \rm cc}(dt^a \otimes dt^b)$, 
$\Delta_{\sfP, \rm cc}(dt^a\otimes dx_n^b)$, 
$\Delta_{\sfP, \rm cc}(dx_m^a \otimes dx_n^b)$ 
are given by the same formulae as in Remark~\ref{rem:propagator-explicit} 
with $V^{(n,i),(m,j)}$ there replaced with $V_{\rm cc}^{(n,i),(m,j)}$. 
Using  
\begin{align*} 
\Omega^\vee(\tnabla \Picc^* (\varphi_n^i|_{\AF}), 
\Picc^* (\varphi_m^u|_{\AF})) 
&= (-1)^{n+m+1} 
g( \tcC R_m e^u, R_n e^i)  
\end{align*} 
where $e^i = \sum_{j=0}^N g^{ij} e_j$, 
we obtain the following explicit formula for $\Lambdacc$: 
\begin{align*} 
& \Lambdacc(dt^a \otimes dt^b) 
= - g(\tcC K(x_1)^{\dagger -1} e^a, K(x_1)^{\dagger -1} e^b)  \\ 
& \Lambdacc(dt^a \otimes dx_n^b) 
= (-1)^n g(\tcC K(x_1)^{\dagger -1} e^a, R_n e^b) 
- g(\tcC K(x_1)^{\dagger -1} e^a, 
K(x_1)^{\dagger -1} K(x_{n+1})^\dagger e^b) \\ 
& \Lambdacc(dx_n^a \otimes dx_m^b) 
= (-1)^{n+m+1} g( \tcC R_n e^a, R_m e^b) \\ 
& \qquad + (-1)^m  g(\tcC K(x_1)^{\dagger -1} 
K(x_{n+1})^\dagger e^a, R_m e^b)   
+ (-1)^n g(\tcC R_n e^a, K(x_1)^{\dagger -1} K(x_{m+1})^\dagger e^b) \\
& \qquad - g( \tcC K(x_1)^{\dagger -1} K(x_{n+1})^\dagger e^a, 
K(x_1)^{\dagger -1} K(x_{m+1})^\dagger e^b) 
\end{align*} 
where $K(x_1)^\dagger$ is the adjoint of $K(x_1)$ with 
respect to the complex bilinear pairing $g$. 
\end{example} 

\subsection{K\"{a}hler Geometry of the Total Space} 
\label{subsec:Kaehler}
We now introduce a (pseudo-)K\"{a}hler metric on the $L^2$-subspace 
$L^2(\LLo)$ and identify $\Nablacc$ with the Chern connection. 
The propagator and the Yukawa coupling also have descriptions 
in terms of the K\"{a}hler metric. 
Recently David and Strachan~\cite{David--Strachan} have considered an extension of
$tt^*$-geometry to the big phase space; their construction 
seems to be closely related to ours.

Let $(\cF = \cO(F), \nabla, (\cdot,\cdot)_\cF, \kappa)$ be 
a pure TRP structure over $\cM$ 
and let $(\sfF, \nabla,(\cdot,\cdot)_\sfF)$ denote the 
corresponding cTP structure. 
Since $\cF=\cO(F)$ is defined over $\cM \times \C$, 
the total space $\LL$ of the TRP structure has a canonical $L^2$-subspace 
$L^2(\LL)$ as follows (cf.~Remark~\ref{rem:L2-neighbourhood}). 
Let $\F_t \subset \sfF_t$ denote the subspace consisting of 
elements in $\sfF_t$ which extend to holomorphic sections of $F$ 
over the unit disc $\{t\} \times \D$ (where $\D = \{z\in \C: |z|<1\}$) 
and have $L^2$-boundary values over $S^1$ (vector-valued Hardy space). 
In other words, $\bx\in \sfF_t$ lies in $\F_t$ if and only if 
it has a square summable expansion $\bx = \sum_{n=0}^\infty x_n z^n$ 
for some (and hence any) trivialization of $F|_{\{t\}\times \C}$. 
Then $L^2(\LL)$ consists of $(t,\bx) \in \LL$ such that $\bx\in z\F_t$. 
This has the structure of a complex Hilbert manifold (the total 
space of a Hilbert vector bundle over $\cM$). 
We let 
\[
\cH_t := L^2(\{t\} \times S^1, F)
\] 
denote the space of $L^2$-sections of $F|_{\{t\}\times S^1}$. 
We define the symplectic form on $\cH_t$ as 
\[
\Omega_t(v,w) = \frac{1}{2\pi\iu} 
\int_{S^1} (v(-z), w(z))_{\cF} \, dz
\]
(cf.~equation~\ref{eq:symplecticform}).
The pair $(\cH_t,\Omega_t)$ is an analogue of the Givental space
(\S\ref{subsec:Givental-symplecticvs}) for the TRP structure. 
The involution $\kappa$ of the TRP structure induces an involution 
$\kappa_{\cH_t}$ on $\cH_t$ (cf.~equation~\ref{eq:kappa-H0}). 
We have (cf.~equation~\ref{eq:kappa_t-property}) 
\[
\kappa_{\cH_t}(f v) = \ov{\gamma^*f} \cdot \kappa_{\cH_t}(v) 
\]
for $f \in L^\infty(S^1;\C)$. 
Note that parallel translation using the flat connection $\nabla$ 
identifies all the triples $(\cH_t,\Omega_t, \kappa_{\cH_t})$
for $t$ in a simply-connected open subset $U$ of $\cM$. 
We work locally on $\cM$ and write $(\cH, \Omega,\kappa_\cH)$ for nearby 
$(\cH_t,\Omega_t,\kappa_{\cH_t})$, $t\in U$, 
identified with each other. 
The reality of the pairing $(\cdot,\cdot)_\cF$ (see equation~\ref{eq:reality-pairing}) 
implies that the symplectic form is pure-imaginary 
with respect to the \emph{shifted} involution
$\tkappa_\cH :=z^{-1} \kappa_\cH$ 
(note that we still have $\tkappa_\cH \circ \tkappa_\cH =\id$): 
\[
\Omega(\tkappa_\cH(v), \tkappa_\cH(w)) = - \ov{\Omega(v,w)}. 
\]
The subspace $\F_t \subset \cH_t \cong \cH$ is 
Lagrangian with respect to $\Omega$. 
The family $t\mapsto \F_t$ of subspaces of $\cH$ should be viewed as 
a \emph{semi-infinite period map} 
(see \S\ref{subsec:Lagrangian_TP}, where the semi-infinite 
subspace is denoted by $T_t \subset \cH$) which takes values 
in the semi-infinite Grassmannian $\Gr_{\frac{\infty}{2}}(\cH)$.  
Locally one can imbed the total space $L^2(\LLo)$ into $\cH$  
via the semi-infinite period map: 
\[
\iota \colon L^2(\LLo)|_U  = \bigcup_{t\in U} (z\F_t)^\circ 
\looparrowright \cH 
\]
where $(z\F_t)^\circ = L^2(\LLo_t) = z\F_t \cap \LLo_t$ is a ``Zariski open"  
subset of $z\F_t=L^2(\LL_t)$. 
The derivative of $\iota$ defines an isomorphism: 
\begin{equation} 
\label{eq:KS-L2}
d \iota \colon T_{(t,\bx)} L^2(\LLo)|_U \cong \F_t \subset \cH  
\end{equation} 
which corresponds to the Kodaira--Spencer map (Definitions~\ref{def:KS},~\ref{def:KS-cc}). 
Note that $\F_t$ is identified with the tangent space of $L^2(\LLo)$ 
at $(t,\bx)$. 
\begin{definition} 
We define a non-degenerate sesqui-linear pairing $\bh$ on $\cH$ by:
\[
\bh(v,w) = - \Omega(\tkappa_\cH (v), w) = - \Omega(z^{-1} \kappa_\cH(v),w)
\]
This is Hermitian and purely imaginary; one can easily check that:
\begin{align*} 
& \bh(v,w) = \ov{\bh(w,v)} && \bh(\alpha v, w) = \ov{\alpha} \bh(v,w)  
\quad (\alpha \in \C) \\ 
& \bh(\tkappa_{\cH}(v), \tkappa_{\cH}(w)) = -\ov{\bh(v,w)} && 
\bh(zv, zw) = - \bh(v,w)
\end{align*} 
Pulling back $\bh$ along the local immersion $\iota \colon L^2(\LLo)|_U 
\looparrowright \cH$ gives a Hermitian metric $\bh$ on $L^2(\LLo)$. 
Thus $L^2(\LLo)$ has the structure of a (pseudo)-K\"{a}hler Hilbert 
manifold. 
\end{definition} 
\begin{remark} 
The pairing $\bh$ on $\cH$ is indefinite of signature $(\infty,\infty)$. 
The metric $\bh$ restricted to $L^2(\LLo)$ is non-degenerate under 
purity---this follows from the $\bh$-orthogonal decomposition 
\eqref{eq:purity-L2} below---and is also of signature $(\infty,\infty)$. 
\end{remark} 

Purity of the TRP structure implies (cf.~equation~\ref{eq:pure-trans}) that:
\begin{equation} 
\label{eq:purity-L2} 
\cH = \F_t \oplus \tkappa_\cH(\F_t)
\end{equation} 
The family $t\mapsto \tkappa_\cH(\F_t)$ defines 
an $L^2$-version of the 
complex conjugate opposite module $z^{-1}\ov\AF$ from \eqref{eq:opposition-cc}. 
Note that $\F_t$ and $\tkappa_\cH(\F_t)$ are orthogonal 
to each other with respect to $\bh$. 
In particular the projection $\Picc \colon \cH \to \F_t$ along $\tkappa_\cH(\F_t)$ 
(which is an $L^2$-version of $\Picc$ in Definition~\ref{def:Nabla-cc}) 
is the orthogonal projection to $\F_t$. 
Therefore $\Nablacc$ on $L^2(\LLo)$ can be identified with the induced connection 
on the immersed submanifold $L^2(\LLo)|_U\looparrowright \cH$ 
via the orthogonal projection. 
This implies the following proposition. 
\begin{proposition} 
The connection $\Nablacc$ on $L^2(\LLo)$ (Definition~\ref{def:Nabla-cc}) 
is the Chern connection associated to the Hermitian metric $\bh$. 
\end{proposition}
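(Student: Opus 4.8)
The plan is to compare two connections on the immersed submanifold $L^2(\LLo)|_U \looparrowright \cH$: the Chern connection of the Hermitian metric $\bh$, and the connection $\Nablacc$ of Definition~\ref{def:Nabla-cc}. The key observation, which we have already recorded just above the statement, is the $\bh$-orthogonal decomposition $\cH = \F_t \oplus \tkappa_\cH(\F_t)$ coming from purity (equation~\ref{eq:purity-L2}), together with the fact that $\F_t$ and $\tkappa_\cH(\F_t)$ are $\bh$-orthogonal — this is immediate from $\bh(v,w) = -\Omega(\tkappa_\cH(v),w)$, the isotropy of $z^{-1}\ov\AF$ for $\Omega$ in its $L^2$-incarnation, and the involutivity of $\tkappa_\cH$. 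Consequently the projection $\Picc \colon \cH \to \F_t$ along $\tkappa_\cH(\F_t)$ used to define $\Nablacc$ \emph{is} the $\bh$-orthogonal projection onto the tangent space $\F_t \cong T_{(t,\bx)} L^2(\LLo)$ (using the Kodaira--Spencer identification \eqref{eq:KS-L2}).

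First I would set up the ambient picture precisely: the trivial Hilbert bundle $\cH \times L^2(\LLo)|_U \to L^2(\LLo)|_U$ carries the flat connection coming from parallel transport by $\nabla$ on $\cM$ (extended trivially in the fiber directions of $\LL$), and $\bh$ is parallel for this flat connection because $\Omega$ and $\kappa_\cH$ are $\nabla$-flat. Pulling back along $\iota$, the tangent bundle $TL^2(\LLo)$ embeds as the subbundle with fiber $\F_t$, and $\Nablacc$ is by construction the compression of the ambient flat connection to this subbundle via $\Picc$; this is the standard ``submanifold/orthogonal-projection'' description of an induced connection. Next I would verify the two defining properties of the Chern connection of $(TL^2(\LLo), \bh)$: (i) compatibility with the metric, and (ii) compatibility with the holomorphic structure (i.e. its $(0,1)$-part is $\ov\partial$). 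For (i): since the ambient flat connection preserves $\bh$ and $\Picc$ is the $\bh$-orthogonal projection onto the subbundle, the compressed connection preserves the restricted metric — this is the usual computation $d\,\bh(s_1,s_2) = \bh(\Picc \widetilde\nabla s_1, s_2) + \bh(s_1, \Picc\widetilde\nabla s_2)$, where the cross terms involving $(\id-\Picc)\widetilde\nabla s_i$ drop out by orthogonality. For (ii): we already observed in the proposition just before Definition~\ref{def:propagator-cc} that the $(0,1)$-part of $\Nablacc$ is the $\ov\partial$-operator of the holomorphic bundle $\bThetao$ (equivalently $\pr^*\sfF$ under Kodaira--Spencer), because the $(0,1)$-part of the ambient connection $\tnabla$ is precisely the $\ov\partial$-operator of $\pr^*\AF$ and $\Picc$ is holomorphic in the appropriate sense. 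By uniqueness of the Chern connection, $\Nablacc$ equals it.

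The main obstacle I anticipate is the functional-analytic bookkeeping rather than any conceptual difficulty: one must check that all the objects — the orthogonal projection $\Picc$, the induced metric $\bh$ on $L^2(\LLo)$, and its Chern connection — are genuinely well-defined on the Hilbert manifold $L^2(\LLo)$, i.e. that $\bh$ restricted to the tangent spaces $\F_t$ is a bounded, non-degenerate Hermitian form (non-degeneracy is exactly the content of the orthogonal splitting \eqref{eq:purity-L2}, but one should note that it has signature $(\infty,\infty)$, so ``Chern connection'' here means the unique connection compatible with the holomorphic structure and the (pseudo-)Hermitian metric, whose existence/uniqueness in the indefinite Hilbert setting needs a sentence of justification), and that the orthogonal projection depends real-analytically on the base point so the compressed connection is a bona fide connection. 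Once these technical points are dispatched — largely by invoking the Hilbert-manifold structure established in Remark~\ref{rem:L2-neighbourhood} and the purity hypothesis — the identification of $\Nablacc$ with the Chern connection is a formal consequence of the orthogonal-projection description of induced connections on (pseudo-)Hermitian holomorphic subbundles.
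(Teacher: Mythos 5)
Your proposal is correct and follows essentially the same route as the paper: the paper's argument is precisely that purity gives the $\bh$-orthogonal splitting $\cH = \F_t \oplus \tkappa_\cH(\F_t)$, so $\Picc$ is the orthogonal projection and $\Nablacc$ is the connection induced on the immersed submanifold $L^2(\LLo)|_U \looparrowright \cH$ by compressing the flat, $\bh$-preserving ambient connection. Your explicit verification of the two Chern-connection axioms (metric compatibility via vanishing cross terms, and the $(0,1)$-part being $\ov\partial$ from the proposition preceding Definition~\ref{def:propagator-cc}) simply spells out what the paper leaves implicit.
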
 

\begin{corollary} 
\label{cor:curvature_cc_pi} 
The curvature two-form $\varthetacc$ is a pure-imaginary $(1,1)$-form. 
\end{corollary}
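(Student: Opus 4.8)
\textbf{Proof proposal for Corollary~\ref{cor:curvature_cc_pi}.}
The plan is to deduce this directly from the preceding Proposition, which identifies $\Nablacc$ on $L^2(\LLo)$ with the Chern connection of the Hermitian metric $\bh$. Recall that $\varthetacc = d\omega_{\sfP,\rm cc}$ for a parallel pseudo-opposite $\sfP$, and that by Proposition~\ref{prop:curvaturecc}(2) we have $\varthetacc = \tfrac12 \Tr\big((\Nablacc)^2\big)$, where $(\Nablacc)^2$ is the curvature of $\Nablacc$ on the cotangent sheaf. So the strategy is: (i) observe that the curvature of any Chern connection is of type $(1,1)$; (ii) observe that the curvature is pure-imaginary because $\bh$ is (purely imaginary-valued) Hermitian, hence $\nabla\bh$ being compatible with $\bh$ forces the curvature operator to be skew-Hermitian with respect to $\bh$, and tracing a skew-Hermitian $(1,1)$-form-valued endomorphism against $\bh$ produces a pure-imaginary scalar $(1,1)$-form; (iii) pass from the $L^2$-statement back to the statement on $\LLo$ proper, since $\varthetacc$ is pulled back from $\cM$ and the $L^2$-subspace surjects onto $\cM$ under $\pr$.

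More concretely, first I would invoke the Proposition to write, in a local unitary-type frame of the Hermitian bundle $(T^{1,0}L^2(\LLo),\bh)$, the connection $\Nablacc = D^{1,0} + \ov\partial$ with $D^{1,0}$ the $(1,0)$-part, so that the curvature $(\Nablacc)^2 = \ov\partial D^{1,0}$ is manifestly of type $(1,1)$. This already re-proves Proposition~\ref{prop:curvaturecc}(1), but now I want the reality statement: since $\Nablacc$ preserves $\bh$ (it is the Chern connection), the curvature endomorphism $\mathcal{R}$ satisfies $\bh(\mathcal{R}u,v) + \bh(u,\mathcal{R}v)=0$; because $\bh$ is \emph{pure-imaginary} Hermitian (as recorded in the Definition of $\bh$: $\bh(v,w)=\ov{\bh(w,v)}$ together with the factor $-\Omega(\tkappa_\cH(\cdot),\cdot)$ and the pure-imaginarity of $\Omega$ under $\tkappa_\cH$), the trace $\Tr_{\bh}\mathcal{R}$ — equivalently $\Tr\big((\Nablacc)^2\big)$ after contracting with $\bh$ — is a pure-imaginary scalar-valued $(1,1)$-form. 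Then $\varthetacc = \tfrac12\Tr\big((\Nablacc)^2\big)$ is pure-imaginary of type $(1,1)$.

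Finally, since $\varthetacc$ by Definition~\ref{def:difference_one-form_cc} is pulled back from $\cM$, and the $L^2$-subspace $L^2(\LLo)|_U$ surjects onto $U\subset \cM$, the conclusion on $L^2(\LLo)$ transports to the statement on $\LLo$ (and hence on $\cM$) without further work; alternatively one can read off the type-$(1,1)$ and pure-imaginarity directly from the explicit formula~\eqref{eq:varthetacc_CCtilde}, $\varthetacc = -\tfrac12 \sum_{i,j}\Tr(\tcC_{\oj}\cC_i)\, dt^i\wedge d\ov t^j$, using the $tt^*$-relation $\tcC = \kappa\circ\cC\circ\kappa$ and $h(\tcC u,v)=h(u,\cC v)$ from Definition~\ref{def:ttstarbundle} to see that $\sum_{i,j}\Tr(\tcC_{\oj}\cC_i)\,dt^i\wedge d\ov t^j$ is real, so that multiplication by $-\tfrac12$ keeps it real while the form $dt^i\wedge d\ov t^j$ carries the pure-imaginary character; I would present this explicit check as the cross-verification.

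The main obstacle I anticipate is purely bookkeeping rather than conceptual: one must be careful that $\bh$ is \emph{indefinite} (signature $(\infty,\infty)$), so "skew-Hermitian curvature $\Rightarrow$ pure-imaginary trace" must be justified using only the algebraic identity $\bh(v,w)=\ov{\bh(w,v)}$ and the $\bh$-compatibility of $\Nablacc$, without appealing to positivity or to an orthonormal frame; working instead in the explicit frame of the last Example, where $\Nablacc$ has connection form involving $h^{-1}\partial h$ with $h$ the (possibly indefinite) Hermitian metric on the $tt^*$-bundle, makes the skew-Hermitian property of $\ov\partial(h^{-1}\partial h)$ a one-line computation, and then the explicit formula~\eqref{eq:varthetacc_CCtilde} closes the argument.
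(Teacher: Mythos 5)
Your proposal is correct and takes essentially the same route as the paper: the paper's proof is the one-line observation that, by Proposition~\ref{prop:curvaturecc} and \eqref{eq:varthetacc_CCtilde}, $\varthetacc = \tfrac12\Tr\big((\Nablacc)^2\big)$ with $\Nablacc$ the Chern connection of $\bh$, which is precisely your steps (i)--(ii), and your attention to the indefiniteness of $\bh$ and the cross-check via \eqref{eq:varthetacc_CCtilde} only make explicit what the paper leaves implicit. One small slip in the cross-check: the coefficients $\Tr(\tcC_{\oj}\cC_i)$ need not be individually real; what the relation $\tcC=\kappa\circ\cC\circ\kappa$ gives is the Hermitian symmetry $\ov{\Tr(\tcC_{\oj}\cC_i)}=\Tr(\tcC_{\oi}\cC_j)$, which is what makes $\sum_{i,j}\Tr(\tcC_{\oj}\cC_i)\,dt^i\wedge d\ov t^j$ pure-imaginary.
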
 
\begin{proof} 
Recall from Proposition~\ref{prop:curvaturecc} and \eqref{eq:varthetacc_CCtilde} that $\varthetacc$ 
is the half of the trace of the curvature of $\Nablacc$.
\end{proof} 

Let $\sfP$ be a parallel pseudo-opposite module for the 
underlying cTP structure $(\sfF,\nabla, (\cdot,\cdot))$ 
over $U$. 
We assume here that $\sfP$ is \emph{compatible with the given 
$L^2$-structure} on $\sfF$, namely that:  
\begin{itemize} 
\item every element of $\sfP_t\subset \sfF_t[z^{-1}]$ 
extends to a holomorphic section of $F|_{\{t\}\times \D^*}$ 
over the unit punctured disc $\D^* =\{z: 0<|z|<1\}$ 
and has an $L^2$-boundary value along $S^1$.  
Thus $\sfP_t$ is a subspace of $\cH_t$; 
\item the $L^2$-closure $\Proj_t$ of $\sfP_t$ in $\cH_t$ 
is complementary to $\F_t$, i.e.~$\cH_t = \F_t \oplus \Proj_t$ 
(as an algebraic direct sum, not necessarily orthogonal). 
\end{itemize} 
The same notion already appeared in Example \ref{exa:compatible_with_L2}. 
Since $\sfP$ is parallel, it gives rise to a constant Lagrangian 
subspace $\Proj$ in $\cH \cong \cH_t$. 

\begin{definition} 
Let $\Pi_{\sfP} \colon \cH \to \F_t$ denote the projection 
along $\Proj$. 
We define the complex-antilinear endomorphism $\varkappa_\sfP 
\colon TL^2(\LLo)|_U \to TL^2(\LLo)|_U$ 
by 
$\varkappa_\sfP(v) = (d\iota)^{-1} \Pi_{\sfP} \tkappa_\cH ( d\iota ( v) )$:
\[
\xymatrix{
  \varkappa_\sfP \colon 
  T_{(t,\bx)} L^2(\LLo) \cong \F_t \ar[rr]^-{\tkappa_\cH} &&
  \tkappa_\cH(\F_t) \subset \cH \ar[rr]^-{\Pi_{\sfP}} &&
  \F_t \cong T_{(t,\bx)}L^2(\LLo) 
}
\]
where $d\iota \colon T_{(t,\bx)}L^2(\LLo) \cong \F_t$ 
is the Kodaira--Spencer map \eqref{eq:KS-L2}. 
\end{definition} 
\begin{remark} 
In general, $\varkappa_\sfP$ is neither an isomorphism nor an involution. 
It is easy to see that:
\begin{itemize} 
\item $\varkappa_\sfP$ is an isomorphism 
if and only if $\Proj \oplus \tkappa_\cH(\F_t) =\cH$; 
\item $\varkappa_\sfP$ is an involution if and only if $\Proj$ is real, 
i.e.~$\tkappa_\cH(\Proj) = \Proj$. 
\end{itemize} 
Let us prove the second statement. 
Note that $v= \varkappa_\sfP(w)$ if and only if $v -\tkappa_\cH(w) \in \Proj$. 
The ``if" part of the statement is obvious. 
Every $p\in \Proj$ can be written as $p = v - \tkappa_\cH(w)$ for 
some $v,w\in \F_t$ by purity \eqref{eq:purity-L2}.  
Then we have $v = \varkappa_\sfP(w)$. 
If $\varkappa_\sfP$ is an involution, we have $w = \varkappa_\sfP(v)$ and 
thus $\tkappa_\cH(p) = \tkappa_\cH(v) - w$ lies in $\Proj$.  
The ``only if" part follows. 
It would be interesting to study parallel pseudo-opposite modules 
$\sfP$ such that $\varkappa_\sfP$ is an involution. 
\end{remark} 

\begin{remark}
If $\sfP$ is an opposite module (i.e.~is closed under $z^{-1}$), then
$\varkappa_\sfP$ cannot be an isomorphism. 
Moreover if the flat trivialization (Proposition~\ref{prop:flat_trivialization}) 
of $\sfF_t$ given by $\sfP$ extends to a smooth trivialization 
of the bundle $F$ over the closed disc $\{t\} \times \{|z|\le 1\}$,  then
$\varkappa_\sfP$ is Hilbert--Schmidt and hence compact. 
\emph{Proof}: Let $v\in \cH$ be a vector of unit length. 
The $L^2$-distance $\dist(z^{-n}v, \Proj) = \dist(v, z^n \Proj)$ 
goes to zero as $n\to \infty$ because 
$\bigcup_{n\ge 0} z^n \Proj$ is dense in $\cH$ 
(see Lemma~\ref{lem:fullrange}).  
Similarly, $\dist(z^{-n}v,\tkappa_\cH(\F_t))\to 0$ as $n\to \infty$. 
These together imply that the distance between the unit spheres 
in $\Proj$ and in $\tkappa_\cH(\F_t)$ is zero 
(because $\|z^{-n}v \|=1$ for all $n$).   
Therefore we cannot have $\Proj \oplus \tkappa_\cH(\F_t) = \cH$. 
To see the latter statement, note that $\varkappa_\sfP$ can be 
viewed as a Hankel operator associated to the gauge transformation 
from the trivialization given by $z^{-1}\ov\AF_t$ 
to the trivialization given by $\sfP_t$. 
Consequently $\varkappa_\sfP$ is Hilbert--Schmidt 
if the gauge transformation extends smoothly 
to the circle $S^1$. 
\end{remark} 

\begin{remark} 
In the Calabi-Yau B-model~\cite{Witten:background, ABK}, 
the middle cohomology $H^3(X,\C)$ of a Calabi-Yau threefold $X$ 
(equipped with the intersection form) 
is a symplectic vector space to be quantized. 
The so-called ``real polarization'' in this context is 
a Lagrangian subspace $P$ of $H^3(X,\C)$ 
with $\overline{P} = P$. 
The above two remarks say that a real polarization 
in the infinite-dimensional setting would be a rather exotic object: 
at least it is not given by an opposite module. 
\end{remark}

The following proposition gives an interpretation of the 
propagator $\Delta_{\sfP,\rm cc}$ and the Yukawa coupling $\bY$ 
in terms of K\"{a}hler geometry. 

\begin{proposition} 
Let $\bh^\vee$ denote the dual Hermitian metric on the 
cotangent bundle of $L^2(\LLo)$. Then we have 
\begin{align*} 
\Delta_{\sfP, \rm cc}(\omega_1,\omega_2) 
& = - \bh^\vee(\varkappa_\sfP^* \omega_1, \omega_2) \\ 
\bY( u, \varkappa_\sfP v,w) &= (\Nabla^{\sfP}_u\bh) (v,w) 
\end{align*} 
for cotangent vectors $\omega_1, \omega_2 \in T^*L^2(\LLo)$ 
and tangent vectors $u,v,w\in T L^2(\LLo)$. 
\end{proposition}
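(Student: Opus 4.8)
The plan is to unwind all the definitions in the $L^2$-setting and reduce both identities to the linear-algebraic statements about $\Omega$, $\Picc$, $\Pi_\sfP$, $\tkappa_\cH$, and the Kodaira--Spencer map $d\iota$ on the fixed symplectic space $\cH$. First I would record the key ingredients on $\cH$: the Hermitian metric $\bh(v,w) = -\Omega(\tkappa_\cH(v),w)$, the orthogonal decomposition $\cH = \F_t \oplus \tkappa_\cH(\F_t)$ from purity \eqref{eq:purity-L2}, the fact that $\Picc$ is the $\bh$-orthogonal projection onto $\F_t$, and the identification $d\iota \colon T_{(t,\bx)}L^2(\LLo) \cong \F_t$ together with its transpose sending $T^*_{(t,\bx)}L^2(\LLo)$ to a quotient/subspace of $\cH^\vee$. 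Under the symplectic identification $\cH \cong \cH^\vee$, $v \mapsto \iota_v\Omega$, the dual Kodaira--Spencer map $\KS^*$ becomes $\Picc^* \circ \sharp$ (up to sign), exactly as in the formal case; I would make this bookkeeping precise so that $\Delta_{\sfP,\rm cc}(\omega_1,\omega_2) = \Omega^\vee(\Pi_\sfP^* \varphi_1, \Picc^* \varphi_2)$ with $\varphi_i = (\KS^*)^{-1}\omega_i$ translates into an expression purely on $\cH$.

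For the first identity, I would compute $-\bh^\vee(\varkappa_\sfP^*\omega_1,\omega_2)$ directly. Using that $\bh^\vee$ on the cotangent bundle is the metric induced from $\bh$ via $\Picc$ (the orthogonal projection) and that $\varkappa_\sfP = (d\iota)^{-1}\Pi_\sfP \tkappa_\cH\, d\iota$, the adjoint $\varkappa_\sfP^*$ with respect to $\bh$ is (by the Hermitian-adjoint computation, using $\bh(\tkappa_\cH v,\tkappa_\cH w) = -\ov{\bh(v,w)}$ and that $\Pi_\sfP,\Picc$ are complementary projections) essentially $\tkappa_\cH \Pi_{\overline{\sfP}}$ or its conjugate; the point is that $-\bh^\vee(\varkappa_\sfP^*\omega_1,\omega_2)$ will collapse to $-\Omega(\tkappa_\cH \cdot, \cdot)$ evaluated on the images of $\Picc^*\varphi_i$ and $\Pi_\sfP^*\varphi_i$, which is precisely $\Omega^\vee(\Pi_\sfP^*\varphi_1,\Picc^*\varphi_2)$. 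The isotropy of $\Proj = \cA\sfP|_{\cH}$ and of $\tkappa_\cH(\F_t)$ for $\Omega$, together with the fact that $\Image(\Pi_\sfP^* - \Picc^*)$ lies in $\F_t^\perp$ (as in the proof of Proposition~\ref{pro:prop-elementary}), is what makes the cross terms vanish. For the second identity, I would differentiate the Hermitian metric: writing $\bh$ as pulled back along $\iota$ and using that $\Nabla^{\sfP}$ is the connection induced on the immersed submanifold via the projection $\Pi_\sfP$, the covariant derivative $(\Nabla^{\sfP}_u \bh)(v,w)$ picks up exactly the second-fundamental-form term, which — by $\tnabla$-flatness of $\bh$ on $\cH$ and the definition of $\varkappa_\sfP$ — equals $-\Omega(\tnabla_u \tkappa_\cH(d\iota\, v), \Pi_\sfP$-perpendicular part of $d\iota\, w)$; this is the same contraction that computes $\bY(u,\varkappa_\sfP v, w) = \Omega(\KS(\iota_{(\cdot)}\Delta\text{-type object}),\tnabla_u\tnabla_{(\cdot)}\bx)$ by Lemma~\ref{lem:Yukawa-KS} and Proposition~\ref{prop:difference_conn}(1). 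Here I would lean on the formal-neighbourhood identities in \S\ref{subsec:conn-LL}–\S\ref{subsec:Yukawa}, noting that all the relevant tensors are fiberwise polynomial/holomorphic and hence determined by their restrictions, so the $L^2$-statement follows from the formal one already proved.

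The main obstacle I expect is the careful handling of \emph{adjoints and signs} in infinite dimensions: $\bh$ is indefinite of signature $(\infty,\infty)$, $\tkappa_\cH$ is complex-antilinear with $\tkappa_\cH\tkappa_\cH = \id$ but $\Omega(\tkappa_\cH v,\tkappa_\cH w) = -\ov{\Omega(v,w)}$, and $\varkappa_\sfP$ is itself antilinear and generally neither invertible nor compact, so one must be scrupulous about what ``$\varkappa_\sfP^*$'' and ``$\bh^\vee$'' mean and in which slot conjugation acts. I would dispatch this by fixing once and for all the conventions (antilinear in the first slot for $\bh$, $\bh^\vee$ defined via the musical isomorphism $\Picc$) and checking the two scalar identities on a spanning set of decomposable cotangent vectors $\omega_i = \KS^*\varphi_i$ with $\varphi_i$ in the dual frame, reducing everything to the already-established residue-pairing identities on $\cH$. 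Once the bookkeeping is pinned down, both formulas are immediate consequences of Proposition~\ref{prop:propagatorcc}, Lemma~\ref{lem:Yukawa-KS}, and the identification of $\Picc$ with the orthogonal projection.
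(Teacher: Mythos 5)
For the first identity your proposal is essentially the paper's own argument: identify $T_{(t,\bx)}L^2(\LLo)$ with $\F_t\subset\cH$, use the relation $\bh^\vee(\omega_1,\omega_2)=-\Omega^\vee(\tkappa_\cH^*\omega_1,\omega_2)$ on the topological dual, and then use the $\bh$-orthogonal decomposition \eqref{eq:purity-L2} to replace $\tkappa_\cH^*\Pi_\sfP^*\omega_1$ by its restriction to $\F_t$, which is by definition the action of $\varkappa_\sfP$ transposed to cotangent vectors. The one convention to fix in your write-up: $\varkappa_\sfP^*$ in the statement is this transpose (the paper simply writes $(\tkappa_\cH^*\Pi_\sfP^*\omega_1)|_{\F}$), not an $\bh$-adjoint of $\varkappa_\sfP$; your tentative identification of it with ``$\tkappa_\cH\Pi_{\overline{\sfP}}$ or its conjugate'' is where the sign and conjugation worries you flag come from, and they dissolve once this is pinned down, since $\Picc$ being the $\bh$-orthogonal projection does the rest. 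For the second identity the paper's printed proof gives no argument at all, so here your sketch supplies something extra rather than reproducing the paper; it is viable, but note there is no ``formal version already proved'' to quote, so the differentiation must actually be carried out: writing $\bh(v,w)=-\Omega(\tkappa_\cH\KS(v),\KS(w))$, using flatness of $\Omega$ and parallelism of $\kappa$ (so the $u$-derivative of the first slot becomes a $\tnabla_{\bar u}$-term that can be killed by a suitable extension of $v$), the defining property $\tkappa_\cH\KS(v)-\KS(\varkappa_\sfP v)\in\Proj$, and isotropy of $\Proj$ and of $\F_t$, one reduces $(\Nabla^{\sfP}_u\bh)(v,w)$ to $-\Omega\big(\KS(\varkappa_\sfP v),\tnabla_u\KS(w)\big)=\bY(\varkappa_\sfP v,u,w)$, the last equality being the identity $\bY(Z,X,Y)=-\Omega(\KS(Z),\tnabla_X\tnabla_Y\bx)$ from the proof of Proposition~\ref{prop:difference_conn}. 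Your appeal to a ``second-fundamental-form term'' is the right intuition, but it should be replaced by this explicit computation rather than left as an analogy.
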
 
\begin{proof} 
We identify the tangent space $T_{(t,\bx)}L^2(\LLo)$ 
with $\F_t \subset \cH$ in the proof. 
Let $\cH'$, $\F_t'$ denote the topological duals of $\cH$, $\F_t$.  
The dual symplectic form $\Omega^\vee$ and the dual Hermitian form 
$\bh^\vee$ are defined on $\cH'$ and are 
related by $\bh^\vee (\omega_1,\omega_2) = - \Omega^\vee(
\tkappa_\cH^*\omega_1,\omega_2)$ for $\omega_i \in \cH'$. 
Thus we have 
\[
\Delta_{\sfP, \rm cc}(\omega_1,\omega_2) = 
\Omega^\vee(\Pi_{\sfP}^* \omega_1, \Picc^* \omega_2) 
= - \bh^\vee(\tkappa_\cH^* \Pi_\sfP^* \omega_1, \Picc^* \omega_2) 
\]
for $\omega_1,\omega_2 \in \F_t'$. 
The right-hand side equals $-\bh^\vee(
(\tkappa_\cH^*\Pi_\sfF^*\omega_1)|_\F, \omega_2) 
= -\bh^\vee(\varkappa_\sfP \omega_1, \omega_2)$ 
by the $\bh$-orthogonal decomposition \eqref{eq:purity-L2}.  
\end{proof} 

\subsection{Holomorphic Anomaly Equation} 
We now consider correlation functions under the complex conjugate 
opposite module $z^{-1}\ov\AF$ from Definition~\ref{def:cc-opposite}. 
We show that they satisfy the Bershadsky--Cecotti--Ooguri--Vafa holomorphic anomaly equation, 
and use this to define the Fock space for $z^{-1}\ov\AF$. 
Throughout the section we fix a pure TRP structure 
$(\cF, \nabla,(\cdot,\cdot)_\cF, \kappa)$ over $\cM$. 
The associated cTP structure is denoted by 
$(\sfF,\nabla,(\cdot,\cdot)_\sfF)$. 
We denote an algebraic local co-ordinate system on the total space $\LL$ 
by $\{\sx^\mu\} = \{t^i,x_n^i\}$, as usual. 

\begin{definition}[cf.~Definition~\ref{def:under-curvedopposite}]
\label{def:correlationfunctions_under_cc}
Let $\sfP$ be a parallel pseudo-opposite module for the cTP structure 
$(\sfF,\nabla,(\cdot,\cdot)_\sfF)$, and let $\wave 
= \{C^{(g)}_{\mu_1,\dots,\mu_n}\} \in \Fock(U;\sfP)$ 
be a Fock space element. 
We define a set of completely symmetric tensors 
\[
\wave_\cc = \left\{C^{(g)}_{\cc; \mu_1,\dots,\mu_n} d\sx^{\mu_1} \otimes 
\cdots \otimes d\sx^{\mu_n} 
\in (\AOmegao^{1,0})^{\otimes n}(\pr^{-1}(U)) 
: \text{$n\ge 0$, $g\ge 0$, $2g-2+n>0$}\right\}
\] 
via the Feynman rule in Definition~\ref{def:transformation} 
\[
C^{(g)}_{\cc;\mu_1,\dots,\mu_n} = \sum_{\Gamma} 
\frac{1}{\Aut(\Gamma)} \Cont_{\Gamma} (\wave, \Delta_{\sfP,\rm cc})_{
\mu_1,\dots,\mu_n} 
\]
and the propagator $\Delta_{\sfP,\rm cc}$ in 
Definition~\ref{def:propagator-cc}. 
We call $\wave_\cc = \{C^{(g)}_{\cc;\mu_1,\dots,\mu_n}\}$ 
the \emph{correlation functions under the complex conjugate 
opposite module $z^{-1} \ov\AF$ corresponding to $\wave$} and write:
\[
\wave_\cc = T\left(\sfP,z^{-1} \ov\AF\right) \wave
\]
The corresponding \emph{jet potential} is defined by 
\[
\cW_{\cc}(\sx,\sy) = \sum_{g=0}^\infty \sum_{n=\max(3-2g,0)}^\infty 
\frac{\hbar^{g-1}}{n!} C^{(g)}_{\cc;\mu_1,\dots,\mu_n}(\sx)  
\sy^{\mu_1} \cdots \sy^{\mu_n}   
\]
and we have 
\[
\exp\big(\cW_\cc(\sx,\sy)\big) = \exp\left(
\frac{\hbar}{2} \Delta^{\mu\nu} 
(\sfP, z^{-1} \ov\AF) \partial_{\sy^\mu} \partial_{\sy^\nu} 
\right) 
\exp\big(\cW(\sx,\sy)\big) 
\]
where $\cW(\sx,\sy)$ is the jet potential associated to $\wave$. 
\end{definition} 

\begin{remark} 
The correlation functions $C^{(g)}_{\cc;\mu_1,\dots,\mu_n}$ are holomorphic 
in $\{x_n^i  : n\ge 1, 0\le i\le N\}$ and are real analytic in $t^0,\dots,t^N$. 
Note that $\mu_1,\dots,\mu_n$ are holomorphic indices. 
\end{remark} 

\begin{proposition}[cf.~Definition~\ref{def:localFock}] 
\label{prop:HAE} 
Let $\sfP$ be a parallel pseudo-opposite module for the cTP structure 
$(\sfF,\nabla,(\cdot,\cdot))$ and 
let $\wave=\{C^{(g)}_{\mu_1,\dots,\mu_n}\}$ 
be an element of $\Fock(U;\sfP)$.  
The correlation functions $\{C^{(g)}_{\cc;\mu_1,\dots,\mu_n}\}$ 
under $z^{-1} \ov\AF$ corresponding to $\wave$ satisfy the 
following properties: 
\begin{description} 
\item[(Yukawa)] $C^{(0)}_{\cc;\mu\nu\rho} d\sx^\mu \otimes 
d \sx^\nu \otimes d\sx^\rho$ 
is the Yukawa coupling $\bY$; 
\item[(Jetness)] $\Nablacc_{\mu_1}
C^{(g)}_{\cc;\mu_2,\dots,\mu_n} = C^{(g)}_{\cc;\mu_1,\dots,\mu_n}$, 
where we use notation as in \eqref{eq:covariant-derivative}; 
\item[(Holomorphic Anomaly)] 
\begin{equation} 
\label{eq:HAE}
0  = \partial_{\ov\mu_1} C^{(g)}_{\cc;\mu_2,\dots,\mu_n} 
+ \frac{1}{2} 
\sum_{\substack{\{2,\dots,n\} = I \sqcup J \\ 
k+ l = g}} 
C^{(k)}_{\cc;\mu_I, \alpha} {\Lambdacc}_{\, \ov\mu_1}^{\alpha\beta}
C^{(l)}_{\cc;\mu_J, \beta}  
+ \frac{1}{2} 
C^{(g-1)}_{\cc;\mu_2,\dots,\mu_n,\alpha,\beta} 
{\Lambdacc}^{\alpha\beta}_{\, \ov{\mu}_1}
\end{equation} 
\item[(Grading \& Filtration)]
$C_{\cc;\mu_1,\dots,\mu_n} d\sx^{\mu_1} \otimes \cdots 
\otimes d\sx^{\mu_n} \in 
((\AOmega^{1,0})^{\otimes n}(\pr^{-1}(U)^\circ))^{2-2g}_{3g-3}$;  

\item[(Pole)]
$P (C_{\cc;\mu} d\sx^\mu)$ extends to a regular $(1,0)$-form on 
$\pr^{-1}(U)$, and for $g\ge 2$ we have:
\[
C^{(g)}_{\cc} \in P^{-(5g-5)} \cA(U)[x_1,x_2,P x_3, P^2 x_4, \dots, 
P^{3g-4} x_{3g-2}] 
\]
where $P = P(t,x_1)$ is the discriminant \eqref{eq:discriminant}. 
\end{description} 
\end{proposition}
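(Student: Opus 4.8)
The strategy is to realize everything via the already-established general framework for curved pseudo-opposite modules in \S\ref{subsec:curved}, applied to the particular curved module $\sfQ = z^{-1}\ov{\AF}$, but working over the real-analytic structure sheaf $\cA_\cM$ rather than $\cO_\cM$. Concretely, I would first observe that the complex conjugate opposite module satisfies (Opp1), (Opp2) and (Opp4) by the Proposition just before Definition~\ref{def:ttstarbundle}, and that its failure to be parallel is precisely in the anti-holomorphic direction, measured by the background torsion $\Lambdacc$, which takes values in $\pr^*\cA^{0,1}_\cM$. With this dictionary in place, each of the five assertions becomes a translation of a statement already proved in \S\ref{subsec:curved} or in the present section, with the single new feature that the ``deviation from parallel'' one-form lives in $\cA^{0,1}_\cM$ rather than in $\Omega^1_\cM$.

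\textbf{Key steps, in order.} (1) \emph{(Yukawa)}: This is immediate from the Feynman rule, exactly as in \eqref{eq:Yukawa-Feynman} --- the genus-zero three-point function comes from the unique stable graph with three legs and no edges, so no propagator enters and $C^{(0)}_{\cc;\mu\nu\rho}=C^{(0)}_{\mu\nu\rho}$ is the Yukawa coupling, which is intrinsic. (2) \emph{(Jetness)}: Here I would run the proof of Lemma~\ref{lem:jetness} verbatim, but using $\Nablacc$ in place of $\hNabla$ and Proposition~\ref{prop:propagatorcc} in place of Proposition~\ref{prop:difference_conn}. The point is that $\Nablacc_\mu C^{(g)}_{\cc;\mu_1,\dots,\mu_n}$ decomposes into vertex-differentiation and propagator-differentiation terms; the propagator derivative $\Nabla^\sfP_\mu\Delta_{\sfP,\cc}$ has two pieces by Proposition~\ref{prop:propagatorcc}(2), the ``$\Delta C^{(0)}\Delta$'' piece which reassembles a genus-zero trivalent vertex (as in the picture following \eqref{eq:leg-modification}), and the ``$\Lambda_\sfP$'' piece --- but for the \emph{holomorphic} index $\mu$, $\Lambda_\sfP$ combines with the $(\Nablacc-\Nabla^\sfP)$ correction from Proposition~\ref{prop:propagatorcc}(1) so that, when $\mu$ is holomorphic, the net effect is exactly the leg modification \eqref{eq:leg-modification}, giving the jetness identity. (3) \emph{(Holomorphic Anomaly)}: This is the anti-holomorphic analogue --- differentiate the Feynman rule in an anti-holomorphic direction $\ov\mu_1$. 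By the second line of Proposition~\ref{prop:propagatorcc}(2), $\ov\partial_i\Delta_{\sfP,\cc}^{\nu\rho} = -\Lambdacc{}^{\nu\rho}_{\,\oi}$, and since $C^{(g)}_{\cc;\mu_1,\dots,\mu_n}$ is holomorphic in the fiber variables and $\Nabla^\sfP$ is flat with $(0,1)$-part equal to $\ov\partial$, the only contribution to $\partial_{\ov\mu_1}$ of the contraction comes from inserting $-\Lambdacc{}_{\,\ov\mu_1}$ at each internal edge; the separating-edge case produces the $\sum_{S_1\sqcup S_2, k+l=g}$ term and the non-separating case produces the $C^{(g-1)}$ term, with the $\tfrac12$ factors from edge-branch automorphisms, exactly as in the proof of Theorem~\ref{thm:anomaly}. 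This gives \eqref{eq:HAE}. (4) \emph{(Grading \& Filtration)} and (5) \emph{(Pole)}: These follow by repeating the proofs of Lemma~\ref{lem:GrFil} and Lemma~\ref{lem:pole} with the same bookkeeping: by Proposition~\ref{prop:grading-filtration-prop} the propagator $\Delta_{\sfP,\cc}$ has $\deg = -2$, $\filt\le 2$ (its holomorphic part is built from $\Omega^\vee$ and $\KS^*$ exactly as $\Delta(\sfP_1,\sfP_2)$), the degree count over a stable graph $\Gamma$ gives $2-2g$, the filtration count gives $3g-3-|E(\Gamma)|\le 3g-3$, and the pole estimate gives $\hC^{(g)}_{\cc}$ poles of order $\le 5g-5$ along $P=0$ --- the only change is that coefficients now lie in $\cA(U)$ rather than $\cO(U)$, and the analogue of Hartogs' extension across the discriminant, used as in Lemma~\ref{lem:pole}, still applies since the argument there is local on the base and we only shrink to $t\in U\setminus\{P=0\}$ where a flat co-ordinate system exists.

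\textbf{Main obstacle.} The routine steps are the grading/filtration/pole estimates; the genuine content is verifying that the ``mixed'' derivative identities of Proposition~\ref{prop:propagatorcc} --- and in particular the clean split between a holomorphic ``$C^{(0)}$-vertex'' term and an anti-holomorphic ``$\Lambdacc$'' term --- are exactly what is needed to make the Feynman-graph combinatorics close up into jetness in the holomorphic directions and into the holomorphic anomaly equation in the anti-holomorphic directions. This requires being careful that $z^{-1}\ov\AF$ is parallel in the holomorphic direction (so that no $\Lambda$-insertion occurs when one differentiates holomorphically, forcing the holomorphic-index case to behave exactly like the transformation rule between two parallel modules of \S\ref{subsec:transformation}) while its anti-holomorphic non-parallelness is governed solely by $\Lambdacc\in\pr^*\cA^{0,1}_\cM$. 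Once that bifurcation is pinned down, the combinatorial identity is literally the Feynman-rule computation in Lemma~\ref{lem:jetness} and Theorem~\ref{thm:anomaly}, applied separately in the $(1,0)$ and $(0,1)$ slots, and the proof is complete by invoking those arguments.
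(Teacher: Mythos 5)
Your proposal is correct and takes essentially the same route as the paper's proof: (Yukawa) comes from the unique trivalent graph, (Jetness) by rerunning Lemma~\ref{lem:jetness} with Proposition~\ref{prop:propagatorcc} in place of Proposition~\ref{prop:difference_conn}, (Holomorphic Anomaly) by differentiating the Feynman rule anti-holomorphically using $\ov\partial_i\Delta_{\sfP,\cc}^{\mu\nu}=-{\Lambdacc}_{\,\oi}^{\mu\nu}$ and sorting the distinguished edge into separating/non-separating cases, and (Grading \& Filtration), (Pole) by repeating Lemmas~\ref{lem:GrFil} and~\ref{lem:pole} with real-analytic coefficients. The only (harmless) inaccuracy is your jetness remark that $\Lambda_{\sfP}$ ``combines'' with the $(\Nablacc-\Nabla^{\sfP})$ correction: since $\sfP$ is assumed parallel, $\Lambda_{\sfP}=0$, so Proposition~\ref{prop:propagatorcc}(1) alone supplies the leg modification and part~(2) alone the trivalent vertex, exactly as in the parallel--parallel case of Lemma~\ref{lem:jetness}.
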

\begin{proof} 
The proof is similar to that of Lemmas~\ref{lem:jetness}--\ref{lem:pole} 
and Theorem~\ref{thm:anomaly}.  (Yukawa) is obvious 
from the Feynman rule: $C^{(0)}_{\cc; \mu\nu\rho} = C^{(0)}_{\mu\nu\rho}$. 
(Jetness) follows from the argument of
Lemma~\ref{lem:jetness}, using Proposition~\ref{prop:propagatorcc} instead 
of Proposition~\ref{prop:difference_conn}. 
To establish (Holomorphic Anomaly), we differentiate with respect to $\ov{t}^i$ 
the Feynman rule expressing $C^{(g)}_{\cc;\mu_1,\dots,\mu_n}$ 
in terms of $\{C^{(h)}_{\nu_1,\dots,\nu_m}\}$ and 
$\Delta = \Delta( \sfP, z^{-1} \ov\AF)$. 
The only non-holomorphic objects in the Feynman rule are propagators, 
and we have that
$\ov\partial_i \Delta^{\mu\nu} = - {\Lambdacc}_{\oi}^{\mu\nu}$ 
by Proposition~\ref{prop:propagatorcc}(2). 
Therefore $\ov\partial_i C^{(g)}_{\cc;\mu_1,\dots,\mu_n}$ 
can be written as the sum over graphs with one distinguished internal edge,  
on which the propagator is replaced 
with $-{\Lambdacc}_{\oi}^{\mu\nu}$. The second and 
the third terms in equation \eqref{eq:HAE} correspond (respectively) 
to the cases where the distinguished edge separates and does not separate the graph.
(Grading \& Filtration) follows from the argument of 
Lemma~\ref{lem:GrFil}. Here we need to establish an analogue 
of Proposition~\ref{prop:grading-filtration-prop} for 
$\Delta = \Delta(\sfP,z^{-1} \ov\AF)$, but this is straightforward. 
(Pole) follows from the argument of Lemma~\ref{lem:pole}.  
\end{proof} 

\begin{remark} 
Since $\Nablacc$ is flat in the holomorphic direction, 
the condition (Jetness) is compatible with the symmetry 
of the correlation functions (see also Remark~\ref{rem:Fockspace}).  
Note that the holomorphic anomaly equation \eqref{eq:HAE} is non-trivial only when 
the index $\ov\mu_1$ corresponds to one of the co-ordinates 
$\{\ov{t^0},\dots,\ov{t^N}\}$ on $\cM$. 
\end{remark} 

We deduce an important consequence of the holomorphic anomaly 
equation for the genus-one one-point correlation function, 
which is similar to Proposition~\ref{prop:curvature_condition}. 
\begin{proposition}[curvature condition] 
\label{prop:curvature_condition_cc} 
The genus-one one-point function under the complex conjugate 
opposite module $z^{-1} \ov\AF$ satisfies:
\[
d (C^{(1)}_{\cc;\mu} d\sx^\mu) = \varthetacc 
\]
\end{proposition}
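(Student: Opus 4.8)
\textbf{Proof proposal for Proposition~\ref{prop:curvature_condition_cc}.}
The plan is to mimic the proof of Proposition~\ref{prop:curvature_condition}, replacing the generic curved opposite module $\sfQ$ by the complex conjugate opposite module $z^{-1}\ov\AF$ and the generic background torsion $\Lambda_\sfQ$ by $\Lambdacc$. First I would fix a parallel pseudo-opposite module $\sfP$ over $U$ (one exists at least in a formal neighbourhood of any point by Lemma~\ref{lem:existence_opposite}, and globally under Assumption~\ref{assump:covering}; the statement is local on $\cM$ so this suffices), and let $\wave = \{C^{(g)}_{\mu_1,\dots,\mu_n}\}\in\Fock(U;\sfP)$ be the element whose correlation functions under $z^{-1}\ov\AF$ give $\{C^{(g)}_{\cc;\mu_1,\dots,\mu_n}\}$, as in Definition~\ref{def:correlationfunctions_under_cc}.

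The computation then splits into a holomorphic and an anti-holomorphic part of the exterior derivative. For the holomorphic part, by the Feynman rule at genus zero (equation~\ref{eq:Feynman-genusone}) we have $C^{(1)}_{\cc;\mu}d\sx^\mu = C^{(1)}_{\sfP;\mu}d\sx^\mu + \omega_{\sfP,\rm cc}$, where $\omega_{\sfP,\rm cc}$ is the difference one-form of Definition~\ref{def:difference_one-form_cc}. Applying $\partial$ (the holomorphic part of $d$): since $C^{(1)}_{\sfP;\mu}d\sx^\mu$ is closed (its symmetric covariant-derivative tensor $\Nabla^{\sfP}_\nu C^{(1)}_\mu$ is symmetric because $\wave\in\Fock(U;\sfP)$ satisfies (Jetness)), the $(2,0)$-part of $d(C^{(1)}_{\cc;\mu}d\sx^\mu)$ equals the $(2,0)$-part of $d\omega_{\sfP,\rm cc}$; but $\omega_{\sfP,\rm cc}$ is pulled back from $\cM$ and $\varthetacc = d\omega_{\sfP,\rm cc}$ is of type $(1,1)$ by Proposition~\ref{prop:curvaturecc}(2) (or by Corollary~\ref{cor:curvature_cc_pi}), so this $(2,0)$-part vanishes — consistent with $\varthetacc$ being $(1,1)$. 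For the genuinely new content, the anti-holomorphic part, I would use (Holomorphic Anomaly) from Proposition~\ref{prop:HAE} at $(g,n)=(1,2)$: it gives
\[
0 = \partial_{\ov\mu_1}C^{(1)}_{\cc;\mu_2} + \tfrac{1}{2}C^{(0)}_{\cc;\mu_2\alpha\beta}{\Lambdacc}^{\alpha\beta}_{\,\ov\mu_1},
\]
where the sum over $S_1\sqcup S_2$ with $k+l=1$ contributes only the term with a $C^{(-1)}$ or an empty-leg $C^{(1)}$ against $C^{(0)}_{\cc}$ — the first type is absent and careful bookkeeping leaves precisely the displayed term (there is no separating contribution since one side would be unstable). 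Since $C^{(0)}_{\cc;\mu\nu\rho}$ is the Yukawa coupling $\bY$ with components $C^{(0)}_{ijh}$ in the $\cM$-directions, and ${\Lambdacc}^{\alpha\beta}_{\,\ov\mu}$ is non-zero only for $\ov\mu = \ov{t}^l$, this reads $\partial_{\ov l}C^{(1)}_{\cc;\mu} = -\tfrac{1}{2}C^{(0)}_{i j \mu}{\Lambdacc}^{ij}_{\,\ov l}$ with the free index contracted appropriately. Comparing with the first line of Proposition~\ref{prop:curvaturecc}(2), namely $\varthetacc = \tfrac{1}{2}C^{(0)}_{ijh}{\Lambdacc}^{jh}_{\,\ov l}\,dt^i\wedge d\ov t^l$, we get exactly $\ov\partial(C^{(1)}_{\cc;\mu}d\sx^\mu) = \varthetacc$.

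Combining the two parts, $d(C^{(1)}_{\cc;\mu}d\sx^\mu) = \partial(\cdots) + \ov\partial(\cdots) = 0 + \varthetacc = \varthetacc$, which is the claim. I expect the main obstacle to be purely a matter of sign and combinatorial bookkeeping: matching the factor $\tfrac12$ and the index symmetrization in the $(1,2)$-instance of \eqref{eq:HAE} against the explicit trace formula for $\varthetacc$ in Proposition~\ref{prop:curvaturecc}(2) (note the footnoted asymmetry between $i$ and $j$, i.e.\ the $\tfrac12$ versus $\tfrac14$ discrepancy relative to \eqref{eq:curvature-Lambda}), and checking that the holomorphic $(2,0)$-part genuinely vanishes rather than merely being absorbed. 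A clean alternative, which I would use as a cross-check, is to invoke Remark~\ref{rem:curvature}: the general anomaly-equation computation there shows $d(C^{(1)}_\nu d\sx^\nu) = \vartheta_\sfQ$ for any curved $\sfQ$ directly from Proposition~\ref{prop:curvature}(2); specializing $\sfQ = z^{-1}\ov\AF$ and $\vartheta_\sfQ = \varthetacc$ and using that Proposition~\ref{prop:propagatorcc} is the exact analogue of Proposition~\ref{prop:propagator-curved} needed in that argument yields the result with essentially no extra work.
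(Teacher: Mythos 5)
Your proposal is correct and essentially matches the paper's proof: the paper likewise obtains the $(0,1)$-derivative from the $(g,n)=(1,2)$ instance of the holomorphic anomaly equation, $\partial_{\ov\mu}C^{(1)}_{\cc;\nu}=-\tfrac12 C^{(0)}_{\nu\alpha\beta}{\Lambdacc}^{\alpha\beta}_{\ov\mu}$, combined with Proposition~\ref{prop:curvaturecc}(2), while getting $\partial$-closedness directly from (Jetness) of Proposition~\ref{prop:HAE} (symmetry of $\Nablacc_\mu C^{(1)}_{\cc;\nu}$) rather than from your decomposition $C^{(1)}_{\cc;\mu}d\sx^\mu=C^{(1)}_{\sfP;\mu}d\sx^\mu+\omega_{\sfP,\rm cc}$. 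That decomposition route is a harmless variant (and, as you note, it already yields the full statement on its own, exactly as in the proof of Proposition~\ref{prop:curvature_condition}).
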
 
\begin{proof} 
(Jetness) implies that $\Nablacc_\mu C^{(1)}_\nu$ is symmetric 
in $\mu$ and $\nu$. 
Thus $C^{(1)}_{\cc;\mu} d\sx^\mu$ is $\partial$-closed. 
The holomorphic anomaly equation gives 
$\partial_{\oi} C^{(1)}_{\cc,\mu_1} = -\frac{1}{2} C^{(0)}_{\mu_1\alpha\beta}
{\Lambdacc}_{\oi}^{\alpha\beta}$, and this implies that
$\ov\partial ( C^{(1)}_{\cc;\mu} d\sx^\mu ) = \varthetacc$ 
in view of Proposition~\ref{prop:curvaturecc}(2). 
\end{proof} 

\begin{definition}[Fock space for the complex conjugate opposite module] 
The local Fock space $\Fock(U;z^{-1} \ov\AF)$ for the TRP structure 
$(\cF,\nabla, (\cdot,\cdot)_{\cF},\kappa)$ consists of collections 
\[
\left\{C^{(g)}_{\cc; \mu_1,\dots,\mu_n} d\sx^{\mu_1} 
\otimes \cdots \otimes d \sx^{\mu_n} 
\in (\AOmega^{1,0})^{\otimes n} (\pr^{-1}(U)^\circ) : 
\text{$g\ge 0$, $n\ge 0$, $2g-2 +n >0$}\right\}
\]
of completely symmetric tensors satisfying the conditions 
(Yukawa), (Jetness), (Holomorphic Anomaly), (Grading \& Filtration),
and (Pole) listed in Proposition~\ref{prop:HAE}. 
\end{definition} 

Note that Definition~\ref{def:correlationfunctions_under_cc} defines 
a transformation rule 
\[
T(\sfP,z^{-1} \ov\AF) \colon 
\Fock(U;\sfP) \to \Fock(U;z^{-1} \ov\AF) 
\]
for a parallel pseudo-opposite module $\sfP$ over $U$. 

\begin{proposition} 
Let $\sfP$ be a parallel pseudo-opposite module. 
The transformation rule $T(\sfP, z^{-1} \ov\AF)$ defines a bijection 
between $\Fock(U;\sfP)$ and $\Fock(U;z^{-1} \AF)$. 
The inverse map is given by a transformation rule $T(z^{-1} \ov\AF, \sfP)$ 
defined in terms of the propagator $\Delta_{\cc,\sfP} = -\Delta_{\sfP,\cc}$ 
and the Feynman rule similarly to Definition~\ref{def:correlationfunctions_under_cc}. 
\end{proposition}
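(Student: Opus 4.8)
The plan is to establish the bijectivity of $T(\sfP, z^{-1}\ov\AF)$ by exhibiting an explicit two-sided inverse, namely the transformation rule $T(z^{-1}\ov\AF, \sfP)$ built from the propagator $\Delta_{\cc,\sfP} := \Delta(z^{-1}\ov\AF, \sfP) = -\Delta_{\sfP,\cc}$ (the sign being the analogue of Proposition~\ref{prop:Deltasum}, whose proof applies verbatim since it only uses that the images of the dual projections are isotropic and that their differences lie in $\sfF^\perp$). The composition of the two Feynman-rule transformations should then reduce to the identity by the same argument as the cocycle condition (Proposition~\ref{prop:cocycle}): at the level of jet potentials, $\exp\bigl(\tfrac{\hbar}{2}\Delta_{\cc,\sfP}^{\mu\nu}\partial_{\sy^\mu}\partial_{\sy^\nu}\bigr)\exp\bigl(\tfrac{\hbar}{2}\Delta_{\sfP,\cc}^{\mu\nu}\partial_{\sy^\mu}\partial_{\sy^\nu}\bigr) = \exp\bigl(\tfrac{\hbar}{2}(\Delta_{\cc,\sfP}+\Delta_{\sfP,\cc})^{\mu\nu}\partial_{\sy^\mu}\partial_{\sy^\nu}\bigr) = \id$, since the two propagators cancel. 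This handles bijectivity as soon as we know both maps are \emph{well-defined}, i.e.\ that $T(z^{-1}\ov\AF,\sfP)$ sends $\Fock(U;z^{-1}\ov\AF)$ into $\Fock(U;\sfP)$.

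The first real step, then, is to check well-definedness of $T(\sfP, z^{-1}\ov\AF)$ itself: this is exactly Proposition~\ref{prop:HAE}, already proved, so that direction is free. For the reverse direction, I would set up the analogue of Proposition~\ref{prop:Fockspace-curved}: starting from a collection $\{C^{(g)}_{\cc;\mu_1,\dots,\mu_n}\}$ satisfying (Yukawa), (Jetness), (Holomorphic Anomaly), (Grading \& Filtration) and (Pole), apply the Feynman rule with propagator $\Delta_{\cc,\sfP}$ to produce candidate correlation functions $\{C^{(g)}_{\sfP;\mu_1,\dots,\mu_n}\}$, and verify these satisfy the conditions of Definition~\ref{def:localFock}. (Yukawa) is immediate since the three-point function is propagator-independent. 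For (Jetness), the computation mirrors Lemma~\ref{lem:jetness}, but now run in reverse: the input satisfies the anomaly equation \eqref{eq:HAE} with torsion $\Lambdacc$, and $\Nabla^\sfP_\mu\Delta_{\cc,\sfP}^{\nu\rho}$ picks up a $+\Lambdacc$ term by Proposition~\ref{prop:propagatorcc}(2) (with signs adjusted for $\Delta_{\cc,\sfP}=-\Delta_{\sfP,\cc}$), and these two contributions cancel, leaving precisely the jetness identity $\Nabla^\sfP_\nu C^{(g)}_{\sfP;\mu_1,\dots,\mu_n} = C^{(g)}_{\sfP;\nu,\mu_1,\dots,\mu_n}$. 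The genus-one symmetry requires the curvature condition $d(C^{(1)}_{\cc;\mu}d\sx^\mu) = \varthetacc$ from Proposition~\ref{prop:curvature_condition_cc}, combined with $\vartheta_\sfQ = 0$ for parallel $\sfP$ (Lemma~\ref{lem:curvature2form}): after transformation the genus-one one-form becomes closed. (Grading \& Filtration) and (Pole) for the output follow by the grading/filtration estimates of Lemma~\ref{lem:GrFil} and the pole-order argument of Lemma~\ref{lem:pole}, noting that the analogues of Propositions~\ref{prop:grading-filtration-Nabla}, \ref{prop:grading-filtration-prop} hold for $\Delta_{\sfP,\cc}$ as remarked after Proposition~\ref{prop:HAE}; one must be slightly careful that the output is now holomorphic in the fiber variables (the non-holomorphicity having been absorbed by the complex-conjugate polarization), which is automatic from the Feynman rule since $\sfP$ is holomorphic.

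Having established well-definedness of both maps, the bijectivity is the formal cancellation described above, together with the dual statement $T(\sfP,z^{-1}\ov\AF)\circ T(z^{-1}\ov\AF,\sfP) = \id$ proved identically. I expect the main obstacle to be the genus-one bookkeeping: the transformation rules are only defined up to a constant (the ambiguity at genus one noted throughout, e.g.\ in the footnote to \S\ref{subsec:motivation-Focksheaf}), so strictly speaking $T(\sfP,z^{-1}\ov\AF)$ and $T(z^{-1}\ov\AF,\sfP)$ are inverse bijections on the level of the one-forms $d C^{(1)}$ and the higher-genus potentials, and one must phrase the statement so that this constant ambiguity does not obstruct bijectivity --- concretely, by working with $d C^{(1)}_{\cc}$ rather than $C^{(1)}_{\cc}$, exactly as in the definition of the local Fock space. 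A secondary technical point is verifying that the Feynman-rule contractions converge as formal expressions --- but since the propagators $\Delta_{\sfP,\cc}$, $\Delta_{\cc,\sfP}$ lower degree by two and raise filtration by at most two (as for ordinary propagators), the same finiteness argument as in \S\ref{subsec:transformation} applies, and for each fixed $(g,n)$ only finitely many stable graphs contribute.
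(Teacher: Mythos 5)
Your overall strategy (exhibit the inverse as the Feynman rule with propagator $\Delta_{\cc,\sfP}=-\Delta_{\sfP,\cc}$, get the two-sided inverse property from the formal cancellation of propagators at the level of jet potentials, and check the local Fock space axioms by rerunning Lemmas~\ref{lem:jetness}--\ref{lem:pole}) is the same as the paper's. But there is a genuine gap at the one point where the argument is not formal: holomorphicity of the reconstructed correlation functions. Elements of $\Fock(U;\sfP)$ are by definition \emph{holomorphic} tensors in $(\bOmega^1)^{\otimes n}(\pr^{-1}(U)^\circ)$, while both ingredients of your inverse Feynman rule are non-holomorphic in the base directions: the vertex terms $C^{(g)}_{\cc;\mu_1,\dots,\mu_n}$ are only real-analytic in $t$, $\ov t$, and the propagator $\Delta_{\cc,\sfP}$ depends on $\ov t$ as well (indeed $\ov\partial_i\Delta_{\sfP,\cc}^{\nu\rho}=-{\Lambdacc}^{\nu\rho}_{\,\oi}$ by Proposition~\ref{prop:propagatorcc}(2)). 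So your claim that holomorphicity of the output ``is automatic from the Feynman rule since $\sfP$ is holomorphic'' is false; this is precisely the non-trivial content of the statement, and it is exactly where the holomorphic anomaly equation must be used. The paper proves it by induction on genus: write the \emph{forward} Feynman rule $C^{(g)}_{\cc}=C^{(g)}+\sum_\Gamma\frac{1}{|\Aut(\Gamma)|}\Cont_\Gamma(\wave,\Delta_{\sfP,\cc})$ (trivial graph removed), differentiate in $\ov t^i$, and observe that — granting that $C^{(h)}$ is holomorphic for $h<g$ — the $\ov\partial_i$-derivative of the graph sum reproduces the negative of the loop and splitting terms of \eqref{eq:HAE}, so the assumed anomaly equation forces $\ov\partial_i C^{(g)}=0$; at genus one, $C^{(1)}_{\cc;\mu}=C^{(1)}_\mu+(\omega_{\sfP,\cc})_\mu$ together with $d(C^{(1)}_{\cc;\mu}d\sx^\mu)=\varthetacc=d\omega_{\sfP,\cc}$ gives $d(C^{(1)}_\mu d\sx^\mu)=0$, hence holomorphicity and closedness simultaneously. (One could equally run this on your inverse Feynman rule, but some such induction using \eqref{eq:HAE} is indispensable.)

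A related slip: you invoke the anomaly equation and the torsion $\Lambdacc$ in your verification of (Jetness). This conflates the holomorphic and anti-holomorphic directions. The torsion only enters through anti-holomorphic derivatives ($\Nabla^{\sfP}_{\oi}\Delta_{\sfP,\cc}^{\nu\rho}=-{\Lambdacc}^{\nu\rho}_{\,\oi}$), whereas (Jetness) for the output concerns the holomorphic connection $\Nabla^{\sfP}$ in holomorphic directions, where the relevant identity is $\Nabla^{\sfP}_\mu\Delta^{\nu\rho}=\Delta^{\nu\sigma}C^{(0)}_{\sigma\mu\tau}\Delta^{\tau\rho}$ (since $\Lambda_{\sfP}=0$ for parallel $\sfP$) combined with the $\Nablacc$-jetness of the input; this is the Lemma~\ref{lem:jetness} argument verbatim and needs no cancellation against \eqref{eq:HAE}. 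The anomaly equation is the tool for holomorphicity, not for jetness, and your proposal has these two roles interchanged.
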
 
\begin{proof} 
Let $\wave_\cc$ be an element of $\Fock(U;z^{-1}\ov\AF)$. 
It suffices to show that $\wave := T(z^{-1} \ov\AF, \sfP) \wave_\cc$ 
satisfies the conditions for elements in $\Fock(U;\sfP)$ in 
Definition~\ref{def:localFock}. (It is clear from the definition 
that the transformation rules $T(\sfP,z^{-1} \ov\AF)$, 
$T(z^{-1} \ov\AF, \sfP)$ are inverse to each other.) 
The conditions (Yukawa), (Jetness), (Grading \& Filtration), and
(Pole) can be checked using the argument in 
Lemmas~\ref{lem:jetness}--\ref{lem:pole}. 
It suffices to show that each correlation function in $\wave$ 
is holomorphic. Writing $\wave_\cc = \{C^{(g)}_{\cc;\mu_1,\dots,\mu_n}\}$ 
and $\wave = \{C^{(g)}_{\mu_1,\dots,\mu_n}\}$, 
we have the following Feynman rule: 
\begin{align}
\label{eq:Feynmanrule_Ccc_zeropoint} 
\begin{split}  
C^{(1)}_{\cc; \mu} & = C^{(1)}_\mu + (\omega_{\sfP,\cc})_\mu \\
C^{(g)}_{\cc} & = C^{(g)} + \sum_{\Gamma} 
\frac{1}{|\Aut(\Gamma)|} \Cont_\Gamma( \wave, \Delta_{\sfP,\cc}) 
\qquad \text{ for $g\ge 2$} 
\end{split} 
\end{align} 
where the trivial graph (with one genus-$g$ vertex) is removed 
from the summation in the second line. The curvature condition 
in Proposition~\ref{prop:curvature_condition_cc} together with 
\eqref{eq:Feynmanrule_Ccc_zeropoint} 
shows that $\ov\partial( C^{(1)}_\mu d\sx^\mu) =0$. 
Hence $C^{(1)}_\mu$ is holomorphic.  
Suppose by induction that $C^{(h)}$ is holomorphic 
for all $h<g$ for some $g\ge 2$. 
We differentiate the above Feynman rule \eqref{eq:Feynmanrule_Ccc_zeropoint} 
for $C^{(g)}_\cc$ with respect to $\ov{t}^i$. 
Using the argument in the proof of Proposition~\ref{prop:HAE} 
and the induction hypothesis, we find that the differentiation of the 
second term (in the right-hand side) gives the negative of 
the second and the third 
terms of the holomorphic anomaly equation \eqref{eq:HAE}. 
By the assumed holomorphic anomaly equation, 
we obtain $\ov\partial_i C^{(g)} =0$. This completes 
the induction steps, and the proof. 
\end{proof} 

\begin{remark} 
We have, as in Proposition~\ref{prop:cocycle}, 
\begin{align*} 
T(\sfP_1,z^{-1} \ov\AF) & = T(\sfP_2,z^{-1} \ov\AF) 
\circ T(\sfP_1,\sfP_2) \\
T(z^{-1}\ov\AF, \sfP_1) & = T(\sfP_2,\sfP_1) \circ 
T(z^{-1} \ov\AF, \sfP_2) 
\end{align*} 
for parallel pseudo-opposite modules $\sfP_1,\sfP_2$. 
\end{remark}

\begin{lemma} 
\label{lem:real_potential}
Let $\sfP$ be a parallel pseudo-opposite module. 
Consider the difference one-form $\omega = \omega_{\sfP,\cc} 
\in \cA_\cM^{1,0}$ from 
Definition~\ref{def:difference_one-form_cc}. 
Locally on $\cM$ there exists a real-valued function $u = u_{\sfP}$ such that 
$\partial u = \omega$. The function $u$ is unique 
up to a real constant.   
\end{lemma}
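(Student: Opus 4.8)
The plan is to deduce the statement from the ordinary Poincaré lemma applied to the closed real $1$-form $\omega + \ov{\omega}$ on $\cM$. First I would record the two inputs from the earlier discussion: by Definition~\ref{def:difference_one-form_cc} the form $\omega := \omega_{\sfP,\cc}$ is a real-analytic $(1,0)$-form pulled back from the base $\cM$, and its exterior derivative is the curvature two-form, $d\omega = \varthetacc$; and by Proposition~\ref{prop:curvaturecc}(2) together with Corollary~\ref{cor:curvature_cc_pi} (see also the explicit expression \eqref{eq:varthetacc_CCtilde}) the form $\varthetacc$ is pure-imaginary and of type $(1,1)$. Comparing bidegrees in $d\omega = \partial\omega + \ov\partial\omega = \varthetacc$ then gives $\partial\omega = 0$ and $\ov\partial\omega = \varthetacc$.

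Next I would examine the real $1$-form $\eta := \omega + \ov{\omega} = 2\,\Re\omega$. Splitting $d\eta$ into types, the $(2,0)$-part is $\partial\omega = 0$, the $(0,2)$-part is $\ov\partial\ov{\omega} = \overline{\partial\omega} = 0$, and the $(1,1)$-part is $\ov\partial\omega + \partial\ov{\omega} = \varthetacc + \overline{\varthetacc}$, which vanishes since $\varthetacc$ is pure imaginary. Hence $\eta$ is a closed real $1$-form on $\cM$, and on any contractible coordinate neighbourhood the Poincaré lemma produces a real-valued function $u$ with $du = \eta$ (and $u$ may be taken real-analytic by the analytic Poincaré lemma). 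Comparing types in $du = \partial u + \ov\partial u = \omega + \ov{\omega}$ yields $\partial u = \omega$, which is the required primitive. For uniqueness, if $u_1,u_2$ are real-valued with $\partial u_1 = \partial u_2 = \omega$, then $f := u_1 - u_2$ is real with $\partial f = 0$; conjugating and using that $f$ is real gives $\ov\partial f = \overline{\partial f} = 0$, so $df = 0$ and $f$ is locally a real constant.

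All of the computations here are routine once the inputs are in place, so I do not expect a serious obstacle. The only step that genuinely uses the hypotheses of the lemma (rather than formal manipulations) is the assertion that $\varthetacc$ is pure imaginary of type $(1,1)$; this is precisely Corollary~\ref{cor:curvature_cc_pi}, whose proof rests in turn on the purity of the TRP structure through the $\bh$-orthogonal decomposition \eqref{eq:purity-L2}. Once that fact is invoked, the remainder is just the Poincaré lemma together with a bidegree count.
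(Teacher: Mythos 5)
Your proof is correct, but it reaches the real primitive by a different mechanism than the paper. Both arguments start from the same inputs, namely $d\omega=\varthetacc$ with $\varthetacc$ a pure-imaginary $(1,1)$-form (Proposition~\ref{prop:curvaturecc} and Corollary~\ref{cor:curvature_cc_pi}), so that $\partial\omega=0$ and $\ov\partial\omega=\varthetacc$. From there the paper solves $\partial u=\omega$ with a \emph{complex-valued} $u$ (a local $\partial$-Poincar\'e lemma), observes that $\ov\partial\partial(\Im u)=\Re(\varthetacc)=0$, so $\Im u$ is pluriharmonic, and then subtracts an anti-holomorphic function $f$ with $\Im f=\Im u$ to make the primitive real; you instead symmetrize to the real one-form $\eta=\omega+\ov{\omega}$, check that all three bidegree components of $d\eta$ vanish (the $(1,1)$-part being $\varthetacc+\ov{\varthetacc}=0$ by pure-imaginariness), and apply the ordinary de Rham Poincar\'e lemma, recovering $\partial u=\omega$ by a type comparison. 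Your route is somewhat more elementary, since it avoids both the local solvability of $\partial u=\omega$ and the existence of an anti-holomorphic function with prescribed pluriharmonic imaginary part, at the cost of needing the full pure-imaginary $(1,1)$ statement up front (the paper's correction step isolates exactly where $\Re\varthetacc=0$ is used). Your uniqueness argument ($f$ real with $\partial f=0$ forces $\ov\partial f=\ov{\partial f}=0$, hence $f$ locally a real constant) is the same in substance as the paper's observation that the ambiguity is a real-valued anti-holomorphic function; and your remark that the primitive can be taken real-analytic is consistent with the paper's real-analytic setting.
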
 
\begin{proof} 
Recall from Corollary~\ref{cor:curvature_cc_pi} 
that the curvature two-form $\varthetacc = d \omega$ is 
a purely imaginary $(1,1)$-form. Therefore $\partial \omega = 0$ 
and $\ov\partial \omega = \varthetacc$. 
Hence we can locally find a complex-valued function 
$u$ with $\partial u = \omega$. 
Also we have 
\[
\ov\partial \partial (\Im u) = \Re(\ov\partial \partial u) = 
\Re(\ov \partial \omega) = \Re ( \varthetacc) = 0 
\]
since $\ov\partial \partial$ is a purely imaginary operator. 
Therefore $\Im u$ is a pluri-harmonic function. 
We can locally find an anti-holomorphic function $f$ such that 
$\Im f  = \Im u$. Replacing $u$ with $u-f$, we obtain a 
real-valued function $u$ satisfying $\partial u = \omega$. 
The ambiguity in $u$ is a real-valued anti-holomorphic 
function, and hence is a real constant. 
\end{proof} 

\begin{definition} 
\label{def:half_density_metric}
Let $u = u_\sfP$ be the real-valued function in Lemma~\ref{lem:real_potential} 
associated to a parallel pseudo-opposite module $\sfP$. 
We call $h_{\sfP} = \exp(u)$ the \emph{half-density metric} 
associated to $\sfP$. 
This is a locally defined function, 
unique up to multiplication by a real positive constant. 
\end{definition} 

The curvature two-form $\varthetacc$ can be viewed as the curvature of 
the half-density line bundle ``$\det(\bOmegao^1)^{1/2}$'' 
by Proposition~\ref{prop:curvaturecc}. 
From the equation 
\[
\ov\partial \partial \log h_{\sfP} = \varthetacc
\]
we may view $h_{\sfP}$ as the Hermitian metric on 
``$\det(\bOmegao^1)^{1/2}$''. 
Let $C^{(1)}_\mu$ be a genus-one one-point function 
under a parallel pseudo-opposite module $\sfP$. 
Locally we can integrate this to obtain a multi-valued genus-one 
potential $C^{(1)}$; see Remark~\ref{rem:Fockspace}(2). 
Let $C_{\cc;\mu}^{(1)}$ be the corresponding genus-one 
one point function under the complex conjugate opposite module 
$z^{-1} \ov\AF$. 
We can regard $\exp(C^{(1)})$ as a section of the line bundle 
``$\det(\bOmegao^1)^{1/2}$'' and define $\exp(C^{(1)}_\cc)$ 
to be the norm:
\[
\left\|\exp(C^{(1)}) \right\|^2 := \left| \exp(C^{(1)}) \right|^2  h_\sfP
\] 
In fact we have 
\[
\partial \log\left\|\exp(C^{(1)}) \right\|^2 = \omega_{\sfP,\cc} + C^{(1)}_\mu 
d\sx^\mu = C^{(1)}_{\cc;\mu} d\sx^\mu 
\]
and:
\begin{equation} 
\label{eq:genus_one_HAE}
\ov\partial \partial \log\left\|\exp(C^{(1)}) \right\|^2 = \varthetacc
\end{equation}
The latter equation is similar to the holomorphic anomaly 
equation at genus one considered in~\cite{BCOV:HA}. 

\begin{remark} 
We defined the half-density metric for a TRP structure equipped 
with a parallel pseudo-opposite module. In particular, this
defines a CDV\footnote{CDV structures are named after Cecotti, Dubrovin and Vafa~\cite{Cecotti--Vafa:top-anti-top,Cecotti--Vafa:classification,
Dubrovin:top-anti-top, Dubrovin:2DTFT}.}~structure~\cite[Definition~1.2]{Hertling:ttstar}, 
which is a mixture of a Frobenius manifold structure 
and a $tt^*$-structure. 
It would be interesting to study singularity and monodromy of 
the half-density metric for CDV structures which arise from 
singularity theory and quantum cohomology. 
For example, for quantum cohomology equipped with the $\hGamma$-real 
structure~\cite{Iritani:integral, Iritani:ttstar},  
is the half-density metric a single-valued function 
around the large radius limit point? 
\end{remark}

\appendix
\section{Opposite Subspaces in the $L^2$-picture.}
\label{sec:L2pair} 
In this Appendix we collect some facts about opposite subspaces in the $L^2$-picture.  
Let $\cH =L^2(S^1,\C^{N+1})$ be the Hilbert space of 
$\C^{N+1}$-valued square integrable functions on $S^1$. 
(This corresponds to the Givental space in the main body of the text.) 
Let $\cH_+ \subset \cH$ denote the 
subspace consisting of boundary values of holomorphic functions 
$\D \to \C^{N+1}$ on the unit disc $\D = \{z: |z|<1\}$; 
cf.~\eqref{eq:cH+-}. 

\begin{lemma} 
\label{lem:fullrange}
Let $\Proj \subset \cH$ be a closed subspace 
such that $z^{-1} \Proj \subset \Proj$ and 
$\Proj \oplus \cH_+ = \cH$ (the direct sum here is not necessarily orthogonal).  
Then $\bigcup_{n\ge 0} z^n \Proj$ 
is dense in $\cH$. 
\end{lemma}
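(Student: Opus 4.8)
The plan is to exploit the two hypotheses $z^{-1}\Proj \subset \Proj$ and $\Proj \oplus \cH_+ = \cH$ to show that the orthogonal complement of $\overline{\bigcup_{n\ge 0} z^n\Proj}$ in $\cH$ is zero. So let $w \in \cH$ be orthogonal to $z^n\Proj$ for every $n \ge 0$; the goal is to prove $w = 0$. Writing $(\cdot,\cdot)_{L^2}$ for the Hermitian inner product on $\cH = L^2(S^1,\C^{N+1})$, the hypothesis says $(w, z^n p)_{L^2} = 0$ for all $p \in \Proj$ and all $n \ge 0$.

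First I would reformulate this in terms of Fourier coefficients. For $p = \sum_{k \in \Z} p_k z^k \in \cH$ and $w = \sum_{k \in \Z} w_k z^k$, we have $(w, z^n p)_{L^2} = \sum_{k} (w_{k+n}, p_k)_{\C^{N+1}}$, so orthogonality of $w$ to $z^n\Proj$ for all $n \ge 0$ means that the function $\bar{w} \cdot p$ (as an element of $L^1(S^1)$, paired coefficientwise) has vanishing Fourier coefficients in all non-positive degrees $-n$, i.e. $\overline{w(z)}^{\mathrm{T}} p(z)$, expanded as a Laurent series, has only strictly positive powers of $z$. Equivalently, $\Pi_-(\overline{w} \cdot p) = 0$ for the projection $\Pi_-$ onto non-positive Fourier modes. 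Now I would use the second hypothesis: since $\Proj \oplus \cH_+ = \cH$, the only closed $z^{-1}$-invariant subspace complementary to $\cH_+$ behaves like a ``negative-mode'' space, and in particular $\Proj$ cannot be contained in any proper closed subspace of the form $\{s : \Pi_- (\overline{w}\cdot s) = 0\}$ unless $w = 0$. Concretely: the condition $\Pi_-(\overline{w}\cdot s) = 0$ for all $s \in \Proj$, combined with $z^{-1}\Proj \subset \Proj$ (which lets us shift), forces $\overline{w}\cdot s \equiv 0$ in $L^1(S^1)$ for every $s\in\Proj$; since $\Proj \oplus \cH_+ = \cH$ and $\cH_+$ spans (together with $\Proj$) all of $\cH$, and since $\overline w \cdot s$ also vanishes trivially on a dense subspace of $\cH_+$ only if paired suitably, one deduces $\overline{w}\cdot s = 0$ for a total family of $s$, hence $w = 0$ a.e.

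The cleanest route to the shift step is: fix $s\in\Proj$ and note $z^{-m}s\in\Proj$ for all $m\ge 0$, so $\Pi_-(\overline w \cdot z^{-m}s)=0$, i.e. all Fourier coefficients of $\overline w\cdot s$ in degrees $\le m$ vanish for every $m\ge 0$; letting $m\to\infty$ gives $\overline w\cdot s=0$ in $L^1(S^1)$. Then, writing an arbitrary $h\in\cH_+$ and using that $\cH_+$ together with $\Proj$ spans $\cH$, I would pick $s \in \Proj$ with support (in the sense of non-vanishing a.e.) filling out $S^1$ — more precisely, since $\Proj$ is complementary to $\cH_+$ it cannot be annihilated pointwise a.e. by any nonzero $\overline w$, because the pointwise annihilator of a nonzero $L^2$ vector field $\overline w$ is a proper closed subspace contained in $\{s : s(z)\perp w(z) \text{ a.e.}\}$, which still contains all of $\cH_+$ shifted appropriately only if $w=0$. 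This last point — ruling out that $\Proj$ lies entirely in the pointwise-orthogonal-complement bundle of a nonzero $w$ — is where the hypothesis $\Proj\oplus\cH_+=\cH$ really bites, and I expect it to be the main obstacle: one must show that any such pointwise-orthogonality subspace has infinite codimension (it misses, for each $z$, a whole coordinate direction), so it cannot be complementary to $\cH_+$, which has infinite codimension of exactly the ``right'' type. I would handle this by a direct argument: if $w \ne 0$ on a positive-measure set $E$, choose a measurable unit section $v(z) \parallel w(z)$ on $E$; then $z^N \chi_E v \in \cH$ for large $N$ can be arranged to have its $\Proj$-component nonzero (since its $\cH_+$-component is bounded in norm while $\|z^N\chi_E v\|$ is fixed), yielding an $s\in\Proj$ with $\overline w\cdot s\ne 0$, contradicting the previous paragraph. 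Assembling these steps gives $w=0$, hence density.
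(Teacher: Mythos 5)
Your first reduction is correct, and it is in effect an elementary substitute for the first step of the paper's own proof: if $w$ is orthogonal to $z^n\Proj$ for all $n\ge 0$, then using $z^{-m}\Proj\subset\Proj$ you get $w\perp z^k s$ for every $k\in\Z$ and every $s\in\Proj$, so all Fourier coefficients of the $L^1$-function $z\mapsto\langle w(z),s(z)\rangle_{\C^{N+1}}$ vanish and hence $\langle w(z),s(z)\rangle=0$ almost everywhere, for each fixed $s\in\Proj$. (The paper instead passes to the closure $V$ of $\bigcup_{n\ge0}z^n\Proj$, which is doubly invariant, and invokes the vector-valued Wiener theorem; your computation establishes the same pointwise statement directly.)

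The gap is in the second half, where the hypothesis $\Proj\oplus\cH_+=\cH$ has to do the work. Writing $h_N=z^N\chi_E v=a_N+s_N$ with $a_N\in\cH_+$ and $s_N\in\Proj$, your conclusion ``yielding an $s\in\Proj$ with $\overline w\cdot s\ne0$'' is a non sequitur: under the standing assumption \emph{every} $s\in\Proj$ satisfies $\langle w(z),s(z)\rangle=0$ a.e., so exhibiting a nonzero $\Proj$-component of $h_N$ contradicts nothing (and the justification offered --- the $\cH_+$-component is bounded while $\|h_N\|$ is fixed --- does not even show $s_N\ne0$). What your construction really gives is the pointwise identity $z^N\chi_E(z)\,|w(z)|=\langle w(z),a_N(z)\rangle$ a.e.\ with $a_N\in\cH_+$ of uniformly bounded norm (the projection along $\Proj$ is bounded), and the actual task is to show that this is impossible for $w\ne0$. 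That cannot be done by norm bookkeeping or by the ``infinite codimension'' heuristic: two closed subspaces of infinite codimension can perfectly well be complementary, and soft arguments (weak compactness, Riemann--Lebesgue) only show that the weak limits of both sides vanish, which is consistent. Excluding it requires genuine Hardy-space input --- an F.~and~M.~Riesz/Beurling--Lax type theorem, or in the scalar case an outer-function construction showing that restriction $\cH_+\to L^2(E)$ is not onto --- and this is precisely what the paper supplies: it applies the Beurling--Lax theorem to the simply invariant subspace $V\cap\cH_+$ and uses the opposedness-derived isomorphism $z\Proj\cap\cH_+\cong\cH_+/z\cH_+\cong\C^{N+1}$ to force the associated inner function to have full rank, whence $Q\equiv\id$ and $V=\cH$. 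Since your proposal never brings the opposedness hypothesis to bear in a comparable quantitative way, the proof as written does not go through.
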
 
\begin{proof} 
Let $V$ be the closure of $\bigcup_{n\ge 0} z^n \Proj$. 
Then $V$ is a $z^{\pm 1}$-invariant subspace: $z V = V$. 
By a vector-valued version of Wiener's theorem~\cite{Srinivasan}, 
$V$ is of the form 
\[
V= \{Q(z) f(z) : f \in \cH \}
\] 
for a measurable function $Q \colon S^1 \to \End(\C^{N+1})$ 
such that $Q(z)$ is an orthogonal projector for each $z\in S^1$, 
i.e.~$Q(z)^2 = Q(z)$, $Q(z)^* = Q(z)$. 
On the other hand, $V \cap \cH_+$ is a $z$-invariant subspace of 
$\cH_+$: 
$z(V\cap \cH_+) \subset V \cap \cH_+$. 
The Beurling--Lax theorem (see e.g.~\cite{Halmos})  
tells us that it is of the form: 
\[
V \cap \cH_+ = \{ T(z) f(z) : f\in \cH_+ \} 
\]
where $T:S^1 \to \End(\C^{N+1})$ is a measurable function 
with the following properties: (1) $T$ is a boundary value of a 
holomorphic function $T\colon \D \to \End(\C^{N+1})$; 
(2) there exists a subspace $U$ of $\C^{N+1}$ such that, 
for each $z \in S^1$, $T(z)|_U$ is an isometry on $U$ and $T(z)|_{U^\perp}=0$. 
Arguing as in the proof of Proposition~\ref{prop:flat_trivialization}(i), we see that $V \cap \cH_+\supset z\Proj\cap \cH_+ \cong \cH_+/z \cH_+
\cong \C^{N+1}$. Therefore $U= \C^{N+1}$. 
Because $V\cap \cH_+ \subset V$ and every element $g\in V$ satisfies 
$Q(z) g(z) = g(z)$, we have $Q(z) T(z) f(z) = T(z)f(z)$ 
for all $f\in \cH_+$.  
This implies that $Q(z)=\id$ and $V = \cH$. 
\end{proof} 

\begin{lemma}
\label{lem:A2}
Let $\Proj\subset \cH$ be as in the previous lemma. Then
$\bigcap_{n\ge 0} z^{-n}\Proj =\{0\}$. 
\end{lemma}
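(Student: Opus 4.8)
The plan is to show that the intersection $W := \bigcap_{n\ge 0} z^{-n}\Proj$ is a $z^{\pm1}$-invariant closed subspace of $\cH$ and that it cannot be non-zero unless it is all of $\cH$, which is impossible since $\Proj$ is a proper subspace (it satisfies $\Proj \oplus \cH_+ = \cH$ with $\cH_+ \ne 0$). First I would observe that $W$ is closed, being an intersection of closed subspaces ($z^{-n}\Proj$ is closed because multiplication by $z^{-n}$ is an isometry of $\cH$ and $\Proj$ is closed). Next, $W$ is clearly contained in $\Proj$ (take $n=0$), and $z^{-1} W \subset W$: if $v \in z^{-n}\Proj$ for all $n$, then $z^{-1}v \in z^{-(n+1)}\Proj$ for all $n$, hence $z^{-1}v \in W$. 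The key extra point is that $W$ is also invariant under multiplication by $z$: if $v \in W$, I need $zv \in z^{-n}\Proj$ for every $n\ge 0$, i.e.\ $z^{n+1}v \in \Proj$; but $v \in z^{-(n+1)}\Proj$ means exactly $z^{n+1}v \in \Proj$, so $zv \in W$. Thus $zW = W$.

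Now I would apply the same vector-valued Wiener theorem used in the proof of Lemma~\ref{lem:fullrange} (Srinivasan's theorem): a closed $z^{\pm1}$-invariant subspace $W$ of $\cH = L^2(S^1,\C^{N+1})$ has the form $W = \{Q(z)f(z) : f\in \cH\}$ for a measurable $Q\colon S^1 \to \End(\C^{N+1})$ with $Q(z)^2 = Q(z) = Q(z)^*$ a.e. Since $W \subset \Proj$ and $z^{-1}\Proj \subset \Proj$, we have $z^{-1}W \subset \Proj$, and iterating, $z^{-n}W = W \subset \Proj$ for all $n$; in particular $W$ is a $z^{\pm 1}$-invariant subspace of $\Proj$. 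I claim $W \cap \cH_+ = \{0\}$: if $g \in W \cap \cH_+$, then $z^{-n}g \in W \subset \Proj$ for every $n\ge 0$; but also $z^{-n}g \in \cH$ and for $n$ large the ``tail'' estimate — more precisely, writing $g = \sum_{k\ge 0} g_k z^k$, the component of $z^{-n}g$ in $\cH_+$ is $\sum_{k\ge n} g_k z^{k-n}$, whose norm $\to 0$ as $n\to\infty$ — shows $\mathrm{dist}(z^{-n}g, \cH_+) \to 0$; combined with $z^{-n}g \in \Proj$ and the closedness of the (not necessarily orthogonal, but bounded) decomposition $\cH = \Proj \oplus \cH_+$, the projection of $z^{-n}g$ onto $\Proj$ along $\cH_+$ equals $z^{-n}g$ while its $\cH_+$-part tends to $0$, forcing $\|z^{-n}g\| \to 0$; since $\|z^{-n}g\| = \|g\|$ for all $n$, we get $g = 0$.

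Finally, from $W \cap \cH_+ = \{0\}$ and $\cH = \Proj \oplus \cH_+ \supseteq W \oplus \cH_+$, together with $W$ being $z^{\pm1}$-invariant: by Lemma~\ref{lem:fullrange} applied to any complement of $W$... actually the cleanest route is: since $Q(z)$ is an orthogonal projector a.e.\ and $W = \mathrm{ran}(Q(\cdot))$, the orthogonal complement $W^\perp = \{(\id - Q(z))f(z) : f \in \cH\}$ is also $z^{\pm1}$-invariant and closed; if $W \ne \{0\}$ then $Q(z) \ne 0$ on a positive-measure set, so $\mathrm{rank}\, Q(z) \ge 1$ there, and then $W$ contains, for generic $z_0 \in S^1$ and a nonzero $u \in \mathrm{ran}\, Q(z_0)$, enough mass that $W \cap \cH_+ \ne \{0\}$ — this is exactly the argument in the last lines of the proof of Lemma~\ref{lem:fullrange}, which shows a nonzero $z^{\pm1}$-invariant subspace meeting $\cH_+$ trivially cannot exist (because $z^n$ times the inner part would land in $\cH_+$). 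I expect the main obstacle to be making this last dichotomy fully rigorous: one must rule out that $W$ is a ``moving'' subspace $\mathrm{ran}\, Q(z)$ of intermediate constant rank that is somehow transverse to $\cH_+$ everywhere; the resolution is precisely that $z^{\pm1}$-invariance forces $Q(z)$ to be a \emph{constant} projector (any $z$-invariant measurable projector-valued function on $S^1$ is a.e.\ equal to a constant, since the spectral subspaces are $z$-invariant Beurling--Lax subspaces with trivial inner part), so $W = E \otimes L^2(S^1)$ for a fixed subspace $E \subseteq \C^{N+1}$, and $W \cap \cH_+ = E \otimes \cH_+^{\mathrm{scalar}} \ne \{0\}$ unless $E = 0$. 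I would cite Lemma~\ref{lem:fullrange}'s proof technique rather than re-deriving the Beurling--Lax input.
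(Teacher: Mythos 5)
Your reduction to the doubly-invariant subspace $W=\bigcap_{n\ge 0}z^{-n}\Proj$ is fine (the verification that $zW=W$ is correct), but the concluding step contains a genuine error: it is \emph{not} true that $z^{\pm 1}$-invariance forces the Wiener projector $Q(z)$ to be constant, nor that a nonzero closed $z^{\pm1}$-invariant subspace must meet $\cH_+$ nontrivially. Already in the scalar case $N+1=1$, take a measurable set $E\subset S^1$ with $0<|E|<|S^1|$ and $W=\{f\in L^2(S^1):f=0 \text{ a.e.\ on } S^1\setminus E\}$; this is closed and doubly invariant, $Q(z)=\chi_E(z)$ is non-constant, and $W\cap\cH_+=\{0\}$ because a nonzero Hardy-class function cannot vanish on a set of positive measure. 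So the dichotomy you invoke in the last lines fails, and your argument never uses the essential hypothesis that $W$ sits inside a $z^{-1}$-invariant \emph{complement} of $\cH_+$ — which is exactly what must do the work. (A secondary issue: your argument for $W\cap\cH_+=\{0\}$ is both superfluous — it is immediate from $W\subset\Proj$ and $\Proj\cap\cH_+=\{0\}$ — and incorrectly reasoned: for $g\in\cH_+$ the orthogonal $\cH_+$-component of $z^{-n}g$ tending to $0$ means $z^{-n}g$ approaches $\cH_-$, not $\cH_+$, and no conclusion $\|z^{-n}g\|\to 0$ follows from membership in $\Proj$.)

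The paper's own proof is elementary and bypasses Wiener/Beurling--Lax entirely, and it is also the way to repair your approach: take any nonzero $x$ in the intersection, so $z^nx\in\Proj$ for all $n\ge 0$. Writing $z^nx=a_n+b_n$ with $a_n\in\cH_+$, $b_n\in\cH_-=(\cH_+)^\perp$, one has $\|b_n\|\to 0$. Since $\cH=\Proj\oplus\cH_+$ with both summands closed, the orthogonal projection onto $\cH_-$ restricts to a bounded bijection $\Proj\to\cH_-$, whose inverse $f$ is bounded by the open mapping theorem; as $z^nx\in\Proj$ projects to $b_n$, we get $z^nx=f(b_n)$, whence $\|x\|=\|z^nx\|=\|f(b_n)\|\le \|f\|\,\|b_n\|\to 0$, a contradiction. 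In other words, the hypothesis $\Proj\oplus\cH_+=\cH$ enters through the bounded isomorphism $\Proj\cong\cH_-$, not through any structure theorem for invariant subspaces; with that substitution your outline collapses to the paper's two-line argument, and the Wiener-theorem detour can be dropped.
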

\begin{proof} 
Suppose that there is a non-zero vector 
$x\in \bigcap_{n\ge 0} z^{-n}\Proj$. 
Write $z^n x = a_n + b_n$ where $a_n \in \cH_+$ 
and $b_n \in \cH_- :=(\cH_+)^\perp$ (cf.~equation~\ref{eq:cH+-}). 
We have $b_n \to 0$ in the norm topology as $n\to \infty$. 
The projection $\cH \to \cH_-$ along $\cH_+$ 
induces an isomorphism $\Proj \to \cH_-$. 
Let $f\colon \cH_- \to \Proj$ be 
the inverse isomorphism. We have $z^n x = (a_n + b_n - f(b_n)) + f(b_n)$ 
with $a_n +b_n-f(b_n) \in \cH_+$, $f(b_n) \in \Proj$,
and $z^n x\in \Proj$. Therefore $f(b_n) = z^n x$ and 
so $\|f(b_n)\| = \|z^n x\| = \|x\|$. 
This contradicts the fact that $\lim_{n\to \infty} b_n = 0$. 

\end{proof} 

Lemmas~\ref{lem:fullrange} and~\ref{lem:A2} together imply that the pair 
$(\cH_+, \Proj)$ satisfies $\ov{\bigcup_{n\ge 0} z^{-n} \cH_+} =  
\ov{\bigcup_{n\ge 0} z^n \Proj} = \cH$ and  
$\bigcap_{n\ge 0} z^n \cH_+ = \bigcap_{n\ge 0} z^{-n} \Proj 
=\{0\}$. 
A pair of complementary subspaces with these properties 
is studied in~\cite{Ball--Raney} under the name ``dual shift-invariant pair".

\bibliographystyle{plain}
\bibliography{bibliography}

\end{document}